\documentclass[a4paper, 10pt]{article}
\usepackage{amsmath}
\usepackage{amssymb,esint}
\usepackage{amscd}
\usepackage{xspace}
\usepackage{fancyhdr}
\usepackage{xcolor}
\pagestyle{fancy}
\fancyhf{}
\cfoot{\thepage}
\setcounter{MaxMatrixCols}{30}
\setlength{\oddsidemargin}{-0.05in}
\setlength{\evensidemargin}{-0.05in}
\setlength{\textwidth}{14cm}
\newtheorem{theorem}{Theorem}[section]

\textheight=22.75cm
\voffset=-1truecm
\hoffset=+1.1truecm

\newtheorem{corollary}[theorem]{Corollary}

\newtheorem{definition}[theorem]{Definition}

\newtheorem{lemma}[theorem]{Lemma}

\newtheorem{proposition}[theorem]{Proposition}
\newtheorem{remark}[theorem]{Remark}

\newenvironment{proof}[1][Proof]{\textbf{#1.} }{\hfill\rule{0.5em}{0.5em}}
{\catcode`\@=11\global\let\AddToReset=\@addtoreset
\AddToReset{equation}{section}

\AddToReset{theorem}{section}

\begin{document}
\title{POTENTIAL ESTIMATES AND QUASILINEAR PARABOLIC EQUATIONS WITH MEASURE DATA}
\author{
 {\bf Quoc-Hung Nguyen\thanks{ New address: ShanghaiTech University, 393 Middle Huaxia Road, Pudong,
 		Shanghai, 201210, China. E-mail address: qhnguyen@shanghaitech.edu.cn}}\\[0.5mm]
{\small Laboratoire de Math\'ematiques et Physique Th\'eorique, }\\
{\small  Universit\'e Fran\c{c}ois Rabelais,  Tours,  FRANCE}}
\date{April 14, 2021}  
\maketitle
\begin{abstract}
In this paper, we study the existence and regularity of the quasilinear parabolic equations:
$$u_t-\operatorname{div}(A(x,t,\nabla u))=B(u,\nabla u)+\mu,$$
in either $\mathbb{R}^{N+1}$ or  $\mathbb{R}^N\times(0,\infty)$ or on  a bounded domain $\Omega\times (0,T)\subset\mathbb{R}^{N+1}$ where $N\geq 2$. In this paper, we shall assume that the nonlinearity $A$ fulfills standard growth conditions,  the function $B$  is a continuous and $\mu$ is a radon measure.
Our first task is to  establish the existence results with $B(u,\nabla u)=\pm|u|^{q-1}u$, for $q>1$. We next obtain global weighted-Lorentz, Lorentz-Morrey and Capacitary estimates on gradient of solutions with $B\equiv 0$, under minimal conditions on the boundary of domain and on nonlinearity $A$. Finally,
due to these estimates,
we solve the existence problems with $B(u,\nabla u)=|\nabla u|^q$
for $q>1$.\\\\
MSC: primary 35K55, 35K58, 35K59, 31E05; secondary 35K67,42B37\\\\
Keywords: quasilinear parabolic equations; renormalized solutions; Wolff parabolic potential; Riesz parabolic potential; Bessel parabolic potential; maximal potential; heat kernel; Radon measures; uniformly thick domain; Reifenberg flat domain; decay estimates; Lorentz spaces; Riccati type equations; capacity.
\end{abstract}
\tableofcontents 
\section{Introduction}
In this article, we study a class of quasilinear parabolic equations: 
\begin{align}\label{5hh060520141}
u_t-\operatorname{div}(A(x,t,\nabla u))=B(x,t,u,\nabla u)+\mu
\end{align} 
in $\mathbb{R}^{N+1}$ or $\mathbb{R}^N\times(0,\infty)$ or on a bounded domain $\Omega_T:=\Omega\times(0,T)\subset\mathbb{R}^{N+1}$
 where $N\geq2$. In this paper,  $\mu$  is a Radon measure,  $A:\mathbb{R}^N\times\mathbb{R}\times\mathbb{R}^N\to \mathbb{R}^N$ is a Carath\'eodory function which satisfies 
\begin{align}
\label{5hhconda}&|A(x,t,\zeta)|\le \Lambda_1 |\zeta|~\text{ and }~\\&
\label{5hhcondb}
 \left\langle {A(x,t,\zeta)-A(x,t,\lambda),\zeta-\lambda}\right\rangle\geq  \Lambda_2 |\zeta-\lambda|^2,
\end{align}   for every $(\lambda,\zeta)\in \mathbb{R}^N\times \mathbb{R}^N$ and a.e. $(x,t)\in \mathbb{R}^N\times \mathbb{R}$, for some   $\Lambda_1, \Lambda_2>0$; and  $B:\mathbb{R}^{N+1}\times\mathbb{R}\times\mathbb{R}^N\to\mathbb{R}$ is also a Carath\'eodory function. 

The regularity and singularity theory for the parabolic quasilinear equation \eqref{5hh060520141} were studied and developed intensely over the past 50 years in \cite{55Mo,55Lad,55Fu,55Li1,55Li2,55Di,55Li3,55Nau,55Zi1,55SaGa,55QuSo}. Moreover, we also refer to  \cite{55BOR}-\cite{55BW4} for $L^p-$gradient estimates theory in non-smooth domains and \cite{55Hu1} for Wiener criteria to existence of large solutions of nonlinear parabolic equations with absorption  in a non-cylindrical domain.

First, we are specially interested in the existence of solutions to  quasilinear parabolic equations with absorption, source terms and measure data:
\begin{align}\label{5hh300420141}
& u_t-\operatorname{div}(A(x,t,\nabla u))+|u|^{q-1}u=\mu, \quad (\text{absorption term})\\&
u_t-\operatorname{div}(A(x,t,\nabla u))=|u|^{q-1}u+\mu,\quad (\text{source term})\label{5hh300420142}
\end{align}
in $\mathbb{R}^{N+1}$ and 
\begin{align}\label{5hh300420143}
& u_t-\operatorname{div}(A(x,t,\nabla u))+|u|^{q-1}u=\mu,~~u(0)=\sigma, \quad (\text{absorption term})\\&
u_t-\operatorname{div}(A(x,t,\nabla u))=|u|^{q-1}u+\mu, ~~u(0)=\sigma,\quad (\text{source term})\label{5hh300420144}
\end{align}
in $\mathbb{R}^{N}\times (0,\infty)$ or a bounded domain $\Omega_T\subset \mathbb{R}^{N+1}$, where $q>1$ and $\mu,\sigma$ are Radon measures. 

The linear case $A(x,t,\nabla u )=\nabla u$ was studied in detail by Fujita, Brezis and Friedman, Baras and Pierre. 

 For the absorption case, in \cite{55BrFr}, they showed that if $\mu=0$  and $\sigma$ is a Dirac
    mass in $\Omega$, the problem \eqref{5hh300420143} in $\Omega_T$ (with Dirichlet boundary condition)  admits a (unique) solution if and only if $q<(N+2)/N$. Then, optimal results had been obtained in \cite{55BaPi1}. They proved that  for any $\mu\in\mathfrak{M}%
_{b}(\Omega_T)$ and $\sigma\in\mathfrak{M}_{b}(\Omega)$ there exists a unique solution of \eqref{5hh300420143} in $\Omega_T$ if and only if  $\mu\ll\text{Cap}_{2,1,q'}$ and $\sigma\ll\text{Cap}_{\mathbf{G}_{2/q},q'}$ i.e $\mu,\sigma$ are absolutely continuous with respect to the capacity $\text{Cap}_{2,1,q'}, \text{Cap}_{\mathbf{G}_{2/q},q'}$ (in $\Omega_T,\Omega$) respectively. Here  $q'$ is the conjugate exponent of $q$ and these two capacities will be defined in section 2.  

For the source case, in \cite{55BaPi2}, they showed that for any $\mu\in\mathfrak{M}%
_{b}^+(\Omega_T)$ and $\sigma\in\mathfrak{M}_{b}^+(\Omega)$, the problem \eqref{5hh300420144} in bounded domain $\Omega_T$  has a nonnegative solution if 
$$\mu(E)\leq C \text{Cap}_{2,1,q'}(E)~~\text{and}~~\sigma(O)\leq C \text{Cap}_{\mathbf{G}_{\frac{2}{q}},q'}(O)$$
      hold for every compact sets $E\subset \mathbb{R}^{N+1}$, $O\subset \mathbb{R}^{N}$ here $C=C(N,\text{diam}(\Omega),T)$ is small enough. Conversely, the existence holds then for compact subset $K\subset\subset\Omega$, one find $C_K>0$ such that 
      \begin{equation*}
             \mu(E\cap (K\times[0,T]))\leq C_K \text{Cap}_{2,1,q'}(E)~~\text{and}~~\sigma(O\cap K)\leq C_K \text{Cap}_{\mathbf{G}_{\frac{2}{q}},q'}(O)
             \end{equation*}hold for every compact sets $E\subset \mathbb{R}^{N+1}$, $O\subset \mathbb{R}^{N}$. 
       In unbounded domain $\mathbb{R}^N\times(0,\infty)$, in \cite{55Fu} they asserted that an inequality 
      \begin{align}\label{5hh300420147}
      u_t-\Delta u\geq u^q, u\geq 0~\text{ in } \mathbb{R}^N\times(0,\infty), 
      \end{align} 
      \begin{description}
      \item[i.]if $q<(N+2)/N$ then the only nonnegative global (in time) solution of above inequality is $u\equiv 0$,
      \item[ii.] if $q>(N+2)/N$ then there exists global positive solution of  above inequality. 
      \end{description}
      More generally, in \cite{55BaPi2},
      for $\mu\in\mathfrak{M}^+(\mathbb{R}^N\times(0,\infty))$ and $ \sigma\in\mathfrak{M}^+(\mathbb{R}^N)$, the equation \eqref{5hh300420144} has a nonnegative solution in $\mathbb{R}^N\times(0,\infty)$  (with $A(x,t,\nabla u )=\nabla u$) if and only if 
      \begin{equation}\label{5hh300420146}
             \mu(E)\leq C \text{Cap}_{\mathcal{H}_2,q'}(E)~~\text{and}~~\sigma(O)\leq C \text{Cap}_{\mathbf{I}_{\frac{2}{q}},q'}(O)
             \end{equation} 
            hold for every compact set $E\subset \mathbb{R}^{N+1}$, $O\subset \mathbb{R}^{N}$.  Here $C=C(N,q)$ is small enough, two capacities $\text{Cap}_{\mathcal{H}_2,q'}, \text{Cap}_{\mathbf{I}_{\frac{2}{q}},q'}$ will be defined in section 2. Note that a necessary and sufficient condition  for \eqref{5hh300420146} to  hold with  $\mu\in\mathfrak{M}^+(\mathbb{R}^N\times(0,\infty))\backslash\{0\}$ or $\sigma\in\mathfrak{M}^+(\mathbb{R}^N)\backslash\{0\}$ is $q\geq (N+2)/N$. In particular, \eqref{5hh300420147} has a (global) positive solution if and only if $q\geq (N+2)/N$.  It is known that conditions for data $\mu,\sigma$ in problems with absorption are  softer than those in problems with source. Recently, the exponential case, i.e $|u|^{q-1}u$ is replaced by $P(u)\sim\exp(a|u|^q)$, for $a>0$ and $q\geq 1$  was considered in \cite{55Ta}.

            We consider \eqref{5hh300420143} and \eqref{5hh300420144} in $\Omega_T$ with Dirichlet boundary conditions when $\operatorname{div}(A(x,t,\nabla u))$ is replaced by $\Delta_p u:= \operatorname{div}(|\nabla u|^{p-2}\nabla u )$ for $p\in(2-1/N,N)$. In \cite{55PePoPor}, they  showed that for any $q>p-1$, \eqref{5hh300420143} admits a (unique renormalized) solution provided 
            $\sigma\in L^1(\Omega)$ and $\mu\in\mathfrak{M}_b(\Omega_T)$ is diffuse measure i.e absolutely  continuous with respect to $C_p-$parabolic capacity in $\Omega_T$ defined on a compact set $K\subset \Omega_T$:  
 $$
 C_{p}(K,\Omega_T)=\inf\left\{||\varphi||_X:\varphi\geq \chi_K,\varphi\in C^\infty_c(\Omega_T)\right\},
$$
 where $X=\{\varphi:\varphi\in L^{p}(0,T;W_0^{1,p}(\Omega)),\varphi_t\in L^{p'}(0,T;W^{-1,p'}(\Omega))\}$ endowed with norm $||\varphi||_{X}=||\varphi||_{L^{p}(0,T;W_0^{1,p}(\Omega))}+||\varphi_t||_{L^{p'}(0,T;W^{-1,p'}(\Omega))}$ and $\chi_K$ is the characteristic function of $K$. An improving result was presented in \cite{55VHb} for measures that have good behavior in time, it is based on results of \cite{55VHV} relative to the elliptic case. That is,  \eqref{5hh300420143} has a (renormalized) solution for $q>p-1$ if $\sigma\in L^1(\Omega)$ and $|\mu|\leq f+\omega\otimes F$, where $f\in L^1_+(\Omega_T),F\in L^1_+((0,T))$ and $\omega\in\mathfrak{M}_b^+(\Omega)$ is absolutely continuous with respect to  $\text{Cap}_{\mathbf{G}_p,\frac{q}{q-p+1}}$ in $\Omega$. Also, \eqref{5hh300420144} has a (renormalized) nonnegative solution if $\sigma\in L^\infty_+(\Omega)$, $0\leq \mu\leq \omega\otimes\chi_{(0,T)}$ with $\omega\in\mathfrak{M}_b^+(\Omega)$ and 
 \begin{align*}
 \omega(E)\leq C_1\text{Cap}_{\mathbf{G}_p,\frac{q}{q-p+1}}(E)~\forall \text{ compact } E~ \subset\mathbb{R}^N,~~~||\sigma||_{L^\infty(\Omega)}\leq C_2 
 \end{align*}
for some $C_1,C_2$ small enough. 
 Another improving results are also stated in \cite{55Hu2}, especially if $q> p-1$, $p>2$, $\mu\in\mathfrak{M}_b(\Omega_T)$ and $\sigma\in\mathfrak{M}_b(\Omega)$ are absolutely continuous with respect to $\text{Cap}_{2,1,q'}$ in $\Omega_T$ and $\text{Cap}_{\mathbf{G}_{\frac{2}{q}},q'}$ in $\Omega$ then \eqref{5hh300420143} has a distributional solution. 
 
 In \cite{55Hu2},  we also obtain the existence of solutions for the porous medium equation with absorption and data measure. More precisely, for $q>m>\frac{N-2}{N},$ a sufficient condition to have existence of solution to the problem
 \begin{align*}
 u_t-\Delta (|u|^{m-1}u)+|u|^{q-1}u=\mu ~\text{ in }~\Omega_T,~~u=0~\text{on }~\partial\Omega\times (0,T),~~\text{ and }~ u(0)=\sigma ~\text{ in }~\Omega,
 \end{align*} 
  is $\mu\ll\text{Cap}_{2,1,q'}$,  $\sigma\ll\text{Cap}_{\mathbf{G}_{\frac{2}{q}},q'}$ if $m\geq 1$ and $\mu\ll\text{Cap}_{\mathcal{G}_2,\frac{2q}{2(q-1)+N(1-m)}}$,  $\sigma\ll\text{Cap}_{\mathbf{G}_{\frac{2-N(1-m)}{q}},\frac{2q}{2(q-1)+N(1-m)}}$ if $\frac{N-2}{N}<m\leq 1$. A necessary condition is $\mu\ll\text{Cap}_{2,1,\frac{q}{q-\max \{m,1\}}}$ and $\sigma\ll\text{Cap}_{\mathbf{G}_{\frac{2\max \{m,1\}}{q}},\frac{q}{q-\max \{m,1\}}}$. Moreover, if $\mu=\mu_1\otimes\chi_{[0,T]}$ with $\mu_1\in\mathfrak{M}_b(\Omega)$ and $\sigma\equiv 0$ then a condition $\mu_1\ll \text{Cap}_{\mathbf{G}_{2},\frac{q}{q-m}}$ is not only sufficient  but also 
 also necessary for existence of solutions to the above problem. 
 
We would like to make a brief survey on quasilinear elliptic equations with absorption, source terms and data measure. Namely, we shall study the following equations:
 \begin{align}\label{5hh300420148}
 &-\Delta_pu+|u|^{q-1}u=\omega, \\&
 -\Delta_p u=|u|^{q-1}u+\omega, u\geq 0,\label{5hh300420149}
 \end{align}
 in $\Omega$ with Dirichlet boundary conditions where $1<p<N$, $q>p-1$. In \cite{55VHV}, we proved that the existence solution of equation  \eqref{5hh300420148} holds if $\omega\in\mathfrak{M}_b(\Omega)$ is absolutely continuous with respect to  $\text{Cap}_{\mathbf{G}_p,\frac{q}{q-p+1}}$. Moreover, a necessary condition for existence  was also showed in \cite{55Bi2,55Bi3}. For the problem with source term, it was solved in \cite{55AdPi,55BaPi2} for $p=2$ (also see \cite{VHVjfa2015}) and \cite{55PhVe} for any $1<p\leq N$ (also see \cite{55PhVe2}). More precisely, if $\omega\in\mathfrak{M}_b^+(\Omega)$ has compact support in $\Omega$, then a sufficient and necessary condition for the existence of solutions to the problem \eqref{5hh300420149} is
  $$\omega(E)\leq C\text{Cap}_{\mathbf{G}_p,\frac{q}{q-p+1}}(E)\qquad\text{for all compact sets }E\subset\Omega,$$
  where $C$ is a constant depending  only on $N,p,q$ and $d(\text{supp}(\omega),\partial\Omega)$.
Their construction is based upon sharp estimates on the solutions of the problem
\begin{align*}
-\Delta_p u=\omega~~\text{ in }\Omega,~~
u=0~\text{ on } \partial\Omega,\end{align*}
for nonnegative Radon measures $\omega$ in $\Omega$ together with  a deep analysis of the Wolff potential. \\
Corresponding results in the case where $u^q$ term is changed by $P(u)\approx\exp (au^\lambda)$ for $a>0,\lambda>0$, were given in \cite{55VHV,55HV}.\medskip \\
There are many works for the Riccati equation 
\begin{equation}\label{Z12}
	-\Delta_p u=|\nabla u|^{q}+\omega
\end{equation}
in $\Omega$ with Dirichlet boundary conditions where $1<p\leq N$ and $q>p-1$. This problem was firstly studied in \cite{HaMa} in the case $p=2$. They proved that the problem has a solution if and only if 
$$\omega(E)\leq C\text{Cap}_{\mathbf{G}_1,\frac{q}{q-p+1}}(E)\qquad\text{for all compact sets }E,$$
for some $C>0$. Then, in  \cite{55Ph0, 55Ph3,55Ph2} they extended this result to $2-\frac{1}{N}<p\leq N$, see also \cite{VNV2020CVPDE}. Recently, in  \cite{H-P1,H-P2,H-P3,H-P4}, the authors considered this  problem in the singular case $1<p\leq 2-\frac{1}{N}$. \\
 In \cite{55DuzaMing}, Duzaar and Mingione gave a local pointwise estimate from above of the solutions  to the equation
 \begin{align}\label{5hh3004201412}
 u_t-\operatorname{div}(A(x,t,\nabla u))=\mu,
 \end{align} 
in $\Omega_T$ involving the Wolff parabolic potential $\mathbb{I}_2[|\mu|]$ defined by 
\begin{align*}
\mathbb{I}_2[|\mu|](x,t)=\int_{0}^{\infty}\frac{|\mu|(\tilde{Q}_\rho(x,t))}{\rho^N}\frac{d\rho}{\rho}~~\text{ for all }~~(x,t)\in\mathbb{R}^{N+1},
\end{align*}
here $\tilde{Q}_\rho(x,t):=  B_\rho(x)\times (t-\rho^2/2,t+\rho^2/2)$.  Specifically if $u\in L^2(0,T;H^1(\Omega))\cap C(\Omega_T)$ is a weak solution to the above equation with data $\mu \in L^2(\Omega_T)$, then 
\begin{align}
|u(x,t)|\leq C\fint_{\tilde{Q}_R(x,t)}|u|+C \int_{0}^{2R}\frac{|\mu|(\tilde{Q}_\rho(x,t))}{\rho^N}\frac{d\rho}{\rho},
\end{align}
for any $Q_{2R}(x,t):=B_{2R}(x)\times (t-(2R)^2,t)\subset\Omega_T$, where a constant $C$ only depends on $N$ and the structure of operator $A$.
In  this paper we show that if $u\geq 0,\mu\geq 0$ we also have  local pointwise estimate from below: 
\begin{equation}
u(y,s)\gtrsim\sum_{k=0}^{\infty}\frac{\mu(Q_{r_k/8}(y,s-\frac{35}{128}r_k^2))}{r_k^N},
\end{equation}
for any $Q_r(y,s)\subset \Omega_T$,  where $r_k=4^{-k}r$ (see section 5).

From the preceding two inequalities, we obtain  global pointwise estimates of solution to \eqref{5hh3004201412}. For example, if $\mu\in\mathfrak{M}(\mathbb{R}^{N+1})$ with $\mathbb{I}_2[|\mu|](x_0,t_0)<\infty$ for some $(x_0,t_0)\in\mathbb{R}^{N+1}$ then there exists a distributional solution to \eqref{5hh3004201412} in $\mathbb{R}^{N+1}$  such that 
\begin{align}\label{5hh3004201413}
-K\mathbb{I}_2[\mu^-](x,t)\leq u(x,t)\leq K \mathbb{I}_2[\mu^+](x,t) ~~\text{ for a.e } (x,t)\in\mathbb{R}^{N+1},
\end{align}
and we emphasize that if $u\geq 0, \mu\geq 0$ then 
\begin{align*}
          u(x,t)\geq K^{-1}\sum_{k=-\infty}^{\infty}\frac{\mu(Q_{2^{-2k-3}}(x,t-35\times 2^{-4k-7}))}{2^{-2Nk}} ~~\text{ for a.e } (x,t)\in\mathbb{R}^{N+1},
            \end{align*}
  and  for $q>1$,
  \begin{align*}
  ||u||_{L^q(\mathbb{R}^{N+1})}\sim ||\mathbb{I}_2[\mu]||_{L^q(\mathbb{R}^{N+1})}.
  \end{align*}
  Where the constant $K$  depends only  on $N$ and on the structure of the operator $A$.  \\

Our first aim is to verify that{\color{red},}
\begin{description}
\item[i. ] problems \eqref{5hh300420141} and \eqref{5hh300420143} have solutions if  $\mu,\sigma$ are absolutely continuous with respect to the capacity $\text{Cap}_{2,1,q'}, \text{Cap}_{\mathbf{G}_{\frac{2}{q}},q'}$  respectively,
\item[ii.] problems \eqref{5hh300420142} in $\mathbb{R}^{N+1}$ and \eqref{5hh300420144} in $\mathbb{R}^N\times (0,\infty)$ with signed measure data $\mu,\sigma$ admit a solution if 
 \begin{equation}\label{5hh3004201410}
             |\mu|(E)\leq C \text{Cap}_{\mathcal{H}_2,q'}(E)~~\text{and}~~|\sigma|(O)\leq C\text{Cap}_{\mathbf{I}_{\frac{2}{q}},q'}(O)
             \end{equation} 
            holds for every compact sets $E\subset \mathbb{R}^{N+1}$, $O\subset \mathbb{R}^{N}$ for some $C>0$. Also, 
              the equation \eqref{5hh300420144} in a bounded domain $\Omega_T$ has a solution if \eqref{5hh3004201410} holds where capacities $\text{Cap}_{2,1,q'},\text{Cap}_{\mathbf{G}_{\frac{2}{q}},q'}$ are exploited instead of $\text{Cap}_{\mathcal{H}_2,q'},\text{Cap}_{\mathbf{I}_{\frac{2}{q}},q'}$.              
\end{description}
It is worth mentioning that the solutions  of \eqref{5hh300420142} in $\mathbb{R}^{N+1}$ and \eqref{5hh300420144} in $\mathbb{R}^N\times (0,\infty)$ that we have obtained obey 
\begin{align*}
\int_E|u|^qdxdt \lesssim\text{Cap}_{\mathcal{H}_2,q'}(E)~~\text{ for all compact } E\subset \mathbb{R}^{N+1},
\end{align*}
 and we also have an analogous estimate for a solution of  \eqref{5hh300420144} in $\Omega_T$;
 \begin{align*}
 \int_E|u|^qdxdt \lesssim\text{Cap}_{2,1,q'}(E)~~\text{ for all compact } E\subset \mathbb{R}^{N+1}.
 \end{align*}

  In the case $\mu\equiv 0$, solutions of \eqref{5hh300420144} in $\mathbb{R}^N\times(0,\infty)$ and $\Omega_T$ verify the decay estimate
   \begin{align*}
   - Ct^{-\frac{1}{q-1}} \leq \inf_{x}u(x,t)\leq \sup_{x}u(x,t)\leq Ct^{-\frac{1}{q-1}} \text{ for any }~t>0. 
   \end{align*}
The strategy used to prove the above results is mainly based on some techniques from the two articles \cite{55VHV,55PhVe}  the global pointwise estimate \eqref{5hh3004201413} and some delicate estimates on Wolff parabolic potential and  the stability theorem see \cite{55VH} (and also Proposition \ref{5hh1203201412} of this paper). They will be proved in section 6. 

Then, we shall study the global regularity of solutions to quasilinear parabolic equations of the type:
\begin{align}\label{5hh3004201414}
 u_t- \operatorname{div}\left( {A(x,t,\nabla u)} \right)=\mu ~\text{ in }~\Omega_T,~u=0~\text{on}~\partial\Omega\times (0,T)~\text{ and }~ u(0)=\sigma ~\text{ in }~\Omega,
 \end{align}
 where the domain $\Omega_T$ and the nonlinearity $A$ are as mentioned at the beginning.
 
 Our aim is to find the minimal conditions on the boundary of $\Omega$ and on the nonlinearity $A$  so that the following statement holds
 \begin{align*}|||\nabla u|||_{\mathcal{K}}\lesssim || \mathbb{M}_1[\omega]||_{\mathcal{K}}.
 \end{align*}
  Here $\omega=|\mu|+|\sigma|\otimes\delta_{\{t=0\}}$ and  $\mathbb{M}_1$  is the first order fractional Maximal parabolic potential defined by 
      $$
      \mathbb{M}_{1}[\omega](x,t)=\sup_{\rho>0}\frac{\omega(\tilde{Q}_\rho(x,t))}{\rho^{N+1}}~~\forall ~(x,t)\in\mathbb{R}^{N+1},
      $$ the constant $C$ does not depend on $u$ and $\mu\in\mathfrak{M}_b(\Omega_T),\sigma\in\mathfrak{M}_b(\Omega)$ and $\mathcal{K}$ is a function space.
      The same type of question was answered in the elliptic framework  (see N. C. Phuc 
\cite{55Ph0,55Ph2,55Ph3} for more details). 

 First, we take $\mathcal{K}=L^{p,s}(\Omega_T)$ for  $1\leq p\leq \theta $ and $0<s\leq\infty$ under a capacity density condition on the domain $\Omega$ where $L^{p,s}(\Omega_T)$ is the Lorentz space.  The constant $\theta>2$ depends on the structure of this condition and on the nonlinearity $A$. It follows  the recent result in \cite{55BaCaPa}, see remark \ref{5hh070520145}. The capacity density condition is that, the complement of $\Omega$ satisfies the  {\it uniformly $2-$thick} condition, see section 2. We remark that under this condition,  the  Sobolev embedding $H_0^1(\Omega)\subset L^{\frac{2N}{N-2}}(\Omega)$ for $N>2$ is valid and it is fulfilled by any domain with Lipschitz boundary, or even of corkscrew type. This condition was used in the two papers \cite{55Ph0,55Ph3}. Also, it is essentially sharp for higher integrability results, presented in \cite[Remark 3.3]{55KiKo}. Furthermore, we also claim that if 
 $\frac{\gamma}{\gamma-1}<p<\theta$,  $2\leq\gamma<N+2$, $0<s \leq \infty$ and  $\sigma\equiv 0$ then
  $$                           |||\nabla u|||_{L_{*}^{p,s;(\gamma-1)p}(\Omega_T)}\lesssim||\mu||_{L_{*}^{\frac{(\gamma-1)p}{\gamma},\frac{(\gamma-1)s}{\gamma};(\gamma-1)p}(\Omega_T)},   $$
 where $L_{*}^{p,s;(\gamma-1)p}(\Omega_T), L_{*}^{\frac{(\gamma-1)p}{\gamma},\frac{(\gamma-1)s}{\gamma};(\gamma-1)p}(\Omega_T) $ are the Lorentz-Morrey spaces involving "calorie" introduced in section 2.  We would like to refer to  \cite{55Mi0} as the first paper where Lorentz-Morrey
  estimates for solutions of quasilinear elliptic equations via fractional operators have been obtained.  
  
  Next, in order to obtain shaper results,   we take $\mathcal{K}=L^{q,s}(\Omega_T,dw)$, the weighted Lorentz spaces with weight in the Muckenhoupht class $ A_\infty$ for $q\geq 1$,  $0<s\leq \infty$, we require some stricter conditions on the domain $\Omega$ and nonlinearity $A$. The condition on $\Omega$ is flat enough in the sense of Reifenberg. It essentially says that at the boundary point and at every scale, the boundary of the domain is between two hyperplanes at both sides (inside and outside) of the domain by a distance which depends on the scale. Conditions on $A$ are that  BMO type of $A$ with respect to the $x-$variable is small enough and the derivative of $A(x,t,\zeta)$ with respect to $\zeta$ is uniformly bounded.
   By choosing an appropriate weight we can establish the following important estimates: \\\\
  \textbf{a.} The Lorentz-Morrey estimates involving "calorie" for $0<\kappa\leq N+2$ is obtained
$$
                                                                                                                                           |||\nabla u|||_{L_{*}^{q,s;\kappa}(\Omega_T)}\lesssim ||\mathbb{M}_1[|\omega|]||_{L_{*}^{q,s;\kappa}(\Omega_T)}.
$$
                                                                                                                                                                                                                                                                                                     \textbf{b.} Another Lorentz-Morrey estimates is also obtained for $0<\vartheta\leq N$
$$
                                                                                                                                                                                                                ||\mathbb{M}(|\nabla u|)||_{L_{**}^{q,s;\vartheta}(\Omega_T)}\lesssim ||\mathbb{M}_1[|\omega|]||_{L_{**}^{q,s;\vartheta}(\Omega_T)},
                                                                               $$                                                           where $L_{**}^{q,s;\vartheta}(\Omega_T)$ is introduced in section 2. This estimate implies    global H\"older-estimate in space variable and $L^q-$estimate  in time, that is for all ball $B_\rho\subset\mathbb{R}^N$
                                                                                                                                                                                                                    \begin{equation*}                               
                                                                                                                                                                                                                                                                                                                                    \left(\int_0^T|\text{osc}_{B_\rho\cap\overline{\Omega}}u(t)|^qdt\right)^{\frac{1}{q}} \lesssim \rho^{1-\frac{\vartheta}{q}} ||\mathbb{M}_1[|\omega|]||_{L_{**}^{q;\vartheta}(\Omega_T)}~\text{provided }~0<\vartheta<\min\{q,N\}.
                                                                                                                                                                                                                                                                                                                                                                                                                                                \end{equation*}
 In particular, there holds                                                                                                                                                                                                                                                                                                                                                                                                                                               
 \begin{align*}
                 \left(\int_0^T|\text{osc}_{B_\rho\cap\overline{\Omega}}u(t)|^qdt\right)^{\frac{1}{q}}\lesssim \rho^{1-\frac{\vartheta}{q}}||\sigma||_{L^{\frac{\vartheta q}{\vartheta+2-q};\vartheta}(\Omega)}+\rho^{1-\frac{\vartheta}{q}}||\mu||_{L^{\frac{\vartheta qq_1}{(\vartheta+2+q)q_1-2q};\vartheta}(\Omega, L^{q_1}((0,T)))}
                  \end{align*}
        provided \begin{align*}
        & 1<q_1\leq q<2,\\& \max\left\{\frac{2-q}{q-1},\frac{1}{q-1}\left(2+q-\frac{2q}{q_1}\right)\right\}<\vartheta\leq N.
        \end{align*}
         Where $L^{\frac{\vartheta q}{\vartheta+2-q};\vartheta}(\Omega)$ is the standard Morrey space and 
                 \begin{align*}
                ||\mu||_{L^{q_2;\vartheta}(\Omega, L^{q_1}((0,T)))}=\sup_{\rho>0, x\in \Omega}\rho^{\frac{\vartheta-N}{q_2}}\left(\int_{B_\rho(x)\cap\Omega}\left(\int_{0}^T|\mu(y,t)|^{q_1}dt\right)^{\frac{q_2}{q_1}}dy\right)^{\frac{1}{q_2}},
                 \end{align*} 
with $q_2=\frac{\vartheta qq_1}{(\vartheta+2+q)q_1-2q}$. Besides, we also find 
  \begin{align*}
                   \left(\int_0^T|\text{osc}_{B_\rho\cap\overline{\Omega}}u(t)|^qdt\right)^{\frac{1}{q}}\lesssim\rho^{1-\frac{\vartheta}{q}}||\mu||_{L^{\frac{\vartheta qq_1}{(\vartheta+2+q)q_1-2q};\vartheta}(\Omega, L^{q_1}((0,T)))}
                    \end{align*}
          provided \begin{align*}
          &\sigma\equiv 0,~~ q\geq 2, 1<q_1\leq q,
         \\& \frac{1}{q-1}\left(2+q-\frac{2q}{q_1}\right)<\vartheta\leq N.
          \end{align*}      
      \textbf{c.} A global capacitary estimate is also  given 
      \begin{align*}
      \mathop {\sup }\limits_{\scriptstyle \text{compact } K\subset\mathbb{R}^{N+1} \hfill \atop 
        \scriptstyle \text{Cap}_{\mathcal{G}_1,q'}(K)>0 \hfill}\left(\frac{\int_{K}|\nabla u|^qdxdt}{\text{Cap}_{\mathcal{G}_1,q'}(K)}\right) \lesssim \mathop {\sup }\limits_{\scriptstyle \text{compact } K\subset\mathbb{R}^{N+1} \hfill \atop 
                \scriptstyle \text{Cap}_{\mathcal{G}_1,q'}(K)>0 \hfill}\left(\frac{|\omega|(K)}{\text{Cap}_{\mathcal{G}_1,q'}(K)}\right)^q.
      \end{align*}   
 To obtain this estimate we use deep techniques from nonlinear potential theory, see section 4 and Theorem \ref{5hh2410131'}.
                                           
                                           We  use some ideas (in the quasilinear elliptic framework) in articles of N.C. Phuc \cite{55Ph0, 55Ph3,55Ph2} to establish above estimates. Recently, in \cite{hungcvpde,THung2021} the author extended these results for \eqref{5hh3004201414} with distributional data.
                                           
We would like to emphasize that above estimates are also true for solutions to equation \eqref{5hh3004201414} in $\mathbb{R}^{N+1}$ with data $\mu$ (of course it is still true for \eqref{5hh3004201414} in $\mathbb{R}^{N}\times(0,\infty)$) with data $\mu$, provided  $\mathbb{I}_2[|\mu|](x_0,t_0)<\infty$ for some $(x_0,t_0)\in \mathbb{R}^{N+1}{\color{red}{,}}$ see Theorem  \ref{5hh040420149} and \ref{5hh040420145}. Moreover, a global pointwise estimate for the gradient of solutions is obtained when $A$ is independent of the space variable $x$, that is
\begin{align*}
|\nabla u(x,t)|\lesssim\mathbb{I}_1[|\mu|](x,t)~~\text{ a.e } (x,t)\in\mathbb{R}^{N+1},
\end{align*}
see Theorem \ref{5hh1203201417}.
                                            
Our final aim is to obtain existence results for the quasilinear Riccati type parabolic problems \eqref{5hh060520141} where $B(x,t,u,\nabla u)=|\nabla u|^q$ for $q>1$. The strategy we use in order to prove these existence results is a combination of the Schauder Fixed Point Theorem and all above estimates and  the stability Theorem see \cite{55VH}, Proposition \ref{5hh1203201412} in section 3. They will be carried out in section 9. 
   The method used in this paper allows to treat more general equations \eqref{5hh060520141}, namely
  $$
  | B(x,t,u,\nabla u)|\lesssim |u|^{q_1}+|\nabla u|^{q_2}, ~q_1,q_2>1.
   $$
  \\\\\\        
\textbf{Acknowledgements:}\\ The author wishes to express his deep gratitude to his advisors Professor Laurent V\'eron and Professor Marie-Fran\c{c}oise Bidaut-V\'eron for encouraging, taking care and giving many useful comments during the preparation of the paper. Besides the author would like to thank Omar Lazar and Nguyen Phuoc Tai for many interesting comments.  \\                         
 \section{Main Results}                                
Throughout the paper, we assume that $\Omega$ is a bounded open subset of $\mathbb{R}^N$, $N\geq2$ and $T>0$. Besides, we always denote  $\Omega_T=\Omega\times (0,T)$, $T_0=\text{diam}(\Omega)+T^{1/2}$ and $Q_\rho(x,t)=B_\rho(x)\times (t-\rho^2,t)$ $\tilde{Q}_\rho(x,t)=B_\rho(x)\times (t-\rho^2/2,t+\rho^2/2)$ for $(x,t)\in\mathbb{R}^{N+1}$ and $\rho>0$. We always assume that $A:\mathbb{R}^N\times\mathbb{R}\times \mathbb{R}^N\to \mathbb{R}^N$ is a Caratheodory vector valued function, i.e. $A$ is measurable in $(x,t)$ and continuous with respect to $\nabla u$ for each fixed $(x,t)$ and satisfies \eqref{5hhconda} and \eqref{5hhcondb}. This article is divided into three parts.  First part, we study the existence problems for the quasilinear parabolic equations with absorption and  source terms
\begin{equation}\label{5hh070120148}
                      \left\{
                                      \begin{array}
                                      [c]{l}%
                                      {u_{t}}-\operatorname{div}(A(x,t,\nabla u))+|u|^{q-1}u=\mu~\text{in }\Omega_T,\\ 
                                                            u=0~~~~~~~\text{on}~~
                                                                                             \partial\Omega \times (0,T),
                                                                                                \\
                                                                                                u(0) = \sigma~~~\text{in}~~ \Omega, \\ 
                                      \end{array}
                                      \right.  
                                      \end{equation}
                                        and                                  
\begin{equation}\label{5hh070120149}
                  \left\{
                  \begin{array}
                  [c]{l}%
                  {u_{t}}-\operatorname{div}(A(x,t,\nabla u))=|u|^{q-1}u+\mu~\text{in }\Omega_T,\\ 
                                      u=0~~~~~~~\text{on}~~
                                   \partial\Omega \times (0,T),
                                      \\
                                      u(0) = \sigma~~~\text{in}~~ \Omega,\\ 
                  \end{array}
                  \right.  
                  \end{equation}
where $q>1$,  and $\mu,\sigma$ are Radon measures.

                 In order to state our results, let us introduce some definitions and notations. If $D$ is either a bounded domain or whole $\mathbb{R}^{l}$ for $l\in\mathbb{N}$, we denote by $\mathfrak{M}(D)$ (resp. $\mathfrak{M}_b(D)$) the set of Radon measure (resp. bounded Radon measures) in $D$. Their positive cones are $\mathfrak{M}^+(D)$ and  $\mathfrak{M}_b^+(D)$ respectively. For $R\in (0,\infty]$, we define the 
                $R-$truncated Riesz parabolic potential $\mathbb{I}_{\alpha}$ and Fractional Maximal parabolic  potential $\mathbb{M}_{\alpha}$, $\alpha\in (0,N+2)$, on $\mathbb{R}^{N+1}$ of a measure $\mu\in\mathfrak{M}^+(\mathbb{R}^{N+1})$ by                 
                 \begin{equation}\label{5hh160420141}
                                                      \mathbb{I}_{\alpha}^R[\mu](x,t)=\int_{0}^{R}\frac{\mu(\tilde{Q}_\rho(x,t))}{\rho^{N+2-\alpha}}\frac{d\rho}{\rho}~\text{and}~\mathbb{M}_{\alpha}^R[\mu](x,t)=\sup_{0<\rho<R}\frac{\mu(\tilde{Q}_\rho(x,t))}{\rho^{N+2-\alpha}},
                                                     \end{equation}
                   for all $(x,t)$ in $\mathbb{R}^{N+1}$. If $R=\infty$, we drop it in expressions of \eqref{5hh160420141}.\\                                                       
                We denote by $\mathcal{H}_\alpha$ the Heat  kernel of order $\alpha\in (0,N+2)$:
                   \begin{equation*}
                   \mathcal{H}_\alpha(x,t)=C_\alpha
                   \frac{\chi_{(0,\infty)}(t)}{t^{(N+2-\alpha)/2}}\exp\left(-\frac{|x|^2}{4t}\right)~~\text{for}~~(x,t)~~\text{in}~~\mathbb{R}^{N+1},
                   \end{equation*}
                   and $\mathcal{G}_\alpha$
                   the parabolic Bessel kernel of order $\alpha>0$:
                      \begin{equation*}
                      \mathcal{G}_\alpha(x,t)=C_\alpha
                      \frac{\chi_{(0,\infty)}(t)}{t^{(N+2-\alpha)/2}}\exp\left(-t-\frac{|x|^2}{4t}\right)~~\text{for}~~(x,t)~~\text{in}~~\mathbb{R}^{N+1},
                      \end{equation*}
                     see \cite{55Bag}, where  $C_\alpha=\left((4\pi)^{N/2}\Gamma(\alpha/2)\right)^{-1}$.
                            It is known that
                            $
                           \mathcal{F}(\mathcal{H}_\alpha)(x,t)=(|x|^2+it)^{-\alpha/2}$ and  $ \mathcal{F}(\mathcal{G}_\alpha)(x,t)=(1+|x|^2+it)^{-\alpha/2}
                           $.  We define the parabolic  Riesz potential $\mathcal{H}_\alpha$ of a measure $\mu\in \mathfrak{M}^+(\mathbb{R}^{N+1})$ by                                  \begin{equation*}
                                 \mathcal{H}_\alpha[\mu](x,t)=(\mathcal{H}_\alpha*\mu)(x,t)=\int_{\mathbb{R}^{N+1}}\mathcal{H}_\alpha(x-y,t-s)d\mu(y,s)~~\text{for any }~~(x,t)~~\text{in}~~\mathbb{R}^{N+1},
                                 \end{equation*}      
                                 the parabolic  Bessel potential $\mathcal{G}_\alpha$ of a measure $\mu\in \mathfrak{M}^+(\mathbb{R}^{N+1})$ by 
                                       \begin{equation*}
                                       \mathcal{G}_\alpha[\mu](x,t)=(\mathcal{G}_\alpha*\mu)(x,t)=\int_{\mathbb{R}^{N+1}}\mathcal{G}_\alpha(x-y,t-s)d\mu(y,s)~\text{for any}~(x,t)~\text{in}~\mathbb{R}^{N+1}.
                                       \end{equation*}
               We also define $\mathbf{I}_\alpha, \mathbf{G}_\alpha, 0<\alpha<N$ the Riesz, Bessel potential of a measure $\mu\in\mathfrak{M}^+(\mathbb{R}^{N})$ by 
                \begin{equation*}\mathbf{I}_{\alpha}[\mu](x)=\int_{0}^{\infty}\frac{\mu(B_\rho(x))}{\rho^{N-\alpha}}\frac{d\rho}{\rho}~\text{and}~
                                                 \mathbf{G}_\alpha[\mu](x)=\int_{\mathbb{R}^{N}}\mathbf{G}_\alpha(x-y)d\mu(y)~\text{for any}~x~\text{in}~\mathbb{R}^{N},
                                                 \end{equation*}      where $\mathbf{G}_\alpha$ is the Bessel kernel of order $\alpha$, see \cite{55AH}.\\
Several  different capacities will be used throughout the paper. For $1<p<\infty$,   the $(\mathcal{H}_\alpha,p)$-capacity, $(\mathcal{G}_\alpha,p)$-capacity of a Borel set $E\subset \mathbb{R}^{N+1}$ are defined by  
 \begin{align*}
& \text{Cap}_{\mathcal{H}_\alpha,p}(E)=\inf\left\{\int_{\mathbb{R}^{N+1}}|f|^pdxdt: f\in L^p_+(\mathbb{R}^{N+1}), \mathcal{H}_\alpha*f\geq \chi_E\right\},\\&
 \text{Cap}_{\mathcal{G}_\alpha,p}(E)=\inf\left\{\int_{\mathbb{R}^{N+1}}|f|^pdxdt: f\in L^p_+(\mathbb{R}^{N+1}), \mathcal{G}_\alpha*f\geq \chi_E\right\}.
 \end{align*}  
 The  $W^{2,1}_p-$capacity of  compact set $E\subset\mathbb{R}^{N+1}$ is defined by    
           \begin{align*}
           \text{Cap}_{2,1,p}(E)=\inf\left\{||\varphi||^p_{W^{2,1}_p(\mathbb{R}^{N+1})}:\varphi\in S(\mathbb{R}^{N+1}), \varphi\geq 1 \text{ in a neighborhood of}~E\right \}, 
           \end{align*} 
           where      
          \begin{align*}
          ||\varphi||_{W^{2,1}_p(\mathbb{R}^{N+1})}=||\varphi||_{L^p(\mathbb{R}^{N+1})}+||\frac{\partial \varphi}{\partial t}||_{L^p(\mathbb{R}^{N+1})}+||\nabla \varphi||_{L^p(\mathbb{R}^{N+1})}+\sum\limits_{i,j=1,2,...,N} ||\frac{\partial^{2} \varphi}{\partial x_i\partial x_j}||_{L^p(\mathbb{R}^{N+1})}.
          \end{align*}                                                  We remark that thanks to Richard J. Bagby's result (see \cite{55Bag}) we obtain the equivalent of capacities $\text{Cap}_{2,1,p}$ and $\text{Cap}_{\mathcal{G}_2,p}$, i.e, 
          for any compact set $K\subset\mathbb{R}^{N+1}$ there holds 
$$
          \text{Cap}_{\mathcal{G}_2,p}(K)\sim\text{Cap}_{2,1,p}(K),$$
         see Corollary \eqref{5hh250220142} in section 4. \\
          The $(\mathbf{I}_{\alpha},p)$-capacity, $(\mathbf{G}_{\alpha},p)$-capacity of a Borel set $O\subset \mathbb{R}^{N}$ are defined by
\begin{align*}
& \text{Cap}_{\mathbf{I}_{\alpha},p}(O)=\inf\left\{\int_{\mathbb{R}^{N}}|g|^pdx: g\in L^p_+(\mathbb{R}^{N}), \mathbf{I}_{\alpha}*g\geq \chi_O\right\},\\&
\text{Cap}_{\mathbf{G}_{\alpha},p}(O)=\inf\left\{\int_{\mathbb{R}^{N}}|g|^pdx: g\in L^p_+(\mathbb{R}^{N}), \mathbf{G}_{\alpha}*g\geq \chi_O\right\}.
\end{align*}
In this paper, we often use the expression $A\lesssim_{\alpha,\beta} B$ to mean that there exists a universal constant $C$ depending on $\alpha,\beta$ and on fixed quantities ($N,\Lambda_1,\Lambda_2$) such that $a\leq C b$. The same convention is adopted for $\gtrsim_{\alpha,\beta}$ and $\sim_{\alpha,\beta}$.\\

In our first three Theorems, we present global pointwise potential estimates for solutions  to quasilinear parabolic problems
 \begin{equation}\label{5hhparabolic1}\left\{ \begin{array}{l}
                     {u_t} - \operatorname{div}\left({A(x,t,\nabla u)} \right) = \mu ~\text{in}~\Omega_T,  \\ 
                      u=0~~~~~~~\text{on}~~
                                                       \partial\Omega \times (0,T),
                                                          \\
                                                          u(0) = \sigma~~~\text{in}~~ \Omega, \\  
                     \end{array} \right.\end{equation}             
  \begin{equation}\label{5hhparabolic2}\left\{ \begin{array}{l}
                      {u_t} - \operatorname{div}\left({A(x,t,\nabla u)} \right) = \mu ~\text{in}~\mathbb{R}^{N+1}_+,  \\                     
                                                           u(0) = \sigma~~~\text{in}~~ \mathbb{R}^N, \\  
                      \end{array} \right.\end{equation}
                      and
                      \begin{align}\label{5hhparabolic2'}
                       {u_t} - \operatorname{div}\left( {A(x,t,\nabla u)} \right) = \mu ~\text{in}~\mathbb{R}^{N+1}.
                      \end{align}                                                                                                         
                 \begin{theorem}\label{5hh141013112} There exists a constant $K$ depending on $N,\Lambda_1,\Lambda_2$ such that for any  $\mu\in\mathfrak{M}_b(\Omega_T),\sigma\in\mathfrak{M}_b(\Omega)$ there is a distributional solution $u$ of \eqref{5hhparabolic1} which satisfies 
                          \begin{equation}\label{5hh14101314}
                           -K\mathbb{I}^{2T_0}_{2}[\mu^-+\sigma^-\otimes\delta_{\{t=0\}}] \leq u\leq K\mathbb{I}_{2}^{2T_0}[\mu^++\sigma^+\otimes\delta_{\{t=0\}}]~\text{in}~\Omega_T.
                             \end{equation}

                          \end{theorem}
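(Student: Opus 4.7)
The plan is to combine Duzaar-Mingione's local pointwise estimate, recalled in the introduction, with a sign decomposition, an extension-by-zero across the parabolic boundary, and the stability statement of Proposition~\ref{5hh1203201412}. First I approximate: choose $\mu_n^\pm\in C_c^\infty(\Omega_T)$ and $\sigma_n^\pm\in C_c^\infty(\Omega)$ mollifying $\mu^\pm$ and $\sigma^\pm$, with $\|\mu_n^\pm\|_{L^1}\le |\mu^\pm|(\Omega_T)$ and analogously for $\sigma$. Let $u_n$ be the classical weak solution of the approximate problem with data $(\mu_n^+-\mu_n^-,\sigma_n^+-\sigma_n^-)$ and let $v_n^\pm\ge 0$ be the solutions with one-signed data $(\mu_n^\pm,\sigma_n^\pm)$. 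Because the principal part of $A$ is monotone (condition \eqref{5hhcondb}) and the approximate data are smooth, the usual comparison principle for the linearized equation for the difference yields $-v_n^-\le u_n\le v_n^+$ in $\Omega_T$, so it suffices to prove a uniform upper bound $v_n^+\le K\,\mathbb{I}_2^{2T_0}[\mu_n^++\sigma_n^+\otimes\delta_{\{t=0\}}]$ on $\Omega_T$, and the analogous bound for $v_n^-$.

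To prove this bound, extend $v_n^+$ by zero to all of $\mathbb{R}^N\times(-\infty,T)$; call the extension $\bar v$. The Dirichlet condition on $\partial\Omega\times(0,T)$ and the initial trace $\sigma_n^+$ combine to show that $\bar v$ is a distribution solution of an equation of the same form with right-hand side $\mu_n^++\sigma_n^+\otimes\delta_{\{t=0\}}$, a finite Radon measure supported in $\overline{\Omega_T}$. Apply the Duzaar-Mingione local estimate at $(x,t)\in\Omega_T$ with ``radius'' $R=T_0$ (the cylinder $Q_{2T_0}(x,t)$ sits automatically in $\mathbb{R}^N\times(-\infty,T)$):
\begin{equation*}
v_n^+(x,t)\le C\fint_{\tilde Q_{T_0}(x,t)}\bar v\,dy\,ds+C\,\mathbb{I}_2^{2T_0}[\mu_n^++\sigma_n^+\otimes\delta_{\{t=0\}}](x,t).
\end{equation*}
Since $T_0\ge\mathrm{diam}(\Omega)+T^{1/2}$, the cylinder $\tilde Q_{T_0}(x,t)$ contains $\Omega_T$, so the average equals $|\tilde Q_{T_0}|^{-1}\int_{\Omega_T}v_n^+$. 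The standard $L^1$-bound for parabolic equations with measure data gives $\int_{\Omega_T}v_n^+\le T(\|\mu_n^+\|_{L^1}+\|\sigma_n^+\|_{L^1})$, and using $T\le T_0^2$ the average is dominated by $C\,T_0^{-N}(\|\mu_n^+\|_{L^1}+\|\sigma_n^+\|_{L^1})$. On the other hand, for $\rho\in[T_0,2T_0]$ the cylinder $\tilde Q_\rho(x,t)$ already contains all the mass, so $\mathbb{I}_2^{2T_0}[\mu_n^++\sigma_n^+\otimes\delta_{\{t=0\}}](x,t)\ge c\,T_0^{-N}(\|\mu_n^+\|_{L^1}+\|\sigma_n^+\|_{L^1})$, and the average term is absorbed into the potential.

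Finally I pass to the limit: the uniform pointwise bound, combined with Proposition~\ref{5hh1203201412}, produces along a subsequence a limit $u$ that is a distribution solution of \eqref{5hhparabolic1} with data $(\mu,\sigma)$; by Fatou and the pointwise a.e.\ convergence $\mathbb{I}_2^{2T_0}[\mu_n^\pm+\sigma_n^\pm\otimes\delta_{\{t=0\}}]\to\mathbb{I}_2^{2T_0}[\mu^\pm+\sigma^\pm\otimes\delta_{\{t=0\}}]$ (a direct consequence of the mollification procedure), the bound \eqref{5hh14101314} is inherited in the limit. The main obstacle is the rigorous application of the Duzaar-Mingione estimate to $\bar v$ on a cylinder crossing $\{t=0\}$ when the source carries the singular-in-time Dirac mass $\sigma_n^+\otimes\delta_{\{t=0\}}$: one must either invoke the general measure-data version of the local estimate, or further smooth in time by solving a zero-initial-value problem on $\mathbb{R}^N\times(0,T)$ with the mollified initial datum absorbed into the right-hand side, and then reconstitute the estimate.
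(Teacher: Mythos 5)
Your outline has the right overall architecture — sign splitting, a comparison with one-signed solutions, the Duzaar--Mingione local estimate with radius $\sim T_0$, the $L^1$ bound to absorb the average, then a stability/limit passage — and this is indeed the shape of the paper's argument. But there is a genuine gap at the central step, and the paper routes around exactly the point where your argument breaks.

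The problem is the extension by zero across the \emph{lateral} boundary $\partial\Omega\times(0,T)$. If $v\ge0$ is a weak solution with zero Dirichlet data and you set $\bar v=v$ in $\Omega_T$, $\bar v=0$ outside, then $\bar v$ is \emph{not} a distribution solution of the same equation on a spatially larger cylinder. Testing against $\varphi\in C_c^\infty$ of a larger region and integrating by parts produces an extra distributional term carried by the conormal flux $A(x,t,\nabla v)\cdot n$ on $\partial\Omega\times(0,T)$; for a nonnegative solution vanishing on the boundary that flux is one-signed, so $\bar v$ is only a \emph{subsolution} with respect to $\mu_n^++\sigma_n^+\otimes\delta_{\{t=0\}}$, i.e. the actual right-hand side is $\mu_n^++\sigma_n^+\otimes\delta_{\{t=0\}}-\nu_n$ with $\nu_n\ge0$ supported on $\partial\Omega\times(0,T)$. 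Theorem~\ref{5hh14101313} is stated for weak solutions, and its conclusion bounds $|u|$ by $\mathbb{I}_2^{2R}[|\mu|]$; applying it to $\bar v$ would replace $|\mu|$ by $|\mu_n^+ + \sigma_n^+\otimes\delta_{\{t=0\}} - \nu_n|$, which in general is \emph{larger}, and you have no a priori control on the size or potential of the unknown boundary flux measure $\nu_n$. So the step ``$\bar v$ is a distribution solution with right-hand side $\mu_n^++\sigma_n^+\otimes\delta_{\{t=0\}}$'' is false as stated, and the subsequent appeal to Duzaar--Mingione does not go through for $\bar v$.

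The paper circumvents this by \emph{not} extending $v_n^+$: instead it solves a brand-new auxiliary problem $u_{n,1}$ on a larger spatial cylinder $B_{2T_0}(x_0)\times(0,2T_0^2)$ (with $\Omega\subset B_{T_0}(x_0)$) with zero Dirichlet data on $\partial B_{2T_0}\times(0,2T_0^2)$, right-hand side $\chi_{\Omega_T}\mu_{n,1}$ and initial datum $\sigma_{1,n}$. Since $u_{n,1}\ge0$ everywhere and $u_{n,1}\ge0$ on $\partial\Omega\times(0,T)$, the comparison principle applied on $\Omega_T$ gives $u_n\le u_{n,1}$; and $u_{n,1}$ \emph{is} a genuine weak solution on the large cylinder, so the cylinder $\tilde Q_{T_0}(x,t)$ centered at interior points of $\Omega_T$ fits inside the domain and the DM estimate applies legitimately. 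The mollification-in-time of the initial datum (your final paragraph correctly flags this as the other technical issue) is handled in the paper precisely by writing the initial datum as $(g_{n,m})_t$ and solving with zero initial datum at $t=-2T_0^2$, which is the route you gestured at. So the fix you need is: rather than extend the solution, enlarge the domain and re-solve with the same extended data, then compare.
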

                          \begin{remark}
                        Since $\sup_{x\in\mathbb{R}^{N}}\mathbb{I}_{\alpha}[\sigma^\pm\otimes\delta_{\{t=0\}}](x,t)\leq \frac{\sigma^\pm(\Omega)}{(N+2-\alpha)(2|t|)^{\frac{N+2-\alpha}{2}}}$ for any $t\not=0$ with $0<\alpha<N+2$. Thus, if $\mu\equiv0$, then we obtain the decay estimate:
\begin{equation*}
                  - \frac{K\sigma^-(\Omega)}{N(2t)^{\frac{N}{2}}} \leq \inf_{x\in\Omega}u(x,t)\leq \sup_{x\in\Omega}u(x,t)\leq \frac{K\sigma^+(\Omega)}{N(2t)^{\frac{N}{2}}}\text{ for any }~0<t<T.
                             \end{equation*}                        
                          \end{remark} 
                                          
                          \begin{theorem}\label{5hh270120145}For any $\mu\in\mathfrak{M}^+_b(\Omega_T),\sigma\in\mathfrak{M}^+_b(\Omega)$, there is a distributional solution $u$ of \eqref{5hhparabolic1} satisfying for a.e $(y,s)\in \Omega_T$ and $B_r(y)\subset\Omega$
                                                    \begin{equation}\label{5hh010420141}
                                                    u(y,s)\gtrsim\sum_{k=0}^{\infty}\frac{\mu(Q_{r_k/8}(y,s-\frac{35}{128}r_k^2))}{r_k^N}+\sum_{k=0}^{\infty}\frac{(\sigma\otimes\delta_{\{t=0\}})(Q_{r_k/8}(y,s-\frac{35}{128}r_k^2))}{r_k^N},
                                                    \end{equation}
                                                    where $r_k=4^{-k}r$.   
                                                    \end{theorem}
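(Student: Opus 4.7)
The plan is to reduce the initial-boundary-value problem to a pure source-measure problem on a time slab extending below $t=0$, by encoding the initial datum $\sigma$ as a Dirac mass on the hyperplane $\{t=0\}$, and then invoke the purely local pointwise lower bound proved in Section 5 for nonnegative solutions of the equation with nonnegative source. That local bound (recalled in the introduction) asserts that if $v \geq 0$ solves $v_t - \mathrm{div}(A(x,t,\nabla v)) = \nu$ with $\nu \geq 0$ on a cylinder compactly containing $Q_r(y,s)$, then
\begin{equation*}
v(y,s) \;\geq\; C^{-1} \sum_{k=0}^{\infty} \frac{\nu\bigl(Q_{r_k/8}(y,\, s - \tfrac{35}{128} r_k^2)\bigr)}{r_k^N},\qquad r_k = 4^{-k} r.
\end{equation*}

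First I would build $u$ by approximation: mollify $\mu, \sigma$ into smooth nonnegative $\mu_n, \sigma_n$ converging narrowly, let $u_n \geq 0$ be the classical nonnegative solution of \eqref{5hhparabolic1} with data $(\mu_n, \sigma_n)$, and pass to a nonnegative distributional limit $u$ through the renormalized-solution framework. Next, extend each $u_n$ by zero for $t \leq 0$, and extend $A$ by $A(x,t,\zeta) = A(x,0,\zeta)$ for $t<0$. Testing against $\varphi \in C_c^\infty(\Omega \times (-\infty,T))$ and using $u_n(0)=\sigma_n$, a direct integration by parts shows that the extended function $\tilde u_n$ satisfies
\begin{equation*}
\partial_t \tilde u_n - \mathrm{div}(A(x,t,\nabla \tilde u_n)) \;=\; \mu_n + \sigma_n \otimes \delta_{\{t=0\}} \quad\text{in }\mathcal{D}'(\Omega \times (-\infty,T)),
\end{equation*}
the surface Dirac arising precisely from the jump of $\tilde u_n$ across $\{t=0\}$. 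For every $(y,s) \in \Omega_T$ with $B_r(y) \subset \Omega$, the cylinder $Q_r(y,s)$ is compactly contained in $\Omega \times (-\tau,T)$ as soon as $\tau > r^2$, so the Section 5 local bound applies to $\tilde u_n$ with source $\tilde\mu_n := \mu_n + \sigma_n \otimes \delta_{\{t=0\}}$ and yields
\begin{equation*}
u_n(y,s) \;\geq\; C^{-1} \sum_{k=0}^{\infty} \frac{\mu_n\bigl(Q_{r_k/8}(y, s - \tfrac{35}{128}r_k^2)\bigr) + (\sigma_n \otimes \delta_{\{t=0\}})\bigl(Q_{r_k/8}(y, s - \tfrac{35}{128}r_k^2)\bigr)}{r_k^N}.
\end{equation*}
Sending $n \to \infty$ then produces \eqref{5hh010420141}: the left-hand side converges a.e.\ to $u(y,s)$, while Fatou's lemma combined with the lower semicontinuity of measures of open cylinders under narrow convergence controls each summand on the right (the set of centres for which a relevant cylinder boundary carries $\mu$-mass or $\sigma$-mass is negligible).

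The principal obstacle I anticipate is the extension step: one must verify that $\tilde u_n$ solves the equation with the surface Dirac source in the precise sense required by the Section 5 proof, whose arguments rest on barrier constructions and Harnack-type comparisons inside cylinders that here may straddle $t=0$, where $\tilde u_n$ is only Lipschitz in time. A second, technical point is ensuring that the structural assumptions on the extended $A$ remain compatible with those local arguments for $t < 0$ (which is essentially free since $\tilde u_n \equiv 0$ there, but must be recorded). Once these are in place, the time-translation of the target point $(y,s)$ by $-\tfrac{35}{128}r_k^2$ in the local bound automatically captures the $\sigma$-contribution for the finitely many scales $r_k$ for which the shifted cylinder crosses $\{t=0\}$, which is exactly what the theorem requires.
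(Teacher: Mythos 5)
Your strategy is the same as the paper's: mollify the data, apply the local lower-bound estimate of Section 5 (Theorem \ref{5hh260220143}) to the approximate solutions, and pass to the limit via the stability machinery (Proposition \ref{5hhmun} and Theorem \ref{5hhsta}). Your observation that the shifted cylinders $Q_{r_k/8}(y,s-\tfrac{35}{128}r_k^2)$ automatically pick up the $\sigma\otimes\delta_{\{t=0\}}$ mass for the finitely many scales at which they straddle $\{t=0\}$ is exactly the point, and your distributional computation that the zero-extension solves the equation with source $\mu_n+\sigma_n\otimes\delta_{\{t=0\}}$ is correct.

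The gap you flag is real, and your description of it is too mild. The zero-extension $\tilde u_n$ is not merely ``Lipschitz in time''; it has a jump of size $\sigma_n(x)$ across $\{t=0\}$, so it is genuinely discontinuous there. Theorem \ref{5hh260220143} is stated for $u\in C(Q_r(y,s))\cap L^2(s-r^2,s,H^1(B_r(y)))$, and its proof rests on the weak Harnack inequality \cite[Thm.\ 6.18]{55Li3} and energy estimates for continuous weak supersolutions; none of this applies directly to the discontinuous $\tilde u_n$. The paper sidesteps this by not taking the sharp Dirac at all: following the proof of Theorem \ref{5hh141013112}, one instead solves on $\Omega\times(-\tau,T)$ (or a larger box) with the time-mollified source $\mu_n+\sigma_n(x)\varphi_{2,m}(t)$ and zero initial trace at $t=-\tau$, obtaining a continuous (indeed classical) $u_{n,m}$ to which Theorem \ref{5hh260220143} legitimately applies; the bound with $\sigma_n\otimes\delta_{\{t=0\}}$ on the right is then recovered by sending $m\to\infty$ before $n\to\infty$. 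If you replace your zero-extension step with this time-mollification, your argument closes. Also, the extension of $A$ for $t<0$ that you propose is unnecessary, since the standing hypotheses \eqref{5hhconda}–\eqref{5hhcondb} already hold for a.e.\ $(x,t)\in\mathbb{R}^N\times\mathbb{R}$.
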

                         \begin{remark} The Theorem \ref{5hh270120145} is also true when we replace the assumption \eqref{5hhcondb} by  the following weaker one \begin{align*}                          \left\langle {A(x,t,\zeta),\zeta} \right\rangle \geq \Lambda_2 |\zeta|^2, ~~\left\langle {A(x,t,\zeta)-A(x,t,\lambda),\zeta-\lambda} \right\rangle>0,
                         \end{align*}   for every $(\lambda,\zeta)\in \mathbb{R}^N\times \mathbb{R}^N$, $\lambda\not=\zeta$ and a.e. $(x,t)\in \mathbb{R}^N\times \mathbb{R}$.
                                                    
                                                    \end{remark}
\begin{theorem}\label{5hh1203201417}
Let $K$ be the constant in Theorem \ref{5hh141013112}.  Let $\omega\in\mathfrak{M}(\mathbb{R}^{N+1})$ such that $I_2[|\omega|](x_0,t_0)<\infty$ for some $(x_0,t_0)\in \mathbb{R}^{N+1}$. Then,  there is a distributional solution $u$ to \eqref{5hhparabolic2'} with data $\mu=\omega$  satisfying
 \begin{equation}\label{5hh1203201414}
             -K\mathbb{I}_{2}[\omega^-] \leq u\leq K\mathbb{I}_{2}[\omega^+] \text{ in } \mathbb{R}^{N+1}
                               \end{equation}
 such that the following statements hold. 
                               \begin{description}
                               \item[a.] If $\omega\geq 0$, there holds for a.e $(x,t)\in\mathbb{R}^{N+1}$
                                  \begin{align}
                                  u(x,t)\gtrsim\sum_{k=-\infty}^{\infty}\frac{\omega(Q_{2^{-2k-3}}(x,t-35\times 2^{-4k-7}))}{2^{-2Nk}}. 
                                  \label{5hh1203201415}
                                  \end{align}  
                                  In particular, for any $q>\frac{N+2}{N}$ 
                                  \begin{align}\label{5hh130320145}
                               ||u||_{L^q(\mathbb{R}^{N+1})}\sim ||\mathcal{H}_2[\omega]||_{L^q(\mathbb{R}^{N+1})}.
                                  \end{align}
                              
                                  \item[b.] If $A$ is independent of space variable $x$ and  satisfies $\eqref{5hhcondc}$, then we have 
                                  \begin{align}\label{5hh310320146}
                                  |\nabla u|\lesssim\mathbb{I}_{1}[|\omega|]~\text{ in }\mathbb{R}^{N+1}.
                                  \end{align}
                                  \item[c.] If $\omega=\mu+\sigma\otimes\delta_{\{t=0\}}$ with $\mu\in\mathfrak{M}(\mathbb{R}^N\times (0,\infty))$ and $\sigma\in \mathfrak{M}(\mathbb{R}^N)$, then $u=0$ in $\mathbb{R}^{N}\times (-\infty,0)$ and ${\left. u \right|_{\mathbb{R}^N\times [0,\infty)}}$ is a distributional solution to \eqref{5hhparabolic2}. 
                               \end{description}

\end{theorem}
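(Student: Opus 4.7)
The plan is to construct $u$ by approximation on an exhausting sequence of bounded cylinders using Theorem \ref{5hh141013112}, pass to the limit via the stability result (Proposition \ref{5hh1203201412}), and then deduce parts (a)--(c) from pointwise ingredients already available in the paper.

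Without loss of generality take $(x_0,t_0)=(0,0)$. Let $Q_n=B_n(0)\times(-n^2,n^2)$ and set $\omega_n=\chi_{Q_n}\omega\in\mathfrak{M}_b(\mathbb{R}^{N+1})$. Regarding $\omega_n$ as data on the cylinder $B_{2n}(0)\times(-n^2,n^2)$ with zero initial trace at time $-n^2$, Theorem \ref{5hh141013112} yields a distribution solution $u_n$ on that cylinder satisfying
\begin{equation*}
-K\,\mathbb{I}_{2}^{8n}[\omega_n^{-}] \le u_n \le K\,\mathbb{I}_{2}^{8n}[\omega_n^{+}].
\end{equation*}
The hypothesis $\mathbb{I}_{2}[|\omega|](0,0)<\infty$, combined with the standard doubling/translation estimates on the Wolff potential, propagates to $\mathbb{I}_{2}[|\omega|]<\infty$ a.e.\ in $\mathbb{R}^{N+1}$, which renders the right-hand side of the display locally uniformly bounded a.e.\ and uniformly in $n$. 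The a priori estimates that underlie Theorem \ref{5hh141013112} also furnish local $L^1$-type gradient bounds on the sequence. Proposition \ref{5hh1203201412} then produces, along a subsequence, a limit $u$ with $u_n\to u$ and $\nabla u_n\to \nabla u$ a.e., and with $A(x,t,\nabla u_n)\to A(x,t,\nabla u)$ in $L^1_{\mathrm{loc}}$, so that $u$ is a global distribution solution to \eqref{5hhparabolic2'}. Passing to the limit in the two-sided bound yields \eqref{5hh1203201414} by monotone convergence of $\omega_n^{\pm}\nearrow\omega^{\pm}$ and of the truncated potentials $\mathbb{I}_{2}^{8n}\to\mathbb{I}_{2}$.

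For part (a), apply Theorem \ref{5hh270120145} to each $u_n$ (valid since $\omega\geq 0$) and pass to the limit with Fatou's lemma to obtain \eqref{5hh1203201415}. The $L^q$-equivalence \eqref{5hh130320145} then combines the upper bound $u\leq K\,\mathbb{I}_{2}[\omega]$ from \eqref{5hh1203201414} with the lower bound in \eqref{5hh1203201415}, which a dyadic comparison with the heat kernel shows is pointwise comparable from below to a constant multiple of $\mathcal{H}_2[\omega]$. The Hedberg-type norm equivalence $\|\mathbb{I}_2[\omega]\|_{L^q}\approx \|\mathcal{H}_2[\omega]\|_{L^q}$ for $q>(N+2)/N$ closes the argument. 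Part (b) is obtained by invoking the pointwise gradient Riesz-potential estimate $|\nabla u_n|\leq C\,\mathbb{I}_1[|\omega_n|]$ available when $A$ is independent of $x$ (the parabolic counterpart of the Kuusi--Mingione gradient bound, established earlier in the paper), and passing to the a.e.\ limit using the convergence $\nabla u_n\to \nabla u$ from the construction step. Part (c) is a causality argument: if $\omega=\mu+\sigma\otimes\delta_{\{t=0\}}$ with $\mu$ supported in $\{t>0\}$, we take the approximating data supported in $\{t\geq 0\}$, apply Theorem \ref{5hh141013112} on cylinders $B_n(0)\times(0,n^2)$ with truncated initial trace, and extend each $u_n$ by zero on $\mathbb{R}^N\times(-\infty,0)$; the limit $u$ inherits both the vanishing on $\{t<0\}$ and, upon restriction to $[0,\infty)$, the status of a distribution solution of \eqref{5hhparabolic2}.

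The main obstacle is the stability step: turning the pointwise hypothesis $\mathbb{I}_2[|\omega|](x_0,t_0)<\infty$ into uniform local control on the approximating sequence $(u_n)$ strong enough to invoke Proposition \ref{5hh1203201412}, and then verifying that the distributional equation is satisfied on all of $\mathbb{R}^{N+1}$ rather than merely on each bounded cylinder. Once this stability is secured, parts (a)--(c) follow along the routes sketched above without further serious difficulty.
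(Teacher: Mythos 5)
Your construction — exhausting cylinders, the two-sided Riesz-potential bound from Theorem \ref{5hh141013112}, stability via Proposition \ref{5hh1203201412}, and the routes to parts (a)--(c) — matches the paper's proof. There is, however, a genuine error in your derivation of the $L^q$-equivalence \eqref{5hh130320145} in part (a). You assert that the dyadic sum in \eqref{5hh1203201415} is \emph{pointwise} comparable from below to $\mathcal{H}_2[\omega]$. This is false: the cylinders $Q_{2^{-2k-3}}(x,\,t-35\cdot 2^{-4k-7})$ are shifted into the past and sample, as $k$ ranges over $\mathbb{Z}$, only the disjoint time windows $\bigl(t-\tfrac{37}{128}2^{-4k},\,t-\tfrac{35}{128}2^{-4k}\bigr)$, whose union does not cover the past; if $\omega$ is a Dirac mass at a point $(y_0,s_0)$ with $t-s_0$ falling in an uncovered gap, the dyadic sum vanishes at $(x,t)$ while $\mathcal{H}_2[\omega](x,t)>0$. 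The correct argument is at the level of $L^q$-norms, not pointwise: one first integrates the $q$-th power over $t$ to absorb the shift (so that $\int_{\mathbb{R}} g\bigl(\omega(Q_{\rho}(x,t-\text{shift}))\bigr)\,dt = \int_{\mathbb{R}} g\bigl(\omega(\tilde{Q}_{\rho}(x,t))\bigr)\,dt$), obtains a lower bound by the discrete-in-$k$ version of $\int_0^\infty\bigl(\omega(\tilde{Q}_\rho(x,t))/\rho^N\bigr)^q\,d\rho/\rho$, and then invokes the norm equivalences of Theorem \ref{5hh051120131} and Proposition \ref{5hh230120143}. That is exactly what the paper does, and your pointwise claim cannot be salvaged as stated.

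A lesser imprecision: you describe the envelope $K\,\mathbb{I}_2[|\omega|]$ as rendering $u_n$ ``locally uniformly bounded a.e.,'' but $\mathbb{I}_2[|\omega|]$ is only locally in $L^s$ for $1<s<\frac{N+2}{N}$, not locally bounded. The paper gets the needed uniform local $L^{s}$ control from the doubling property of the Wolff potential (Lemma \ref{5hh1310136} and Remark \ref{5hh120320141}), and verifies the remaining hypotheses of Proposition \ref{5hh1203201412} (decomposition bounds for the approximating data, convergence of $h_n$) via Remark \ref{5hh1203201410}. You correctly identify this stability step as the crux but leave it unresolved; the phrase above suggests a stronger control than what is actually available.
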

\begin{remark} For $q>\frac{N+2}{N}$, we alway have the following claim:
 \begin{align*}
 ||\mathcal{H}_2[\mu+\omega\otimes\delta_{\{t=0\}}]||_{L^q(\mathbb{R}^{N+1})}\sim ||\mathcal{H}_2[\mu]||_{L^q(\mathbb{R}^{N+1})}+||\mathbf{I}_{2/q}[\sigma]||_{L^q(\mathbb{R}^{N+1})},
                                  \end{align*}
                                  for every $\mu\in\mathfrak{M}^+(\mathbb{R}^N\times (0,\infty))$ and $\sigma\in \mathfrak{M}^+(\mathbb{R}^N)$.

\end{remark}
\begin{remark}\label{5hh020520141} For $\omega\in\mathfrak{M}^+(\mathbb{R}^{N+1})$, $0<\alpha<N+2$ if $\mathbb{I}_\alpha[\omega](x_0,t_0)<\infty$ for some $(x_0,t_0)\in\mathbb{R}^{N+1}$ then for any $0<\beta\leq \alpha$,  $\mathbb{I}_\beta[\omega]\in L^{s}_{\text{loc}}(\mathbb{R}^{N+1})$ for any $0<s<\frac{N+2}{N+2-\beta}$. However, for $0<\beta<\alpha<N+2$, one can find $\omega\in\mathfrak{M}^+(\mathbb{R}^{N+1})$ such that $\mathbb{I}_\alpha[\omega]\equiv \infty$ and $\mathbb{I}_\beta[\omega]<\infty$ in $\mathbb{R}^{N+1}$, see Appendix section. 
 \end{remark}                                                    
The next four theorems provide the existence of  solutions to quasilinear parabolic equations with absorption and  source terms. For convenience, we always denote by $q'$ the conjugate exponent of $q\in (1,\infty)$ i.e $q'=\frac{q}{q-1}$.
\begin{theorem}\label{5hh070120146}
Let $q>1$, $\mu\in\mathfrak{M}_b(\Omega_T)$ and $\sigma\in \mathfrak{M}_b(\Omega)$. Suppose that $\mu,\sigma$ are absolutely continuous with respect to the capacities $\text{Cap}_{2,1,q'}$, $\text{Cap}_{\mathbf{G}_{\frac{2}{q}},q'}$ in  $\Omega_T, \Omega$ respectively. 
Then there exists a distributional solution $u$ of \eqref{5hh070120148} satisfying 
                \begin{equation*}                      
                      -K\mathbb{I}_{2}[\mu^-+\sigma^-\otimes\delta_{\{t=0\}}] \leq u\leq K\mathbb{I}_{2}[\mu^++\sigma^+\otimes\delta_{\{t=0\}}]~\text{ in } \Omega_T.
                       \end{equation*}
                       Here the constant $K$ is in Theorem \ref{5hh141013112}. 
\end{theorem}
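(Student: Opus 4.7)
The plan is to approximate $\mu$ and $\sigma$ by smoother data for which solutions to the absorption problem can be constructed by classical means, to derive the pointwise Wolff estimate for these approximations from Theorem \ref{5hh141013112} via a comparison argument, and then to pass to the limit using Proposition \ref{5hh1203201412}. The main obstacle will be to extract from the qualitative capacity hypothesis a uniform $L^q$-control yielding equi-integrability of the absorption term $|u_n|^{q-1}u_n$.

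First I decompose $\mu=\mu^+-\mu^-$ and $\sigma=\sigma^+-\sigma^-$; the absolute continuity assumptions with respect to $\text{Cap}_{2,1,q'}$ and $\text{Cap}_{\mathbf{G}_{2/q},q'}$ are inherited by each of the four parts. A standard capacity-based approximation (in the spirit of Feyel--de la Pradelle and the parabolic version of the Dal Maso--Murat--Orsina--Prignet decomposition developed by Petitta--Ponce--Porretta) then produces nondecreasing sequences of nonnegative $L^\infty$ measures $\mu_n^\pm$ and $\sigma_n^\pm$ converging in total variation to $\mu^\pm$ and $\sigma^\pm$, with each $\mu_n^\pm$ still dominated by a measure absolutely continuous with respect to the corresponding capacity. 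Setting $\mu_n=\mu_n^+-\mu_n^-$ and $\sigma_n=\sigma_n^+-\sigma_n^-$, the regularized problem
\begin{equation*}
(u_n)_t-\text{div}(A(x,t,\nabla u_n))+|u_n|^{q-1}u_n=\mu_n\text{ in }\Omega_T,\quad u_n=0\text{ on }\partial\Omega\times(0,T),\quad u_n(0)=\sigma_n,
\end{equation*}
admits a solution $u_n$ by classical monotone-operator / Galerkin arguments.

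Next I obtain the pointwise bound for $u_n$ by a Kato-type comparison with the no-absorption problem. A distributional Kato inequality applied to $u_n^+$ shows that it is dominated by the solution of the problem without absorption and data $\mu_n^+,\sigma_n^+$, which is the object constructed in Theorem \ref{5hh141013112}; the latter is itself controlled by $K\mathbb{I}_2^{2T_0}[\mu_n^++\sigma_n^+\otimes\delta_{\{t=0\}}]$. The symmetric argument applied to $u_n^-$ produces the matching lower bound, so that
\begin{equation*}
-K\mathbb{I}_2^{2T_0}[\mu_n^-+\sigma_n^-\otimes\delta_{\{t=0\}}]\le u_n\le K\mathbb{I}_2^{2T_0}[\mu_n^++\sigma_n^+\otimes\delta_{\{t=0\}}].
\end{equation*}

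Finally, the capacity absolute continuity together with the Hedberg--Wolff / Maz'ya--Verbitsky type duality between the capacities $\text{Cap}_{2,1,q'}\sim\text{Cap}_{\mathcal{G}_2,q'}$ and $\text{Cap}_{\mathbf{G}_{2/q},q'}$ and $L^q$-boundedness of the corresponding parabolic Riesz/Wolff potentials yields a uniform $L^q(\Omega_T)$ bound on the right-hand side of the pointwise inequality, hence on $u_n$ itself, and in particular equi-integrability of $|u_n|^{q-1}u_n$ in $L^1(\Omega_T)$. Proposition \ref{5hh1203201412} then extracts a subsequence $u_n\to u$ converging a.e. and in $L^1(\Omega_T)$ to a distribution solution $u$ of \eqref{5hh070120148}, and the pointwise estimate for $u$ is inherited from the ones for $u_n$ via Fatou, using $\mu_n^\pm\uparrow\mu^\pm$ and $\sigma_n^\pm\uparrow\sigma^\pm$. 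The core difficulty, as anticipated, is the equi-integrability step: translating the mere absolute continuity into quantitative $L^q$ integrability of the Wolff potentials is the key technical ingredient that distinguishes the absorption framework (no smallness needed) from the source framework.
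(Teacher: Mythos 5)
Your overall scheme (Baras–Pierre-type approximation, comparison with the no-absorption problem, monotone passage to the limit) matches the paper's strategy, and your intuition that equi-integrability of the absorption term is the crux is correct. However, the way you propose to resolve it contains a genuine gap.

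You claim that the absolute continuity of $\mu,\sigma$ with respect to $\text{Cap}_{2,1,q'}$, $\text{Cap}_{\mathbf{G}_{2/q},q'}$, combined with Wolff/Maz'ya–Verbitsky duality, yields a \emph{uniform} $L^q(\Omega_T)$ bound on $\mathbb{I}_2^{2T_0}[\mu_n^\pm+\sigma_n^\pm\otimes\delta_{\{t=0\}}]$. This is false. Absolute continuity with respect to $\text{Cap}_{2,1,q'}$ only says $\mu$ charges no set of zero capacity; it does \emph{not} say $\mathbb{I}_2[|\mu|]\in L^q(\Omega_T)$, which is a strictly stronger condition. What the Baras–Pierre approximation (Proposition \ref{5hh040220142}) provides is a nondecreasing sequence $\mu_n^\pm$ with $\mathbb{I}_2[\mu_n^\pm]\in L^q(\Omega_T)$ \emph{for each fixed $n$}, but with $L^q$ norms of these potentials that will in general blow up as $n\to\infty$ (otherwise the potential of the limit measure would itself be in $L^q$, which is not assumed). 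So your dominating function is not in $L^q$, and you cannot conclude equi-integrability of $|u_n|^{q-1}u_n$ this way.

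The paper circumvents this by two different mechanisms. First, the uniform $L^q(\Omega_T)$ bound on $u_n$ does not come from the potential estimate; it comes directly from integrating the absorption equation, giving $\int_{\Omega_T} u_{i,n}^q\,dxdt\le|\mu|(\Omega_T)+|\sigma|(\Omega)$, a bound depending only on the mass of the data. Second — and this is the step your argument is missing — the capacity approximation is arranged to be \emph{nondecreasing}, so the auxiliary one-signed solutions $u_{1,n},u_{2,n}$ (bounding $u_n$ above and below) form monotone sequences; the Monotone Convergence Theorem applied to $u_{1,n}^q,u_{2,n}^q$, together with the uniform $L^1$ bound above, yields $u_i^q\in L^1$ and strong $L^1$ convergence $u_{i,n}^q\to u_i^q$, whence equi-integrability of $|u_n|^q$ by domination. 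This is why the approximation is built in three successive layers (mollification in $m$, truncation of the absorption in $k$, capacity approximation in $n$), with the potentials $\mathbb{I}_2[\mu_n^\pm]\in L^q$ needed only to justify the inner $m,k$ limits for each fixed $n$. Finally, for $\sigma\in\mathfrak{M}_b(\Omega)\setminus L^1(\Omega)$ the renormalized framework with initial trace $\sigma$ does not apply directly (since $\sigma\otimes\delta_{\{t=0\}}\in\mathfrak{M}_0(\Omega\times(a,b))$ only when $\sigma\in L^1$), so the paper first treats $\sigma\in L^1$ and then lifts the problem to $\Omega\times(-T,T)$ with $\sigma\otimes\delta_{\{t=0\}}$ placed in the right-hand side; your proposal does not address this distinction.
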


\begin{theorem}\label{5hh070120147}Let $K$ be the constant in Theorem \ref{5hh141013112}. Let $q>1$, $\mu\in\mathfrak{M}_b(\Omega_T)$ and $\sigma\in\mathfrak{M}_b(\Omega)$.  There exists a constant $\varepsilon_0=\varepsilon_0(N,q,\Lambda_1,\Lambda_2,\text{diam}(\Omega),T)$ such that  if 
       \begin{equation}\label{5hh020420143}
       |\mu|(E)\leq \varepsilon_0 \text{Cap}_{2,1,q'}(E)~~\text{and}~~|\sigma|(O)\leq \varepsilon_0 \text{Cap}_{\mathbf{G}_{\frac{2}{q}},q'}(O).
       \end{equation} 
      hold for every compact sets $E\subset \mathbb{R}^{N+1}$, $O\subset \mathbb{R}^{N}$, then the problem \eqref{5hh070120149} has  a distributional solution $u$ satisfying
                \begin{equation}  \label{5hh020420142}                     
                      -\frac{Kq}{q-1}\mathbb{I}_{2}[\mu^-+\sigma^-\otimes\delta_{\{t=0\}}]\leq u\leq \frac{Kq}{q-1}\mathbb{I}_{2}[\mu^++\sigma^+\otimes\delta_{\{t=0\}}]~\text{ in }~\Omega_T.
                       \end{equation}
            Besides, for every compact set $E\subset \mathbb{R}^{N+1}$ there holds
            \begin{equation}\label{5hh270420141}
            \int_E|u|^qdxdt\lesssim_{T_0,q} \text{Cap}_{2,1,q'}(E).
            \end{equation}                      
\end{theorem}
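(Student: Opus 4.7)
The plan is to construct the solution via the Schauder fixed point theorem, applied in the spirit of Phuc--Verbitsky \cite{55PhVe} for the quasilinear elliptic source problem, but using the global parabolic Wolff-potential estimate from Theorem \ref{5hh141013112} as the central linear building block. Set $\omega = |\mu| + |\sigma|\otimes\delta_{\{t=0\}}$. First I would approximate $\mu,\sigma$ by smooth compactly supported measures $\mu_n,\sigma_n$ still satisfying \eqref{5hh020420143} (with a uniform constant). For each $n$, consider the closed convex set
\begin{equation*}
\mathcal{E}_n = \Bigl\{\, v \in L^{q}(\Omega_T) : |v| \le M\,\mathbb{I}_2^{2T_0}[\omega_n] \text{ a.e.\ in } \Omega_T \,\Bigr\},\qquad M = \tfrac{Kq}{q-1},
\end{equation*}
and the map $T_n: v \mapsto w$, where $w$ is the solution (furnished by Theorem \ref{5hh141013112}) of the linearized problem
$w_t - \operatorname{div}(A(x,t,\nabla w)) = |v|^{q-1}v + \mu_n$ in $\Omega_T$, $w(0) = \sigma_n$, $w = 0$ on the lateral boundary.

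The core step is the invariance $T_n(\mathcal{E}_n) \subset \mathcal{E}_n$. By Theorem \ref{5hh141013112},
\begin{equation*}
|w| \le K\,\mathbb{I}_2^{2T_0}\!\bigl[|v|^q + \omega_n\bigr] \le K M^{q}\,\mathbb{I}_2^{2T_0}\!\bigl[(\mathbb{I}_2^{2T_0}[\omega_n])^{q}\bigr] + K\,\mathbb{I}_2^{2T_0}[\omega_n].
\end{equation*}
What I need is the parabolic self-improving estimate
\begin{equation*}
\mathbb{I}_2^{2T_0}\!\bigl[(\mathbb{I}_2^{2T_0}[\omega_n])^{q}\bigr] \;\le\; C_3\,\mathbb{I}_2^{2T_0}[\omega_n],
\end{equation*}
where $C_3$ can be made arbitrarily small by taking the constant $C_1$ in \eqref{5hh020420143} small. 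Granting this, choosing $C_1$ so that $K M^{q} C_3 + K \le M$ (which is possible precisely because $q>1$ and $M = Kq/(q-1)$ is the optimal root of the scalar relation $Kx^q/(q-1)+K=x$ at the critical coefficient), invariance follows and \eqref{5hh020420142} will hold for the fixed point. Continuity of $T_n$ on $\mathcal{E}_n$ in $L^q$ follows from the stability theorem (Proposition \ref{5hh1203201412}), and compactness from the standard energy and Aubin--Lions estimates for the linearized parabolic equation. Schauder yields $u_n \in \mathcal{E}_n$ with $T_n(u_n) = u_n$. Passing to the limit via the stability theorem again gives the desired distribution solution $u$ of \eqref{5hh070120149} satisfying \eqref{5hh020420142}.

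The genuine obstacle is the self-improving inequality. The proof will proceed along the lines of Maz'ya--Verbitsky: the capacity condition $|\mu|(E) \le C_1 \operatorname{Cap}_{2,1,q'}(E) \approx C_1 \operatorname{Cap}_{\mathcal{G}_2,q'}(E)$ (using the equivalence recalled in the paper) is equivalent to the trace-type inequality $\int (\mathcal{G}_2 * f)^{q}\,d|\mu| \le C_1' \|f\|_{L^{q'}}^{q'}$ for $f\ge 0$, which by duality and $\mathbb{I}_2 \approx \mathcal{H}_2$ on test sets translates to $\int (\mathbb{I}_2[\omega_n])^q\,g\,dx\,dt \le C_1'\,\omega_n(\mathbb{R}^{N+1})^{1/q}\|g\|_{L^{q'}}^{1/q'}$-type bounds, from which a Wolff-type iteration produces the claimed pointwise domination by $\mathbb{I}_2[\omega_n]$ with constant $C_3 \approx C_1^{q-1}$. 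Once \eqref{5hh020420142} is established, the capacitary estimate \eqref{5hh270420141} is immediate: $\int_E |u|^q \le M^q \int_E (\mathbb{I}_2[\omega])^q \le C_2 \operatorname{Cap}_{2,1,q'}(E)$, the last step being the standard Wolff potential trace inequality applied again to $\omega$ under \eqref{5hh020420143}.
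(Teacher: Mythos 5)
Your plan coincides almost line for line with the paper's proof: approximate $\mu,\sigma$ by smooth measures, set up a Schauder fixed point on a convex set of functions dominated pointwise by a truncated parabolic Riesz potential, prove invariance through the linear estimate of Theorem \ref{5hh141013112} combined with a self-improving (Maz'ya--Verbitsky-type) inequality for iterated potentials, and then pass to the limit via the stability theorem. The identification of the optimal constant $M=Kq/(q-1)$ and of the self-improving inequality as the crux is exactly right, and the final capacitary bound \eqref{5hh270420141} is obtained in the same way.

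Two points, however, deviate from the paper in ways that leave genuine gaps relative to the stated conclusion.

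First, your convex set $\mathcal{E}_n=\{v:|v|\le M\,\mathbb{I}_2^{2T_0}[\omega_n]\}$ only yields the symmetric bound $|u|\le M\,\mathbb{I}_2[\,|\mu|+|\sigma|\otimes\delta_{\{t=0\}}\,]$, which is strictly weaker than \eqref{5hh020420142}, where $u^+$ is controlled by the potential of the \emph{positive} parts $\mu^+,\sigma^+$ only and $u^-$ by the negative parts. The paper's convex set $J$ is defined by the two separate inequalities $u^+\le M\,\mathbb{I}_2^{2T_0,\delta}[\mu_{n_0,1}+\sigma_{1,n_0}\otimes\delta_{\{t=0\}}]$ and $u^-\le M\,\mathbb{I}_2^{2T_0,\delta}[\mu_{n_0,2}+\sigma_{2,n_0}\otimes\delta_{\{t=0\}}]$, which is what makes the sign-sensitive conclusion come out of the fixed point; invariance is then checked on $u^+$ and $u^-$ separately using $(|v|^{q-1}v)^\pm=(v^\pm)^q$ and the two-sided form of Theorem \ref{5hh141013112}. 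With your single modulus bound, the term $|v|^{q-1}v$ contributes with both signs to the source and there is no way to recover \eqref{5hh020420142} as written.

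Second, you work throughout with the plain truncated potential $\mathbb{I}_2^{2T_0}$, but the self-improving inequality in the paper (Theorem \ref{5hh1410136}, implication $4\Rightarrow5$, fed through Proposition \ref{5hh14101315}) is stated and proved only for the decay-modified potential $\mathbb{I}_2^{R,\delta}$ with $\delta>0$, with capacity $\text{Cap}_{E_2^{R,\delta},q'}$; the decay parameter is what makes the kernel well behaved at large scales (and what yields the $A_1$-weight property used in the surrounding Wolff-potential machinery). The paper accordingly takes $\delta\in(\max\{2-(N+2)/q',0\},2)$, and then connects $\text{Cap}_{E_2^{2T_0,\delta},q'}$ to $\text{Cap}_{\mathcal{G}_2,q'}\approx\text{Cap}_{2,1,q'}$ via Corollary \ref{5hh250320146}, which is specifically for bounded-diameter sets. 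Your ``duality and $\mathbb{I}_2\approx\mathcal{H}_2$'' sketch would need to be reworked to fit the $\mathbb{I}_2^{2T_0}$ kernel directly, or you would need to replace $\mathbb{I}_2^{2T_0}$ by $\mathbb{I}_2^{2T_0,\delta}$ and invoke Theorem \ref{5hh1410136} as the paper does. Related, the continuity of $T_n$ on a bounded domain is handled via Proposition \ref{5hhmun} (together with dominated convergence, using the $L^q$ envelope $\mathbb{I}_2^{2T_0,\delta}[\omega_n]$), not via Proposition \ref{5hh1203201412}, which is the whole-space analogue.
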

\begin{remark} From \eqref{5hh270420141} we get
 if $q>\frac{N+2}{N}$, 
                       \begin{align*}
                       \int_{\tilde{Q}_\rho(y,s)}|u|^qdxdt\lesssim_{T_0,q} \rho^{N+2-2q'}~~\text{ for any }~\tilde{Q}_\rho(y,s)\subset \mathbb{R}^{N+1},
                       \end{align*}
                       if $q=\frac{N+2}{N}$, 
                       \begin{align*}
                                              \int_{\tilde{Q}_\rho(y,s)}|u|^qdxdt\lesssim_{T_0,q} \left(\log(1/\rho)\right)^{-\frac{1}{q-1}} ~~\text{ for any }~\tilde{Q}_\rho(y,s)\subset \mathbb{R}^{N+1}, 0<\rho<1/2,
                                              \end{align*}
                                              see Remark \ref{5hh020520142}. 
\end{remark}
\begin{remark}\label{5hh180420141}
In the sub-critical case $1<q< \frac{N+2}{N}$, since the capacity $\text{Cap}_{2,1,q'}, \text{Cap}_{\mathbf{G}_{\frac{2}{q}},q'}$ of a single point are positive thus the conditions \eqref{5hh020420143} hold for some constant $\varepsilon_0>0$ provided $\mu\in\mathfrak{M}_b(\Omega_T),\sigma\in\mathfrak{M}_b(\Omega)$. Moreover, in the super-critical case $q>\frac{N+2}{N}$, we have 
\begin{align*}
\text{Cap}_{2,1,q'}(E)\gtrsim_q |E|^{1-\frac{2q'}{N+2}}~~\text{and}~~\text{Cap}_{\mathbf{G}_{\frac{2}{q}},q'}(O)\gtrsim_q |O|^{1-\frac{2}{(q-1)N}},
\end{align*} 
for every Borel sets $E\subset \mathbb{R}^{N+1}$, $O\subset \mathbb{R}^{N}$, thus if $\mu\in L^{\frac{N+2}{2q'},\infty}(\Omega_T)$ and $\sigma\in L^{\frac{(q-1)N}{2},\infty}(\Omega)$ then \eqref{5hh020420143} holds for some constant $\varepsilon_0>0$. In addition,  if $\mu\equiv0$, then \eqref{5hh020420142} implies for any $0<t<T$,
\begin{align*}
- c(T_0,q)t^{-\frac{1}{q-1}} \leq \inf_{x\in\Omega}u(x,t)\leq \sup_{x\in\Omega}u(x,t)\leq c(T_0,q)t^{-\frac{1}{q-1}} ,
\end{align*}
 since $|\sigma|(B_\rho(x))\lesssim_{T_0,q}\rho^{N-\frac{2}{q-1}}$ for all  $x\in\mathbb{R}^N$, $0<\rho<2T_0$.  
\end{remark}

\begin{theorem}\label{5hh1303201411}Let $K$ be the constant in Theorem \ref{5hh141013112} and $q>1$. If $\omega\in\mathfrak{M}(\mathbb{R}^{N+1})$  is absolutely continuous with respect to the capacity $\text{Cap}_{2,1,q'}$ in $\mathbb{R}^{N+1}$, then there exists a distributional solution $u\in L^\gamma_{\text{loc}
               }(\mathbb{R};W^{1,\gamma}_{\text{loc}}(\mathbb{R}^N))$ for any $1\leq \gamma<\frac{2q}{q+1}$ to problem
\begin{align}\label{5hhparabolic3'}
 {u_t} - \operatorname{div}\left({A(x,t,\nabla u)} \right) +|u|^{q-1}u= \omega ~\text{in}~\mathbb{R}^{N+1},
\end{align} which satisfies
                \begin{equation} \label{5hh020420145}                     
                      -K\mathbb{I}_{2}[\omega^-]\leq u\leq K\mathbb{I}_{2}[\omega^+]~\text{ in } \mathbb{R}^{N+1}.
                       \end{equation} 
Furthermore, when $\omega=\mu+\sigma\otimes \delta_{\{t=0\}}$ with       $\mu\in\mathfrak{M}(\mathbb{R}^{N}\times (0,\infty))$, $\sigma\in \mathfrak{M}(\mathbb{R}^N)$  then $u=0$ in $\mathbb{R}^N\times(-\infty,0)$ and   ${\left. u \right|_{\mathbb{R}^N\times [0,\infty)}}$ is a distributional solution to problem                 
 \begin{equation}\label{5hhparabolic3}\left\{ \begin{array}{l}
                       {u_t} - \operatorname{div}\left( {A(x,t,\nabla u)} \right) +|u|^{q-1}u= \mu ~\text{in}~\mathbb{R}^N\times (0,\infty),  \\                     
                                                            u(0) = \sigma~~~\text{in}~~ \mathbb{R}^N. \\  
                       \end{array} \right.\end{equation}
                                  
\end{theorem}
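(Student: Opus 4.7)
The plan is to exhaust $\mathbb{R}^{N+1}$ by a sequence of bounded cylinders, apply Theorem \ref{5hh070120146} with suitably truncated data on each, and pass to the limit using the pointwise potential envelope from Theorem \ref{5hh141013112} together with the stability result (Proposition \ref{5hh1203201412}).

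For each $n \geq 1$, set $\omega_n := \omega|_{B_n(0) \times (-n, n)} \in \mathfrak{M}_b(\mathbb{R}^{N+1})$; since $|\omega_n| \leq |\omega|$, each $\omega_n$ remains absolutely continuous with respect to $\text{Cap}_{2,1,q'}$. On the cylinder $\Omega^{(n)} := B_{2n}(0) \times (-n-1, n+1)$ I apply Theorem \ref{5hh070120146} with source $\omega_n$ (viewed, after the time shift $t \mapsto t + n + 1$, as an interior source together with zero initial datum at time $-n-1$), obtaining a distribution solution $u_n$ satisfying
\begin{equation*}
-K\,\mathbb{I}_2[\omega_n^-] \leq u_n \leq K\,\mathbb{I}_2[\omega_n^+] \quad \text{in } \Omega^{(n)},
\end{equation*}
and I extend $u_n$ by zero to $\mathbb{R}^{N+1} \setminus \Omega^{(n)}$. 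Since $\omega_n^\pm \uparrow \omega^\pm$ monotonically, $\mathbb{I}_2[\omega_n^\pm] \uparrow \mathbb{I}_2[\omega^\pm]$ pointwise, and in particular $|u_n| \leq K\,\mathbb{I}_2[|\omega|]$ uniformly in $n$.

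To let $n \to \infty$, I first observe that $\mathbb{I}_2[|\omega|](x_0, t_0) < \infty$ at some point, a standard consequence of absolute continuity with respect to $\text{Cap}_{2,1,q'}$. Then Remark \ref{5hh020520141} yields $\mathbb{I}_2[|\omega|] \in L^s_{\text{loc}}(\mathbb{R}^{N+1})$ for every $s < (N+2)/N$, giving a uniform $L^s_{\text{loc}}$ bound on $u_n$. Combined with the absorption control $\int_K |u_n|^q \leq K^q \int_K \mathbb{I}_2[|\omega|]^q$ on each compact $K$ and standard truncation-energy estimates for parabolic equations with measure data, this produces uniform local bounds on $\nabla u_n$ in $L^\gamma$ for every $1 \leq \gamma < 2q/(q+1)$, hence compactness of $\{u_n\}$ in $L^1_{\text{loc}}(\mathbb{R}^{N+1})$ together with a.e.\ convergence of the gradients along a subsequence. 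Applying Proposition \ref{5hh1203201412} on a nested sequence of test cylinders and extracting a diagonal subsequence, the limit $u$ is a distribution solution of \eqref{5hhparabolic3'} on all of $\mathbb{R}^{N+1}$ inheriting the potential envelope \eqref{5hh020420145}.

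For the Cauchy version where $\omega = \mu + \sigma \otimes \delta_{\{t=0\}}$, I take the truncations to be supported in $\mathbb{R}^N \times [0,\infty)$, prescribe the initial datum $\sigma|_{B_n}$ at $t = 0$ on the cylinders $B_{2n}(0) \times (0, n)$, and extend the $u_n$ by zero backwards in time; then $u$ vanishes on $\{t < 0\}$ by the one-sided potential bound, and its restriction to $\{t \geq 0\}$ solves the Cauchy problem by the same stability argument. The main technical obstacle will be the localization step for the stability result, which is formulated on a fixed bounded cylinder: I will have to verify that the Dirichlet boundary contributions of $u_n$ on $\partial B_{2n}(0) \times (-n-1, n+1)$ do not obstruct convergence on any fixed test cylinder, which follows from the strict inclusion $B_n \Subset B_{2n}$ together with the fact that $\mathbb{I}_2[\omega_n^\pm]$ converges dominatedly inside compacts.
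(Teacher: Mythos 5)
Your overall scheme---exhaust $\mathbb{R}^{N+1}$ by bounded cylinders, invoke Theorem \ref{5hh070120146} on each, and pass to the limit via Proposition \ref{5hh1203201412}---is the same as the paper's. But the mechanism by which you obtain the uniform local bounds needed for the stability argument is wrong, and this is the crux of the proof.

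The gap is in the sentence ``I first observe that $\mathbb{I}_2[|\omega|](x_0,t_0)<\infty$ at some point, a standard consequence of absolute continuity with respect to $\text{Cap}_{2,1,q'}$.'' This is false. Take $\omega$ to be Lebesgue measure on $\mathbb{R}^{N+1}$: it is absolutely continuous with respect to \emph{every} capacity, yet
\begin{equation*}
\mathbb{I}_2[\omega](x,t)=\int_0^\infty\frac{|\tilde Q_\rho(x,t)|}{\rho^N}\,\frac{d\rho}{\rho}=c\int_0^\infty \rho\,d\rho=+\infty\quad\text{for every }(x,t).
\end{equation*}
So the envelope $-K\mathbb{I}_2[\omega^-]\leq u_n\leq K\mathbb{I}_2[\omega^+]$ may be vacuous, Remark \ref{5hh020520141} is not applicable, and neither the $L^s_{\text{loc}}$ bound nor the claimed control $\int_K|u_n|^q\leq K^q\int_K\mathbb{I}_2[|\omega|]^q$ is available. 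Even if $\mathbb{I}_2[|\omega|]$ were finite somewhere, Remark \ref{5hh020520141} only gives local integrability to powers $s<\frac{N+2}{N}$, which is useless precisely in the supercritical regime $q\geq\frac{N+2}{N}$ where the absorption theorem is interesting. Your chain ``potential envelope $\Rightarrow$ $L^q_{\text{loc}}$ bound on $u_n$ $\Rightarrow$ $L^\gamma_{\text{loc}}$ bound on $\nabla u_n$'' therefore breaks at the first arrow.

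What actually makes the proof work---and what the paper does---is that the absorption term $|u|^{q-1}u$ \emph{itself} furnishes the uniform local $L^q$ bound, with no reference whatever to the finiteness of the potential. Concretely, the paper applies Proposition \ref{5hh1203201411} with $L(u_n)=|u_n|^{q-1}u_n$: the quantity $\|L(u_n)\eta\|_{L^1(D)}=\int_D|u_n|^q\eta$ appears on the left side of \eqref{5hh110320141}, while the right side contains $\int_D(|u_n|+1)^{q_0}\eta$ with $q_0<q$. Since $(|u_n|+1)^{q_0}\leq\varepsilon|u_n|^q+c(\varepsilon)$, the $q_0$-term is absorbed into the left side, leaving a bound on $\int_{D_i}|u_n|^q$ that depends only on $i$ and $|\omega|(D_{i+1})$, uniformly in $n$. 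It is this bound, together with the comparison solutions $v_n$ (with data $\chi_{D_n}|\omega|$, monotone in $n$) that makes $|u_n|^{q-1}u_n$ converge in $L^1_{\text{loc}}$, that feeds into Proposition \ref{5hh1203201412}. You should replace the potential-based integrability argument by the absorption energy estimate of Proposition \ref{5hh1203201411}; the remainder of your outline (extraction of a diagonal subsequence, treatment of the Cauchy problem by choosing approximants supported in $\{t\geq 0\}$ so that $u_n\equiv 0$ for $t<0$) is sound and matches the paper.
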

\begin{remark}The measure $\omega=\mu+\sigma\otimes \delta_{\{t=0\}}$ is absolutely continuous with respect to the capacity $\text{Cap}_{2,1,q'}$ in $\mathbb{R}^{N+1}$ if and only if $\mu,\sigma$ are absolutely continuous with respect to the capacities $\text{Cap}_{2,1,q'}$, $\text{Cap}_{\mathbf{G}_{\frac{2}{q}},q'}$ in $\mathbb{R}^{N+1},\mathbb{R}^N$ respectively.
\end{remark}
Existence result of the problem \eqref{5hh070120149} on $\mathbb{R}^{N+1}$ or on $\mathbb{R}^N\times(0,\infty)$ is similar to Theorem \ref{5hh070120147} presented in the following Theorem, where the capacities $\text{Cap}_{\mathcal{H}_2,q'}, \text{Cap}_{\mathbf{I}_{\frac{2}{q}},q'}$ are used in place of respectively $\text{Cap}_{2,1,q'}, \text{Cap}_{\mathbf{G}_{\frac{2}{q}},q'}$.
\begin{theorem}\label{5hh130320142} Let $K$ be the constant in Theorem \ref{5hh141013112} and  $q>\frac{N+2}{N}$, $\omega\in\mathfrak{M}(\mathbb{R}^{N+1})$. There exists a constant $\varepsilon_0=\varepsilon_0(N,q,\Lambda_1,\Lambda_2)$ such that if  
\begin{equation}\label{5hh130320143'}
       |\omega|(E)\leq \varepsilon_0 \text{Cap}_{\mathcal{H}_2,q'}(E),
       \end{equation} 
for every compact set $E\subset \mathbb{R}^{N+1}$, then the problem 
  \begin{align}\label{5hhparabolic4'}
  {u_t} - \operatorname{div}\left({A(x,t,\nabla u)} \right) =|u|^{q-1}u+ \omega ~\text{in}~\mathbb{R}^{N+1}
  \end{align}    has a distributional solution $u\in L^\gamma_{\text{loc}
                 }(\mathbb{R};W^{1,\gamma}_{\text{loc}}(\mathbb{R}^N))$ for any $1\leq \gamma<\frac{2q}{q+1}$ satisfying
                  \begin{equation}
                        \label{5hh130320144}
                         -\frac{Kq}{q-1}\mathbb{I}_{2}[\omega^-]\leq u\leq \frac{Kq}{q-1}\mathbb{I}_{2}[\omega^+]~\text{ in }~\mathbb{R}^{N+1}.
                         \end{equation} 
    Moreover, when  $\omega=\mu+\sigma\otimes \delta_{\{t=0\}}$ with       $\mu\in\mathfrak{M}(\mathbb{R}^{N}\times (0,\infty))$, $\sigma\in \mathfrak{M}(\mathbb{R}^N)$  then $u=0$ in $\mathbb{R}^N\times(-\infty,0)$ and   ${\left. u \right|_{\mathbb{R}^N\times [0,\infty)}}$ is a distributional solution to problem  \begin{equation}\label{5hhparabolic4}\left\{ \begin{array}{l}
                           {u_t} - \operatorname{div}\left( {A(x,t,\nabla u)} \right) =|u|^{q-1}u+ \mu ~\text{in}~\mathbb{R}^N\times (0,\infty),  \\                     
                                                                u(0) = \sigma~~~\text{in}~~ \mathbb{R}^N. \\  
                           \end{array} \right.\end{equation}                               
In addition, for any compact set $E\subset \mathbb{R}^{N+1}$ there holds
\begin{equation}\label{5hh280420141}
\int_E|u|^qdxdt\lesssim_q \text{Cap}_{\mathcal{H}_2,q'}(E).
\end{equation}
\end{theorem}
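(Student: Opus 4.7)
The plan is to transpose the Schauder fixed point scheme of Phuc--Verbitsky \cite{55PhVe} to the parabolic framework, using the global pointwise estimate \eqref{5hh1203201414} of Theorem \ref{5hh1203201417} as linear building block and the stability Proposition \ref{5hh1203201412} to pass to the limit. First I would regularize $\omega$ by mollification and truncation to obtain $\omega_n \in C^\infty_c(\mathbb{R}^{N+1})$ converging narrowly to $\omega$, with $|\omega_n|(E) \leq C_1'\,\text{Cap}_{\mathcal{H}_2, q'}(E)$ for all compact $E$ and $C_1'$ a fixed multiple of $C_1$ (such approximants exist because the $(\mathcal{H}_2, q')$-capacity is essentially invariant under convolution with a probability kernel). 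Setting $\lambda = Kq/(q-1)$, I consider the convex closed subset
\begin{equation*}
\mathcal{D}_\lambda = \bigl\{ v \in L^\gamma_{\text{loc}}(\mathbb{R}^{N+1}) \, : \, -\lambda \mathbb{I}_2[\omega^-] \leq v \leq \lambda \mathbb{I}_2[\omega^+] \bigr\}.
\end{equation*}
For $v \in \mathcal{D}_\lambda$, let $T_n v$ denote the distribution solution to $u_t - \text{div}(A(x,t,\nabla u)) = |v|^{q-1}v + \omega_n$ in $\mathbb{R}^{N+1}$ supplied by Theorem \ref{5hh1203201417}, which satisfies $-K\mathbb{I}_2[(|v|^{q-1}v + \omega_n)^-] \leq T_n v \leq K\mathbb{I}_2[(|v|^{q-1}v + \omega_n)^+]$. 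I then want to show $T_n(\mathcal{D}_\lambda) \subset \mathcal{D}_\lambda$, verify continuity and precompactness, apply Schauder to produce a fixed point $u_n$, and send $n \to \infty$ via Proposition \ref{5hh1203201412}.

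The heart of the argument, and the step I expect to be the main obstacle, is the self-improving Riesz potential inequality
\begin{equation*}
\mathbb{I}_2\bigl[(\mathbb{I}_2[\omega^\pm])^q\bigr](x,t) \leq C_0(C_1)\, \mathbb{I}_2[\omega^\pm](x,t), \qquad (x,t) \in \mathbb{R}^{N+1},
\end{equation*}
with $C_0(C_1) \to 0$ as $C_1 \to 0$, valid under the capacity hypothesis $|\omega|(E) \leq C_1\,\text{Cap}_{\mathcal{H}_2, q'}(E)$. This is the parabolic counterpart of the Verbitsky-type testing inequality exploited in \cite{55VHV, 55PhVe}; I would prove it by passing from $\mathbb{I}_2$ to the parabolic Wolff potential (to which it is pointwise comparable for $\alpha = 2$), then applying a dyadic Carleson-type estimate together with the duality description of $\text{Cap}_{\mathcal{H}_2, q'}$ through nonnegative $L^{q'}$ densities. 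Granting this inequality, on $\mathcal{D}_\lambda$ one has $|v|^q \leq \lambda^q (\mathbb{I}_2[|\omega|])^q$, so
\begin{equation*}
T_n v \leq K \lambda^q C_0(C_1)\, \mathbb{I}_2[\omega^+] + K \mathbb{I}_2[\omega_n^+],
\end{equation*}
and choosing $C_1$ small enough that $K \lambda^q C_0(C_1) \leq \lambda - K = K/(q-1)$ (and separating positive and negative parts of the source) closes the invariance $T_n(\mathcal{D}_\lambda) \subset \mathcal{D}_\lambda$.

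Continuity and precompactness of $T_n$ in $L^\gamma_{\text{loc}}$ follow from standard uniform a priori gradient estimates for quasilinear parabolic equations with measure data together with Proposition \ref{5hh1203201412}. Once a fixed point $u_n$ is produced with the uniform bound $|u_n| \leq \lambda \mathbb{I}_2[|\omega|]$, one more application of Proposition \ref{5hh1203201412} yields a distribution solution $u$ of the equation with data $\omega$ satisfying \eqref{5hh130320144}. The integrability bound \eqref{5hh280420141} is then obtained from $|u|^q \leq \lambda^q (\mathbb{I}_2[|\omega|])^q$ combined with the Adams-type companion estimate $\int_E (\mathbb{I}_2[|\omega|])^q \, dx dt \leq C\,\text{Cap}_{\mathcal{H}_2, q'}(E)$ that accompanies the self-improving inequality. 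Finally, when $\omega = \mu + \sigma \otimes \delta_{\{t=0\}}$ with $\mu, \sigma$ as in the statement, I would choose the approximants with the same structure and restrict $\mathcal{D}_\lambda$ to functions vanishing on $\{t<0\}$; each approximate solution then vanishes on $\{t<0\}$ by Theorem \ref{5hh1203201417}(c), and the Cauchy problem statement follows in the limit.
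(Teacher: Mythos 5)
Your identification of the key mechanism is correct and matches the paper: the proof does hinge on a self-improving Riesz potential (or ``testing'') inequality of the form $\mathbb{I}_2[(\mathbb{I}_2[|\omega|])^q]\leq C_0\,\mathbb{I}_2[|\omega|]$ under the capacity hypothesis, and this is precisely what Theorem \ref{5hh1410136} (statements 4--5) together with Corollary~\ref{5hh250320145} provide, with $C_0$ scaling like $C_1^{1/(q-1)}$. The capacitary bound \eqref{5hh280420141} at the end also follows exactly as you describe, from the Adams-type companion estimate packaged in Corollary \ref{5hh250320145}. So the skeleton of the argument — Schauder fixed point on a potential-bounded convex set, plus Proposition~\ref{5hh1203201412} to stabilize — is the right one.

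The genuine gap is in where you run the Schauder scheme. You propose to define the solution operator $T_n$ by invoking Theorem \ref{5hh1203201417} for each right-hand side $|v|^{q-1}v+\omega_n$ on $\mathbb{R}^{N+1}$. But Theorem \ref{5hh1203201417} only asserts \emph{existence} of a distribution solution on $\mathbb{R}^{N+1}$ with the two-sided potential bound — it does not assert uniqueness, and indeed there is no uniqueness theory for distribution solutions of measure-data parabolic equations on the whole space in this paper. Consequently $T_n$ is not a well-defined single-valued map on $\mathcal{D}_\lambda$, and even if you fix a particular construction, the continuity of $T_n$ in $L^\gamma_{\mathrm{loc}}$ (which you wave at via ``standard estimates'') is not available without a uniqueness or comparison principle for the whole-space problem. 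The paper sidesteps this entirely: the Schauder argument is carried out in the proof of Theorem \ref{5hh070120147} on \emph{bounded} cylinders, where the auxiliary truncated source term $T_k(|u|^{q-1}u)$ is used and the renormalized solution of the linear problem is unique (because the right-hand side is an $L^1$ function plus a good decomposition). The whole-space theorem is then obtained by applying that bounded-domain result on the exhaustion $D_n=B_n(0)\times(-n^2,n^2)$ with data $\chi_{D_{n-1}}\omega$ (which inherits the capacity bound uniformly in $n$), extracting uniform pointwise bounds $|u_n|\leq \frac{Kq}{q-1}\mathbb{I}_2[|\omega|]$ in $D_n$, and passing to the limit through Proposition~\ref{5hh1203201412}. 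Your ``restrict $\mathcal{D}_\lambda$ to functions vanishing on $\{t<0\}$'' device for the Cauchy-problem statement would likewise need to be implemented at the level of the $D_n$-approximations (as in Remark~\ref{5hh020420144}, item~ii), not by restricting the fixed-point set on $\mathbb{R}^{N+1}$, for the same well-definedness reason.
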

\begin{remark} The measure $\omega=\mu+\sigma\otimes \delta_{\{t=0\}}$ satisfies \eqref{5hh130320143'} if and only if 
 \begin{equation*}
       |\mu|(E)\lesssim \text{Cap}_{\mathcal{H}_2,q'}(E)~~\text{and}~~|\sigma|(O)\lesssim \text{Cap}_{\mathbf{I}_{\frac{2}{q}},q'}(O),
       \end{equation*}    
       for every compact sets $E\subset \mathbb{R}^{N+1}$ and $O\subset \mathbb{R}^{N}$. 
\end{remark}
\begin{remark}\label{5hh180420142}
If $\omega\in L^{\frac{N+2}{2q'},\infty}(\mathbb{R}^{N+1})$  then \eqref{5hh130320143'} holds for some constant $\varepsilon_0>0$. Moreover, if $\omega=\sigma\otimes\delta_{\{t=0\}}$ with $\sigma\in \mathfrak{M}_b(\mathbb{R}^N)$, then from \eqref{5hh130320144} we get the decay estimate: 
$$
                  - ct^{-\frac{1}{q-1}} \leq \inf_{x\in\mathbb{R}^N}u(x,t)\leq \sup_{x\in\mathbb{R}^N}u(x,t)\leq ct^{-\frac{1}{q-1}}\text{ for any }~t>0,
$$
                             since $|\sigma|(B_\rho(x))\lesssim_q \rho^{N-\frac{2}{q-1}}$ for any $B_\rho(x)\subset\mathbb{R}^N$. 
\end{remark}

Second part, we establish global regularity in weighted-Lorentz space and Lorentz-Morrey {space for the gradient of solutions to problem 
\eqref{5hhparabolic1}.  For this purpose, we need a capacity density condition imposed on $\Omega$. That is,  the complement of $\Omega$  satisfies  {\it uniformly $p$-thick with constants $c_0,r_0$}, i.e,  for all $0<r\le r_0$ and all $x\in \mathbb{R}^N\backslash \Omega$ there holds 
      \begin{equation}
      \text{Cap}_p(\overline{B_r(x)}\cap(\mathbb{R}^N\backslash \Omega), B_{2r}(x))\geq c_0 \text{Cap}_p(\overline{B_r(x)}, B_{2r}(x)),
      \end{equation}
  where the involved capacity of a compact set $K\subset B_{2r}(x)$ is given as follows 
      \begin{equation}
      \text{Cap}_p(K,B_{2r}(x))=\inf\left\{\int_{B_{2r}(x)}|\nabla \phi|^pdy: \phi\in C^\infty_c(B_{2r}(x)), \phi\geq \chi_K\right\}.
      \end{equation}
      
 In order to obtain better regularity we need a stricter condition on $\Omega$ which is expressed in the following way. We say that $\Omega$ is a $(\delta,R_0)-$Reifenberg flat domain for $\delta\in (0,1)$ and $R_0>0$ if for every $x_0\in\partial \Omega$ and every $r\in(0,R_0]$, there exists a system of coordinates $\{z_1,z_2,...,z_n\}$, which may depend on $r$ and $x_0$, so that in this coordinate system $x_0=0$ and that 
         \begin{equation}
         B_r(0)\cap \{z_n>\delta r\}\subset B_r(0)\cap \Omega\subset B_r(0)\cap\{z_n>-\delta r\}.
         \end{equation}
         
 We remark that this class of flat domains is rather wide since it includes $C^1$, Lipschitz domains with sufficiently small Lipschitz constants and fractal domains. Besides, it has many important roles in the theory of minimal surfaces and free boundary problems,  this class was first appeared in a work of Reifenberg (see \cite{55Re}) in the context of a Plateau problem. Its properties can be found in \cite{55KeTo1,55KeTo2,55To}.
 
         On the other hand, it is well-known that in general, conditions \eqref{5hhconda} and \eqref{5hhcondb} on the nonlinearity $A(x,t,\zeta)$ are not enough to ensure higher integral of gradient of solutions to problem \eqref{5hhparabolic1},  we need to  assume that $A$ satisfies \begin{equation}\label{5hhcondc}
              \left \langle A_\zeta(x,t,\zeta)\xi ,\xi\right\rangle\geq \Lambda_2 |\xi|^2,~~~~ |A_\zeta(x,t,\zeta)|\le \Lambda_1,
               \end{equation}
               for every $(\xi,\zeta)\in \mathbb{R}^N\times \mathbb{R}^N\backslash \{(0,0)\}$ and a.e $(x,t)\in \mathbb{R}^N\times\mathbb{R}$, where $\Lambda_1,\Lambda_2$ are constants in \eqref{5hhconda} and \eqref{5hhcondb}.
               We also require  that the nonlinearity $A$
               satisfies a smallness condition of BMO type in the $x$-variable. We say that $A(x,t,\zeta)$ satisfies a $(\delta,R_0)$-BMO condition for some $\delta, R_0>0$ with exponent $s>0$ if 
                                   \begin{equation*}
                                   [A]^{R_0}_s:=\mathop {\sup }\limits_{(y,s)\in \mathbb{R}^N\times\mathbb{R},0<r\leq R_0}\left(\fint_{Q_r(y,s)}\left(\Theta(A,B_r(y))(x,t)\right)^sdxdt\right)^{\frac{1}{s}} \leq \delta,
                                   \end{equation*}
                                   where 
                                   \begin{equation*}
                                   \Theta(A,B_r(y))(x,t):=\mathop {\sup }\limits_{\zeta\in\mathbb{R}^N\backslash\{0\}}\frac{|A(x,t,\zeta)-\overline{A}_{B_r(y)}(t,\zeta)|}{|\zeta|},
                                   \end{equation*}
                                   and $\overline{A}_{B_r(y)}(t,\zeta)$ is denoted the average of $A(t,.,\zeta)$ over the cylinder $B_r(y)$, i.e,
                                   \begin{equation*}
                                   \overline{A}_{B_r(y)}(t,\zeta):=\fint_{B_r(y)}A(x,t,\zeta)dx=\frac{1}{|B_r(y)|}\int_{B_r(y)}A(x,t,\zeta)dx.
                                   \end{equation*}
                                   
                                      The above condition was observed in  \cite{55BW2}. It is easy to  see that the $(\delta,R_0)-$BMO condition on $A$ is satisfied when $A$ is continuous or has small jump discontinuities with respect to $x$.  
                                                         
               In this paper, $\mathbb{M}$ denotes the Hardy-Littlewood maximal function defined for each locally integrable function  $f$ in $\mathbb{R}^{N+1}$ by
                     \begin{equation*}
                     \mathbb{M}(f)(x,t)=\sup_{\rho>0}\fint_{\tilde{Q}_\rho(x,t)}|f(y,s)|dyds~~\forall (x,t)\in\mathbb{R}^{N+1}.
                     \end{equation*}
                                          
     It is  by now rather standard to verify that $\mathbb{M}$ is bounded operator from $L^1(\mathbb{R}^{N+1})$ to $L^{1,\infty}(\mathbb{R}^{N+1})$ and $L^s(\mathbb{R}^{N+1})$ ($L^{s,\infty}(\mathbb{R}^{N+1})$) to itself for $s>1$, see  \cite{55Stein2,55Stein3}.
     
     We recall that a positive function $w\in L^1_{\text{loc}}(\mathbb{R}^{N+1})$ is called an $A_{\infty}$ weight if there are two positive constants $C$ and $\nu$ such that
                         $$w(E)\le C \left(\frac{|E|}{|Q|}\right)^\nu w(Q),
                         $$
                          for all cylinder $Q=\tilde{Q}_\rho(x,t)$ and all measurable subsets $E$ of $Q$. The pair $(C,\nu) $ is called the $A_\infty$ constant of $w$ and is denoted by $[w]_{A_\infty}$.
                          
      For  a weight function $w\in A_{\infty}$, the weighted Lorentz spaces $L^{q,s}(D,dw)$ with $0<q<\infty$, $0<s\leq\infty$ and a Borel set $D\subset \mathbb{R}^{N+1}$, is the set of measurable functions $g$ on $D$ such that 
      \begin{equation*}
   ||g||_{L^{q,s}(D,dw)}:=\left\{ \begin{array}{l}
            \left(q\int_{0}^{\infty}\left(\rho^qw\left(\{(x,t)\in D:|g(x,t)|>\rho\}\right)\right)^{\frac{s}{q}}\frac{d\rho}{\rho}\right)^{1/s}<\infty~\text{ if }~s<\infty, \\ 
             \sup_{\rho>0}\rho w\left(\{(x,t)\in D:|g(x,t)|>\rho\}\right)^{1/q}<\infty~~\text{ if }~s=\infty. \\ 
             \end{array} \right.
      \end{equation*}              
             Here we write $w(E)=\int_{E}w(x,t)dxdt$ for a measurable set $E\subset \mathbb{R}^{N+1}$.  Obviously,  
             $
                     ||g||_{L^{q,q}(D,dw)}=||g||_{L^q(D,dw)}
                    $,
                    thus we have $L^{q,q}(D,dw)=L^{q}(D,dw)$.               As usual, when $w \equiv 1$  we simply write $L^{q,s}(D)$ instead of $L^{q,s}(D,dw)$. 
                    \\
                             
                    We now state the next results of the paper.                   
                    \begin{theorem}\label{5hh0701201411}Let $\mu\in\mathfrak{M}_b(\Omega_T)$, $\sigma\in\mathfrak{M}_b(\Omega)$, set $\omega=|\mu|+|\sigma|\otimes\delta_{\{t=0\}}$. There exists a distributional solution of \eqref{5hhparabolic1} with data $\mu$ and $\sigma$ such that if $\mathbb{R}^N\backslash\Omega$ satisfies uniformly $2-$thick with  constants $c_0,r_0$ then for any $1\leq p< \theta $ and $0<s \leq \infty$, 
                    \begin{align}
                    \label{5hh15101311}
                                  ||\mathbb{M}(|\nabla u|)||_{L^{p,s}(\Omega_T)}\lesssim C ||\mathbb{M}_1[\omega]||_{L^{p,s}(Q)}.
                    \end{align}                
Here                                                 $\theta=\theta(N,\Lambda_1,\Lambda_1,c_0)>2$ and  $C=C(p,s,c_0,T_0/r_0)$ 
                            and $Q=B_{\text{diam}(\Omega)}(x_0)\times (0,T)$ which $\Omega\subset B_{\text{diam}(\Omega)}(x_0)$.\\
                           Especially, when  $1< p<2$, then 
\begin{align}\label{5hh200320142}                                                                                                                                                                                                         ||\mathbb{M}(|\nabla u|)||_{L^{p}(\Omega_T)}\lesssim C\left( ||\mathcal{G}_1[|\mu|]||_{L^{p}(\mathbb{R}^{N+1})}+||\mathbf{G}_{\frac{2}{p}-1}[|\sigma|]||_{L^{p}(\mathbb{R}^{N})}\right).
                                                                                                                                                                                \end{align}                             
                       \end{theorem}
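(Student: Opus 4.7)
The plan is to reduce the proof to a good-$\lambda$ inequality for the parabolic Hardy-Littlewood maximal function $\mathbb{M}(|\nabla u|)$ controlled by the fractional maximal potential $\mathbb{M}_1[\omega]$. I would first approximate $\mu,\sigma$ by smooth bounded densities $\mu_n,\sigma_n$ and solve the resulting smooth Cauchy-Dirichlet problems to obtain weak solutions $u_n$ with $\omega_n=|\mu_n|+|\sigma_n|\otimes\delta_{\{t=0\}}\to\omega$ weakly. The stability Proposition~\ref{5hh1203201412} then guarantees that any uniform bound of the type \eqref{5hh15101311} for $u_n$ passes to a distributional solution $u$, so all the work is to establish such a uniform estimate.

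The cornerstone is a two-scale comparison lemma. For each parabolic cylinder $\tilde{Q}_\rho(x,t)$, either contained in $\Omega_T$ or centred at a point of the lateral or initial boundary (in which case $u_n$ is extended by zero and the geometry on $\Omega\cap B_\rho(x)$ is exploited), I compare $u_n$ with the local $A$-caloric function $v$ solving $v_t-\text{div}(A(\cdot,\cdot,\nabla v))=0$ on $\tilde{Q}_\rho$ with Cauchy-Dirichlet trace equal to that of $u_n$. Testing the difference equation with a truncation of $u_n-v$ and using the monotonicity \eqref{5hhcondb} should yield
\begin{equation*}
\fint_{\tilde{Q}_{\rho/2}(x,t)}|\nabla u_n-\nabla v|\,dy\,ds\leq C\,\mathbb{M}_1[\omega_n](x,t),
\end{equation*}
while classical intrinsic regularity for $A$-caloric functions bounds $\sup_{\tilde{Q}_{\rho/4}}|\nabla v|$ by an $L^2$-mean of $|\nabla v|$ on $\tilde{Q}_\rho$.

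The main obstacle is promoting this $L^2$-type control on $\nabla v$ to a reverse H\"older inequality with exponent strictly larger than $2$ up to the boundary; this is exactly what introduces the exponent $\theta>2$ and is where the capacity density hypothesis enters. Uniform $2$-thickness of $\mathbb{R}^N\setminus\Omega$ supplies a Sobolev-Poincar\'e inequality for the zero extension of $v$ across $\partial\Omega$, and a Gehring / Giaquinta-Modica self-improvement argument then produces
\begin{equation*}
\Bigl(\fint_{\tilde{Q}_{\rho/2}}|\nabla v|^{\theta}\,dy\,ds\Bigr)^{1/\theta}\leq C\Bigl(\fint_{\tilde{Q}_\rho}|\nabla v|^{2}\,dy\,ds\Bigr)^{1/2},
\end{equation*}
for some $\theta=\theta(N,\Lambda_1,\Lambda_2,c_0)>2$. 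Obtaining this uniformly in the position of $\tilde{Q}_\rho$ relative to $\partial\Omega$ and uniformly down to scale $r_0$ is the technical heart of the proof.

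Combining the comparison estimate, the reverse H\"older inequality, a Vitali covering of the super-level sets, and suitable choices of large $A$ and small $\varepsilon$ I obtain a Mingione-type good-$\lambda$ inequality
\begin{equation*}
\bigl|\{\mathbb{M}(|\nabla u_n|)>A\lambda,\ \mathbb{M}_1[\omega_n]\le\varepsilon\lambda\}\cap Q\bigr|\leq C\,\varepsilon^{\theta}\bigl|\{\mathbb{M}(|\nabla u_n|)>\lambda\}\cap Q\bigr|,
\end{equation*}
and integrating this against the Lorentz distribution measure $(\lambda^p|\cdot|)^{s/p}d\lambda/\lambda$, then absorbing the maximal-function term on the left, yields \eqref{5hh15101311} uniformly in $n$ for $1\leq p<\theta$ and $0<s\leq\infty$. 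Passing to the limit via Proposition~\ref{5hh1203201412} gives the claimed distributional solution. Finally, for the specialization $1<p<2$ I use the standard identification
\begin{equation*}
\|\mathbb{M}_1[\omega]\|_{L^p(Q)}\lesssim \|\mathcal{G}_1[|\mu|]\|_{L^p(\mathbb{R}^{N+1})}+\|\mathbf{G}_{2/p-1}[|\sigma|]\|_{L^p(\mathbb{R}^N)},
\end{equation*}
which follows from the Muckenhoupt-Wheeden theorem applied to the fractional maximal operator together with the tensor structure of $|\sigma|\otimes\delta_{\{t=0\}}$ and the $L^p$-equivalence between the heat semigroup and Bessel potentials after integration in time.
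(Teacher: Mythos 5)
Your overall strategy matches the paper's: approximate by smooth data, compare $u_n$ with an $A$-caloric approximant both in the interior and at the boundary, use uniform $2$-thickness of $\mathbb{R}^N\setminus\Omega$ (via Sobolev--Poincar\'e for the zero extension and Gehring) to get a boundary reverse H\"older inequality with exponent $\theta>2$, deduce a good-$\lambda$ inequality, and integrate against the Lorentz distribution function. This is precisely the route the paper takes, through Theorems~\ref{5hh1510135}, \ref{5hh24093} and \ref{5hh1510139}. Your remark on $1<p<2$ is also correct in spirit, although the paper reaches the $\mathcal G_1$/$\mathbf G_{2/p-1}$ bound through the norm equivalences of Propositions~\ref{5hh230120143} and \ref{5hh240120146} (built on the Wolff-type good-$\lambda$ inequality, Theorem~\ref{5hh051120131}) rather than Muckenhoupt--Wheeden.

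The one step that does not hold as written is the precise shape of your good-$\lambda$ inequality. You assert
\begin{equation*}
\bigl|\{\mathbb{M}(|\nabla u_n|)>A\lambda,\ \mathbb{M}_1[\omega_n]\le\varepsilon\lambda\}\cap Q\bigr|\leq C\,\varepsilon^{\theta}\bigl|\{\mathbb{M}(|\nabla u_n|)>\lambda\}\cap Q\bigr|
\end{equation*}
with a fixed large $A$. But Chebyshev applied to the comparison estimate $\fint|\nabla u_n-\nabla v|\lesssim\varepsilon\lambda$ only yields a factor of order $\varepsilon/A$, and the reverse H\"older bound $\fint|\nabla v|^{\theta}\lesssim(\fint|\nabla v|)^{\theta}$ only yields a factor of order $A^{-\theta}$; together these give a single power of $\varepsilon$ after balancing, not $\varepsilon^{\theta}$. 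Worse, a $\varepsilon^{\theta}$-decay with fixed $A$ would absorb in the Lorentz integral for \emph{every} $p>0$, so the constraint $p<\theta$ --- which is genuinely needed --- would not emerge. The correct form, as in Theorem~\ref{5hh1510139}, couples the jump factor to $\varepsilon$: one takes $A=\varepsilon^{-1/\theta}$, threshold $\varepsilon^{1-1/\theta}\lambda$ on $\mathbb{M}_1[\omega]$, and a single factor $\varepsilon$ on the right. Integrating that version produces the factor $\varepsilon^{s(\theta-p)/(\theta p)}$ on the absorbed term, which is small exactly when $p<\theta$. The rescaling of the multiplier $A$ in terms of $\varepsilon$ (rather than a higher power of $\varepsilon$ on the right) is the mechanism by which $\theta$ enters the Lorentz range, and your draft needs that correction for the argument to close.
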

   \begin{remark}\label{5hh070520145} If $\frac{N+2}{N+1}<p<2$, there hold
   \begin{align*}
   ||\mathcal{G}_1[|\mu|]||_{L^p(\mathbb{R}^{N+1})}\lesssim_p||\mu||_{L^{\frac{p(N+2)}{N+2+p}}(\Omega_T)}~\text{ and }~
   ||\mathbf{G}_{\frac{2}{p}-1}[|\sigma|]||_{L^p(\mathbb{R}^{N})}\lesssim_p ||\sigma||_{L^{\frac{pN}{N+2-p}}(\Omega)}.
   \end{align*}
   From \eqref{5hh200320142} we obtain
    \begin{align*}                                                                                                                                                                                                        |||\nabla u|||_{L^{p}(\Omega_T)}\lesssim_p||\mu||_{L^{\frac{p(N+2)}{N+2+p}}(\Omega_T)}+||\sigma||_{L^{\frac{pN}{N+2-p}}(\Omega)}~\text{ provided } \frac{N+2}{N+1}<p<2.
                                                                                                                                                                                                             \end{align*}  
   We should mention that if $\sigma\equiv 0$, then for any $0<s\leq \infty, p>\frac{N+2}{N+1}$  
    \begin{align*}
    ||\mathbb{M}_1[\omega]||_{L^{p,s}(\mathbb{R}^{N+1})}\lesssim_{p,s} ||\mu||_{L^{\frac{p(N+2)}{N+2+p},s}(\Omega_T)},
    \end{align*}
    and we get \cite[Theorem 1.2]{55BaCaPa} from estimate \eqref{5hh15101311}.
     \end{remark}  
                         
  In order to state the next results, we need to introduce  Lorentz-Morrey spaces  $L^{q,s;\theta}_{*}(D)$ involving "calorie"  with a Borel set $D\subset\mathbb{R}^{N+1}$, is the set of measurable functions $g$ on $D$ such that 
               \begin{equation*}
               ||g||_{L_{*}^{q,s;\kappa}(D)}:=\sup_{0<\rho< \text{diam}(D), (x,t)\in D}\rho^{\frac{\kappa-N-2}{q}}||g||_{L^{q,s}(\tilde{Q}_\rho(x,t)\cap D)}<\infty,
               \end{equation*}
               where $0<\kappa\leq N+2$, $0<q<\infty$, $0<s\leq \infty$.                
               Clearly, $L_{*}^{q,s;N+2}(D)=L^{q,s}(D)$. Moreover, when $q=s$ the space $L_{*}^{q,s;\theta}(D)$ will be denoted by $L_{*}^{q;\theta}(D)$. \\                     
                       The following theorem provides an estimate on gradient in Lorentz-Morrey spaces.   
                        \begin{theorem}\label{5hh0701201412} Let $\mu\in\mathfrak{M}_b(\Omega_T)$, $\sigma\in\mathfrak{M}_b(\Omega)$, set $\omega=|\mu|+|\sigma|\otimes\delta_{\{t=0\}}$. There exists a distributional solution of \eqref{5hhparabolic1} with data $\mu$ and $\sigma$ such that if $\mathbb{R}^N\backslash\Omega$ satisfies uniformly $2-$thick with  constants $c_0,r_0$ then for any  $1 \leq p< \theta $ and $0<s \leq \infty$,                        
                       $2-\gamma_0<\gamma<N+2$, $\gamma\leq \frac{N+2}{p}+1,$ 
 \begin{align}
                       \nonumber&||\mathbb{M}\left(|\nabla u|\right)||_{L_{*}^{p,s;p(\gamma-1)}(\Omega_T)}\lesssim C_1||\mathbb{M}_{\gamma}[\omega]||_{L^\infty(\Omega_T)}\\&~~~~~+C_2\sup_{0<R\leq T_0, (y_0,s_0)\in \Omega_T}\left(R^{\frac{p(\gamma-1)-N-2}{p}}||\mathbb{M}_1[\chi_{\tilde{Q}_{R}(y_0,s_0)}\omega]||_{L^{p,s}(\tilde{Q}_{R}(y_0,s_0))}\right).\label{5hh070520141}
                       \end{align} 
Here $\theta$ is in Theorem \ref{5hh0701201411}, $\gamma_0=\gamma_0(N,\Lambda_1,\Lambda_1,c_0)\in (0,1/2]$ and $C_1=C_1(p,s,\gamma,c_0,T_0/r_0)$, $C_2=C_2(p,s,\gamma,c_0)$.  Besides, if $\frac{\gamma}{\gamma-1}<p<\theta$,  $2-\gamma_0<\gamma<N+2$, $0<s \leq \infty$ and $\mu\in L_{*}^{\frac{(\gamma-1)p}{\gamma},\frac{(\gamma-1)s}{\gamma};(\gamma-1)p}(\Omega_T)$, $\sigma\equiv 0$, then  $u$ is a unique renormalized solution satisfying   

 \begin{equation}                            ||\mathbb{M}\left(|\nabla u|\right)||_{L_{*}^{p,s;(\gamma-1)p}(\Omega_T)}\lesssim C_1||\mu||_{L_{*}^{\frac{(\gamma-1)p}{\gamma},\frac{(\gamma-1)s}{\gamma};(\gamma-1)p}(\Omega_T)}.\label{5hh070520142} 
                            \end{equation}                                               
                                         \end{theorem}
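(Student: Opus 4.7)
The plan is to reduce \eqref{5hh070520141} to the global gradient bound \eqref{5hh15101311} of Theorem \ref{5hh0701201411} by a space-time localization. Fix any $\tilde{Q}_R(y_0,s_0)$ with $R \leq T_0$ entering the supremum defining $||\cdot||_{L_{*}^{p,s;p(\gamma-1)}(\Omega_T)}$, and split the datum as $\omega = \omega_{\mathrm{loc}} + \omega_{\mathrm{far}}$, where $\omega_{\mathrm{loc}} := \chi_{\tilde{Q}_{cR}(y_0,s_0)}\omega$ and $\omega_{\mathrm{far}} := \omega - \omega_{\mathrm{loc}}$ for a fixed constant $c > 1$ (say $c=4$). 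The idea is that $\omega_{\mathrm{loc}}$ contributes the Lorentz-Morrey summand on the RHS of \eqref{5hh070520141} via the localization of Theorem \ref{5hh0701201411}, while $\omega_{\mathrm{far}}$ is absorbed by the uniform bound $||\mathbb{M}_\gamma[\omega]||_{L^\infty}$.

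For the tail, if $(x,t) \in \tilde{Q}_R(y_0,s_0)$ and $\rho < (c-1)R$, then $\tilde{Q}_\rho(x,t)$ misses the support of $\omega_{\mathrm{far}}$; for $\rho \geq (c-1)R$ the hypothesis $\gamma > 1$ (which follows from $\gamma > 2-\gamma_0$ with $\gamma_0 \leq 1/2$) gives
\[
\frac{\omega_{\mathrm{far}}(\tilde{Q}_\rho(x,t))}{\rho^{N+1}} \leq \rho^{1-\gamma}\,\mathbb{M}_\gamma[\omega](x,t) \leq C R^{1-\gamma}\, ||\mathbb{M}_\gamma[\omega]||_{L^\infty(\Omega_T)}.
\]
Combining this with $\mathbb{M}_1[\omega] \leq \mathbb{M}_1[\omega_{\mathrm{loc}}] + \mathbb{M}_1[\omega_{\mathrm{far}}]$ produces the pointwise splitting
\[
\mathbb{M}_1[\omega] \leq \mathbb{M}_1[\chi_{\tilde{Q}_{cR}(y_0,s_0)}\omega] + C R^{1-\gamma}\, ||\mathbb{M}_\gamma[\omega]||_{L^\infty(\Omega_T)} \quad \text{on } \tilde{Q}_R(y_0,s_0).
\]

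A localized version of Theorem \ref{5hh0701201411}, obtained by running the same good-$\lambda$/comparison machinery on the sub-cylinder $\tilde{Q}_R$, then gives
\[
||\mathbb{M}(|\nabla u|)||_{L^{p,s}(\tilde{Q}_R)} \leq C\, ||\mathbb{M}_1[\chi_{\tilde{Q}_{cR}}\omega]||_{L^{p,s}(\tilde{Q}_{cR})} + C R^{1-\gamma}\, ||\mathbb{M}_\gamma[\omega]||_{L^\infty}\, |\tilde{Q}_R|^{1/p}.
\]
Multiplying through by $R^{(p(\gamma-1)-N-2)/p}$ and using $|\tilde{Q}_R|^{1/p} \approx R^{(N+2)/p}$, the exponents in the second summand cancel to $R^0$, so that term reduces to $C||\mathbb{M}_\gamma[\omega]||_{L^\infty}$. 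Taking the supremum over $R \in (0,T_0]$ and $(y_0,s_0) \in \Omega_T$ produces \eqref{5hh070520141}; the constraint $\gamma \leq (N+2)/p + 1$ merely keeps the Morrey exponent $p(\gamma-1)$ in the admissible range $(0,N+2]$.

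The second estimate \eqref{5hh070520142} is deduced from \eqref{5hh070520141} by controlling both terms on its RHS in terms of $||\mu||_{L_{*}^{(\gamma-1)p/\gamma,(\gamma-1)s/\gamma;(\gamma-1)p}(\Omega_T)}$: for the $L^\infty$ term one uses that this Lorentz-Morrey datum embeds through $\mathbb{M}_\gamma$ into $L^\infty$ precisely because $(\gamma-1)p/\gamma > 1$ (equivalent to the assumption $\gamma/(\gamma-1) < p$); for the $\mathbb{M}_1$ term one invokes boundedness of the first-order fractional maximal operator between the appropriate Lorentz-Morrey scales (the parabolic counterpart of the Adams inequality). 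Uniqueness as a renormalized solution then follows from the stability Proposition \ref{5hh1203201412}. The main obstacle is the localized variant of Theorem \ref{5hh0701201411} at scale $R$: since the equation is quasilinear one cannot simply split $u$ into solutions corresponding to $\omega_{\mathrm{loc}}$ and $\omega_{\mathrm{far}}$, so the localization must be performed at the level of the gradient maximal function itself, using the intrinsic good-$\lambda$ and comparison estimates on each sub-cylinder — the parabolic analogue of the elliptic scheme of Phuc \cite{55Ph0,55Ph3,55Ph2}.
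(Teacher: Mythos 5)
Your outline has the right overall shape (localize to a cylinder, control the tail of $\mathbb{M}_1[\omega]$ by $||\mathbb{M}_\gamma[\omega]||_{L^\infty}$, and apply a localized version of Theorem \ref{5hh0701201411}), but the heart of the argument—the claimed localized Lorentz bound on $\tilde{Q}_R$—is exactly where the gap sits.

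The localized good-$\lambda$ estimate of the paper (\eqref{5hh070120141} in Theorem \ref{5hh1510139}) is \emph{not} valid for all $\lambda>0$: it holds only for $\lambda>\varepsilon^{-1+\frac{1}{\theta}}\,||\nabla u||_{L^1(\Omega_T\cap B_2)}\,R_2^{-N-2}$. Below that threshold you can only bound $|\{\mathbb{M}(\chi_{B_2}|\nabla u|)>\lambda\}\cap B_1|$ crudely by $|B_1|$. Integrating the layer-cake therefore produces a residual of the form $R^{\frac{N+2}{p}}\inf\{r_0,R\}^{-N-2}\,||\nabla u||_{L^1(\tilde{Q}_{4R})}$ (this is Theorem \ref{5hh070120143}). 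So the localized estimate you write down—in which the error term is already $C R^{1-\gamma}||\mathbb{M}_\gamma[\omega]||_{L^\infty}|\tilde{Q}_R|^{1/p}$—does not follow merely by "running the good-$\lambda$/comparison machinery on the sub-cylinder." To convert the genuine $L^1(\tilde{Q}_{4R})$ error into that form you must first prove a local Morrey bound $\int_{Q_\rho}|\nabla u|\,dxdt\lesssim \rho^{N+3-\gamma}\,||\mathbb{M}_\gamma[\omega]||_{L^\infty}$ with the correct $\rho$-exponent. Your datum splitting $\omega=\omega_{\mathrm{loc}}+\omega_{\mathrm{far}}$ does not supply this: it controls $\mathbb{M}_1[\omega]$ pointwise, but the residual term involves the solution's gradient, not the datum, and the equation being quasilinear forbids splitting $u$ accordingly.

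The paper closes exactly this gap with Corollaries \ref{5hh060120148} and \ref{5hh060120149}: a local decay estimate for $\int_{Q_\rho}|\nabla u|$ obtained by comparison with the homogeneous equation and a Campanato-type iteration (the estimates \eqref{5hhineq1}, \eqref{5hhineq3'}, \eqref{5hhineq9}, \eqref{5hhineq10'} fed into \cite[Lemma 4.6, p.\,54]{55Li3}), a tool entirely separate from the good-$\lambda$ argument. One also needs this carried out at the level of smooth approximations $u_{n,m}$ (weak solutions) and then passed to the limit. Without supplying this local Morrey estimate, the main inequality \eqref{5hh070520141} does not follow from what you have written. Your treatment of \eqref{5hh070520142} is essentially correct once \eqref{5hh070520141} is in hand (the paper uses a Hedberg-type interpolation $\mathbb{M}_1[\chi_{\tilde Q_R}|\mu|]\lesssim(\mathbb{M}(\chi_{\tilde Q_R}|\mu|))^{1-1/\gamma}||\mu||_{L_*^{\cdots}}^{1/\gamma}$, close in spirit to your "parabolic Adams" invocation), but the first estimate is where the real work lies.
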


The following Theorem is about higher regularity for solutions of \eqref{5hhparabolic1}. We need more assumptions on boundary of $\Omega$ and on the nonlinearity $A$. 
                   \begin{theorem} \label{5hh2410131}  Suppose that $A$ satisfies \eqref{5hhcondc}. Let $\mu\in\mathfrak{M}_b(\Omega_T)$, $\sigma\in\mathfrak{M}_b(\Omega)$, set $\omega=|\mu|+|\sigma|\otimes\delta_{\{t=0\}}$. There exists a distributional solution of \eqref{5hhparabolic1} with data $\mu,\sigma$ such that the following holds. For any $w\in A_\infty$, $1\leq q<\infty$, $0<s\leq\infty$ we find  $\delta=\delta(N,\Lambda_1,\Lambda_2,q,s, [w]_{A_\infty})\in (0,1)$ and $s_0=s_0(N,\Lambda_1,\Lambda_2)>0$ such that if $\Omega$ is  $(\delta,R_0)$-Reifenberg flat domain $\Omega$ and $[A]_{s_0}^{R_0}\le \delta$ for some $R_0$ then                                       
                        \begin{equation}\label{5hh16101312}
                        ||\mathbb{M}(|\nabla u|)||_{L^{q,s}(\Omega_T,dw)}\lesssim C ||\mathbb{M}_1[\omega]||_{L^{q,s}(\Omega_T,dw)}.
                        \end{equation} 
                         Here $C$ depends  on $q,s, [w]_{A_\infty}$ and $T_0/R_0$.                       
                       \end{theorem}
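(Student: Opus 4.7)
The plan is to obtain the solution $u$ as a limit of classical approximations: regularize $\mu$ and $\sigma$ to smooth data $\mu_n,\sigma_n$, solve \eqref{5hhparabolic1} with $(\mu_n,\sigma_n)$ to get $u_n$, establish \eqref{5hh16101312} uniformly in $n$ for $u_n$, and pass to the limit through the stability Proposition \ref{5hh1203201412}; Fatou's lemma on the weighted Lorentz quasi-norm preserves the bound. The whole work therefore reduces to the \emph{a priori} estimate for $u_n$, which I would derive via a level-set (good-$\lambda$) argument controlling $\mathbb{M}(|\nabla u_n|)$ against $\mathbb{M}_1[\omega_n]$, where $\omega_n=|\mu_n|+|\sigma_n|\otimes\delta_{\{t=0\}}$.

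The engine consists of two comparison estimates on parabolic cylinders $\tilde{Q}_\rho(y,s)$ (replaced in the boundary case by their intersections with $\Omega_T$ after flattening $\partial\Omega$ via the Reifenberg hypothesis). Let $v$ solve the homogeneous Cauchy--Dirichlet problem
\[
v_t-\text{div}(A(x,t,\nabla v))=0 \text{ in } \tilde{Q}_\rho(y,s)\cap\Omega_T,\qquad v=u_n \text{ on } \partial_p(\tilde{Q}_\rho(y,s)\cap\Omega_T).
\]
Combining monotonicity \eqref{5hhcondb} with the measure-data theory yields
\[
\left(\fint_{\tilde{Q}_\rho(y,s)\cap\Omega_T}|\nabla u_n-\nabla v|^{\tau}\,dx\,dt\right)^{1/\tau}\le C\,\frac{\omega_n(\tilde{Q}_\rho(y,s))}{\rho^{N+1}}\le C\,\mathbb{M}_1[\omega_n](y,s),
\]
for some $\tau\in(0,1)$. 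Next I compare $v$ with the solution $w$ of the frozen-in-$x$, coefficient-averaged problem $w_t-\text{div}(\overline{A}_{B_\rho(y)}(t,\nabla w))=0$ on the same (Reifenberg-flattened) region; the BMO smallness $[A]_{s_0}^{R_0}\le\delta$ forces
\[
\left(\fint|\nabla v-\nabla w|^{\tau}\,dx\,dt\right)^{1/\tau}\le\eta_1(\delta)\left(\fint|\nabla v|^{\tau}\,dx\,dt\right)^{1/\tau},
\]
with $\eta_1(\delta)\to 0$ as $\delta\to 0^+$. For the frozen homogeneous $w$ with nonlinearity satisfying \eqref{5hhcondc} on a Reifenberg-flat domain, the Lipschitz-type higher regularity gives the $L^\infty$ gradient bound $\|\nabla w\|_{L^\infty(\tilde{Q}_{\rho/2})}^{\tau}\le C\fint_{\tilde{Q}_\rho}|\nabla w|^{\tau}\,dx\,dt$ up to the flat boundary (this is the parabolic analogue of the estimates used in \cite{55BW2,55BW4}).

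Combining the two comparisons with the $L^\infty$ control on $\nabla w$ and running a Vitali/Calder\'on--Zygmund covering at each super-level set of $\mathbb{M}(|\nabla u_n|)$, I obtain the good-$\lambda$ inequality
\[
|\{\mathbb{M}(|\nabla u_n|)>A\lambda,\,\mathbb{M}_1[\omega_n]\le\varepsilon\lambda\}\cap\Omega_T|\le\eta(\varepsilon,\delta)\,|\{\mathbb{M}(|\nabla u_n|)>\lambda\}\cap\Omega_T|,
\]
with $A=A(N,\Lambda_1,\Lambda_2)>1$ fixed and $\eta(\varepsilon,\delta)\to 0$ as $\varepsilon,\delta\to 0^+$. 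The $A_\infty$ property of $w$ promotes this to
\[
w(\{\mathbb{M}(|\nabla u_n|)>A\lambda,\,\mathbb{M}_1[\omega_n]\le\varepsilon\lambda\}\cap\Omega_T)\le C\,\eta(\varepsilon,\delta)^\nu\,w(\{\mathbb{M}(|\nabla u_n|)>\lambda\}\cap\Omega_T),
\]
where $\nu$ depends on $[w]_{A_\infty}$. Choosing $\varepsilon,\delta$ small enough that $A^q C\eta(\varepsilon,\delta)^\nu\le 1/2$ and iterating the resulting distribution-function inequality by integration against the weighted Lorentz measure yields \eqref{5hh16101312} for $u_n$, which transfers to the limit solution $u$.

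The main obstacle I expect is the boundary comparison: establishing the $L^\infty$ bound on $\nabla w$ up to the Reifenberg-flat portion of $\partial\Omega$ with quantitative control, so that the constants shrink to zero as $\delta$ and $[A]_{s_0}^{R_0}$ shrink to zero. This is the technical core forcing the choices $\delta=\delta(N,\Lambda_1,\Lambda_2,q,s,[w]_{A_\infty})$ (since $\nu$ must compensate the dilation $A^q$ in the Lorentz scale) and $s_0=s_0(N,\Lambda_1,\Lambda_2)$ in the statement, and it is the point where the flatness-plus-BMO hypothesis, parabolic Gehring-type higher integrability, and the $A_\infty$ structure of $w$ must be coordinated. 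The elliptic blueprint of N.~C.~Phuc \cite{55Ph0,55Ph3} combined with the parabolic comparison technology of \cite{55BW2,55BW4} provides the template.
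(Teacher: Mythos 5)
Your overall strategy is the paper's: regularize the data, derive a weighted good-$\lambda$ inequality for the approximating solutions through interior and boundary comparison estimates, promote it via the $A_\infty$ structure of the weight, iterate in the Lorentz scale, and pass to the limit by the stability machinery. The interior comparison chain (homogeneous Dirichlet solution, then averaged-coefficient solution, then $L^\infty$ on the gradient) matches Lemma \ref{5hh24092}, and the Vitali argument on Reifenberg domains is Lemma \ref{5hhvitali2}.

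The boundary step, however, is under-specified in a way that matters. You assert that the frozen-coefficient solution on the ``Reifenberg-flattened region'' satisfies a gradient $L^\infty$ bound up to the flat boundary. But a $(\delta,R_0)$-Reifenberg flat boundary is not flat, and there is no coordinate change turning it into a hyperplane (that would require a Lipschitz graph, which Reifenberg flatness does not supply). Even for the frozen equation, $L^\infty$ gradient estimates up to a Reifenberg boundary are false; they hold only up to a genuine flat (half-space) portion. The paper therefore inserts a comparison that your sketch omits: after freezing coefficients it compares the frozen-coefficient solution $v$ on $\tilde{\Omega}_\rho(0)$ with the solution $V$ of the same frozen equation on the half-cylinder $Q_\rho^+$ with zero data on the flat face (Lemma \ref{5hh21101314}), and it is $\nabla V$, not $\nabla v$, that carries the $L^\infty$ bound (Lemma \ref{5hh2110139}). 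The closeness of $v$ to $V$ is not a perturbation: it is obtained by a compactness/contradiction argument (Lemma \ref{5hh2110138}) that only works for $\delta$ small, and the resulting $L^2$ closeness of $v-V$ must then be upgraded to $L^2$ closeness of $\nabla v-\nabla V$ through an extension of $V$ across the flat face and a Caccioppoli-type argument (Lemmas \ref{5hh21101312}--\ref{5hh21101313}). Without this extra layer the boundary branch of the good-$\lambda$ inequality has no source of an $L^\infty$ estimate, so the proof as you have written it would not close.
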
 
                                             
                       Next results are actually consequences of  Theorem \ref{5hh2410131}. For our purpose, we  introduce
                      another Lorentz-Morrey spaces  $L_{**}^{q,s;\theta}(O_1\times O_2)$, is the set of measurable functions $g$ on $O_1\times O_2$ such that 
                                                                     \begin{equation*}
                                                                     ||g||_{L_{**}^{q,s;\vartheta}(O_1\times O_2)}:=\sup_{0<\rho< \text{diam}(O_1), x\in O_1}\rho^{\frac{\vartheta-N}{q}}||g||_{L^{q,s}((B_\rho(x)\cap O_1)\times O_2))}<\infty,
                                                                     \end{equation*}
 where $O_1,O_2$ are Borel sets in $\mathbb{R}^N$ and $\mathbb{R}$ respectively, $0<\vartheta\leq N$, $0<q<\infty$, $0<s\leq \infty$. Obviously,    $L_{**}^{q,s;N}(D)=L^{q,s}(D)$. For simplicity of notation, we write $L_{**}^{q;\vartheta}(D)$ instead of $L_{**}^{q,s;\vartheta}(D)$ when $q=s$. Moreover, $$
 ||g||_{L_{**}^{q,q;\vartheta}(O_1\times O_2)}=||G||_{L^{q;\vartheta}(O_1)},$$ 
 where  $G(x)=||g(x,.)||_{L^{q}(O_1)}$ and $L^{q;\vartheta}(O_1)$ is the usual Morrey space, i.e the spaces of all measurable functions $f$ on $O_1$ with
                                                   $$||f||_{L^{q;\vartheta}(O_1)}:=\sup_{0<\rho< \text{diam}(O_1),y\in O_1}\rho^{\frac{\vartheta-N}{q}}||f||_{L^{q}(B_\rho(y)\cap O_1)}<\infty.$$                     \begin{theorem} \label{5hh190320143}  Suppose that $A$ satisfies \eqref{5hhcondc}. Let $\mu\in\mathfrak{M}_b(\Omega_T)$, $\sigma\in\mathfrak{M}_b(\Omega)$, set $\omega=|\mu|+|\sigma|\otimes\delta_{\{t=0\}}$. Let $s_0$ be in  Theorem \ref{5hh2410131}.  There exists a distributional solution of \eqref{5hhparabolic1} with data $\mu, \sigma$ such that the following holds.
                                                                                                           \begin{description}
                                                                                                           \item[a.]For any $1\leq q<\infty$, $0<s\leq\infty$ and $0<\kappa\leq N+2$ we find  $\delta=\delta(N,\Lambda_1,\Lambda_2,q,s,\kappa)\in (0,1)$ such that if $\Omega$ is  $(\delta,R_0)$-Reifenberg flat domain $\Omega$ and $[A]_{s_0}^{R_0}\le \delta$ for some $R_0$ then 
                                                                                                                                                                      \begin{equation}\label{5hh190320144}                                 
                                                                                                                                                                      ||\mathbb{M}(|\nabla u|)||_{L_{*}^{q,s;\kappa}(\Omega_T)}\lesssim C_1 ||\mathbb{M}_1[|\omega|]||_{L_{*}^{q,s;\kappa}(\Omega_T)}.
                                                                                                                                                                      \end{equation} 
                                                                                                            Here $C_1$ depends  on $q,s,\kappa$ and $T_0/R_0$.

                                                                                                                     \item[b.]For any $1\leq q<\infty$, $0<s\leq\infty$ and $0<\vartheta\leq N$ we find  $\delta=\delta(N,\Lambda_1,\Lambda_2,q,s,\vartheta)\in (0,1)$  such that if $\Omega$ is  $(\delta,R_0)$-Reifenberg flat domain $\Omega$ and $[A]_{s_0}^{R_0}\le \delta$ for some $R_0$ then 
                                                                                                                                                                                                        \begin{equation}\label{5hh050520143}                                 
                                                                                                                                                                                                        ||\mathbb{M}(|\nabla u|)||_{L_{**}^{q,s;\vartheta}(\Omega_T)}\lesssim C_2 ||\mathbb{M}_1[|\omega|]||_{L_{**}^{q,s;\vartheta}(\Omega_T)},
                                                                                                                                                                                                        \end{equation} 
                                                                                                                                              for some  $C_2=C_2(q,s,\vartheta, T_0/R_0)$. Especially, when $q=s$ and $0<\vartheta<\min\{N,q\}$,  there holds for any ball $B_\rho\subset\mathbb{R}^N$
                                                                                                          \begin{equation}\label{5hh050520141}                                 
                                                                                                                \left(\int_0^T|\text{osc}_{B_\rho\cap\overline{\Omega}}u(t)|^qdt\right)^{\frac{1}{q}} \lesssim C_3 \rho^{1-\frac{\vartheta}{q}} ||\mathbb{M}_1[|\omega|]||_{L_{**}^{q;\vartheta}(\Omega_T)},
                                                                                                                                                                                                                                                               \end{equation} 
                                                                                                                                                                                        for some  $C_3=C_3(q,\vartheta, T_0/R_0)$.                                                                           
                                                                                                           \end{description}                           
                                                                                                                                           \end{theorem}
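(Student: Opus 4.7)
\textbf{Proof proposal for Theorem \ref{5hh190320143}.} The solution $u$ is chosen to be the same as the one provided by Theorem \ref{5hh2410131}. Both parts will be obtained by selecting, for each parabolic (resp. spatial) scale, a weight in $A_\infty$ tailored to localize on a cylinder, applying Theorem \ref{5hh2410131} with that weight, and then taking a supremum.

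For part \textbf{a}, fix $\tilde{Q}_R(y_0,s_0)\subset \mathbb{R}^{N+1}$ and consider
\[
w_{y_0,s_0,R}(x,t):=\bigl(\mathbb{M}\chi_{\tilde{Q}_R(y_0,s_0)}(x,t)\bigr)^{\alpha},\qquad \alpha\in(0,1),
\]
which belongs to $A_1\subset A_\infty$ with $[w_{y_0,s_0,R}]_{A_\infty}$ depending only on $N$ and $\alpha$, hence uniformly in $(y_0,s_0,R)$. Since $w_{y_0,s_0,R}\geq c(N,\alpha)>0$ on $\tilde{Q}_R(y_0,s_0)$ and $w_{y_0,s_0,R}(x,t)\lesssim (R/d)^{\alpha(N+2)}$ at parabolic distance $d\geq R$ from $\tilde{Q}_R(y_0,s_0)$, one checks that for any measurable $g\geq 0$ and $\alpha$ small enough (depending on $q,\kappa$),
\[
\|g\|_{L^{q,s}(\tilde{Q}_R(y_0,s_0)\cap\Omega_T)}\leq C\,\|g\|_{L^{q,s}(\Omega_T,dw_{y_0,s_0,R})},
\]
\[
\|\mathbb{M}_1[|\omega|]\|_{L^{q,s}(\Omega_T,dw_{y_0,s_0,R})}\leq C\,R^{(N+2-\kappa)/q}\,\|\mathbb{M}_1[|\omega|]\|_{L_*^{q,s;\kappa}(\Omega_T)},
\]
where the second estimate follows by dyadically decomposing $\mathbb{R}^{N+1}$ into annuli $\{d\approx 2^k R\}$ around $\tilde{Q}_R(y_0,s_0)$ and summing a geometric series. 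Theorem \ref{5hh2410131} applied with $w=w_{y_0,s_0,R}$ then yields
\[
\|\mathbb{M}(|\nabla u|)\|_{L^{q,s}(\tilde{Q}_R(y_0,s_0)\cap\Omega_T)}\leq C\,R^{(N+2-\kappa)/q}\,\|\mathbb{M}_1[|\omega|]\|_{L_*^{q,s;\kappa}(\Omega_T)},
\]
and multiplying by $R^{(\kappa-N-2)/q}$ and taking $\sup$ over $(y_0,s_0,R)$ gives \eqref{5hh190320144}. The smallness of $\delta$ enters only through Theorem \ref{5hh2410131} applied to this weight family, so $\delta$ depends on $N,\Lambda_1,\Lambda_2,q,s,\kappa$.

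For part \textbf{b}, the argument is identical except that the localizing weight is built on spatial cylinders: with a ball $B_R(y_0)\subset\mathbb{R}^N$ fix
\[
\tilde{w}_{y_0,R}(x,t):=\bigl(M_x\chi_{B_R(y_0)}(x)\bigr)^{\alpha},
\]
where $M_x$ is the spatial Hardy--Littlewood maximal function; this is in $A_1(\mathbb{R}^N)$ uniformly, hence in $A_\infty(\mathbb{R}^{N+1})$ (viewed as a function of $(x,t)$) with uniform constants. The same dyadic-annulus summation, now only in the spatial variable, plugs into Theorem \ref{5hh2410131} to deliver \eqref{5hh050520143}. The oscillation bound \eqref{5hh050520141} is then a standard consequence of the Morrey-type Sobolev embedding in the space variable: for each fixed $t$, $|\nabla u(\cdot,t)|\in L^{q;\vartheta}(\Omega)$ implies Hölder regularity of $u(\cdot,t)$ on $\overline{\Omega}\cap B_\rho$ of the form $\mathrm{osc}_{B_\rho\cap\overline{\Omega}}u(t)\leq C\rho^{1-\vartheta/q}\||\nabla u(\cdot,t)|\|_{L^{q;\vartheta}(\Omega)}$ when $\vartheta<q$, and integrating in $t$ together with the identity $\|g\|_{L^{q,q;\vartheta}_{**}}=\||g(\cdot,\cdot)|_{L^q_t}\|_{L^{q;\vartheta}_x}$ (using Fubini via $\mathbb{M}(|\nabla u|)\geq |\nabla u|$) yields the result.

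The principal difficulties I expect are: (i) verifying that the $A_\infty$ constants of the weights $w_{y_0,s_0,R}$ and $\tilde{w}_{y_0,R}$ are genuinely uniform in the parameters, which requires a careful choice of $\alpha$ depending on $q,s,\kappa$ (or $\vartheta$) and the structural parameters so that the smallness $\delta$ of Reifenberg flatness and BMO oscillation prescribed by Theorem \ref{5hh2410131} can still be imposed universally; (ii) the dyadic annular estimates for $\|\mathbb{M}_1[|\omega|]\|_{L^{q,s}(dw)}$ must be handled in Lorentz (not just Lebesgue) quasinorms, which requires the scale-reduction Lemma of Lorentz-space dyadic decomposition and the correct use of the layer-cake representation. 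Everything else is a packaging of already established weighted estimates and classical Morrey embeddings.
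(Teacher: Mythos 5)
Your treatment of \eqref{5hh190320144} and \eqref{5hh050520143} is essentially the paper's argument repackaged with a different choice of localizing $A_\infty$ weight. The paper works with the explicit truncated power weight $w(x,t)=\min\{\rho^{-N-2+\kappa-\kappa_1},\max\{|x-x_0|,\sqrt{2|t-t_0|}\}^{-N-2+\kappa-\kappa_1}\}$ for part a (and $w(x,t)=\min\{\rho^{-N+\vartheta-\vartheta_1},|x-x_0|^{-N+\vartheta-\vartheta_1}\}$ for part b), which is pointwise comparable to your $(\mathbb{M}\chi_{\tilde{Q}_\rho})^\alpha$ once you identify $\alpha(N+2)=N+2-\kappa+\kappa_1$; the layer-cake and Minkowski computation the paper carries out is the concrete version of your dyadic-annulus summation. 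One direction is reversed in your sketch: since the contribution from the dyadic annulus at scale $2^kR$ is $\lesssim 2^{-k\alpha(N+2)/q}\,(2^kR)^{(N+2-\kappa)/q}\,\|\mathbb{M}_1[|\omega|]\|_{L_{*}^{q,s;\kappa}}$, the geometric series converges when $\alpha(N+2)>N+2-\kappa$, so $\alpha$ must be chosen \emph{close to $1$}, not ``small enough.'' With that correction, parts a and b are sound and align with the paper's proof.

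Your derivation of the oscillation estimate \eqref{5hh050520141}, by contrast, has a genuine gap. Applying the Morrey embedding slicewise in $t$ and then integrating produces $C\rho^{1-\vartheta/q}\bigl(\int_0^T\||\nabla u(\cdot,t)|\|_{L^{q;\vartheta}(\Omega)}^q\,dt\bigr)^{1/q}$, and the quantity in parentheses is $\int_0^T\bigl(\sup_{y,r}r^{(\vartheta-N)/q}\|\nabla u(\cdot,t)\|_{L^q(B_r(y)\cap\Omega)}\bigr)^q\,dt$: the supremum over balls sits \emph{inside} the time integral. What \eqref{5hh050520143} controls, through the identity $\|g\|_{L_{**}^{q,q;\vartheta}}=\|G\|_{L^{q;\vartheta}}$ with $G(y)=\|g(y,\cdot)\|_{L^q_t}$, is only $\sup_{y,r}r^{\vartheta-N}\int_0^T\|\nabla u(\cdot,t)\|_{L^q(B_r(y)\cap\Omega)}^q\,dt$, i.e.\ the supremum \emph{outside} the integral, which is always the smaller of the two. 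So your chain of inequalities points in the wrong direction and the identity you invoke cannot repair it. The paper avoids this by starting from the pointwise Riesz-potential representation $|u(x,t)-u_{B_\rho}(t)|\le C\int_0^{3\rho}r^{1-N}\bigl(\int_{B_r(x)}|\nabla u(y,t)|\,dy\bigr)\tfrac{dr}{r}$ (Lemma 7.16 of Gilbarg--Trudinger), then applying Minkowski's integral inequality to pull the $L^q_t$-norm inside the $r$- and $y$-integrations, and only afterwards applying H\"older and the Morrey hypothesis on $G$. That ordering keeps the supremum over balls outside every integral. You would need to adopt this integral-representation-plus-Minkowski strategy, or find some other argument that commutes $\sup$ and $\int_t$ in the required direction, for your sketch of \eqref{5hh050520141} to close.
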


                                                        \begin{remark} Above results also hold when $[A]_{s}^{R_0}$ is replaced by $\{A\}^{R_0}_s$:
                                                                          \begin{equation*}
                                                                    \{A\}^{R_0}_s:=\mathop {\sup }\limits_{(y,s)\in \mathbb{R}^N\times\mathbb{R},0<r\leq R_0}\left(\fint_{Q_r(y,s)}\left(\Theta(A,Q_r(y,s))(x,t)\right)^sdxdt\right)^{\frac{1}{s}} \leq \delta,
                                                                                                                              \end{equation*}
                                                                                                                               where 
                                                                                                                                                                 \begin{equation*}
                                                                                                                                                                 \Theta(A,Q_r(y,s))(x,t):=\mathop {\sup }\limits_{\zeta\in\mathbb{R}^N\backslash\{0\}}\frac{|A(x,t,\zeta)-\overline{A}_{Q_r(y,s)}(\zeta)|}{|\zeta|},
                                                                                                                                                                 \end{equation*}
                                                                                                                                                                 and  $\overline{A}_{Q_r(y,s)}(\zeta)$ is denoted the average of $A(.,.,\zeta)$ over the cylinder $Q_r(y,s)$, i.e,
                                                                                                                                                                 \begin{equation*}
                                                                                                                                                                 \overline{A}_{Q_r(y,s)}(\zeta):=\fint_{Q_r(y,s)}A(x,t,\zeta)dxdt=\frac{1}{|Q_r(y,s)|}\int_{Q_r(y,s)}A(x,t,\zeta)dxdt.
                                                                                                                                                                \end{equation*} 
                                                                                                                                                   
                                                                                           \end{remark}
                                                                                           Next results are corresponding estimates of gradient for domain $\mathbb{R}^N\times (0,\infty)$ or whole $\mathbb{R}^{N+1}$.  
 \begin{theorem}\label{5hh040420149}
 Let $\theta\in (2,N+2)$ be in Theorem \ref{5hh0701201411} and $\omega\in\mathfrak{M}(\mathbb{R}^{N+1})$. There exists a distributional solution $u$ of \eqref{5hhparabolic2'}  with data $\mu=\omega$ such that the following statements hold
 \begin{description}
 \item[a.] For any $\frac{N+2}{N+1}< p<\theta$ and $0<s\leq\infty$, \begin{equation}
                      \label{5hh040420141}
             |||\nabla u|||_{L^{p,s}(\mathbb{R}^{N+1})}\lesssim_{p,s} ||\mathbb{M}_1[|\omega|]||_{L^{p,s}(\mathbb{R}^{N+1}                                                                )}.
 \end{equation}                                                                                                                                                          \item[b.] For any  $\frac{N+2}{N+1}< p< \theta $ and $0<s \leq \infty$,                        
                                                                                                                                                                                                                                 $2-\gamma_0<\gamma<N+2$ and $\gamma\leq \frac{N+2}{p}+1,$
                                                                                                                                                                                                           \begin{align}
                                                                                                                                                                                                                                 \nonumber&|||\nabla u|||_{L_{*}^{p,s;p(\gamma-1)}(\mathbb{R}^{N+1})}\lesssim_{p,s,\gamma}||\mathbb{M}_{\gamma}[|\omega|]||_{L^\infty(\mathbb{R}^{N+1})}\\&~~~~~+\sup_{R>0, (y_0,s_0)\in \mathbb{R}^{N+1}}\left(R^{\frac{p(\gamma-1)-N-2}{p}}||\mathbb{M}_1[\chi_{\tilde{Q}_{R}(y_0,s_0)}|\omega|]||_{L^{p,s}(\tilde{Q}_{R}(y_0,s_0))}\right),
                             \label{5hh040420143}                       \end{align} 
provided $\mathbb{I}_2[|\omega|](x_0,t_0)<\infty$ for some $(x_0,t_0)\in \mathbb{R}^{N+1}$.\\                             
 Also, if  $\omega\in L_{*}^{\frac{(\gamma-1)p}{\gamma},\frac{(\gamma-1)s}{\gamma};(\gamma-1)p}(\mathbb{R}^{N+1})$ with $p>\frac{\gamma}{\gamma-1}$ then   
                                                                                                                                              \begin{equation} \label{5hh040420144}                           |||\nabla u|||_{L_{*}^{p,s;(\gamma-1)p}(\mathbb{R}^{N+1})}\lesssim_{p,s,\gamma}||\omega||_{L_{*}^{\frac{(\gamma-1)p}{\gamma},\frac{(\gamma-1)s}{\gamma};(\gamma-1)p}(\mathbb{R}^{N+1})},                     
                                                                                                                                              \end{equation} 
                                                                                                                                                                                                                                                                                                                                                           for some $\gamma_0=\gamma_0(N,\Lambda_1,\Lambda_2)\in (0,\frac{1}{2}]$. 
                                                                                                                                                                                                   \item[c.] The statement \textbf{c} in Theorem \ref{5hh1203201417} is true.
 \end{description}

 \end{theorem}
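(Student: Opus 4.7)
The strategy is to obtain $u$ by exhausting $\mathbb{R}^{N+1}$ with bounded parabolic cylinders and inheriting the desired estimates from Theorems \ref{5hh0701201411}--\ref{5hh0701201412} via the pointwise bound of Theorem \ref{5hh141013112} and the stability result (Proposition \ref{5hh1203201412}).

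First, fix $R_n\to\infty$ and set $\Omega_n:=B_{R_n}(0)$, $Q_n:=\Omega_n\times(-R_n^2,R_n^2)$, and $\omega_n:=\chi_{Q_n}\omega$. Each $\omega_n$ belongs to $\mathfrak{M}_b(\mathbb{R}^{N+1})$, so, applied with zero initial datum at $t=-R_n^2$ and zero lateral Dirichlet data, Theorems \ref{5hh0701201411} and \ref{5hh0701201412} produce distribution solutions $u_n$ of \eqref{5hhparabolic1} on $Q_n$ satisfying \eqref{5hh15101311} and \eqref{5hh070520141}--\eqref{5hh070520142} with $\omega_n$ in place of $\omega$. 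Crucially, $\mathbb{R}^N\setminus B_{R_n}$ is uniformly $2$-thick with an absolute constant $c_0$ and inner radius $r_0\sim R_n$, while $T_0^n\sim R_n$; thus the scale-invariant ratio $T_0^n/r_0$ is bounded independently of $n$, and the constants in \eqref{5hh15101311} and \eqref{5hh070520141}--\eqref{5hh070520142} are independent of $n$.

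Next, Theorem \ref{5hh141013112} supplies the pointwise bound $|u_n|\le K\mathbb{I}_2^{2T_0^n}[|\omega_n|]\le K\mathbb{I}_2[|\omega|]$, and the hypothesis $\mathbb{I}_2[|\omega|](x_0,t_0)<\infty$ combined with Remark \ref{5hh020520141} places $\mathbb{I}_2[|\omega|]$ in $L^s_{\text{loc}}(\mathbb{R}^{N+1})$ for every $0<s<\tfrac{N+2}{N}$. Together with the uniform Lorentz bound on $\nabla u_n$, Proposition \ref{5hh1203201412} extracts a subsequence $u_{n_k}\to u$ in $L^1_{\text{loc}}(\mathbb{R}^{N+1})$ with $\nabla u_{n_k}\to\nabla u$ a.e., and $u$ is a distribution solution of \eqref{5hhparabolic2'} with datum $\mu=\omega$ which inherits \eqref{5hh1203201414} from Theorem \ref{5hh1203201417}. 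The lower semicontinuity of the norms $\|\cdot\|_{L^{p,s}}$ and $\|\cdot\|_{L^{p,s;\kappa}_{*}}$ under a.e. convergence, combined with the monotone relation $\mathbb{M}_1[|\omega_n|]\nearrow\mathbb{M}_1[|\omega|]$, then upgrades the uniform bounds into the global estimates \eqref{5hh040420141}, \eqref{5hh040420143} and \eqref{5hh040420144}.

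For statement (c), when $\omega=\mu+\sigma\otimes\delta_{\{t=0\}}$ with $\mu$ supported in $\mathbb{R}^N\times(0,\infty)$, one instead solves the approximating problems on $\Omega_n\times(0,R_n^2)$ with zero datum at $t=0$; every $u_n$ vanishes on $\mathbb{R}^N\times(-\infty,0)$, the limit inherits the same, and the restriction to $[0,\infty)$ solves \eqref{5hhparabolic2}. The main obstacle will be rigorously establishing the uniformity in $n$ of the constants from Theorems \ref{5hh0701201411}--\ref{5hh0701201412}: this should follow from parabolic rescaling $(x,t)\mapsto(R_nx,R_n^2t)$, under which the structure conditions \eqref{5hhconda}--\eqref{5hhcondb}, the uniformly $2$-thick property, and the Lorentz-(Morrey) scales of the estimates are all preserved. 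A secondary difficulty is checking that the lower bound $p>\tfrac{N+2}{N+1}$ in (a) is exactly what guarantees that the right-hand side $\|\mathbb{M}_1[|\omega|]\|_{L^{p,s}(\mathbb{R}^{N+1})}$ controls the nonlocal tails arising from passing from $Q_n$ to $\mathbb{R}^{N+1}$.
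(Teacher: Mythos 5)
Your overall strategy matches the paper's: exhaust $\mathbb{R}^{N+1}$ by bounded parabolic cylinders, apply Theorems \ref{5hh0701201411}--\ref{5hh0701201412} on each $B_n(0)\times(-n^2,n^2)$, observe that the relevant constants are uniform in $n$ because $T_0/r_0=\bigl(2n+(1+n^2)^{1/2}\bigr)/n\approx 1$ and $\mathbb{R}^N\setminus B_n(0)$ is uniformly $2$-thick with an absolute constant and $r_0=n$, use the pointwise bound $|u_n|\le K\mathbb{I}_2[|\omega|]$ from Theorem \ref{5hh141013112} to get uniform local integrability of $u_n$, and pass to the limit via Proposition \ref{5hh1203201412} and lower semicontinuity.

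There is, however, one gap in the way you set this up. You invoke the hypothesis $\mathbb{I}_2[|\omega|](x_0,t_0)<\infty$ (together with Remark \ref{5hh020520141}) for every item, but that hypothesis is only part of the statement of item \textbf{b}, not of item \textbf{a}. For \textbf{a} you need to derive the finiteness of $\mathbb{I}_2[|\omega|]$ from the assumption that the right-hand side of \eqref{5hh040420141} is finite (otherwise the inequality is vacuous but the solution $u$ still must be constructed). The paper does this via Remark \ref{5hh0404201410}: since $\frac{N+2}{N+1}<p<\theta<N+2$, from $\|\mathbb{M}_1[|\omega|]\|_{L^{p,s}(\mathbb{R}^{N+1})}<\infty$ and Theorem \ref{5hh051120131} (with $\alpha=\tfrac12$, Wolff exponent $2$, so $\mathbb{W}_{1/2,2}\approx\mathbb{I}_1$) one gets $\mathbb{I}_1[|\omega|]\in L^{p,\infty}(\mathbb{R}^{N+1})$ with $1<p\le N+2$, and Remark \ref{5hh0404201410} then yields $\mathbb{I}_2[|\omega|]<\infty$ a.e.\ in $\mathbb{R}^{N+1}$. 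This also clarifies the role of the restriction $p>\frac{N+2}{N+1}>1$: it is not primarily about ``nonlocal tails'' as you guessed, but about ensuring $\mathbb{I}_1[|\omega|]$ lands in a weak Lebesgue space with exponent $>1$, which is what Remark \ref{5hh0404201410} requires to conclude that $\mathbb{I}_2[|\omega|]$ is finite a.e.\ and hence that Remark \ref{5hh020520141} applies to yield the uniform $L^{q_0}_{\mathrm{loc}}$ bound on $u_n$ needed for Proposition \ref{5hh1203201412}. Once you insert this step, your argument closes as intended.
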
               
             \begin{remark}\label{5hh0404201410} Let $s>1$. For  $\omega\in\mathfrak{M}^+(\mathbb{R}^{N+1})$, $\mathbb{I}_1[\omega]\in L^{s,\infty}(\mathbb{R}^{N+1})$ implies  $ \mathbb{I}_2[|\omega|]<\infty$ a.e in $\mathbb{R}^{N+1}$ if and only if $s\leq N+2$.
                \end{remark}                                           
              \begin{theorem} \label{5hh040420145}  Suppose that $A$ satisfies  \eqref{5hhcondc}. Let $s_0$ be in Theorem \ref{5hh2410131}. Let $\omega\in\mathfrak{M}(\mathbb{R}^{N+1})$ with $\mathbb{I}_2[|\omega|](x_0,t_0)<\infty$ for some $(x_0,t_0)\in \mathbb{R}^{N+1}$. There exists a distributional solution of \eqref{5hhparabolic2'} with data $\mu=\omega$  such that following statements hold,
             \begin{description}
             \item[a.]For any $w\in A_\infty$, $1\leq q<\infty$, $0<s\leq\infty$ we find  $\delta=\delta(N,\Lambda_1,\Lambda_2,q,s, [w]_{A_\infty})\in (0,1)$ such that if  $[A]_{s_0}^\infty\le \delta$ then                                       
                                                                                             \begin{equation}\label{5hh040420146}
                                                                                             |||\nabla u|||_{L^{q,s}(\mathbb{R}^{N+1},dw)}\lesssim C_1 ||\mathbb{M}_1[|\omega|]||_{L^{q,s}(\mathbb{R}^{N+1},dw)}.
                                                                                             \end{equation}                                                                                               Here $C_1$ depends  on $q,s, [w]_{A_\infty}$.                                                                                                                                                                                   \item[b.] For any $\frac{N+2}{N+1}<q<\infty$, $0<s\leq\infty$ and $0<\kappa\leq N+2$ we find  $\delta=\delta(N,\Lambda_1,\Lambda_2,q,s,\kappa)\in (0,1)$  such that if  $[A]_{s_0}^{\infty}\le \delta$ then 
                                                                                                                                                                                                                                                        \begin{equation}\label{5hh040420147}                                 
                                                                                                                                                                                                                                                        |||\nabla u|||_{L_{*}^{q,s;\kappa}(\mathbb{R}^{N+1})}\lesssim_{q,s,\kappa} ||\mathbb{M}_1[|\omega|]||_{L_{*}^{q,s;\kappa}(\mathbb{R}^{N+1})}.
                                                                                                                                                                                                                                                        \end{equation} 
                                                                                                                                                                                             
                                                                                                                                                                                               \item[c.] For any $\frac{N+2}{N+1}<q<\infty$, $0<s\leq\infty$ and $0<\vartheta\leq N$ one find  $\delta=\delta(N,\Lambda_1,\Lambda_2,q,s,\vartheta)\in (0,1)$  such that if  $[A]_{s_0}^{\infty}\le \delta$ then 
                                                                                                                                                                                                                                                                                                                                                       \begin{equation}\label{5hh05052014113}                                 
                                                                                                                                                                                                                                                                                                                                                       |||\nabla u|||_{L_{**}^{q,s;\vartheta}(\mathbb{R}^{N+1})}\lesssim_{q,s,\vartheta} ||\mathbb{M}_1[|\omega|]||_{L_{**}^{q,s;\vartheta}(\mathbb{R}^{N+1})}.
                                                                                                                                                                                                                                                                                                                                                       \end{equation} 
                                                                                                                                                                                                                                    Especially, when $q=s$ and $0<\vartheta<\min\{N,q\}$,  there holds for any ball $B_\rho\subset\mathbb{R}^N$
                                                                                                                                                                                                                                                                                                                                                                                                       \begin{equation}\label{5hh0505201413}                                 
                                                                                                                                                                                                                                                                                                                                                                                                             \left(\int_{\mathbb{R}}|\text{osc}_{B_\rho}u(t)|^qdt\right)^{\frac{1}{q}} \lesssim_{q,s,\vartheta} \rho^{1-\frac{\vartheta}{q}} ||\mathbb{M}_1[|\omega|]||_{L_{**}^{q;\vartheta}(\mathbb{R}^{N+1})}.
                                                                                                                                                                                                                                                                                                                                                                                                                                                                                                                                                            \end{equation} 
                                                                                                                                                                                                                                                                                                                                                                                                                                                                                                                                                                                                 
                         \item[d.] The statement \textbf{c} in Theorem \ref{5hh1203201417} is true.              
             \end{description} 
                                                                       
                                                                               \end{theorem}
                                                                           We will present some  estimates for norms of $\mathbb{M}_1[\omega]$ in $L_{*}^{q;\kappa}(\mathbb{R}^{N+1})$ and $L_{**}^{q;\vartheta}(\mathbb{R}^{N+1})$ in section 8.

Final part, we prove the  existence solutions for the quasilinear Riccati type parabolic problems
    \begin{equation}\label{5hh0701201410}
                      \left\{
                      \begin{array}
                      [c]{l}%
                      {u_{t}}-\operatorname{div}(A(x,t,\nabla u))=|\nabla u|^q+\mu~~~\text{in }\Omega_T,\\
                      {u}=0\qquad\text{on }\partial\Omega\times (0,T),\\
                      u(0)=\sigma~~~~\text{in }~\Omega,
                      \end{array}
                      \right.  
                      \end{equation}
               \begin{align}\label{5hh270420142}
               {u_t} - \operatorname{div}\left( {A(x,t,\nabla u)} \right) = |\nabla u|^q+\mu ~\text{in }~\mathbb{R}^{N+1},
               \end{align}
 where $q>1$. 
 
 The following result is considered in subcritical case this means $1<q<\frac{N+2}{N+1}$, to obtain existence solutions in this case we need data $\mu,\sigma$ to be  finite measures and small enough.
                       \begin{theorem}\label{5hh260320144} Let $1<q<\frac{N+2}{N+1}$ and $\mu\in\mathfrak{M}_b(\Omega_T)$, $\sigma\in\mathfrak{M}_b(\Omega)$.  There exists   $\varepsilon_0=\varepsilon_0(N,\Lambda_1,\Lambda_2,q)>0$  such that if $$|\Omega_T|^{-1+\frac{q'}{N+2}}\left(|\mu|(\Omega_T)+|\omega|(\Omega)\right)\leq \varepsilon_0,$$  the  problem \eqref{5hh0701201410}
                                             has a distribution  solution $u$, satisfied
                                             \begin{align*}
                                             |||\nabla u|||_{L^{\frac{N+2}{N+1},\infty}(\Omega_T)}\lesssim_q|\mu|(\Omega_T)+|\omega|(\Omega).
                                             \end{align*}
                                            
                                              \end{theorem}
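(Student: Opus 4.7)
The argument is based on Schauder's fixed point theorem applied to the map $T:v\mapsto \nabla u$, where $u$ is the distribution solution produced by Theorem \ref{5hh0701201411} of the linearized problem
\begin{equation*}
u_t-\text{div}(A(x,t,\nabla u))=|v|^q+\mu \text{ in }\Omega_T,\qquad u|_{\partial\Omega\times(0,T)}=0,\qquad u(0)=\sigma.
\end{equation*}
A fixed point of $T$ is then a distribution solution of \eqref{5hh0701201410}. I iterate on the closed convex ball
\begin{equation*}
\mathcal{E}_M=\{v\in L^{\frac{N+2}{N+1},\infty}(\Omega_T;\mathbb{R}^N):\|v\|_{L^{\frac{N+2}{N+1},\infty}(\Omega_T)}\leq M\},
\end{equation*}
where $M$ is to be chosen.

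\textbf{Self-map step.} Applying Theorem \ref{5hh0701201411} with $p=\tfrac{N+2}{N+1}$, $s=\infty$, together with the standard weak-type bound $\|\mathbb{M}_1[\omega]\|_{L^{\frac{N+2}{N+1},\infty}(\mathbb{R}^{N+1})}\leq C|\omega|(\mathbb{R}^{N+1})$ valid for any finite measure $\omega$, I obtain
\begin{equation*}
\|T(v)\|_{L^{\frac{N+2}{N+1},\infty}(\Omega_T)}\leq C\Bigl(|\mu|(\Omega_T)+|\sigma|(\Omega)+\int_{\Omega_T}|v|^q\,dx\,dt\Bigr).
\end{equation*}
Since $q<\tfrac{N+2}{N+1}$, the Lorentz embedding on the finite-measure set $\Omega_T$ yields $\int_{\Omega_T}|v|^q\leq C|\Omega_T|^{1-\frac{q(N+1)}{N+2}}\|v\|_{L^{\frac{N+2}{N+1},\infty}}^{q}$. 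Taking $M=2C(|\mu|(\Omega_T)+|\sigma|(\Omega))$, the self-map inequality $T(\mathcal{E}_M)\subset\mathcal{E}_M$ collapses, via the algebraic identity $-1+\frac{q'}{N+2}=\frac{N+2-q(N+1)}{(q-1)(N+2)}$, to exactly the hypothesis
\begin{equation*}
|\Omega_T|^{-1+\frac{q'}{N+2}}\bigl(|\mu|(\Omega_T)+|\sigma|(\Omega)\bigr)\leq \varepsilon_0
\end{equation*}
for a sufficiently small $\varepsilon_0=\varepsilon_0(N,\Lambda_1,\Lambda_2,q)$.

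\textbf{Continuity, compactness, conclusion.} To apply Schauder I endow $\mathcal{E}_M$ with the strong $L^1(\Omega_T)$-topology, in which it is still closed and convex (the $L^{\frac{N+2}{N+1},\infty}$-ball being closed under $L^1$-limits by lower semi-continuity). Compactness of $T(\mathcal{E}_M)$: the solutions produced in Theorem \ref{5hh0701201411} arise as almost-everywhere limits of smooth approximations; the associated bounds force the family $\{\nabla u:v\in\mathcal{E}_M\}$ to be equi-integrable in $L^r(\Omega_T)$ for every $r<\tfrac{N+2}{N+1}$, and the PDE yields equicontinuity of $\{u\}$ in time with values in some $W^{-1,r}$, so Aubin-Lions delivers relative compactness of $\{\nabla u\}$ in $L^1(\Omega_T)$. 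Continuity: if $v_n\to v$ in $L^1(\Omega_T)$ with $\{v_n\}\subset\mathcal{E}_M$, the uniform bound on $|v_n|^q$ in $L^{\frac{N+2}{q(N+1)},\infty}$ (an exponent $>1$) provides equi-integrability, whence $|v_n|^q\to|v|^q$ in $L^1$ by Vitali; the stability Proposition \ref{5hh1203201412} then forces $T(v_n)\to T(v)$ in $L^1$. Schauder yields a fixed point $v=\nabla u$, and $\|v\|_{L^{\frac{N+2}{N+1},\infty}(\Omega_T)}\leq M$ is precisely the advertised estimate.

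\textbf{Main obstacle.} The delicate point is matching the natural a priori bound, which lives in the non-reflexive weak Lorentz space $L^{\frac{N+2}{N+1},\infty}$, against a topology fine enough to ensure both continuity of $T$ and compactness of its image. The resolution rests on the strict subcriticality $q<\tfrac{N+2}{N+1}$: this guarantees that $|v|^q$ inhabits an $L^{r,\infty}$ space with $r>1$, which in turn supplies the equi-integrability needed to run Vitali and Aubin-Lions in the coarser $L^1$-topology, and allows invocation of the stability result Proposition \ref{5hh1203201412}---itself the technically demanding ingredient---to convert weak/almost-everywhere convergence of the sources into strong convergence of the gradients.
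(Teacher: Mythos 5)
The overall strategy---Schauder's fixed point theorem applied to the linearized Riccati map on a weak-Lorentz ball, with the strict subcriticality $q<\frac{N+2}{N+1}$ supplying the smallness needed to close the self-map step---is the same as the paper's. But there are two concrete flaws in the execution.

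First, invoking Theorem \ref{5hh0701201411} at the self-map step is illegitimate: that theorem assumes that $\mathbb{R}^N\backslash\Omega$ is uniformly $2$-thick, a boundary regularity hypothesis that is \emph{not} part of Theorem \ref{5hh260320144}. The paper instead draws the weak-type bound
\begin{equation*}
|||\nabla u|||_{L^{\frac{N+2}{N+1},\infty}(\Omega_T)}\leq c\bigl(||\sigma||_{L^1(\Omega)}+|\mu|(\Omega_T)\bigr)
\end{equation*}
from Remark \ref{5hh070420143}, which is the elementary a priori estimate for renormalized solutions valid on an arbitrary bounded domain and does not involve the fractional maximal potential $\mathbb{M}_1$ at all. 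That elementary estimate is exactly what you need to close the self-map inequality, and it does not require any hidden geometric hypothesis. The paper also first mollifies $\mu,\sigma$ to $\mu_{n_0}\in C^\infty_c(\Omega_T)$, $\sigma_{n_0}\in C^\infty_c(\Omega)$ so that Remark \ref{5hh070420143} (stated for $L^1$ data) applies, and recovers the solution for general $\mu,\sigma$ by a limiting argument via Theorem \ref{5hhsta}; your write-up skips this approximation entirely.

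Second, Aubin--Lions gives relative compactness of $\{u\}$, not of $\{\nabla u\}$, in $L^1(\Omega_T)$; strong $L^1$-precompactness of the gradients is exactly what is hard here, and it does not follow from Aubin--Lions. The paper gets it from the dedicated stability result Proposition \ref{5hhmun} (a bounded family of renormalized solutions with bounded $L^1$ data is precompact in $L^s(0,T;W^{1,s}_0(\Omega))$ for all $s<\frac{N+2}{N+1}$), and uses Theorem \ref{5hhsta} for continuity of the map $S$. Your appeal to Proposition \ref{5hh1203201412} is also slightly off target: that is the whole-space ($\mathbb{R}^{N+1}$) version, whereas Theorem \ref{5hhsta} is the bounded-domain stability theorem appropriate here. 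Replacing Theorem \ref{5hh0701201411} by Remark \ref{5hh070420143}, and replacing the (Aubin--Lions, Proposition \ref{5hh1203201412}) pair by (Proposition \ref{5hhmun}, Theorem \ref{5hhsta}), repairs your argument and essentially reproduces the paper's proof.
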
 
In the next results are concerned in critical and supercritical case.                                               
                      \begin{theorem}\label{5hh0701201413}
                              Suppose that  $\mathbb{R}^N\backslash\Omega$ satisfies uniformly $2-$thick with  constants $c_0,r_0$. Let $\theta$ be in Theorem \ref{5hh0701201411},  $q\in\left(\frac{N+2}{N+1},\frac{N+2+\theta}{N+2}\right)$, $\mu\in \mathfrak{M}_b(\Omega_T)$ and $\sigma\in \mathfrak{M}_b(\Omega)$. Assume that $\sigma\equiv0$ when $q\geq\frac{N+4}{N+2}$. There exists $\varepsilon_0=\varepsilon_0(N,\Lambda_1,\Lambda_2,q,c_0,T_0/r_0)>0$ such that if $$ ||\mathbb{I}_1[|\mu|]||_{L^{(N+2)(q-1),\infty}(\mathbb{R}^{N+1})}+ ||\mathbf{I}_{\frac{2}{(N+2)(q-1)}-1}[|\sigma|]||_{L^{(N+2)(q-1)}(\mathbb{R}^{N})}\leq \varepsilon_0,$$ then the problem \eqref{5hh0701201410}                              
                                                has a  distributional solution u satisfying
                                  \begin{equation}
                                  |||\nabla u|||_{L^{(q-1)(N+2),\infty}(\Omega_T)}\lesssim C,
                                  \end{equation}
                                  for some $C=C(q,c_0,T_0/r_0)$. 
                                 
                              \end{theorem}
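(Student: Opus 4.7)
The strategy is to solve \eqref{5hh0701201410} by Schauder's fixed point theorem, using the Lorentz gradient estimate of Theorem \ref{5hh0701201411} for the linear problem, the Riesz potential mapping property together with Proposition \ref{5hh050520145} to handle the nonlinear term $|\nabla v|^q$, and the stability result Proposition \ref{5hh1203201412} for continuity of the solution map.

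Set $p := (N+2)(q-1)$. The hypotheses $\frac{N+2}{N+1} < q < \frac{N+2+\theta}{N+2}$ yield $1 < p < \theta$ and $p/q > 1$. Define the closed convex set
\begin{equation*}
E_\lambda := \bigl\{ v \in L^1(0,T;W^{1,1}_0(\Omega)) : \|\nabla v\|_{L^{p,\infty}(\Omega_T)} \leq \lambda \bigr\}.
\end{equation*}
For $v \in E_\lambda$, define $T(v) := u$ to be the distribution solution furnished by Theorem \ref{5hh0701201411} of
\begin{equation*}
u_t - \mathrm{div}(A(x,t,\nabla u)) = |\nabla v|^q + \mu \text{ in } \Omega_T,\quad u = 0 \text{ on } \partial\Omega\times(0,T),\quad u(0) = \sigma.
\end{equation*}
Writing $\omega_v := |\nabla v|^q\chi_{\Omega_T}\,dxdt + |\mu| + |\sigma|\otimes\delta_{\{t=0\}}$, Theorem \ref{5hh0701201411} applied at the exponent pair $(p,\infty)$, combined with the pointwise bound $\mathbb{M}_1[\omega_v] \leq 2^{N+2}\mathbb{I}_1[\omega_v]$ from Proposition \ref{5hh050520145}, yields
\begin{equation*}
\|\nabla u\|_{L^{p,\infty}(\Omega_T)} \leq C\bigl(\|\mathbb{I}_1[|\nabla v|^q]\|_{L^{p,\infty}} + \|\mathbb{I}_1[|\mu|]\|_{L^{p,\infty}} + \|\mathbb{I}_1[|\sigma|\otimes\delta_{\{t=0\}}]\|_{L^{p,\infty}}\bigr).
\end{equation*}

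Next I would absorb the nonlinear term via the parabolic Marcinkiewicz mapping $\mathbb{I}_1 : L^{p/q,\infty}(\mathbb{R}^{N+1}) \to L^{p,\infty}(\mathbb{R}^{N+1})$, whose validity is ensured by $p/q > 1$ and the relation $(q-1)/p = 1/(N+2)$; this gives $\|\mathbb{I}_1[|\nabla v|^q]\|_{L^{p,\infty}} \leq C\|\nabla v\|_{L^{p,\infty}}^q \leq C\lambda^q$. When $p < 2$ (i.e.\ $q < \frac{N+4}{N+2}$), Proposition \ref{5hh050520145}(b) provides $\|\mathbb{I}_1[|\sigma|\otimes\delta_{\{t=0\}}]\|_{L^{p,\infty}} \leq \|\mathbf{I}_{2/p-1}[|\sigma|]\|_{L^p(\mathbb{R}^N)}$; when $p \geq 2$ the hypothesis forces $\sigma \equiv 0$ and this term vanishes. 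Altogether
\begin{equation*}
\|\nabla u\|_{L^{p,\infty}(\Omega_T)} \leq C_*\lambda^q + C_*\varepsilon_0.
\end{equation*}
Choosing $\lambda := 2 C_* \varepsilon_0$ and requiring $\varepsilon_0$ small enough that $C_*(2C_*\varepsilon_0)^{q-1} \leq 1/2$ forces $T(E_\lambda) \subset E_\lambda$ and simultaneously produces the advertised a priori estimate on any fixed point.

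To apply Schauder it then remains to verify that $T$ is continuous and $T(E_\lambda)$ relatively compact in a suitable Banach topology compatible with $E_\lambda$. For continuity: if $v_n \to v$ strongly in $L^1(\Omega_T)$ with $v_n \in E_\lambda$, the uniform control of $|\nabla v_n|^q$ in $L^{p/q,\infty}$ together with $p/q > 1$ delivers equi-integrability of $|\nabla v_n|^q$, so $|\nabla v_n|^q \to |\nabla v|^q$ in $L^1(\Omega_T)$; the stability Proposition \ref{5hh1203201412} then yields $\nabla T(v_n) \to \nabla T(v)$ in $L^1$. Compactness is obtained by an Aubin-Lions type argument, in which a negative-Sobolev bound on $\partial_t T(v)$ extracted from the equation is coupled with the uniform $L^{p,\infty}$ control of $\nabla T(v)$. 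Schauder's theorem then furnishes the desired fixed point. The principal obstacle is precisely this compactness step: Marcinkiewicz embeddings are not compact, so one must carefully combine the equi-integrability afforded by $p > 1$ with a quantitative time-regularity estimate derived from the parabolic structure of \eqref{5hh0701201410}, while simultaneously accommodating the initial measure $\sigma$ at $t=0$ within the stability scheme.
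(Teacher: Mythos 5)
Your overall strategy --- Schauder fixed point in a weak-Lorentz ball, with the nonlinear term handled by the Marcinkiewicz mapping $\mathbb{I}_1:L^{p/q,\infty}\to L^{p,\infty}$ (a consequence of Proposition \ref{5hh23101315}) and the initial trace handled via Proposition \ref{5hh050520145} --- is precisely the route the paper takes; it is the two-thick analogue of \textbf{Case c} in the proof of Theorem \ref{5hh2410136}. The exponent arithmetic ($p/q>1$ from $q>\frac{N+2}{N+1}$, and $\frac{(p/q)(N+2)}{N+2-p/q}=p$) is correct, as is the dichotomy $p<2$ versus $p\geq 2$ matching the hypothesis that $\sigma\equiv 0$ when $q\geq\frac{N+4}{N+2}$.

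There is, however, a genuine gap where you define $T(v)$ as ``the distribution solution furnished by Theorem \ref{5hh0701201411}'': for data $\mu\in\mathfrak{M}_b(\Omega_T)$, $\sigma\in\mathfrak{M}_b(\Omega)$, distribution solutions of the linear problem are not unique, so $T$ is not a well-defined single-valued map and Schauder cannot be invoked as stated. The paper avoids this by first replacing $\mu,\sigma$ with smooth compactly supported approximations $\mu_{n_0},\sigma_{n_0}$ (as in the proof of Theorem \ref{5hh141013112}): for each fixed $n_0$ the right-hand side $|\nabla v|^q+\mu_{n_0}$ lies in $L^1(\Omega_T)$ and $\sigma_{n_0}\in L^1(\Omega)$, so the renormalized solution is unique, $S$ is well-defined, and Schauder produces a fixed point $u_{n_0}$; the uniform bound $\|\nabla u_{n_0}\|_{L^{p,\infty}}\leq\Lambda$ together with the equi-integrability of $|\nabla u_{n_0}|^q$ (from $p/q>1$) then permits passing to the limit $n_0\to\infty$ via Proposition \ref{5hhmun} and Theorem \ref{5hhsta} --- these, not Proposition \ref{5hh1203201412} (which concerns the whole-space setting), are the pertinent bounded-domain stability results. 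Once this two-layer approximation scheme is in place, the compactness you flagged as the ``principal obstacle'' is supplied directly by Proposition \ref{5hhmun}; no separate Aubin--Lions argument is required.
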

   We remark that a  necessary  condition the existence of $\sigma\in \mathfrak{M}_b(\Omega)\backslash\{0\}$ with $\mathbb{M}_1[|\sigma|\otimes\delta_{\{t=0\}}]\in L^{(N+2)(q-1),\infty}(\mathbb{R}^{N+1})$ is $\frac{N+2}{N+1}\leq q<\frac{N+4}{N+2}$.  
                 
 \begin{theorem}\label{5hh2410136}
            Suppose that $A$ satisfies \eqref{5hhcondc}.  Let $s_0$ be the constant in Theorem \ref{5hh2410131}. Let $q\geq\frac{N+2}{N+1}$ and $\mu\in\mathfrak{M}_b(\Omega_T),\sigma\in\mathfrak{M}_b(\Omega)$, set $\omega=|\mu|+|\sigma|\otimes\delta_{\{t=0\}}$. There exists   $\delta=\delta(N,\Lambda_1,\Lambda_2,q)\in (0,1)$  such that $\Omega$ is $(\delta,R_0)$-Reifenberg flat domain $\Omega$ and $[A]_{s_0}^{R_0}\le \delta$ for some $R_0$  and the following holds. The problem  \eqref{5hh0701201410}           
           has a distributional solution $u$ if  \begin{equation}\label{5hh090120146}
                                 \omega(K)\leq \varepsilon_0\text{Cap}_{\mathcal{G}_1,q'}(K)~~\text{ for every compact subset } K\subset \mathbb{R}^{N+1},
                                 \end{equation}
                                  with a constant $\varepsilon_0>0$ small enough.   
           Moreover, $u$   satisfies
           $$\int_K |\nabla u|^{q}dxdt\lesssim C \text{Cap}_{\mathcal{G}_1,q'}(K)~~\text{ for every compact subset } K\subset \mathbb{R}^{N+1},
$$where $C$ depends on $q,T_0/R_0,T_0$.
                         
            \end{theorem} 
            Since $\text{
            Cap}_{\mathcal{G}_1,s}(B_r(0)\times\{t=0\})=0$ for all $r>0$ and $0<s\leq 2$, see Remark \ref{5hh270320146} thus if there is $\sigma\in \mathfrak{M}_b(\Omega)\backslash\{0\}$ satisfying $ (|\sigma|\otimes\delta_{\{t=0\}})(E)\leq  \text{Cap}_{\mathcal{G}_1,s}(E)$  for every compact subsets $E\subset \mathbb{R}^{N+1}$ then we must have $s>2$. \\\\
    We have an \textbf{important} Proposition.
    \begin{proposition}
    All the existence results (in the bounded domain case) that we have obtained in the above Theorems are renormalized solutions if we further assume that $\sigma\in L^1(\Omega).$
    \end{proposition}                                 The following theorem is an existence result to the problem  \eqref{5hh270420142}.                         
     \begin{theorem}\label{5hh0404201412} Let  $\theta\in (2,N+2)$ be in Theorem \ref{5hh0701201411},  $q\in\left(\frac{N+2}{N+1},\frac{N+2+\theta}{N+2}\right)$ and $\omega\in\mathfrak{M}(\mathbb{R}^{N+1})${\color{red}{.}}  There exists $\varepsilon_0=\varepsilon_0(N,\Lambda_1,\Lambda_2,q)>0$ such that if \begin{align*}
                                  ||\mathbb{I}_1[|\omega|]||_{L^{(N+2)(q-1),\infty}(\mathbb{R}^{N+1})}\leq \varepsilon_0,
                                  \end{align*} then the problem  \eqref{5hh270420142} 
                                                                        has a  distributional solution $u\in L^1_{loc}(\mathbb{R};W^{1,1}_{loc}(\mathbb{R}^N))$ such that 
                                     \begin{equation}
                |||\nabla u|||_{L^{(q-1)(N+2),\infty}(\mathbb{R}^{N+1})}\lesssim_q 1\label{5hh0404201415}.
                                     \end{equation}
         Furthermore, if $\text{supp}(\mu)\subset \mathbb{R}^{N}\times [0,\infty)$ then $u=0$ in $\mathbb{R}^N\times (-\infty,0)$.             
        
                                  \end{theorem}                                                                                                 
\begin{theorem}\label{5hh0404201417}
  Suppose that $A$ satisfies \eqref{5hhcondc}.  Let $s_0$ be the constant in Theorem \ref{5hh2410131}.  Let $q>\frac{N+2}{N+1}$ and $\mu\in\mathfrak{M}(\mathbb{R}^{N+1})$. There exists   $\delta=\delta(N,\Lambda_1,\Lambda_2,q)\in (0,1)$  such that  $[A]_{s_0}^{\infty}\le \delta$ for some $R_0$  and the following holds. The problem  \eqref{5hh270420142}           
  has a distributional solution $u$ if  \begin{equation}\label{Z7}
  	|\mu|(K)\leq \varepsilon_0\text{Cap}_{\mathcal{H}_1,q'}(K)~~\text{ for every compact subset } K\subset \mathbb{R}^{N+1},
  \end{equation}
  with a constant $\varepsilon_0>0$ small enough.     Moreover, $u$   satisfies
  $$\int_K |\nabla u|^{q}dxdt\lesssim_q \text{Cap}_{\mathcal{H}_1,q'}(K)~~\text{ for every compact subset } K\subset \mathbb{R}^{N+1}.
  $$
                                                                                              Furthermore, if $\text{supp}(\mu)\subset \mathbb{R}^{N}\times [0,\infty)$ then $u=0$ in $\mathbb{R}^N\times (-\infty,0)$.                                                                                                                                                                                     \end{theorem}                                                                                                                                                                                  
    \section{The notion of solutions and some properties}
    
    ~~Although the notion of renormalized solutions  becomes more and more familiar in the theory of quasilinear parabolic  equations with measure data, it is still necessary to present below some main aspects concerning this notion. Let $\Omega$ be a bounded domain in $\mathbb{R}^N$, $(a,b)\subset\subset \mathbb{R}$. If $\mu\in \mathfrak{M}_b(\Omega\times (a,b))$, we denote by $\mu^+$ and $\mu^-$ respectively its positive and negative part. We denote by $\mathfrak{M}_0(\Omega\times (a,b))$ the space of measures in $\Omega
    \times (a,b)$ which are absolutely continuous with respect to the $C_{2}$-capacity defined on a compact set $K\subset\Omega\times(a,b)$ by
    \begin{equation}\label{5hhD1}
    C_{2}(K,\Omega\times (a,b))=\inf\left\{||\varphi||_W:\varphi\geq \chi_K,\varphi\in C^\infty_c(\Omega\times (a,b))\right\}.
    \end{equation}
    Here $W=\{z:z\in L^2(a,b,H^1_0(\Omega)),z_t\in L^2(a,b,H^{-1}(\Omega))\}$ is endowed  with the  norm $||\varphi||_{W}=||\varphi||_{L^2(a,b,H^1_0(\Omega))}+||\varphi_t||_{L^2(a,b,H^{-1}(\Omega))}$ and $\chi_K$ is the characteristic function of $K$.
    
    We also denote $\mathfrak{M}_s(\Omega\times (a,b))$ the space of measures in $\Omega\times (a,b)$ with support on a set of zero $C_{2}$-capacity. Classically, any $\mu\in \mathfrak{M}_b(\Omega\times (a,b))$ can be written in a unique way under the form $\mu=\mu_0+\mu_s$ where $\mu_0\in (\mathfrak{M}_0\cap\mathfrak{M}_b)(\Omega\times (a,b))$ and $\mu_s\in \mathfrak{M}_s(\Omega\times (a,b))$.
    We recall that any $\mu_0\in (\mathfrak{M}_0\cap \mathfrak{M}_b)(\Omega\times (a,b))$ can be decomposed  under the form $\mu_0=f-\operatorname{div}g+h_t$ where $f\in L^1(\Omega\times (a,b))$, $g\in L^{2}(\Omega\times (a,b),\mathbb{R}^N)$ and $h\in L^2(a,b,H^1_0(\Omega))$ and $(f,g,h)$ is said to be decomposition of $\mu_0$.
    Set ${{\widehat{\mu_{0}%
    }}}=\mu_{0}-h_{t}=f-\operatorname{div}g$. In the general case ${{\widehat{\mu
    _{0}}}}\notin\mathfrak{M}(\Omega\times (a,b)),$ but we write, for convenience,
    \begin{align*}
    \int_{\Omega\times (a,b)}{wd{\widehat{\mu_{0}}}}:=\int_{\Omega\times (a,b)}(fw+g.\nabla w)dxdt,\qquad\forall w\in
    L^2(a,b,H^1_0(\Omega)){\cap}L^{\infty}(\Omega\times (a,b)).
    \end{align*}
    
    However, for $\sigma\in\mathfrak{M}_b(\Omega)$ and $t_0\in (a,b)$ then $\sigma\otimes\delta_{\{t=t_0\}}\in\mathfrak{M}_0(\Omega\times (a,b))$ if and only if $\sigma\in L^1(\Omega)$, see \cite{55DrPoPr}. We also have that for $\sigma\in\mathfrak{M}_b(\Omega)$, $\sigma\otimes \chi_{[a,b]}\in \mathfrak{M}_0(\Omega\times (a,b))$ if and only if $\sigma$ is absolutely continuous with respect to the $\text{Cap}_{\mathbf{G}_1,2}$-capacity, see \cite{55DrPoPr}.
     
    For $k>0$ and $s\in\mathbb{R}$ we set $T_k(s)=\max\{\min\{s,k\},-k\}$. We recall that if $u$ is a measurable function defined and finite a.e. in $\Omega\times (a,b)$, such that $T_k(u)\in L^2(a,b,H^1_0(\Omega))$ for any $k>0$, there exists a measurable function $v:\Omega\times (a,b)\to \mathbb{R}^N$ such that $\nabla T_k(u)=\chi_{ |u|\leq k}v$ 
    a.e. in $\Omega\times (a,b)$ and for all $k>0$. We define the gradient of $u$ by $v=\nabla u$.\medskip\\ We recall the definition of a renormalized solution given in \cite{55Pe08}.
    \begin{definition}
    \label{5hhdefin}\label{5hh100320142}Suppose that $B\in C(\mathbb{R}\times\mathbb{R}^N,\mathbb{R})$. Let  $\mu=\mu_{0}+\mu_{s}%
    \in\mathfrak{M}_{b}(\Omega\times (a,b))$ and $\sigma\in L^1(\Omega)$. A measurable function $u$ is a
    \textbf{renormalized solution} of
    \begin{equation}\label{5hh050420141}
                                          \left\{
                                          \begin{array}
                                          [c]{l}%
                                          {u_{t}}-\operatorname{div}(A(x,t,\nabla u))=B(u,\nabla u)+\mu~\text{in }\Omega\times (a,b),\\ 
                                                                u=0~~~~~~~\text{on}~~
                                                                                                 \partial\Omega \times (a,b),
                                                                                                    \\
                                                                                                    u(a) = \sigma~~~\text{in}~~ \Omega, \\ 
                                          \end{array}
                                          \right.  
                                          \end{equation}   if
    there exists a decomposition $(f,g,h)$ of $\mu_{0}$ such that
    \begin{align}
    \nonumber &v=u-h\in L^{s}(a,b,W_{0}^{1,s}(\Omega))\cap L^{\infty}(a,b,L^{1}%
    (\Omega))~\forall s\in\left[  1,\frac{N+2}{N+1}\right),\\&~~~~ T_{k}(v)\in
    L^2(a,b,H^1_0(\Omega))~\forall k>0,B(u,\nabla u)\in L^1(\Omega\times (a,b)), \label{5hhdefv}%
    \end{align}
    and:\medskip
    
    (i) for any $S\in W^{2,\infty}(\mathbb{R})$ such that $S^{\prime}$ has compact
    support on $\mathbb{R}$, and $S(0)=0$,%
    \begin{align}\nonumber
    &-\int_{\Omega}S(\sigma)\varphi(a)dx-\int_{\Omega\times (a,b)}{{\varphi_{t}}S(v)}dxdt+\int%
    _{\Omega\times (a,b)}{S^{\prime}(v)A(x,t,\nabla u)\nabla\varphi}dxdt\\&+\int_{\Omega\times (a,b)}{S^{\prime\prime
    }(v)\varphi A(x,t,\nabla u).\nabla v}dxdt=\int_{\Omega\times (a,b)}S^{\prime}(v)\varphi B(u,\nabla u)dxdt+\int_{\Omega\times (a,b)}{S^{\prime}(v)\varphi
    d{\widehat{\mu_{0}}}}, \label{5hhrenor}%
    \end{align}
    for any $\varphi\in L^2(a,b,H^1_0(\Omega))\cap L^{\infty}(\Omega\times (a,b))$ such that $\varphi_{t}\in L^2(a,b,H^{-1}(\Omega))+L^{1}(\Omega\times (a,b))$ and $\varphi(.,b)=0$;\medskip
    
    (ii) for any $\phi\in C(\overline{\Omega}\times [a,b]),$%
    \begin{equation}
    \label{5hhrenor2}\lim_{m\rightarrow\infty}\frac{1}{m}\int\limits_{\left\{  m\leq
    v<2m\right\}  }{\phi A(x,t,\nabla u)\nabla v}dxdt =\int_{\Omega\times (a,b)}\phi d\mu_{s}^{+}~~\text{ and }
    \end{equation}
    \begin{equation}
    \label{5hhrenor3}\lim_{m\rightarrow\infty}\frac{1}{m}\int\limits_{\left\{  -m\geq
    v>-2m\right\}  }{\phi A(x,t,\nabla u)\nabla v}dxdt =\int_{\Omega\times (a,b)}\phi d\mu_{s}^{-}.
    \end{equation}
    
    \end{definition}
    \begin{remark}\label{5hh070420143} If $\mu\in L^1(\Omega\times(a,b))$, then we have the following estimates: 
    \begin{align*}
    &||u||_{L^{\frac{N+2}{N},\infty}(\Omega\times(a,b))}\lesssim||\sigma||_{L^1(\Omega)}+|\mu|(\Omega\times(a,b)),\\&
    |||\nabla u|||_{L^{\frac{N+2}{N+1},\infty}(\Omega\times(a,b))}\lesssim||\sigma||_{L^1(\Omega)}+|\mu|(\Omega\times(a,b)),
    \end{align*}
   see \cite[Remark 4.9]{55VH}.\\
    In particular, 
    \begin{align*}
    &||u||_{L^1(\Omega\times(a,b))}\lesssim(\text{diam}(\Omega)+(b-a)^{1/2})^2\left(||\sigma||_{L^1(\Omega)}+|\mu|(\Omega\times(a,b))\right),\\&
    |||\nabla u|||_{L^1(\Omega\times(a,b))}\lesssim(\text{diam}(\Omega)+(b-a)^{1/2})\left(||\sigma||_{L^1(\Omega)}+|\mu|(\Omega\times(a,b))\right).
    \end{align*}
    \end{remark}
    \begin{remark} \label{5hh240220142}It is easy to see that $u$ is a weak solution to the problem \eqref{5hh050420141} in $\Omega\times (a,b)$ with $\mu\in L^2(\Omega\times (a,b))$, $\sigma\in H^1_0(\Omega)$ and $B\equiv 0$ then $U=\chi_{[a,b]} u$ is a unique  renormalized solution of 
     \begin{equation*}\left\{ \begin{array}{l}
                         {U_t} - \operatorname{div}\left( {A(x,t,\nabla U)} \right) = \chi_{(a,b)}\mu+(\chi_{[a,b)}\sigma)_t ~\text{in}~\Omega\times (c,b), \\ 
                         U=0~~~~~~~~~~
                         \text{ on }\partial\Omega \times (c,b),
                         \\
                         U(c) = 0~~~~~~~
                                       \text{ in }\Omega, \\ 
                         \end{array} \right.\end{equation*}
                         for any $c<a$.
    \end{remark}
    \begin{remark}\label{5hh120320145}
    Let $\Omega'\subset\subset\Omega$ and $a<a'<b'<b$. For a nonnegative function $\eta\in C^\infty_c(\Omega'\times(a',b'))$, from \eqref{5hhrenor} we have 
    \begin{align*}
    &\left(\eta S(v)\right)_t-\eta_tS(v)+S'(v)A(x,t,\nabla u)\nabla \eta-\operatorname{div}\left(S'(v)\eta A(x,t,\nabla u)\right)\\&~~~+S''(v)\eta A(x,t,\nabla u)\nabla v=S'(v)\eta f+\nabla\left(S'(v)\eta\right).g-\operatorname{div}\left(S'(v)\eta g\right)
    \end{align*}
    in $\mathcal{D}'(\Omega'\times (a',b'))$. 
    Thus, $\left(\eta S(v)\right)_t\in L^{2}(a',b',H^{-1}(\Omega'))+L^1(D)$ and we have the following estimate 
    \begin{align}\nonumber
    &||\left(\eta S(v)\right)_t||_{L^{2}(a',b',H^{-1}(\Omega'))+L^1(D)}\lesssim||S||_{W^{2,\infty}(\mathbb{R})}\left( ||\eta_t v||_{L^1(D)} \right.
    \\\nonumber&~~~~~~+|||\nabla u||\nabla \eta|||_{L^1(D)}+||\eta|\nabla u|\chi_{|v|\leq M}||_{L^2(D)}+||\eta|\nabla u||\nabla v|\chi_{|v|\leq M}||_{L^2(D)}
    \\&~~~~~~+||\eta f||_{L^1(D)}+||\eta|\nabla u|^2\chi_{|v|\leq M|}||_{L^1(D)}+||\eta|g|^2||_{L^1(D)}\left.+||\eta|g|||_{L^2(D)}  \right)\label{5hh100320141}
    \end{align}
    with $D=\Omega'\times (a',b')$ and $\text{supp}(S')\subset [-M,M]$.
    \end{remark}
    We recall the following important results, see in \cite{55VH}.
     \begin{proposition}
             \label{5hhmun} Let $\{\mu_{n}\}$ be a bounded in $\mathfrak{M}_b(\Omega\times (a,b))$ and $\sigma_{n}$ be a bounded in $L^1(\Omega)$. 
             Let $u_{n}$ be a renormalized solution of \eqref{5hhparabolic1} with data $\mu_{n}=\mu_{n,0}%
              +\mu_{n,s}$ relative to a decomposition $(f_{n},g_{n},h_{n})$
              of $\mu_{n,0}$ and initial data $\sigma_{n}$. If $\{f_{n}\}$ is bounded in
              $L^{1}(\Omega_T)$, $\{g_{n}\}$ is  bounded in $L^{2}(\Omega\times(a,b),\mathbb{R}^N)$ and $\{h_{n}\}$ is
              convergent in $L^2(a,b,H^1_0(\Omega))$,    
             then, up to a subsequence, $\{u_{n}\}$ converges  to a
             function $u$ in $L^1(\Omega\times(a,b))$. Moreover,  if $\{\mu_{n}\}$ is a bounded in $L^1(\Omega\times(a,b))$ then  $\{u_{n}\}$ is convergent in  $L^s(a,b,W^{1,s}_0(\Omega))$ for any $s\in\left[1,\frac{N+2}{N+1}\right)$.
             \end{proposition}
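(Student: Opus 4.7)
The plan is to combine a priori estimates in Marcinkiewicz spaces with a local Aubin--Simon compactness argument applied to truncations, and then to pass to the limit in the renormalized formulation; the scheme follows the classical DiPerna--Lions strategy adapted to the parabolic setting with measure data.

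First I would record the a priori bounds: by Remark \ref{5hh070420143}, the sequences $\{u_n\}$ and $\{\nabla u_n\}$ are uniformly bounded in $L^{(N+2)/N,\infty}(\Omega\times(a,b))$ and $L^{(N+2)/(N+1),\infty}(\Omega\times(a,b))$ respectively. Setting $v_n=u_n-h_n$, the hypotheses on $(f_n,g_n,h_n)$ together with the renormalized identity \eqref{5hhrenor} tested with a suitable $S\in W^{2,\infty}(\mathbb{R})$ whose derivative approximates $T_k'$ give the uniform estimate $\|T_k(v_n)\|_{L^2(a,b;H^1_0(\Omega))}\le C\sqrt{k}$. Applying Remark \ref{5hh120320145} to the decomposition $\mu_{n,0}=f_n-\text{div}\,g_n+(h_n)_t$ one sees that $(\eta T_k(v_n))_t$ is uniformly bounded in $L^2(a',b';H^{-1}(\Omega'))+L^1(\Omega'\times(a',b'))$ for every $\Omega'\subset\subset\Omega$ and every $(a',b')\subset\subset(a,b)$, uniformly in $n$.

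Next, an Aubin--Simon compactness argument applied locally to $\eta T_k(v_n)$ yields, along a subsequence, $T_k(v_n)\to T_k(v)$ strongly in $L^2(\Omega\times(a,b))$ with $\nabla T_k(v_n)\rightharpoonup \nabla T_k(v)$ weakly in $L^2$; combined with the Marcinkiewicz bound on $v_n$ and the strong convergence of $h_n$, this produces $u_n\to u:=v+h$ in $L^1(\Omega\times(a,b))$. The crucial step is to upgrade weak to strong convergence of $\nabla T_k(v_n)$, so that $\nabla u_n\to\nabla u$ almost everywhere. This is achieved by a Minty-type device: test the renormalized equation for $v_n$ with a Landes time-regularization of $S'(v_n)\,(T_k(v_n)-(T_k(v))_\nu)$, use the monotonicity \eqref{5hhcondb} of $A$, and absorb the singular contributions through the asymptotic conditions \eqref{5hhrenor2}--\eqref{5hhrenor3}, together with the corresponding identities for the decompositions $(f_n,g_n,h_n)$. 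Sending $n\to\infty$ first and then $\nu\to 0$ gives strong convergence of truncated gradients for every $k>0$, hence $\nabla u_n\to\nabla u$ almost everywhere. Once this is in hand, every term in \eqref{5hhrenor}--\eqref{5hhrenor3} passes to the limit and $u$ is itself a renormalized solution of the limit problem.

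For the final assertion, assume $\{\mu_n\}$ is bounded in $L^1(\Omega\times(a,b))$. The Marcinkiewicz estimate of Remark \ref{5hh070420143} yields $|\{|\nabla u_n|>\lambda\}|\le C\lambda^{-(N+2)/(N+1)}$ uniformly in $n$, so for every $1\le s<(N+2)/(N+1)$ the family $\{|\nabla u_n|^s\}$ is equi-integrable on $\Omega\times(a,b)$. Combined with the almost-everywhere convergence $\nabla u_n\to\nabla u$, Vitali's theorem gives convergence in $L^s(a,b;W^{1,s}_0(\Omega))$. The main obstacle throughout is precisely the almost-everywhere convergence of gradients: in the parabolic renormalized framework one cannot directly subtract the equations for $u_n$ and a limit candidate and test with a truncation of the difference, and so the Landes--Mustonen time-regularization together with uniform control of \eqref{5hhrenor2}--\eqref{5hhrenor3} in $n$ is needed to recover the monotonicity estimate in the limit.
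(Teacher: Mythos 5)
The paper does not supply a proof of this proposition — it is recalled from \cite{55VH} — so there is no internal argument to compare against, but your outline is the standard one and almost certainly matches the argument in \cite{55VH}: uniform truncation energy bounds $\|T_k(v_n)\|^2_{L^2(a,b;H^1_0(\Omega))}\lesssim k$, time-derivative control via Remark~\ref{5hh120320145}, local Aubin--Simon compactness on $\eta T_k(v_n)$, gluing the truncations via the Marcinkiewicz estimate, then the Landes regularization with a Minty-type monotonicity argument to upgrade to a.e.\ convergence of gradients, and finally equi-integrability plus Vitali for the $L^s(W^{1,s})$ conclusion. Three small imprecisions worth correcting: the Landes parameter is sent to $+\infty$ (so that $\langle w\rangle_\nu\to w$ strongly), not to $0$; Remark~\ref{5hh070420143} as stated assumes $\mu\in L^1(\Omega\times(a,b))$, so for the first conclusion, where the $\mu_n$ are only bounded measures, the Marcinkiewicz bounds on $u_n$ and $\nabla u_n$ have to be derived from the truncation energy estimates (Proposition~\ref{5hh120320146}) rather than quoted from that remark — the bound itself is the same; and the first conclusion only needs a.e.\ convergence of $v_n$ together with the Marcinkiewicz bound and Vitali, so the Landes--Minty machinery is in fact only required for the second conclusion.
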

              We say that a sequence of bounded measures $\{\mu_n\}$ in $\Omega\times (a,b)$ converges to a bounded measure $\mu$ in $\Omega\times (a,b)$ in the {\it narrow topology} of measures if 
              \begin{equation*}
              \lim_{n\to\infty}\int_{\Omega\times(a,b)}\varphi d\mu_n=\int_{\Omega\times(a,b)}\varphi d\mu ~~\text{ for all } \varphi\in C(\Omega\times(a,b))\cap L^\infty(\Omega\times(a,b))).
              \end{equation*}         
    We recall the following fundamental stability result of \cite{55VH}.
    \begin{theorem}
             \label{5hhsta} Suppose that $B\equiv 0$. Let $\sigma\in L^1(\Omega)$ and 
             \[
             \mu=f-\operatorname{div}g+h_{t}+\mu_{s}^{+}-\mu_{s}^{-}\in\mathfrak{M}_{b}%
             ({\Omega\times(a,b)}),
             \]
             with $f\in L^{1}(\Omega\times(a,b)),g\in L^{2}(\Omega\times(a,b),\mathbb{R}^N)$, $h\in L^2(a,b,H^1_0(\Omega))$ and $\mu_{s}%
             ^{+},\mu_{s}^{-}\in\mathfrak{M}_{s}^{+}(\Omega\times(a,b))$. Let $\sigma_{n}\in L^1(\Omega)$ and
             \begin{align*}         
             \mu_{n}=f_{n}-\operatorname{div} g_{n}+(h_{n})_{t}+\rho_{n}-\eta_{n}%
                      \in\mathfrak{M}_{b}(\Omega\times(a,b))
             \end{align*}       
             with \ $f_{n}\in L^{1}(\Omega\times(a,b)),g_{n}\in L^{2}(\Omega\times(a,b),\mathbb{R}^N),h_{n}\in L^2(a,b,H^1_0(\Omega)),$ and
             $\rho_{n},\eta_{n}\in\mathfrak{M}_{b}^{+}(\Omega\times(a,b)),$ such that
             \begin{align*}
              \rho_{n}=\rho_{n}^{1}-\operatorname{div}\rho_{n}^{2}+\rho_{n,s},\qquad\eta
                      _{n}=\eta_{n}^{1}-\operatorname{div}   \eta_{n}^{2}+\eta_{n,s},
             \end{align*}       
             with $\rho_{n}^{1},\eta_{n}^{1}\in L^{1}(\Omega\times(a,b)),\rho_{n}^{2},\eta_{n}^{2}%
             \in L^{2}(\Omega\times(a,b),\mathbb{R}^N)$ and $\rho_{n,s},\eta_{n,s}\in\mathfrak{M}_{s}%
             ^{+}(\Omega\times(a,b)).$\\ Assume that $\{\mu_n\}$ is a bounded in $\mathfrak{M}_b(\Omega\times(a,b))$,  $\{\sigma_n\}, \{ f_{n}\}, \{ g_{n}\}, \{h_{n}\} $ converge to $\sigma, f, g, h$ in $L^1(\Omega)$,weakly in
             $L^{1}(\Omega\times(a,b))$,in
             $L^{2}(\Omega\times(a,b),\mathbb{R}^N)$,in $L^2(a,b,H^1_0(\Omega))$ respectively and  $\{  \rho_{n}\}, \{\eta_{n}\}$ converge to $\mu_{s}^{+}, \mu_{s}^{-}$  in the narrow
             topology of measures; and $\left\{  \rho_{n}%
             ^{1}\right\}  ,\left\{  \eta_{n}^{1}\right\}  $ are bounded in $L^{1}(\Omega\times(a,b))$, and
             $\left\{  \rho_{n}^{2}\right\}  ,\left\{  \eta_{n}^{2}\right\}  $ bounded in
             $L^{2}(\Omega\times(a,b),\mathbb{R}^N)$. \\Let $\left\{  u_{n}\right\}  $ be a sequence of
             renormalized solutions of
             \begin{equation}
             \left\{
             \begin{array}
             [c]{l}%
             {(u_{n})_t}-\operatorname{div}(A(x,t,\nabla u_n))=\mu_{n}~\text{in }\Omega\times (a,b),\\ 
                                 u_n=0~~~~~~\text{ on }\partial\Omega \times (a,b),
                                 \\
                                 u_n(a) = \sigma_{n}~\text{ in } \Omega, \\         
             \end{array}
             \right.  \label{5hhpmun}%
             \end{equation}
             relative to the decomposition $(f_{n}+\rho_{n}^{1}-\eta_{n}%
             ^{1},g_{n}+\rho_{n}^{2}-\eta_{n}^{2},h_{n})$ of $\mu_{n,0}.$  Let $v_{n}=u_{n}-h_{n}.$ Then up to a subsequence, $\left\{
             u_{n}\right\}  $ converges $a.e.$ in $\Omega\times(a,b)$ to a renormalized solution $u$ of \eqref{5hh050420141},
             and $\left\{  v_{n}\right\}  $ converges $a.e.$ in $\Omega\times(a,b)$ to $v=u-h.$ Moreover,
             $\left\{  \nabla u_{n}\right\}  ,\left\{  \nabla v_{n}\right\}  $ converge
             respectively to $\nabla u,\nabla v$ a.e in $\Omega\times(a,b),$ and $\left\{  T_{k}(v_{n})\right\}  $ converges to 
             $T_{k}(v)$ strongly in $L^2(a,b,H^1_0(\Omega))$ for any $k>0$.\bigskip
             \end{theorem}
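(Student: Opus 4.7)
The plan follows the parabolic stability framework for renormalized solutions developed in \cite{55Pe08} and refined in \cite{55VH} for the present three-part measure decomposition. Proposition \ref{5hhmun} already provides, along a subsequence, $L^1$ and a.e.\ convergence of $u_n$ to a candidate limit $u$, hence of $v_n=u_n-h_n$ to $v=u-h$. Using \eqref{5hhrenor} with $S=S_k$ a $C^2$ truncation satisfying $S_k'=1$ on $[-k,k]$ and $\mathrm{supp}(S_k')\subset[-2k,2k]$, together with the monotonicity \eqref{5hhcondb} and the uniform bounds on $f_n,\rho_n^1,\eta_n^1$ in $L^1$ and $g_n,\rho_n^2,\eta_n^2$ in $L^2$, one obtains the uniform estimate of $T_k(v_n)$ in $L^2(a,b;H^1_0(\Omega))\cap L^\infty(a,b;L^2(\Omega))$ for every fixed $k>0$ (compare Remark \ref{5hh070420143}). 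Passing to a further subsequence, $T_k(v_n)\rightharpoonup T_k(v)$ weakly in $L^2(a,b;H^1_0(\Omega))$ and $\nabla u_n\rightharpoonup\nabla u$ weakly in $L^s(\Omega\times(a,b))$ for every $s<(N+2)/(N+1)$.

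The core analytic step is to upgrade this weak convergence to strong convergence, equivalently to prove a.e.\ convergence of $\nabla v_n$. I would use the Landes time-regularization $\langle T_k(v)\rangle_\nu$ of $T_k(v)$ and choose as test function in \eqref{5hhrenor}
\[
\varphi_{n,\nu,\delta}=\chi(t)\,\psi_\delta\bigl(T_k(v_n)-\langle T_k(v)\rangle_\nu\bigr),
\]
where $\psi_\delta$ is a Lipschitz approximation of the identity truncated at level $\delta$ and $\chi$ a smooth cutoff in time with $\chi(b)=0$. Combined with a renormalization $S=S_m$ at a high level $m\gg k$, the strong convergences $h_n\to h$ in $L^2(a,b;H^1_0(\Omega))$ and $g_n\to g$ in $L^2$, the weak convergence $f_n\rightharpoonup f$ in $L^1$, and the narrow convergence of $\rho_n,\eta_n$ allow, after the successive limits $n\to\infty$, $\nu\to\infty$, $\delta\to 0$, and $m\to\infty$, to deduce
\[
\limsup_{n\to\infty}\int_{\Omega\times(a,b)}\bigl(A(x,t,\nabla u_n)-A(x,t,\nabla u)\bigr)\cdot\bigl(\nabla T_k(v_n)-\nabla T_k(v)\bigr)\,dxdt\le 0.
\]
The strict monotonicity \eqref{5hhcondb} then yields, via a Minty-type argument, $\nabla u_n\to\nabla u$ a.e.\ in $\Omega\times(a,b)$ and $T_k(v_n)\to T_k(v)$ strongly in $L^2(a,b;H^1_0(\Omega))$ for every $k>0$.

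The principal obstacle is the correct treatment of the singular conditions \eqref{5hhrenor2}--\eqref{5hhrenor3} in the limit. Because $\rho_n,\eta_n$ converge only in the narrow topology, one must isolate the concentration of energy on the sets $\{m\le v_n<2m\}$ and $\{-2m<v_n\le -m\}$ and identify it with the narrow-limit masses. This is done by testing \eqref{5hhrenor} with $S$ whose derivative is supported near $\pm\infty$: on the region $\{|v_n|\ge m\}$ the diffuse parts $f_n,g_n,(h_n)_t,\rho_n^1,\rho_n^2,\eta_n^1,\eta_n^2$ contribute vanishing mass as $m\to\infty$, uniformly in $n$, by equi-integrability; only the singular parts $\rho_{n,s},\eta_{n,s}$ survive, and narrow convergence matches their limits to $\mu_s^+,\mu_s^-$, yielding \eqref{5hhrenor2}--\eqref{5hhrenor3} for $u$.

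Once a.e.\ gradient convergence and the singular limits are in hand, passage to the limit in each term of \eqref{5hhrenor} is routine: the parabolic term uses $S(v_n)\to S(v)$ in $L^1$; the elliptic term combines a.e.\ convergence of $\nabla u_n$ with the uniform $L^2$-bound of $S'(v_n)A(x,t,\nabla u_n)$ supported in $\{|v_n|\le 2m\}$; the $S''$-term uses strong $L^2$-convergence of $\nabla T_{2m}(v_n)$; and $\int S'(v_n)\varphi\,d\widehat{\mu_{n,0}}$ passes via the strong-weak pairings provided by the $L^2$-convergence of $g_n,\nabla h_n$ and the weak $L^1$-convergence of $f_n$. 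The candidate limit $u$ thus satisfies the renormalized formulation and is the desired solution.
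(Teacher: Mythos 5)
Two remarks. First, the paper does not actually prove this theorem: immediately before the statement it says ``We recall the following fundamental stability result of \cite{55VH},'' so the proof is cited, not supplied, and there is no in-paper argument against which to match your sketch. The top-level architecture you describe --- a.e.\ compactness via Proposition~\ref{5hhmun}, uniform $L^2(a,b;H^1_0)$-bounds on $T_k(v_n)$, the Landes time-regularization paired with a Minty-type monotonicity argument to upgrade weak to a.e.\ gradient convergence, and then term-by-term passage to the limit in the renormalized formulation --- is indeed the route taken in \cite{55Pe08} and its refinement \cite{55VH}, and your organization is consistent with that literature.

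Second, the paragraph treating the singular conditions \eqref{5hhrenor2}--\eqref{5hhrenor3} contains a genuine gap. You assert that on $\{|v_n|\ge m\}$ the contributions of $\rho_n^1,\eta_n^1,\rho_n^2,\eta_n^2$ vanish uniformly in $n$ ``by equi-integrability.'' For $\{f_n\}$ this is fine (weak $L^1$-convergence gives equi-integrability by Dunford--Pettis), and for $\{\rho_n^2\},\{\eta_n^2\}$ the $L^2$-bound gives $L^1$-equi-integrability by H\"older. But $\{\rho_n^1\},\{\eta_n^1\}$ are only assumed \emph{bounded} in $L^1(\Omega\times(a,b))$, and boundedness in $L^1$ does not imply equi-integrability --- these sequences may concentrate, and the hypotheses of the theorem allow exactly that. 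The actual mechanism by which the \cite{55Pe08}/\cite{55VH} framework identifies the narrow limits with $\mu_s^{\pm}$ is a capacity-based concentration argument: since $\mu_s^{+},\mu_s^{-}$ live on Borel sets of zero $C_2$-parabolic capacity and $\rho_n,\eta_n$ converge to them narrowly, one builds, for each $\delta>0$, cut-off functions supported in sets of $C_2$-capacity smaller than $\delta$ which nevertheless carry, uniformly in $n$, all but an $\varepsilon$ of the mass of $\rho_n$ and $\eta_n$; the diffuse decompositions $(\rho_n^1,\rho_n^2)$, $(\eta_n^1,\eta_n^2)$ are then estimated against those cut-offs through the capacity bound, not through integrability of the individual pieces. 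This load-bearing device is absent from your sketch, and without it the matching of the concentration of $\int_{\{m\le v_n<2m\}}\phi\,A(x,t,\nabla u_n)\!\cdot\!\nabla v_n$ with the singular masses does not go through. The rest of your outline is plausible modulo the usual care with the $S''$-term, but the singular part is precisely where the proof is delicate, and that is where it needs to be rebuilt.
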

             In order to apply above theorem, we need some the following properties concerning approximate measures of $\mu
             \in\mathfrak{M}_{b}^{+}(\Omega\times (a,b))$, see in \cite{55VH}.
             \begin{proposition}
             \label{5hhatt}Let $\mu=\mu_0+\mu_{s}\in\mathfrak{M}_{b}^{+}(\Omega\times(a,b))$ with $\mu_0\in \mathfrak{M}_0(\Omega\times(a,b))\cap\mathfrak{M}_b^+(\Omega\times(a,b))$ and $\mu_s\in \mathfrak{M}_s^+(\Omega\times(a,b))$. Let $\left\{  \varphi_{n}\right\}$ be sequence of standard mollifiers in $\mathbb{R}^{N+1}$. Then, 
              there exist a decomposition $(f,g,h)$ of $\mu_0$ and $f_{n},g_{n},h_{n}\in C_{c}^{\infty}(\Omega\times(a,b))$, 
                      $\mu_{n,s}\in (C_{c}^{\infty}\cap\mathfrak{M}^{+}_b)(\Omega\times(a,b))$ such that
                     $\left\{  f_{n}\right\}  ,\left\{  g_{n}\right\}  ,\left\{  h_{n}\right\}
                      $ strongly converge to $f,g,h$ in $L^{1}(\Omega\times(a,b)),L^{2}(\Omega\times(a,b),\mathbb{R}^N)$ and $L^2(a,b,H^1_0(\Omega))$;  $\mu_{n}=f_{n}-\operatorname{div}%
                                        g_{n}+(h_{n})_{t}+\mu_{n,s}, \mu_{n,s}$  converge to $\mu,\mu_s$ in the narrow topology respectively;
           $0\leq\mu_n \leq \varphi_n*\mu$ and                    \begin{equation*}
             ||f_{n}||_{L^{1}(\Omega\times(a,b))}+\left\Vert g_{n}\right\Vert _{L^{2}(\Omega\times(a,b),\mathbb{R}^N)}+||h_{n}||_{L^2(a,b,H^1_0(\Omega))}+\mu_{n,s}(\Omega\times(a,b))
             \leq 2\mu(\Omega\times(a,b)).\end{equation*}
             \end{proposition}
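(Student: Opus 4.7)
The plan is to carry out a parabolic version of the classical regularization of a bounded measure by mollification, combined with the diffuse/singular decomposition of $\mu$, done carefully so that the formal identity $\mu_n = \varphi_n \ast (\chi_{K_n}\mu)$ holds and thereby delivers the pointwise bound $0 \le \mu_n \le \varphi_n \ast \mu$ essentially for free.

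First I would invoke the known structure theorem for $\mathfrak{M}_0 \cap \mathfrak{M}_b$ (recalled just before Proposition \ref{5hhatt}) to fix a decomposition $\mu_0 = f - \operatorname{div} g + h_t$ with $f \in L^1(\Omega\times(a,b))$, $g \in L^2(\Omega\times(a,b),\mathbb{R}^N)$, $h \in L^2(a,b;H^1_0(\Omega))$, and extend all of $f,g,h,\mu_s$ by zero to $\mathbb{R}^{N+1}$. Then I would choose an increasing exhaustion by compact sets $K_n \Subset \Omega\times(a,b)$ with $\operatorname{dist}(K_n,\partial(\Omega\times(a,b))) > 2/n$, and set
\begin{align*}
f_n &= \varphi_n \ast (\chi_{K_n} f), \quad g_n = \varphi_n \ast (\chi_{K_n} g), \\
h_n &= \varphi_n \ast (\chi_{K_n} h), \quad \mu_{n,s} = \varphi_n \ast (\chi_{K_n} \mu_s).
\end{align*}
By the support condition each of these functions lies in $C_c^\infty(\Omega\times(a,b))$, and commuting the convolution with $\partial_t$ and $\operatorname{div}$ gives the crucial identity
\[
\mu_n := f_n - \operatorname{div} g_n + (h_n)_t + \mu_{n,s} = \varphi_n \ast (\chi_{K_n}\mu),
\]
which is pointwise nonnegative and dominated by $\varphi_n \ast \mu$.

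With the identity in hand, the narrow convergence $\mu_n \to \mu$ and $\mu_{n,s} \to \mu_s$ is routine: testing against $\phi \in C_b(\Omega\times(a,b))$, one writes $\int \phi\, \mu_n = \int (\varphi_n \ast \phi)\, \chi_{K_n}\, d\mu$, and invokes dominated convergence using $\chi_{K_n}\to 1$ and $\varphi_n \ast \phi \to \phi$ pointwise with uniform $L^\infty$ bound by $\|\phi\|_\infty$; mass preservation $\mu_n(\Omega\times(a,b)) \to \mu(\Omega\times(a,b))$ gives tightness. The strong convergences $f_n \to f$ in $L^1$ and $g_n \to g$ in $L^2$ are standard dominated-convergence-plus-mollifier arguments, and the bound $\|f_n\|_{L^1}+\|g_n\|_{L^2}+\|h_n\|_{L^2(H^1_0)}+\mu_{n,s}(\Omega\times(a,b)) \le 2\mu(\Omega\times(a,b))$ follows from $\|\chi_{K_n}\cdot\|\le\|\cdot\|$, the non-uniqueness of the decomposition (choosing $f,g,h$ so that the sum of norms is close to $\mu_0(\Omega\times(a,b))$), and contractivity of convolution with $\varphi_n$.

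The main obstacle is the strong convergence $h_n \to h$ in $L^2(a,b;H^1_0(\Omega))$, because the naive derivative $\nabla(\chi_{K_n} h)$ picks up a singular contribution on $\partial K_n$ and $\|\varphi_n \ast \nabla(\chi_{K_n} h) - \nabla h\|_{L^2}$ does not obviously tend to zero. My plan to resolve this is to exploit the freedom in the decomposition of $\mu_0$: by density of $C_c^\infty(\Omega\times(a,b))$ in $L^2(a,b;H^1_0(\Omega))$, I first replace $h$ by a smooth approximation $\tilde h$ close to it in that norm, absorbing the discrepancy $(h-\tilde h)_t$ into a modification of $f$ and $g$ (using that $H^1_0$-functions admit a decomposition into an $L^2$ piece and a divergence of an $L^2$ vector field). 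With $h$ thus chosen in $C_c^\infty(\Omega\times(a,b))$, a diagonal argument $n \mapsto h_n = \varphi_n \ast h$ (for $n$ so large that $\operatorname{supp}(h)+B_{1/n} \subset \Omega\times(a,b)$) gives $h_n \in C_c^\infty$ with $h_n \to h$ in $L^2(a,b;H^1_0)$, while the identity $\mu_n = \varphi_n \ast (\chi_{K_n}\mu)$ and all previous conclusions remain intact.
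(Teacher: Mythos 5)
The proposal rests on the claimed identity
\[
\mu_n := f_n - \operatorname{div} g_n + (h_n)_t + \mu_{n,s} = \varphi_n \ast (\chi_{K_n}\mu),
\]
from which both $\mu_n\ge 0$ and $\mu_n\le\varphi_n\ast\mu$ would follow immediately. That identity is false. Commuting $\varphi_n\ast(\cdot)$ with $\operatorname{div}$ and $\partial_t$ is legitimate and gives
\[
\mu_n=\varphi_n\ast\bigl[\chi_{K_n}f-\operatorname{div}(\chi_{K_n}g)+(\chi_{K_n}h)_t+\chi_{K_n}\mu_s\bigr],
\]
but you then need $\chi_{K_n}f-\operatorname{div}(\chi_{K_n}g)+(\chi_{K_n}h)_t=\chi_{K_n}\mu_0$, i.e.\ that multiplication by the sharp cutoff $\chi_{K_n}$ commutes with $\operatorname{div}$ and $\partial_t$ across the decomposition $\mu_0=f-\operatorname{div}g+h_t$. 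It does not: $\chi_{K_n}$ is a characteristic function, so $\operatorname{div}(\chi_{K_n}g)$ and $(\chi_{K_n}h)_t$ carry extra singular contributions concentrated on $\partial K_n$ that have no counterpart in $\chi_{K_n}\mu_0$ (for smooth $g$ one would literally see a surface term $g\cdot\nu\,\delta_{\partial K_n}$). The object $\chi_{K_n}f-\operatorname{div}(\chi_{K_n}g)+(\chi_{K_n}h)_t$ is thus a genuinely different distribution from the measure $\chi_{K_n}\mu_0$; it need not be nonnegative, and its mollification need not be dominated by $\varphi_n\ast\mu$. Since $0\le\mu_n\le\varphi_n\ast\mu$ is the heart of the statement, this is a fatal gap.

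You do notice the cutoff problem, but only for $h$, and the proposed repair does not close the gap. Replacing $h$ by a smooth, compactly supported $\tilde h$ and "absorbing" $(h-\tilde h)_t$ into $f$ and $g$ would require $(h-\tilde h)_t\in L^1+\operatorname{div}(L^2)$, but for a general $h\in L^2(a,b;H^1_0(\Omega))$ the time derivative $h_t$ lies in no such space — that is precisely why the $h_t$ term is present in the decomposition of diffuse measures in the first place, so it cannot simply be relocated. Moreover the same cutoff difficulty arises for $g$, which you never address, and it persists even after your diagonal argument: with $h_n=\varphi_n\ast h$ you would need $\chi_{K_n}f-\operatorname{div}(\chi_{K_n}g)+h_t=\chi_{K_n}\mu_0$, which is again false for the reason above. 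The correct construction (it comes from \cite{55VH}, to which the paper defers) must define the nonnegative approximants $\mu_n=\varphi_n\ast(\eta_n\mu)$ directly with smooth cutoffs $\eta_n$ and then exhibit a decomposition $(f_n,g_n,h_n)$ of the smooth function $\mu_n-\mu_{n,s}$ converging strongly to some $(f,g,h)$; this requires carefully tracking the commutator terms $g\cdot\nabla\eta_n$ and $h(\eta_n)_t$ rather than pretending they vanish.
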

             \begin{proposition}
             \label{5hhP5}Let $\mu=\mu_0+\mu_{s}, \mu_n=\mu_{n,0}+\mu_{n,s}\in\mathfrak{M}_{b}^{+}(\Omega\times(a,b))$ with $\mu_0,\mu_{n,0}\in (\mathfrak{M}_0\cap\mathfrak{M}_b^+)(\Omega\times(a,b))$ and $\mu_{n,s}, \mu_s\in \mathfrak{M}_s^+(\Omega\times(a,b))$ such that
             $\left\{  \mu_{n}\right\}  $  nondecreasingly  converges to $\mu$ in $\mathfrak{M}_{b}(\Omega\times(a,b)).$ Then, $\left\{  \mu_{n,s}\right\}  $ is nondecreasing and converging to $\mu_s$ in $\mathfrak{M}_{b}(\Omega\times(a,b))$ and  there exist decompositions $(f,g,h)$ of $\mu_0$, $(f_n,g_n,h_n)$ of $\mu_{n,0}$ 
             such that
            $\left\{  f_{n}\right\}  ,\left\{  g_{n}\right\}  ,\left\{  h_{n}\right\}
             $ strongly converge to $f,g,h$ in $L^{1}(\Omega\times(a,b)),L^{2}(\Omega\times(a,b),\mathbb{R}^N)$ and $L^2(a,b,H^1_0(\Omega))$
             respectively satisfying 
             \begin{equation*}
           ||f_{n}||_{L^{1}(\Omega\times(a,b))}+\left\Vert g_{n}\right\Vert _{L^{2}(\Omega\times(a,b),\mathbb{R}^N)}+||h_{n}||_{L^2(a,b,H^1_0(\Omega))}+\mu_{n,s}(\Omega\times(a,b))
                    \leq 2\mu(\Omega\times(a,b)). 
             \end{equation*}                          
                      \end{proposition}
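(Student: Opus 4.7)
The plan is to reduce to Proposition \ref{5hhatt} via a telescoping decomposition. Set $\nu_k := \mu_k - \mu_{k-1}$ (with the convention $\mu_0 \equiv 0$); since $\{\mu_n\}$ is nondecreasing, each $\nu_k$ lies in $\mathfrak{M}_b^+(\Omega\times(a,b))$ and $\sum_{k\geq 1}\nu_k(\Omega\times(a,b)) = \mu(\Omega\times(a,b))<\infty$.

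First I would dispatch the monotonicity claim on the singular parts. Given $\mu_n \leq \mu_{n+1}$, write $\rho := \mu_{n+1} - \mu_n \geq 0$ and Lebesgue-decompose $\rho = \rho_0 + \rho_s$ with $\rho_0 \in \mathfrak{M}_0\cap\mathfrak{M}_b^+$ and $\rho_s \in \mathfrak{M}_s^+$. Then $\mu_{n+1} = (\mu_{n,0}+\rho_0) + (\mu_{n,s}+\rho_s)$ displays a Lebesgue decomposition of $\mu_{n+1}$; uniqueness forces $\mu_{n+1,s} = \mu_{n,s}+\rho_s \geq \mu_{n,s}$ and $\mu_{n+1,0} = \mu_{n,0}+\rho_0 \geq \mu_{n,0}$. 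In particular $\nu_{k,0} = \mu_{k,0} - \mu_{k-1,0}$ and $\nu_{k,s} = \mu_{k,s} - \mu_{k-1,s}$. Monotone convergence then produces limits $\mu_{n,s}\nearrow \lambda$ and $\mu_{n,0}\nearrow \mu_0^*$; since each $\mu_{n,s}$ is concentrated on a set of zero $C_2$-capacity and countable unions of such sets still have zero capacity, $\lambda\in\mathfrak{M}_s^+$, while $\mu_0^*\in\mathfrak{M}_0\cap\mathfrak{M}_b^+$. Matching $\mu_0^*+\lambda$ with the total limit $\mu_0+\mu_s$ and invoking uniqueness again gives $\lambda=\mu_s$.

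For the decomposition step, I would apply Proposition \ref{5hhatt} to each $\nu_k$ and extract (by passing to the limit in the norm bound on its smooth approximations) a decomposition $(\tilde f^{(k)}, \tilde g^{(k)}, \tilde h^{(k)})$ of $\nu_{k,0}$ satisfying
\begin{equation*}
\|\tilde f^{(k)}\|_{L^1} + \|\tilde g^{(k)}\|_{L^2} + \|\tilde h^{(k)}\|_{L^2(a,b;H^1_0(\Omega))} + \nu_{k,s}(\Omega\times(a,b)) \leq 2\nu_k(\Omega\times(a,b)).
\end{equation*}
Define the partial sums $f_n := \sum_{k=1}^n \tilde f^{(k)}$, and analogously $g_n, h_n$. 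Summing the inequality and telescoping $\sum_{k=1}^n \nu_{k,s} = \mu_{n,s}$ yields the advertised bound
\begin{equation*}
\|f_n\|_{L^1} + \|g_n\|_{L^2} + \|h_n\|_{L^2(a,b;H^1_0(\Omega))} + \mu_{n,s}(\Omega\times(a,b)) \leq 2\mu_n(\Omega\times(a,b)) \leq 2\mu(\Omega\times(a,b)).
\end{equation*}
Since the individual norms $\|\tilde f^{(k)}\|_{L^1}$, $\|\tilde g^{(k)}\|_{L^2}$, $\|\tilde h^{(k)}\|_{L^2(a,b;H^1_0(\Omega))}$ are absolutely summable by the same inequality, the partial sums $f_n, g_n, h_n$ are Cauchy and converge strongly to some $f, g, h$ in the respective spaces. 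Because $\sum_{k=1}^n \nu_{k,0} = \mu_{n,0}$, the triple $(f_n, g_n, h_n)$ is a decomposition of $\mu_{n,0}$; passing to the limit in $\mathcal{D}'(\Omega\times(a,b))$ in the identity $f_n - \operatorname{div} g_n + (h_n)_t = \mu_{n,0}$ and combining with the convergence $\mu_{n,0}\to\mu_0$ shows that $(f,g,h)$ decomposes $\mu_0$.

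The principal obstacle is the very first step: uniqueness of the Lebesgue decomposition with respect to the parabolic capacity $C_2$. This is what drives both the monotonicity $\mu_{n+1,s}\geq\mu_{n,s}$ and the identification $\nu_{k,0}=\mu_{k,0}-\mu_{k-1,0}$ which makes the telescoping exact; it is standard (the classes $\mathfrak{M}_0$ and $\mathfrak{M}_s$ are mutually singular by the definition recalled in \eqref{5hhD1}), but must be pinned down before the rest of the argument runs.
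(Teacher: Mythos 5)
Your argument is correct, and it is the natural approach.  A few remarks on the checking.

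The monotonicity step is precisely what the uniqueness statement recalled before Definition~\ref{5hhdefin} buys you: writing $\mu_{n+1}-\mu_n = \rho_0+\rho_s$ with $\rho_0\ge 0$ diffuse and $\rho_s\ge 0$ singular (possible since $\rho\geq 0$ forces both pieces nonnegative), and observing that $\mu_{n,0}+\rho_0\in\mathfrak M_0$ and $\mu_{n,s}+\rho_s\in\mathfrak M_s^+$ (the latter by countable subadditivity of the capacity), uniqueness does give $\mu_{n+1,0}=\mu_{n,0}+\rho_0$ and $\mu_{n+1,s}=\mu_{n,s}+\rho_s$.  The identification of the monotone limit $\lambda$ with $\mu_s$ (and $\mu_0^*$ with $\mu_0$) is again uniqueness, once one checks that the monotone limit of diffuse measures is diffuse (it vanishes on capacity-null sets by monotone convergence) and the monotone limit of the singular parts is carried by the capacity-null set $\bigcup_n N_n$.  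Convergence in $\mathfrak M_b$ then follows for free because $(\mu_s-\mu_{n,s})(\Omega\times(a,b))\to 0$.

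For the telescoping step, you need the limit version of Proposition~\ref{5hhatt}'s estimate, i.e.\ that the \emph{decomposition} $(\tilde f^{(k)},\tilde g^{(k)},\tilde h^{(k)})$ of $\nu_{k,0}$ satisfies the bound, not merely the smooth approximants.  This does pass to the limit: the $L^1$, $L^2$ and $L^2(a,b;H^1_0)$ norms converge along the strongly convergent sequences, and the mass term converges because narrow convergence against $\varphi\equiv 1$ gives $\tilde\mu^{(k)}_{m,s}(\Omega\times(a,b))\to\nu_{k,s}(\Omega\times(a,b))$.  After that, the triangle inequality, the telescoping $\sum_{k\leq n}\nu_{k,0}=\mu_{n,0}$, $\sum_{k\leq n}\nu_{k,s}=\mu_{n,s}$, the absolute summability of the series of norms, and the distributional limit in $f_n-\operatorname{div}g_n+(h_n)_t=\mu_{n,0}$ all run exactly as you say.

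One cosmetic point: the convention ``$\mu_0\equiv 0$'' for the zeroth term of the sequence collides with the paper's use of $\mu_0$ for the diffuse part of $\mu$; a different symbol (e.g.\ declaring $\nu_1:=\mu_1$) would remove the clash.  The paper itself defers the proof to~\cite{55VH}, so I cannot compare to the authors' own argument, but your telescoping reduction to Proposition~\ref{5hhatt} is sound and self-contained.
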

             \begin{remark}\label{5hh1203201410}  For $0<\rho\leq\frac{1}{3}\min\{\sup_{x\in\Omega}d(x,\partial\Omega),(b-a)^{1/2}\}$, set
             \begin{align*}
             \Omega_{\rho}^{j}=\{x\in\Omega: d(x,\partial\Omega)>j\rho\}\times (a+(j\rho)^2,a+((b-a)^{1/2}-j\rho)^2)~\text{ for }~j=0,...,k_\rho,
             \end{align*}  where $k_\rho=\left[\frac{\min\{\sup_{x\in\Omega}d(x,\partial\Omega),(b-a)^{1/2}\}}{2\rho}\right]$. \\
             We can choose $f_n,g_n,h_n$ in above two Propositions  such that for any $j=1,...,k_\rho$,
             \begin{align}\label{5hh120320149}
             ||f_{n}||_{L^1(\Omega_{\rho}^{j})}+\left\Vert g_{n}\right\Vert _{L^{2}(\Omega_{\rho}^{j},\mathbb{R}^N)}+|||h_n|+|\nabla h_{n}|||_{L^2(\Omega_{\rho}^{j})}\leq 2\mu(\Omega_{\rho}^{j-1})~\forall n\in\mathbb{N}
             \end{align}
             In fact, set $\mu_j=\chi_{\Omega_{\rho}^{k_\rho-j}\backslash \Omega_{\rho}^{k_\rho-j+1}}\mu$ if $j=1,...,k_\rho-1$, $\mu_j=\chi_{\Omega\times(a,b)\backslash \Omega^{1}_{\rho}}\mu$ if $j=k_\rho$ and  $\mu_j=\chi_{\Omega^{k_\rho}_{\rho}}\mu$ if $j=0$.
               From the proof of above two Propositions in \cite{55VH}, for any $\varepsilon>0$ we can assume  supports of $f_n,g_n,h_n$  containing in $\text{supp}(\mu)+\tilde{Q}_\varepsilon(0,0)$. Thus, for any $\mu=\mu_j$ we have $f_n^j,g_n^j,h_n^j$  correspondingly such that their supports contain in $\Omega_{\rho,T}^{k_\rho-j-1/2}\backslash \Omega_{\rho,T}^{k_\rho-j+3/2}$ if $j=1,...,k_\rho-1$ and $\Omega_{T}\backslash \Omega^{3/2}_{\rho,T}$ if $j=k_\rho$ and  $\Omega^{k_\rho-1/2}_{\rho,T}$ if $j=0$. By $\mu=\sum_{j=0}^{k_\rho}\mu_j$, thus it is allowed to choose $f_n=\sum_{j=0}^{k_\rho}f^j_n,f_n=\sum_{j=0}^{k_\rho}g^j_n$ and $h_n=\sum_{j=0}^{k_\rho}h^j_n$
               and \eqref{5hh120320149} satisfies since 
               \begin{align*}
                        &||f_{n}||_{L^1(\Omega_{\rho}^{j})}+\left\Vert g_{n}\right\Vert _{L^{2}(\Omega_{\rho}^{j},\mathbb{R}^N)}+|||h_n|+|\nabla h_{n}|||_{L^2(\Omega_{\rho}^{j})}
                        \\&~~~~~~~~~~\leq  \sum_{i=0}^{k_\rho-j+1}\left(||f_{n}^i||_{L^1(\Omega_{\rho}^{j})}+\left\Vert g_{n}^i\right\Vert _{L^{2}(\Omega_{\rho}^{j},\mathbb{R}^N)}+|||h_n^i|+|\nabla h_{n}^i|||_{L^2(\Omega_{\rho}^{j})}\right) 
                        \\&~~~~~~~~~~\leq \sum_{i=j-1}^{k_\rho-j+1}2\mu_j(\Omega\times(a,b))
                       = 2\mu(\Omega_{\rho}^{j-1}).
                        \end{align*}

             \end{remark}
             \begin{definition} Let $\mu\in\mathfrak{M}_b(\Omega\times(a,b))$ and $\sigma\in\mathfrak{M}_b(\Omega)$. A measurable function $u$ is a distributional solution to the problem \eqref{5hh050420141}
             if $u\in L^s(a,b,W_0^{1,s}(\Omega))$ for any $s\in\left[1,\frac{N+2}{N+1}\right)$ and $B(u,\nabla u)\in L^1(\Omega\times (a,b))$ such that 
               \begin{align*}
              & -\int_{\Omega\times(a,b)}u\varphi_tdxdt+\int_{\Omega\times(a,b)}A(x,t,\nabla u)\nabla \varphi dxdt\\&~~~~~~~~=\int_{\Omega\times(a,b)}B(u,\nabla u)\varphi dxdt+\int_{\Omega\times(a,b)}\varphi d\mu+\int_{\Omega}\varphi(a)d \sigma
               \end{align*}
               for every $\varphi\in C_c^1(\Omega\times [a,b))$. 
             \end{definition}
             \begin{remark}\label{5hh060420141} Let $\sigma'\in\mathfrak{M}_b(\Omega)$ and $a'\in (a,b)$, set $\omega=\mu+\sigma'\otimes\delta_{\{t=a'\}}$. If $u$ is a distributional solution to the problem \eqref{5hh050420141} with data $\omega$ and $\sigma=0$ such that $\text{supp}(\mu)\subset \overline{\Omega}\times [a',b]$, and $u=0,B(u,\nabla u)=0$ in $\Omega\times (a,a')$, then $\tilde{u}:=\left. u \right|_{\Omega\times[a',b)}$ is a distributional solution to problem \eqref{5hh050420141} in $\Omega\times (a',b)$ with data $\mu$ and $\sigma'$. Indeed, for any $\varphi\in C_c^1(\Omega\times [a',b))$ we define 
             \begin{equation*}\tilde{\varphi}(x,t)=\left\{ \begin{array}{l}
                                  \varphi(x,t) ~\text{if}~(x,t)\in \Omega\times [a',b), \\                     
                                  (1+\varepsilon_0)(t-a')\varphi_t(x,a')+\varphi(x,(1+\varepsilon_0)a'-\varepsilon_0t)~\text{if}~(x,t)\in \Omega\times [a,a'), \\ 
                                  \end{array} \right.\end{equation*}
                                  where $\varepsilon_0\in \left(0,\frac{b-a'}{a'-a}\right)$.\\
                                  Clearly, $\tilde{\varphi}\in C_c^1(\Omega\times [a,b))$, thus we have
                                  \begin{align*}
                                    &-\int_{\Omega\times(a,b)}u\tilde{\varphi}_tdxdt+\int_{\Omega\times(a,b)}A(x,t,\nabla u)\nabla \tilde{\varphi} dxdt\\&~~~~~~~~~~~~~~~~~~~~~=\int_{\Omega\times(a,b)}B(u,\nabla u)\tilde{\varphi} dxdt+\int_{\Omega\times(a,b)}\tilde{\varphi} d\omega,                      
                                    \end{align*}
                          which implies
                          \begin{align*}
                                                 &-\int_{\Omega\times(a',b)}\tilde{u}\varphi_tdxdt+\int_{\Omega\times(a',b)}A(x,t,\nabla \tilde{u})\nabla \varphi dxdt\\&~~~~~~~~~~~~~~~=\int_{\Omega\times(a',b)}B(\tilde{u},\nabla \tilde{u})\varphi dxdt+\int_{\Omega\times(a',b)}\varphi d\mu
                                                 + \int_{\Omega}\varphi(a')d\sigma'.
                                                 \end{align*}        
             \end{remark}
             \begin{definition}Let $\mu\in\mathfrak{M}(\mathbb{R}^{N}\times [a,+\infty))$, for $a\in \mathbb{R}$ and $\sigma\in\mathfrak{M}(\mathbb{R}^{N})$. A measurable function $u$ is a distributional solution to problem 
             \begin{equation}\label{5hh120320148}\left\{ \begin{array}{l}
                                  {u_t} - \operatorname{div}\left( {A(x,t,\nabla u)} \right) = B(u,\nabla u)+\mu ~\text{in}~\mathbb{R}^{N}\times (a,+\infty), \\                     
                                  u(a) = \sigma\quad ~in~ \mathbb{R}^{N}, \\ 
                                  \end{array} \right.\end{equation}
             if $u\in L^s_{\text{loc}}(a,\infty,W_{\text{loc}}^{1,s}(\mathbb{R}^N))$  for any $s\in\left[1,\frac{N+2}{N+1}\right)$ and $B(u,\nabla u)\in L^1_{\text{loc}}(\mathbb{R}^N\times[a,\infty))$ such that 
               \begin{align*}
               &-\int_{\mathbb{R}^N\times (a,\infty)}u\varphi_tdxdt+\int_{\mathbb{R}^N\times (a,\infty)}A(x,t,\nabla u)\nabla \varphi dxdt\\&~~~~~~~~~~~~~~~~=\int_{\mathbb{R}^N\times (a,\infty)}B(u,\nabla u)\varphi dxdt+\int_{\mathbb{R}^N\times (a,\infty)}\varphi d\mu+\int_{\mathbb{R}^N}\varphi(a)d \sigma
               \end{align*}
               for every $\varphi\in C_c^1(\mathbb{R}^N\times [a,\infty))$. 
             \end{definition}
             \begin{definition}Let $\omega\in\mathfrak{M}(\mathbb{R}^{N+1})$. A measurable function $u$ is a distributional solution to problem 
             \begin{equation}\label{5hh190420141}
             {u_t} - \operatorname{div}\left( {A(x,t,\nabla u)} \right) = B(u,\nabla u)+\omega ~\text{in}~\mathbb{R}^{N+1},
             \end{equation}                  
                      if $u\in L^s_{\text{loc}}(\mathbb{R};W_{\text{loc}}^{1,s}(\mathbb{R}^N))$  for any $s\in\left[1,\frac{N+2}{N+1}\right)$ and $B(u,\nabla u)\in L^1_{\text{loc}}(\mathbb{R}^{N+1})$ such that 
$$
                        -\int_{\mathbb{R}^{N+1}}u\varphi_tdxdt+\int_{\mathbb{R}^{N+1}}A(x,t,\nabla u)\nabla \varphi dxdt=\int_{\mathbb{R}^{N+1}}B(u,\nabla u)\varphi dxdt+\int_{\mathbb{R}^{N+1}}\varphi d\omega,
$$
                        for every $\varphi\in C_c^1(\mathbb{R}^{N+1})$. 
                      \end{definition}
             \begin{remark}\label{5hh060420142}Let $\mu\in\mathfrak{M}(\mathbb{R}^{N}\times [a,+\infty))$, for $a\in \mathbb{R}$ and $\sigma\in\mathfrak{M}(\mathbb{R}^{N})$.  If $u$ is a distributional solution to problem \eqref{5hh190420141} with data $\omega=\mu+\sigma\otimes\delta_{\{t=a\}}$  such that  $u=0,B(u,\nabla u)=0$ in $\mathbb{R}^N\times (-\infty,a)$, then $\tilde{u}:=\left. u \right|_{\mathbb{R}^N\times[a,\infty)}$ is a distributional solution to problem \eqref{5hh120320148} in $\mathbb{R}^N\times (a,\infty)$ with data $\mu$ and $\sigma$, see Remark \ref{5hh060420141}.        
             \end{remark}
             To prove the existence distributional solution of problem \eqref{5hh120320148} we need the following results. 
    First, we have local estimates of the renormalized solution which get from \cite[Proposition 2.8 ]{55VH}.
    \begin{proposition} \label{5hh120320146}
     Let $u,v$ be in Definition \ref{5hh100320142}.  There holds for $k\geq 1$ and $0\leq\eta\in C_c^\infty(\Omega\times (a,b))$
    \begin{equation}\label{5hh120320143}
    \int_{|v|\leq k}\eta |\nabla u|^2dxdt+\int_{|v|\leq k}\eta |\nabla v|^2dxdt\lesssim k A
    \end{equation}
    where  
    \begin{align*}
    &A=|| v\eta_t||_{L^1(\Omega\times (a,b))}+|||\nabla u||\nabla\eta|||_{L^1(\Omega\times (a,b))}+||\eta f||_{L^1(\Omega\times (a,b))}+||\eta|g|^2||_{L^1(\Omega\times (a,b))}\\&~~~~~~~~~~~~+|||\nabla\eta||g|||_{L^1(\Omega\times (a,b))}+||\eta|\nabla h|^2||_{L^1(\Omega\times (a,b))}+\int_{\Omega\times (a,b)}\eta d|\mu_s|.
    \end{align*}
    \end{proposition}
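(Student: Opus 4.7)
The plan is to test the renormalized formulation \eqref{5hhrenor} against $\varphi=\eta$ with a two-scale truncation $S=S_{k,m}$ that concentrates its second derivative on $\{|v|\le k\}$ together with a thin shell $\{m\le|v|<2m\}$, and then to send $m\to\infty$, so that conditions \eqref{5hhrenor2}--\eqref{5hhrenor3} convert the shell contribution into the singular-measure term $k\int\eta\,d|\mu_s|$. Concretely, I take $h_m$ Lipschitz with $h_m\equiv 1$ on $[-m,m]$, $h_m\equiv 0$ outside $[-2m,2m]$, linear on each connecting interval, and set $S_{k,m}(r):=\int_0^r T_k(\tau)h_m(\tau)\,d\tau$. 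After a harmless mollification, $S_{k,m}\in W^{2,\infty}(\mathbb{R})$, $S_{k,m}(0)=0$, $S_{k,m}'$ has compact support with $|S_{k,m}'|\le k$, $S_{k,m}''=\chi_{(-k,k)}$ on $[-m,m]$, and $S_{k,m}''=-k/m$ on $[-2m,-m]\cup[m,2m]$.

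Since $\eta$ is compactly supported in $\Omega\times(a,b)$, the initial term in \eqref{5hhrenor} vanishes. Substituting $S=S_{k,m}$ and $\varphi=\eta$, expanding $\int S_{k,m}'(v)\eta\,d\widehat{\mu_0}=\int S_{k,m}'(v)\eta f\,dxdt+\int\nabla(S_{k,m}'(v)\eta)\cdot g\,dxdt$, and moving the cross term $\int S_{k,m}''(v)\eta g\cdot\nabla v\,dxdt$ to the left side gives
\begin{align*}
\int S_{k,m}''(v)\eta(A-g)\cdot\nabla v\,dxdt &= \int\eta_t S_{k,m}(v)\,dxdt - \int S_{k,m}'(v)A\cdot\nabla\eta\,dxdt \\
&\quad + \int S_{k,m}'(v)\eta f\,dxdt + \int S_{k,m}'(v)g\cdot\nabla\eta\,dxdt.
\end{align*}
Splitting the left side by the support structure of $S_{k,m}''$ rewrites this as
\[
\int_{|v|<k}\eta(A-g)\cdot\nabla v\,dxdt = [\text{RHS above}] + \frac{k}{m}\int_{m\le|v|<2m}\eta(A-g)\cdot\nabla v\,dxdt.
\]

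As $m\to\infty$, \eqref{5hhrenor2}--\eqref{5hhrenor3} applied with $\phi=\eta$ yield $\frac{k}{m}\int_{m\le|v|<2m}\eta A\cdot\nabla v\,dxdt\to k\int\eta\,d|\mu_s|$. Coercivity \eqref{5hhcondb} with $\lambda=0$ on the shell gives $\int_{m\le|v|<2m}\eta|\nabla v|^2\,dxdt=O(m)$, whence Cauchy--Schwarz dispatches the $g$ cross-term: $\frac{k}{m}\int_{m\le|v|<2m}\eta|g||\nabla v|\,dxdt=O(km^{-1/2})\to 0$. Dominated convergence with $|S_{k,m}(v)|\le k|v|$ and $|S_{k,m}'|\le k$ passes the remaining right-hand integrals to the limit, producing
\begin{align*}
\int_{|v|<k}\eta(A-g)\cdot\nabla v\,dxdt &= \int\eta_t\Phi_k(v)\,dxdt - \int T_k(v)A\cdot\nabla\eta\,dxdt + \int T_k(v)\eta f\,dxdt \\
&\quad + \int T_k(v)g\cdot\nabla\eta\,dxdt + k\int\eta\,d|\mu_s|,
\end{align*}
where $\Phi_k(v):=\int_0^v T_k(\tau)\,d\tau$ satisfies $|\Phi_k(v)|\le k|v|$.

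Coercivity \eqref{5hhcondb} with $\lambda=0$ combined with Young's inequality gives $(A-g)\cdot\nabla v\ge \tfrac{\Lambda_2}{2}|\nabla u|^2-C(|g|^2+|\nabla h|^2)$, and each right-hand term is bounded by $k$ times the corresponding quantity in $A$ via $|\Phi_k|\le k|\cdot|$, $|T_k|\le k$, and \eqref{5hhconda}. Using $k\ge 1$ to fold the constant-coefficient terms $\|\eta|g|^2\|_{L^1}$ and $\|\eta|\nabla h|^2\|_{L^1}$ into $k$-multiples delivers $\int_{|v|\le k}\eta|\nabla u|^2\,dxdt\le CkA$; the companion bound on $\int_{|v|\le k}\eta|\nabla v|^2\,dxdt$ then follows from $|\nabla v|^2\le 2|\nabla u|^2+2|\nabla h|^2$. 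The chief difficulty is the passage $m\to\infty$ in the shell $\{m\le|v|<2m\}$: this is precisely where the renormalization conditions \eqref{5hhrenor2}--\eqref{5hhrenor3} are indispensable, and the auxiliary shell bound $\int_{m\le|v|<2m}\eta|\nabla v|^2\,dxdt=O(m)$ (required to kill the $g$ cross-term) is itself deduced from them via the same coercivity argument applied on the shell.
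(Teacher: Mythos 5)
Your proof is correct. The paper itself does not prove this proposition but simply quotes it from [55VH, Proposition 2.8], so there is no in-text proof to compare against; your argument is the standard Caccioppoli-type estimate for renormalized solutions, and it is almost certainly the one used in the cited reference. The key ingredients — testing \eqref{5hhrenor} with $\varphi=\eta$ and the two-scale renormalizing function $S_{k,m}'=T_k\,h_m$, splitting $S_{k,m}''$ into $\chi_{(-k,k)}$ plus the shell factor $-k/m$, invoking \eqref{5hhrenor2}--\eqref{5hhrenor3} to identify the shell contribution as $k\int\eta\,d|\mu_s|$, killing the $g$ cross-term on the shell via the auxiliary bound $\int_{m\le|v|<2m}|\nabla v|^2\,dxdt=O(m)$ (itself a consequence of \eqref{5hhrenor2}--\eqref{5hhrenor3} with $\phi=1$ plus coercivity), and finally applying \eqref{5hhconda}--\eqref{5hhcondb} with Young's inequality and $k\ge 1$ to absorb the $|g|^2$ and $|\nabla h|^2$ terms into $kA$ — are all present and correctly executed. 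The only minor point worth noting is that the proposition, though stated for the general Definition \ref{5hh100320142}, implicitly assumes $B\equiv 0$ (otherwise an extra term $\|\eta B(u,\nabla u)\|_{L^1}$ would appear in $A$), which you have correctly taken for granted.
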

    For our purpose, we recall the Landes-time approximation  of functions $w$ belonging to $L^2(a,b,H_0^1(\Omega))$, introduced in \cite{55Lan}, used in \cite{55DaOr,55BDGO97,55BlPo1}. For $\nu>0$ we define
    \begin{align*}
    \langle w\rangle_{\nu}(x,t)=\nu\int_{a}^{\min\{t,b\}}w(x,s)e^{\nu(s-t)}ds~~\text{ for all }~ (x,t)\in \Omega\times(a,b).
    \end{align*}
    We have that $\langle w\rangle_{\nu}$ converges to $w$ strongly in $L^2(a,b,H_0^1(\Omega))$ and $||\langle w\rangle_{\nu}||_{L^q(\Omega\times(a,b))}\leq ||w||_{L^q(\Omega\times(a,b))}$ for every $q\in [1,\infty]$. Moreover, 
    \begin{align*}
    (\langle w\rangle_{\nu})_{t}=\nu\left(  w-\langle
    w\rangle_{\nu}\right)  \quad\text{in the distributional sense.}
    \end{align*}
    \begin{proposition}\label{5hh1203201411} 
    Let $q_0>1$ and $0<\alpha<1/2$ such that $q_0>\alpha+1$. Let $L:\mathbb{R}\to\mathbb{R}$  be continuous and nondecreasing such that $L(0)=0$.  If $u$ is a solution of \begin{equation}\label{5hh060420143}
                                          \left\{
                                          \begin{array}
                                          [c]{l}%
                                          {u_{t}}-\operatorname{div}(A(x,t,\nabla u))+L(u)=\mu~\text{in }\Omega\times (a,b),\\ 
                                                                u=0~~~~~~~\text{on}~~
                                                                                                 \partial\Omega \times (a,b),
                                                                                                    \\
                                                                                                    u(a) =0~~~\text{in}~~ \Omega, \\ 
                                          \end{array}
                                          \right.  
                                          \end{equation} 
                                           with $\mu\in C^\infty_c(\Omega\times(a,b))$  then  for   $0\le \eta \in C^\infty_c(D)$ 
    \begin{equation}
    \frac{1}{k}\int_{D}|\nabla T_k(u)|^2\eta + \int_{D}\frac{|\nabla u|^2\eta}{(|u|+1)^{\alpha+1}}+|||\nabla u| |\nabla \eta|||_{L^1(D)}+||L(u)\eta||_{L^1(D)}\lesssim_{\alpha,q_0}B,
    \label{5hh110320141}
    \end{equation}
    where $q_1=\frac{q_0-\alpha-1}{2q_0}$, $D=\Omega'\times (a',b')$, $\Omega'\subset\subset\Omega$ and   $a<a'<b'<b$; 
$$
    B=||\eta_t (|u|+1)||_{L^1(D)}+\int_{D}(|u|+1)^{q_0}\eta dxdt+\int_{D}|\nabla \eta^{1/q_1}|^{q_1}dxdt+\int_{D}\eta d|\mu|.$$
    Furthermore, for $T_k(w)\in L^2(a',b',H^1_0(\Omega'))$, the Landes-time approximation $\langle
    T_{k}(w)\rangle_{\nu}$  of the truncated function $T_{k}(w)$ in $D$ then for any $\varepsilon\in (0,1)$ and $\nu>0$
     \begin{align}\nonumber
     &\nu\int_{D} \eta\left( T_{k}(w)-\langle T_{k}(w)\rangle_\nu\right)T_\varepsilon(T_k(u)-\langle T_{k}(w)\rangle_\nu) dxdt\\&~~~~~~+\int_{D}\eta A(x,t,\nabla T_k(u))\nabla T_{\varepsilon}(T_k(u)-\langle T_{k}(w)\rangle_\nu) dxdt \lesssim\varepsilon(1+k)B.\label{5hh110320147}
     \end{align}
    \end{proposition}
    
    \begin{proposition}\label{5hh1203201412} Let $q_0>1$, $\mu_n=\mu_{n,0}+\mu_{n,s}\in \mathfrak{M}_b(B_n(0)\times (-n^2,n^2))$. Let $u_n$ be a renormalized solution of 
    \begin{equation}
             \left\{
             \begin{array}
             [c]{l}%
             {(u_{n})_t}-\operatorname{div}(A(x,t,\nabla u_n))=\mu_{n}~~\text{in }B_n(0)\times (-n^2,n^2),\\ 
                                 u_n=0
                                 ~~~~~~\text{on}~~\partial B_n(0)\times (-n^2,n^2),
                                 \\
                                 u_n(-n^2) = 0~\text{in}~ B_n(0), \\         
             \end{array}
             \right.  \label{5hh120320142}%
             \end{equation}
             relative to the decomposition $(f_n,g_n,h_n)$ of $\mu_{n,0}$ satisfying  \eqref{5hh110320147} in Proposition \ref{5hh1203201411} with $L\equiv 0$. Assume that for any $m\in\mathbb{N}$ and $\alpha\in (0,1/2)$, $D_m:=B_m(0)\times (-m^2,m^2)$
             \begin{align}\nonumber
             &\frac{1}{k}|||\nabla T_k(u)|^2||_{L^1(D_m)}+|||\nabla u|^2(|u|+1)^{-\alpha-1}||_{L^1(D_m)}+|||\nabla u| ||_{L^1(D_m)}+|\mu_n|(D_m)\\&\nonumber~~+||f_n||_{L^1(D_m)}+||g_n||_{L^2(D_m,\mathbb{R}^N)}+|||h_n|+|\nabla h_n|||_{L^2(D_m)}+||u_n||_{L^{q_0}(D_m)}\leq  C(m,\alpha)\label{5hh120320144}
             \end{align}
               for all $n\geq m$ and $h_n$ is convergent in $L^1_{\text{loc}}(\mathbb{R}^{N+1})$. 
      Then, there exists a subsequence of $\{u_n\}$, still denoted by $\{u_n\}$  such that $u_n$ converges to $u$ a.e in $\mathbb{R}^{N+1}$ and  in $L^s_{\text{loc}}(\mathbb{R};W^{1,s}_{loc}(\mathbb{R}^N))$ for any $s\in [1,\frac{N+2}{N+1})$.
    \end{proposition}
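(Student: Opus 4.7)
The plan is to carry out a localized version of the stability argument of \cite{55VH}, promoting the standing uniform bounds to almost-everywhere and $L^s_{\text{loc}}$ convergence by a diagonal extraction across the cylinders $D_m = B_m(0)\times (-m^2,m^2)$. First, I would use the bound $\frac{1}{k}|||\nabla T_k(u_n)|^2||_{L^1(D_m)}\le C(m,\alpha)$ together with $|||\nabla u_n|||_{L^1(D_m)}\le C(m,\alpha)$ to obtain, via a standard level-set computation, an $L^{\frac{N+2}{N+1},\infty}(D_m)$ bound on $|\nabla u_n|$, hence an equi-integrable $L^s(D_m)$ bound for every $s<\frac{N+2}{N+1}$. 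The renormalized equation together with the decomposition bounds yields (as in Remark \ref{5hh120320145}) that $(\eta S(u_n))_t$ is uniformly bounded in $L^2(H^{-1})+L^1$ on each $D_m$ for suitable cutoffs $\eta$ and truncations $S$. An Aubin-Lions compactness argument and a diagonal extraction then furnish a subsequence with $u_n\to u$ strongly in $L^1_{\text{loc}}(\mathbb{R}^{N+1})$ and almost everywhere.

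The central step is to upgrade this to almost-everywhere convergence of the gradients, for which I would invoke the hypothesis that $u_n$ satisfies the Landes-time energy inequality \eqref{5hh110320147}. Fix $k\ge 1$, $m\in\mathbb{N}$ and a nonnegative cutoff $\eta\in C^\infty_c(D_m)$; applying \eqref{5hh110320147} to $u_n$ with the Landes regularization $\langle T_k(u)\rangle_\nu$ of the limit, I would isolate the monotonicity term and invoke \eqref{5hhcondb} to derive
\begin{equation*}
\int_{D_m}\!\eta\bigl(A(x,t,\nabla T_k(u_n))-A(x,t,\nabla \langle T_k(u)\rangle_\nu)\bigr)\cdot\bigl(\nabla T_k(u_n)-\nabla \langle T_k(u)\rangle_\nu\bigr)\chi_{\{|T_k(u_n)-\langle T_k(u)\rangle_\nu|\le \varepsilon\}}\,dxdt \le C\varepsilon.
\end{equation*}
Passing successively $n\to\infty$ (using $u_n\to u$ a.e.\ and the weak compactness of $\nabla T_k(u_n)$ in $L^2$), then $\nu\to\infty$ (so $\langle T_k(u)\rangle_\nu \to T_k(u)$ strongly in $L^2(H^1_0)$), and finally $\varepsilon\to 0$ followed by $k\to\infty$, I obtain $\nabla u_n\to \nabla u$ in measure on $D_m$; another diagonal extraction promotes this to a.e.\ convergence on $\mathbb{R}^{N+1}$.

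Finally, combining the a.e.\ convergence of gradients with the equi-integrable $L^s$ bound from the first step, Vitali's theorem gives strong convergence $\nabla u_n\to \nabla u$ in $L^s(D_m;\mathbb{R}^N)$ for every $s<\frac{N+2}{N+1}$, and after a last diagonal extraction across $m$ we conclude that $u_n\to u$ in $L^s_{\text{loc}}(\mathbb{R};W^{1,s}_{\text{loc}}(\mathbb{R}^N))$. The principal obstacle lies in the middle step: the limit $u$ carries no a priori time regularity, so it cannot itself be used as a test function in the renormalized formulation. This is precisely why the quantitative form \eqref{5hh110320147} of the energy estimate is built into the hypotheses of the proposition — the Landes regularization $\langle T_k(u)\rangle_\nu$ supplies the missing time differentiability, while the $\nu$-term on the left and the factor $\varepsilon$ on the right of \eqref{5hh110320147} are arranged so that the limits in $\nu$ and $\varepsilon$ can be taken in either order without additional information on $u_t$.
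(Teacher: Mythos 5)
Your proposal follows essentially the same route as the paper: extract an a.e.\ convergent subsequence via a compactness-in-$L^1_{\mathrm{loc}}$ argument for suitably truncated/cut-off quantities, then upgrade to a.e.\ convergence of gradients by testing the Landes-time inequality \eqref{5hh110320147} against the regularized limit $\langle T_k(\cdot)\rangle_\nu$ and passing to the limits in the order $n\to\infty$, $\nu\to\infty$, $\varepsilon\to 0$, and finally $k\to\infty$, so that the monotonicity quantity forces $\nabla T_k(u_n)\to\nabla T_k(u)$; the closing Vitali/equi-integrability step to get $L^s_{\mathrm{loc}}$ is standard and is in fact spelled out more explicitly here than in the paper.

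Two small but real imprecisions are worth flagging. First, the time-derivative bound from Remark \ref{5hh120320145} is on $(\eta S(v_n))_t$ with $v_n=u_n-h_n$, not on $(\eta S(u_n))_t$; the paper accordingly works with $U_{m,n}=\eta_m S_k(v_n)$, obtains $v_n\to v$ in measure, and only then uses the assumed $L^1_{\mathrm{loc}}$ convergence of $h_n$ to recover convergence of $u_n=v_n+h_n$. Your formulation should likewise peel off $h_n$ first. Second, applying \eqref{5hh110320147} requires $T_k(w)\in L^2(a',b',H^1_0(\Omega'))$, which the bare limit $T_k(u)$ need not satisfy on the full local cylinder; the paper inserts a cutoff and uses $T_k(\eta_{m+1}u)$ (which agrees with $T_k(u)$ on $\mathrm{supp}\,\eta_m$). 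With these two adjustments your argument matches the paper's, and you correctly identify the role of the Landes regularization in compensating for the absence of time regularity of $u$.
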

    Proofs of above two propositions are given in  Appendix. The following result is as a consequence of Proposition \ref{5hh1203201412}.
    \begin{corollary}\label{5hh090420144}
    Let  $\mu_n\in L^1(B_n(0)\times (-n^2,n^2))$. Let $u_n$ be a unique renormalized solution to the problem \ref{5hh120320142}.  Assume that for any $m\in\mathbb{N}$,
    \begin{align*}
    \sup_{n\geq m}|\mu_n|(B_m(0)\times (-m^2,m^2))<\infty~~\text{and}~~\sup_{n\geq m}\int_{B_m(0)\times (-m^2,m^2)}|u_n|^{q_0}dxdt<\infty.
    \end{align*}  then there exists a subsequence of $\{u_n\}$ converging to $u$ a.e in $\mathbb{R}^{N+1}$ and  in $L^s_{\text{loc}}(\mathbb{R};W^{1,s}_{loc}(\mathbb{R}^N))$ for any $s\in [1,\frac{N+2}{N+1})$.
    \end{corollary}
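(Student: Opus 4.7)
The plan is to invoke Proposition \ref{5hh1203201412} directly, by verifying all its hypotheses for the sequence $\{u_n\}$. Since each $\mu_n \in L^1(B_n(0) \times (-n^2, n^2))$ belongs to $\mathfrak{M}_0$, I choose the canonical decomposition $\mu_{n,0} = \mu_n$, $\mu_{n,s} = 0$, with triple $(f_n, g_n, h_n) = (\mu_n, 0, 0)$. Then $h_n \equiv 0$ trivially converges in $L^1_{\text{loc}}(\mathbb{R}^{N+1})$, and the $g_n$ and $h_n$ contributions to the bound \eqref{5hh120320144} vanish. The hypotheses of the corollary directly control $\|f_n\|_{L^1(D_m)} = |\mu_n|(D_m)$ and $\|u_n\|_{L^{q_0}(D_m)}$ uniformly in $n \geq m$ for every $m \in \mathbb{N}$.

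It remains to produce the gradient bounds in \eqref{5hh120320144} and to supply \eqref{5hh110320147}. For this I would apply Proposition \ref{5hh1203201411} with $L \equiv 0$ to $u_n$ on the cylinder $D_{m+1}$, using a fixed cutoff $\eta \in C_c^\infty(D_{m+1})$ with $\eta \equiv 1$ on $D_m$ and $0 \leq \eta \leq 1$. With this choice the quantity $B$ in \eqref{5hh110320141} reduces to a combination of $\|\eta_t\|_\infty \, \||u_n|+1\|_{L^1(D_{m+1})}$, $\|\nabla \eta^{1/q_1}\|_\infty^{q_1}$, and $|\mu_n|(D_{m+1})$. The $L^1$-norm of $u_n$ is controlled via Hölder by $\|u_n\|_{L^{q_0}(D_{m+1})}$, so the entire right-hand side is bounded uniformly in $n \geq m+1$ by the standing assumption. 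This yields both the required local energy-type estimates and \eqref{5hh110320147}.

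The one technical point is that Proposition \ref{5hh1203201411} is stated for $\mu \in C_c^\infty(\Omega \times (a,b))$, whereas here $\mu_n \in L^1$. I would close this gap by approximating $\mu_n$ in $L^1$ by a sequence $\{\mu_n^{\varepsilon}\}_\varepsilon \subset C_c^\infty$, applying Proposition \ref{5hh1203201411} to the associated smooth solutions $u_n^{\varepsilon}$, and passing to the limit $\varepsilon \to 0$ using the stability Theorem \ref{5hhsta} (which ensures $u_n^{\varepsilon} \to u_n$ a.e.\ and $\nabla u_n^{\varepsilon} \to \nabla u_n$ a.e.) together with Fatou's lemma, which transfers each pointwise-nonnegative local estimate to $u_n$. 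Once this approximation is in place, every hypothesis of Proposition \ref{5hh1203201412} is satisfied, and that proposition produces a subsequence of $\{u_n\}$ converging a.e.\ in $\mathbb{R}^{N+1}$ and in $L^s_{\text{loc}}(\mathbb{R}; W^{1,s}_{\text{loc}}(\mathbb{R}^N))$ for every $s \in [1, \tfrac{N+2}{N+1})$. The main (and really only) delicate point is this approximation step; modulo it, the corollary is a direct reduction.
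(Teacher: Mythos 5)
Your reduction is the intended one: the paper offers no separate argument for the corollary, presenting it simply as a consequence of Proposition \ref{5hh1203201412}, and your choice of the canonical decomposition $(f_n,g_n,h_n)=(\mu_n,0,0)$ together with the H\"older estimate on the quantity $B$ of Proposition \ref{5hh1203201411} is exactly what makes that reduction work. One point deserves tightening, however. You need to verify not only the nonnegative bounds feeding into \eqref{5hh120320144} but also that $u_n$ satisfies \eqref{5hh110320147}, which is a hypothesis of Proposition \ref{5hh1203201412}. That inequality is \emph{not} pointwise nonnegative (both the Landes term and $\int \eta A(x,t,\nabla T_k(u))\nabla T_\varepsilon(\cdot)$ are signed), so Fatou's lemma does not transfer it from $u_n^\varepsilon$ to $u_n$. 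What does transfer it is the rest of what Theorem \ref{5hhsta} gives you: a.e.\ convergence of $\nabla u_n^\varepsilon$ \emph{and} strong convergence of $T_k(u_n^\varepsilon)$ in $L^2(a,b,H^1_0)$, combined with dominated convergence and the observation that the level set $\{|T_k(u_n)-\langle T_k(w)\rangle_\nu|=\varepsilon\}$ is null for a.e.\ $\varepsilon$. You cite the right theorem but attribute the transfer to the wrong mechanism; spelling out the limit passage for \eqref{5hh110320147} closes this. Alternatively, you can avoid the approximation entirely: the proof of Proposition \ref{5hh1203201411} only uses test-function identities that hold for renormalized solutions, so it applies verbatim to $\mu_n\in L^1$ with the decomposition $(\mu_n,0,0)$ --- the $C_c^\infty$ hypothesis in its statement is a convenience, not a genuine restriction. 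Modulo that clarification, the argument is complete.
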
 
    Finally, we would like to present a technical lemma which will be used several times in the paper, especially in the proof of Theorem \ref{5hh0701201411}, \ref{5hh0701201412} and \ref{5hh2410131}.   It is a consequence of Vitali Covering Lemma (see \cite{55BW4,55BW2,55MePh2}). 
     \begin{lemma}\label{5hhvitali2} Let $\Omega$ be a $(R_0,\delta)$- Reifenberg flat domain with $\delta<1/4$ and let $w$ be an $A_\infty$ weight. Suppose that the sequence of balls $\{B_r(y_i)\}_{i=1}^L$ with centers $y_i\in\overline{\Omega}$ and a common radius $r\leq R_0/4$ covers $\Omega$. Set $s_i=T-ir^2/2$ for all $i=0,1,...,[\frac{2T}{r^2}]$. Let $E\subset F\subset \Omega_T$ be measurable sets for which there exists $0<\varepsilon<1$ such that  $w(E)<\varepsilon w(\tilde{Q}_r(y_i,s_j))$ for all $i=1,...,L$, $j=0,1,...,[\frac{2T}{r^2}]$; and  for all $(x,t)\in \Omega_T$, $\rho\in (0,2r]$, we have
             $\tilde{Q}_\rho(x,t)\cap \Omega_T\subset F$      
             if $w(E\cap \tilde{Q}_\rho(x,t))\geq \varepsilon w(\tilde{Q}_\rho(x,t))$. Then $
             w(E)\leq C\varepsilon w(F)$         
             for a constant $C$ depending only on $N$ and $[w]_{A_\infty}$.
            \end{lemma}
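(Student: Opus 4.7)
The plan is to run a standard Calderón--Zygmund stopping-time construction adapted to the parabolic cylinders $\tilde{Q}_\rho$, with two cases in the final density estimate to accommodate that the Vitali enlargement may exceed the scale $2r$ at which condition (b) is available. Write $c = c(N)$ for the parabolic Vitali inflation constant (so that two intersecting parabolic cylinders $\tilde Q_{\rho_1},\tilde Q_{\rho_2}$ with $\rho_2\le\rho_1$ satisfy $\tilde Q_{\rho_2}\subset\tilde Q_{c\rho_1}$; a direct check gives $c=3$). I will use, without comment, the following facts about $A_\infty$ weights: $w$ is doubling on cylinders with constant depending only on $[w]_{A_\infty}$, Lebesgue differentiation holds for $w$, and $w$ is comparable to Lebesgue measure in the quantitative sense that $|S|/|Q|\ge\alpha$ implies $w(S)/w(Q)\ge\beta(\alpha,[w]_{A_\infty})>0$.

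First, for each density point $(x,t)\in E$ set
\[
\rho^*(x,t):=\sup\bigl\{\rho\in(0,2r]\;:\;w(E\cap\tilde{Q}_\rho(x,t))\ge\varepsilon\,w(\tilde{Q}_\rho(x,t))\bigr\}.
\]
Lebesgue differentiation gives $\rho^*(x,t)>0$ for a.e.\ $(x,t)\in E$. To see $\rho^*(x,t)<2r$, pick $y_i$ with $x\in B_r(y_i)$ and, by the spacing $r^2/2$ of the $s_j$'s, an $s_j$ with $|t-s_j|\le r^2/4$; then $\tilde{Q}_r(y_i,s_j)\subset\tilde{Q}_{2r}(x,t)$ and assumption (a) yields
\[
w(E\cap\tilde{Q}_{2r}(x,t))\le w(E)<\varepsilon\,w(\tilde{Q}_r(y_i,s_j))\le\varepsilon\,w(\tilde{Q}_{2r}(x,t)).
\]
By right-continuity of $\rho\mapsto w(\tilde{Q}_\rho)$ and $\rho\mapsto w(E\cap\tilde{Q}_\rho)$, the supremum is attained in the sense $w(E\cap\tilde{Q}_{\rho^*}(x,t))\ge\varepsilon\,w(\tilde{Q}_{\rho^*}(x,t))$, so hypothesis (b) gives $\tilde{Q}_{\rho^*(x,t)}(x,t)\cap\Omega_T\subset F$. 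Apply the Vitali covering lemma to $\{\tilde{Q}_{\rho^*(x,t)}(x,t)\}_{(x,t)\in E}$ to extract a disjoint countable subfamily $\{\tilde{Q}_{\rho^*_k}(x_k,t_k)\}$ with $E\subset\bigcup_k \tilde{Q}_{c\rho^*_k}(x_k,t_k)$ up to a $w$-null set.

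The crucial claim is that on every enlarged cylinder the density remains controlled: $w(E\cap\tilde{Q}_{c\rho^*_k}(x_k,t_k))<\varepsilon\,w(\tilde{Q}_{c\rho^*_k}(x_k,t_k))$. If $c\rho^*_k\le 2r$ this is immediate from the definition of $\rho^*_k$ as the supremum on $(0,2r]$. If instead $c\rho^*_k>2r$, then, as in the preceding step, one can locate a cylinder $\tilde{Q}_r(y_i,s_j)$ from the covering whose center is close enough to $(x_k,t_k)$ so that $\tilde{Q}_r(y_i,s_j)\subset\tilde{Q}_{c\rho^*_k}(x_k,t_k)$ (the spatial inclusion uses $x_k\in\Omega$, which gives $|y_i-x_k|<r$, and the time inclusion uses $|s_j-t_k|\le r^2/4$ together with $(c\rho^*_k)^2/2>2r^2>r^2/4+r^2/2$). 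Then assumption (a) yields
\[
w(E\cap\tilde{Q}_{c\rho^*_k})\le w(E)<\varepsilon\,w(\tilde{Q}_r(y_i,s_j))\le\varepsilon\,w(\tilde{Q}_{c\rho^*_k}).
\]

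Summing, using doubling of $w$, then comparing $w(\tilde{Q}_{\rho^*_k})$ with $w(\tilde{Q}_{\rho^*_k}\cap\Omega_T)$ and finally using disjointness together with (b):
\[
w(E)\le\sum_k w(E\cap\tilde{Q}_{c\rho^*_k})<\varepsilon\sum_k w(\tilde{Q}_{c\rho^*_k})\le C_w\varepsilon\sum_k w(\tilde{Q}_{\rho^*_k})\le C_w C_w'\,\varepsilon\sum_k w(\tilde{Q}_{\rho^*_k}\cap\Omega_T)\le B\varepsilon\,w(F).
\]
The step I expect to be the most delicate is the comparison $w(\tilde{Q}_{\rho^*_k})\le C_w' w(\tilde{Q}_{\rho^*_k}\cap\Omega_T)$: it requires a uniform lower bound on $|\tilde{Q}_{\rho^*_k}\cap\Omega_T|/|\tilde{Q}_{\rho^*_k}|$. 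For the spatial factor this is exactly where Reifenberg flatness with $\delta<1/4$ enters, yielding $|B_{\rho^*_k}(x_k)\cap\Omega|\ge c_N|B_{\rho^*_k}(x_k)|$ at every boundary scale up to $R_0$; for the time factor one uses $t_k\in(0,T)$ together with $\rho^*_k\le 2r\le R_0/2$ (so the intersection of $(t_k-\rho^{*2}_k/2,t_k+\rho^{*2}_k/2)$ with $(0,T)$ retains at least half of its length). The $A_\infty$ characterization $|S|/|Q|\ge\alpha\Rightarrow w(S)/w(Q)\ge\beta$ then converts these Lebesgue comparisons into the required $w$-comparison, completing the proof with $B=B(N,[w]_{A_\infty})$.
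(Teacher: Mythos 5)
The paper does not actually prove Lemma \ref{5hhvitali2}: it states that the result ``is a consequence of Vitali Covering Lemma'' and refers to \cite{55BW4,55BW2,55MePh2}. Your Calder\'on--Zygmund/Vitali stopping-time construction is exactly the mechanism behind those references, and the overall architecture of your argument is the standard and correct one: stop at the maximal density radius $\rho^*\in(0,2r]$, use continuity of $\rho\mapsto w(E\cap\tilde Q_\rho)$ to see the supremum is achieved and feed $\tilde Q_{\rho^*}\cap\Omega_T\subset F$ into the conclusion, Vitali-extract a disjoint family, check the density remains $<\varepsilon$ on the inflated cylinders via the two cases $c\rho^*_k\le 2r$ and $c\rho^*_k>2r$ (both verifications are fine), and close with doubling plus a measure-density estimate for $\Omega_T$ in each $\tilde Q_{\rho^*_k}$.

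There is, however, a genuine gap in the very step you flag as ``the most delicate,'' and specifically in its temporal half. You assert that because $t_k\in(0,T)$ and $\rho^*_k\le 2r\le R_0/2$, the interval $(t_k-\rho^{*2}_k/2,\,t_k+\rho^{*2}_k/2)\cap(0,T)$ retains at least half of $\rho^{*2}_k$. That is only true when $\rho^{*2}_k\le 2T$. If $t_k-\rho^{*2}_k/2<0$ \emph{and} $t_k+\rho^{*2}_k/2>T$ simultaneously (which forces $\rho^{*2}_k>T$, certainly possible when $T<4r^2$), the intersection has length exactly $T$, and the retained fraction is $T/\rho^{*2}_k$, which can be arbitrarily small. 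The bound $\rho^*_k\le 2r\le R_0/2$ controls the spatial scale against $R_0$ but says nothing about $T$ versus $r^2$; nothing in the hypotheses prevents $T\ll r^2$. Hence the inequality $w(\tilde Q_{\rho^*_k})\le C_w'\,w(\tilde Q_{\rho^*_k}\cap\Omega_T)$ with $C_w'$ depending only on $N$ and $[w]_{A_\infty}$ is not justified as written. Trying to recover it from the stopping condition doesn't help either: $\varepsilon w(\tilde Q_{\rho^*_k})\le w(E\cap\tilde Q_{\rho^*_k})\le w(\tilde Q_{\rho^*_k}\cap\Omega_T)$ only gives $C_w'\lesssim\varepsilon^{-1}$, which destroys the $\varepsilon$ in the final bound $w(E)\le B\varepsilon\,w(F)$. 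You need either an additional hypothesis relating $T$ and $r^2$ (which the applications in the paper, where $r$ is chosen as a fraction of $T_0=\operatorname{diam}(\Omega)+T^{1/2}$, do not obviously satisfy), or a modified construction — for instance stopping radii capped at $\min(2r,\sqrt{2T})$ with a separate argument to handle the density at that cap, or working with cylinders clipped to $\mathbb R^N\times(0,T)$ from the outset so that no temporal mass is ever lost.

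Apart from this temporal-density issue, the remaining steps are sound: the proof that $\rho^*(x,t)<2r$ (locating $\tilde Q_r(y_i,s_j)\subset\tilde Q_{2r}(x,t)$ via $|x-y_i|<r$, $|t-s_j|\le r^2/2$), the Vitali inflation constant $c=3$ for $\tilde Q_\rho=B_\rho\times(t-\rho^2/2,t+\rho^2/2)$, the two-case density verification on $\tilde Q_{c\rho^*_k}$, and the spatial measure-density estimate $|B_{\rho^*_k}(x_k)\cap\Omega|\ge c_N|B_{\rho^*_k}(x_k)|$ from Reifenberg flatness with $\delta<1/4$ and $\rho^*_k\le R_0/2$ are all correct. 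You also correctly note that the conversion from Lebesgue-density to $w$-density uses the strong-doubling form of $A_\infty$, which is equivalent to but not literally the same as the paper's $w(E)\le C(|E|/|Q|)^\nu w(Q)$; that is legitimate but worth flagging, as you did.
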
           
Clearly,  the lemma implies the following two consquences that we state as Lemmas. More precisely, we have the following two Lemmas.          
              \begin{lemma}\label{5hhvitali1}
                Let $0<\varepsilon<1, R>0$ and  consider the cylinder $\tilde{Q}_R:=\tilde{Q}_R(x_0,t_0)$ for some $(x_0,t_0)\in \mathbb{R}^{N+1}$ and $w\in A_{\infty}$. Let $E\subset F\subset \tilde{Q}_R$ be two measurable sets in $\mathbb{R}^{N+1}$ with $w(E)<\varepsilon w(\tilde{Q}_R)$
                 satisfying
                 the following property: for all $(x,t)\in \tilde{Q}_R$ and $r\in (0,R]$, we have
                $\tilde{Q}_r(x,t)\cap \tilde{Q}_R\subset F$
                 provided $w(E\cap \tilde{Q}_r(x,t))\geq \varepsilon w(\tilde{Q}_r(x,t)).
                   $
                 Then $w(E)\leq C\varepsilon w(F)$ for some $C=C(N,[w]_{A_\infty})$.
                \end{lemma}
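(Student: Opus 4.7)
The plan is to derive this statement as an immediate specialization of the more general preceding Lemma~\ref{5hhvitali2}. I would take $\Omega$ to be a smooth bounded domain containing $B_R(x_0)$ (so that the $(\delta, R_0)$-Reifenberg-flat hypothesis is vacuous for arbitrarily small $\delta$), choose $T$ and shift the time variable so that $\Omega \times (0,T)$ contains the cylinder $\tilde{Q}_R(x_0,t_0)$ with $(x_0,t_0)$ coinciding with some $(y_i, s_j)$ in the setup of that lemma, cover $\Omega$ by the single ball ($L=1$, $y_1=x_0$) of radius $r=R$, and rescale if necessary so that $r \leq R_0/4$ (the $A_\infty$ constant is scale invariant, and all the hypotheses are preserved under parabolic scaling). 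With these choices the hypotheses of Lemma~\ref{5hhvitali2} reduce precisely to those of the present statement, and its conclusion yields $w(E) \leq B\varepsilon\,w(F)$ with $B$ depending only on $N$ and $[w]_{A_\infty}$.

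For the stand-alone proof, as performed in the cited references \cite{55BW4,55BW2,55MePh2}, the argument is a classical Vitali/Calder\'on--Zygmund stopping-time one. Since $w\in A_\infty$ is a doubling measure on parabolic cylinders, the Lebesgue differentiation theorem for $dw$ gives, for $w$-a.e.\ $(x,t)\in E$, arbitrarily small radii at which $w(E \cap \tilde{Q}_r(x,t))/w(\tilde{Q}_r(x,t))$ exceeds $\varepsilon$. I would therefore define a critical radius
\[
r_{(x,t)} = \sup\bigl\{r \in (0, R]\ :\ w(E \cap \tilde{Q}_r(x,t)) > \varepsilon\,w(\tilde{Q}_r(x,t))\bigr\},
\]
apply the parabolic Vitali $5r$-covering lemma to extract a disjoint subfamily $\{\tilde{Q}_{r_i}(x_i,t_i)\}_i$ of these critical cylinders whose $5$-fold enlargements cover $E$ modulo a $w$-null set, and invoke the density hypothesis to get $\tilde{Q}_{r_i}(x_i,t_i) \cap \tilde{Q}_R \subset F$ for each $i$. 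Because $(x_i, t_i) \in \tilde{Q}_R$, a purely geometric computation yields $|\tilde{Q}_{r_i}(x_i,t_i) \cap \tilde{Q}_R| \geq c(N) |\tilde{Q}_{r_i}(x_i,t_i)|$, which combined with the $A_\infty$ property gives $w(\tilde{Q}_{r_i}(x_i,t_i)) \leq C\,w(F \cap \tilde{Q}_{r_i}(x_i,t_i))$ with $C = C(N, [w]_{A_\infty})$.

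To close the argument, I would use the maximality of $r_i$ together with the doubling of $w$ to pass from the critical radius to the enlarged one, obtaining $w(E \cap \tilde{Q}_{5r_i}(x_i,t_i)) \leq C'\varepsilon\,w(\tilde{Q}_{r_i}(x_i,t_i))$, and then sum over the disjoint family:
\[
w(E) \leq \sum_i w(E \cap \tilde{Q}_{5r_i}(x_i,t_i)) \leq C'\varepsilon \sum_i w(\tilde{Q}_{r_i}(x_i,t_i)) \leq CC'\varepsilon \sum_i w(F \cap \tilde{Q}_{r_i}(x_i,t_i)) \leq B\varepsilon\,w(F).
\]
The only real technical obstacle is the boundary case when $r_i$ is close to $R$, so that $5 r_i > R$ and the maximality bound at radius $5r_i$ is no longer directly available from the definition (which only controls radii in $(0,R]$). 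I expect this to be handled in the standard way, by restricting the stopping radius to $(0, R/5]$, paying one more doubling factor that is absorbed into $B$, while exploiting the global bound $w(E) < \varepsilon\,w(\tilde{Q}_R)$ to handle points whose critical radius would otherwise exceed $R/5$.
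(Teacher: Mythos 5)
Your proposal contains two arguments: a derivation from Lemma~\ref{5hhvitali2} (the paper's implicit route, since the paper only says this lemma ``is contained in'' Lemma~\ref{5hhvitali2}) and a self-contained Vitali/Calder\'on--Zygmund argument (the route of the cited references). The second is essentially correct; the first, as stated, has a concrete gap.

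The gap: taking $L=1$ and $r=R$ is incompatible with the hypothesis $r\le R_0/4$, and rescaling cannot repair it. The Reifenberg radius of a bounded domain scales linearly with the domain: for $\Omega=B_R(x_0)$, $(\delta,R_0)$-flatness forces $R_0\lesssim\delta R$, so $r\le R_0/4\lesssim R/16$; and since, exactly as you observe, the ratio $r/R_0$ is a scale-invariant quantity, parabolic rescaling cannot change it. Separately, Lemma~\ref{5hhvitali2} demands $w(E)<\varepsilon\,w(\tilde Q_r(y_i,s_j))$ for \emph{every} base cylinder, which does not follow from the single bound $w(E)<\varepsilon\,w(\tilde Q_R)$ once $r<R$. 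The specialization can be repaired: take $\Omega=B_R(x_0)$ with the time interval chosen so that $\Omega_T=\tilde Q_R(x_0,t_0)$, fix $\delta\sim1/5$, cover $\Omega$ by $\sim20^N$ balls of common radius $r\sim R/20$, and observe that every base cylinder satisfies $\tilde Q_R(x_0,t_0)\subset\tilde Q_{3R}(y_i,s_j)$, whence by doubling $w(\tilde Q_R)\le C(N,[w]_{A_\infty})\,w(\tilde Q_r(y_i,s_j))$. One may then apply Lemma~\ref{5hhvitali2} with $\varepsilon$ replaced by $C\varepsilon$ (its density hypothesis for $\rho\in(0,2r]\subset(0,R]$ is implied by yours) and absorb $C$ into $B$; in the trivial regime $C\varepsilon\ge1$ one bounds $w(E)\le w(F)\le C\varepsilon\,w(F)$ directly.

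On the standalone argument, the issue you anticipate is best handled not by shrinking the stopping interval (restricting to $(0,R/5]$ still leaves the density bound at $5r_i$ unavailable when $r_i\in(R/25,R/5]$) but by keeping $r_{(x,t)}=\sup\{r\in(0,R]:w(E\cap\tilde Q_r(x,t))>\varepsilon w(\tilde Q_r(x,t))\}$, extracting the disjoint Vitali family $\{\tilde Q_{r_i}\}$, and splitting the sum: for $r_i\le R/5$, the stopping property at radius $5r_i\le R$ together with doubling gives $w(E\cap\tilde Q_{5r_i})\lesssim\varepsilon\,w(\tilde Q_{r_i})$; for $r_i>R/5$, use the global bound and doubling, $w(E\cap\tilde Q_{5r_i})\le w(E)<\varepsilon w(\tilde Q_R)\lesssim\varepsilon\,w(\tilde Q_{r_i})$. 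The remaining steps --- the geometric bound $|\tilde Q_{r_i}\cap\tilde Q_R|\gtrsim|\tilde Q_{r_i}|$, the $A_\infty$ comparison $w(\tilde Q_{r_i})\lesssim w(F\cap\tilde Q_{r_i})$, and the summation over the disjoint family --- are correct as you have them.
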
  
                \begin{lemma}\label{5hhvitali3}
                      Let $0<\varepsilon<1$ and  $R>R^{\prime}>0$ and let $E\subset F\subset Q=B_{R}(x_0)\times(a,b)$ be two measurable sets in $\mathbb{R}^{N+1}$ with $|E|<\varepsilon |\tilde{Q}_{R^{\prime}}|$
                       which satisfy
                       the following property: for all $(x,t)\in Q$ and $r\in (0,R^{\prime}]$, we have 
                       $Q_r(x,t)\cap Q\subset F$
                       if  $ |E\cap \tilde{Q}_r(x,t)|\geq \varepsilon |\tilde{Q}_r(x,t)|$.
                       Then  $|E|\leq C\varepsilon |F|$ for a constant $C$ depending only on $N$.
                      \end{lemma}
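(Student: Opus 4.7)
The plan is to run the standard parabolic Calderón-Zygmund--Vitali stopping-time argument; the statement is essentially the unweighted specialization of the more general Lemma~\ref{5hhvitali2}. First, for almost every $(x,t)\in E$, parabolic Lebesgue differentiation applied to the symmetric cylinders gives $|E\cap\tilde{Q}_r(x,t)|/|\tilde{Q}_r|\to 1$ as $r\to 0^+$, so the critical radius
\[
  r(x,t):=\sup\bigl\{\,r\in(0,R']\,:\,|E\cap\tilde{Q}_r(x,t)|\geq\varepsilon|\tilde{Q}_r|\,\bigr\}
\]
is positive. The smallness $|E|<\varepsilon|\tilde{Q}_{R'}|$ together with continuity in $r$ forces $r(x,t)<R'$ and density $\geq\varepsilon$ at $r=r(x,t)$; so the hypothesis gives $Q_{r(x,t)}(x,t)\cap Q\subset F$. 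I then apply Vitali's covering lemma to $\{\tilde{Q}_{r(x,t)}(x,t)\}_{(x,t)\in E}$ to extract a disjoint countable subfamily $\{\tilde{Q}_{r_i}(x_i,t_i)\}$ with $r_i=r(x_i,t_i)$ and $E\subset\bigcup_i\tilde{Q}_{5r_i}(x_i,t_i)$ up to a null set.

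The next key ingredient is Wang's upper-density trick: for every $i$ one has $|E\cap\tilde{Q}_{5r_i}(x_i,t_i)|<\varepsilon|\tilde{Q}_{5r_i}|$. Indeed, if $5r_i\leq R'$ this is the maximality of $r_i$, while if $5r_i>R'$ it follows from $|E|<\varepsilon|\tilde{Q}_{R'}|\leq\varepsilon|\tilde{Q}_{5r_i}|$. Summing over $i$,
\[
  |E|\leq\sum_i|E\cap\tilde{Q}_{5r_i}(x_i,t_i)|<5^{N+2}\varepsilon\sum_i|\tilde{Q}_{r_i}|.
\]
It remains to show $\sum_i|\tilde{Q}_{r_i}|\leq C_N|F|$, which then yields the conclusion with $B=5^{N+2}C_N$.

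This last step is the main obstacle, and uses the hypothesis crucially. From $Q_{r_i}(x_i,t_i)\cap Q\subset F$ and $|Q_{r_i}|=|\tilde{Q}_{r_i}|$ one is tempted to sum directly, but two subtleties obstruct this: (i) disjointness of the symmetric cylinders $\tilde{Q}_{r_i}$ does \emph{not} imply disjointness of the shifted one-sided cylinders $Q_{r_i}$; and (ii) for centers $(x_i,t_i)$ near the temporal boundary of $Q$, the set $Q_{r_i}(x_i,t_i)\cap Q$ may be a small fraction of $|\tilde{Q}_{r_i}|$. I would handle (i) by an elementary geometric check: if two $Q$-cylinders have disjoint $\tilde{Q}$-counterparts and still overlap, their time-separation is pinched between $r_{\max}^2/2$ and $r_{\max}^2$, so the pointwise multiplicity of the cover $\{Q_{r_i}\}$ is bounded by a dimensional constant $M_N$; this gives $\sum_i|Q_{r_i}(x_i,t_i)\cap Q|\leq M_N|F|$. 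For (ii), one re-applies the hypothesis at a slightly shifted center point $(x_i,t_i+\alpha r_i^2)$ (chosen so that the resulting $Q_r$ sits inside $Q$ with a uniform fraction of its volume), and combines with the density-in-$\tilde{Q}_{r_i}$ condition to produce the uniform lower bound $|Q_{r_i}(x_i,t_i)\cap Q|\geq c_N|\tilde{Q}_{r_i}|$ even near $\partial Q$. This re-centering device, together with the multiplicity bound, delivers $\sum_i|\tilde{Q}_{r_i}|\leq c_N^{-1}M_N|F|$ as required, following the parabolic Vitali framework of \cite{55MePh2,55BW2,55BW4} cited in the paper.
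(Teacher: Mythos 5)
Your overall strategy (parabolic Lebesgue differentiation, stopping radius $r(x,t)$, Vitali extraction of a disjoint subfamily of symmetric cylinders with the $5r$ dilate cover, and Wang's upper-density trick) is the right skeleton and matches the Caffarelli--Peral/Byun--Wang framework in the references \cite{55BW4,55BW2,55MePh2} that the paper points to, and the first six steps are fine. However, your resolution of subtlety (i) is false: it is \emph{not} true that the family $\{Q_{r_i}(x_i,t_i)\}$ has pointwise multiplicity bounded by a dimensional constant whenever the symmetric cylinders $\{\tilde{Q}_{r_i}(x_i,t_i)\}$ are pairwise disjoint. Fix a point $(y,s)$ and take $x_k=y$, $r_k=2^{k}\rho$ for $k=1,\dots,m$, and $t_k=\tfrac{9}{10}r_k^2+s$. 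Then $t_k\in(s,s+r_k^2)$, so $(y,s)\in Q_{r_k}(y,t_k)$ for every $k$, while $t_{k+1}-t_k=\tfrac{9}{10}(r_{k+1}^2-r_k^2)=\tfrac{27}{10}\cdot4^k\rho^2>\tfrac{1}{2}(r_k^2+r_{k+1}^2)=\tfrac{5}{2}\cdot4^k\rho^2$, so the $\tilde{Q}_{r_k}(y,t_k)$ are pairwise disjoint; yet all $m$ backward cylinders contain $(y,s)$, with $m$ arbitrary. Your pairwise-pinching observation ($t_2-t_1\in(r_{\max}^2/2,r_{\max}^2)$) is correct but does not yield a uniform multiplicity bound because $r_{\max}$ varies with the pair. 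The clean and correct fix is the inclusion $Q_{r/\sqrt{2}}(x,t)\subset\tilde{Q}_r(x,t)$ (equality of the left time endpoint $t-r^2/2$, shorter forward extent, smaller ball): disjointness of the $\tilde{Q}_{r_i}$ then \emph{does} give disjointness of the shrunk one-sided cylinders $Q_{r_i/\sqrt{2}}(x_i,t_i)\subset Q_{r_i}(x_i,t_i)$, whose intersections with $Q$ lie in $F$, and $|Q_{r_i/\sqrt{2}}|$ is a fixed fraction of $|\tilde{Q}_{r_i}|$.

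Your handling of subtlety (ii) is also incomplete: re-centering at $(x_i,t_i+\alpha r_i^2)$ does not ``combine with the density-in-$\tilde{Q}_{r_i}$ condition'' in any obvious way, because the hypothesis requires density at least $\varepsilon$ in the cylinder \emph{centered at the new point}, $\tilde{Q}_r(x_i,t_i+\alpha r_i^2)$, not at the original one. If you enlarge the radius to recover containment of $\tilde{Q}_{r_i}(x_i,t_i)$ you only get density $\geq\varepsilon\lambda^{-(N+2)}$ with $\lambda>1$, which is too weak to invoke the hypothesis. So a genuine quantitative argument is still missing near $\{t=a\}$. (Separately, you should be aware that in the application within Theorem \ref{5hh1510139} the condition that is actually verified is $\tilde{Q}_r(x,t)\cap Q\subset F$ rather than $Q_r(x,t)\cap Q\subset F$; with the symmetric-cylinder hypothesis the same Vitali argument applies and is cleaner, but the near-boundary lower bound $|\tilde{Q}_{r_i}(x_i,t_i)\cap Q|\gtrsim|\tilde{Q}_{r_i}|$ still needs a real argument, not a one-line re-centering.)
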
 
                
\section{Estimates on Potential}
In this section, we will develop the nonlinear potential theory corresponding to quasilinear parabolic equations. \\\\
First we introduce the Wolff parabolic potential of $\mu\in\mathfrak{M}^+(\mathbb{R}^{N+1})$ by
\begin{equation*}
 \mathbb{W}^R_{\alpha,p}[\mu](x,t)=\int_{0}^{R}\left(\frac{\mu(\tilde{Q}_\rho(x,t))}{\rho^{N+2-\alpha p}}\right)^{\frac{1}{p-1}}\frac{d\rho}{\rho}~~\text{ for any }~ (x,t)\in \mathbb{R}^{N+1},
\end{equation*}
where $\alpha>0, 1<p<\alpha^{-1}(N+2)$ and $0<R\leq\infty$. For convenience, $\mathbb{W}_{\alpha,p}[\mu]:=\mathbb{W}^\infty_{\alpha,p}[\mu]$.\\\\
The following result is an extension of \cite[Theorem 1.1]{55HoJa}, \cite[Proposition 2.2]{55VHV} to Parabolic potential. 
\begin{theorem}\label{5hh100420141}Let $\alpha>0$, $1<p<\alpha^{-1}(N+2)$ and $w\in A_\infty$, $\mu\in\mathfrak{M}^+(\mathbb{R}^{N+1})$. There exist constants $C>0$ and $\varepsilon_0\in (0,1)$ depending on $N,\alpha,p,[w]_{A_\infty}$ such that for any $\lambda>0$ and $\varepsilon\in (0,\varepsilon_0)$
\begin{equation}\label{5hh100420142}
 w(\{\mathbb{W}^R_{\alpha,p}[\mu]>a\lambda,(\mathbb{M}^R_{\alpha p}[\mu])^{\frac{1}{p-1}}\le \varepsilon \lambda \})\le C\exp(-1/(C\varepsilon)) w(\{\mathbb{W}^R_{\alpha ,p}[\mu]>\lambda\})
                     \end{equation}
 where $a=2+3^{\frac{N+2-\alpha p}{p-1}}$.
\end{theorem}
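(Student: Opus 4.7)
The plan is to adapt to the parabolic scaling the good-$\lambda$ strategy of Hedberg--Wolff and Honz\'{\i}k--Jaye for Wolff potentials of measures in $\mathbb{R}^n$. Since $(x,t)\mapsto\mu(\tilde{Q}_\rho(x,t))$ is lower semicontinuous for each fixed $\rho$, Fatou's lemma shows that $\mathbb{W}^R_{\alpha,p}[\mu]$ is lower semicontinuous, so the level set $E_\lambda:=\{\mathbb{W}^R_{\alpha,p}[\mu]>\lambda\}$ is open. I would begin by performing a Whitney-type decomposition of $E_\lambda$ into pairwise disjoint parabolic cylinders $\tilde{Q}_{r_i}(x_i,t_i)$ whose dilates $\tilde{Q}_{3r_i}(x_i,t_i)$ still meet $E_\lambda^c$, and fix a reference point $(y_i,s_i)\in\tilde{Q}_{3r_i}(x_i,t_i)\setminus E_\lambda$, so that $\mathbb{W}^R_{\alpha,p}[\mu](y_i,s_i)\le\lambda$.

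Next I would split $\mu=\mu^i_1+\mu^i_2$ on each cylinder, with $\mu^i_1:=\chi_{\tilde{Q}_{3r_i}(x_i,t_i)}\mu$. For any $(x,t)\in\tilde{Q}_{r_i}(x_i,t_i)$ and $\rho<2r_i$ one has $\tilde{Q}_\rho(x,t)\subset\tilde{Q}_{3r_i}(x_i,t_i)$, whence $\mu^i_2(\tilde{Q}_\rho(x,t))=0$; and for $\rho\ge 2r_i$ a direct check of space and time parts gives $\tilde{Q}_\rho(x,t)\subset\tilde{Q}_{3\rho}(y_i,s_i)$. The change of variable $\sigma=3\rho$ in the Wolff integral then yields
$$\mathbb{W}^R_{\alpha,p}[\mu^i_2](x,t)\le 3^{\frac{N+2-\alpha p}{p-1}}\,\mathbb{W}^R_{\alpha,p}[\mu](y_i,s_i)\le 3^{\frac{N+2-\alpha p}{p-1}}\,\lambda,$$
so with $a=2+3^{(N+2-\alpha p)/(p-1)}$ the inclusion $\{\mathbb{W}^R_{\alpha,p}[\mu]>a\lambda\}\cap\tilde{Q}_{r_i}\subset\{\mathbb{W}^R_{\alpha,p}[\mu^i_1]>2\lambda\}\cap\tilde{Q}_{r_i}$ holds. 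This reduces the theorem to a local bound on $\mu^i_1$ on the good set.

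The heart of the proof is a Hedberg-type exponential integrability estimate: if $\pi\in\mathfrak{M}^+(\mathbb{R}^{N+1})$ is supported in $\tilde{Q}_{3r}(x_0,t_0)$ with $\|\mathbb{M}^R_{\alpha p}[\pi]\|_{L^\infty}\le M$, then
$$\frac{1}{|\tilde{Q}_r(x_0,t_0)|}\int_{\tilde{Q}_r(x_0,t_0)}\exp\!\left(\frac{c_0\,\mathbb{W}^R_{\alpha,p}[\pi]}{M^{1/(p-1)}}\right)dx\,dt\le C_0,$$
with $c_0,C_0$ depending only on $N,\alpha,p$. I would prove this by the standard parabolic adaptation: a dyadic decomposition of the Wolff integrand at scales $2^{-k}r$, where each dyadic level is controlled by $M^{1/(p-1)}$ via the maximal function bound $\pi(\tilde{Q}_{2^{-k}r}(x,t))\le M(2^{-k}r)^{N+2-\alpha p}$, summed in an exponential sense using the total mass estimate $\pi(\tilde{Q}_{3r})\le M(3r)^{N+2-\alpha p}$ and a John--Nirenberg-style sparse covering argument. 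Applying this to $\pi=\mu^i_1$, for which the good-set hypothesis yields $\|\mathbb{M}^R_{\alpha p}[\mu^i_1]\|_{L^\infty}\le C(\varepsilon\lambda)^{p-1}$ (up to a bounded doubling factor), and using Chebyshev's inequality gives
$$|\{\mathbb{W}^R_{\alpha,p}[\mu^i_1]>2\lambda\}\cap\tilde{Q}_{r_i}\cap\{(\mathbb{M}^R_{\alpha p}[\mu])^{1/(p-1)}\le\varepsilon\lambda\}|\le C\exp(-c/\varepsilon)\,|\tilde{Q}_{r_i}|.$$

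Finally, the $A_\infty$ hypothesis $w(E)\le[w]_{A_\infty}(|E|/|\tilde{Q}_{r_i}|)^{\nu}w(\tilde{Q}_{r_i})$ for measurable $E\subset\tilde{Q}_{r_i}$ converts the Lebesgue estimate into its weighted form with exponential rate $C_2/\varepsilon$ (with $\nu$ absorbed into $C_2$). Summing over the disjoint Whitney cylinders and using $\sum_i w(\tilde{Q}_{r_i})\le Cw(E_\lambda)$ (once more from $A_\infty$ and bounded overlap of the doubles) delivers \eqref{5hh100420142}. The main obstacle is the exponential integrability in the third step: in the parabolic setting one must carefully track the correct parabolic dimension $N+2$ in every dyadic estimate and handle the truncation at scale $R$ so that the telescoping keeps its exponential tail, something more delicate than in the elliptic framework of Honz\'{\i}k--Jaye.
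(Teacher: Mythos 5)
Your overall architecture (Whitney-decompose the open level set, split the measure into near and far parts on each cylinder, handle the far part by the reference point at which the Wolff potential is $\le\lambda$, and reduce to a Hedberg-type exponential integrability bound on the near part) is a legitimate and classical route, and the computation of the constant $a=2+3^{(N+2-\alpha p)/(p-1)}$ via the change of variables $\sigma=3\rho$ is correct. The paper's proof is organized differently: it fixes a cover of $\mathbb{R}^{N+1}$ by cylinders of radius $R$, works inside a single such cylinder $Q$, and invokes the Vitali-type good-$\lambda$ lemma (Lemma \ref{5hhvitali1}), reducing everything to two claims: a direct distributional bound on $|Q\cap\{g_k>s\}|$ for the dyadic levels $g_k$ of the Wolff integrand, summed after truncating the first $m\approx\varepsilon^{-1}$ levels; and a rescaled version of the same claim. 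Both frameworks can be made rigorous, but the paper avoids the Whitney decomposition of the level set and instead bakes the localization into the covering lemma, which also automatically disposes of the degenerate case $\{\mathbb{W}^R_{\alpha,p}[\mu]>\lambda\}=\mathbb{R}^{N+1}$, where a Whitney decomposition is unavailable.

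There is, however, a genuine gap in your application of the exponential integrability lemma. You assert that on a Whitney cylinder the good-set hypothesis yields $\|\mathbb{M}^R_{\alpha p}[\mu^i_1]\|_{L^\infty}\le C(\varepsilon\lambda)^{p-1}$, but the hypothesis only gives $(\mathbb{M}^R_{\alpha p}[\mu](x_2,t_2))^{1/(p-1)}\le\varepsilon\lambda$ at some reference point $(x_2,t_2)$ of the good set that happens to lie in $\tilde Q_{r_i}$; the pointwise maximal function of the localized measure $\mu^i_1=\chi_{\tilde Q_{3r_i}}\mu$ is in no way globally controlled by this. A concrete obstruction: if $\mu^i_1$ contains an atom $c\,\delta_{(z_0,\tau_0)}$ at some $(z_0,\tau_0)\in\tilde Q_{3r_i}\setminus\{(x_2,t_2)\}$, then $\mathbb{M}^R_{\alpha p}[\mu^i_1](z_0,\tau_0)=\infty$ while $\mathbb{M}^R_{\alpha p}[\mu](x_2,t_2)$ stays bounded if the atom is small or far from $(x_2,t_2)$; "up to a bounded doubling factor'' cannot absorb this, because for $\rho\ll r_i$ the ratio $(\rho+Cr_i)^{N+2-\alpha p}/\rho^{N+2-\alpha p}\to\infty$. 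Consequently the exponential integrability lemma, stated under a \emph{global} $L^\infty$ hypothesis, cannot be applied. What is actually available — and what the paper uses — is the \emph{single-point} bound, which yields control of $\mu(\tilde Q_{\sigma}(x_2,t_2))$ for $\sigma\lesssim R$ and hence, by Chebyshev and Fubini, the distributional estimate $|Q\cap\{g_k>s\}|\le C s^{-(p-1)}2^{-k\alpha p}|Q|(\varepsilon\lambda)^{p-1}$ on each dyadic level; summing over $k>m$ with $m\approx\varepsilon^{-1}$ produces the exponential decay directly without ever invoking an $L^\infty$ bound. Note also that in the paper's logical order the exponential integrability estimate (Theorem \ref{5hh170220145}) is \emph{deduced from} the good-$\lambda$ inequality, so if you wish to use it as a lemma you must prove it independently — and for the proof to plug in here it must be formulated with the single-point, not the $L^\infty$, hypothesis.
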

\begin{proof}[Proof of Theorem \ref{5hh100420141}]
We only consider the case $R<\infty$. Let $\{\tilde{Q}_{R}(x_j,t_j)\}$ be a cover of $\mathbb{R}^{N+1}$ such that 
  $
  \sum_j \chi_{\tilde{Q}_{R}(x_j,t_j)}\leq M$  in $\mathbb{R}^{N+1}
 $
  for some constant $M=M(N)>0$. It is enough to show that 
 there exist constants $c>0$ and $\varepsilon_0\in (0,1)$ depending on $N,\alpha,p,[w]_{A_\infty}$ such that for any $Q\in\{\tilde{Q}_{R}(x_j,t_j)\}$, $\lambda>0$ and $\varepsilon\in (0,\varepsilon_0)$
   \begin{equation}\label{5hh170220141}
                     w(Q\cap\{\mathbb{W}^R_{\alpha,p}[\mu]>a\lambda,(\mathbb{M}^R_{\alpha p}[\mu])^{\frac{1}{p-1}}\le \varepsilon \lambda \})\le c\exp(-(c\varepsilon)^{-1}) w(Q\cap\{\mathbb{W}^R_{\alpha ,p}[\mu]>\lambda\}).
                     \end{equation}
                    Fix $\lambda>0$ and $0<\varepsilon<1/10$. We set 
\begin{align*}
E=Q\cap\{\mathbb{W}^R_{\alpha,p}[\mu]>a\lambda,(\mathbb{M}^R_{\alpha p}[\mu])^{\frac{1}{p-1}}\le \varepsilon \lambda \}~\text{ and }~F=Q\cap\{\mathbb{W}^R_{\alpha ,p}[\mu]>\lambda\}.
\end{align*}
Thanks to Lemma \ref{5hhvitali1} we will get \eqref{5hh170220141} if we verify the following two claims:
 \begin{equation}\label{5hh170220142}
 w(E)\leq c\exp(-(c\varepsilon)^{-1})w(Q),
 \end{equation}
 and for any $(x,t)\in Q$, $0<r\leq R$, 
 \begin{equation}\label{5hh170220143}
 w(E\cap \tilde{Q}_r(x,t))< c\exp(-(c\varepsilon)^{-1})w(\tilde{Q}_r(x,t)),
 \end{equation}
   provided that $\tilde{Q}_r(x,t)\cap Q\cap F^c\not =\emptyset$ and $E\cap \tilde{Q}_r(x,t)\not = \emptyset,$
   where constants $c_3,c_4,c_5$ and $c_6$ depend on $N,\alpha,p$ and $[w]_{A_\infty}$. \\
 \textbf{Claim} \eqref{5hh170220142}: Set 
 $$g_k(x,t)=\int_{2^{-k}R}^{2^{-k+1}R}\left(\frac{\mu(\tilde{Q}_\rho(x,t))}{\rho^{N+2-\alpha p}}\right)^{\frac {1}{p-1}}\frac{d\rho}{\rho}.
  $$
 We have for $m\in\mathbb{N}$ and $(x,t)\in E$
 \begin{align*}
 \mathbb{W}^R_{\alpha,p}[\mu](x,t)&=\sum_{k=m+1}^{\infty}g_k(x,t)+\int_{2^{-m}R}^{R}\left(\frac{\mu(\tilde{Q}_\rho(x,t))}{\rho^{N+2-\alpha p}}\right)^{\frac {1}{p-1}}\frac{d\rho}{\rho}
\\& \leq \sum_{k=m+1}^{\infty}g_k(x,t)+m(\mathbb{M}^R_{\alpha p}[\mu](x,t))^{\frac{1}{p-1}}
\\&\leq \sum_{k=m+1}^{\infty}g_k(x,t)+m\varepsilon\lambda.
 \end{align*} 
 We deduce that for $\beta>0$, $m\in\mathbb{N}$
 \begin{align*}
 |E|&\leq |Q\cap\{\sum_{k=m+1}^{\infty}g_k>(1-m\varepsilon)\lambda \}|
 \\&= |Q\cap\{\sum_{k=m+1}^{\infty}g_k>\sum_{k=m+1}^{\infty}2^{-\beta(k-m-1)}(1-2^{-\beta})(1-m\varepsilon)\lambda\}|
 \\&\leq \sum_{k=m+1}^{\infty}|Q\cap\{g_k>2^{-\beta(k-m-1)}(1-2^{-\beta})(1-m\varepsilon)\lambda\}|.
 \end{align*}
 We can  assume that $(x_0,t_0)\in Q$, $(\mathbb{M}^R_{\alpha p}[\mu](x_0,t_0))^{\frac{1}{p-1}}\leq \varepsilon \lambda$. Thus, by computing,  see \cite[Proof of Proposition 2.2 ]{55VHV} we have for any $k\in\mathbb{N}$
 \begin{align*}
 |{Q\cap \{ g_k>s\}}|\leq \frac{c}{s^{p-1}}2^{-k\alpha p}|{Q}|(\varepsilon \lambda)^{p-1}. 
 \end{align*}  
 Consequently, 
 \begin{align*}
 |E|&\leq \sum_{k=m+1}^{\infty}\frac{c_7}{\left(2^{-\beta(k-m-1)}(1-2^{-\beta})(1-m\varepsilon)\lambda \right)^{p-1}}2^{-k\alpha p}|{Q}|(\varepsilon \lambda)^{p-1} 
 \\&\leq c2^{-(m+1)\alpha p}\left(\frac{\varepsilon}{1-m\varepsilon}\right)^{p-1}|{Q}|\left(1-2^{-\beta}\right)^{-p+1}
   \displaystyle\sum_{k=m+1}^\infty 2^{(\beta(p-1)-\alpha p)(k-m-1)}.
 \end{align*}
  If we choose $\varepsilon^{-1}-2<m\leq \varepsilon^{-1}-1$ and $\beta=\beta(\alpha,p)$ so that $\beta (p-1)-\alpha p<0$, we obtain 
  \begin{align*}
  |E|\leq c \exp(-\alpha p \ln(2) \varepsilon^{-1})|{Q}|.
  \end{align*}
  Thus, we get \eqref{5hh170220142}.\\
  \textbf{Claim} \eqref{5hh170220143}. 
   Take $(x,t)\in Q$ and $0<r\leq R$.
           Now assume that $\tilde{Q}_r(x,t)\cap Q\cap F^c\not= \emptyset$ and $E\cap \tilde{Q}_r(x,t)\not = \emptyset$ i.e, there exist $(x_1,t_1),(x_2,t_2)\in \tilde{Q}_r(x,t)\cap Q$ such that $\mathbb{W}^R_{\alpha,p}[\mu](x_1,t_1)\leq \lambda$ and $(\mathbb{M}^R_{\alpha p}[\mu](x_2,t_2))^{\frac{1}{p-1}}\le \varepsilon \lambda$.
            We need to prove that
            \begin{equation*}
                   w(E\cap \tilde{Q}_r(x,t))< c\exp(-(c\varepsilon)^{-1}) w(\tilde{Q}_r(x,t)). 
                                    \end{equation*}
         To do this, for all $(y,s)\in E\cap\tilde{Q}_r(x,t)$, $\tilde{Q}_\rho(y,s)\subset \tilde{Q}_{3\rho}(x_1,t_1)$ if $\rho> r$.\\
         If $r\leq R/3$, one has\begin{align*}
        \mathbb{W}^R_{\alpha,p}[\mu](y,s)&=\mathbb{W}^r_{\alpha,p}[\mu](y,s)+\int_{r}^{R/3}\left(\frac{\mu(\tilde{Q}_\rho(y,s))}{\rho^{N+2-\alpha p}}\right)^{\frac{1}{p-1}}\frac{d\rho}{\rho}
         +\int_{R/3}^{R}\left(\frac{\mu(\tilde{Q}_\rho(y,s))}{\rho^{N+2-\alpha p}}\right)^{\frac{1}{p-1}}\frac{d\rho}{\rho}
         \\&\leq 
\mathbb{W}^r_{\alpha,p}[\mu](y,s)+\int_{r}^{R/3}\left(\frac{\mu(\tilde{Q}_{3\rho}(x_1,t_1))}{\rho^{N+2-\alpha p}}\right)^{\frac{1}{p-1}}\frac{d\rho}{\rho}
         +2(\mathbb{M}^R_{\alpha p}[\mu](y,s))^{\frac{1}{p-1}}    
\\&\leq 
\mathbb{W}^r_{\alpha,p}[\mu](y,s)+3^{\frac{N+2-\alpha p}{p-1}}\lambda
         +2\varepsilon\lambda.              
         \end{align*}
     This gives $\mathbb{W}^r_{\alpha,p}[\mu](y,s)>\lambda$.\\
     If $r\geq R/3$, one has 
     \begin{align*}
           \mathbb{W}^R_{\alpha,p}[\mu](y,s)&\leq \mathbb{W}^r_{\alpha,p}[\mu](y,s)
              +\int_{R/3}^{R}\left(\frac{\mu(\tilde{Q}_\rho(y,s))}{\rho^{N+2-\alpha p}}\right)^{\frac{1}{p-1}}\frac{d\rho}{\rho}               
     \\&\leq      \mathbb{W}^r_{\alpha,p}[\mu](y,s)+2\varepsilon\lambda,              
              \end{align*}
 This gives $\mathbb{W}^r_{\alpha,p}[\mu](y,s)>\lambda$.\\ 
 Thus,  
 \begin{align*}
 w(E\cap \tilde{Q}_r(x,t))\leq w(\tilde{Q}_r(x,t)\cap\{\mathbb{W}_{\alpha,p}^r[\mu]>\lambda\}).
 \end{align*} 
 Since  $(x_2,t_2)\in \tilde{Q}_r(x,t)$, $(\mathbb{M}^R_{\alpha p}[\mu](x_2,t_2))^{\frac{1}{p-1}}\le \varepsilon \lambda$, so as above we also obtain 
 \begin{equation*}
 w(\tilde{Q}_r(x,t)\cap\{\mathbb{W}_{\alpha,p}^r[\mu]>\lambda\})\leq c\exp(-(c\varepsilon)^{-1})w(\tilde{Q}_r(x,t)),
 \end{equation*}
 which implies \eqref{5hh170220143}.
This completes the proof.
\end{proof}
   \begin{theorem}\label{5hh051120131} Let $\alpha>0$, $1<p<\alpha^{-1}(N+2)$,  $p-1<q<\infty$ and $0<s\leq \infty$ and $w\in A_\infty$. There holds 
    \begin{equation}\label{5hh180220142}
    ||\mathbb{W}^R_{\alpha,p}[\mu]||_{L^{q,s}(\mathbb{R}^{N+1},dw)}\sim_C ||(\mathbb{M}^R_{\alpha p}[\mu])^{\frac{1}{p-1}}||_{L^{q,s}(\mathbb{R}^{N+1},dw)},
    \end{equation}
    for all $\mu\in \mathfrak{M}^+(\mathbb{R}^{N+1})$ and $R\in (0,\infty] $ where $C$ is a positive constant only depending on $\alpha,p,q,s$ and $[w]_{A_\infty}$.
     \end{theorem}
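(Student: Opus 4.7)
The proof of \eqref{5hh180220142} has two directions; the lower bound follows from a pointwise comparison, while the upper bound will come from the good-$\lambda$ inequality of Theorem \ref{5hh100420141}.

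For the lower estimate $\|(\mathbb{M}^R_{\alpha p}[\mu])^{1/(p-1)}\|_{L^{q,s}(\mathbb{R}^{N+1},dw)}\le C\|\mathbb{W}^R_{\alpha,p}[\mu]\|_{L^{q,s}(\mathbb{R}^{N+1},dw)}$, my plan is to use the monotonicity $\mu(\tilde Q_r(x,t))\ge \mu(\tilde Q_\rho(x,t))$ for $r\ge\rho$ to obtain, for $\rho\in(0,R/2]$,
\begin{equation*}
\left(\frac{\mu(\tilde Q_\rho(x,t))}{\rho^{N+2-\alpha p}}\right)^{\frac{1}{p-1}}\le \frac{2^{(N+2-\alpha p)/(p-1)}}{\ln 2}\int_\rho^{2\rho}\left(\frac{\mu(\tilde Q_r(x,t))}{r^{N+2-\alpha p}}\right)^{\frac{1}{p-1}}\frac{dr}{r}\le C\,\mathbb{W}^R_{\alpha,p}[\mu](x,t).
\end{equation*}
For $\rho\in(R/2,R)$ I will cover $\tilde Q_\rho(x,t)$ by finitely many cylinders of radius $R/2$ centered at nearby points, so that $\mathbb{M}^R_{\alpha p}[\mu](x,t)$ is dominated up to a constant by the translation-supremum of $\mathbb{M}^{R/2}_{\alpha p}[\mu]$ within a fixed radius of $(x,t)$; the $L^{q,s}(dw)$-boundedness of this operator for $A_\infty$ weights then completes the lower bound.

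For the upper estimate I will assume the RHS of \eqref{5hh180220142} is finite (else there is nothing to prove) and argue by good-$\lambda$. Writing the Lorentz quasi-norm in distributional form and performing the change of variable $\rho=a\lambda$,
\begin{equation*}
\|\mathbb{W}^R_{\alpha,p}[\mu]\|_{L^{q,s}(dw)}^s=q\,a^s\int_0^\infty\bigl(\lambda^q w(\{\mathbb{W}^R_{\alpha,p}[\mu]>a\lambda\})\bigr)^{s/q}\frac{d\lambda}{\lambda},
\end{equation*}
I will split
\begin{equation*}
w(\{\mathbb{W}^R_{\alpha,p}[\mu]>a\lambda\})\le w(\{\mathbb{W}^R_{\alpha,p}[\mu]>a\lambda,\,(\mathbb{M}^R_{\alpha p}[\mu])^{\frac{1}{p-1}}\le\varepsilon\lambda\})+w(\{(\mathbb{M}^R_{\alpha p}[\mu])^{\frac{1}{p-1}}>\varepsilon\lambda\})
\end{equation*}
and bound the first piece by $C_1e^{-C_2/\varepsilon}\,w(\{\mathbb{W}^R_{\alpha,p}[\mu]>\lambda\})$ using Theorem \ref{5hh100420141}. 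Invoking the elementary inequality $(A+B)^{s/q}\le 2^{(s/q-1)_+}(A^{s/q}+B^{s/q})$ will produce
\begin{equation*}
\|\mathbb{W}^R_{\alpha,p}[\mu]\|_{L^{q,s}(dw)}^s\le C a^s\bigl(C_1 e^{-C_2/\varepsilon}\bigr)^{s/q}\|\mathbb{W}^R_{\alpha,p}[\mu]\|_{L^{q,s}(dw)}^s+C(a/\varepsilon)^s\|(\mathbb{M}^R_{\alpha p}[\mu])^{\frac{1}{p-1}}\|_{L^{q,s}(dw)}^s,
\end{equation*}
and choosing $\varepsilon$ so small that $C a^s(C_1 e^{-C_2/\varepsilon})^{s/q}\le 1/2$ lets me absorb the first term on the left.

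The hard part is justifying this absorption when $\|\mathbb{W}^R_{\alpha,p}[\mu]\|_{L^{q,s}(dw)}$ is not a priori finite. I will circumvent this by first running the argument for the truncated measures $\mu_k=\chi_{E_k}\mu$ with $\{E_k\}$ an exhaustion of $\mathbb{R}^{N+1}$ by bounded sets; for such $\mu_k$ (and finite $R$), $\mathbb{W}^R_{\alpha,p}[\mu_k]$ has polynomial decay at infinity and lies in $L^{q,s}(dw)$, so the absorption step is legitimate and \eqref{5hh180220142} follows for $\mu_k$. Monotone convergence as $k\to\infty$, using $\mathbb{W}^R_{\alpha,p}[\mu_k]\nearrow\mathbb{W}^R_{\alpha,p}[\mu]$ and $\mathbb{M}^R_{\alpha p}[\mu_k]\nearrow\mathbb{M}^R_{\alpha p}[\mu]$, then delivers the inequality for general $\mu$. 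The case $s=\infty$ is handled in parallel, replacing the integral quasi-norm by $\sup_{\lambda>0}\lambda\,w(\{\cdot>\lambda\})^{1/q}$; the case $R=\infty$ follows by sending $R\nearrow\infty$ in the estimate already proved for finite $R$ and using monotone convergence once more.
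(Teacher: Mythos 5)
Your plan matches the paper's proof in outline: the upper bound uses the good-$\lambda$ inequality of Theorem \ref{5hh100420141} plus absorption on the distributional form of the Lorentz quasi-norm, while the lower bound uses the dyadic pointwise domination at scales $\rho\le R/2$ together with a covering argument at scales $\rho\in(R/2,R)$. Two of your intermediate claims, however, are not correct as stated.

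\emph{Lower bound.} You invoke ``the $L^{q,s}(dw)$-boundedness of the translation-supremum operator for $A_\infty$ weights.'' No such theorem exists, and the claim fails in exactly the configuration you need. Take $\mu=\delta_{(0,0)}$ and $R<\infty$; then $(\mathbb{M}^{R/2}_{\alpha p}[\mu])^{1/(p-1)}$ blows up like a negative power of the parabolic distance to the origin, so its translation-supremum over any fixed positive radius is identically $+\infty$ on a neighbourhood of the origin, while the original function has finite $L^{q,s}(dw)$ norm for suitable weights such as $w=d^\gamma$, $\gamma>0$ small (which is in $A_\infty$). What does hold, and what the paper actually proves as \eqref{5hh180220143}, is a comparison of the $w$-measures of superlevel sets via the strong doubling of $A_\infty$ weights applied to cylinders of \emph{fixed, comparable} radius $\approx R$: cover $\mathbb{R}^{N+1}$ by cylinders $Q_j=\tilde Q_{R/4}(x_j,t_j)$ of bounded overlap, observe that $\tilde Q_{2R}(x,t)$ for $(x,t)\in Q_j$ is covered by a fixed finite family of neighbours $Q_{j,k}$, note that if one $Q_{j,k}$ carries large $\mu$-mass then the whole cylinder $Q_{j,k}$ lies in a superlevel set of $(\mu(\tilde Q_{R/2}(\cdot))/R^{N+2-\alpha p})^{1/(p-1)}$ (hence of $\mathbb{W}^R_{\alpha,p}[\mu]$ by the pointwise bound), and then use $w(Q_j)\lesssim w(Q_{j,k})$. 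You need to replace the appeal to a nonexistent operator bound by this explicit covering computation.

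\emph{Upper bound.} You correctly notice that the paper's absorption step requires a priori finiteness of $\|\mathbb{W}^R_{\alpha,p}[\mu]\|_{L^{q,s}(dw)}$, a point the paper passes over in silence; this is a fair observation. Your fix, however, is not quite right: truncating $\mu$ to $\mu_k=\chi_{E_k}\mu$ with $E_k$ bounded does \emph{not} guarantee $\mathbb{W}^R_{\alpha,p}[\mu_k]\in L^{q,s}(dw)$. If $\mu_k$ has a point mass, $\mathbb{W}^R_{\alpha,p}[\mu_k]$ still has the full singularity $d^{-(N+2-\alpha p)/(p-1)}$ there, and for large $q$ and a suitably singular $w\in A_\infty$ the norm diverges. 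The correct observation is weaker and suffices: for $\mu_k$ compactly supported and $R<\infty$, $\mathbb{W}^R_{\alpha,p}[\mu_k]$ is compactly supported (and for $R=\infty$ it decays polynomially), so each superlevel set $\{\mathbb{W}^R_{\alpha,p}[\mu_k]>\lambda\}$ lies in a compact set and has finite $w$-measure because $w\in L^1_{\rm loc}$. Hence $I(M):=q\int_0^M\lambda^s\,w(\{\mathbb{W}^R_{\alpha,p}[\mu_k]>\lambda\})^{s/q}\,\frac{d\lambda}{\lambda}<\infty$ for each $M$, the good-$\lambda$ inequality and the change of variable yield $I(M)\le c\,a^s(C_1e^{-C_2/\varepsilon})^{s/q}I(M/a)+c(a/\varepsilon)^s\|(\mathbb{M}^R_{\alpha p}[\mu])^{1/(p-1)}\|^s_{L^{q,s}(dw)}$, and since $a>1$ one has $I(M/a)\le I(M)$, so the absorption is legitimate at each finite $M$; then let $M\to\infty$ and $k\to\infty$ by monotone convergence (the constants are $R$-independent). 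The remainder of your plan — the split via the elementary power inequality, the $s=\infty$ case, and sending $R\to\infty$ — is fine.
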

      \begin{proof} Thanks to \eqref{5hh100420142} in Theorem \eqref{5hh100420141}, we have     
             for $0<s<\infty$ 
                 \begin{align*}
                  &||\mathbb{W}^R_{\alpha,p}[\mu]||_{L^{q,s}(\mathbb{R}^{N+1},dw)}^s= a^sq\int_{0}^{\infty}\lambda^sw(\{\mathbb{W}^R_{\alpha,p}[\mu]>a\lambda \})^{\frac{s}{q}}\frac{d\lambda}{\lambda}\\&~\leq 
                            c\exp(-\frac{1}{c\varepsilon})q\int_{0}^{\infty}\lambda^sw(\{\mathbb{W}^R_{\alpha,p}[\mu]>\lambda \})^{\frac{s}{q}}\frac{d\lambda}{\lambda}+ cs\int_{0}^{\infty}\lambda^sw(\{(\mathbb{M}^R_{\alpha p}[\mu])^{\frac{1}{p-1}}>\varepsilon\lambda \})^{\frac{s}{q}}\frac{d\lambda}{\lambda}\\&~= c\exp(-\frac{1}{c\varepsilon})||\mathbb{W}^R_{\alpha,p}[\mu]||_{L^{q,s}(\mathbb{R}^{N+1},dw)}^s+ c \varepsilon^{-s}||(\mathbb{M}^R_{\alpha p}[\mu])^{\frac{1}{p-1}}||_{L^{q,s}(\mathbb{R}^{N+1},dw)}^s.
                 \end{align*}           
                 Choose $0<\varepsilon<\varepsilon_0$ such that $ c\exp(-\frac{1}{c\varepsilon})<1/2$ we get 
$$
                             ||\mathbb{W}^R_{\alpha,p}[\mu]||_{L^{q,s}(\mathbb{R}^{N+1},dw)}^s\lesssim||(\mathbb{M}^R_{\alpha p}[\mu])^{\frac{1}{p-1}}||_{L^{q,s}(\mathbb{R}^{N+1},dw)}^s. $$
                             Similarly, we also get above inequality in case $s=\infty$. So, we proved the right-hand side inequality of \eqref{5hh180220142}.\\                     
            To complete the proof, we prove the left-hand side inequality of \eqref{5hh180220142}. Since for every $(x,t)\in\mathbb{R}^{N+1}$  
            \begin{align*}
            &(\mathbb{M}^R_{\alpha p}[\mu](x,t))^{\frac{1}{p-1}}\lesssim \mathbb{W}^R_{\alpha,p}[\mu](x,t)+\left(\frac{\mu(\tilde{Q}_{2R}(x,t))}{R^{N+2-\alpha p}}\right)^{\frac{1}{p-1}},\\&~~~~~~~~~~~~~~~
            \left(\frac{\mu(\tilde{Q}_{R/2}(x,t))}{R^{N+2-\alpha p}}\right)^{\frac{1}{p-1}}\lesssim\mathbb{W}^R_{\alpha,p}[\mu](x,t).
            \end{align*}   
             Thus it is enough to show that  for any $\lambda>0$
             \begin{align}\label{5hh180220143}
             w\left(\left\{(x,t): \left(\frac{\mu(\tilde{Q}_{2R}(x,t))}{R^{N+2-\alpha p}}\right)^{\frac{1}{p-1}}>\lambda\right\}\right)\lesssim w\left(\left\{(x,t): \left(\frac{\mu(\tilde{Q}_{R/2}(x,t))}{R^{N+2-\alpha p}}\right)^{\frac{1}{p-1}}\gtrsim\lambda\right\}\right).
             \end{align}
             Let $\{Q_{j}\}=\{\tilde{Q}_{R/4}(x_j,t_j)\}$ be a cover of $\mathbb{R}^{N+1}$ such that 
                for any $Q_j\in\{Q_{j}\}$, there exist $Q_{j,1},...,Q_{j,M_1}\in \{Q_{j}\}$  with $\sum_{j}\sum_{k=1}^{M_1}\chi_{Q_{j,k}}\leq M_2$ and 
               $
                        Q_j+\tilde{Q}_{2R}(0,0)\subset\bigcup\limits_{k = 1}^{{M_1}} {{Q_{j,k}}} 
                         $
                         for some integer constants $M_i=M_i(N), i=1,2$.        
               Then,
     \begin{align*}
            & w\left(\left\{(x,t): \left(\frac{\mu(\tilde{Q}_{2R}(x,t))}{R^{N+2-\alpha p}}\right)^{\frac{1}{p-1}}>\lambda\right\}\right)\leq \sum_{j}w\left(\left\{(x,t): \left(\frac{\mu(\tilde{Q}_{2R}(x,t))}{R^{N+2-\alpha p}}\right)^{\frac{1}{p-1}}>\lambda\right\}\cap Q_j\right)
            \\&~~~~~~~~~\leq \sum_{j}w\left(\left\{(x,t):\sum_{k=1}^{M_1} \frac{\mu(Q_{j,k})}{R^{N+2-\alpha p}}>\lambda^{p-1}\right\}\cap Q_j\right)
            \\&~~~~~~~~~\leq \sum_{j}\sum_{k=1}^{M_1}w\left(\left\{(x,t): \left(\frac{\mu(Q_{j,k})}{R^{N+2-\alpha p}}\right)^{\frac{1}{p-1}}>M_1^{-1/(p-1)}\lambda\right\}\cap Q_j\right)
            \\&~~~~~~~~~= \sum_{j}\sum_{k=1}^{M_1}a_{j,k}w(Q_j),
             \end{align*}                                  where $a_{j,k}=1$ if $\left(\frac{\mu(Q_{j,k})}{R^{N+2-\alpha p}}\right)^{\frac{1}{p-1}}>M_1^{-1/(p-1)}\lambda$ and $a_{j,k}=0$ if otherwise. \\
             Using the strong doubling property of $w$, there is $c_0=c_0(N,[w]_{A_\infty})$ such that 
             $w(Q_j)\leq c_0w(Q_{j,k})$. 
             On the other hand, if $a_{j,k}=1$ then $Q_{j,k}\subset\left\{(x,t): \left(\frac{\mu(\tilde{Q}_{R/2}(x,t))}{R^{N+2-\alpha p}}\right)^{\frac{1}{p-1}}>M_1^{-1/(p-1)}\lambda\right\}$.\\
             Therefore,
     \begin{align*}
             &w\left(\left\{(x,t): \left(\frac{\mu(\tilde{Q}_{2R}(x,t))}{R^{N+2-\alpha p}}\right)^{\frac{1}{p-1}}>\lambda\right\}\right)\leq  \sum_{j}\sum_{k=1}^{M_1}c_{0}a_{j,k}w(Q_{j,k})\\&~~~~~~\leq \sum_{j}\sum_{k=1}^{M_1}c_{0}w\left(\left\{(x,t): \left(\frac{\mu(\tilde{Q}_{R/2}(x,t))}{R^{N+2-\alpha p}}\right)^{\frac{1}{p-1}}>M_1^{-1/(p-1)}\lambda\right\}\cap Q_{j,k}\right),            
             \end{align*}        
            which implies \eqref{5hh180220143} since $\sum_{j}\sum_{k=1}^{M_1}\chi_{Q_{j,k}}\leq M_2$ in $\mathbb{R}^{N+1}$.      
       \end{proof}
     \begin{theorem}\label{5hh170220145} Let $0<\alpha p <N+2$ and $w\in A_\infty$
         There exists $C>0$ depending on $N,\alpha,p$ and $[w]_{A_\infty}$ such that for any
         $\mu\in\mathfrak{M}^+(\mathbb{R}^{N+1})$,  any  cylinder $\tilde{Q}_\rho\subset \mathbb{R}^{N+1}$  there holds   
        \begin{equation}
        \label{5hh121120131}\frac{1}{w({\tilde{Q}_{2\rho}})}\int_{\tilde{Q}_{2\rho}}
           \exp\left(C^{-1}\mathbb{W}^R_{\alpha,p}[\mu_{\tilde{Q}_\rho}](x,t) \right) dw(x,t)
           \leq C
        \end{equation}
        provided 
       $
        ||{\mathbb{M}^R_{\alpha p}[\mu_{\tilde{Q}_\rho}]}||_{L^\infty(\tilde{Q}_\rho)}\leq 1,
       $
        where  $\mu_{\tilde{Q}_\rho}=\chi _{\tilde{Q}_\rho}\mu$.
        \end{theorem}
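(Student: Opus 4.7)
The strategy is to iterate the good-$\lambda$ inequality of Theorem~\ref{5hh100420141} with a \emph{level-dependent} choice of $\varepsilon$, thereby converting the abstract factor $e^{-K_2/\varepsilon}$ into a genuine pointwise decay $e^{-c\lambda}$, and then conclude by the layer-cake formula. I first observe that the hypothesis $\|\mathbb{M}^R_{\alpha p}[\mu_{\tilde{Q}_\rho}]\|_{L^\infty(\tilde{Q}_\rho)}\le 1$ self-improves to a \emph{global} bound $\mathbb{M}^R_{\alpha p}[\mu_{\tilde{Q}_\rho}](x,t)\le C_3=C_3(N,\alpha,p)$ for every $(x,t)\in\mathbb{R}^{N+1}$. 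Indeed, for any $0<r<R$, either $\tilde{Q}_r(x,t)\cap \tilde{Q}_\rho=\emptyset$ (and the estimand vanishes), or there exists $(x_0,t_0)\in \tilde{Q}_r(x,t)\cap \tilde{Q}_\rho$, and then a direct check of the parabolic geometry gives $\tilde{Q}_r(x,t)\subset \tilde{Q}_{3r}(x_0,t_0)$, so the hypothesis applied at $(x_0,t_0)$ yields $\mu_{\tilde{Q}_\rho}(\tilde{Q}_r(x,t))\le (3r)^{N+2-\alpha p}$ (the borderline case $3r\ge R$ is handled by bounding $\mu_{\tilde{Q}_\rho}(\tilde{Q}_r(x,t))\le\mu(\tilde{Q}_\rho)\le\rho^{N+2-\alpha p}$ via the centre of $\tilde{Q}_\rho$).

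Next, the localised good-$\lambda$ estimate established in \eqref{5hh170220141} gives, for any cylinder $Q$ and any $\varepsilon\in(0,\varepsilon_0)$,
\[
w\bigl(Q\cap\{\mathbb{W}^R_{\alpha,p}[\mu_{\tilde{Q}_\rho}]>a\lambda,\,(\mathbb{M}^R_{\alpha p}[\mu_{\tilde{Q}_\rho}])^{1/(p-1)}\le \varepsilon\lambda\}\bigr)\le K_1 e^{-K_2/\varepsilon}\,w\bigl(Q\cap\{\mathbb{W}^R_{\alpha,p}[\mu_{\tilde{Q}_\rho}]>\lambda\}\bigr).
\]
Applying this on $Q=\tilde{Q}_{2\rho}$ with the choice $\varepsilon(\lambda):=C_3^{1/(p-1)}/\lambda$, which lies in $(0,\varepsilon_0)$ once $\lambda\ge\lambda_0:=C_3^{1/(p-1)}/\varepsilon_0$, the exceptional event $(\mathbb{M}^R_{\alpha p}[\mu_{\tilde{Q}_\rho}])^{1/(p-1)}>\varepsilon(\lambda)\lambda$ is \emph{void} on $\mathbb{R}^{N+1}$ thanks to Step~1. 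Writing $h(\lambda):=w(\tilde{Q}_{2\rho}\cap\{\mathbb{W}^R_{\alpha,p}[\mu_{\tilde{Q}_\rho}]>\lambda\})$ and $c:=K_2/C_3^{1/(p-1)}$, this gives the key recursion
\[
h(a\lambda)\le K_1\, e^{-c\lambda}\, h(\lambda),\qquad \lambda\ge\lambda_0.
\]

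Iterating along $\lambda_k:=a^k\lambda_0$ yields $h(\lambda_k)\le K_1^{k}\exp\!\bigl(-c\lambda_0\tfrac{a^k-1}{a-1}\bigr)w(\tilde{Q}_{2\rho})$. For $t\in[\lambda_k,\lambda_{k+1}]$ with $k\sim\log_a(t/\lambda_0)$, the at-most-polynomial prefactor $K_1^k\lesssim (t/\lambda_0)^{\log_a K_1}$ is absorbed into the exponential, so
\[
h(t)\le C_4\, e^{-c_1 t}\,w(\tilde{Q}_{2\rho}),\qquad t\ge\lambda_0,
\]
for suitable $C_4,c_1>0$ depending only on $N,\alpha,p,[w]_{A_\infty}$. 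The layer-cake formula then gives
\[
\int_{\tilde{Q}_{2\rho}}\exp\!\bigl(C_1\mathbb{W}^R_{\alpha,p}[\mu_{\tilde{Q}_\rho}]\bigr)dw = w(\tilde{Q}_{2\rho})+C_1\int_0^\infty e^{C_1\lambda}h(\lambda)\,d\lambda,
\]
whose two pieces (split at $\lambda_0$) are bounded by $(e^{C_1\lambda_0}-1)w(\tilde{Q}_{2\rho})$ and, once $C_1<c_1$ is fixed, by a convergent integral of the form $C_1C_4 e^{(C_1-c_1)\lambda_0}/(c_1-C_1)\cdot w(\tilde{Q}_{2\rho})$; this produces the required constant $C_2$.

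The main obstacle is precisely the choice $\varepsilon=\varepsilon(\lambda)$ in the second paragraph: a \emph{fixed} $\varepsilon$ only yields polynomial decay $h(t)\lesssim t^{-\gamma}$ (with $\gamma=-\log_a(K_1 e^{-K_2/\varepsilon})$ arbitrarily large but finite), which is insufficient for exponential integrability since the weight $e^{C_1\lambda}$ in the layer-cake integral grows faster than any polynomial decays. Allowing $\varepsilon$ to shrink linearly like $1/\lambda$ is exactly what promotes Theorem~\ref{5hh100420141} from a self-improvement of integrability to a genuinely exponential tail bound, and this is the new quantitative input required beyond the good-$\lambda$ machinery already in place.
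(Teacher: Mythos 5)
Your proposal follows the same route as the paper: apply Theorem~\ref{5hh100420141} to $\mu_{\tilde{Q}_\rho}$ with a level-dependent $\varepsilon\sim 1/\lambda$, observe that the exceptional set $\{(\mathbb{M}^R_{\alpha p}[\mu_{\tilde{Q}_\rho}])^{1/(p-1)}>\varepsilon\lambda\}$ is void, deduce an exponential tail for the weighted distribution function of $\mathbb{W}^R_{\alpha,p}[\mu_{\tilde{Q}_\rho}]$ on $\tilde{Q}_{2\rho}$, and finish by layer-cake. Your Step~1 (self-improvement of the $L^\infty(\tilde{Q}_\rho)$ hypothesis to a \emph{global} bound $\mathbb{M}^R_{\alpha p}[\mu_{\tilde{Q}_\rho}]\le C_3$) is in fact a genuine point the paper's proof passes over silently: the paper takes $\varepsilon=\lambda^{-1}$, which only voids the exceptional set if the maximal function is $\le 1$ on all of $\tilde{Q}_{2\rho}$ rather than merely on $\tilde{Q}_\rho$; your renormalisation $\varepsilon=C_3^{1/(p-1)}/\lambda$ is the clean way to handle this. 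Two smaller remarks. First, the iteration along $\lambda_k=a^k\lambda_0$ is unnecessary: once the exceptional set is void, a \emph{single} application of Theorem~\ref{5hh100420141} together with the paper's observation that $\{\mathbb{W}^R_{\alpha,p}[\mu_{\tilde{Q}_\rho}]>\lambda\}\subset\tilde{Q}_{2\rho}$ for $\lambda\ge\lambda_0$ already gives $h(a\lambda)\le K_1 e^{-c\lambda}w(\tilde{Q}_{2\rho})$, i.e.\ the exponential tail, with no prefactor to absorb; the gain of taking $\varepsilon\sim 1/\lambda$ is precisely that the decay is \emph{already} exponential after one pass. Second, your borderline case $3r\ge R$ in Step~1 quietly assumes $\rho<R$ (so that the hypothesis can be applied at radius $\rho$); when $\rho\ge R$ you should instead cover $\tilde{Q}_{3r}(x_0,t_0)\cap\tilde{Q}_\rho$ by a bounded number of $\tilde{Q}_{R/2}$-cylinders centred in $\tilde{Q}_\rho$, or invoke the good-$\lambda$ via Theorem~\ref{5hh100420141} globally (intersecting with $\tilde{Q}_{2\rho}$ on the left only) rather than citing the local statement \eqref{5hh170220141}, which is proved there for cylinders of radius exactly $R$.
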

        \begin{proof}
        Assume that $||\mathbb{M}^R_{\alpha p}[\mu_{\tilde{Q}_\rho}]||_{L^\infty(\tilde{Q}_\rho)}\leq 1$. We apply Theorem \eqref{5hh100420141} to $\mu_{\tilde{Q}_\rho}$. Then, choose $\varepsilon=\lambda^{-1}$ for all $\lambda \geq \lambda_0:=\max\{\varepsilon_0^{-1},\frac{N+2-\alpha p}{p-1}\}$,                                 
              we obtain
    \begin{equation*}w(\{\mathbb{W}^R_{\alpha,p}[\mu_{\tilde{Q}_\rho}]>a\lambda \}\cap \tilde{Q}_{2\rho})\le c\exp(-\lambda/c) w(\{\mathbb{W}^R_{\alpha,p}[\mu_{\tilde{Q}_\rho}]>\lambda\})~~\forall~ \lambda\geq \lambda_0.
                                \end{equation*}
On the other hand, if $\rho>R$, clearly we have  $\mathbb{W}^R_{\alpha,p}[\mu_{\tilde{Q}_\rho}]\equiv0$ in $\mathbb{R}^{N+1}\backslash \tilde{Q}_{2\rho}$, if $\rho\leq R$, for any $(x,t)\in \mathbb{R}^{N+1}\backslash \tilde{Q}_{2\rho}$
\begin{equation*}
\mathbb{W}^R_{\alpha,p}[\mu_{\tilde{Q}_\rho}](x,t)=\int_{\rho}^{R}\left(\frac{\mu_{\tilde{Q}_\rho}(\tilde{Q}_r(x,t))}{r^{N+2-\alpha p}}\right)^{\frac{1}{p-1}}\frac{dr}{r}\leq \frac{N+2-\alpha p}{p-1}\left(\frac{\mu(\tilde{Q}_\rho)}{\rho^{N+2-\alpha p}}\right)^{\frac{1}{p-1}}\leq \lambda_0.
\end{equation*}                                                     So, we get $\{\mathbb{W}^R_{\alpha,p}[\mu_{\tilde{Q}_\rho}]>\lambda\}\subset  \tilde{Q}_{2\rho}$ for all $\lambda\geq\lambda_0$.
         This can be written under the form 
         \begin{equation*}
               w(\{\mathbb{W}^R_{\alpha,p}[\mu_{\tilde{Q}_\rho}]>a\lambda \}\cap \tilde{Q}_{2\rho})\le \left(\chi_{(0,\lambda_0]}+ c\exp(-\lambda/c)\right) w(\tilde{Q}_{2\rho}),                                              \end{equation*}                                             for all $\lambda>0$. Therefore, we get \eqref{5hh121120131}. 
        \end{proof}\\\\
  In what follows, we need some estimates on Wolff parabolic potential:
  \begin{proposition}\label{5hh23101315} Let $p>1, 0<\alpha p <N+2$ and $q>1, \alpha p q<N+2$. There hold \begin{equation}\label{5hh2210133}
      ||\mathbb{W}_{\alpha,p}[\mu]||_{L^{\frac{(N+2)(p-1)}{N+2-\alpha p},\infty}(\mathbb{R}^{N+1})}\lesssim(\mu (\mathbb{R}^{N+1}))^{\frac{1}{p-1}}~~\forall ~\mu\in \mathfrak{M}_b^+(\mathbb{R}^{N+1}),
      \end{equation}
      \begin{equation}\label{5hh2210134}
          ||\mathbb{W}_{\alpha,p}[\mu]||_{L^{\frac{q(N+2)(p-1)}{N+2-\alpha pq},\infty}(\mathbb{R}^{N+1})}\lesssim ||\mu||^{\frac{1}{p-1}}_{L^{q,\infty}(\mathbb{R}^{N+1})}~~\forall ~\mu\in L^{q,\infty}(\mathbb{R}^{N+1}),\mu\geq 0,
          \end{equation}
          \begin{equation}\label{5hh2210135}
                  ||\mathbb{W}_{\alpha,p}[\mu]||_{L^{\frac{q(N+2)(p-1)}{N+2-\alpha pq}}(\mathbb{R}^{N+1})}\lesssim ||\mu||^{\frac{1}{p-1}}_{L^{q}(\mathbb{R}^{N+1})}~~\forall ~\mu\in L^{q}(\mathbb{R}^{N+1}),\mu\geq 0.
                  \end{equation}
    In particular, for $s>\frac{(p-1)(N+2)}{N+2-\alpha p}$, we define $F(\mu):=\left(\mathbb{W}_{\alpha,p}[\mu]\right)^{s}$  for all $\mu\in \mathfrak{M}_b^+(\mathbb{R}^{N+1})$. Then,
    \begin{align*}
   & ||F(\mu)||_{L^{\frac{(N+2)(s-p+1)}{\alpha s p}}(\mathbb{R}^{N+1})}\lesssim||\mu||^{\frac{s}{p-1}}_{L^{\frac{(N+2)(s-p+1)}{\alpha s p}}(\mathbb{R}^{N+1})},\\&
   ||F(\mu)||_{L^{\frac{(N+2)(s-p+1)}{\alpha s p},\infty}(\mathbb{R}^{N+1})}\lesssim ||\mu||^{\frac{s}{p-1}}_{L^{\frac{(N+2)(s-p+1)}{\alpha s p},\infty}(\mathbb{R}^{N+1})}.
    \end{align*}
    \end{proposition}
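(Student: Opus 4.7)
The plan is to reduce all three estimates to bounds on the fractional maximal potential $(\mathbb{M}_{\alpha p}[\mu])^{1/(p-1)}$ via Theorem \ref{5hh051120131}, and then to establish the required weak/strong-type bounds for $\mathbb{M}_{\alpha p}$ by standard parabolic Vitali-covering and Hardy-Littlewood-Sobolev arguments.

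First I would verify that in each of the three inequalities the Lorentz exponent satisfies $q > p-1$, so that Theorem \ref{5hh051120131} (applied with $w \equiv 1$, $R = \infty$) yields
$$\|\mathbb{W}_{\alpha,p}[\mu]\|_{L^{q,s}(\mathbb{R}^{N+1})} \le C\,\|(\mathbb{M}_{\alpha p}[\mu])^{1/(p-1)}\|_{L^{q,s}(\mathbb{R}^{N+1})}.$$
For \eqref{5hh2210133} take $(q,s) = \bigl(\tfrac{(N+2)(p-1)}{N+2-\alpha p},\infty\bigr)$, and $q > p-1$ is immediate from $\alpha p > 0$. For \eqref{5hh2210134} and \eqref{5hh2210135}, take $q_1 = \tfrac{q(N+2)(p-1)}{N+2-\alpha pq}$ (with $s = \infty$ resp. $s = q_1$); the inequality $q_1 > p-1$ reduces to $q(N+2+\alpha p) > N+2$, which holds since $q > 1$.

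Next I would reduce to the three standard bounds for the parabolic fractional maximal operator:
\begin{align*}
&\|\mathbb{M}_{\alpha p}[\mu]\|_{L^{\frac{N+2}{N+2-\alpha p},\infty}(\mathbb{R}^{N+1})} \le C\,\mu(\mathbb{R}^{N+1}), \\
&\|\mathbb{M}_{\alpha p}[\mu]\|_{L^{\frac{q(N+2)}{N+2-\alpha pq},\infty}(\mathbb{R}^{N+1})} \le C\,\|\mu\|_{L^{q,\infty}(\mathbb{R}^{N+1})}, \\
&\|\mathbb{M}_{\alpha p}[\mu]\|_{L^{\frac{q(N+2)}{N+2-\alpha pq}}(\mathbb{R}^{N+1})} \le C\,\|\mu\|_{L^{q}(\mathbb{R}^{N+1})}.
\end{align*}
The first is proved by the usual Vitali covering argument: any $(x,t)$ in $\{\mathbb{M}_{\alpha p}[\mu] > \lambda\}$ lies in a cylinder $\tilde Q_\rho(x,t)$ with $\mu(\tilde Q_\rho(x,t)) > \lambda \rho^{N+2-\alpha p}$, one extracts a disjoint subfamily enlarging by a fixed factor, and sums using $\mu \in \mathfrak{M}_b^+$. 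The second and third are the parabolic Hardy-Littlewood-Sobolev type estimates for $\mathbb{M}_{\alpha p}$; they follow from the pointwise split
$$\mathbb{M}_{\alpha p}[\mu](x,t) \le C\bigl(R^{\alpha p}\mathbb{M}[\mu](x,t) + R^{\alpha p - (N+2)/q}\|\mu\|_{L^{q,\infty}}\bigr),$$
obtained by separating the supremum into $\rho \le R$ and $\rho > R$, optimising in $R$, and invoking boundedness of the Hardy-Littlewood maximal operator on $L^{q,\infty}$ (resp. $L^q$). Raising both sides to the power $1/(p-1)$ gives \eqref{5hh2210133}-\eqref{5hh2210135}.

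Finally, for the statement about $F(\mu) = (\mathbb{W}_{\alpha,p}[\mu])^s$ with $s > \tfrac{(p-1)(N+2)}{N+2-\alpha p}$, set $r := \tfrac{(N+2)(s-p+1)}{\alpha s p}$. A direct computation shows $r > 1$ precisely under the assumption on $s$, and that $\tfrac{r(N+2)(p-1)}{N+2-\alpha p r} = sr$; so applying \eqref{5hh2210135} (resp. \eqref{5hh2210134}) with $q = r$ yields
$$\|F(\mu)\|_{L^{r}(\mathbb{R}^{N+1})} = \|\mathbb{W}_{\alpha,p}[\mu]\|_{L^{sr}(\mathbb{R}^{N+1})}^s \le C\|\mu\|_{L^{r}(\mathbb{R}^{N+1})}^{s/(p-1)},$$
and analogously for the weak-type norm. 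The only delicate step is the verification of the parameter conditions; the main analytic obstacle is the pointwise splitting estimate for $\mathbb{M}_{\alpha p}$ (and its weak-type Lorentz variant), but this is essentially the classical argument transplanted to parabolic cylinders $\tilde Q_\rho$ and presents no new difficulty.
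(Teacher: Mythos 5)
Your proposal is correct, and the parameter arithmetic (the bound $q_1 > p-1$, the computations $r > 1$ and $\frac{r(N+2)(p-1)}{N+2-\alpha p r} = sr$) all check out, but you take a genuinely different route from the paper. The paper bypasses Theorem~\ref{5hh051120131} entirely and instead proves a \emph{pointwise} bound directly on the Wolff potential: it splits $\mathbb{W}_{\alpha,p}[\mu](x,t) = \int_0^\delta + \int_\delta^\infty$, bounds the near part by $C(\mathbb{M}(\mu)(x,t))^{1/(p-1)}\delta^{\alpha p/(p-1)}$ using $\mu(\tilde Q_\rho) \le \mathbb{M}(\mu)\rho^{N+2}$, bounds the far part by $C\|\mu\|_{L^{s,\infty}}^{1/(p-1)}\delta^{-(N+2-\alpha sp)/(s(p-1))}$ using $\mu(\tilde Q_\rho) \le c\|\mu\|_{L^{s,\infty}}\rho^{(N+2)/s'}$, and then optimises $\delta$ to obtain
\[
\mathbb{W}_{\alpha,p}[\mu](x,t) \le c\,(\mathbb{M}(\mu)(x,t))^{\frac{N+2-\alpha sp}{(p-1)(N+2)}}\|\mu\|_{L^{s,\infty}}^{\frac{\alpha sp}{(p-1)(N+2)}},
\]
after which the three estimates follow from the boundedness of the ordinary Hardy--Littlewood maximal operator $\mathbb{M}$ on $\mathfrak{M}_b^+ \to L^{1,\infty}$, on $L^{q,\infty}$, and on $L^q$. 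You instead invoke the good-$\lambda$ comparison theorem (Theorem~\ref{5hh051120131}) to pass from $\mathbb{W}_{\alpha,p}$ to $(\mathbb{M}_{\alpha p})^{1/(p-1)}$ in Lorentz norm, and then run the same split-and-optimise argument at the level of $\mathbb{M}_{\alpha p}$. The two approaches use the same analytic engine (split at a scale, optimise, reduce to $\mathbb{M}$); the paper's is more self-contained and elementary, while yours imports a heavier tool (which is anyway available in the paper) in exchange for a clean two-step reduction that isolates the fractional-maximal bound as a standalone classical fact.
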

    \begin{proof}
    Let $s\geq 1$ be such that $\alpha s p <N+2$. It is known that if $\mu\in L^{s,\infty}(\mathbb{R}^{N+1})$ then 
    \begin{equation*}
    |\mu|(\tilde{Q}_\rho (x,t))\lesssim ||\mu||_{L^{s,
    \infty}(\mathbb{R}^{N+1})}\rho^{\frac{N+2}{s'}}~~\forall~~\rho>0.
    \end{equation*} 
    Thus for $\delta=||\mu||^{\frac{s}{N+2}}_{L^{s,\infty}(\mathbb{R}^{N+1})} \left(\mathbb{M}(\mu)(x,t)\right)^{-\frac{s}{N+2}}$, we have
    \begin{align*}
     \mathbb{W}_{\alpha,p}[\mu](x,t)&=\int_{0}^{\delta}\left(\frac{\mu(\tilde{Q}_\rho(x,t))}{\rho^{N+2-\alpha p}}\right)^{\frac{1}{p-1}}\frac{d\rho}{\rho}+\int_{\delta}^{\infty}\left(\frac{\mu(\tilde{Q}_\rho(x,t))}{\rho^{N+2-\alpha p}}\right)^{\frac{1}{p-1}}\frac{d\rho}{\rho}
      \\&\lesssim \left(\mathbb{M}(\mu)(x,t)\right)^{\frac{1}{p-1}}\delta^{\frac{\alpha p}{p-1}}+||\mu||^{\frac{1}{p-1}}_{L^{s,\infty}(\mathbb{R}^{N+1})}\delta^{-\frac{N+2-\alpha s p}{s(p-1)}}
      \\&=  \left(\mathbb{M}(\mu)(x,t)\right)^{\frac{N+2-\alpha s p}{(p-1)(N+2)}}||\mu||^{\frac{\alpha s p}{(p-1)(N+2)}}_{L^{s,\infty}(\mathbb{R}^{N+1})}.
    \end{align*}
    So, for any $\lambda>0$
    $$|\{\mathbb{W}_{\alpha,p}[\mu]>\lambda\}|\leq |\{\mathbb{M}(\mu) \gtrsim ||\mu||^{-\frac{\alpha s p}{N+2-\alpha s p}}_{L^{s,\infty}(\mathbb{R}^{N+1})}\lambda^{\frac{(p-1)(N+2)}{N+2-\alpha s p}}\}|.$$
    Since $\mathbb{M}$ is bounded from $\mathfrak{M}^+_b(\mathbb{R}^{N+1})$ to $L^{1,\infty}(\mathbb{R}^{N+1})$ and $L^{q}(\mathbb{R}^{N+1})$ ($L^{q,\infty}(\mathbb{R}^{N+1})$~resp.) to itself, we get  the result.
    \end{proof}\\ 
    \begin{remark}\label{5hh140420141}Assume that $\alpha p=N+2$ and $R>0$. As above we also have for any $\varepsilon>0$
$$
    \mathbb{W}_{\alpha,p}^R[\mu](x,t)\lesssim_{\varepsilon} \max\left\{(|\mu|(\mathbb{R}^{N+1}))^{\frac{1}{p-1}},\left((\mathbb{M}(\mu)(x,t))^{\varepsilon}(|\mu|(\mathbb{R}^{N+1}))^{\frac{\alpha p}{p-1}}R^{\varepsilon \alpha p}\right)^{\frac{1}{\alpha p+\varepsilon (p-1)}}\right\}.$$
      Therefore, for any $\lambda\gtrsim_\varepsilon (|\mu|(\mathbb{R}^{N+1}))^{\frac{1}{p-1}}$,
     \begin{align}\label{5hh140420142}
     |\{\mathbb{W}_{\alpha,p}^R[\mu]>\lambda\}|\lesssim_{\varepsilon} \left(\frac{(|\mu|(\mathbb{R}^{N+1}))^{\frac{1}{p-1}}}{\lambda}\right)^{\frac{\alpha p+\varepsilon(p-1)}{\varepsilon}}R^{\alpha p},
     \end{align}
     In particular, if $\mu\in\mathfrak{M}^+_b(\mathbb{R}^{N+1})$ then  $\mathbb{W}_{\alpha,p}^R[\mu]\in L^s_{\text{loc}}(\mathbb{R}^{N+1})$ for all  $s>0$.
    \end{remark}     
    \begin{remark} Assume that $p,q>1, 0<\alpha pq <N+2$. 
    As in \cite[Theorem 3]{55MuWh}, it is easy to prove that
    if $w\in A_{\frac{q(N+2-\alpha)}{N+2-\alpha pq}}$, i.e, $0<w\in L^1_{\text{loc}}(\mathbb{R}^{N+1})$ and for any $\tilde{Q}_\rho(y,s)\subset\mathbb{R}^{N+1}$
    \begin{align*}
    \sup_{\tilde{Q}_\rho(y,s)\subset\mathbb{R}^{N+1}}\left(\left(\fint_{\tilde{Q}_\rho(y,s)}wdxdt\right)\left(\fint_{\tilde{Q}_\rho(y,s)}w^{-\frac{N+2-\alpha pq}{(q-1)(N+2)}}dxdt\right)^{\frac{(q-1)(N+2)}{N+2-\alpha pq}}\right)= C_1<\infty,
    \end{align*}
    then 
    \begin{align*}
    \left(\int_{\mathbb{R}^{N+1}}\left(\mathbb{M}_{\alpha p}[|f|]\right)^{\frac{(N+2)q}{N+2-\alpha pq}}wdxdt\right)^{\frac{N+2-\alpha pq}{(N+2)q}}\leq C_2\left(\int_{\mathbb{R}^{N+1}}|f|^qw^{1-\frac{\alpha pq}{N+2}}dxdt\right)^{\frac{1}{q}},
    \end{align*}
    for some a constant $C_2=C_2(N,\alpha p,q,C_1)$.\\
    Therefore, from \eqref{5hh180220142} in Theorem \ref{5hh051120131} we get a weighted version of \eqref{5hh2210135}:
     \begin{align*}
        \left(\int_{\mathbb{R}^{N+1}}\left(\mathbb{W}_{\alpha, p}[|f|]\right)^{\frac{(N+2)(p-1)q}{N+2-\alpha pq}}wdxdt\right)^{\frac{N+2-\alpha pq}{(N+2)q}}\leq C_2\left(\int_{\mathbb{R}^{N+1}}|f|^pw^{1-\frac{\alpha p}{N+2}}dxdt\right)^{\frac{1}{p}}.
        \end{align*}     
    \end{remark}
    In the following proposition, we give another version of \eqref{5hh2210135} in the Lorentz-Morrey spaces involving calorie.  
  \begin{proposition}\label{5hh190320148}
  Let $p,q>1$, and $0<\alpha pq<\theta\leq N+2$. There  holds
  \begin{align}\label{5hh190320142}
  ||\left(\mathbb{W}_{\alpha,p}[|\mu|]\right)^{p-1}||_{L^{\frac{\theta q}{\theta -\alpha p q};\theta}(\mathbb{R}^{N+1})}\lesssim||\mu||_{L^{q;\theta}(\mathbb{R}^{N+1})}~~\forall \mu\in L^{q;\theta}(\mathbb{R}^{N+1}).
  \end{align}
  \end{proposition}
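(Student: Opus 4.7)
The plan is to reduce the Wolff-potential estimate to a weighted balance between the Hardy--Littlewood maximal operator and the Morrey norm of $\mu$, via a Hedberg-type truncation of the potential integral.

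First I would assume $\mu\ge0$ (replacing by $|\mu|$) and fix $(x,t)\in\mathbb{R}^{N+1}$ and a cutoff radius $\delta>0$ to split
\[
\mathbb{W}_{\alpha,p}[\mu](x,t)=\int_{0}^{\delta}\left(\frac{\mu(\tilde Q_\rho(x,t))}{\rho^{N+2-\alpha p}}\right)^{\!1/(p-1)}\!\frac{d\rho}{\rho}+\int_{\delta}^{\infty}(\cdots)\frac{d\rho}{\rho}.
\]
For the first integral I use $\mu(\tilde Q_\rho)\le C\rho^{N+2}\,\mathbb{M}\mu(x,t)$, giving a bound of order $\delta^{\alpha p/(p-1)}(\mathbb{M}\mu(x,t))^{1/(p-1)}$. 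For the second I use the Morrey hypothesis together with Hölder on $\tilde Q_\rho$, which yields $\mu(\tilde Q_\rho)\le C\|\mu\|_{L^{q;\theta}}\rho^{N+2-\theta/q}$; since $\alpha pq<\theta$, the resulting exponent $(\alpha p-\theta/q)/(p-1)$ is negative, so the tail is controlled by $C\|\mu\|_{L^{q;\theta}}^{1/(p-1)}\delta^{(\alpha p-\theta/q)/(p-1)}$.

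Next, I would balance the two pieces by choosing $\delta^{\theta/q}=\|\mu\|_{L^{q;\theta}}/\mathbb{M}\mu(x,t)$. This is the Hedberg-type step and produces the pointwise inequality
\[
\bigl(\mathbb{W}_{\alpha,p}[\mu](x,t)\bigr)^{p-1}\le C\,\bigl(\mathbb{M}\mu(x,t)\bigr)^{(\theta-\alpha pq)/\theta}\,\|\mu\|_{L^{q;\theta}(\mathbb{R}^{N+1})}^{\alpha pq/\theta}.
\]
Raising to the power $\tilde q:=\theta q/(\theta-\alpha pq)$, the exponent of $\mathbb{M}\mu$ becomes exactly $q$, and the remaining power of the Morrey norm is $\tilde q-q$.

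Finally, I would integrate over any cylinder $\tilde Q_\rho\subset\mathbb{R}^{N+1}$ and invoke the well-known boundedness of the Hardy--Littlewood maximal operator on the Morrey space $L^{q;\theta}$ for $q>1$, giving $\int_{\tilde Q_\rho}(\mathbb{M}\mu)^{q}\le C\|\mu\|_{L^{q;\theta}}^{q}\rho^{N+2-\theta}$. Multiplying by $\rho^{\theta-N-2}$ and taking supremum over cylinders yields
\[
\sup_{\tilde Q_\rho}\rho^{\theta-N-2}\!\!\int_{\tilde Q_\rho}\!\!\bigl(\mathbb{W}_{\alpha,p}[\mu]\bigr)^{(p-1)\tilde q}dxdt\le C\|\mu\|_{L^{q;\theta}}^{\tilde q},
\]
which is exactly \eqref{5hh190320142} after taking the $\tilde q$-th root. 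The main technical point to verify carefully is the Hedberg truncation, specifically the integrability of the tail at $\infty$, which uses crucially $\alpha pq<\theta$ (so the Morrey norm control gives a negative exponent in $\rho$); the rest is exponent bookkeeping and a standard application of maximal-function bounds on Morrey spaces.
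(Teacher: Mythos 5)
Your proof is correct and uses the same underlying machinery as the paper: a Hedberg-type truncation producing the pointwise bound $(\mathbb{W}_{\alpha,p}[\mu])^{p-1}\le C\|\mu\|_{L^{q;\theta}}^{\alpha pq/\theta}(\mathbb{M}\mu)^{(\theta-\alpha pq)/\theta}$, followed by integration over cylinders. The only difference is one of packaging: where you cite the Chiarenza--Frasca-type boundedness of the parabolic maximal operator on $L^{q;\theta}$ as a black box, the paper instead splits $\mathbb{W}_{\alpha,p}[\mu]\le c\,\mathbb{W}_{\alpha,p}[\chi_{\tilde{Q}_{2\rho}}\mu]+c\,\mathbb{W}_{\alpha,p}[\chi_{(\tilde{Q}_{2\rho})^c}\mu]$ and handles the local piece via $L^q$-boundedness of $\mathbb{M}$ and the tail via the direct Morrey decay of $\mu(\tilde{Q}_r)$ --- which is exactly the near/far argument that proves the Morrey boundedness you invoke, so the two proofs are mathematically equivalent.
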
  
    \begin{proof}
  As the proof of Proposition \ref{5hh23101315} we have 
$$
  \mathbb{W}_{\alpha,p}[|\mu|]\lesssim \left(\mathbb{M}_{\theta/q}[|\mu|]\right)^{\frac{\alpha p q}{\theta (p-1)}}
  \left(\mathbb{M}[|\mu|]\right)^{\frac{\theta-\alpha p q}{\theta (p-1)}}.$$
  Since $\mathbb{M}_{\theta/q}[|\mu|]\lesssim \left(\mathbb{M}_{\theta}[|\mu|^q]\right)^{1/q}$, above inequality becomes
  \begin{align}\label{5hh190320141}
  \mathbb{W}_{\alpha,p}[\mu]\lesssim \left(\mathbb{M}_{\theta}[|\mu|^q]\right)^{\frac{\alpha p}{\theta (p-1)}}
  \left(\mathbb{M}[\mu]\right)^{\frac{\theta-\alpha p q}{\theta (p-1)}}.\end{align}
  Take $\tilde{Q}_\rho(y,s)\subset\mathbb{R}^{N+1}$, we have 
  \begin{align*}
  \int_{\tilde{Q}_\rho(y,s)}\left(\mathbb{W}_{\alpha,p}[\mu]\right)^{\frac{\theta q (p-1)}{\theta-\alpha p q}}dxdt&\lesssim \int_{\tilde{Q}_\rho(y,s)}\left(\mathbb{W}_{\alpha,p}[\chi_{\tilde{Q}_{2\rho}(y,s)}\mu]\right)^{\frac{\theta q (p-1)}{\theta-\alpha p q}}dxdt\\&~~~~~~~~~+\int_{\tilde{Q}_\rho(y,s)}\left(\mathbb{W}_{\alpha,p}[\chi_{(\tilde{Q}_{2\rho}(y,s))^c}\mu]\right)^{\frac{\theta q (p-1)}{\theta-\alpha p q}}dxdt
  \\&=A+B.
  \end{align*}
  Using  \eqref{5hh190320141} and boundedness of $\mathbb{M}$ from $L^q(\mathbb{R}^{N+1})$ to itself, yield
  \begin{align*}
  A&\lesssim \int_{\mathbb{R}^{N+1}}\left(\mathbb{M}_{\theta}[|\mu|^q]\right)^{\frac{\alpha pq}{\theta-\alpha p q}}
  \left(\mathbb{M}[\chi_{\tilde{Q}_{2\rho}(y,s)}\mu]\right)^{q}dxdt
  \\&\lesssim ||\mu||_{L^{q;\theta}(\mathbb{R}^{N+1})}^{\frac{\alpha pq^2}{\theta-\alpha p q}} \int_{\chi_{\tilde{Q}_{2\rho}(y,s)}}
  |\mu|^{q}dxdt
  \\&\lesssim||\mu||_{L^{q;\theta}(\mathbb{R}^{N+1})}^{\frac{\theta q}{\theta-\alpha p q}} \rho^{N+2-\theta}.
  \end{align*}
  On the other hand, since $|\mu|(\tilde{Q}_r(x,t))\lesssim ||\mu||_{L^{q;\theta}(\mathbb{R}^{N+1})}r^{N+2-\frac{\theta}{q}}$ for all $\tilde{Q}_r(x,t)\subset\mathbb{R}^{N+1}$,
  \begin{align*}
  B&\leq\int_{\tilde{Q}_\rho(y,s)}\left(\int_{\rho}^{\infty}\left(\frac{|\mu|(\tilde{Q}_r(x,t))}{r^{N+2-\alpha p}}\right)^{\frac{1}{p-1}}\frac{dr}{r}\right)^{\frac{\theta q (p-1)}{\theta-\alpha p q}}dxdt
  \\&\lesssim \int_{\tilde{Q}_\rho(y,s)}\left(\int_{\rho}^{\infty}\left(||\mu||_{L^{q;\theta}(\mathbb{R}^{N+1})}r^{-\frac{\theta}{q}+\alpha}\right)^{\frac{1}{p-1}}\frac{dr}{r}\right)^{\frac{\theta q (p-1)}{\theta-\alpha p q}}dxdt
  \\&\lesssim ||\mu||_{L^{q;\theta}(\mathbb{R}^{N+1})}^{\frac{\theta q}{\theta-\alpha p q}} \rho^{N+2-\theta}.
  \end{align*}
  Therefore, 
$$
  \int_{\tilde{Q}_\rho(y,s)}\left(\mathbb{W}_{\alpha,p}[\mu]\right)^{\frac{\theta q (p-1)}{\theta-\alpha p q}}dxdt\lesssim||\mu||_{L^{q;\theta}(\mathbb{R}^{N+1})}^{\frac{\theta q}{\theta-\alpha p q}} \rho^{N+2-\theta},$$
  which follows \eqref{5hh190320142}.
    \end{proof}\\
    In the next result we state a series of equivalent norms concerning potentials $\mathbb{I}_\alpha,\mathbb{I}_\alpha^R, \mathcal{H}_\alpha,\mathcal{G}_\alpha$. 
   \begin{proposition}\label{5hh230120143}
     Let  $q>1$, $0<\alpha<N+2$ and $R>0$. Then, the following statements hold
     \begin{description}
     \item[a.] for any $\mu\in \mathfrak{M}^+(\mathbb{R}^{N+1})$
     \begin{align}
     \label{5hh230120141}
             & ||\mathcal{H}_\alpha[\mu]||_{L^q(\mathbb{R}^{N+1})}\sim_{\alpha,q}||\mathbb{I}_\alpha[\mu]||_{L^q(\mathbb{R}^{N+1})},\\&\label{5hh230120142}
          ||{\mathop \mathcal{H}\limits^ \vee}_\alpha[\mu]||_{L^q(\mathbb{R}^{N+1})}\sim_{\alpha,q}||\mathbb{I}_\alpha[\mu]||_{L^q(\mathbb{R}^{N+1})}.             
     \end{align}                        
        \item[b.]  for any $\mu\in \mathfrak{M}^+(\mathbb{R}^{N+1})$ 
        \begin{align}
        \label{5hh230120141'}
                      &  ||\mathcal{G}_\alpha[\mu]||_{L^q(\mathbb{R}^{N+1})}\sim_{\alpha,q,R}||\mathbb{I}^R_\alpha[\mu]||_{L^q(\mathbb{R}^{N+1})},\\&\label{5hh230120142'}
                                          ||{\mathop \mathcal{G}\limits^ \vee}_\alpha[\mu]||_{L^q(\mathbb{R}^{N+1})}\sim_{\alpha,q,R}||\mathbb{I}^R_\alpha[\mu]||_{L^q(\mathbb{R}^{N+1})}.
        \end{align}                         
     \end{description} 
     where   ${\mathop \mathcal{H}\limits^ \vee}_\alpha[\mu]$ is the backward parabolic  Riesz potential, defined by 
                $$
                 {\mathop \mathcal{H}\limits^ \vee}_\alpha[\mu](x,t)=({\mathop \mathcal{H}\limits^ \vee}_\alpha*\mu)(x,t)=\int_{\mathbb{R}^{N+1}}\mathcal{H}_\alpha(y-x,s-t)d\mu(y,s),
                $$        
                 and ${\mathop \mathcal{G}\limits^ \vee}_\alpha[\mu]$ is the backward parabolic  Bessel potential:
                            $$
                            {\mathop \mathcal{G}\limits^ \vee}_\alpha[\mu](x,t)=({\mathop \mathcal{G}\limits^ \vee}_\alpha*\mu)(x,t)=\int_{\mathbb{R}^{N+1}}\mathcal{G}_\alpha(y-x,s-t)d\mu(y,s).
                            $$ 
     \end{proposition}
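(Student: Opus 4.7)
The first step is to rewrite $\mathbb{I}_\alpha[\mu](x,t)$ as a convolution with an explicit parabolic Riesz kernel. By Fubini, the condition $(y,s)\in \tilde Q_\rho(x,t)$ is equivalent to $\rho>\max(|x-y|,\sqrt{2|t-s|})$, so interchanging the $d\rho$- and $d\mu$-integrals yields
\[
\mathbb{I}_\alpha[\mu](x,t) \;=\; \frac{1}{N+2-\alpha}\int_{\mathbb{R}^{N+1}} \max\bigl(|x-y|,\sqrt{2|t-s|}\bigr)^{-(N+2-\alpha)}\,d\mu(y,s).
\]
Denote the corresponding kernel by $K_\alpha$; then $K_\alpha(x,t)\simeq (|x|^2+|t|)^{-(N+2-\alpha)/2}$ uniformly on $\mathbb{R}^{N+1}$. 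The upper bounds in \eqref{5hh230120141} and \eqref{5hh230120142} then follow pointwise: the elementary inequality $e^{-u}\leq c_k u^{-k}$ with $k=(N+2-\alpha)/2$ implies $\mathcal{H}_\alpha(x,t)\leq c K_\alpha(x,t)$, so $\mathcal{H}_\alpha[\mu]\leq c\,\mathbb{I}_\alpha[\mu]$ pointwise, and the same bound holds for ${\mathop \mathcal{H}\limits^ \vee}_\alpha[\mu]$ by time reversal.

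The core of the argument is the reverse estimate. The Gamma identity $a^{-\beta}=\Gamma(\beta)^{-1}\int_0^\infty s^{\beta-1}e^{-as}\,ds$ with $a=|x|^2+t$, $\beta=(N+2-\alpha)/2$, and the substitution $s=1/(4r)$ produce the representation
\[
(|x|^2+t)^{-(N+2-\alpha)/2} \;=\; c_{N,\alpha}\int_0^\infty e^{-t/(4r)}\,\mathcal{H}_\alpha(x,r)\,\frac{dr}{r}\qquad (t\geq 0),
\]
and similarly with $|t|$ in place of $t$ for $t\leq 0$. Substituting into $K_\alpha(x-y,t-s)$ and splitting $d\mu$ according to the sign of $t-s$, Fubini identifies the forward contribution to $\mathbb{I}_\alpha[\mu](x,t)$ as $c\int_0^\infty (dr/r)\,T_r^+[\mu](x,t)$, where
\[
T_r^+[\mu](x,t)\;:=\;\int_{s<t} e^{-(t-s)/(4r)}\,\mathcal{H}_\alpha(x-y,r)\,d\mu(y,s)
\]
is a temporal Laplace convolution of the spatial $\mathcal{H}_\alpha(\cdot,r)$-convolved measure. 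The key step, combining Minkowski's integral inequality with the rescaling $r=(t-s)\rho$ and a dyadic splitting of $r$, is to establish $\int_0^\infty \|T_r^+[\mu]\|_{L^q}\,dr/r\leq c\,\|\mathcal{H}_\alpha[\mu]\|_{L^q}$. The backward contribution is handled identically with ${\mathop \mathcal{H}\limits^ \vee}_\alpha$. Finally, since time reversal interchanges $\mathcal{H}_\alpha[\tilde\mu]$ with ${\mathop \mathcal{H}\limits^ \vee}_\alpha[\mu]$ (where $\tilde\mu$ denotes the time-reversal of $\mu$) while preserving $\|\mathbb{I}_\alpha[\mu]\|_{L^q}$, applying the forward/backward bound to both $\mu$ and $\tilde\mu$ yields the left-hand inequalities in \eqref{5hh230120141} and \eqref{5hh230120142}.

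Part \textbf{b} follows the identical template with $\mathcal{H}_\alpha$ replaced by $\mathcal{G}_\alpha=e^{-t}\mathcal{H}_\alpha$ and $\mathbb{I}_\alpha$ by $\mathbb{I}_\alpha^R$. The extra factor $e^{-t}$ in $\mathcal{G}_\alpha$ provides decay at times $t\gtrsim 1$ that precisely mirrors the truncation at scale $R$ in $\mathbb{I}_\alpha^R$; introducing a corresponding cutoff $r\lesssim R^2$ in the Gamma representation above and absorbing the resulting tail into the $R$-dependent constant $C_2$ completes the argument. The main obstacle is the uniform $L^q$-control of the Laplace-average operators $T_r^+$ in the lower bound: managing the interaction between the spatial scale encoded by $\mathcal{H}_\alpha(\cdot,r)$ and the temporal scale $r$ across all $r>0$ appears to require either the dyadic splitting and rescaling indicated above, or alternatively exploitation of the heat semigroup identity $\mathcal{H}_\alpha*\mathcal{H}_\beta=\mathrm{const}\cdot\mathcal{H}_{\alpha+\beta}$ to regroup the iterated kernels into a single heat potential.
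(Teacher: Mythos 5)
Your identification of $\mathbb{I}_\alpha[\mu]$ as $c\,K_\alpha*\mu$ with $K_\alpha(x,t)=\max(|x|,\sqrt{2|t|})^{-(N+2-\alpha)}$, and the pointwise bound $\mathcal{H}_\alpha\lesssim K_\alpha$ giving the right-hand inequalities of \eqref{5hh230120141} and \eqref{5hh230120142}, are correct and agree with the paper; the Gamma-function representation of the forward half of $K_\alpha$ as a superposition of heat kernels is a legitimate identity. The problem is the reverse inequality $\|\mathbb{I}_\alpha[\mu]\|_{L^q}\lesssim\|\mathcal{H}_\alpha[\mu]\|_{L^q}$, where your outline has two gaps, the second fatal. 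The asserted estimate $\int_0^\infty\|T_r^+[\mu]\|_{L^q}\,dr/r\lesssim\|\mathcal{H}_\alpha[\mu]\|_{L^q}$ is left unproved; Minkowski discards all cross-scale coupling before taking norms, and no mechanism is offered by which the separable single-scale operators $T_r^+$ (Gaussian of width $\sqrt{r}$ in space, exponential of rate $1/r$ in time) can be resummed against $dr/r$ into the $L^q$ norm of the single kernel $\mathcal{H}_\alpha$, in which space and time scales are rigidly coupled. More seriously, even if both the forward bound and its identically argued backward analogue held, your decomposition gives only $\|\mathbb{I}_\alpha[\mu]\|_{L^q}\lesssim\|\mathcal{H}_\alpha[\mu]\|_{L^q}+\|{\mathop \mathcal{H}\limits^ \vee}_\alpha[\mu]\|_{L^q}$, while the proposition requires each kernel \emph{alone} to dominate $\mathbb{I}_\alpha[\mu]$. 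Applying your argument to the time reversal $\tilde\mu$ merely exchanges the two terms on the right, since $\|\mathcal{H}_\alpha[\tilde\mu]\|_{L^q}=\|{\mathop \mathcal{H}\limits^ \vee}_\alpha[\mu]\|_{L^q}$ while $\|\mathbb{I}_\alpha[\tilde\mu]\|_{L^q}=\|\mathbb{I}_\alpha[\mu]\|_{L^q}$, and the inequality $\|{\mathop \mathcal{H}\limits^ \vee}_\alpha[\mu]\|_{L^q}\lesssim\|\mathcal{H}_\alpha[\mu]\|_{L^q}$ you would need to conclude is itself a consequence of the proposition, so the argument is circular.

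The paper does not decompose $\mathbb{I}_\alpha$ into forward and backward parts at all. It uses the elementary pointwise lower bound $\mathcal{H}_\alpha(x,t)\gtrsim t^{-(N+2-\alpha)/2}\chi_{\{t>0,\,|x|\le2\sqrt{t}\}}$ to get $\mathcal{H}_\alpha[\mu](x,t)\gtrsim\int_0^\infty \mu\bigl(B_r(x)\times(t-r^2,t-r^2/4)\bigr)\,r^{-(N+2-\alpha)}\,dr/r$, a purely \emph{forward} box integral; it then discretizes in $r$ dyadically and integrates in $t$. At each scale $r_k$ a translation $t\mapsto t+c\,r_k^2$ turns the one-sided box average of $\mu$ into a \emph{centered} box average over a parabolic cylinder $\tilde{Q}_{r_k}(x,t)$, with no change to the $L^q(dt)$ norm; this translation-in-time step is precisely what makes a one-sided kernel control the two-sided potential $\mathbb{I}_\alpha$. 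Summing over $k$ and $x$ gives $\|\mathcal{H}_\alpha[\mu]\|_{L^q}^q\gtrsim\int_{\mathbb{R}^{N+1}}\int_0^\infty\bigl(\mu(\tilde{Q}_r(x,t))/r^{N+2-\alpha}\bigr)^q\,dr/r\,dxdt$, which dominates $\|\mathbb{M}_\alpha[\mu]\|_{L^q}^q$ pointwise and hence, by the good-$\lambda$ comparison of Theorem~\ref{5hh051120131}, dominates $\|\mathbb{I}_\alpha[\mu]\|_{L^q}^q$. Both the time-translation step and the appeal to the Wolff good-$\lambda$ inequality are missing from your outline, and neither is supplied by the Gamma-function representation.
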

     \begin{proof}\textbf{a.} We have:
$$
     \frac{1 }{t^{\frac{N+2-\alpha}{2}}}\chi_{t>0}\chi_{|x|\leq2 \sqrt{t}}\lesssim \mathcal{H}_\alpha(x,t)\lesssim \frac{1}{\max\{|x|,\sqrt{2|t|}\}^{N+2-\alpha}},
$$
     which implies
$$
         \int_{0}^{\infty}\frac{\chi_{B_r(0)\times (\frac{r^2}{4},r^2)}(x,t)}{r^{N+2-\alpha}}\frac{dr}{r}\lesssim \mathcal{H}_\alpha(x,t)\lesssim \int_{0}^{\infty}\frac{\chi_{\tilde{Q}_r(0,0)}(x,t)}{r^{N+2-\alpha}}\frac{dr}{r}.
$$
  Thus,  
     \begin{equation}\label{5hh230120147}
    \int_{0}^{\infty}\frac{\mu\left(B(x,r)\times (t-r^2,t-\frac{r^2}{4})\right)}{r^{N+2-\alpha}}\frac{dr}{r}\lesssim \mathcal{H}_\alpha[\mu](x,t) \lesssim\mathbb{I}_\alpha[\mu](x,t).
     \end{equation}
   Thanks to Theorem \ref{5hh051120131} we will finish the proof of \eqref{5hh230120141} if we show that 
$$
   \int_{\mathbb{R}}\left(\int_{0}^{\infty}\frac{\mu\left(B(x,r)\times (t-r^2,t-\frac{r^2}{4})\right)}{r^{N+2-\alpha}}\frac{dr}{r}\right)^qdt\gtrsim \int_{\mathbb{R}}\int_{0}^{+\infty}\left(\frac{\mu(\tilde{Q}_r(x,t))}{r^{N+2-\alpha}}\right)^q\frac{dr}{r}dt.
$$
  Indeed, we have for $r_k=(\frac{2}{\sqrt{3}})^{-k}$, 
$$
  \left(\int_{0}^{\infty}\frac{\mu\left(B(x,r)\times (t-r^2,t-r^2/4)\right)}{r^{N+2-\alpha}}\frac{dr}{r}\right)^q
  \gtrsim \sum_{k=-\infty}^{\infty}\left(\frac{\mu\left(B(x,r_{k})\times (t-r_{k}^2,t-\frac{1}{3}r_k^2)\right)}{r_k^{N+2-\alpha}}\right)^q.$$
   Thus,
  \begin{align*}
  &\int_{\mathbb{R}}\left(\int_{0}^{\infty}\frac{\mu\left(B(x,r)\times (t-r^2,t-\frac{1}{4}r^2)\right)}{r^{N+2-\alpha}}\frac{dr}{r}\right)^qdt\\&~~~~\gtrsim \sum_{k=-\infty}^{\infty}\int_{\mathbb{R}}\left(\frac{\mu\left(B(x,r_{k})\times (t-\frac{1}{3}r_{k}^2,t+\frac{1}{3}r_k^2)\right)}{r_k^{N+2-\alpha}}\right)^qdt
  \\&~~~~
  \gtrsim \int_{\mathbb{R}}\int_{0}^{+\infty}\left(\frac{\mu(\tilde{Q}_r(x,t))}{r^{N+2-\alpha}}\right)^q\frac{dr}{r}dt.
  \end{align*} 
  Similarly, we also can prove \eqref{5hh230120142}. \\
  \textbf{b.} Obviously   \begin{align*}
         &\frac{ \exp(-4R^2) }{t^{\frac{N+2-\alpha}{2}}}\chi_{0<t<4R^2}\chi_{|x|\leq2 \sqrt{t}}\lesssim \mathcal{G}_\alpha(x,t)\\&\quad\quad\quad\quad\lesssim \frac{\chi_{\tilde{Q}_{R/2}(0,0)}(x,t)}{\max\{|x|,\sqrt{2|t|}\}^{N+2-\alpha}}+\frac{\exp\left(-\max\{|x|,\sqrt{2|t|}\}\right)}{R^{N+2-\alpha}}.
          \end{align*}
     Thus, we can assert that 
     \begin{align*}
       &\int_{0}^{2R}\frac{\chi_{B_r(0)\times (\frac{r^2}{4},r^2)}(x,t)}{r^{N+2-\alpha}}\frac{dr}{r}\lesssim_R \mathcal{G}_\alpha(x,t)\lesssim \int_{0}^{R}\frac{\chi_{\tilde{Q}_r(0,0)}(x,t)}{r^{N+2-\alpha}}\frac{dr}{r}\\&~~~~~~~~~~~~~+c(R)\int_{\mathbb{R}^{N+1}}\exp\left(-\max\{|y|,\sqrt{2|s|}\}\right)\chi_{\tilde{Q}_{R/2}(0,0)}(x-y,t-s)dyds.
                       \end{align*}
Immediately, we get 
\begin{align}
      &\int_{0}^{2R}\frac{\mu \left(B(x,r)\times(t-r^2,t-\frac{r^2}{4})\right)}{r^{N+2-\alpha}} \frac{dr}{r} \lesssim_R \mathcal{G}_\alpha[\mu](x,t) \lesssim \mathbb{I}_\alpha^R[\mu](x,t) +c(R)\mathbf{F}(x,t),
                       \end{align}                                     where $$\mathbf{F}(x,t)=\int_{\mathbb{R}^{N+1}}\exp\left(-\max\{|y|,\sqrt{2|s|}\}\right)\mu\left(\tilde{Q}_{R/2}(x-y,t-s)\right) dyds.$$    
  As above, we can show that
  \begin{align*}
  \int_{0}^{\infty}\left(\int_{0}^{2R}\frac{\mu \left(B(x,r)\times(t-r^2,t-\frac{r^2}{4})\right)}{r^{N+2-\alpha}} \frac{dr}{r}\right)^qdt\gtrsim \int_{0}^{\infty}\int_{0}^{R}\left(\frac{\mu(\tilde{Q}_r(x,t))}{r^{N+2-\alpha}}\right)^q\frac{dr}{r}.
  \end{align*}
  Thus, thanks to Theorem \ref{5hh051120131} we get the left-hand side inequality of \eqref{5hh230120141'}. \\To obtain the right-hand side of \eqref{5hh230120141'}, we use  $\mu\left(\tilde{Q}_{R/2}(x-y,t-s)\right)\lesssim R^{-(N+2-\alpha)}\mathbb{I}_\alpha^R[\mu] (x-y,t-s)$ and Young's inequality
  \begin{align*}
 &||\mathcal{G}_\alpha[\mu]||_{L^q(\mathbb{R}^{N+1})}\lesssim_R ||\mathbb{I}^R_\alpha[\mu]||_{L^q(\mathbb{R}^{N+1})}+||\mathbf{F}||_{L^q(\mathbb{R}^{N+1})}\\&~~~~~~\lesssim_R ||\mathbb{I}^R_\alpha[\mu]||_{L^q(\mathbb{R}^{N+1})}+||\mathbb{I}^R_\alpha[\mu]||_{L^q(\mathbb{R}^{N+1})}\int_{\mathbb{R}^{N+1}}\exp\left(-\max\{|x|,\sqrt{2|t|}\}\right)dxdt
 \\&~~~~~~\lesssim_R||\mathbb{I}^R_\alpha[\mu]||_{L^q(\mathbb{R}^{N+1})}. \end{align*}
   Similarly, we also can obtain \eqref{5hh230120142'}. This completes the proof.
     \end{proof}
      \begin{remark} Assume that $0<\alpha<N+2$.
         From \eqref{5hh2210133} in Proposition \ref{5hh23101315} and $
          ||\mathcal{G}_\alpha[\mu]||_{L^{1}(\mathbb{R}^{N+1})}\lesssim \mu(\mathbb{R}^{N+1})
          $ we deduce that for $1\leq s<\frac{N+2}{N+2-\alpha}$
          \begin{align*}
          ||\mathcal{G}_\alpha[\mu]||_{L^{s}(\mathbb{R}^{N+1})}\lesssim \mu(\mathbb{R}^{N+1})~~\forall ~\mu\in \mathfrak{M}_b^+(\mathbb{R}^{N+1}).
          \end{align*}
         \end{remark}
         Next, we introduce the following kernel:
         \begin{align*}
         E_{\alpha}^{R}(x,t)=\max\{|x|,\sqrt{2|t|}\}^{-(N+2-\alpha)}\chi_{\tilde{Q}_R(0,0)}(x,t)
         \end{align*}  
         where $0<\alpha<N+2$ and $0<R\leq\infty$. We denote $E_\alpha^\infty$ by $E_\alpha$. It is easy to see that $E_\alpha*\mu=(N+2-\alpha)\mathbb{I}_\alpha[\mu]$ and 
         $||E^R _\alpha*\mu||_{L^{s}(\mathbb{R}^{N+1})}$ is equivalent to $||\mathbb{I}^R _\alpha[\mu]||_{L^{s}(\mathbb{R}^{N+1})}$ for every $\mu\in\mathfrak{M}^+(\mathbb{R}^{N+1})$ where $1\leq s<\infty$.\\          
We obtain equivalences of capacities $\text{Cap}_{E_\alpha,p}, \text{Cap}_{E_\alpha^R,p},\text{Cap}_{\mathcal{H}_\alpha,p}$ and $\text{Cap}_{\mathcal{G}_\alpha,p}$. 
     \begin{corollary}\label{5hh230120148}
     Let $p>1$, $1<\alpha<N+2$ and $R>0$. Then, the following statements hold
     \begin{description}
     \item[a.]for any compact $E\subset \mathbb{R}^{N+1}$
                                     \begin{equation}\label{5hh230120144}
              \text{Cap}_{E_\alpha,p}(E)\lesssim \text{Cap}_{\mathcal{H}_\alpha,p}(E), \end{equation}
                                                \item[b.]for any compact $E\subset \mathbb{R}^{N+1}$
                                                \begin{equation}\label{5hh230120144'}
                                \text{Cap}_{E^R_\alpha,p}(E)\lesssim_R \text{Cap}_{\mathcal{G}_\alpha,p}(E), \end{equation}
   \item[c.]for any compact $E\subset \mathbb{R}^{N+1}$
   \begin{equation}\label{5hh23012014'}
                \text{Cap}_{\mathcal{H}_\alpha,p}(E) \leq  \text{Cap}_{\mathcal{G}_\alpha,p}(E) \lesssim \text{Cap}_{\mathcal{H}_\alpha,p}(E)+\left(\text{Cap}_{\mathcal{H}_\alpha,p}(E)\right)^{\frac{N+2}{N+2-\alpha p}}\end{equation}
                                                 provided $1<\alpha p <N+2$.                                              
     \end{description}                                          
                                              
     \end{corollary}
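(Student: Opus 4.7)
The plan is to combine the dual characterization of capacity,
\begin{equation*}
\text{Cap}_{K,p}(E)^{1/p} = \sup\Bigl\{\mu(E) : \mu\in\mathfrak{M}^+(\mathbb{R}^{N+1}),\ \text{supp}(\mu)\subset E,\ \|\check{K}*\mu\|_{L^{p'}(\mathbb{R}^{N+1})}\le 1\Bigr\},
\end{equation*}
with Proposition \ref{5hh230120143}. The key observation is that the cylinders $\tilde{Q}_\rho(x,t)$ are symmetric in the time variable, so the kernels $E_\alpha,E_\alpha^R$ and the discrete Riesz potentials $\mathbb{I}_\alpha,\mathbb{I}_\alpha^R$ are all invariant under time reversal; in particular $\check{E}_\alpha=E_\alpha$, $\check{E}_\alpha^R=E_\alpha^R$, and Proposition \ref{5hh230120143} applies unchanged with $\check{\mathcal{H}}_\alpha,\check{\mathcal{G}}_\alpha$ in place of $\mathcal{H}_\alpha,\mathcal{G}_\alpha$.

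\textbf{Parts (a) and (b).} The pointwise identity $E_\alpha*\mu=(N+2-\alpha)\mathbb{I}_\alpha[\mu]$, the remark preceding the Corollary that $\|E_\alpha^R*\mu\|_{L^{p'}}\approx\|\mathbb{I}_\alpha^R[\mu]\|_{L^{p'}}$, and Proposition \ref{5hh230120143} together give, uniformly in $\mu\in\mathfrak{M}^+(\mathbb{R}^{N+1})$,
\begin{equation*}
\|\check{\mathcal{H}}_\alpha*\mu\|_{L^{p'}}\approx \|E_\alpha*\mu\|_{L^{p'}},\qquad \|\check{\mathcal{G}}_\alpha*\mu\|_{L^{p'}}\approx \|E_\alpha^R*\mu\|_{L^{p'}}.
\end{equation*}
Substituting these equivalences into the dual characterization and taking suprema over the admissible $\mu$ immediately yields \eqref{5hh230120144} and \eqref{5hh230120144'}.

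\textbf{Part (c).} The first inequality is immediate from $\mathcal{G}_\alpha(x,t)=e^{-t}\mathcal{H}_\alpha(x,t)\chi_{t>0}\le\mathcal{H}_\alpha(x,t)$: every $f\ge 0$ admissible for $\text{Cap}_{\mathcal{G}_\alpha,p}(E)$ also satisfies $\mathcal{H}_\alpha*f\ge\chi_E$, so the admissible class enlarges. For the reverse direction, the plan is to first establish the intermediate bound
\begin{equation*}
\text{Cap}_{\mathcal{G}_\alpha,p}(E)\le C\bigl(\text{Cap}_{\mathcal{H}_\alpha,p}(E)+|E|\bigr),
\end{equation*}
and then invoke the parabolic Sobolev inequality $|E|^{1-\alpha p/(N+2)}\le C\,\text{Cap}_{\mathcal{H}_\alpha,p}(E)$ — itself a consequence of the Hardy--Littlewood--Sobolev boundedness $\|E_\alpha*f\|_{L^{p^*}}\le C\|f\|_{L^{p}}$ with $p^*=p(N+2)/(N+2-\alpha p)$ combined with part (a) and Proposition \ref{5hh230120143}(a) — to rewrite $|E|\le C\,\text{Cap}_{\mathcal{H}_\alpha,p}(E)^{(N+2)/(N+2-\alpha p)}$ and conclude. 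To produce the intermediate bound, I pass through part (b) and work with the equivalent $\text{Cap}_{E_\alpha^R,p}$; starting from a near-extremal $f\ge 0$ for $\text{Cap}_{E_\alpha,p}(E)$ with $E_\alpha*f\ge\chi_E$, I set $g=2f+c_0\chi_{E+\tilde{Q}_\eta(0,0)}$ for a small $\eta>0$ and a suitable $c_0>0$. The first summand carries the inside-$\tilde{Q}_R$ contribution of $E_\alpha*f$, while the tail $(E_\alpha-E_\alpha^R)*f(x,t)\le CR^{-\gamma}\|f\|_{L^p}$ on $E$ (H\"older's inequality, with $\gamma=(N+2-\alpha)-(N+2)/p'>0$ being positive precisely because $\alpha p<N+2$) is compensated by the second summand via the local positivity of $E_\alpha^R$ on $\tilde{Q}_\eta(0,0)$.

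\textbf{Main obstacle.} The most delicate step is calibrating $\eta,c_0$ so as to simultaneously ensure admissibility $E_\alpha^R*g\ge\chi_E$ and the norm estimate $\|g\|_{L^p}^{p}\le C(\|f\|_{L^p}^p+|E|)$. The latter reduces to bounding the thickening volume $|E+\tilde{Q}_\eta(0,0)|$ by $C|E|$ up to constants depending only on $\eta$, which I expect to handle by a Vitali-type covering argument in the spirit of Lemma \ref{5hhvitali2} applied to an efficient cover of $E$ by cylinders of radius $\eta$, combined with the parabolic strong doubling property.
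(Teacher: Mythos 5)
For parts (a) and (b), and the first inequality in (c), your argument coincides with the paper's: both use the dual formula
\begin{equation*}
\text{Cap}_{K,p}(E)^{1/p}=\sup\bigl\{\mu(E):\mu\in\mathfrak{M}^+(E),\ \|\check K*\mu\|_{L^{p'}}\le 1\bigr\}
\end{equation*}
together with the norm equivalences of Proposition~\ref{5hh230120143} and the identity $E_\alpha*\mu=(N+2-\alpha)\mathbb{I}_\alpha[\mu]$, and the pointwise bound $\mathcal{G}_\alpha\le\mathcal{H}_\alpha$. That part is fine.

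For the second inequality in (c) your route diverges from the paper's and it has a genuine gap. You propose $g=2f+c_0\chi_{E+\tilde{Q}_\eta(0,0)}$ and need $\|g\|_{L^p}^p\le C(\|f\|_{L^p}^p+|E|)$, hence $|E+\tilde{Q}_\eta(0,0)|\le C|E|$ with $C$ depending only on $\eta$. This is false: for any fixed $\eta>0$ one always has $|E+\tilde{Q}_\eta(0,0)|\ge|\tilde{Q}_\eta(0,0)|>0$, while $|E|$ can be arbitrarily small (even zero, e.g.\ $E$ a finite set or a lower-dimensional slice). A Vitali-type covering argument cannot rescue this; Vitali controls overlap of the covering cylinders but cannot shrink the volume of the $\eta$-thickening of a measure-zero set below $|\tilde{Q}_\eta|$. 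For the same reason the estimate is also incompatible with the conclusion: as $E$ shrinks to a point, $\text{Cap}_{\mathcal{H}_\alpha,p}(E)\to 0$ (since $\alpha p<N+2$), so the right-hand side of \eqref{5hh23012014'} goes to zero, whereas your bound for $\|g\|_{L^p}^p$ stays bounded below by $c_0^p|\tilde{Q}_\eta|$.

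The paper avoids this trap by never thickening $E$. Following Netrusov, write $E_\alpha=E_\alpha^1+F_\alpha$ with $F_\alpha=E_\alpha-E_\alpha^1$, and observe the self-reproducing inequality $c\,F_\alpha\le E_\alpha^1*F_\alpha$. From $E_\alpha*f\ge\chi_E$ one gets $E\subset\{E_\alpha^1*f\ge 1/2\}\cup\{E_\alpha^1*(F_\alpha*f)\ge c/2\}$. One then truncates $F_\alpha*f$ at a small level $c_3$: the part below $c_3$ contributes at most $c_3\|E_\alpha^1\|_{L^1}<c/4$ after convolving with $E_\alpha^1$, and the part above, $g=\chi_{\{F_\alpha*f\ge c_3\}}F_\alpha*f$, has $\|g\|_{L^p}^p\le c_3^{p^*-p}\|F_\alpha*f\|_{L^{p^*}}^{p^*}\le c_3^{p^*-p}\|E_\alpha*f\|_{L^{p^*}}^{p^*}$ with $p^*=(N+2)p/(N+2-\alpha p)$. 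Subadditivity of $\text{Cap}_{E_\alpha^1,p}$ and the Sobolev bound $\|E_\alpha*f\|_{L^{p^*}}\le C\|f\|_{L^p}$ (Proposition~\ref{5hh23101315}) then give $\text{Cap}_{E_\alpha^1,p}(E)\le C\|f\|_{L^p}^p+C\|f\|_{L^p}^{p^*}\le C(a+a^{(N+2)/(N+2-\alpha p)})$, which is precisely the two-term right-hand side. The crucial difference is that the paper controls the measure of the exceptional \emph{superlevel set} of $F_\alpha*f$ via the Sobolev embedding, whereas you tried to control the measure of a geometric thickening of $E$, which has no lower semicontinuity as $E$ degenerates.
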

    \begin{proof}
    By \cite[Chapter 2]{55AH}, we have 
    \begin{align*}
     &\text{Cap}_{E_\alpha,p}(E)^{1/p}=\sup\{\mu(E):\mu\in\mathfrak{M}^+(E), ||E _\alpha*\mu||_{L^{p'}(\mathbb{R}^{N+1})}\leq 1\},\\&
      \text{Cap}_{E^R_\alpha,p}(E)^{1/p}=\sup\{\mu(E):\mu\in\mathfrak{M}^+(E),||E^R _\alpha*\mu||_{L^{p'}(\mathbb{R}^{N+1})}\leq 1\},\\&
      \text{Cap}_{\mathcal{H}_\alpha,p}(E)^{1/p}=\sup\{\mu(E):\mu\in\mathfrak{M}^+(E),||{\mathop \mathcal{H}\limits^ \vee}_\alpha[\mu]||_{L^{p'}(\mathbb{R}^{N+1})}\leq 1\},\\& \text{Cap}_{\mathcal{G}_\alpha,p}(E)^{1/p}=\sup\{\mu(E):\mu\in\mathfrak{M}^+(E),||{\mathop \mathcal{G}\limits^ \vee}_\alpha[\mu]||_{L^{p'}(\mathbb{R}^{N+1})}\leq 1\}.
    \end{align*}                                         
     Thanks to \eqref{5hh230120142}, \eqref{5hh230120142'}  in Proposition \ref{5hh230120143} and  $\mathbb{I}_\alpha[\mu]= E_\alpha*\mu$ and $||E^R _\alpha*\mu||_{L^{s}(\mathbb{R}^{N+1})}\sim||\mathbb{I}^R _\alpha[\mu]||_{L^{s}(\mathbb{R}^{N+1})}$,  we get \eqref{5hh230120144} and \eqref{5hh230120144'}.\\
     Since $\mathcal{G}_\alpha\leq \mathcal{H}_\alpha$, thus $\text{Cap}_{\mathcal{H}_\alpha,p}(E) \leq  \text{Cap}_{\mathcal{G}_\alpha,p}(E)$ for any compact $E\subset \mathbb{R}^{N+1}$. Set $\text{Cap}_{E_\alpha,p}(E)=a>0$. We need to prove that
     \begin{equation}\label{5hh160220141}
       \text{Cap}_{E_\alpha^1,p}(E) \lesssim a+a^{\frac{N+2}{N+2-\alpha p}}.\end{equation}       
       We will follow a proof of Yu.V. Netrusov in \cite[Chapter 5]{55AH}. First, we can find $f\in L^p_+(\mathbb{R}^{N+1})$ such that $||f||_{L^p(\mathbb{R}^{N+1})}\leq 2a$ and $E_\alpha*f\geq \chi_E$. 
       Set $F_\alpha=E_\alpha-E^1_\alpha$, we have $c_1 F_\alpha\leq E_\alpha^1*F_\alpha$ for some $c_1>0$. Thus, 
       $
       E\subset \{E^1_\alpha*f\geq 1/2\}\cup \{E^1_\alpha*(F_\alpha*f)\geq c_1/2\} 
       $. \\
       Since $||E^1_\alpha||_{L^1(\mathbb{R}^{N+1})}<\infty$, for $c_2=c_1(4 ||E^1_\alpha||_{L^1(\mathbb{R}^{N+1})})^{-1}$  $$
       E^1_\alpha*(F_\alpha*f)\leq c_1/4+E^1_\alpha*g \text{ with } g=\chi_{F_\alpha*f\geq c_2}F_\alpha*f,
       $$
      which follows
      $
           E\subset \{E^1_\alpha*f\geq 1/2\}\cup \{E^1_\alpha*g\geq c_1/4\} 
           $. \\
      Using the subadditivity of capacity, we have
      \begin{align*}
      \text{Cap}_{E_\alpha^1,p}(E) &\leq  \text{Cap}_{E_\alpha^1,p}(\{E^1_\alpha*f\geq 1/2\})+\text{Cap}_{E_\alpha^1,p}(\{E^1_\alpha*g\geq c_1/4\})
      \\&\lesssim ||f||_{L^{p}(\mathbb{R}^{N+1})}^p+||g||_{L^{p}(\mathbb{R}^{N+1})}^p
      \\&\lesssim ||f||_{L^{p}(\mathbb{R}^{N+1})}^p+||E_\alpha*f||_{L^{p*}(\mathbb{R}^{N+1})}^{p*}, \text{ with } p*=\frac{(N+2)p}{N+2-\alpha p}.
      \end{align*}  
On the other hand, from \eqref{5hh2210135} in Proposition \ref{5hh23101315} we have 
$$||E_\alpha*f||_{L^{p*}(\mathbb{R}^{N+1})}\lesssim||f||_{L^{p}(\mathbb{R}^{N+1})}.$$
  Hence, we get \eqref{5hh160220141}. The proof is complete.   
    \end{proof} \\
    \begin{remark}\label{5hh240320148}
    Since $\mathcal{G}_\alpha\in L^1(\mathbb{R}^{N+1})$, 
$$
    \int_{\mathbb{R}^{N+1}}\left(\mathcal{G}_\alpha*f\right)^pdxdt\leq ||\mathcal{G}_\alpha||_{L^1(\mathbb{R}^{N+1})}^p\int_{\mathbb{R}^{N+1}}f^pdxdt~~\forall f\in L^p_+(\mathbb{R}^{N+1}).
$$
    Thus, for any Borel set $E\subset \mathbb{R}^{N+1}$\begin{equation}\label{5hh240320147}
    Cap_{\mathcal{G}_\alpha,p}(E)\geq C|E|~\text{with }C=||\mathcal{G}_\alpha||_{L^1(\mathbb{R}^{N+1})}^{-p}.
    \end{equation}
    \end{remark}
    \begin{remark}  It is well-known that $\mathcal{H}_2$ is the fundamental solution of the heat operator $\frac{\partial}{\partial t}-\Delta$. In \cite{55GZ}, R. Gariepy and W. P. Ziemer introduced the following capacity:
$$
      \text{C}_{\mathcal{H}_2}(K)=\sup\left\{\mu(K):\mu\in \mathfrak{M}^+(K), \mathcal{H}_2[\mu]\leq \right\},$$
       whenever $K\subset \mathbb{R}^{N+1}$ is compact. 
       Thanks to  \cite[Theorem 2.5.5]{55AH}, we obtain $$
    \text{Cap}_{\mathcal{H}_1,2}(K)=\text{C}_{\mathcal{H}_2}(K).$$
     \end{remark}
     \begin{remark}\label{5hh270320146}
     For any Borel set $E\subset \mathbb{R}^{N}$, then we always have $\text{Cap}_{\mathcal{G}_1,2}(E\times\{t=0\})=0$
     In fact, for $B_1=B_1(0)$
     \begin{align*}
     \text{Cap}_{E^1_1,2}(B_1\times\{t=0\})=\sup\{ \omega(B_1):\omega\in\mathfrak{M}^+(B_1),||E^1 _1*(\omega\otimes\delta_0)||_{L^{2}(\mathbb{R}^{N+1})}\leq 1\}.
     \end{align*}
  Since $||E^1 _1*(\omega\otimes\delta_0)||_{L^{2}(\mathbb{R}^{N+1})}=\infty$ if $\omega\not=0$, thus $\text{Cap}_{\mathcal{G}_1,2}(B_1\times\{t=0\})=\text{Cap}_{E^1_1,2}(B_1\times\{t=0\})=0$.
  In particular, $\text{Cap}_{\mathcal{G}_1,2}$ is not absolutely continuous with respect to capacity $C_{1,2}(.,{\Omega\times (a,b)})$. This capacity will be defined in next section. 
     \end{remark}
     \begin{remark}\label{5hh020520142}
     Let $p>1,\alpha>0$ and $\tilde{Q}_\rho=\tilde{Q}_\rho(0,0)$ for $\rho>0$. Case  $\alpha p\geq N+1$, we always have $||\mathcal{H}_\alpha[\mu]||_{L^{p'}(\mathbb{R}^N)}=\infty$ for any $\mu\in\mathfrak{M}^+(\mathbb{R}^N)\backslash\{0\}$. This implies  $\text{Cap}_{\mathcal{H}_\alpha,p}(\tilde{Q}_1)=0$. If $0<\alpha p<N+2$, $\text{Cap}_{\mathcal{H}_\alpha,p}(\tilde{Q}_\rho)=c\rho^{N+2-\alpha p}$ for some constant $c>0$.  From \eqref{5hh23012014'} in Corollary \ref{5hh230120148} we get $\text{Cap}_{\mathcal{G}_\alpha,p}(\tilde{Q}_\rho)\sim \rho^{N+2-\alpha p}$ for any $0<\rho<1$ if $\alpha p< N+2$. Since $||\mathcal{G}_\alpha[\delta_{(0,0)}]||_{L^{p'}(\mathbb{R}^{N+1})}<\infty$ thus $\text{Cap}_{\mathcal{G}_\alpha,p}((0,0))>0$ if $\alpha p>N+2$.\\ If $\alpha p= N+2$,  $\text{Cap}_{\mathcal{G}_\alpha,p}(\tilde{Q}_\rho)\sim \left(\log(1/\rho)\right)^{1-p}$ for any $0<\rho<1/2$. In fact, 
     we can prove that $||\mathbb{I}^{1/2}_{\alpha}[\mu]||_{L^{p'}(\mathbb{R}^N)}\lesssim 1$  for any $d\mu(x,t)=\left(\text{log}(1/\rho)\right)^{-1/p'}\rho^{-N-2}\chi_{\tilde{Q}_\rho}dxdt$ it follows $\text{Cap}_{\mathcal{G}_\alpha,p}(\tilde{Q}_\rho)\gtrsim \left(\log(1/\rho)\right)^{1-p}$. Moreover, for $\mu\in\mathfrak{M}^+(\tilde{Q}_\rho)$, if $||\mathbb{I}_\alpha^3[\mu]||_{L^{p'}(\mathbb{R}^{N+1})}^{p'}\leq 1$, 
     \begin{align*}
     1&\geq \int_{\tilde{Q}_1\backslash \tilde{Q}_\rho }\left(\int_{2\max\{|x|,|2t|^{1/2}\}}^{3}\frac{\mu(\tilde{Q}_r(x,t))}{r^{N+2-\alpha}}\frac{dr}{r}\right)^{p'}dxdt
      \\&\geq \int_{\tilde{Q}_1\backslash \tilde{Q}_\rho }\left(\int_{2\max\{|x|,|2t|^{1/2}\}}^{3}\frac{1}{r^{N+2-\alpha}}\frac{dr}{r}\right)^{p'}dxdt\mu(\tilde{Q}_\rho)^{p'}
      \\& \gtrsim\log(1/\rho)\mu(\tilde{Q}_\rho)^{p'}. 
     \end{align*}
     So
     $\text{Cap}_{\mathcal{G}_\alpha,p}(\tilde{Q}_\rho)\lesssim \mu(\tilde{Q}_\rho)^{p}\lesssim\left(\log(1/\rho)\right)^{1-p}$. 
     \end{remark}
     \begin{definition}
   The parabolic Bessel potential $\mathcal{L}_\alpha^p(\mathbb{R}^{N+1})$, $\alpha>0$ and $p>1$ is defined by 
$$
         \mathcal{L}_\alpha^p(\mathbb{R}^{N+1})= \left\{f:f=\mathcal{G}_\alpha*g, g\in L^p(\mathbb{R}^{N+1})\right\}
$$
         with the norm $||f||_{\mathcal{L}_\alpha^p(\mathbb{R}^{N+1})}:=||g||_{L^p(\mathbb{R}^{N+1})}$.
   We denote its dual space by $\left(\mathcal{L}_\alpha^p(\mathbb{R}^{N+1})\right)^*$.
     \end{definition}
  \begin{definition}
  Let $k$  be a positive integer, the Sobolev space $W^{2k,k}_p(\mathbb{R}^{N+1})$ is defined by 
$$
      W^{2k,k}_p(\mathbb{R}^{N+1})=\left\{\varphi:\frac{\partial^{i_1+...+i_N+i} \varphi}{\partial x_1^{i_1}...\partial x_N^{i_N}\partial t^{i}}\in L^p(\mathbb{R}^{N+1})~\text{for any}~i_1+...+i_N+2i\leq 2k\right\}
$$
      with the norm
$$
         ||\varphi||_{W^{2k,k}_p(\mathbb{R}^{N+1})}=\sum\limits_{i_1+...+i_N+2i\leq 2k} ||\frac{\partial^{i_1+...+i_N+i} \varphi}{\partial x_1^{i_1}...\partial x_N^{i_N}\partial t^{i}}||_{L^p(\mathbb{R}^{N+1})}.
$$
     We denote its dual space by $\left(W^{2k,k}_p(\mathbb{R}^{N+1})\right)^*$.     
        We also define a corresponding capacity on compact set $E\subset\mathbb{R}^{N+1}$,
$$
          Cap_{2k,k,p}(E)=\inf\left\{||\varphi||^p_{W^{2k,k}_p(\mathbb{R}^{N+1})}:\varphi\in S(\mathbb{R}^{N+1}), \varphi\geq 1 \text{ in a neighborhood of}~E\right \}. 
$$
  \end{definition} Let us recall Richard J. Bagby's result, proved in \cite{55Bag}.
     \begin{theorem}\label{5hh040220141} Let $p>1$ and $k$ be a positive integer. Then, there holds for any $u\in \mathcal{L}_{2k}^p(\mathbb{R}^{N+1})$,
      \begin{equation*}
       ||u||_{\mathcal{L}_{2k}^p(\mathbb{R}^{N+1})}\sim||u||_{W^{2k,k}_p(\mathbb{R}^{N+1})}.
      \end{equation*}
      \end{theorem}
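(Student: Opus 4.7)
The plan is to prove the equivalence via Fourier analysis, exploiting the identity $\mathcal{F}(\mathcal{G}_{2k})(\xi,\tau)=(1+|\xi|^2+i\tau)^{-k}$ stated earlier in the excerpt. Under this identification, $u\in\mathcal{L}_{2k}^p(\mathbb{R}^{N+1})$ iff $g := (I-\Delta+\partial_t)^k u \in L^p(\mathbb{R}^{N+1})$, and by construction $\|u\|_{\mathcal{L}_{2k}^p}=\|g\|_{L^p}$. The problem therefore reduces to comparing $\|g\|_{L^p}$ with the sum of mixed derivative norms of $u$ appearing in $\|u\|_{W^{2k,k}_p}$.

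For the direction $\|u\|_{\mathcal{L}_{2k}^p}\le C\|u\|_{W^{2k,k}_p}$ I would expand the symbol via the binomial theorem,
\begin{equation*}
(1+|\xi|^2+i\tau)^k = \sum_{j=0}^{k}\sum_{l=0}^{j}\binom{k}{j}\binom{j}{l}\,|\xi|^{2(j-l)}(i\tau)^l,
\end{equation*}
and use the polynomial identity $|\xi|^{2(j-l)} = \sum_{|\alpha|=2(j-l)} c_\alpha (i\xi)^\alpha$ to rewrite $(I-\Delta+\partial_t)^k u$ as a finite linear combination of mixed derivatives $\partial_x^\alpha\partial_t^l u$ with $|\alpha|+2l\le 2k$. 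Each such derivative belongs to $L^p(\mathbb{R}^{N+1})$ by the definition of $W^{2k,k}_p$, so the triangle inequality delivers $\|g\|_{L^p}\le C\|u\|_{W^{2k,k}_p}$.

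The substantive direction is $\|u\|_{W^{2k,k}_p}\le C\|u\|_{\mathcal{L}_{2k}^p}$: this requires showing that for every pair $(\alpha,i)$ with $|\alpha|+2i\le 2k$ the Fourier multiplier
\begin{equation*}
m_{\alpha,i}(\xi,\tau) \;=\; \frac{(i\xi)^\alpha (i\tau)^i}{(1+|\xi|^2+i\tau)^k}
\end{equation*}
defines a bounded operator on $L^p(\mathbb{R}^{N+1})$ for $1<p<\infty$, for this operator sends $g$ to $\partial_x^\alpha\partial_t^i u$. My plan is to verify a parabolic Mikhlin-H\"ormander type condition for $m_{\alpha,i}$ using the parabolic quasi-norm $\rho(\xi,\tau)=|\xi|+|\tau|^{1/2}$: mixed derivatives up to some finite order should satisfy
\begin{equation*}
|\partial_\xi^\beta \partial_\tau^j m_{\alpha,i}(\xi,\tau)|\le C\,\rho(\xi,\tau)^{-|\beta|-2j}.
\end{equation*}
The bound $|m_{\alpha,i}|\le C$ itself is immediate from $|\alpha|+2i\le 2k$ and $|1+|\xi|^2+i\tau|\gtrsim 1+\rho(\xi,\tau)^2$; the higher derivative estimates then follow by an inductive computation using the product and quotient rules, since each $\xi$-differentiation lowers the parabolic order by $1$ and each $\tau$-differentiation lowers it by $2$.

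The main obstacle is this final step: promoting the symbol estimates on $m_{\alpha,i}$ to actual $L^p$-boundedness. I would invoke a parabolic Calder\'on-Zygmund theory in which Euclidean balls are replaced by parabolic cylinders $\tilde{Q}_\rho(x,t)$ adapted to the scaling $(x,t)\mapsto(\lambda x,\lambda^2 t)$, then apply a parabolic version of the H\"ormander multiplier theorem (whose proof runs through the usual $L^2$ theory, the weak-$(1,1)$ estimate via parabolic Calder\'on-Zygmund decomposition, and Marcinkiewicz interpolation; smoothness of $m_{\alpha,i}$ away from the origin and the decay estimates above suffice). Summing the resulting inequalities $\|\partial_x^\alpha\partial_t^i u\|_{L^p}\le C\|g\|_{L^p}$ over all $(\alpha,i)$ with $|\alpha|+2i\le 2k$ yields $\|u\|_{W^{2k,k}_p}\le C\|g\|_{L^p}=C\|u\|_{\mathcal{L}_{2k}^p}$, closing the equivalence.
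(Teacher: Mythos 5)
The paper does not actually prove this theorem: it cites it directly to Bagby's paper on Lebesgue spaces of parabolic potentials (\cite{55Bag}). Your outline supplies a proof, and it is sound; moreover it is essentially the standard (and, as far as I can tell, essentially Bagby's) route. A few points worth noting. The easy direction is fine: $(1+|\xi|^2+i\tau)^k$ is a polynomial in $(\xi,\tau)$ in which each monomial has parabolic degree at most $2k$, so $g=(I-\Delta+\partial_t)^k u$ is a fixed linear combination of derivatives $\partial_x^\alpha\partial_t^i u$ with $|\alpha|+2i\le 2k$; this shows simultaneously that $g\in L^p$ and (by Fourier inversion in $\mathcal{S}'$) that $u=\mathcal{G}_{2k}*g$, so $u\in\mathcal{L}^p_{2k}$ with the claimed bound. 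For the substantive direction, your symbol estimate $|\partial_\xi^\beta\partial_\tau^j m_{\alpha,i}(\xi,\tau)|\lesssim\rho(\xi,\tau)^{-|\beta|-2j}$ holds exactly as you describe in the region $\rho\ge 1$ (each differentiation produces a sum of terms of the form $P(\xi,\tau)/(1+|\xi|^2+i\tau)^{k+m}$ with $P$ of parabolic degree $\le 2(k+m)-|\beta|-2j$), while near the origin $m_{\alpha,i}$ and all its derivatives are simply bounded since the denominator stays away from $0$; together these verify the averaged H\"ormander condition on parabolic dyadic shells. The only genuine ingredient your write-up leans on without proof is the anisotropic (parabolic) Mikhlin--H\"ormander theorem on $\mathbb{R}^{N+1}$; this is a known result (proved, as you say, by running the Calder\'on--Zygmund machinery with parabolic cylinders), and it should be cited rather than re-derived. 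With that citation in place, summing over the finitely many pairs $(\alpha,i)$ with $|\alpha|+2i\le 2k$ closes the argument. So: correct approach, consistent with the reference the paper quotes; just be explicit that the parabolic multiplier theorem is being invoked as a black box.
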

      This Theorem gives the assertion of  equivalence of capacity $\text{Cap}_{2k,k,p},\text{Cap}_{\mathcal{G}_{2k},p}:$
     \begin{corollary}\label{5hh250220142}
     Let $p>1$ and $k$ be a positive integer. There exists a constant $C$ depending on $N,k,p$ such that for any compact set $E\subset\mathbb{R}^{N+1}$
$$\text{Cap}_{\mathcal{G}_{2k},p}(E)\sim\text{Cap}_{2k,k,p}(E).$$
        
     \end{corollary}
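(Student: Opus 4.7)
The plan is to leverage Bagby's Theorem \ref{5hh040220141}, which identifies the Bessel potential space $\mathcal{L}_{2k}^p(\mathbb{R}^{N+1})$ with the parabolic Sobolev space $W^{2k,k}_p(\mathbb{R}^{N+1})$ up to an equivalence of norms. Since the two capacities are defined as infima over admissible classes whose natural "energies" are the $\mathcal{L}_{2k}^p$-norm and the $W^{2k,k}_p$-norm respectively, the norm equivalence should propagate directly to a capacity equivalence. I will prove the two inequalities separately.

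For the direction $\text{Cap}_{\mathcal{G}_{2k},p}(E)\leq C\,\text{Cap}_{2k,k,p}(E)$, I would take an arbitrary admissible $\varphi\in S(\mathbb{R}^{N+1})$ with $\varphi\geq 1$ on some open neighborhood $U$ of $E$. Since $\mathcal{F}(\mathcal{G}_{2k})(x,t)=(1+|x|^2+it)^{-k}$ never vanishes, I can invert the Bessel convolution: there exists $f\in L^p(\mathbb{R}^{N+1})$ with $\varphi=\mathcal{G}_{2k}\ast f$, and by Bagby's theorem $\|f\|_{L^p}=\|\varphi\|_{\mathcal{L}_{2k}^p}\leq C\|\varphi\|_{W^{2k,k}_p}$. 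Then $|f|\in L^p_+(\mathbb{R}^{N+1})$ satisfies
\begin{equation*}
\mathcal{G}_{2k}\ast |f|\;\geq\;|\mathcal{G}_{2k}\ast f|\;=\;|\varphi|\;\geq\;\chi_E,
\end{equation*}
so $|f|$ is admissible for $\text{Cap}_{\mathcal{G}_{2k},p}(E)$, giving $\text{Cap}_{\mathcal{G}_{2k},p}(E)\leq \|f\|_{L^p}^p\leq C^p\|\varphi\|_{W^{2k,k}_p}^p$. Taking the infimum over $\varphi$ yields the inequality.

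For the reverse direction $\text{Cap}_{2k,k,p}(E)\leq C\,\text{Cap}_{\mathcal{G}_{2k},p}(E)$, I would take any $f\in L^p_+(\mathbb{R}^{N+1})$ with $\mathcal{G}_{2k}\ast f\geq \chi_E$ and set $\varphi=\mathcal{G}_{2k}\ast f\in\mathcal{L}_{2k}^p(\mathbb{R}^{N+1})$. By Bagby, $\varphi\in W^{2k,k}_p(\mathbb{R}^{N+1})$ with $\|\varphi\|_{W^{2k,k}_p}\leq C\|f\|_{L^p}$. For any $\epsilon>0$, consider $(1+\epsilon)\varphi$; this function is lower semicontinuous (as the convolution of the nonnegative lower semicontinuous kernel $\mathcal{G}_{2k}$ with the nonnegative measure $f\,dxdt$) and strictly exceeds $1$ at every point of $E$, so the open set $U_\epsilon:=\{(1+\epsilon)\varphi>1\}$ contains $E$. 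The task is then to construct $\tilde\varphi\in S(\mathbb{R}^{N+1})$ with $\tilde\varphi\geq 1$ on an open neighborhood of $E$ contained in $U_\epsilon$ and $\|\tilde\varphi\|_{W^{2k,k}_p}\leq (1+2\epsilon)\|(1+\epsilon)\varphi\|_{W^{2k,k}_p}$, so that passing to the infimum and sending $\epsilon\to 0$ concludes.

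The main obstacle is precisely the last step above: the passage from $(1+\epsilon)\varphi\in W^{2k,k}_p$, which is merely lower semicontinuous and satisfies the pointwise lower bound only on $E$, to a genuine Schwartz test function dominating $\chi_E$ on an actual open neighborhood. My approach would be to mollify $(1+\epsilon)\varphi$ with a standard parabolic mollifier $\rho_\delta$ to obtain $C^\infty$ functions converging in $W^{2k,k}_p$-norm; since $(1+\epsilon)\varphi-1>0$ is lower semicontinuous on the open set $U_\epsilon$, a standard lower-semicontinuity-meets-mollification argument shows $\bigl((1+\epsilon)\varphi\bigr)\ast\rho_\delta\geq 1$ on an open neighborhood of $E$ for $\delta$ small (shrinking $U_\epsilon$ by $\delta$). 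Truncating multiplicatively against a smooth cutoff in $C_c^\infty(\mathbb{R}^{N+1})$ equal to $1$ on a large cylinder containing that neighborhood, and verifying (via Leibniz and the density of $S$ in $W^{2k,k}_p$) that the norm is increased by at most a factor $(1+\epsilon)$, produces the required Schwartz admissible function. The delicate bookkeeping lies in controlling the $W^{2k,k}_p$-norm under the cutoff and in ensuring the neighborhood produced actually contains $E$, rather than merely being close to $E$ in measure.
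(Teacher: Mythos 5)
Your proposal is correct and takes the same route the paper intends: the paper merely remarks that Bagby's Theorem \ref{5hh040220141} "gives the assertion," and your argument is the standard detailed derivation of capacity equivalence from norm equivalence — the easy direction by taking $|f|$ where $\varphi=\mathcal{G}_{2k}\ast f$, and the harder direction by using lower semicontinuity of $\mathcal{G}_{2k}\ast f$ to pass to a Schwartz admissible function via the $(1+\epsilon)$ dilation, mollification, and a large cutoff. The regularization bookkeeping you flag is indeed the only delicate point, and your sketch of it is sound.
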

     Next result provides  some relations of Riesz, Bessel parabolic potential and Riesz, Bessel potential. 
  \begin{proposition}\label{5hh240120146}
   Let $q>1$ and $\frac{2}{q'}<\alpha<N+\frac{2}{q'}$. There hold  for any $\omega\in\mathfrak{M}^+(\mathbb{R}^N)$
   \begin{equation} ||\mathcal{H}_\alpha[\omega\otimes\delta_{\{t=0\}}]||_{L^q(\mathbb{R}^{N+1})}\sim||{\mathop \mathcal{H}\limits^ \vee}_\alpha[\omega\otimes\delta_{\{t=0\}}]||_{L^q(\mathbb{R}^{N+1})}\sim||{\bf I}_{\alpha-\frac{2}{q'}}[\omega]||_{L^q(\mathbb{R}^N)},\label{5hh240120145}
   \end{equation}
   \begin{equation} ||\mathcal{G}_\alpha[\omega\otimes\delta_{\{t=0\}}]||_{L^q(\mathbb{R}^{N+1})}\sim ||{\mathop \mathcal{G}\limits^ \vee}_\alpha[\omega\otimes\delta_{\{t=0\}}]||_{L^q(\mathbb{R}^{N+1})}\sim||{\bf G}_{\alpha-\frac{2}{q'}}[\omega]||_{L^q(\mathbb{R}^N)}\label{5hh240120145'}
    \end{equation}
     where $\delta_{\{t=0\}}$ is the Dirac mass in time at $0$. 
  \end{proposition}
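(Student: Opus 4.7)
The plan is to reduce all four potentials to their discrete Riesz counterparts via Proposition \ref{5hh230120143}, compute explicitly using the concentration of $\omega\otimes\delta_{\{t=0\}}$ on the hyperplane $\{t=0\}$, integrate out the time variable, and match the result with the spatial Riesz/Bessel potential of $\omega$.

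By Proposition \ref{5hh230120143} it suffices to prove
\[
\|\mathbb{I}_\alpha[\omega\otimes\delta_{\{t=0\}}]\|_{L^q(\mathbb{R}^{N+1})}\approx \|\mathbf{I}_{\alpha-2/q'}[\omega]\|_{L^q(\mathbb{R}^N)},
\]
the Bessel version following identically with $\mathbb{I}^R_\alpha$ and $\mathbf{G}_{\alpha-2/q'}$ in place of $\mathbb{I}_\alpha,\mathbf{I}_{\alpha-2/q'}$. Since $(\omega\otimes\delta_{\{t=0\}})(\tilde{Q}_\rho(x,t))=\omega(B_\rho(x))\chi_{\rho>\sqrt{2|t|}}$, Fubini gives
\[
\mathbb{I}_\alpha[\omega\otimes\delta_{\{t=0\}}](x,t)=\int_{\sqrt{2|t|}}^\infty\frac{\omega(B_\rho(x))}{\rho^{N+2-\alpha}}\frac{d\rho}{\rho},
\]
and the substitution $s=\sqrt{2|t|}$ converts the $L^q$ norm into
\[
\|\mathbb{I}_\alpha[\omega\otimes\delta_{\{t=0\}}]\|_{L^q}^q=2\int_{\mathbb{R}^N}\int_0^\infty s\,F(s,x)^q\,ds\,dx,\qquad F(s,x):=\int_s^\infty\frac{\omega(B_\rho(x))}{\rho^{N+2-\alpha}}\frac{d\rho}{\rho}.
\]

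Next, a weighted Hardy inequality in $s$ (with weight exponent $1>-1$, applied to $g(\rho)=\omega(B_\rho(x))/\rho^{N+3-\alpha}$) gives the upper bound $\int_0^\infty sF(s,x)^q\,ds\le C\int_0^\infty\omega(B_s(x))^q/s^{(N+2-\alpha)q-1}\,ds$, while restricting the $\rho$-integration in $F$ to $(s,2s)$ and using the monotonicity of $\rho\mapsto\omega(B_\rho(x))$ yields the reverse bound $F(s,x)\ge c\,\omega(B_s(x))/s^{N+2-\alpha}$, and hence the pointwise-in-$x$ equivalence
\[
\int_0^\infty sF(s,x)^q\,ds\approx \int_0^\infty\frac{\omega(B_s(x))^q}{s^{(N+2-\alpha)q-1}}\,ds.
\]

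Finally, I match $\int_{\mathbb{R}^N}\int_0^\infty\omega(B_s(x))^q/s^{(N+2-\alpha)q-1}\,ds\,dx$ with $\|\mathbf{I}_{\alpha-2/q'}[\omega]\|_{L^q}^q$ by expanding both $q$-th powers as $q$-fold integrals against $d\omega^{\otimes q}$. After Fubini they become $\int_{(\mathbb{R}^N)^q}K_j(y_1,\dots,y_q)\,d\omega^{\otimes q}$ with $K_1(y)=\int_{\mathbb{R}^N}\max_i|y_i-x|^{2-(N+2-\alpha)q}\,dx$ and $K_2(y)=\int_{\mathbb{R}^N}\prod_i|y_i-x|^{-(N-\alpha+2/q')}\,dx$, both homogeneous of the same degree $N+2-(N+2-\alpha)q$ in the pairwise distances — the shift $2/q'$ being exactly what makes the degrees match. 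The equivalence $K_1\approx K_2$ then follows by a scaling argument: factor out $R=\text{diam}(y_1,\dots,y_q)$ and note that the normalized kernels are continuous and bounded between positive constants on the (compact modulo translation) space of unit-diameter nondegenerate configurations. The main obstacle is this multi-point kernel comparison for general $q\ne 2$: for $q=2$ it is immediate from the Riesz composition formula $\mathbf{I}_\beta*\mathbf{I}_\beta\propto\mathbf{I}_{2\beta}$, while for $q\ge 3$ one must control the degenerate (coinciding-point) configurations, which are of measure zero under $d\omega^{\otimes q}$ when $\omega$ is nonatomic and which make both norms infinite when $\omega$ has atoms in the range where the single-point Riesz contribution diverges, keeping the equivalence trivially valid in that regime.
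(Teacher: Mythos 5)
Your argument matches the paper's through the Hardy step: the reduction to $\mathbb{I}_\alpha$ (resp.\ $\mathbb{I}^1_\alpha$) via Proposition \ref{5hh230120143}, the formula $\mathbb{I}_\alpha[\omega\otimes\delta_{\{t=0\}}](x,t)=\int_{\sqrt{2|t|}}^\infty\omega(B_\rho(x))\rho^{-(N+2-\alpha)}\,d\rho/\rho$, the substitution $s=\sqrt{2|t|}$, and the one-variable sandwich between $\int_0^\infty s\,F(s,x)^q\,ds$ and $\int_0^\infty\omega(B_s(x))^q\,s^{1-(N+2-\alpha)q}\,ds$ (Hardy's inequality from above, the annulus trick from below) are exactly what the paper does to obtain \eqref{5hh240120144} and \eqref{5hh240120144'}.

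The genuine gap is in your final step, identifying $\int_{\mathbb{R}^N}\int_0^\infty\omega(B_s(x))^q\,s^{1-(N+2-\alpha)q}\,ds\,dx$ with $\|\mathbf{I}_{\alpha-2/q'}[\omega]\|_{L^q(\mathbb{R}^N)}^q$. Expanding both $q$-th powers against $d\omega^{\otimes q}$ is meaningful only when $q$ is a positive integer, while the proposition is stated for all real $q>1$; for non-integer $q$ the $q$-th power of $\int\cdot\,d\omega$ admits no multilinear decomposition, so your strategy does not apply. Even for integer $q$, the scaling-plus-compactness claim for $K_1\approx K_2$ is not rigorous: matching homogeneity degrees does not by itself control integrability near coincident points or at infinity (the integral $\int_{\mathbb{R}^N}\max_i|y_i-x|^{2-(N+2-\alpha)q}\,dx$ defining $K_1$ converges only if $\alpha<(N+2)/q'$, a strictly smaller range than the hypothesis $\alpha<N+2/q'$; beyond it both kernels are identically infinite and the comparison degenerates). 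The paper closes this step by invoking \cite[Theorem 2.3]{55VHV}, the spatial analogue of Theorem \ref{5hh051120131}, i.e.\ the Wolff-type good-$\lambda$ equivalence between $\|\mathbf{I}_\beta[\omega]\|_{L^q(\mathbb{R}^N)}$ and the $L^q$-norm of the spatial fractional maximal function $\sup_{s>0}\omega(B_s(\cdot))/s^{N-\beta}$, valid for all $q>1$; the needed comparison then follows from the pointwise sandwich of that maximal function by a fixed multiple of $\bigl(\int_0^\infty(\omega(B_s(x))/s^{N-\beta})^q\,ds/s\bigr)^{1/q}$ on one side and by a H\"older combination of itself and $\mathbf{I}_\beta[\omega](x)$ on the other. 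You should replace your multi-point argument by this reference; without it the proof is restricted to integer $q$ and is not complete even there.
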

  \begin{proof} 
  We have
  \begin{align*}
  \mathbb{I}_\alpha[\omega\otimes\delta_{\{t=0\}}](x,t)=\int_{\sqrt{2|t|}}^{\infty}\frac{\omega(B(x,r))}{r^{N+2-\alpha}}\frac{dr}{r},~\mathbb{I}^1_\alpha[\omega\otimes\delta_{\{t=0\}}](x,t)=\int_{\min\{1,\sqrt{2|t|}\}}^{1}\frac{\omega(B(x,r))}{r^{N+2-\alpha}}\frac{dr}{r}.
  \end{align*}
  By \cite[Theorem 2.3 ]{55VHV} and Proposition \ref{5hh230120143}, thus it is enough to show that
  \begin{equation}
  \int_{\mathbb{R}}\left(\int_{\sqrt{2|t|}}^{\infty}\frac{\omega(B(x,r))}{r^{N+2-\alpha}}\frac{dr}{r}\right)^qdt\sim \int_{0}^{\infty}\left(\frac{\omega(B(x,r))}{r^{N+2-\alpha-\frac{2}{q}}}\right)^q\frac{dr}{r},\label{5hh240120144}
  \end{equation}
  \begin{equation}
    \int_{0}^{1/2}\left(\frac{\omega(B(x,r))}{r^{N+2-\alpha-\frac{2}{q}}}\right)^q\frac{dr}{r}\lesssim \int_{\mathbb{R}}\left(\int_{\min\{1,\sqrt{2|t|}\}}^{1}\frac{\omega(B(x,r))}{r^{N+2-\alpha}}\frac{dr}{r}\right)^qdt\lesssim \int_{0}^{1}\left(\frac{\omega(B(x,r))}{r^{N+2-\alpha-\frac{2}{q}}}\right)^q\frac{dr}{r}.\label{5hh240120144'}
    \end{equation}
  Indeed, by changing of variables
 $$
  \int_{-\infty}^{\infty}\left(\int_{\sqrt{2|t|}}^{\infty}\frac{\omega(B(x,r))}{r^{N+2-\alpha}}\frac{dr}{r}\right)^qdt=2\int_{0}^{\infty}t\left(\int_{t}^{\infty}\frac{\omega(B(x,r))}{r^{N+2-\alpha}}\frac{dr}{r}\right)^qdt.$$
   Using Hardy's inequality, we have
$$
  \int_{0}^{\infty}t\left(\int_{t}^{\infty}\frac{\omega(B(x,r))}{r^{N+2-\alpha}}\frac{dr}{r}\right)^qdt\lesssim \int_{0}^{\infty}r\left(\frac{\omega(B(x,r))}{r^{N+2-\alpha}}\right)^qdr $$
  and using the fact that
$$
  \int_{t}^{\infty}\frac{\omega(B(x,r))}{r^{N+2-\alpha}}\frac{dr}{r}\gtrsim\frac{\omega(B(x,t))}{t^{N+2-\alpha}},$$
  we get
$$
  \int_{0}^{\infty}t\left(\int_{t}^{\infty}\frac{\omega(B(x,r))}{r^{N+2-\alpha}}\frac{dr}{r}\right)^qdt\gtrsim \int_{0}^{\infty}r\left(\frac{\omega(B(x,r))}{r^{N+2-\alpha}}\right)^qdr.$$
  Thus, we get \eqref{5hh240120144}. Likewise, we also obtain \eqref{5hh240120144'}.
  \end{proof}\medskip\\
 We have comparisons of $\text{Cap}_{\mathcal{H}_\alpha,p},\text{Cap}_{\mathcal{G}_\alpha,p},\text{Cap}_{{\bf I}_{\alpha-\frac{2}{p}},p},\text{Cap}_{{\bf G}_{\alpha-\frac{2}{p}},p}$. 
  \begin{corollary}\label{5hh250320147}
  Let $p>1$ and $\frac{2}{p}<\alpha<N+\frac{2}{p}$. There exists a positive constant $C$ depending on $N,q,\alpha$ such that for any compact $K\subset \mathbb{R}^{N}$
  \begin{align}
  	\label{5hh240120147}
  &	 \text{Cap}_{\mathcal{H}_\alpha,p}(K\times\{0\})\sim \text{Cap}_{{\bf I}_{\alpha-\frac{2}{p}},p}(K),\\&\label{5hh240120147'}
  \text{Cap}_{\mathcal{G}_\alpha,p}(K\times\{0\})\sim \text{Cap}_{{\bf G}_{\alpha-\frac{2}{p}},p}(K).
  \end{align}
  \end{corollary}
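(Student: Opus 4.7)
The plan is to reduce the claim to Proposition \ref{5hh240120146} via the standard dual representation of $(K,p)$-capacities, exactly as was done in the proof of Corollary \ref{5hh230120148}. Recall that for a kernel $K \geq 0$ and $1 < p < \infty$, one has
\begin{equation*}
\text{Cap}_{K,p}(E)^{1/p} = \sup\{\mu(E) : \mu \in \mathfrak{M}^+(E),\ \|K * \mu\|_{L^{p'}(\mathbb{R}^d)} \leq 1\},
\end{equation*}
and the analogous formula holds for $\text{Cap}_{\mathcal{H}_\alpha,p}$, $\text{Cap}_{\mathcal{G}_\alpha,p}$, $\text{Cap}_{\mathbf{I}_\beta,p}$ and $\text{Cap}_{\mathbf{G}_\beta,p}$ with ${\mathop \mathcal{H}\limits^\vee}_\alpha$, ${\mathop \mathcal{G}\limits^\vee}_\alpha$, $\mathbf{I}_\beta$ and $\mathbf{G}_\beta$ in place of $K$ respectively.

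Now I would observe that for a compact $K \subset \mathbb{R}^N$, every measure in $\mathfrak{M}^+(K \times \{0\})$ is automatically of the form $\omega \otimes \delta_{\{t=0\}}$ with $\omega \in \mathfrak{M}^+(K)$, and moreover $(\omega \otimes \delta_{\{t=0\}})(K \times \{0\}) = \omega(K)$. Thus
\begin{equation*}
\text{Cap}_{\mathcal{H}_\alpha,p}(K \times \{0\})^{1/p} = \sup\bigl\{\omega(K) : \omega \in \mathfrak{M}^+(K),\ \|{\mathop \mathcal{H}\limits^\vee}_\alpha[\omega \otimes \delta_{\{t=0\}}]\|_{L^{p'}(\mathbb{R}^{N+1})} \leq 1\bigr\},
\end{equation*}
and similarly for $\text{Cap}_{\mathcal{G}_\alpha,p}(K \times \{0\})$, while
\begin{equation*}
\text{Cap}_{\mathbf{I}_{\alpha - 2/p},p}(K)^{1/p} = \sup\bigl\{\omega(K) : \omega \in \mathfrak{M}^+(K),\ \|\mathbf{I}_{\alpha-2/p}[\omega]\|_{L^{p'}(\mathbb{R}^N)} \leq 1\bigr\},
\end{equation*}
and likewise for $\text{Cap}_{\mathbf{G}_{\alpha-2/p},p}(K)$.

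Apply Proposition \ref{5hh240120146} with the choice $q = p'$, so that $q' = p$ and the standing hypothesis $\frac{2}{q'} < \alpha < N + \frac{2}{q'}$ of that proposition becomes exactly our hypothesis $\frac{2}{p} < \alpha < N + \frac{2}{p}$; the exponent $\alpha - \frac{2}{q'}$ featuring in the spatial Riesz/Bessel kernels becomes $\alpha - \frac{2}{p}$. Then \eqref{5hh240120145} yields a constant $C = C(N,p,\alpha)$ with
\begin{equation*}
C^{-1} \|\mathbf{I}_{\alpha - 2/p}[\omega]\|_{L^{p'}(\mathbb{R}^N)} \leq \|{\mathop \mathcal{H}\limits^\vee}_\alpha[\omega \otimes \delta_{\{t=0\}}]\|_{L^{p'}(\mathbb{R}^{N+1})} \leq C \|\mathbf{I}_{\alpha - 2/p}[\omega]\|_{L^{p'}(\mathbb{R}^N)},
\end{equation*}
and \eqref{5hh240120145'} gives the analogous equivalence between ${\mathop \mathcal{G}\limits^\vee}_\alpha[\omega \otimes \delta_{\{t=0\}}]$ and $\mathbf{G}_{\alpha-2/p}[\omega]$.

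Finally I would take the supremum over admissible $\omega \in \mathfrak{M}^+(K)$ on both sides, using a trivial rescaling $\omega \mapsto \lambda \omega$ to convert each norm-equivalence into an equivalence of the constrained suprema (with the constant $C$ entering as $C^p$ after raising to the power $p$). This yields \eqref{5hh240120147} and \eqref{5hh240120147'}. There is no real obstacle here: the argument is purely formal once Proposition \ref{5hh240120146} is in hand, the only thing to keep track of being the substitution $q = p'$ and the fact that the support condition forces the time-marginal of every competitor to be $\delta_{\{t=0\}}$.
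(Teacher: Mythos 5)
Your proof is correct and follows essentially the same route as the paper: the dual representation of the capacities (via \cite[Chapter 2]{55AH}), the identification of measures supported on $K\times\{0\}$ with $\omega\otimes\delta_{\{t=0\}}$ for $\omega\in\mathfrak{M}^+(K)$, and a direct application of Proposition \ref{5hh240120146} with $q=p'$. The only difference is that you spell out the rescaling step that converts the norm equivalence into an equivalence of the constrained suprema, which the paper leaves implicit.
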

  \begin{proof}
  By \cite[Chapter 2]{55AH}, we have 
  \begin{align*}
   &\text{Cap}_{\mathcal{H}_\alpha,p}(K\times\{0\})^{1/p}=\sup\{ \omega(K):\omega\in\mathfrak{M}^+(K),||{\mathop \mathcal{H}\limits^ \vee}_\alpha[\omega\otimes\delta_{\{t=0\}}]||_{L^{p'}(\mathbb{R}^{N+1})}\leq 1\},
   \\&\text{Cap}_{\mathcal{G}_\alpha,p}(K\times\{0\})^{1/p}
  =\sup\{ \omega(K):\omega\in\mathfrak{M}^+(K),||{\mathop \mathcal{G}\limits^ \vee}_\alpha[\omega\otimes\delta_0]||_{L^{p'}(\mathbb{R}^{N+1})}\leq 1\},\\& \text{Cap}_{{\bf I}_{\alpha-\frac{2}{p}},p}(K)^{1/p}=\sup\{\omega(K):\omega\in\mathfrak{M}^+(K),||{\bf I}_{\alpha-\frac{2}{p}}[\omega]||_{L^{p'}(\mathbb{R}^{N+1})}\leq 1\},\\&
  \text{Cap}_{{\bf G}_{\alpha-\frac{2}{p}},p}(K)^{1/p}=\sup\{\omega(K):\omega\in\mathfrak{M}^+(K),||{\bf G}_{\alpha-\frac{2}{p}}[\omega]||_{L^{p'}(\mathbb{R}^{N+1})}\leq 1\}.
  \end{align*}                         
                       Therefore, thanks to Proposition \eqref{5hh240120146} we get the results.
  \end{proof}
   \begin{corollary}\label{5hh250220143}
     Let $p>1$ and $k$ be a positive integer such that $2k<N+2/p$. There exists a positive constant $C$ depending on $N,k,p$ such that for any compact set $K\subset\mathbb{R}^{N}$, we have
$$
         \text{Cap}_{2k,k,p}(K\times\{0\})\sim\text{Cap}_{{\bf G}_{2k-\frac{2}{p}},p}(K).$$
     \end{corollary}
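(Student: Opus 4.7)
The proof is a direct chaining of the two previously established equivalences, namely Corollary \ref{5hh250220142} and Corollary \ref{5hh250320147}, specialized to the case $\alpha = 2k$. No new analytical work is required; the only thing to verify is that the hypotheses of both corollaries are simultaneously satisfied under the assumption $2k < N + 2/p$.

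First, I would apply Corollary \ref{5hh250220142} with the compact set $K \times \{0\} \subset \mathbb{R}^{N+1}$, which immediately yields
\begin{equation*}
C_1^{-1}\,\text{Cap}_{2k,k,p}(K \times \{0\}) \leq \text{Cap}_{\mathcal{G}_{2k},p}(K \times \{0\}) \leq C_1\,\text{Cap}_{2k,k,p}(K \times \{0\})
\end{equation*}
for some constant $C_1 = C_1(N,k,p)$. Next, I would apply Corollary \ref{5hh250320147} with $\alpha = 2k$. Its hypothesis requires $\tfrac{2}{p} < \alpha < N + \tfrac{2}{p}$; the upper bound is exactly our standing assumption $2k < N + 2/p$, and the lower bound holds automatically since $k \geq 1$ and $p > 1$ give $2k \geq 2 > 2/p$. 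This produces
\begin{equation*}
C_2^{-1}\,\text{Cap}_{\mathbf{G}_{2k-\frac{2}{p}},p}(K) \leq \text{Cap}_{\mathcal{G}_{2k},p}(K \times \{0\}) \leq C_2\,\text{Cap}_{\mathbf{G}_{2k-\frac{2}{p}},p}(K)
\end{equation*}
for some $C_2 = C_2(N,k,p)$.

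Combining these two double inequalities with $C = C_1 C_2$ yields the desired equivalence. There is no genuine obstacle here since both ingredients are already proved; the statement is essentially a packaging of Bagby's $W^{2k,k}_p$-capacity-versus-Bessel-capacity equivalence (Theorem \ref{5hh040220141}, via Corollary \ref{5hh250220142}) with the trace identification of parabolic Bessel capacity of a time-slice in terms of the spatial Bessel capacity (Corollary \ref{5hh250320147}, which itself rests on Proposition \ref{5hh240120146}). The only point meriting attention is the verification of the parameter constraint for the trace result, which I noted above.
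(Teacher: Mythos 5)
Your proof is correct and is precisely the intended chaining: Corollary \ref{5hh250220142} (Bagby's equivalence of $\text{Cap}_{2k,k,p}$ and $\text{Cap}_{\mathcal{G}_{2k},p}$) applied to $K\times\{0\}$, followed by Corollary \ref{5hh250320147} with $\alpha=2k$ to trade $\text{Cap}_{\mathcal{G}_{2k},p}(K\times\{0\})$ for $\text{Cap}_{\mathbf{G}_{2k-2/p},p}(K)$, with the parameter check $\frac{2}{p}<2k<N+\frac{2}{p}$ verified exactly as you did. The paper states this corollary without proof, and your argument is the natural one it implicitly relies upon.
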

     We also have comparisons of $\text{Cap}_{\mathcal{G}_\alpha,p}, \text{Cap}_{\mathbf{G}_\alpha,p}$. 
  \begin{proposition} Let $0<\alpha<N$, $p>1$. For $a>0$ there holds for any compact $K\subset\mathbb{R}^N$,
$$
  \text{Cap}_{\mathcal{G}_\alpha,p}(K\times [-a,a])\sim_a \text{Cap}_{\mathbf{G}_\alpha,p}(K).$$
  \end{proposition}
  \begin{proof} By \cite{55AH}, we have 
 $$
  \text{Cap}_{\mathbf{I}_\alpha^{\frac{\sqrt{a}}{2}},p}(K)\lesssim_a\text{Cap}_{\mathbf{G}_\alpha,p}(K).
 $$
  So, we can find $f\in L^p_+(\mathbb{R}^N)$ such that  $\mathbf{I}_\alpha^{\frac{\sqrt{a}}{2}}*f\geq \chi_K$ and
  $$
  \int_{\mathbb{R}^N}|f|^pdx\lesssim\text{Cap}_{\mathbf{G}_\alpha,p}(K).
 $$
  Note that $(E_\alpha^{\sqrt{a}}*\tilde{f})(x,t)\gtrsim (\mathbf{I}_\alpha^{\frac{\sqrt{a}}{2}}*f)(x,t)$ for all $(x,t)\in\mathbb{R}^N\times[-a,a]$ where $\tilde{f}(x,t)=f(x)\chi_{[-2a,2a]}(t)$. So, 
$$
  \text{Cap}_{E_\alpha^{\sqrt{a}},p}(K\times [-a,a])\lesssim\int_{\mathbb{R}^{N+1}}|\tilde{f}|^pdxdt=2a\int_{\mathbb{R}^{N}}|f|^pdx.$$
  By Corollary \ref{5hh230120148}, one has  $$\text{Cap}_{\mathcal{G}_\alpha,p}(K\times [-a,a])\lesssim \text{Cap}_{E_\alpha^{\sqrt{a}},p}(K\times [-a,a]).$$
   Thus, we get 
   $$\text{Cap}_{\mathcal{G}_\alpha,p}(K\times [-a,a])\lesssim \text{Cap}_{\mathbf{G}_\alpha,p}(K).$$
   Finally, we prove other one. It is easy to see that 
$$
   ||\mathbb{I}_\alpha^{\sqrt{\frac{a}{2}}}[\omega\otimes \chi_{[-a,a]}]||_{L^{p'}(\mathbb{R}^{N+1})}\lesssim ||\mathbf{I}_\alpha^{\sqrt{\frac{a}{2}}}[\omega]||_{L^{p'}(\mathbb{R}^{N})}~~\forall~\omega\in\mathfrak{M}^+(\mathbb{R}^{N}),$$
  which implies
$$
    ||\mathcal{G}_\alpha[\omega\otimes \chi_{[-a,a]}]||_{L^{p'}(\mathbb{R}^{N+1})}\lesssim ||\mathbf{G}_\alpha[\omega]||_{L^{p'}(\mathbb{R}^{N})}~~\forall~\omega\in\mathfrak{M}^+(\mathbb{R}^{N+1}).$$
    It follows,
    $$\text{Cap}_{\mathcal{G}_\alpha,p}(K\times [-a,a])\gtrsim \text{Cap}_{\mathbf{G}_\alpha,p}(K).$$
   The proof is complete. 
  \end{proof}  \\\\
  The following proposition is useful for proving that many operators of classical analysis are bounded in the space of functions $f$ such that 
  \begin{equation*}
  \int_K |f|^pdxdt\lesssim \text{Cap} (K)
  \end{equation*}
  for every compact set $K\subset\mathbb{R}^{N+1}$, $(1<p<\infty)$, if they are bounded in $L^q(\mathbb{R}^{N+1},dw)$ with $w\in A_\infty$.
\begin{proposition}\label{5hh140420143}
          Let  $0<R\leq\infty$, $1< p\leq \alpha^{-1}(N+2)$, $0<\delta<\alpha$  and  $f,g\in L^1_{loc}(\mathbb{R}^{N+1})$. Suppose that 
          \begin{description}
          \item[1.] For any compact sets $K\subset\mathbb{R}^{N+1}$
$$
          \int_K|f|dxdt\lesssim \text{Cap}_{E_\alpha^{R,\delta},p}(K).$$
          \item[2.] For all weights $w\in A_1$,
$$
          \int_{\mathbb{R}^{N+1}} |g|wdxdt\lesssim_{[w]_{A_1}}\int_{\mathbb{R}^{N+1}} |f|wdxdt.$$
          \end{description}
          Then, $$
             \int_K|g| dxdt\lesssim_{\alpha, p,\delta}\text{Cap}_{E^{R,\delta}_\alpha,p}(K)~\text{ for any compact set }K\subset\mathbb{R}^{N+1}.$$
          \end{proposition}
          The capacity is mentioned in Proposition \eqref{5hh140420143}, that is $(E^{R,\delta}_\alpha,p)$-capacity defined by
\begin{equation*}
                                       \text{Cap}_{E^{R,\delta}_\alpha,p}(E)=\inf\left\{\int_{\mathbb{R}^{N+1}}|f|^pdxdt: f\in L^p_+(\mathbb{R}^{N+1}), E_\alpha^{R,\delta}*f\geq \chi_E\right\},                                       \end{equation*}
for all measurable sets $E\subset \mathbb{R}^{N+1}$, where $0<R\leq\infty$, $0<\delta<\alpha<N+2$,                                                   
  \begin{align*}
                     E_{\alpha}^{R,\delta}(x,t)=\max\left\{|x|,\sqrt{2|t|}\right\}^{-(N+2-\alpha)}\min\left\{1,\left(\frac{\max\{|x|,\sqrt{2|t|}\}}{R}\right)^{-\delta}\right\}.
                     \end{align*}                
       \begin{remark}\label{5hh080120141}For $0<\alpha q<N+2$, the inequality \eqref{5hh2210135} in Proposition \ref{5hh23101315} implies 
 \begin{equation}\label{5hh160220142}
    \left(\int_{\mathbb{R}^{N+1}}\left(E_{\alpha}^{R,\delta}*f\right)^{\frac{q(N+2)}{N+2-\alpha q}}dxdt\right)^{1-\frac{\alpha q}{N+2}} \lesssim\int_{\mathbb{R}^{N+1}}f^qdxdt ~~\forall f\in L^q(\mathbb{R}^{N+1}), f\geq 0.
             \end{equation}
      Hence, we get the isoperimetric inequality:
     \begin{equation}\label{5hh080120142}|E|^{1-\frac{\alpha p}{N+2}}\lesssim \text{Cap}_{E^{R,\delta}_\alpha,p}(E),
                                               \end{equation}
     for all measurable sets $E\subset \mathbb{R}^{N+1}$.\\
                                         \end{remark} 
           Also, we recall that a positive function $w\in L^1_{loc}(\mathbb{R}^{N+1})$ is called  an $A_1$ weight, 
            if 
            \begin{equation*}
            [w]_{A_1}:=\sup\left(\left(\fint_Qwdyds \right) \mathop {ess\sup }\limits_{(x,t)\in Q}\frac{1}{w(x,t)} \right)<\infty,
            \end{equation*} 
            where the supremum is taken over all cylinder $Q=\tilde{Q}_R(x,t)\subset \mathbb{R}^{N+1}$. The constant $[w]_{A_1}$ is called the $A_1$ constant of $w$.\\\\
            To prove the Proposition \eqref{5hh140420143},  we need to introduce the $(R,\delta)-$Wolff parabolic potential,  
\begin{align*}
\mathbb{W}_{\alpha,p}^{R,\delta}[\mu](x,t)=\int_{0}^{\infty}\left(\frac{\mu(\tilde{Q}_\rho(x,t))}{\rho^{N+2-\alpha p}}\right)^{\frac{1}{p-1}}\min\left\{1,\left(\frac{\rho}{R}\right)^{-\delta}\right\}\frac{d\rho}{\rho}~~\text{for any} ~(x,t)\in \mathbb{R}^{N+1},
\end{align*}
where $p>1$, $0<\alpha p \leq N+2$, $0<\delta<\alpha p'$ and $0<R\leq \infty$ and $\mu\in\mathfrak{M}^+(\mathbb{R}^{N+1})$. \\
It is easy to see that 
\begin{equation}\label{5hh240220141}
               \mathbb{W}^{R,\delta}_{\alpha,p}[\mu](x,t)\lesssim_{\alpha,p,\delta}\mathop {\sup}\limits_{(y,s)\in \text{supp} \mu }  \mathbb{W}^{R,\delta}_{\alpha,p}[\mu](y,s).
               \end{equation}
           
\begin{remark}\label{5hh150420142} We easily verify that the Theorem \ref{5hh100420141} also holds for $\mathbb{W}^{R,\delta,R_1}_{\alpha,p}[\mu]$ and $\mathbb{M}^{R,\delta,R_1}_{\alpha p}[\mu]$:
\begin{align*}
&\mathbb{W}_{\alpha,p}^{R,\delta,R_1}[\mu](x,t)=\int_{0}^{R_1}\left(\frac{\mu(\tilde{Q}_\rho(x,t))}{\rho^{N+2-\alpha p}}\right)^{\frac{1}{p-1}}\min\left\{1,\left(\frac{\rho}{R}\right)^{-\delta}\right\}\frac{d\rho}{\rho},\\&
\mathbb{M}_{\alpha,p}^{R,\delta/(p-1),R_1}[\mu](x,t)=\sup_{0<\rho<R_1}\left(\frac{\mu(\tilde{Q}_\rho(x,t))}{\rho^{N+2-\alpha p}}\min\left\{1,\left(\frac{\rho}{R}\right)^{-\delta(p-1)}\right\}\right)~~\text{for any} ~(x,t)\in \mathbb{R}^{N+1},
\end{align*}
where $0<\delta<\alpha p'$, $1< p<\alpha^{-1}(N+2)$ and $R_1>R>0$. This means, 
for $w\in A_\infty$, $\mu\in\mathfrak{M}^+(\mathbb{R}^{N+1})$, there exist positive constants $C>0$ and $\varepsilon_0\in (0,1)$ depending on $N,\alpha,p,\delta, [w]_{A_\infty}$ such that for any $\lambda>0$ and $\varepsilon\in (0,\varepsilon_0)$
\begin{equation}
 w(\{\mathbb{W}^{R,\delta,R_1}_{\alpha,p}[\mu]>a\lambda,(\mathbb{M}^{R,\delta(p-1),R_1}_{\alpha p}[\mu])^{\frac{1}{p-1}}\le \varepsilon \lambda \})\le C\exp(-(C\varepsilon)^{-1}) w(\{\mathbb{W}^{R,\delta,R_1}_{\alpha ,p}[\mu]>\lambda\}),
                     \end{equation}
 where $a=2+3^{\frac{N+2-\alpha p+\delta(p-1)}{p-1}}$.\\
 Therefore, for $q>p-1$ 
 \begin{equation*}
     ||\mathbb{W}^{R,\delta,R_1}_{\alpha,p}[\mu]||_{L^{q}(\mathbb{R}^{N+1},dw)}\lesssim C_1 ||(\mathbb{M}^{R,\delta(p-1),R_1}_{\alpha p}[\mu])^{\frac{1}{p-1}}||_{L^{q}(\mathbb{R}^{N+1},dw)},
     \end{equation*}
     where $C_1=C_1(\alpha,p,\delta,q)$.
Letting $R_1\to\infty$, we get
   \begin{equation}\label{5hh150420141}
       ||\mathbb{W}^{R,\delta}_{\alpha,p}[\mu]||_{L^{q}(\mathbb{R}^{N+1},dw)}\lesssim C_1 ||(\mathbb{M}^{R,\delta(p-1)}_{\alpha p}[\mu])^{\frac{1}{p-1}}||_{L^{q}(\mathbb{R}^{N+1},dw)},
       \end{equation}
       where   $\mathbb{M}^{R,\delta(p-1)}_{\alpha p}[\mu]:=\mathbb{M}^{R,\delta(p-1),\infty}_{\alpha p}[\mu]$. 
\end{remark}   
We will need the following three Lemmas to prove the Proposition \eqref{5hh140420143}.           
     \begin{lemma} \label{5hh1310136}Let $0<p\leq \alpha^{-1}(N+2)$ and $0<\beta <\frac{(N+2)(p-1)}{N+2-\alpha p+\delta (p-1)}$. There holds for each $\tilde{Q}_r=\tilde{Q}_r(x,t)$
    \begin{equation}\label{5hh09262}
   \fint_{\tilde{Q}_r} (\mathbb{W}^{R,\delta}_{\alpha,p}[\mu](y,s))^\beta dyds \lesssim_\delta (\mathbb{W}^{R,\delta}_{\alpha,p}[\mu](x,t))^\beta.
    \end{equation}
     \end{lemma}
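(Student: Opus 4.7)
My plan is to split the potential at the scale $r$ of the cylinder, bound the tail pointwise, and control the head by a weak-type argument. Fix $(y,s)\in\tilde Q_r=\tilde Q_r(x,t)$ and write
\[
\mathbb{W}^{R,\delta}_{\alpha,p}[\mu](y,s)=\underbrace{\int_{0}^{2r}\left(\frac{\mu(\tilde Q_\rho(y,s))}{\rho^{N+2-\alpha p}}\right)^{\!\frac{1}{p-1}}\!\!\min\{1,(\rho/R)^{-\delta}\}\frac{d\rho}{\rho}}_{=: I(y,s)}+\underbrace{\int_{2r}^{\infty}(\cdots)\frac{d\rho}{\rho}}_{=: II(y,s)}.
\]
Denote $M:=\mathbb{W}^{R,\delta}_{\alpha,p}[\mu](x,t)$. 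For the tail $II(y,s)$, the inclusion $\tilde Q_\rho(y,s)\subset \tilde Q_{2\rho}(x,t)$ (valid for $\rho\geq r$ and $(y,s)\in \tilde Q_r$) combined with the change of variable $\rho\mapsto 2\rho$ yields the pointwise estimate $II(y,s)\leq C(\delta)\,M$ (the constant absorbs the factor $2^{(N+2-\alpha p)/(p-1)+\delta}$).

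For the head $I(y,s)$, the integration range $\rho\leq 2r$ together with $(y,s)\in\tilde Q_r$ forces $\tilde Q_\rho(y,s)\subset \tilde Q_{3r}(x,t)$; hence $I(y,s)\leq \mathbb{W}^{R,\delta}_{\alpha,p}[\mu'](y,s)$ where $\mu':=\chi_{\tilde Q_{3r}(x,t)}\mu$, a measure with total mass $\mu'(\mathbb{R}^{N+1})=\mu(\tilde Q_{3r}(x,t))$. Next I will use the obvious lower bound on $M$ obtained by restricting the defining integral of $\mathbb{W}^{R,\delta}_{\alpha,p}[\mu](x,t)$ to $\rho\in(3r,6r)$, giving
\[
\bigl(\mu(\tilde Q_{3r}(x,t))\bigr)^{1/(p-1)}\leq C\, r^{(N+2-\alpha p)/(p-1)}\max\{1,(r/R)^{\delta}\}\,M.
\]
To estimate $\fint_{\tilde Q_r}I^\beta$, I will establish a weak-type estimate of the form
\[
\bigl|\{(y,s):\mathbb{W}^{R,\delta}_{\alpha,p}[\mu'](y,s)>\lambda\}\bigr|\leq C\,R^{\delta q_0}\bigl(\mu'(\mathbb{R}^{N+1})\bigr)^{q_0/(p-1)}\lambda^{-q_0},\qquad q_0=\tfrac{(N+2)(p-1)}{N+2-\alpha p+\delta(p-1)},
\]
by means of a Hedberg-type optimization: splitting the defining integral at a radius $\rho^*$, bounding the short-range part by $(\mathbb{M}\mu')^{1/(p-1)}(\rho^*)^{\alpha p/(p-1)}$ and the long-range part by $(\mu'(\mathbb{R}^{N+1}))^{1/(p-1)}R^{\delta}(\rho^*)^{-(N+2-\alpha p)/(p-1)-\delta}$, then optimizing in $\rho^*$ and using the weak $(1,1)$ bound for $\mathbb{M}$. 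Substituting the mass bound above yields
\[
\frac{|\{(y,s)\in\tilde Q_r: I(y,s)>\lambda\}|}{|\tilde Q_r|}\leq C\,M^{q_0}\lambda^{-q_0},
\]
after the $r$- and $R$-factors cancel (this is precisely what forces the exponent $q_0$). Writing $\fint_{\tilde Q_r}I^\beta\,dyds=\beta\int_0^\infty\lambda^{\beta-1}\frac{|\{I>\lambda\}\cap\tilde Q_r|}{|\tilde Q_r|}\,d\lambda$, bounding the ratio by $1$ for $\lambda\leq CM$ and by the weak-type bound for $\lambda>CM$, and using $\beta<q_0$ to ensure convergence, delivers $\fint_{\tilde Q_r}I^\beta\,dyds\leq c\,M^\beta$. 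Combining with the pointwise bound on $II$ gives \eqref{5hh09262}.

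The main obstacle is the weak-$L^{q_0}$ estimate for $\mathbb{W}^{R,\delta}_{\alpha,p}[\mu']$ with the correct dependence on $R$, since the truncating weight $\min\{1,(\rho/R)^{-\delta}\}$ does not simply scale and must be carefully tracked through the Hedberg split; in particular the two cases $r\leq R$ and $r>R$ need to be handled separately so that the factor $\max\{1,(r/R)^\delta\}^{q_0}$ entering through the mass estimate exactly cancels the factor $R^{\delta q_0}r^{-\delta q_0}$ arising from the weak bound. The rest (the tail bound and the distribution-function computation) are routine.
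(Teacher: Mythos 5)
Your overall plan (split at scale $r$, pointwise tail bound, and a layer-cake plus weak-type bound for the localized head using the mass bound on $\mu(\tilde Q_{3r}(x,t))$) is exactly the paper's strategy, and the Hedberg approach to the weak-type estimate is also what underlies the paper's Proposition \ref{5hh23101315}. However, the single weak-type estimate you write down, namely $|\{\mathbb{W}^{R,\delta}_{\alpha,p}[\mu']>\lambda\}|\leq C R^{\delta q_0}(\mu')^{q_0/(p-1)}\lambda^{-q_0}$, is correct but not the right tool when $r\leq R$: inserting it and the mass bound $(\mu')^{1/(p-1)}\leq Cr^{(N+2-\alpha p)/(p-1)}M$ (here $\max\{1,(r/R)^\delta\}=1$) into $\frac{1}{|\tilde Q_r|}|\{I>\lambda\}|$ and using $-(N+2)+q_0\frac{N+2-\alpha p}{p-1}=-\delta q_0$, you get $C(R/r)^{\delta q_0}M^{q_0}\lambda^{-q_0}$, so after the distribution-function integration the constant carries $(R/r)^{\delta\beta}$, which is unbounded as $r/R\to0$. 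The cancellation you describe works only when $r>R$, where the mass bound gains the extra factor $(r/R)^{\delta q_0}$.

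The repair is structural, not just a matter of bookkeeping: when $r\leq R$, the head $I(y,s)$ integrates only over $\rho\leq 2r\leq 2R$, where $\min\{1,(\rho/R)^{-\delta}\}\equiv1$, so in fact $I(y,s)\leq\mathbb{W}^{2r}_{\alpha,p}[\mu'](y,s)$, the plain (un-weighted) truncated Wolff potential. One should then invoke the weak estimate for $\mathbb{W}_{\alpha,p}$ at the exponent $\frac{(N+2)(p-1)}{N+2-\alpha p}$ with no $R$-factor at all (valid for $\alpha p<N+2$, cf. \eqref{5hh2210133}); this exponent exceeds $q_0$ so $\beta<q_0$ still gives convergence, and the $r$-powers cancel exactly. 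When $\alpha p=N+2$ that exponent is infinite and a separate estimate (Remark \ref{5hh140420141}, giving a weak bound at any exponent with a factor $r^{\alpha p}$ that exactly compensates the $r^{-N-2}$ from averaging) is required. Your Hedberg split also only delivers the long-range bound $(\mu')^{1/(p-1)}R^\delta(\rho^*)^{-(N+2-\alpha p)/(p-1)-\delta}$ when the optimal $\rho^*>R$; if $\rho^*\leq R$ the correct long-range bound has no $R^\delta$ factor and one recovers the standard exponent. So the proof, as written, handles $r>R$ correctly but has a genuine gap in the regime $r\leq R$ (and an additional, separate gap at $\alpha p=N+2$).
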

     \begin{proof}
     We set 
     \begin{align*}
     & U^{r}_{\alpha,p}[\mu](y,s)= \int_{r}^{\infty}\left(\frac{|\mu|(\tilde{Q}_\rho(y,s))}{\rho^{N+2-\alpha p}}\right)^{\frac{1}{p-1}}\min\left\{1,\left(\frac{\rho}{R}\right)^{-\delta}\right\} \frac{d\rho}{\rho},\\&
     L^{r}_{\alpha,p}[\mu](y,s)= \int_{0}^{r}\left(\frac{\mu(\tilde{Q}_\rho(y,s))}{\rho^{N+2-\alpha p}}\right)^{\frac{1}{p-1}} \min\left\{1,\left(\frac{\rho}{R}\right)^{-\delta}\right\}\frac{d\rho}{\rho}.
     \end{align*}One has,
$$
              \fint_{\tilde{Q}_r} (\mathbb{W}^{R,\delta}_{\alpha,p}[\mu](y,s))^\delta dyds \lesssim  \fint_{\tilde{Q}_r} (U^r_{\alpha,p}[\mu](y,s))^\delta dyds+  \fint_{\tilde{Q}_r} (L^r_{\alpha,p}[\mu](y,s))^\delta dyds.
$$
               Since for each $(y,s)\in \tilde{Q}_r$ and $\rho\geq r$ we have $\tilde{Q}_\rho(y,s)\subset \tilde{Q}_{2\rho}(x,t)$, thus for each $(y,s)\in \tilde{Q}_r$,
$$U^r_{\alpha,p}[\mu](y,s)\leq  \int_{r}^{\infty}\left(\frac{\mu(\tilde{Q}_{2\rho}(x,t))}{\rho^{N+2-\alpha p}}\right)^{\frac{1}{p-1}}\left(\max\{1,\frac{\rho}{R}\}\right)^{-\delta} \frac{d\rho}{\rho}\lesssim \mathbb{W}^{R,\delta}_{\alpha,p}[\mu](x,t),$$
           which implies     
$$
                 \fint_{\tilde{Q}_r} (U^r_{\alpha,p}[\mu](y,s))^\delta dyds\lesssim (\mathbb{W}^{R,\delta}_{\alpha,p}[\mu](x,t))^\delta.$$
           Since   for each $(y,s)\in \tilde{Q}_r$ and $\rho\leq r$ we have $\tilde{Q}_\rho(y,s)\subset \tilde{Q}_{2 r}(x,t)$ thus, $L^r_{\alpha,p}[\mu]=L^r_{\alpha,p}[\mu\chi_{\tilde{Q}_{2 r}(x,t)}]\leq \mathbb{W}^{R,\delta}_{\alpha,p}[\mu\chi_{\tilde{Q}_{2 r}(x,t)}]$ in  $\tilde{Q}_{r}(x,t)$. 
           We now consider two cases. \\
           Case 1: $r\leq R$. We have for $a>0$,
           \begin{align*}
           \fint_{\tilde{Q}_r} (L^r_{\alpha,p}[\mu](y,s))^\beta dyds&\leq \fint_{\tilde{Q}_r} (\mathbb{W}^{r}_{\alpha,p}[\mu\chi_{\tilde{Q}_{2 r}(x,t)}](y,s))^\beta dyds \\&=\frac{1}{|\tilde{Q}_r|} \beta\int_{0}^{\infty}\lambda^{\beta-1}|\{\mathbb{W}_{\alpha,p}^r[\mu \chi_{\tilde{Q}_{2 r}(x,t)}]>\lambda \}\cap\tilde{Q}_r | d\lambda 
         \\&\lesssim \lambda_0^{\beta}+ r^{-N-2}\int_{\lambda_0}^{\infty}\lambda^{\beta-1} |\{\mathbb{W}_{\alpha,p}^r[\mu \chi_{\tilde{Q}_{2 r}(x,t)}]>\lambda \}| d\lambda.
           \end{align*}
      If $\alpha p=N+2$, we use \eqref{5hh140420142} in Remark \ref{5hh140420141} with $\varepsilon=\frac{\alpha p}{\beta}$ and take $\lambda_0=(\mu(\tilde{Q}_{2 r}(x,t)))^{\frac{1}{p-1}}$
      \begin{align*}
                 \fint_{\tilde{Q}_r} (L^r_{\alpha,p}[\mu](y,s))^\beta dyds&\lesssim  \lambda_0^{\beta}+r^{-N-2} \int_{\lambda_0}^{\infty}\lambda^{\beta-1}  \left(\frac{(\mu(\tilde{Q}_{2 r}(x,t)))^{\frac{1}{p-1}}}{\lambda}\right)^{\frac{\alpha p+\varepsilon(p-1)}{\varepsilon}}r^{\alpha p} d\lambda\\&\lesssim
                 (\mu(\tilde{Q}_{2 r}(x,t)))^{\frac{\beta}{p-1}} 
               \lesssim   (\mathbb{W}^{R,\delta}_{\alpha,p}[\mu](x,t))^\beta.
                 \end{align*}
     If $\alpha p<N+2$, we use \eqref{5hh2210133} in Proposition \ref{5hh23101315} and take $\lambda_0 =\mu(\tilde{Q}_{2 r}(x,t))^{\frac{1}{p-1}}r^{-\frac{N+2-\alpha p}{p-1}}$, we get  
$$
             \fint_{\tilde{Q}_r} (L^r_{\alpha,p}[\mu](y,s))^\beta dyds\lesssim \left(\mu(\tilde{Q}_{2 r}(x,t))^{\frac{1}{p-1}}r^{-\frac{N+2-\alpha p}{p-1}}\right)^{\beta}\lesssim (\mathbb{W}^{R,\delta}_{\alpha,p}[\mu](x,t))^\beta.
$$
 Case 2: $r\geq R$. As above case, we have
$$\fint_{\tilde{Q}_r} (\mathbb{W}_{\alpha-\frac{\delta}{p'},p}[\mu\chi_{\tilde{Q}_{2 r}(x,t)}](y,s))^\beta dyds\lesssim \left(\mu(\tilde{Q}_{2 r}(x,t))^{\frac{1}{p-1}}r^{-\frac{N+2-\alpha p+\delta(p-1)}{p-1}}\right)^{\beta}.$$
Since $\mathbb{W}^{R,\delta}_{\alpha,p}[\mu\chi_{\tilde{Q}_{2 r}(x,t)}]\leq R^{\delta} \mathbb{W}_{\alpha-\frac{\delta}{p'},p}[\mu\chi_{\tilde{Q}_{2 r}(x,t)}]$, thus  
$$ \fint_{\tilde{Q}_r} (L^r_{\alpha,p}[\mu](y,s))^\beta dyds\lesssim \left(\mu(\tilde{Q}_{2 r}(x,t))^{\frac{1}{p-1}}r^{-\frac{N+2-\alpha p+\delta (p-1)}{p-1}}R^\delta\right)^{\beta}\lesssim(\mathbb{W}^{R,\delta}_{\alpha,p}[\mu](x,t))^\beta.$$                                               Therefore, we get \eqref{5hh09262}. The proof is complete.
     \end{proof}
     \begin{remark} It is easy to see that
     the inequality \eqref{5hh09262} does not hold true for $\mathbb{W}_{\alpha,p}^R[\delta_{(0,0)}]$  where $\delta_{(0,0)}$ is the Dirac mass at $(x,t)=(0,0)$.
     \end{remark}   
     \begin{remark}\label{5hh120320141} From Lemma \eqref{5hh1310136}, we have, if there exists $(x_0,t_0)\in\mathbb{R}^{N+1}$ such that $\mathbb{W}^{R,\delta}_{\alpha,p}[\mu](x_0,t_0)<\infty$ then $\mathbb{W}^{R,\delta}_{\alpha,p}[\mu]\in L^\beta_{\text{loc}}(\mathbb{R}^{N+1})$ for any $0<\beta <\frac{(N+2)(p-1)}{N+2-\alpha p+\delta (p-1)}$.
     \end{remark}

 \begin{lemma}
     \label{5hh1310133}
                  Let  $R\in (0,\infty]$, $1< p\leq \alpha^{-1}(N+2)$ and $0<\delta<\alpha p'$. Assume that $\alpha p<N+2$ if $R=\infty$. Then, for any compact set $K\subset\mathbb{R}^{N+1}$ there exists a $\mu\in \mathfrak{M}^+(K)$, called a capacitary measure of $K$ such that
                  \begin{align*}
                  \mu(K)\sim_{\alpha,p,\delta}\text{Cap}_{E_\alpha^{R,\delta/p'},p}(K)
                  \end{align*} and $\mathbb{W}^{R,\delta}_{\alpha,p}[\mu](x,t)\gtrsim_{\alpha,p,\delta} 1 $ a.e in $K$ and $ \mathbb{W}^{R,\delta}_{\alpha,p}[\mu]\lesssim_{\alpha,p,\delta} 1$ a.e in  $\mathbb{R}^{N+1}$.
     \end{lemma}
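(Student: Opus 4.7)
The plan is to adapt the Hedberg--Wolff nonlinear potential theory (cf.\ Adams--Hedberg, Ch.~6) to the parabolic truncated setting, which has already been primed by Lemma~\ref{5hh1310136} and Remark~\ref{5hh150420142} in this section. The starting point is the standard $L^p$--duality characterisation
\begin{equation*}
\text{Cap}_{E_\alpha^{R,\delta/p'},p}(K)^{1/p} \;\approx\; \sup\bigl\{\mu(K) : \mu\in\mathfrak{M}^+(K),\ \|E_\alpha^{R,\delta/p'}*\mu\|_{L^{p'}(\mathbb{R}^{N+1})} \le 1\bigr\},
\end{equation*}
so up to universal constants a capacitary measure coincides with an extremal measure in the dual problem. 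Such an extremiser $\nu_K$ exists by weak-$*$ compactness of $\mathfrak{M}^+(K)$ together with lower semi-continuity of $\mu\mapsto \|E_\alpha^{R,\delta/p'}*\mu\|_{L^{p'}}$; after rescaling I will set $\mu = t\nu_K$ with $t>0$ chosen so that $\mu(K)\asymp \text{Cap}_{E_\alpha^{R,\delta/p'},p}(K)$.

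The engine of the proof is a parabolic Wolff-type inequality
\begin{equation*}
c_1 \int_{\mathbb{R}^{N+1}} \mathbb{W}^{R,\delta}_{\alpha,p}[\mu]\, d\mu \;\le\; \|E_\alpha^{R,\delta/p'}*\mu\|_{L^{p'}(\mathbb{R}^{N+1})}^{p'} \;\le\; c_2 \int_{\mathbb{R}^{N+1}} \mathbb{W}^{R,\delta}_{\alpha,p}[\mu]\, d\mu,
\end{equation*}
valid for every $\mu\in\mathfrak{M}^+(\mathbb{R}^{N+1})$. I would derive this by expanding the $L^{p'}$ norm via Fubini, decomposing dyadically in the parabolic gauge $\max\{|x|,\sqrt{2|t|}\}$, and applying Minkowski and its reverse across dyadic shells in the same way as in the classical elliptic case, taking care that the dyadic weights arising from $E_\alpha^{R,\delta/p'}$ (cut-off exponent $\delta/p'$) exactly reproduce the weight $\min\{1,(\rho/R)^{-\delta}\}$ in $\mathbb{W}^{R,\delta}_{\alpha,p}$ after raising to the power $1/(p-1)$.

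With the Wolff inequality in hand, the three required estimates follow by a variational argument. The mass bound $\mu(K)\asymp \text{Cap}_{E_\alpha^{R,\delta/p'},p}(K)$ is built in by the rescaling, and combined with the Wolff inequality gives $\int_K \mathbb{W}^{R,\delta}_{\alpha,p}[\mu]\,d\mu \lesssim \mu(K)$. The pointwise lower bound $\mathbb{W}^{R,\delta}_{\alpha,p}[\mu]\ge C_2$ q.e.\ on $K$ (hence Lebesgue-a.e.\ on $K$, thanks to the isoperimetric inequality \eqref{5hh080120142}) follows by contradiction: if $\mathbb{W}^{R,\delta}_{\alpha,p}[\mu]$ were too small on a set $F\subset K$ of positive capacity, one could add a small multiple of a capacitary measure of $F$ and use the dual inequality to construct a competitor with strictly larger mass but dual norm still $\le 1$, contradicting maximality of $\nu_K$. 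For the global upper bound $\mathbb{W}^{R,\delta}_{\alpha,p}[\mu]\le C_3$ everywhere, the parabolic maximum principle \eqref{5hh240220141} reduces matters to bounding $\mathbb{W}^{R,\delta}_{\alpha,p}[\mu]$ on $\mathrm{supp}(\mu)\subset K$, which in turn follows from the integral bound $\int \mathbb{W}^{R,\delta}_{\alpha,p}[\mu]\,d\mu \lesssim \mu(K)$ together with a thresholding argument: on the set where $\mathbb{W}^{R,\delta}_{\alpha,p}[\mu]$ exceeds a large constant, one removes mass and rescales, obtaining a competing measure of strictly larger total mass unless the set is negligible.

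The main obstacle I expect is the parabolic Wolff inequality itself, specifically the matching of the two truncation scales ($\delta/p'$ in the kernel versus $\delta$ in the Wolff potential) and ensuring the argument handles the endpoint $\alpha p = N+2$ when $R<\infty$ uniformly in $R$. The hypothesis that $\alpha p<N+2$ when $R=\infty$ is precisely what is needed to prevent the untruncated Riesz--Wolff correspondence from degenerating, and I expect this hypothesis to be used exactly once, in the shell-summation step of the Wolff inequality, where a geometric series in $\rho^{\alpha p - (N+2)}$ would otherwise fail to converge.
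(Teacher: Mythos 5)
Your plan breaks at the step where you try to read off the two-sided pointwise bound on $\mathbb{W}^{R,\delta}_{\alpha,p}[\mu]$ from the variational extremiser. For the continuous kernel $E_\alpha^{R,\delta/p'}$, the extremal measure $\mu$ of the dual problem has, by the perturbation argument you sketch, the properties $V^\mu:=E_\alpha^{R,\delta/p'}*\bigl(E_\alpha^{R,\delta/p'}*\mu\bigr)^{p'-1}\ge 1$ quasi-everywhere on $K$ and $V^\mu\le 1$ on $\text{supp}\,\mu$. These are statements about the \emph{nonlinear potential} $V^\mu$, not about the Wolff potential: when you add $\epsilon\sigma$ supported on $F\subset K$ and expand the $L^{p'}$-norm, the first-order term is $p'\epsilon\int V^\mu\,d\sigma$, so it is smallness of $V^\mu$ on $F$, not of $\mathbb{W}^{R,\delta}_{\alpha,p}[\mu]$, that the contradiction requires. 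The pointwise comparison between the two goes only one way: one always has $\mathbb{W}^{R,\delta}_{\alpha,p}[\mu]\le C\,V^\mu$ (the elementary Hedberg--Wolff direction), but $V^\mu\le C\,\mathbb{W}^{R,\delta}_{\alpha,p}[\mu]$ is false pointwise and holds only in energy, i.e.\ after integration against $d\mu$ (your ``Wolff inequality''). Consequently the upper bound $V^\mu\le 1$ on $\text{supp}\,\mu$ does transfer to $\mathbb{W}^{R,\delta}_{\alpha,p}[\mu]\le C$ on $\text{supp}\,\mu$ and hence everywhere via \eqref{5hh240220141}, and the mass estimate follows from duality and the energy inequality; but the lower bound $\mathbb{W}^{R,\delta}_{\alpha,p}[\mu]\ge C_2$ a.e.\ in $K$ — the crux of the lemma — is not obtained.

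The paper circumvents this by \emph{not} working with the continuous kernel at all. It constructs a discrete kernel $\mathbb{P}_\alpha$ on $\mathbb{R}^{N+1}\times M$ with $M=\mathbb{R}^{N+1}\times\mathbb{Z}$, the extra integer variable carrying the dyadic scale. The point of this construction is the two-sided \emph{pointwise} equivalence \eqref{5hh090920141}, $c_1^{-1}V_{\mathbb{P}_\alpha,p}^\mu\le\mathbb{W}^{R,\delta}_{\alpha,p}[\mu]\le c_1 V_{\mathbb{P}_\alpha,p}^\mu$, which is available because the $(\cdot)^{p'-1}$ nonlinearity is applied within each dyadic scale separately — where the convolution is essentially constant — rather than to a sum across all scales. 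With that in hand, the abstract results of Adams--Hedberg (Theorems 2.5.1 and 2.5.6) produce a capacitary measure with $V_{\mathbb{P}_\alpha,p}^\mu\ge 1$ q.e.\ on $K$, $V_{\mathbb{P}_\alpha,p}^\mu\le 1$ a.e.\ on $\text{supp}\,\mu$ and $\mu(K)=\text{Cap}_{\mathbb{P}_\alpha,p}(K)$, and \eqref{5hh090920141} immediately converts both bounds into the required bounds on $\mathbb{W}^{R,\delta}_{\alpha,p}[\mu]$. The final ingredient, the equivalence $\text{Cap}_{\mathbb{P}_\alpha,p}\approx\text{Cap}_{E_\alpha^{R,\delta/p'},p}$, is where the energy Wolff inequality is actually used; the paper derives it from the good-$\lambda$ estimate via Remark~\ref{5hh150420142} rather than a direct dyadic/Minkowski manipulation. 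To salvage your route you would either need the reverse pointwise inequality $V^\mu\lesssim\mathbb{W}^{R,\delta}_{\alpha,p}[\mu]$, which is simply not true, or you would need to replace the continuous kernel by one whose nonlinear potential is pointwise equivalent to the Wolff potential — which is exactly the discrete construction you are missing.
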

     \begin{proof}          
     We consider a measure $\nu$ on $M=\mathbb{R}^{N+1}\times \mathbb{Z}$ as follows 
     \begin{equation*}
     \nu=m\otimes\sum_{n=-\infty}^{\infty}\delta_n,
     \end{equation*}
     where $m$ is Lebesgue measure, and $\delta_n$ denotes unit mass at $n$. Thus, $f\in L^p(M,d\nu)$, means $f=\{f_n\}_{-\infty}^{\infty}$, with 
     \begin{equation*}
     ||f||_{L^p(M,d\nu)}^p=\sum_{n=-\infty}^{\infty}||f_n||_{L^p(\mathbb{R}^{N+1})}^p.
     \end{equation*}
     Let $n_R
     \in\mathbb{Z}\cup \{+\infty\}$ such that $2^{-n_R}\leq R <2^{-n_R+1}$ if $R<+\infty$ and $n_R=-\infty$ if $R=+\infty$. We define a kernel $\mathbb{P}_\alpha$ in $\mathbb{R}^{N+1}\times M=\mathbb{R}^{N+1}\times \mathbb{R}^{N+1}\times \mathbb{Z}$ by 
     \begin{equation*}
     \mathbb{P}_\alpha(x,t,x',t',n)=\min\{1,2^{(n-n_R)\delta/p'}\}2^{n(N+2-\alpha)}\chi_{\tilde{Q}_{2^{-n}}}(x-x',t-t').
     \end{equation*}    
        If $f$ is $\nu-$measurable and nonnegative and $\mu\in\mathfrak{M}^+(\mathbb{R}^{N+1})$, the corresponding potentials $\mathcal{P}_\alpha f$,  ${\mathop \mathcal{P}\limits^ \vee} _\alpha\mu$ and $V_{\mathbb{P}_\alpha,p}^\mu$ are everywhere well defined and given by 
        \begin{align*}
        &(\mathcal{P}_\alpha f)(x,t)=\int_{M}\mathbb{P}_\alpha(x,t,x',t',n)f(x',t',n)d\nu(x',t',n)\\&~~~~~~~~~~~~~~ =\sum_{n=-\infty}^{\infty}\min\{1,2^{(n-n_R)\delta/p'}\}2^{n(N+2-\alpha)}(\chi_{\tilde{Q}_{2^{-n}}}*f_n)(x,t),\\
        &({\mathop \mathcal{P}\limits^ \vee} _\alpha\mu)(x',t',n)=\int_{\mathbb{R}^{N+1}}\mathbb{P}_\alpha(x,t,x',t',n)d\mu(x,t)\\&~~~~~~~~~~~~~~~~~~~=\min\{1,2^{(n-n_R)\delta/p'}\}2^{n(N+2-\alpha)}(\chi_{\tilde{Q}_{2^{-n}}}*\mu)(x',t'),
        \\  & V_{P_\alpha,p}^\mu(x,t)=(\mathcal{P}_\alpha({\mathop \mathcal{P}\limits^ \vee} _\alpha\mu)^{p'-1})(x,t)\\&~~~~~~~~~~~~~~ = \sum_{n=-\infty}^{\infty}\min\{1,2^{(n-n_R)\delta}\}2^{np'(N+2-\alpha)}\left(\chi_{\tilde{Q}_{2^{-n}}}*\left(\chi_{\tilde{Q}_{2^{-n}}}*\mu\right)^{p'-1}\right)(x,t).
        \end{align*}    
        for any $(x,t,x',t',n)\in \mathbb{R}^{N+1}\times M$.    \\      
        Since for all $(x,t)\in \mathbb{R}^{N+1}$,
        \begin{align*}
        |\tilde{Q}_1|2^{-(n+1)(N+2)}(\mu(\tilde{Q}_{2^{-n-1}}(x,t)))^{p'-1}& \leq \left(\chi_{\tilde{Q}_{2^{-n}}}*\left(\chi_{\tilde{Q}_{2^{-n}}}*\mu\right)^{p'-1}\right)(x,t)\\&\leq |\tilde{Q}_1|2^{-n(N+2)}(\mu(\tilde{Q}_{2^{-n+1}}(x,t)))^{p'-1},
        \end{align*} 
        thus, 
        \begin{equation}\label{5hh090920141}  \mathbb{W}^{R,\delta}_{\alpha,p}[\mu]\sim V_{\mathbb{P}_\alpha,p}^\mu.                
        \end{equation} 
         We now define the $L^p-$capacity with $1<p<\infty$
                \begin{equation*}
                \text{Cap}_{\mathbb{P}_\alpha,p}(E)=\inf\{||f||^p_{L^p(M,d\nu)}: f\in L^p_+(M,d\nu), \mathcal{P}_\alpha f\geq \chi_E\}.
                \end{equation*}  
                for any Borel set $E\subset\mathbb{R}^{N+1}$.       
                  By \cite[Theorem 2.5.1]{55AH},  for any compact set $K\subset \mathbb{R}^{N+1}$
                  \begin{equation*}
                  \text{Cap}_{\mathbb{P}_\alpha,p}(K)^{1/p}=\sup\{\mu(K):\mu\in\mathfrak{M}^+(K),||{\mathop \mathcal{P}\limits^ \vee} _\alpha\mu||_{L^{p'}(M,d\nu)}\leq 1\}.
                  \end{equation*}
                   By \cite[Theorem 2.5.6]{55AH}, for any compact set $K$ in $\mathbb{R}^{N+1}$, there exists $\mu\in \mathfrak{M}^+(K)$, called a capacitary measure for $K$, such that $V_{\mathbb{P}_\alpha,p}^\mu\geq 1$  $\text{Cap}_{\mathbb{P}_\alpha,p}-\text{q.e. in } K$, $V_{P_\alpha,p}^\mu\leq 1$ a.e in $\text{supp}(\mu)$ and 
                            $\mu(K)=\text{Cap}_{\mathbb{P}_\alpha,p}(K)$. 
             Thanks to \eqref{5hh090920141} and \eqref{5hh240220141}, we have $\mathbb{W}^{R,\delta}_{\alpha,p}[\mu]\gtrsim 1$ $\text{Cap}_{\mathbb{P}_\alpha,p}-\text{q.e. in } K$, $\mathbb{W}^{R,\delta}_{\alpha,p}[\mu]\lesssim 1$ a.e in $\mathbb{R}^{N+1}$ and 
                                               $\mu(K)=\text{Cap}_{\mathbb{P}_\alpha,p}(K)$. \\
On the other hand,
        \begin{align*}
         ||{\mathop \mathcal{P}\limits^ \vee} _\alpha\mu||_{L^{p'}(M,d\nu)}^{p'}&=\sum_{n=-\infty}^{\infty}||\min\{1,2^{(n-n_R)\delta/p'}\}2^{n(N+2-\alpha)}\chi_{\tilde{Q}_{2^{-n}}}*\mu||_{L^{p'}(\mathbb{R}^{N+1})}^{p'}\\&=\sum_{n=-\infty}^{\infty}\min\{1,2^{(n-n_R)\delta}\}2^{np'(N+2-\alpha)}\int_{\mathbb{R}^{N+1}}(\chi_{\tilde{Q}_{2^{-n}}}*\mu)^{p'}dxdt,
        \end{align*}
This quantity is equivalent to 
$$
 \int_{\mathbb{R}^{N+1}}\int_{0}^{\infty}\left(\frac{\mu(\tilde{Q}_\rho(x,t))}{\rho^{N+2-\alpha}}\right)^{p'}\min\{1,\left(\frac{\rho}{R}\right)^{-\delta}\}\frac{d\rho}{\rho}dxdt.$$
So, thanks to \eqref{5hh150420141} in Remark \ref{5hh150420142}, we obtain
$$ ||{\mathop \mathcal{P}\limits^ \vee} _\alpha\mu||_{L^{p'}(M,d\nu)}\sim ||E_\alpha^{R,\delta/p'}*\mu||_{L^{p'}(\mathbb{R}^{N+1})}.$$
It follows that the two capacities $\text{Cap}_{\mathbb{P}_\alpha,p}$ and $ \text{Cap}_{E_\alpha^{R,\delta/p'},p} $ are equivalent.         
        Therefore, we obtain the desired results.  
       \end{proof}      
        \begin{lemma}\label{5hh1310135} Let $R\in (0,\infty]$, $1<p\leq \alpha^{-1}(N+2)$ and $0<\delta<\alpha p'$. Assume that $\alpha p<N+2$ if $R=\infty$. There holds for any $\mu\in\mathfrak{M}^+_b(\mathbb{R}^{N+1})$
        \begin{equation}\label{5hh150420143}
                        \text{Cap}_{E_\alpha^{R,\delta/p'},p}(\{\mathbb{W}_{\alpha,p}^{R,\delta}[\mu]>\lambda\})\lesssim_{\alpha,p,\delta}\lambda^{-p+1}\mu (\mathbb{R}^{N+1})~~\forall ~\lambda>0.
                        \end{equation}
                        In particular, $\mathbb{W}^{R,\delta}_{\alpha,p}[\mu]<\infty$~ $\text{Cap}_{E_\alpha^{R,\delta/p'},p}-$q.e. in $\mathbb{R}^{N+1}$.
        \end{lemma}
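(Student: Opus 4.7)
\emph{Step 1 (Homogeneity reduction).} Since $\mathbb{W}^{R,\delta}_{\alpha,p}$ is $1/(p-1)$-homogeneous in its argument ($\mathbb{W}^{R,\delta}_{\alpha,p}[t\mu]=t^{1/(p-1)}\mathbb{W}^{R,\delta}_{\alpha,p}[\mu]$), the rescaling $\mu\mapsto \lambda^{-(p-1)}\mu$ turns $\{\mathbb{W}^{R,\delta}_{\alpha,p}[\mu]>\lambda\}$ into $\{\mathbb{W}^{R,\delta}_{\alpha,p}[\lambda^{-(p-1)}\mu]>1\}$ and reduces the claim to the case $\lambda=1$:
$$\text{Cap}_{E_\alpha^{R,\delta/p'},p}\bigl(\{\mathbb{W}^{R,\delta}_{\alpha,p}[\mu]>1\}\bigr)\le C\,\mu(\mathbb{R}^{N+1}).$$
Since $\text{Cap}_{E_\alpha^{R,\delta/p'},p}$ is a Choquet capacity and the superlevel set is open, inner regularity lets me replace the superlevel set by an arbitrary compact subset $K$.

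\emph{Step 2 (Capacitary measure).} Apply Lemma \ref{5hh1310133} to $K$ to obtain a measure $\nu\in\mathfrak{M}^+(K)$ with
$$\nu(K)\ge C_1^{-1}\text{Cap}_{E_\alpha^{R,\delta/p'},p}(K),\qquad \mathbb{W}^{R,\delta}_{\alpha,p}[\nu]\le C_3\text{ a.e.\ on }\mathbb{R}^{N+1}.$$
Because $\nu$ has finite capacitary energy, it is absolutely continuous with respect to $\text{Cap}_{E_\alpha^{R,\delta/p'},p}$, so the inequality $\mathbb{W}^{R,\delta}_{\alpha,p}[\mu]>1$ holds $\nu$-almost everywhere on $K$, and therefore
$$\nu(K)\le\int_K \mathbb{W}^{R,\delta}_{\alpha,p}[\mu]\,d\nu.$$

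\emph{Step 3 (Hedberg--Wolff dual bound).} The crux is the estimate
$$\int \mathbb{W}^{R,\delta}_{\alpha,p}[\mu]\,d\nu\le C\,\mu(\mathbb{R}^{N+1})\qquad (\ast)$$
under the hypothesis $\mathbb{W}^{R,\delta}_{\alpha,p}[\nu]\le C_3$. For $p=2$ the kernel is essentially the Riesz parabolic potential and $(\ast)$ reduces to the Fubini identity $\int \mathbb{I}_2[\mu]\,d\nu=\int \mathbb{I}_2[\nu]\,d\mu\le C_3\mu(\mathbb{R}^{N+1})$. For general $p$, I discretize the Wolff integral at scales $\rho_j=2^{-j}$, cover the relevant region at each scale by a disjoint Whitney-type family of parabolic cubes $\{Q^j_k\}$ of side $\rho_j$, use that $\mu(\tilde Q_{\rho_j}(\cdot))$ is quasi-constant on $Q^j_k$ (controlled by $\mu(cQ^j_k)$ for an enlargement $c>1$), and swap the $\mu$- and $\nu$-integrations so that the integrand against $d\mu$ is pointwise dominated by $C\,\mathbb{W}^{R,\delta}_{\alpha,p}[\nu](y,s)\le CC_3$. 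Combining Steps 1--3 gives
$$\text{Cap}_{E_\alpha^{R,\delta/p'},p}(K)\le C_1\nu(K)\le C_1\int \mathbb{W}^{R,\delta}_{\alpha,p}[\mu]\,d\nu\le C\,\mu(\mathbb{R}^{N+1}),$$
which, together with Step 1, proves \eqref{5hh150420143}.

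\emph{Step 4 (Quasi-everywhere finiteness).} Setting $A=\{\mathbb{W}^{R,\delta}_{\alpha,p}[\mu]=\infty\}=\bigcap_{n\ge 1}\{\mathbb{W}^{R,\delta}_{\alpha,p}[\mu]>n\}$, every compact $K\subset A$ satisfies $K\subset\{\mathbb{W}^{R,\delta}_{\alpha,p}[\mu]>n\}$ for all $n$, so \eqref{5hh150420143} yields $\text{Cap}_{E_\alpha^{R,\delta/p'},p}(K)\le Cn^{-(p-1)}\mu(\mathbb{R}^{N+1})\to 0$. As $A$ is Borel (hence capacitable), inner regularity gives $\text{Cap}_{E_\alpha^{R,\delta/p'},p}(A)=0$.

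The principal obstacle is Step 3, the dyadic Hedberg--Wolff bound $(\ast)$ in the anisotropic parabolic setting with the truncation weight $\min\{1,(\rho/R)^{-\delta}\}$. While the logical skeleton follows the classical isotropic proofs (Hedberg--Wolff, Adams--Hedberg, Maz'ya--Havin), one must carefully check doubling of parabolic cubes $\tilde Q_\rho$, control the weight $\min\{1,(\rho/R)^{-\delta}\}$ in passing from the continuous integral to the dyadic sum, and---crucially in the borderline case $\alpha p=N+2$---use the truncation $R<\infty$ to ensure the long-scale tail is integrable (consistent with the hypothesis ``$\alpha p<N+2$ if $R=\infty$'' in the statement).
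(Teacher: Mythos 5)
Your framing is right and Steps~1, 2, and~4 are sound, but Step~3 contains a genuine gap, and it is exactly where the paper's argument takes a different and essential turn.

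The dual bound $(\ast)$, i.e.\ $\int \mathbb{W}^{R,\delta}_{\alpha,p}[\mu]\,d\nu\le C\,\mu(\mathbb{R}^{N+1})$, cannot be obtained by the proposed ``discretize and swap'' mechanism for $p\neq 2$. After dyadic discretization the quantity to control is $\sum_{j,k} \rho_j^{-\frac{N+2-\alpha p}{p-1}}\,\mu(cQ^j_k)^{1/(p-1)}\,\nu(Q^j_k)$, and the factor $\mu(\cdot)^{1/(p-1)}$ is nonlinear in $\mu$; Fubini is available only when this power equals $1$, i.e.\ $p=2$. For $p\neq 2$ there is no integration to swap because $\mu$ no longer appears linearly. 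If instead you try to reach $(\ast)$ via the maximal ratio $M[\mu,\nu](x,t):=\sup_\rho \mu(\tilde Q_\rho(x,t))/\nu(\tilde Q_{3\rho}(x,t))$ (using the hypothesis $\mathbb{W}^{R,\delta}_{\alpha,p}[\nu]\le C_3$ to get $\mathbb{W}^{R,\delta}_{\alpha,p}[\mu]\lesssim M[\mu,\nu]^{1/(p-1)}$ on $\mathrm{supp}\,\nu$) and then integrate the weak-type $(1,1)$ bound $\nu\bigl(\{M[\mu,\nu]>t\}\bigr)\lesssim \mu(\mathbb{R}^{N+1})/t$, the resulting integral $\int M^{1/(p-1)}\,d\nu = \tfrac{1}{p-1}\int_0^\infty t^{\frac{1}{p-1}-1}\nu(\{M>t\})\,dt$ converges only when $1/(p-1)<1$, i.e.\ $p>2$; for $1<p\le 2$ the tail $\int_{t_0}^\infty t^{\frac{1}{p-1}-2}\,dt$ diverges, and the strong bound $(\ast)$ is not obtainable by this route.

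The paper sidesteps all of this by never passing through $(\ast)$. With $\sigma$ the capacitary measure of $K\subset\{\mathbb{W}^{R,\delta}_{\alpha,p}[\mu]>\lambda\}$ from Lemma~\ref{5hh1310133}, one has, for every $(x,t)\in\mathrm{supp}\,\sigma$,
\begin{equation*}
\lambda < \mathbb{W}^{R,\delta}_{\alpha,p}[\mu](x,t) \le M[\mu,\sigma](x,t)^{1/(p-1)}\int_0^\infty \Bigl(\frac{\sigma(\tilde Q_{3\rho}(x,t))}{\rho^{N+2-\alpha p}}\Bigr)^{1/(p-1)}\min\{1,(\rho/R)^{-\delta}\}\,\frac{d\rho}{\rho}\le c_2\,M[\mu,\sigma](x,t)^{1/(p-1)},
\end{equation*}
so $\mathrm{supp}\,\sigma\subset\{M[\mu,\sigma]>(\lambda/c_2)^{p-1}\}$, and a single application of the Vitali covering lemma (the weak-type $(1,1)$ bound for $M[\mu,\sigma]$) gives $\sigma(\mathrm{supp}\,\sigma)\le C\lambda^{-(p-1)}\mu(\mathbb{R}^{N+1})$. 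The exponent $1/(p-1)$ is applied \emph{once}, to a ratio, which is then undone by the weak-type bound; this handles all $1<p\le\alpha^{-1}(N+2)$ uniformly, with no convergence issue. You should replace Step~3 with this pointwise factorization plus Vitali covering; in its present form the proposal does not close for $1<p\le 2$ and does not justify the swap even for $p>2$.
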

        \begin{proof} By Lemma \ref{5hh1310133}, there is a capacitary measure $\sigma$ for a compact subset $K$ of $\{\mathbb{W}^{R,\delta}_{\alpha,p}[\mu]>\lambda\}$ such that  $\mathbb{W}^{R,\delta}_{\alpha,p}[\sigma](x,t)\lesssim_{\alpha,p,\delta} 1$ on  $\text{supp} \sigma$ and 
        $\text{Cap}_{E^{R,\delta/p'}_{\alpha},p}(K)\sim \sigma (K)$. \\
        Set 
        $
        \mathbb{M}[\mu,\sigma](x,t)= \mathop {\sup}\limits_{\rho>0 }\frac{\mu(\tilde{Q}_{\rho}(x,t))}{\sigma(\tilde{Q}_{3\rho}(x,t))}
       $ for any $(x,t)\in \text{supp} \sigma$.
        Then, for any $(x,t)\in \text{supp} \sigma$
        \begin{align*}
        \lambda< \mathbb{W}^{R,\delta}_{\alpha,p}[\mu](x,t)&\leq \left( \mathbb{M}[\mu,\sigma](x,t)\right)^{\frac{1}{p-1}}
              \int_{0}^{\infty}\left(\frac{\sigma(\tilde{Q}_{3\rho}(x,t))}{\rho^{N+2-\alpha p}}\right)^{\frac{1}{p-1}}\min\{1,\left(\frac{\rho}{R}\right)^{-\delta}\}\frac{d\rho}{\rho}              
                    \\&\lesssim_{\alpha,p,\delta}\left( \mathbb{M}[\mu,\sigma](x,t)\right)^{\frac{1}{p-1}}.
        \end{align*}      
      Thus, for any $\lambda>0$, $
      \text{supp} \sigma \subset \{\left( \mathbb{M}[\mu,\sigma]\right)^{\frac{1}{p-1}}\gtrsim_{\alpha,p,\delta}\lambda\}$.
      By Vitali Covering Lemma one can cover $\text{supp}\sigma$ with a union of $\tilde{Q}_{3\rho_i}(x_i,t_i)$ for $i=1,...,m(K)$ so that $\tilde{Q}_{\rho_i}(x_i,t_i)$ are disjoint and $\sigma(\tilde{Q}_{3\rho_i}(x_i,t_i))\lesssim \lambda^{-p+1}\mu(\tilde{Q}_{\rho_i}(x_i,t_i))$.     It follows that 
$$
      \text{Cap}_{E^{R}_{\alpha},p}(K)\lesssim \sum_{i=1}^{m(K)}\sigma (\tilde{Q}_{3\rho_i}(x_i,t_i))\lesssim   \lambda^{-p+1}\sum_{i=1}^{m(K)}\mu (\tilde{Q}_{\rho_i}(x_i,t_i))\lesssim \lambda^{-p+1}\mu (\mathbb{R}^{N+1}).
    $$           So, for all compact subset $K$ of $\{\mathbb{W}^{R,\delta}_{\alpha,p}[\mu]>\lambda\}$,$$
       \text{Cap}_{E^{R,\delta/p'}_{\alpha},p}(K)\lesssim \lambda^{-p+1}\mu (\mathbb{R}^{N+1}).$$
       Therefore, we obtain \eqref{5hh150420143}.
        \end{proof}\medskip\\
Now we are ready to  prove Proposition \ref{5hh140420143}.\\
   \begin{proof}[Proof of Proposition \ref{5hh140420143}]
    By Lemma \ref{5hh1310136},   \ref{5hh1310133} and \ref{5hh1310135}, there exists a  capacitary measure $\mu$ of a compact subset  $K\subset \mathbb{R}^{N+1}$ such that $\mathbb{W}^{R,\delta p'}_{\alpha,p}[\mu]\gtrsim 1$ a.e in $K$, $\mathbb{W}^{R,\delta p'}_{\alpha,p}[\mu]\lesssim 1$ a.e in $\mathbb{R}^{N+1}$ and $\text{Cap}_{E^{R,\delta}_\alpha,p}(\{\mathbb{W}^{R,\delta p'}_{\alpha,p}[\mu]>\lambda\})\lesssim\lambda^{-p+1}Cap_{E^{R,\delta}_\alpha,p}(K)$ for all $\lambda>0$, $(\mathbb{W}^{R,\delta p'}_{\alpha,p}[\mu])^\beta\in A_1$ for any $0<\beta <\frac{(N+2)(p-1)}{N+2-\alpha p+\delta p}$. 
From the second assumption we have
 $$ 
      \int_{\mathbb{R}^{N+1}} |g|(\mathbb{W}^{R,\delta p'}_{\alpha,p}[\mu])^\beta dxdt\lesssim\int_{\mathbb{R}^{N+1}} |f|(\mathbb{W}^{R,\delta p'}_{\alpha,p}[\mu])^\beta dxdt.
     $$
 Thus
      \begin{align*}
      \int_{K} |g|dxdt&\lesssim \int_{\mathbb{R}^{N+1}}|g|(\mathbb{W}^{R,\delta p'}_{\alpha,p}[\mu])^\beta dxdt
            \lesssim \int_{\mathbb{R}^{N+1}} |f|(\mathbb{W}^{R,\delta p'}_{\alpha,p}[\mu])^\beta dxdt\\&=  \beta \int_{0}^{c}\int_{\mathbb{W}^{R,\delta p'}_{\alpha,p}[\mu]> \lambda}|f|dxdt\lambda^{\beta-1}d\lambda.
      \end{align*}      
By the first assumption we get
$$
\int_{\mathbb{W}^{R,\delta p'}_{\alpha,p}[\mu]> \lambda}|f|dxdt\lesssim\text{Cap}_{E^{R,\delta}_\alpha,p}(\{\mathbb{W}^{R,\delta p'}_{\alpha,p}[\mu]> \lambda\})\lesssim \lambda^{-p+1} \text{Cap}_{E^{R,\delta}_\alpha,p}(K).$$
Therefore,
$$
      \int_{K} |g|dxdt\lesssim \int_{0}^{c}\lambda^{-p+1} \text{Cap}_{E^{R,\delta}_\alpha,p}(K) \lambda^{\delta-1}d\lambda\lesssim \text{Cap}_{E^{R,\delta}_\alpha,p}(K),$$
      since one can choose $\delta>p-1$. This completes the proof.
   \end{proof}
\begin{corollary} \label{keylam}Let $f,g\in L^1_{\text{loc}}(\mathbb{R}^{N+1})$ be such that 
$$
		\int_{\mathbb{R}^{N+1}} |g|wdxdt\lesssim_{[w]_{A_1}}\int_{\mathbb{R}^{N+1}} |f|wdxdt$$
for any weight $w\in A_1$. Then, 
\begin{equation}\label{Z1}
	\mathbb{I}_{\beta}[|g|]\lesssim_\beta 	\mathbb{I}_{\beta}[|f|]
\end{equation}
for any $\beta\in (0,N+2)$. 
\end{corollary}
The inequality \eqref{Z1} for elliptic version was proved in \cite{phucompde,55Ph2,H-P1}.\medskip\\
\begin{proof} Let $\varphi_n$ be the standard mollifiers in $\mathbb{R}^{N+1}$. Thanks to Lemma \ref{5hh1310136}, one gets $\mathbb{I}_{\beta}[\varphi_{n}]\in A_1 $ with $\sup_{n}[\mathbb{I}_{\beta}[\varphi_{n}]]_{A_1}\lesssim_\beta 1$. So, for any $(x_0,t_0)\in \mathbb{R}^{N+1}$, 
$$
\int_{\mathbb{R}^{N+1}} |g| \mathbb{I}_{\beta}[\varphi_{n}((x_0,t_0)+.)]\lesssim_\beta\int_{\mathbb{R}^{N+1}} |f|\mathbb{I}_{\beta}[\varphi_{n}((x_0,t_0)+.)].$$	
Thus,
$$
\int_{\mathbb{R}^{N+1}}\varphi_{n}((x_0,t_0)+.) \mathbb{I}_{\beta}[|g|] \lesssim_\beta\int_{\mathbb{R}^{N+1}} \varphi_{n}((x_0,t_0)+.)\mathbb{I}_{\beta}[|f|]. $$	
Letting $n\to\infty$, one has 
	$$	\mathbb{I}_{\beta}[|g|](x_0,t_0)\lesssim_\beta	\mathbb{I}_{\beta}[|f|](x_0,t_0).$$
This implies \eqref{Z1}. The proof is complete. 	
\end{proof}
   \begin{definition} 
      Let $s>1$, $\alpha>0$. We define the space $\mathfrak{M}^{\mathcal{H}_\alpha,s}(\mathbb{R}^{N+1})$ ($\mathfrak{M}^{\mathcal{G}_\alpha,s}(\mathbb{R}^{N+1})$ resp.) to be the set of all measure $\mu\in \mathfrak{M}(\mathbb{R}^{N+1})$ such that        
        \begin{align*}
        &[\mu]_{\mathfrak{M}^{\mathcal{H}_\alpha,s}(\mathbb{R}^{N+1})}:=\sup\left\{|\mu|(K)/\text{Cap}_{\mathcal{H}_\alpha,s}(K):~\text{Cap}_{\mathcal{H}_\alpha,s}(K)>0\right\}<\infty,
        \\&\left( [\mu]_{\mathfrak{M}^{\mathcal{G}_\alpha,s}(\mathbb{R}^{N+1})}:=\sup\left\{|\mu|(K)/\text{Cap}_{\mathcal{G}_\alpha,s}(K):\text{Cap}_{\mathcal{G}_\alpha,s}(K)>0\right\}<\infty~\text{resp.}\right)
        \end{align*}
where the supremum is taken all compact sets $K\subset\mathbb{R}^{N+1}$.\\         
        For simplicity, we will write $\mathfrak{M}^{\mathcal{H}_\alpha,s},\mathfrak{M}^{\mathcal{G}_\alpha,s}$ to denote  $\mathfrak{M}^{\mathcal{H}_\alpha,s}(\mathbb{R}^{N+1}),\mathfrak{M}^{\mathcal{G}_\alpha,s}(\mathbb{R}^{N+1})$ resp.
      \end{definition}
      We see that if $\alpha s\geq N+2$, $\mathfrak{M}^{\mathcal{H}_\alpha,s}(\mathbb{R}^{N+1})=\{0\}$, if $\alpha s<N+2$, $\mathfrak{M}^{\mathcal{H}_\alpha,s}(\mathbb{R}^{N+1})\subset \mathfrak{M}^{\mathcal{G}_\alpha,s}(\mathbb{R}^{N+1})$. 
       On the other hand, $\mathfrak{M}^{\mathcal{G}_\alpha,s}(\mathbb{R}^{N+1})\supset\mathfrak{M}_b(\mathbb{R}^{N+1})$ if $\alpha s>N+2$.\\
   We now have the following two remarks: 
    \begin{remark}\label{5hh2403201410} There holds
     \begin{align}\label{5hh240320149}
     [f]_{\mathfrak{M}^{\mathcal{G}_\alpha,p}}\lesssim_{\alpha,s} [|f|^s]^{1/s}_{\mathfrak{M}^{\mathcal{G}_\alpha,p}}~~\text{ for all function } f.
     \end{align}
     Indeed,  set $a=[|f|^s]_{\mathfrak{M}^{\mathcal{G}_\alpha,p}}$, so for any compact set $K$ in $\mathbb{R}^{N+1}$,
    $$
     \int_{K}|f|^sdxdt\lesssim_{\alpha,s} \text{Cap}_{\mathcal{G}_\alpha,p}(K).
     $$
     This gives $2a \text{Cap}_{\mathcal{G}_\alpha,p}(K)\gtrsim_{\alpha,s} \int_{K}\left(|f|^s+a\right)
          \gtrsim_{\alpha,s} a^{1-1/s}\int_{K}|f|,$
     here we used \eqref{5hh240320147} in Remark \ref{5hh240320148} at the first inequality  and H\"older's inequality  at the second one. 
     It follows \eqref{5hh240320149}.
    \end{remark}
    \begin{remark}\label{5hh130320146} Assume that $p>1$ and $\frac{2}{p}<\alpha<N+\frac{2}{p}$. 
      Clearly, from Corollary \ref{5hh250320147} we obtain  that for $\omega\in\mathfrak{M}^+(\mathbb{R}^{N})$
$$
           \left[\omega\otimes\delta_{\{t=0\}}\right]_{\mathfrak{M}^{\mathcal{H}_\alpha,p}}\sim_{\alpha,p}\left[\omega\right]_{\mathfrak{M}^{\mathbf{I}_{\alpha-2/p},p}}, \quad \left[\omega\otimes\delta_{\{t=0\}}\right]_{\mathfrak{M}^{\mathcal{G}_\alpha,p}}\sim_{\alpha,p} \left[\omega\right]_{\mathfrak{M}^{\mathbf{G}_{\alpha-2/p},p}}.
$$
        Here $\mathfrak{M}^{\mathbf{I}_{\alpha-2/p},p}:=\mathfrak{M}^{\mathbf{I}_{\alpha-2/p},p}(\mathbb{R}^N)$ , $\mathfrak{M}^{\mathbf{G}_{\alpha-2/p},p}:=\mathfrak{M}^{\mathbf{G}_{\alpha-2/p},p}(\mathbb{R}^N)$ and 
        \begin{align*}
       & [\omega]_{\mathfrak{M}^{\mathbf{I}_{\alpha-2/p},p}(\mathbb{R}^N)}:=\sup\left\{\omega(K)/\text{Cap}_{\mathbf{I}_{\alpha-2/p},p}(K):~\text{Cap}_{\mathbf{I}_{\alpha-2/p},p}(K)>0\right\},\\& [\omega]_{\mathfrak{M}^{\mathbf{G}_{\alpha-2/p},p}(\mathbb{R}^N)}:=\sup\left\{\omega(K)/\text{Cap}_{\mathbf{G}_{\alpha-2/p},p}(K):~\text{Cap}_{\mathbf{G}_{\alpha-2/p},p}(K)>0\right\},       
        \end{align*}
             where the supremum is taken all compact sets $K\subset\mathbb{R}^{N}$.             
      \end{remark}
      Clearly, Theorem \ref{5hh051120131} and Proposition \ref{5hh140420143} lead to the following result.
      \begin{proposition} Let $q>p-1$, $s>1$ and $0<\alpha p<N+2$. Then the following quantities are equivalent
      \begin{equation*}
      \left[\left(\mathbb{W}^R_{\alpha,p}[\mu]\right)^{q}\right]_{\mathfrak{M}^{\mathcal{H}_\alpha,s}},~~\left[\left(\mathbb{I}^R_{\alpha p}[\mu]\right)^{\frac{q}{p-1}}\right]_{\mathfrak{M}^{\mathcal{H}_\alpha,s}}~\text{ and }~\left[\left(\mathbb{M}^R_{\alpha p}[\mu]\right)^{\frac{q}{p-1}}\right]_{\mathfrak{M}^{\mathcal{H}_\alpha,s}},
      \end{equation*}
       for every $\mu\in \mathfrak{M}^{+}(\mathbb{R}^{N+1})$ and $0<R\leq\infty$.       
      \end{proposition}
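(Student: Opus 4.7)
The plan is to reduce the equivalence of the three capacitary Morrey norms to a collection of weighted $L^1$ equivalences, and then transfer those back to the capacitary setting via Proposition \ref{5hh140420143}. Write $f_1 = (\mathbb{W}^R_{\alpha,p}[\mu])^{q}$, $f_2 = (\mathbb{I}^R_{\alpha p}[\mu])^{q/(p-1)}$, $f_3 = (\mathbb{M}^R_{\alpha p}[\mu])^{q/(p-1)}$. Since Proposition \ref{5hh140423143} only transfers the capacitary bound in one direction, I will apply it three times, once with each $f_i$ playing the role of the ``controlled'' function, and exploit the symmetry to conclude pairwise equivalence. The case $\alpha s \geq N+2$ makes $\mathfrak{M}^{\mathcal{H}_\alpha,s}(\mathbb{R}^{N+1})$ trivial and requires no argument, so I focus on $\alpha s \leq N+2$ (which is the range in which Proposition \ref{5hh140420143} is applicable with exponent $s$).

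For the weighted equivalences, I would exploit Theorem \ref{5hh051120131} twice. A direct application with the triple $(\alpha,p,q)$ and $s=q$ already delivers
\[
\int f_1\,dw \;\approx\; \int f_3\,dw \qquad \forall\, w\in A_\infty.
\]
The Riesz potential is brought into the same framework by the elementary identification $\mathbb{W}^{R}_{\alpha p/2,\,2}[\mu]=\mathbb{I}^R_{\alpha p}[\mu]$, which one reads off the definition. Applying Theorem \ref{5hh051120131} with the admissible triple $(\alpha p/2,\,2,\,q/(p-1))$ — note $\alpha p<N+2$ gives $2<(\alpha p/2)^{-1}(N+2)$, and $q>p-1$ gives $q/(p-1)>1=2-1$ — and choosing the second Lorentz index equal to $q/(p-1)$, produces
\[
\int f_2\, dw \;\approx\; \int f_3\, dw \qquad \forall\, w\in A_\infty.
\]
Combined, the three quantities $f_1,f_2,f_3$ are pairwise equivalent in $L^1(\mathbb{R}^{N+1},dw)$ for every $w\in A_\infty$, with constants depending only on $N,\alpha,p,q$ and $[w]_{A_\infty}$.

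Finally, to transfer to the capacitary Morrey norm, fix any $\delta\in(0,\alpha)$. When $R=\infty$ the kernel $E_\alpha^{\infty,\delta}$ reduces to $E_\alpha$, and by Corollary \ref{5hh230120148} we have $\mathrm{Cap}_{E_\alpha,s}\approx\mathrm{Cap}_{\mathcal{H}_\alpha,s}$. Assuming $[f_i]_{\mathfrak{M}^{\mathcal{H}_\alpha,s}}<\infty$ for one index $i$, the defining inequality $\int_K f_i\,dxdt\le C\,\mathrm{Cap}_{\mathcal{H}_\alpha,s}(K)$ becomes $\int_K f_i\,dxdt\le C'\,\mathrm{Cap}_{E_\alpha^{\infty,\delta},s}(K)$; the weighted bound from Step~1 (valid, a fortiori, for $w\in A_1\subset A_\infty$) supplies condition 2 of Proposition \ref{5hh140420143} (with its exponent $p$ replaced by our $s$, which is allowed since $1<s\le\alpha^{-1}(N+2)$). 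The Proposition then yields $\int_K f_j\,dxdt\le C''\,\mathrm{Cap}_{E_\alpha^{\infty,\delta},s}(K)\le C'''\,\mathrm{Cap}_{\mathcal{H}_\alpha,s}(K)$ for $j\ne i$, and swapping the role of $i$ closes the equivalence.

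The only nontrivial point I foresee is the verification that Theorem \ref{5hh051120131} can be reused with $(\alpha p/2,2)$ in place of $(\alpha,p)$ to deliver the Riesz--Maximal comparison; once this is recognized, every remaining ingredient is purely a parameter check and a direct invocation of Proposition \ref{5hh140420143} and Corollary \ref{5hh230120148}.
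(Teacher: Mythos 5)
Your proposal is correct and fills in exactly the argument the paper leaves implicit: the paper cites only Theorem~\ref{5hh051120131} and Proposition~\ref{5hh140420143}, and your write-up shows how to chain them (plus the capacity equivalence of Corollary~\ref{5hh230120148}, which the paper also uses implicitly). The key observation $\mathbb{W}^{R}_{\alpha p/2,\,2}[\mu]=\mathbb{I}^R_{\alpha p}[\mu]$, allowing a second application of Theorem~\ref{5hh051120131} to compare the Riesz potential with the fractional maximal function, is precisely the right way to bring $f_2$ into the Wolff framework, and your parameter checks ($2<(\alpha p/2)^{-1}(N+2)$ from $\alpha p<N+2$, $q/(p-1)>1$ from $q>p-1$, and $1<s\le\alpha^{-1}(N+2)$ for Proposition~\ref{5hh140420143}) are all sound.
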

           In the next result, we present a characterization of the following trace inequality:                    
  \begin{equation}\label{5hh04101}
        ||E^{R,\delta}_{\alpha}*f||_{L^p(\mathbb{R}^{N+1},d\mu)}\leq C_1||f||_{L^p(\mathbb{R}^{N+1})}~~\forall f\in L^{p}(\mathbb{R}^{N+1}).
        \end{equation} 
    \begin{theorem}\label{5hh1410136}
      Let $0<R\leq\infty$,$1< p<\alpha^{-1}(N+2)$, $0<\delta<\alpha$ and $\mu\in \mathfrak{M}^{+}(\mathbb{R}^{N+1})$. Then the following statements are equivalent. 
      \begin{description}
      \item[1.] The trace inequality \eqref{5hh04101} holds.
      
      \item[2.] There holds
        \begin{equation}\label{5hh04102}
         ||E^{R,\delta}_{\alpha}*f||_{L^p(\mathbb{R}^{N+1},d\omega)}\leq C_2||f||_{L^p(\mathbb{R}^{N+1})}~~\forall f\in L^{p}(\mathbb{R}^{N+1}),
         \end{equation} 
         where $d\omega= (\mathbb{I}^{R,\delta}_{\alpha}\mu)^{p'}dxdt $.
    \item[3.]There holds
       \begin{equation}\label{5hh04103}
       ||E^{R,\delta}_{\alpha}*f||_{L^{p,\infty}(\mathbb{R}^{N+1},d\mu)}\leq C_3||f||_{L^p(\mathbb{R}^{N+1})}~~\forall f\in L^{p}(\mathbb{R}^{N+1}).
       \end{equation} 
       \item[4.] For every compact set $E\subset \mathbb{R}^{N+1}$, 
       \begin{equation}\label{5hh04104}
       \mu(E)\leq C_4 Cap_{E^{R,\delta}_\alpha,p}(E).
       \end{equation}
       \item[5.] $\mathbb{I}_\alpha^{R,\delta}[\mu] <\infty$ a.e and 
       \begin{equation}\label{5hh04105}
       \mathbb{I}^{R,\delta}_\alpha[(\mathbb{I}^{R,\delta}_\alpha[\mu])^{p'}]\leq C_5 \mathbb{I}^{R,\delta}_\alpha [\mu] ~~a.e.
       \end{equation}
       \item[6.] For every compact set $E\subset \mathbb{R}^{N+1}$,
       \begin{equation}\label{5hh04106}
       \int_{E}(\mathbb{I}^{R,\delta}_\alpha[\mu])^{p'}dxdt\leq C_6 Cap_{E^{R,\delta}_\alpha,p}(E).
       \end{equation}
       \item[7.] For every compact set $E\subset \mathbb{R}^{N+1}$,
           \begin{equation}\label{5hh04107}
           \int_{\mathbb{R}^{N+1}}(\mathbb{I}^{R,\delta}_\alpha[\mu\chi_E])^{p'}dxdt\leq C_7 \mu(E).
           \end{equation}
       \item[8.] For every compact set $E\subset \mathbb{R}^{N+1}$,
               \begin{equation}\label{5hh04108}
               \int_{E}(\mathbb{I}^{R,\delta}_\alpha[\mu\chi_E])^{p'}dxdt\leq C_8 \mu(E).
               \end{equation}    
      \end{description}    
      \end{theorem}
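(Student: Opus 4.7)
The plan is to establish the eight statements as equivalent by running the standard Kerman--Sawyer--Maz'ya cycle, adapted to the parabolic kernel $E^{R,\delta}_\alpha$. The heavy lifting has already been done: Lemma \ref{5hh1310133} identifies $\text{Cap}_{E^{R,\delta/p'}_\alpha,p}$ with the discrete capacity $\text{Cap}_{\mathbb{P}_\alpha,p}$ and exhibits capacitary measures with controlled Wolff potential, Lemma \ref{5hh1310135} gives the weak-type capacitary bound for $\mathbb{W}^{R,\delta}_{\alpha,p}[\mu]$, and Proposition \ref{5hh140420143} together with Remark \ref{5hh150420142} yields the key self-improving Wolff estimate $\mathbb{I}^{R,\delta}_\alpha[(\mathbb{I}^{R,\delta}_\alpha[\mu])^{p'}] \lesssim (\mathbb{W}^{R,\delta}_{\alpha,p}[\mu])^{p'-1} \mathbb{I}^{R,\delta}_\alpha[\mu]$ on $\text{supp}(\mu)$.

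First I would dispatch the routine implications. The equivalence (1)$\Leftrightarrow$(2) is a duality argument: writing $\int (E^{R,\delta}_\alpha*f)^p d\mu = \int f \cdot E^{R,\delta}_\alpha*[(E^{R,\delta}_\alpha*f)^{p-1}\mu]\,dxdt$ and using the symmetry $E^{R,\delta}_\alpha(x,t)=E^{R,\delta}_\alpha(-x,-t)$, (1) applied twice iterates into (2), while (2)$\Rightarrow$(1) is immediate by Cauchy--Schwarz in the dual pairing. The implication (1)$\Rightarrow$(3) is Chebyshev, and (8)$\Rightarrow$(7) is trivial. For (3)$\Rightarrow$(4), fix a compact $E$ and a near-extremal $f\in L^p_+$ with $E^{R,\delta}_\alpha*f\ge \chi_E$; then $\mu(E)\le \mu(\{E^{R,\delta}_\alpha*f\ge 1\})\le C_3^p\|f\|_{L^p}^p$, and infimizing yields (4).

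Next I would handle the Maz'ya block (4)$\Leftrightarrow$(6)$\Leftrightarrow$(7)$\Leftrightarrow$(8)$\Leftrightarrow$(5). For (4)$\Rightarrow$(5): testing (4) against the level sets of $\mathbb{I}^{R,\delta}_\alpha[\mu]$ and integrating the layer-cake, Fubini converts $\int (\mathbb{I}^{R,\delta}_\alpha[\mu])^{p'} dxdt$ on a compact $E$ into a Wolff-type integral, whose boundedness follows from Proposition \ref{5hh140420143} applied with $f=\chi_E$ and $g=(\mathbb{I}^{R,\delta}_\alpha[\mu\chi_E])^{p'}$, and this gives simultaneously (6), (7), (8). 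The pointwise statement (5) is obtained by applying the same Wolff representation on a dyadic decomposition of cylinders around $(x,t)$, using Lemma \ref{5hh1310136} to control averages of $\mathbb{W}^{R,\delta/p'}_{\alpha,p}[\mu]$. Conversely, (5)$\Rightarrow$(7) is a direct integration against $\mu$ and Fubini, while (7)$\Rightarrow$(4) follows from the capacitary characterization $\text{Cap}_{E^{R,\delta}_\alpha,p}(E)^{1/p}=\sup\{\nu(E):\|\mathbb{I}^{R,\delta}_\alpha[\nu]\|_{L^{p'}}\le 1\}$ by taking $\nu=\mu\chi_E$.

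The main obstacle is the closing implication (4)$\Rightarrow$(1), which upgrades a pure capacity condition to the strong-type trace inequality. My plan is the Maz'ya dyadic-Wolff approach: for $f\in L^p_+$ and $\lambda>0$, set $E_\lambda=\{E^{R,\delta}_\alpha*f>\lambda\}$; using Lemma \ref{5hh1310133} choose capacitary measure $\sigma_\lambda$ of $E_\lambda$ with $\mathbb{W}^{R,\delta}_{\alpha,p}[\sigma_\lambda]\lesssim 1$ and $\sigma_\lambda(E_\lambda)\approx \text{Cap}_{E^{R,\delta}_\alpha,p}(E_\lambda)$. By duality $\lambda \sigma_\lambda(E_\lambda)\le \int(E^{R,\delta}_\alpha*f)\,d\sigma_\lambda = \int f \cdot \mathbb{I}^{R,\delta}_\alpha[\sigma_\lambda] dxdt$. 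Hypothesis (4) converts $\mu(E_\lambda)$ to $\text{Cap}_{E^{R,\delta}_\alpha,p}(E_\lambda)$, and integrating the layer-cake $\int(E^{R,\delta}_\alpha*f)^p d\mu = p\int_0^\infty \lambda^{p-1}\mu(E_\lambda)d\lambda$ and applying Hölder in the Wolff--Riesz pairing together with the self-improvement from the already-proved equivalence (4)$\Leftrightarrow$(6) to absorb the $\sigma_\lambda$-integral, one closes with $\|E^{R,\delta}_\alpha*f\|_{L^p(d\mu)}\le C\|f\|_{L^p}$. The delicate point is the absorption step, which requires the Wolff bound $\int(\mathbb{I}^{R,\delta}_\alpha[\sigma_\lambda])^{p'}dxdt\lesssim \sigma_\lambda(E_\lambda)$ from (7), only available after establishing the Maz'ya block first; hence the correct logical order is (4)$\Rightarrow$(7)$\Rightarrow$(1), completing the cycle and finishing the theorem.
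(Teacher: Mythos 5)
The central flaw is your claim that ``$(8)\Rightarrow(7)$ is trivial.'' It is not; in fact only the reverse implication $(7)\Rightarrow(8)$ is trivial, since the integration domain shrinks from $\mathbb{R}^{N+1}$ to $E$. To pass from the localized bound $\int_E(\mathbb{I}^{R,\delta}_\alpha[\mu\chi_E])^{p'}\le C_8\mu(E)$ to the global bound $\int_{\mathbb{R}^{N+1}}(\mathbb{I}^{R,\delta}_\alpha[\mu\chi_E])^{p'}\le C_7\mu(E)$ one must control the tail of the potential off $E$, and this is precisely the implication the paper singles out as the only one that does \emph{not} follow from the standard Verbitsky machinery. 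The paper reduces the whole theorem to proving $(8)\Rightarrow(5)$, and devotes the entire argument to it.

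Concretely, the paper's proof of $(8)\Rightarrow(5)$ proceeds in several non-obvious steps: it first extracts from $(8)$, tested on $E=\tilde Q_r(x,t)$, the growth bound $\mu(\tilde Q_r(x,t))\lesssim r^{N+2-\alpha p}\min\{1,(r/R)^{-\delta}\}^{-p}$; it then splits $\mathbb{I}^{R,\delta}_\alpha[\mu]=L_r[\mu]+U_r[\mu]$ into far and near parts, controls the near-part measure $d\sigma_{2,r}=(U_r[\mu])^{p'}dxdt$ by a second application of $(8)$ to the doubled cylinder, and controls the far-part measure $d\sigma_{1,r}=(L_r[\mu])^{p'}dxdt$ through the uniform comparability of $L_r$ over $\tilde Q_r$ followed by an integration-by-parts in $r$ that invokes the growth bound from the first step. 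None of this is reproduced by your proposed route ``$(4)\Rightarrow(5)$ via layer-cake, Fubini, and Lemma \ref{5hh1310136}''; that sketch does not explain how one gets the \emph{pointwise} inequality in $(5)$, and it does not establish a path \emph{from} $(8)$ back into the cycle. Until $(8)\Rightarrow(5)$ (or $(8)\Rightarrow(7)$) is actually proved, your cycle is not closed and the theorem does not follow. The remaining parts of your outline (the duality for $(1)\Leftrightarrow(2)$, Chebyshev for $(1)\Rightarrow(3)$, the capacitary characterization for $(7)\Rightarrow(4)$, the Wolff cycle for $(4)\Rightarrow(1)$) are consistent with the standard references the paper cites, so that portion is fine.
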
         
      We can find a simple sufficient condition on $\mu$ so that  trace inequality  \eqref{5hh04101} is satisfied from the isoperimetric inequality \eqref{5hh080120142}. \\    
   \begin{proof}[Proof of Theorem \ref{5hh1410136}]As in \cite{55Verbi} we can show that $1 \Leftrightarrow 2\Leftrightarrow3 \Leftrightarrow4 \Leftrightarrow 6 \Leftrightarrow7$ and $7\Rightarrow 8, 5\Rightarrow 2$. Thus, it is enough   to show that $8.\Rightarrow 5$.\\
       First, we need to show that 
        \begin{equation}\label{5hh08101}
              \left(\int_{r}^{\infty}\frac{\mu(\tilde{Q}_\rho(x,t))}{\rho^{N+2-\alpha}}\min\{1,\left(\frac{\rho}{R}\right)^{-\delta}\}\frac{d\rho}{\rho}\right)^{p'-1}\lesssim r^{-\alpha}\left(\min\{1,\left(\frac{r}{R}\right)^{-\delta}\}\right)^{-1}
              \end{equation}
       We have for any $(y,s)\in \tilde{Q}_r(x,t)$ 
       \begin{align*}
       \mathbb{I}^{R,\delta}_\alpha[\mu\chi_{\tilde{Q}_r(x,t)}](y,s)&\geq  \int_{2r}^{4r}\frac{\mu(\tilde{Q}_r(x,t)\cap \tilde{Q}_\rho(y,s))}{\rho^{N+2-\alpha}}\min\left\{1,\left(\frac{\rho}{R}\right)^{-\delta}\right\}\frac{d\rho}{\rho}
              \\&\gtrsim\frac{\mu(\tilde{Q}_r(x,t))}{r^{N+2-\alpha}}\min\left\{1,\left(\frac{r}{R}\right)^{-\delta}\right\}.
       \end{align*}       
       In \eqref{5hh04108}, we take $E=\tilde{Q}_r(x,t)$
$$
        \mu(\tilde{Q}_r(x,t))\gtrsim \int_{\tilde{Q}_r(x,t)}(\mathbb{I}_\alpha[\mu\chi_{\tilde{Q}_r(x,t)}])^{p'}\gtrsim  \left(\frac{\mu(\tilde{Q}_r(x,t))}{r^{N+2-\alpha}}\min\left\{1,\left(\frac{r}{R}\right)^{-\delta}\right\}\right)^{p'}|\tilde{Q}_r|.$$
      So
       $
       \mu(\tilde{Q}_r(x,t))\lesssim r^{N+2-\alpha p}\left(\min\left\{1,\left(\frac{r}{R}\right)^{-\delta}\right\}\right)^{-p}
       $
       which implies \eqref{5hh08101}.\\
      Next we set 
      \begin{equation*}
      L_r[\mu](x,t)=\int_{r}^{+\infty}\frac{\mu(\tilde{Q}_\rho(x,t))}{\rho}\min\{1,\left(\frac{\rho}{R}\right)^{-\delta}\}\frac{d\rho}{\rho},
      \end{equation*}
      \begin{align*}
      U_r[\mu](x,t)=\int_{0}^{r}\frac{\mu(\tilde{Q}_\rho(x,t))}{\rho}\min\{1,\left(\frac{\rho}{R}\right)^{-\delta}\}\frac{d\rho}{\rho},
      \end{align*}
            and \begin{equation*}
           d\omega= (I_{\alpha}\mu)^{p'}dxdt,~~~ d\sigma_{1,r}=\left(L_r[\mu]\right)^{p'}dxdt,~~~d\sigma_{2,r}=\left(U_r[\mu]\right)^{p'}dxdt .
            \end{equation*}
            We have 
            $
            d\omega\leq 2^{p'-1}\left(d\sigma_{1,r}+d\sigma_{2,r}\right).
           $
            To prove \eqref{5hh04105} we need to show that
            \begin{align}
\label{5hh08102}&\int_{0}^{\infty}\frac{\sigma_{1,r}(\tilde{Q}_{r}(x,t))}{r^{N+2-\alpha}}\min\{1,\left(\frac{r}{R}\right)^{-\delta}\}\frac{dr}{r}\lesssim \mathbb{I}^{R,\delta}_\alpha[\mu](x,t), \\& \label{5hh08103} \int_{0}^{\infty}\frac{\sigma_{2,r}(\tilde{Q}_{r}(x,t))}{r^{N+2-\alpha}}\min\{1,\left(\frac{r}{R}\right)^{-\delta}\}\frac{dr}{r}\lesssim \mathbb{I}^{R,\delta}_\alpha[\mu](x,t).        \end{align}             
            Since, for all $r>0$, $0<\rho<r$ and $(y,s)\in \tilde{Q}_r(x,t)$ we have $\tilde{Q}_\rho(y,s)\subset \tilde{Q}_{2r}(x,t)$. So, 
            \begin{equation*}
            \sigma_{2,r}(\tilde{Q}_r(x,t))=\int_{\tilde{Q}_r(x,t)}\left(U_r[\mu]\right)^{p'}=\int_{\tilde{Q}_r(x,t)}\left(U_r[\mu\chi_{\tilde{Q}_{2r}(x,t)}]\right)^{p'}.
            \end{equation*}
            Thus, from \eqref{5hh04108} we get for $\tilde{Q}_{2r}=\tilde{Q}_{2r}(x,t)$
$$
            \sigma_{2,r}(\tilde{Q}_r(x,t))\leq  \int_{\tilde{Q}_{2r}}\left(U_r[\mu\chi_{\tilde{Q}_{2r}}]\right)^{p'}\leq  \int_{\tilde{Q}_{2r}}\left(\mathbb{I}^{R,\delta}_\alpha[\mu\chi_{\tilde{Q}_{2r}}]\right)^{p'}
                 \lesssim \mu (\tilde{Q}_{2r}).$$
            Therefore, \eqref{5hh08103} follows.\\
            Since, for all $r>0$, $\rho\geq r$ and $(y,s)\in \tilde{Q}_r(x,t)$ we have $\tilde{Q}_\rho(y,s)\subset \tilde{Q}_{2\rho}(x,t)$. So, for all $(y,s)\in \tilde{Q}_r(x,t)$ we have 
$$
             L_r[\mu](y,s)\leq \int_{r}^{+\infty}\frac{\mu(\tilde{Q}_{2\rho}(x,t))}{\rho^{N+2-\alpha}}\min\left\{1,\left(\frac{\rho}{R}\right)^{-\delta}\right\}\frac{d\rho}{\rho}\lesssim  L_r[\mu](x,t).$$
            Hence, 
$$
            \sigma_{1,r}(\tilde{Q}_r(x,t))= \int_{\tilde{Q}_r(x,t)}\left(L_r[\mu](y,s)\right)^{p'}dyds
                      \lesssim r^{N+2}\left(L_r[\mu](x,t)\right)^{p'}.$$
            Since $r^{\alpha-1}\min\{1,\left(\frac{r}{R}\right)^{-\delta}\}\leq\frac{1}{\alpha-\delta}\frac{d}{dr}\left(r^{\alpha}\min\{1,\left(\frac{r}{R}\right)^{-\delta}\}\right)$, we deduce that 
            \begin{align*}
            &\int_{0}^{\infty}\frac{\sigma_{1,r}(\tilde{Q}_{r}(x,t))}{r^{N+2-\alpha}}\min\left\{1,\left(\frac{r}{R}\right)^{-\delta}\right\}\frac{dr}{r}\\&\quad\quad\quad\quad\lesssim\int_{0}^{\infty}r^{\alpha-1}\left(L_r[\mu](x,t)\right)^{p'}\min\{1,\left(\frac{r}{R}\right)^{-\delta}\}dr\\&\quad\quad\quad\quad\lesssim   \int_{0}^{\infty}\frac{d}{dr}\left(r^{\alpha}\min\{1,\left(\frac{r}{R}\right)^{-\delta}\}\right)\left(L_r[\mu](x,t)\right)^{p'}dr\\&\quad\quad\quad\quad\lesssim \int_{0}^{\infty}r^{\alpha}\left(L_r[\mu](x,t)\right)^{p'-1}\frac{\mu(\tilde{Q}_{r}(x,t))}{r^{N+2-\alpha}}\min\left\{1,\left(\frac{r}{R}\right)^{-\delta}\right\}^2\frac{dr}{r}.
            \end{align*}   
        Combining this with  \eqref{5hh08101}, one gets \eqref{5hh08102}.
            The proof is complete.
   \end{proof}
   \begin{remark}\label{5hh260320141}It is easy to assert that if $\textbf{8.}$ holds then for any $0<\beta<N+2$
   \begin{align}\label{5hh260320142}
  \mathbb{I}_{\beta}\left[\left(\mathbb{I}^{R,\delta}_\alpha[\mu]\right)^{p'}\right] \lesssim \mathbb{I}_{\beta}[\mu].
   \end{align}
 
   \end{remark}
   \begin{corollary}\label{5hh250320145}
   Let $p>1, \alpha>0$ be such that $0<\alpha p<N+2$. There holds
            \begin{equation}\label{5hh250320143}
           \left[\left(\mathbb{I}_{\alpha }[\mu]\right)^{p'}\right]_{\mathfrak{M}^{\mathcal{H}_\alpha,p}}\sim\left[\mu\right]^{p'}_{\mathfrak{M}^{\mathcal{H}_\alpha,p}}
            \end{equation}
            for all $\mu\in\mathfrak{M}^+(\mathbb{R}^{N+1} )$.
            Furthermore,
            \begin{align}
            \label{5hh250320141}
            \left[\varphi_n*\mu\right]_{\mathfrak{M}^{\mathcal{H}_\alpha,p}}\lesssim \left[\mu\right]_{\mathfrak{M}^{\mathcal{H}_\alpha,p}}
            \end{align}for $n\in\mathbb{N}$,  $\mu\in\mathfrak{M}^+(\mathbb{R}^{N+1} )$ where $\left\{  \varphi_{n}\right\}$ is a sequence of mollifiers in $\mathbb{R}^{N+1}$.
   \end{corollary}
   \begin{proof}
   For $R=\infty$ we have $\mathbb{I}_\alpha^{R,\delta}[\mu]=\mathbb{I}_\alpha[\mu]$  and $E_\alpha^{R,\delta}=E_\alpha$. Thus, by \eqref{5hh230120144} in Corollary \ref{5hh230120148} and Theorem  \ref{5hh1410136} we get 
for every compact set $E\subset \mathbb{R}^{N+1}$, 
     $$
       \mu(E)\leq c \text{Cap}_{\mathcal{H}_\alpha,p}(E)
     $$
     if and only if for every compact set $E\subset \mathbb{R}^{N+1}$, 
          $$
            \int_{E}\left(\mathbb{I}_\alpha[\mu]\right)^{p'}dxdt\leq c' \text{Cap}_{\mathcal{H}_\alpha,p}(E).
          $$
 It follows \eqref{5hh250320143}.     \\
 Since $\mathbb{I}_\alpha[\varphi_n*\mu]=\varphi_n*\mathbb{I}_\alpha[\mu]\leq \mathbb{M}\left(\mathbb{I}_\alpha[\mu]\right)$ and $\mathbb{M}$ is bounded in $L^{p'}(\mathbb{R}^{N+1},dw)$ with $w\in A_{p'}$ ,one gets$$
 \int_{\mathbb{R}^{N+1}}\left(\mathbb{I}_\alpha[\varphi_n*\mu]\right)^{p'}dw\lesssim_{[w]_{A_{p'}}} \int_{\mathbb{R}^{N+1}}\left(\mathbb{I}_\alpha[\mu]\right)^{p'}dw.$$
 Thanks to Proposition \ref{5hh140420143} we have
 $$
  \left[\left(\mathbb{I}_{\alpha }[\varphi_n*\mu]\right)^{p'}\right]_{\mathfrak{M}^{\mathcal{H}_\alpha,p}}\lesssim \left[\left(\mathbb{I}_{\alpha }[\mu]\right)^{p'}\right]_{\mathfrak{M}^{\mathcal{H}_\alpha,p}},$$
 which implies \eqref{5hh250320141}. 
   \end{proof}
   \begin{corollary}\label{5hh250320146}
   Let $p>1$, $\alpha>0$ with $0<\alpha p\leq N+2$,  $0<\delta<\alpha$ and $R,d>0$. There holds
               \begin{equation}\label{5hh250320144}
              \left[\left(\mathbb{I}_{\alpha }^{R,\delta}[\mu]\right)^{p'}\right]_{\mathfrak{M}^{\mathcal{G}_\alpha,p}}\lesssim_{d,R} \left[\mu\right]^{p'}_{\mathfrak{M}^{\mathcal{G}_\alpha,p}}
               \end{equation}
               for all $\mu\in\mathfrak{M}^+(\mathbb{R}^{N+1} )$ with $\text{diam}(\text{supp}(\mu))\leq d$.  Furthermore, 
                           \begin{align}
                           \label{5hh250320142}
                           \left[\varphi_n*\mu\right]_{\mathfrak{M}^{\mathcal{G}_\alpha,p}}\lesssim_{d} \left[\mu\right]_{\mathfrak{M}^{\mathcal{G}_\alpha,p}}
                           \end{align}for $n\in\mathbb{N}$,  $\mu\in\mathfrak{M}^+(\mathbb{R}^{N+1} )$ with $\text{diam}(\text{supp}(\mu))\leq d$ where $\left\{  \varphi_{n}\right\}$ is a sequence of standard mollifiers in $\mathbb{R}^{N+1}$.
   \end{corollary}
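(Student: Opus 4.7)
The plan is to mirror the proof of Corollary~\ref{5hh250320145}, replacing the role of $\text{Cap}_{\mathcal{H}_\alpha,p}$ by $\text{Cap}_{\mathcal{G}_\alpha,p}$. The main engines are Theorem~\ref{5hh1410136} (equivalence of the trace inequality with the bound on $\mathbb{I}_\alpha^{R,\delta}[\mu]$), Corollary~\ref{5hh230120148}(b) (equivalence of $\text{Cap}_{\mathcal{G}_\alpha,p}$ and $\text{Cap}_{E_\alpha^R,p}$), and Proposition~\ref{5hh140420143} (transfer principle via $A_1$-weighted inequalities). The extra work compared with the Hardy case comes from the fact that Theorem~\ref{5hh1410136} is stated with the kernel $E_\alpha^{R,\delta}$, which agrees with $E_\alpha^R$ only on the cylinder of radius $R$, so a comparison of the two capacities must be established under the diameter constraint $\text{diam}(\text{supp}(\mu))\le d$.

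First I would establish the key capacity comparison: for every compact $F$ with $\text{diam}(F)\le d$,
\begin{equation*}
\text{Cap}_{E_\alpha^R,p}(F)\le C(R,d)\,\text{Cap}_{E_\alpha^{R,\delta},p}(F).
\end{equation*}
The reverse inequality is trivial from $E_\alpha^{R,\delta}\ge E_\alpha^R$. For the displayed inequality I would argue by duality: pick $\nu\ge 0$ supported in $F$ with $\|E_\alpha^R*\nu\|_{L^{p'}}\le 1$ and $\nu(F)$ close to $\text{Cap}_{E_\alpha^R,p}(F)^{1/p}$. Since $(E_\alpha^{R,\delta}-E_\alpha^R)(z)$ is supported where the parabolic maximum of $z$ exceeds $R$ and decays there like $R^\delta|z|^{-(N+2-\alpha+\delta)}$, a direct integral estimate using $\text{diam}(\text{supp}(\nu))\le d$ gives $\|(E_\alpha^{R,\delta}-E_\alpha^R)*\nu\|_{L^{p'}}\le c(R,d)\,\nu(F)$. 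The triangle inequality then yields $\|E_\alpha^{R,\delta}*\nu\|_{L^{p'}}\le 1+c(R,d)\,\nu(F)$, and a short case analysis on whether $c(R,d)\nu(F)$ is small or large (using the a priori bound $\nu(F)\le \text{Cap}_{E_\alpha^R,p}(\tilde Q_d(0,0))^{1/p}\le C(R,d)$) produces the desired comparison. Combined with Corollary~\ref{5hh230120148}(b) this upgrades to $\text{Cap}_{\mathcal{G}_\alpha,p}(F)\le C(R,d)\,\text{Cap}_{E_\alpha^{R,\delta},p}(F)$.

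With this comparison in hand, the first inequality of the statement follows as in the proof of Corollary~\ref{5hh250320145}. After translating we may assume $\text{supp}(\mu)\subset\tilde Q_d(0,0)$. For any compact $K$, the hypothesis on $\mu$ together with the capacity comparison applied to $K\cap \text{supp}(\mu)$ gives $\mu(K)=\mu(K\cap\text{supp}(\mu))\le C(R,d)\,[\mu]_{\mathfrak{M}^{\mathcal{G}_\alpha,p}}\,\text{Cap}_{E_\alpha^{R,\delta},p}(K)$. The implication $4\Rightarrow 6$ of Theorem~\ref{5hh1410136} then yields $\int_K(\mathbb{I}_\alpha^{R,\delta}[\mu])^{p'}\,dxdt\le C'(R,d)\,[\mu]^{p'}\,\text{Cap}_{E_\alpha^{R,\delta},p}(K)$, and the easy inequality $\text{Cap}_{E_\alpha^{R,\delta},p}(K)\le \text{Cap}_{E_\alpha^R,p}(K)\le C(R)\,\text{Cap}_{\mathcal{G}_\alpha,p}(K)$ closes the estimate. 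For the mollification bound I would set $R=1$ and invoke Proposition~\ref{5hh140420143} with $f$ a smooth approximation of $\mu$ and $g=\varphi_n*f$: Condition~1 there follows from the bound just proved (specialized to $R=1$), while Condition~2 is the standard $A_1$-estimate $\int(\varphi_n*f)\,w\,dxdt\le c\,[w]_{A_1}\int f\,w\,dxdt$, obtained from Fubini together with $\varphi_n*w\le \mathbb{M}(w)\le [w]_{A_1}\,w$ a.e. Passing to the limit in the approximation yields the bound for $\varphi_n*\mu$ with a constant depending only on $d$.

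The step I expect to be the main obstacle is the capacity comparison $\text{Cap}_{E_\alpha^R,p}(F)\le C(R,d)\,\text{Cap}_{E_\alpha^{R,\delta},p}(F)$, since $E_\alpha^R$ has bounded support while $E_\alpha^{R,\delta}$ does not; the tail contribution to $\|E_\alpha^{R,\delta}*\nu\|_{L^{p'}}$ must be controlled purely from the bounded-diameter hypothesis on $\text{supp}(\nu)$, and the resulting constant necessarily degenerates both as $d/R\to\infty$ and as $R\to 0$, which is why the dependence $C_1(d/R,R)$ appears in the statement.
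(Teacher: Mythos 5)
Your handling of the first inequality \eqref{5hh250320144} is sound and follows essentially the same route as the paper: compare $\text{Cap}_{E_\alpha^{R,\delta},p}$ with $\text{Cap}_{E_\alpha^{R},p}$ on sets of diameter $\le d$ (the paper phrases this as a norm equivalence $\|E_\alpha^{R,\delta}*\mu\|_{L^{p'}} \approx \|E_\alpha^{R}*\mu\|_{L^{p'}}$ for measures with $\text{diam}(\text{supp}\,\mu)\le d$, from which the capacity equivalence follows by the dual characterization), then upgrade to $\text{Cap}_{\mathcal{G}_\alpha,p}$ via Corollary~\ref{5hh230120148}(b), and finally apply the implication $4\Rightarrow 6$ of Theorem~\ref{5hh1410136}. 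Your fuller account of the tail estimate for $(E_\alpha^{R,\delta}-E_\alpha^R)*\nu$ fills in a step the paper leaves as "it is easy to see," and the observations that the reverse inequality is trivial and that the constant must degenerate both as $d/R\to\infty$ and $R\to 0$ are correct.

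The argument for the mollification bound \eqref{5hh250320142}, however, has a genuine gap. You propose to feed Proposition~\ref{5hh140420143} with "$f$ a smooth approximation of $\mu$ and $g=\varphi_n*f$." This cannot be made to work. If $f$ is a smooth approximation of $\mu$ (say $f=\varphi_m*\mu$), then Condition~1 of Proposition~\ref{5hh140420143} asks for $\int_K f\le C\,\text{Cap}_{E_\alpha^{R,\delta},p}(K)$ with $C$ controlled by $[\mu]_{\mathfrak{M}^{\mathcal{G}_\alpha,p}}$ — but this is precisely the statement \eqref{5hh250320142} you are trying to prove (applied to $\varphi_m$ in place of $\varphi_n$), so the reasoning is circular. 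Moreover this does \emph{not} "follow from the bound just proved," since \eqref{5hh250320144} gives Condition~1 for the function $(\mathbb{I}_\alpha^{R,\delta}[\mu])^{p'}$, not for any smooth approximation of $\mu$. And $\mu$ itself cannot serve as $f$: the proposition requires $f\in L^1_{\text{loc}}$, whereas a measure $\mu$ with $[\mu]_{\mathfrak{M}^{\mathcal{G}_\alpha,p}}<\infty$ can be singular with respect to Lebesgue measure.

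The paper's route is different and avoids this. One takes $f=(\mathbb{I}_\alpha^{1,\delta}[\mu])^{p'}$ and $g=(\mathbb{I}_\alpha^{1,\delta}[\varphi_n*\mu])^{p'}$ — note that $g\neq\varphi_n*f$, but rather $g^{1/p'}=\varphi_n*(f^{1/p'})$, by the commutation $\mathbb{I}_\alpha^{1,\delta}[\varphi_n*\mu]=\varphi_n*\mathbb{I}_\alpha^{1,\delta}[\mu]$. Condition~1 for this $f$ is exactly \eqref{5hh250320144} with $R=1$. Condition~2 follows from $\varphi_n*\mathbb{I}_\alpha^{1,\delta}[\mu]\le\mathbb{M}\bigl(\mathbb{I}_\alpha^{1,\delta}[\mu]\bigr)$ and the boundedness of $\mathbb{M}$ on $L^{p'}(dw)$ for $w\in A_{p'}\supset A_1$, giving $\int g\,dw\le c([w]_{A_{p'}})\int f\,dw$; your proposed estimate $\int(\varphi_n*h)\,w\le [w]_{A_1}\int h\,w$ is the wrong shape here because the power $p'$ sits outside the convolution, not inside. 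Proposition~\ref{5hh140420143} then yields $\bigl[(\mathbb{I}_\alpha^{1,\delta}[\varphi_n*\mu])^{p'}\bigr]_{\mathfrak{M}^{\mathcal{G}_\alpha,p}}\le C[\mu]^{p'}_{\mathfrak{M}^{\mathcal{G}_\alpha,p}}$, and Theorem~\ref{5hh1410136} (the implication $6\Rightarrow 4$, applied to $\varphi_n*\mu$, whose support still has diameter $\le d+2$) converts this back into $[\varphi_n*\mu]_{\mathfrak{M}^{\mathcal{G}_\alpha,p}}\le C_2(d)[\mu]_{\mathfrak{M}^{\mathcal{G}_\alpha,p}}$.
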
 
  \begin{proof}
  It is easy to see that
$$
       ||E_\alpha^R[\mu]||_{L^{p'}(\mathbb{R}^{N+1})}\lesssim_{d/R} ||E_\alpha^{R,\delta}*\mu||_{L^{p'}(\mathbb{R}^{N+1})}\lesssim_{d/R} ||E_\alpha^R[\mu]||_{L^{p'}(\mathbb{R}^{N+1})}$$
        for any $\mu\in\mathfrak{M}^+(\mathbb{R}^{N+1})$ with $\text{diam}(\text{supp}(\mu))\leq d$,
        thus  $\text{Cap}_{E_\alpha^{R,\delta},p}(E)\sim_{d/R} \text{Cap}_{E_\alpha^{R},p}(E)$  for every compact set $E\subset\mathbb{R}^{N+1}$, $\text{diam}(E)\leq d$.
        Therefore, by Corollary
        \ref{5hh230120148} we get $\text{Cap}_{E_\alpha^{R,\delta},p}(E)\sim_{d,R} \text{Cap}_{\mathcal{G}_\alpha,p}(E)$ for every compact set $E\subset\mathbb{R}^{N+1}$, $\text{diam}(E)\leq d$. Thus, by Theorem  \ref{5hh1410136} and $\text{diam}(\text{supp}(\mu))\leq d$ we get that if 
        for every compact set $E\subset \mathbb{R}^{N+1}$, 
             $$
               \mu(E)\lesssim_{d,R} \text{Cap}_{\mathcal{G}_\alpha,p}(E),
             $$
           then  for every compact set $E\subset \mathbb{R}^{N+1}$, 
$$ \int_{E}\left(\mathbb{I}_\alpha^{R,\delta}[\mu]\right)^{p'}dxdt\lesssim_{d,R} \text{Cap}_{E_\alpha^{R,\delta},p}(E)\lesssim_{d,R}  \text{Cap}_{\mathcal{G}_\alpha,p}(E).$$
                   It follows \eqref{5hh250320144}. 
                   As in the Proof of Corollary \ref{5hh250320145}  we also have  for
                   $w\in A_{p'}$ 
$$
                    \int_{\mathbb{R}^{N+1}}\left(\mathbb{I}^{1,\delta}_\alpha[\varphi_n*\mu]\right)^{p'}dw\lesssim_{[w]_{A_{p'}}} \int_{\mathbb{R}^{N+1}}\left(\mathbb{I}^{1,\delta}_\alpha[\mu]\right)^{p'}dw.$$
                    Thanks to Proposition \ref{5hh140420143} and Theorem \ref{5hh1410136} we obtain \eqref{5hh250320142}.
  \end{proof}
\begin{remark}\label{5hh270320141} Likewise (see \cite[Lemma 5.7]{55Ph2}), we can verify that for $\frac{2}{p}<\alpha<N+\frac{2}{p}$,
$$\left[\varphi_{1,n}\star\omega_1\right]_{\mathfrak{M}^{\mathbf{I}_{\alpha-2/p},p}}\lesssim \left[\omega_1\right]_{\mathfrak{M}^{\mathbf{I}_{\alpha-2/p},p}},\quad\quad \left[\varphi_{1,n}\star\omega_2\right]_{\mathfrak{M}^{\mathbf{G}_{\alpha-2/p},p}}\lesssim_d \left[\omega_2\right]_{\mathfrak{M}^{\mathbf{G}_{\alpha-2/p},p}}, 
    $$
     for $n\in\mathbb{N}$ and $\omega_1,\omega_2\in\mathfrak{M}^+(\mathbb{R}^{N} )$ with $\text{diam}(\text{supp}(\omega_2))\leq d$ where  $\left\{  \varphi_{1,n}\right\}$ is a sequence of standard mollifiers in $\mathbb{R}^{N}$ and $[.]_{\mathfrak{M}^{\mathbf{I}_{\alpha-2/p},p}},[.]_{\mathfrak{M}^{\mathbf{G}_{\alpha-2/p},p}}$ was defined in Remark \ref{5hh130320146}.
    Hence, we obtain for $\frac{2}{p}<\alpha<N+\frac{2}{p}$
\begin{align*}
&\left[(\varphi_{1,n}*\omega_1)\otimes\delta_{\{t=0\}}\right]_{\mathfrak{M}^{\mathcal{H}_\alpha,p}}\lesssim \left[\omega_1\otimes\delta_{\{t=0\}}\right]_{\mathfrak{M}^{\mathcal{H}_\alpha,p}},\\&  \left[(\varphi_{1,n}*\omega_2)\otimes\delta_{\{t=0\}}\right]_{\mathfrak{M}^{\mathcal{G}_\alpha,p}}\lesssim_d \left[\omega_2\otimes\delta_{\{t=0\}}\right]_{\mathfrak{M}^{\mathcal{G}_\alpha,p}},
    \end{align*}for $n\in\mathbb{N}$ and $\omega_1,\omega_2\in\mathfrak{M}^+(\mathbb{R}^{N+1} )$, $\text{diam}(\text{supp}(\mu))\leq d$. 
\end{remark}   
      
   \begin{proposition} \label{5hh14101315}Let $q>1$, $0<\alpha q<N+2$, $0<R\leq \infty, 0<\delta<\alpha$ and $K>0$. Let $0\leq f\in L^q_{\text{loc}}(\mathbb{R}^{N+1})$. Let $C_4,C_5$ be constants in inequalities \eqref{5hh04104} and \eqref{5hh04105} in Theorem \ref{5hh1410136} with $p=q'$. Suppose that $\{u_n\}$ is a sequence of nonnegative measurable functions in $\mathbb{R}^{N+1}$  satisfying 
   \begin{align}
   \nonumber
     & u_{n+1}\leq K \mathbb{I}^{R,\delta}_\alpha[u_n^q]+f~~\forall n\in\mathbb{N},\\&~~~~~~~u_0\leq f.\label{5hh1410139}
   \end{align}
   Then, if for every compact set $E\subset \mathbb{R}^{N+1}$, 
       \begin{equation}\label{5hh1410133}
    \int_{E}f^qdxdt\leq C \text{Cap}_{E^{R,\delta}_\alpha,q'}(E)
       \end{equation}
       with 
       \begin{equation}
       \label{5hh1410134} C\leq C_4\left(\frac{2^{-q+1}}{C_5(q-1)}\left(\frac{q-1}{qK2^{q-1}}\right)^q\right)^{q-1},
       \end{equation}
       we have  
       \begin{equation}
       \label{5hh1410135}
       u_n\leq \frac{Kq2^{q-1}}{q-1}\mathbb{I}^{R,\delta}_{\alpha}[f^q]+f~~\forall n\in\mathbb{N}.
       \end{equation}
   \end{proposition}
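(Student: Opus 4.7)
The plan is to establish the bound by induction on $n$, writing $M := Kq2^{q-1}/(q-1)$ for brevity. The base case $n=0$ follows at once from $u_0 \leq f$ and positivity of $\mathbb{I}^{R,\delta}_{\alpha}$. Assume now that $u_n \leq M\,\mathbb{I}^{R,\delta}_{\alpha}[f^q] + f$. Using the recursion \eqref{5hh1410139}, monotonicity of the potential, and the elementary convexity inequality $(a+b)^q \leq 2^{q-1}(a^q+b^q)$ for $a,b\geq 0$, we get
\begin{align*}
u_{n+1} &\leq K\,\mathbb{I}^{R,\delta}_{\alpha}\!\left[\left(M\,\mathbb{I}^{R,\delta}_{\alpha}[f^q] + f\right)^{\!q}\right] + f \\
&\leq K\,2^{q-1} M^q\,\mathbb{I}^{R,\delta}_{\alpha}\!\left[\left(\mathbb{I}^{R,\delta}_{\alpha}[f^q]\right)^{\!q}\right] + K\,2^{q-1}\,\mathbb{I}^{R,\delta}_{\alpha}[f^q] + f.
\end{align*}

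The heart of the argument is the reabsorption of the iterated-potential term. Set $d\mu := f^q\,dx\,dt$; the hypothesis \eqref{5hh1410133} reads exactly $\mu(E) \leq C\,\text{Cap}_{E^{R,\delta}_{\alpha},q'}(E)$, that is, condition (4.) of Theorem \ref{5hh1410136} with $p=q'$ (so $p'=q$). By the equivalence (4.) $\Leftrightarrow$ (5.) in that theorem, combined with the natural scaling of those two conditions in $\mu$—(4.) is degree $1$ in $\mu$, while (5.) has degree $q$ on the left and degree $1$ on the right—we apply the theorem to the normalized measure $(C_4/C)\mu$ (which satisfies (4.) with constant exactly $C_4$, hence (5.) with constant $C_5$) and rescale back to $\mu$. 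This yields
$$\mathbb{I}^{R,\delta}_{\alpha}\!\left[\left(\mathbb{I}^{R,\delta}_{\alpha}[f^q]\right)^{\!q}\right] \leq C_5\,(C/C_4)^{q-1}\,\mathbb{I}^{R,\delta}_{\alpha}[f^q].$$

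Substituting this back, the induction closes provided the total coefficient of $\mathbb{I}^{R,\delta}_{\alpha}[f^q]$ does not exceed $M$, that is
$$K\,2^{q-1} M^q\,C_5\,(C/C_4)^{q-1} + K\,2^{q-1} \leq M.$$
Inserting the explicit value of $M$ and simplifying reduces this to a smallness condition on $C$ of precisely the form \eqref{5hh1410134}. The only nontrivial step is the homogeneity tracking of the constants between (4.) and (5.) of Theorem \ref{5hh1410136}; once it is in place, the remainder of the argument is routine iteration together with the convexity inequality for $x\mapsto x^q$. I do not anticipate any substantial obstacle beyond a careful bookkeeping of the constants.
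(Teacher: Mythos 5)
Your argument has the same skeleton as the paper's: induction, the convexity inequality $(a+b)^q\le 2^{q-1}(a^q+b^q)$, and reabsorption of the iterated potential via the trace-inequality equivalence of Theorem \ref{5hh1410136}. The scaling exponent $q-1$ that you derive is indeed the one dictated by homogeneity: condition (4.) is degree $1$ in $\mu$, while the left side of (5.) is degree $q$ and the right side is degree $1$, so under $\mu\mapsto\lambda\mu$ the compatible constant in (5.) scales like $\lambda^{q-1}$, giving exactly
$$\mathbb{I}^{R,\delta}_\alpha\bigl[(\mathbb{I}^{R,\delta}_\alpha[f^q])^q\bigr]\le C_5\,(C/C_4)^{q-1}\,\mathbb{I}^{R,\delta}_\alpha[f^q].$$
Note, however, that the paper's own display (inequality (5hh1410137') in its proof) carries the exponent $\tfrac{1}{q-1}$. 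These agree only at $q=2$; your value $q-1=p'-1$ (with $p=q'$) is the one the scaling argument gives, so the paper's $\tfrac1{q-1}=p-1$ reads like a $p$ vs.\ $p'$ slip.

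The genuine problem in your proposal is the last sentence, where you assert that inserting $M=\frac{Kq2^{q-1}}{q-1}$ ``reduces this to a smallness condition on $C$ of precisely the form \eqref{5hh1410134}.'' It does not, and the gap is not mere bookkeeping. Using $M-K2^{q-1}=\frac{K2^{q-1}}{q-1}$, the closing condition
$$K2^{q-1}M^qC_5(C/C_4)^{q-1}+K2^{q-1}\le M$$
simplifies to
$$(C/C_4)^{q-1}\le \frac{1}{(q-1)C_5M^q},\qquad\text{i.e.}\qquad C\le C_4\Bigl(\frac{1}{(q-1)C_5}\Bigl(\frac{q-1}{qK2^{q-1}}\Bigr)^{q}\Bigr)^{\frac{1}{q-1}},$$
whose \emph{exterior} exponent is $\tfrac1{q-1}$, not the $q-1$ that appears in \eqref{5hh1410134}. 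When $q\ge 2$ one can still pass from \eqref{5hh1410134} to the displayed condition, since $C/C_4\le 1$ and $q-1\ge \tfrac1{q-1}$ give $(C/C_4)^{q-1}\le (C/C_4)^{1/(q-1)}$, and the extra factor $2^{-q+1}$ only strengthens the hypothesis. But for $1<q<2$ the inequality between the two powers reverses, and \eqref{5hh1410134} as printed does not in general imply the condition your (correct) scaling requires. In other words, either \eqref{5hh1410134} should carry exterior exponent $\tfrac1{q-1}$ (so that it matches your inequality with exponent $q-1$), or you must track the paper's $\tfrac1{q-1}$ exponent throughout; you cannot use one from each side. You need to carry the algebra out explicitly and either restate the smallness hypothesis in the scale-invariant form $C_5C^{q-1}/C_4^{q-1}\le \frac{1}{(q-1)M^q}$, or prove a bound on $C_5/C_4^{q-1}$ that rescues the $1<q<2$ case. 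As written, ``I do not anticipate any substantial obstacle'' is not justified.
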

   \begin{proof}
   From \eqref{5hh04104} and \eqref{5hh04105} in Theorem \ref{5hh1410136}, we see that \eqref{5hh1410133} implies  
       \begin{equation}\label{5hh1410137'}
              \mathbb{I}^{R,\delta}_\alpha[(I^{R,\delta}_\alpha[f^q])^{q}]\leq \left(\frac{C}{C_4}\right)^{\frac{1}{q-1}}C_5 \mathbb{I}^{R,\delta}_\alpha [f^q].
              \end{equation}
       Now we prove \eqref{5hh1410135} by induction. Clearly, \eqref{5hh1410135} holds with $n=0$. Next we assume that \eqref{5hh1410135} holds with $n=m$. Then, by \eqref{5hh1410134},  \eqref{5hh1410137'} and \eqref{5hh1410139} we have
       \begin{align*}
       u_{m+1}&\leq K\mathbb{I}^{R,\delta}_\alpha[u_n^q]+f\\&\leq  
              K2^{q-1}\left(\frac{Kq2^{q-1}}{q-1}\right)^q\mathbb{I}^{R,\delta}_\alpha[(\mathbb{I}^{R,\delta}_\alpha[f^q])^q]+K2^{q-1}\mathbb{I}^{R,\delta}_\alpha[f^q]+f\\&\leq  K2^{q-1}\left(\frac{Kq2^{q-1}}{q-1}\right)^q\left(\frac{C}{C_4}\right)^{\frac{1}{q-1}}C_5\mathbb{I}^{R,\delta}_\alpha[f^q]+K2^{q-1}\mathbb{I}^{R,\delta}_\alpha[f^q]+f\\&\leq  \frac{Kq2^{q-1}}{q-1}\mathbb{I}^{R,\delta}_{\alpha}[f^q]+f.
       \end{align*}     
    Therefore \eqref{5hh1410135} also holds true with $n=m+1$. The proof is complete. 
   \end{proof}
   \begin{corollary} Let $q>\frac{N+2}{N+2-\alpha}$, $\alpha>0$ and $f\in L^q_+(\mathbb{R}^{N+1})$. There exists a constant $\varepsilon_0>0$ depending on $N,\alpha,q$ such that if for every compact set $E\subset \mathbb{R}^{N+1}$,
      $\int_Ef^qdxdt\leq \varepsilon_0 \text{Cap}_{\mathcal{H}_\alpha,q'}(E)$,
      then $u=\mathcal{H}_\alpha[u^q]+f$ admits a positive solution $u\in L^q_{loc}(\mathbb{R}^{N+1})$.
      \end{corollary}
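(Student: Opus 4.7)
\medskip

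The plan is to construct $u$ as the monotone limit of a Picard-type iteration and to use Proposition \ref{5hh14101315} to keep the iterates below an explicit $L^q_{loc}$-majorant. Define $u_0 \equiv 0$ and $u_{n+1} := \mathcal{H}_\alpha[u_n^q] + f$. Since $u_0 \le u_1$ and $u \mapsto \mathcal{H}_\alpha[u^q] + f$ is increasing on nonnegative functions, the sequence $\{u_n\}$ is nondecreasing in $n$.

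The next step is to bridge $\mathcal{H}_\alpha$ with the Wolff/Riesz machinery of Section 4. From the proof of Proposition \ref{5hh230120143} one has the pointwise comparison $\mathcal{H}_\alpha[\mu] \le c_0\,\mathbb{I}_\alpha[\mu]$ for every $\mu \in \mathfrak{M}^+(\mathbb{R}^{N+1})$, with $c_0 = c_0(N,\alpha)$. Hence
\begin{equation*}
u_{n+1} \le c_0\,\mathbb{I}_\alpha[u_n^q] + f \qquad \text{for every } n \ge 0,
\end{equation*}
so the iteration fits the hypothesis \eqref{5hh1410139} of Proposition \ref{5hh14101315} (with $R=\infty$, the parameter $\delta$ being irrelevant since $\mathbb{I}_\alpha^{R,\delta}=\mathbb{I}_\alpha$ in that case, and $K=c_0$). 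On the capacity side, Corollary \ref{5hh230120148} gives $\text{Cap}_{E_\alpha,q'}(E) \approx \text{Cap}_{\mathcal{H}_\alpha,q'}(E)$ on compact sets. Therefore I choose the constant $C$ in the statement so small that the hypothesis $\int_E f^q\,dxdt \le C\,\text{Cap}_{\mathcal{H}_\alpha,q'}(E)$ implies $\int_E f^q\,dxdt \le C'\,\text{Cap}_{E_\alpha,q'}(E)$ with $C'$ satisfying the smallness condition \eqref{5hh1410134} of Proposition \ref{5hh14101315}; that proposition then yields the uniform pointwise bound
\begin{equation*}
u_n(x,t) \le M(x,t) := \tfrac{c_0 q\, 2^{q-1}}{q-1}\,\mathbb{I}_\alpha[f^q](x,t) + f(x,t), \qquad \text{for all } n \ge 0.
\end{equation*}

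It remains to see that $M \in L^q_{loc}(\mathbb{R}^{N+1})$ and to pass to the limit. The capacitary bound on $f^q$ together with the equivalence $7 \Leftrightarrow 6$ of Theorem \ref{5hh1410136} (applied with $p=q'$, $R=\infty$) yields
\begin{equation*}
\int_E \bigl(\mathbb{I}_\alpha[f^q]\bigr)^q\,dxdt \le C_6\,\text{Cap}_{E_\alpha,q'}(E) < \infty
\end{equation*}
for every compact $E$, so $\mathbb{I}_\alpha[f^q] \in L^q_{loc}$ and hence $M \in L^q_{loc}$; here I use that $0 < \alpha q < N+2$ is ensured by the standing hypothesis $q > (N+2)/(N+2-\alpha)$ together with the implicit assumption $\alpha q < N+2$ needed for Theorem \ref{5hh1410136} to apply. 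Since $\{u_n\}$ is nondecreasing and pointwise dominated by $M$, Beppo Levi gives $u_n \uparrow u$ a.e.\ with $0 \le u \le M$ and $u \in L^q_{loc}$. Monotone convergence applied inside $\mathcal{H}_\alpha[u_n^q] = \int \mathcal{H}_\alpha(x-y,t-s)u_n(y,s)^q\,dyds$ lets me pass to the limit in the iteration to obtain $u = \mathcal{H}_\alpha[u^q] + f$. Positivity follows from $u \ge f \ge 0$ (and $u > 0$ wherever $f > 0$).

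The main obstacle is the alignment of the various constants: the smallness constant $C$ in the hypothesis must be chosen in terms of $c_0$, of the equivalence constant between $\text{Cap}_{\mathcal{H}_\alpha,q'}$ and $\text{Cap}_{E_\alpha,q'}$ from Corollary \ref{5hh230120148}, and of the explicit threshold \eqref{5hh1410134} in Proposition \ref{5hh14101315}. Once this bookkeeping is done, everything else is a direct application of the potential-theoretic toolbox already developed in Section 4.
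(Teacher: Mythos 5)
Your proof is correct and takes essentially the same route as the paper: a nondecreasing Picard iteration $u_{n+1}=\mathcal{H}_\alpha[u_n^q]+f$, the pointwise comparison $\mathcal{H}_\alpha\leq c\,\mathbb{I}_\alpha$, Proposition \ref{5hh14101315} (with $R=\infty$) for the uniform majorant, and monotone convergence to pass to the limit; the only cosmetic difference is $u_0=0$ versus the paper's $u_0=f$ and the precise capacity-equivalence citations you use. One small slip in wording: the condition required to apply Theorem \ref{5hh1410136} with $p=q'$ is $\alpha q'<N+2$, which is exactly equivalent to the standing hypothesis $q>(N+2)/(N+2-\alpha)$ — your sentence asserting that $\alpha q<N+2$ is ``ensured by the standing hypothesis together with the implicit assumption $\alpha q<N+2$'' is circular and cites the wrong exponent, but since the correct constraint $\alpha q'<N+2$ does hold under the corollary's hypothesis, the argument goes through unchanged.
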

      \begin{proof}
         Consider the sequence $\{u_n\}$ of nonnegative functions defined by $u_0=f$ and 
      $
        u_{n+1}=\mathcal{H}_\alpha[u_n^q]+f~~\forall~n\geq 0
       $. 
        It is easy to see that 
       $
        u_{n+1}\leq c_1\mathbb{I}_2[u_n^q]+f~~\forall n\geq 0
      $.        
       By Proposition \ref{5hh14101315} and Corollary \ref{5hh250320145}, there exists a constant $c_2=c_2(N,\alpha,q)>0$ such that 
       if   for every compact set $E\subset \mathbb{R}^{N+1}$,
          $\int_Ef^qdxdt\leq c_2 \text{Cap}_{\mathcal{H}_\alpha,q'}(E)$ then $u_n$ is well defined and
         $$
          u_n\leq \frac{c_1q3^{q-1}}{q-1}\mathbb{I}_{\alpha}[f^q]+f~~\forall n\geq 0.
         $$
      Since $\{u_n\}$ is nondecreasing, thus thanks to the dominated convergence theorem we obtain $u(x,t)=\lim\limits_{n\to\infty}u_n(x,t)$ is a solution of $u=\mathcal{H}_\alpha[u^q]+f$ which $u\in L^q_{\text{loc}}(\mathbb{R}^{N+1})$. The proof is complete.   
         \end{proof}
      \begin{corollary} Let $q>1$, $\alpha>0$, $0<R\leq \infty, 0<\delta<\alpha$ and $\mu\in\mathfrak{M}^+(\mathbb{R}^{N+1})$. The following two statements are equivalent.
      \begin{description}
      \item[a.] for every compact set $E\subset \mathbb{R}^{N+1}$, we have 
               $\int_Ef^qdxdt\leq C \text{Cap}_{E^{R,\delta}_\alpha,q'}(E)$
               for some  $C>0$,
       \item[b.] There exists a function $u\in L^q_{loc}(\mathbb{R}^{N+1})$ such that $u=\mathbb{I}_\alpha^{R,\delta}[u^q]+\varepsilon f$ for some $\varepsilon>0$.  
      \end{description}
         \end{corollary}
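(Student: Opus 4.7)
\medskip

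\noindent\textbf{Strategy.} The two implications use opposite faces of Theorem~\ref{5hh1410136}: the direction $(b)\Rightarrow(a)$ extracts the capacitary bound from a solution via the trace inequality (8), while $(a)\Rightarrow(b)$ constructs a solution by monotone iteration using Proposition~\ref{5hh14101315}. Throughout I apply Theorem~\ref{5hh1410136} with $p=q'$, so that $p'=q$.

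\medskip

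\noindent\textbf{Step 1: $(b)\Rightarrow(a)$.} Suppose $u\in L^q_{\mathrm{loc}}(\mathbb{R}^{N+1})$, $u\geq 0$, satisfies $u=\mathbb{I}^{R,\delta}_\alpha[u^q]+\varepsilon f$. Set $d\mu:=u^q\,dxdt$; this is a Radon measure on $\mathbb{R}^{N+1}$ because $u\in L^q_{\mathrm{loc}}$. The equation gives $\mathbb{I}^{R,\delta}_\alpha[\mu]\leq u$ pointwise, hence for every compact $E\subset\mathbb{R}^{N+1}$,
\begin{equation*}
\int_E\bigl(\mathbb{I}^{R,\delta}_\alpha[\mu\chi_E]\bigr)^{q}\,dxdt\le \int_E\bigl(\mathbb{I}^{R,\delta}_\alpha[\mu]\bigr)^{q}\,dxdt\le \int_E u^q\,dxdt=\mu(E).
\end{equation*}
This is exactly condition \textbf{8.} of Theorem~\ref{5hh1410136}, so condition \textbf{4.} holds: $\mu(E)\leq C_4\,\mathrm{Cap}_{E_\alpha^{R,\delta},q'}(E)$. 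Since $u\geq\varepsilon f$, we have $\int_E f^q\,dxdt\leq \varepsilon^{-q}\mu(E)\le \varepsilon^{-q}C_4\,\mathrm{Cap}_{E_\alpha^{R,\delta},q'}(E)$, which is $(a)$.

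\medskip

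\noindent\textbf{Step 2: $(a)\Rightarrow(b)$.} Let $C_4,C_5$ be the constants from Theorem~\ref{5hh1410136} with $p=q'$, and set
\begin{equation*}
C_\ast:= C_4\left(\frac{2^{-q+1}}{C_5(q-1)}\Bigl(\frac{q-1}{q\,2^{q-1}}\Bigr)^q\right)^{q-1}.
\end{equation*}
Choose $\varepsilon>0$ so small that $\varepsilon^q C\leq C_\ast$; then the function $\varepsilon f$ satisfies the hypothesis \eqref{5hh1410133}--\eqref{5hh1410134} of Proposition~\ref{5hh14101315} with $K=1$. Define the iteration $u_0=\varepsilon f$ and $u_{n+1}=\mathbb{I}^{R,\delta}_\alpha[u_n^q]+\varepsilon f$. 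By induction $u_n$ is nondecreasing in $n$ (since $u_1\ge u_0$ and $\mathbb{I}^{R,\delta}_\alpha$ is monotone), and Proposition~\ref{5hh14101315} gives the uniform upper bound
\begin{equation*}
u_n\le \frac{q\,2^{q-1}}{q-1}\,\mathbb{I}^{R,\delta}_\alpha[(\varepsilon f)^q]+\varepsilon f\qquad\text{for every }n\in\mathbb{N}.
\end{equation*}
Pass to the limit $u:=\lim_n u_n$ by monotone convergence; the dominated convergence theorem (applied to the two terms of the right‑hand side of the iteration, using the uniform bound above and Fubini for $\mathbb{I}^{R,\delta}_\alpha$) yields $u=\mathbb{I}^{R,\delta}_\alpha[u^q]+\varepsilon f$. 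Finally, to see $u\in L^q_{\mathrm{loc}}$, apply the equivalence $\mathbf{4.}\Leftrightarrow\mathbf{6.}$ in Theorem~\ref{5hh1410136} to the measure $d\mu=(\varepsilon f)^q\,dxdt$ (which by $(a)$ satisfies \textbf{4.}): this gives $\int_E(\mathbb{I}^{R,\delta}_\alpha[(\varepsilon f)^q])^q\,dxdt\le C_6\,\mathrm{Cap}_{E_\alpha^{R,\delta},q'}(E)<\infty$ for every compact $E$, so the dominating function is in $L^q_{\mathrm{loc}}$, and consequently so is $u$.

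\medskip

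\noindent\textbf{Expected main obstacle.} There is no serious obstacle: the proof is a clean application of Theorem~\ref{5hh1410136} and Proposition~\ref{5hh14101315}. The only point requiring a bit of care is the quantitative bookkeeping of the smallness condition on $\varepsilon$ in Step~2 (which must match the constant $C_\ast$ defined from $C_4,C_5$ so that Proposition~\ref{5hh14101315} applies), and ensuring that the monotone limit $u$ is genuinely locally $L^q$, for which the trace inequality $\mathbf{6.}$ of Theorem~\ref{5hh1410136} is the key input.
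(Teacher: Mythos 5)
Your proof is correct and complete. Your Step 2, constructing the solution by monotone iteration via Proposition \ref{5hh14101315} after scaling $f$ by a small $\varepsilon$ and closing the loop with the $\mathbf{4.}\Leftrightarrow\mathbf{6.}$ equivalence to get $L^q_{\mathrm{loc}}$ membership, is essentially the argument the paper uses for the preceding corollary (the $\mathcal{H}_\alpha$ version, cf.\ the corollary just before the statement), and the paper itself leaves $(a)\Rightarrow(b)$ implicit, so there is no meaningful difference there. Your Step 1 is where the routes genuinely diverge. The paper introduces the auxiliary measure $d\omega=\bigl((\mathbb{I}^{R,\delta}_\alpha[u^q])^q+\varepsilon^q f^q\bigr)dxdt$, notes the pointwise inequality $d\omega\geq(\mathbb{I}^{R,\delta}_\alpha[\omega])^q\,dxdt$, and then deduces the \emph{global} trace inequality (condition \textbf{7.} of Theorem~\ref{5hh1410136}) by running an $\omega$-centered Hardy--Littlewood maximal function $\mathbb{M}_\omega$ through the pointwise estimate $\mathbb{M}_\omega\chi_E\cdot\mathbb{I}^{R,\delta}_\alpha[\omega]\geq\mathbb{I}^{R,\delta}_\alpha[\omega\chi_E]$ and its boundedness on $L^q(d\omega)$, before finally reading off condition \textbf{4.} You instead work directly with $d\mu=u^q\,dxdt$, observe that the equation gives $\mathbb{I}^{R,\delta}_\alpha[\mu]\leq u$ pointwise, and hence obtain the \emph{local} inequality (condition \textbf{8.}) with constant $C_8=1$ in one line, then pass to \textbf{4.} by the same equivalence theorem. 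Your route is shorter and more transparent; you avoid the auxiliary measure and the maximal-function machinery entirely, at the cost of producing only condition \textbf{8.} rather than the stronger \textbf{7.}, which the theorem shows is immaterial. One minor stylistic point: you implicitly assume $u\geq 0$; this is automatic since $u^q$ is present in the equation and $\mathbb{I}^{R,\delta}_\alpha[u^q]\geq 0$, $f\geq 0$, so it is worth a word but is not a gap.
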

         \begin{proof}
         We will prove $b.\Rightarrow a.$ Set $d\omega (x,t)=\left(\left(\mathbb{I}_\alpha^{R,\delta}[u^q]\right)^q+\varepsilon^qf^q\right)dxdt$, thus we have $ dw(x,t)\geq \left(I_\alpha^{R,\delta}[\omega]\right)^qdxdt$. Let $\mathbb{M}_\omega$ denote the centered Hardy-littlewoood maximal function which is defined for $g\in L^1_{loc}(\mathbb{R}^{N+1}, d\omega)$,
         $$
         \mathbb{M}_\omega g(x,t)=\sup_{\rho>0}\frac{1}{\omega(\tilde{Q}_\rho(x,t))}\int_{\tilde{Q}_\rho(x,t)}|g|d\omega.
         $$
        We have for compact set $E\subset\mathbb{R}^{N+1}$
         \begin{align*}
         \int_{\mathbb{R}^{N+1}}\left(\mathbb{M}_\omega \chi_E\right)^q\left(\mathbb{I}_\alpha^{R,\delta}[\omega]\right)^qdxdt\leq \int_{\mathbb{R}^{N+1}}\left(\mathbb{M}_\omega \chi_E\right)^qd\omega(x,t).
         \end{align*}         
         Since $\mathbb{M}_\omega$ is bounded on $L^s(\mathbb{R}^{N+1},d\omega)$ for $s>1$ and $\left(\mathbb{M}_\omega \chi_E\right)^q\left(\mathbb{I}_\alpha^{R,\delta}[\omega]\right)^q\geq \left(\mathbb{I}_\alpha^{R,\delta}[\omega\chi_E]\right)^q$, thus
        $$
                  \int_{\mathbb{R}^{N+1}}\left(\mathbb{I}_\alpha^{R,\delta}[\omega\chi_E]\right)^qdxdt\lesssim \omega(E).
                $$
By Theorem \ref{5hh1410136}, we get for any compact set $E\subset\mathbb{R}^{N+1}$
\begin{align*}
\omega(E)\lesssim \text{Cap}_{E^{R,\delta}_\alpha,q'}(E).
\end{align*}
It follows the result.                   
         \end{proof}\\
\begin{remark} In \cite{55QH1}, we also use Theorem \ref{5hh1410136} to show the existence  of mild solutions to the   Navier-Stokes Equations
\begin{equation}\label{5hh090920142}
  \left\{ 
  \begin{array}{l}
  \partial_tu-{\Delta }u+\mathbb{P}\operatorname{div}(u\otimes u)=\mathbb{P}F ~~\text{in }\mathbb{R}^N\times (0,\infty),
  \\ 
  u(0)=u_0~~\text{in }\mathbb{R}^N,
  \end{array}%
  \right.
  \end{equation}
  where $u,F\in \mathbb{R}^N$ , $\mathbb{P}=id-\nabla \Delta^{-1}\nabla.$ is the Helmholtz Leray projection onto the vector fields of zero divergence, i.e, for $f\in\mathbb{R}^N$,  $\mathbb{P}f=f-\nabla u$ and $\Delta u=\operatorname{div}f$. Namely, there exists $\varepsilon_0=\varepsilon_0(N)>0$ such that if $\operatorname{div}(u_0)=0$ and 
     \begin{equation}\label{060920141}
            \int_{K}|D(x,t)|^2dxdt\leq \varepsilon_0 \text{Cap}_{\mathcal{H}_1,2}(K),
            \end{equation}
           for any compact set $K\subset \mathbb{R}^{N+1}$, where if $(x,t)\in \mathbb{R}^N\times [0,+\infty)$, 
            \begin{equation*}
            D(x,t)=(e^{t\Delta}u_0)(x)+\int_{0}^{t}(e^{(t-s)\Delta}\mathbb{P}F)(x)ds,
            \end{equation*}
            and  $D(x,t)=0$ otherwise. 
            Then, the equation \eqref{5hh090920142} has globally  solution $u$ satisfying 
                      \begin{equation}\label{070920148}
                      |u(x,t)|\leq |D(x,t)|+c\mathbb{I}_1[|D|^2](x,t),
                      \end{equation}
                      for all $(x,t)\in \mathbb{R}^N\times (0,\infty)$ for some $c=c(N)$.
\end{remark}

     \section{Global point wise estimates of solutions to the parabolic equations}  
   
    First, we recall Duzzar and Mingione's result \cite{55DuzaMing},  see also \cite{55KuMi1,55KuMi2} which involves local pointwise estimates for solutions of equations \eqref{5hhparabolic1}. 
  \begin{theorem} \label{5hh14101313} If $u\in L^2(0,T,H^1(\Omega))\cap C(\Omega_T)$ is a weak solution to \eqref{5hhparabolic1} with $\mu\in L^2(\Omega_T)$ and $u(0)=0$, we have 
     \begin{equation}
     |u(x,t)|\lesssim \fint_{\tilde{Q}_R(x,t)}|u|+\mathbb{I}_{2}^{2R}[|\mu|](x,t)
     \end{equation}
     for all $Q_{2R}(x,t)\subset \Omega\times (-\infty,T)$.\\
     Furthermore, if $A$ is independent of space variable $x$,  $\eqref{5hhcondc}$ is satisfied and $\nabla u \in C(\Omega_T)$ then 
     \begin{align}
     \label{5hh310320147}
     |\nabla u(x,t)|\lesssim \fint_{\tilde{Q}_R(x,t)}|\nabla u|dyds+\mathbb{I}_{1}^{2R}[|\mu|](x,t)
     \end{align}
      for all $Q_{2R}(x,t)\subset \Omega\times (-\infty,T)$.
     \end{theorem}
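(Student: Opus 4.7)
The plan is to obtain both estimates by the comparison-plus-iteration scheme of Duzaar–Mingione, working on a dyadic family of parabolic cylinders $Q_{r_k}(x,t)$ with $r_k = H^{-k} (2R)$ for a fixed constant $H>1$ to be tuned, and exploiting the fact that on each cylinder the inhomogeneous solution $u$ is close to a solution $v$ of the associated homogeneous problem that enjoys strong a priori regularity. Once one has, on each cylinder, a comparison bound $\fint_{Q_{r_k}}|u-v|\,dyds \lesssim r_k \, |\mu|(Q_{r_k})/r_k^{N+1}$ together with an excess-decay estimate for $v$, a telescoping argument produces precisely the truncated Riesz parabolic potential $\mathbb{I}_2^{2R}[|\mu|]$ on the right-hand side.

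First I would fix $(x,t)$ with $Q_{2R}(x,t)\subset\Omega\times(-\infty,T)$ and solve, on each $Q_{r_k}$, the homogeneous Cauchy–Dirichlet problem
\begin{equation*}
v_t - \operatorname{div}(A(y,s,\nabla v)) = 0 \quad\text{in } Q_{r_k}, \qquad v = u \quad\text{on } \partial_p Q_{r_k}.
\end{equation*}
Standard energy/Caccioppoli estimates for the difference $u-v$, together with the growth/monotonicity conditions \eqref{5hhconda}–\eqref{5hhcondb}, give the basic comparison inequality
\begin{equation*}
\fint_{Q_{r_k}}|u-v|\,dyds \;\leq\; C\, \frac{|\mu|(Q_{r_k})}{r_k^{N}}.
\end{equation*}
Meanwhile, De Giorgi–Nash–Moser theory applied to $v$ yields a $C^{0,\alpha}$-type pointwise control
\begin{equation*}
\operatorname*{osc}_{Q_{r_{k+1}}} v \;\leq\; C\, \Big(\tfrac{r_{k+1}}{r_k}\Big)^\alpha \operatorname*{osc}_{Q_{r_k}} v.
\end{equation*}
Combining the two, one obtains an excess-decay recursion of the form
\begin{equation*}
\operatorname*{osc}_{Q_{r_{k+1}}} u \;\leq\; \tfrac{1}{2}\operatorname*{osc}_{Q_{r_k}} u \;+\; C\, \frac{|\mu|(Q_{r_k})}{r_k^{N}}
\end{equation*}
after choosing $H$ large enough so that $C(1/H)^\alpha \leq 1/2$.

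Iterating this recursion from $k=0$ and using Lebesgue differentiation (justified by the assumption $u\in C(\Omega_T)$) to pass $k\to\infty$ so that $\operatorname*{osc}_{Q_{r_k}} u \to 0$ at the Lebesgue point $(x,t)$, one gets
\begin{equation*}
|u(x,t)| \;\leq\; C\fint_{Q_{2R}(x,t)} |u|\,dyds \;+\; C\sum_{k=0}^{\infty} \frac{|\mu|(Q_{r_k}(x,t))}{r_k^{N}},
\end{equation*}
and the dyadic sum is comparable to the continuous integral $\mathbb{I}_2^{2R}[|\mu|](x,t)$ (up to redefining the involved cylinders between $Q_\rho$ and $\tilde{Q}_\rho$, which differ only by a harmless shift in time that preserves all estimates). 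This yields the first inequality.

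For the gradient estimate, the strategy is analogous but performed at the level of $\nabla u$. Because $A$ is independent of $x$ and \eqref{5hhcondc} holds, the homogeneous solution $v$ now enjoys $\nabla v\in L^\infty_{\text{loc}}$ with a Lipschitz-type decay estimate
\begin{equation*}
\operatorname*{osc}_{Q_{r_{k+1}}} \nabla v \;\leq\; C\Big(\tfrac{r_{k+1}}{r_k}\Big)^{\beta} \operatorname*{osc}_{Q_{r_k}} \nabla v,
\end{equation*}
and the comparison estimate upgrades to $\fint_{Q_{r_k}}|\nabla u - \nabla v| \leq C\,|\mu|(Q_{r_k})/r_k^{N+1}$. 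The same iteration then produces
\begin{equation*}
|\nabla u(x,t)| \;\leq\; C\fint_{Q_{2R}(x,t)} |\nabla u|\,dyds \;+\; C\sum_{k=0}^\infty \frac{|\mu|(Q_{r_k}(x,t))}{r_k^{N+1}},
\end{equation*}
and the last sum is comparable to $\mathbb{I}_1^{2R}[|\mu|](x,t)$. The assumption $\nabla u\in C(\Omega_T)$ is used to pass to the limit at genuine Lebesgue points of $\nabla u$. The main obstacle in carrying this program out is the comparison step for the gradient: one needs gradient Hölder continuity for the homogeneous equation with $x$-independent nonlinearity together with a sharp $L^1$-comparison between $\nabla u$ and $\nabla v$ whose right-hand side is exactly $|\mu|(Q_{r_k})/r_k^{N+1}$ and not worse (otherwise one gets $\mathbb{I}_{1-\varepsilon}$ instead of $\mathbb{I}_1$); this requires a delicate double-truncation / energy argument going back to Mingione's elliptic papers, suitably adapted to the parabolic Steklov-averaged setting to handle the time derivative. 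Since both statements are precisely Theorems 1.2 and 1.4 of \cite{55DuzaMing}, the cleanest route is to invoke them directly.
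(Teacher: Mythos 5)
The paper itself gives no proof of this theorem: it is stated as a recollection of Duzaar--Mingione's result (the gradient estimate also appearing in Kuusi--Mingione), and your proposal reaches the same endpoint by deferring to \cite{55DuzaMing}. Your sketch of the comparison-plus-iteration mechanism is a fair outline of how the cited proof proceeds, so the approaches are essentially the same.
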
                          
\begin{proof}[Proof of Theorem \ref{5hh141013112}]Let $\mu=\mu_{0}+\mu_{s}\in\mathfrak{M}_{b}(\Omega_T)$,
with $\mu_{0}\in\mathfrak{M}_{0}(\Omega_T),\mu_{s}\in\mathfrak{M}_{s}(\Omega_T)$.  By Proposition
\ref{5hhatt}, there exist  sequences of nonnegative measures $\mu_{n,0,i}=(f_{n,i}%
         ,g_{n,i},h_{n,i})$ and $\mu_{n,s,i}$ such that 
   \begin{description}
   	\item[i)]  $f_{n,i},g_{n,i},h_{n,i}\in C_{c}^{\infty}(\Omega_T)$ 
   	strongly converge to some $f_i,g_i,h_i$ in $L^{1}(\Omega_T),L^{2}(\Omega_T,\mathbb{R}^N)$ and $L^2(0,T,H^1_0(\Omega))$
   	respectively;
   	\item[ii)]$\mu_{n,1},\mu_{n,2}, \mu_{n,s,1},\mu_{n,s,2}\in C_{c}^{\infty}(\Omega_T)$ converge to $\mu^{+},\mu^-,\mu^+_s,\mu^-_s$ resp. in the narrow topology with $\mu_{n,i}=\mu_{n,0,i}+\mu_{n,s,i}$, for $i=1,2$ and
   	satisfying $\mu_{0}^{+}=(f_{1},g_{1},h_{1}),$ $\mu_{0}^{-}=(f_{2},g_{2},h_{2})$ and  $0\leq\mu_{n,1} \leq \varphi_n*\mu^{+}, 0\leq\mu_{n,2} \leq \varphi_n*\mu^{-}$, where $\{\varphi_n\}$ is a sequence of standard mollifiers in $\mathbb{R}^{N+1}$.
   \end{description}      
   Let $\sigma_{1,n},\sigma_{2,n}\in C^\infty_c(\Omega)$ be convergent to $\sigma^+$ and $\sigma^-$ in the narrow topology and in  $L^1(\Omega)$ if $\sigma\in L^1(\Omega)$ resp. such that $0\leq\sigma_{1,n}\leq \varphi_{1,n}*\sigma^+, 0\leq\sigma_{2,n}\leq \varphi_{1,n}*\sigma^-$ where $\{\varphi_{1,n}\}$ is a sequence of standard mollifiers in $\mathbb{R}^{N}$. Set $\mu_{n}=\mu_{n,1}-\mu_{n,2}$ and $\sigma_n=\sigma_{1,n}-\sigma_{2,n}$.\\
         Let $u_n,u_{n,1},u_{n,2}$ be solutions of equations 
         \begin{equation}
                  \left\{
                  \begin{array}
                  [c]{l}%
                  {(u_{n})_t}-\operatorname{div}(A(x,t,\nabla u_n))=\mu_{n}~~\text{in }\Omega_T,\\
                  u_n=0~~~\text{on}~\partial\Omega \times (0,T),\\
                  {u}_{n}(0)=\sigma_{n}~~\text{on }\Omega,
                  \end{array}
                  \right.  
                  \end{equation}               
                   \begin{equation}
                                    \left\{
                                    \begin{array}
                                    [c]{l}%
                                    {(u_{n,1})_t}-\operatorname{div}(A(x,t,\nabla u_{n,1}))=\chi_{\Omega_T}\mu_{n,1}~~\text{in }B_{2T_0}(x_0)\times (0,2T_0^2),\\
                                    u_{n,1}=0~~~\text{on}~\partial B_{2T_0}(x_0)\times (0,2T_0^2),\\
                                    {u}_{n,1}(0)=\sigma_{1,n}~~\text{on }B_{2T_0}(x_0),
                                    \end{array}
                                    \right.  
                                    \end{equation}
\begin{equation}
                                    \left\{
                                    \begin{array}
                                    [c]{l}%
                                    {(u_{n,2})_t}+\operatorname{div}(A(x,t,-\nabla u_{n,2}))=\chi_{\Omega_T}\mu_{n,2}~~\text{in }B_{2T_0}(x_0)\times (0,2T_0^2),\\
                                    u_{n,2}=0~~~\text{on}~\partial B_{2T_0}(x_0)\times (0,2T_0^2),\\
                                    {u}_{n,2}(0)=\sigma_{2,n}~~\text{on }B_{2T_0}(x_0),
                                    \end{array}
                                    \right.                                      \end{equation}                                                                        where $\Omega\subset B_{T_0}(x_0)$ for $x_0\in\Omega$.\\
                                                                        We see that $u_{n,1},u_{n,2}\geq 0$ in $B_{2T_0}(x_0)\times (0,2T_0^2)$ and  $-u_{n,2}\leq u_n\leq u_{n,1}$ in $\Omega_T$. \\ 
                                                     Now, we estimate $u_{n,1}$. By Remark \ref{5hh240220142} and Theorem \ref{5hhsta}, a sequence $\{u_{n,1,m}\}$ of solutions to equations
 \begin{equation}
       \left\{
    \begin{array}
                             [c]{l}%
                                    {(u_{n,1,m})_t}-\operatorname{div}(A(x,t,\nabla u_{n,1,m}))=\left(g_{n,m}\right)_t+\chi_{\Omega_T}\mu_{n,1}~~\text{in }B_{2T_0}(x_0)\times (-2T_0^2,2T_0^2),\\
                                    u_{n,1,m}=0~~~\text{on}~\partial B_{2T_0}(x_0)\times (-2T_0^2,2T_0^2),\\
                                    {u}_{n,1,m}(-2T^2_0)=0~~\text{on }B_{2T_0}(x_0),
                                    \end{array}
                                    \right.  
                    \end{equation}   
                    converges to $u_{n,1}$ in $B_{2T_0}(x_0)\times (0,2T_0^2)$,                
                    where $g_{n,m}(x,t)=\sigma_{1,n}(x)\int_{-2T^2_0}^{t}\varphi_{2,m}(s)ds$ and $\{\varphi_{2,m}\}$ is a sequence of mollifiers in $\mathbb{R}$.\\                         
                    By Remark \ref{5hh070420143}, we have
  \begin{equation} \label{5hh240220143}                                                                    ||u_{n,1,m}||_{L^1(\tilde{Q}_{2T_0}(x_0,0))}\lesssim T_0^2 A_{n,m},
        \end{equation} 
  where                                                          $
 A_{n,m}=\mu_{n,1}(\Omega_T)+\int_{\tilde{Q}_{2T_0}(x_0,0)}\sigma_{1,n}(x)\varphi_{2,m}(t)dxdt.$   \\                                      
Hence, thanks to Theorem \ref{5hh14101313} we have for $(x,t)\in \Omega_T$
\begin{align*}
 u_{n,1,m}(x,t)&\lesssim T_0^{-N-2}||u_{n,1,m}||_{L^1(\tilde{Q}_{2T_0}(x_0,0))}+\mathbb{I}_{2}[\mu_{n,1}](x,t)+c\mathbb{I}_{2}[\sigma_{1,n}\varphi_m](x,t)
 \\&\lesssim \mathbb{I}_{2}[\mu_{n,1}](x,t)+\mathbb{I}_{2}[\sigma_{1,n}\varphi_m](x,t).
\end{align*}
 Since $0\leq\mu_{n,1} \leq \varphi_n*\mu^{+}$, $\sigma_{1,n}\leq \varphi_{1,n}*\sigma^+$,
$$
                      u_{n,1,m}(x,t)\leq c\varphi_n*\mathbb{I}_{2}[\mu^+](x,t)+c(\varphi_{1,n}\varphi_{2,m})*\mathbb{I}_{2}[\sigma^+\otimes \delta_{\{t=0\}}](x,t)~\forall ~(x,t)\in \Omega_T.                    $$
   Letting $m\to\infty$, we get
   $$u_{n,1}(x,t)\lesssim\varphi_n*\mathbb{I}_{2}[\mu^+](x,t)+\varphi_{1,n}*\left(\mathbb{I}_{2}[\sigma^+\otimes \delta_{\{t=0\}}](.,t)\right)(x)~\forall ~(x,t)\in \Omega_T.$$
 Similarly, we also get 
$$
      u_{n,2}(x,t)\lesssim\varphi_n*\mathbb{I}_{2}[\mu^-](x,t)+\varphi_{1,n}*\left(\mathbb{I}_{2}[\sigma^-\otimes \delta_{\{t=0\}}](.,t)\right)(x)~\forall ~(x,t)\in \Omega_T. $$                  
     Consequently, by Proposition \ref{5hhmun} and Theorem \ref{5hhsta} , up to a subsequence,
           $\{u_{n}\}$ converges to a  distributional solution (a renormalized solution if $\sigma\in L^1(\Omega)$) $u$ of \eqref{5hhparabolic1} and satisfied    \eqref{5hh14101314}.                                    
\end{proof}
\begin{remark}\label{5hh070420145} Obviously, if $\sigma\equiv0$ and $\text{supp}(\mu)\subset\overline{\Omega}\times[a,T]$, $a>0$ then $u=0$ in $\Omega\times(0,a)$.
\end{remark}
\begin{remark} \label{5hh070420146}
 If $A$ is independent of space variable $x$,  $\eqref{5hhcondc}$ is satisfied then
 \begin{align}
 \label{5hh070420144}
 |\nabla u(x,t)|\lesssim_{T_0/d} \mathbb{I}_1^{2T_0}[|\mu|+|\sigma|\otimes\delta_{\{t=0\}}](x,t)
 \end{align}
 for any $(x,t)\in\Omega^d\times (0,T)$ and $ 0<d\leq\frac{1}{2}\min\{\sup_{x\in\Omega}d(x,\partial\Omega),T_0^{1/2}\}$ where  $\Omega^d=\{x\in\Omega: d(x,\partial\Omega)>d\}$. 
 Indeed, by Remark \ref{5hh240220142} and Theorem \ref{5hhsta}, a sequence $\{v_{n,m}\}$ of solutions to equations
  \begin{equation}
        \left\{
     \begin{array}
                              [c]{l}%
                                     {(v_{n,m})_t}-\operatorname{div}(A(t,\nabla u_{n,m}))=\left(g_{n,m}\right)_t+\chi_{\Omega_T}\mu_{n}~~\text{in }\Omega\times (-2T_0^2,T),\\
                                     v_{n,m}=0~~~\text{on}~\partial \Omega\times (-2T_0^2,T),\\
                                     {v}_{n,m}(-2T^2_0)=0~~\text{on }\Omega,
                                     \end{array}
                                     \right.  
                     \end{equation}   
                     converges to $u_{n}$ in $L^1(0,T,W^{1,1}_0(\Omega))$,                
                     where $g_{n,m}(x,t)=\sigma_{n}(x)\int_{-2T^2_0}^{t}\varphi_{2,m}(s)ds$ and $\{\varphi_{2,m}\}$ is a sequence of mollifiers in $\mathbb{R}$.\\
                     By Theorem \ref{5hh14101313}, we have for any $(x,t)\in\Omega^d\times (0,T)$
$$
                     |\nabla v_{n,m}(x,t)|\lesssim
                      \fint_{\tilde{Q}_{d/2}(x,t)}|\nabla v_{n,m}|+\mathbb{I}_{1}^{d}[|\mu_n|+|\sigma_n|\otimes\varphi_{2,m}](x,t).
$$
                     On the other hand, by remark \ref{5hh070420143}, $$|||\nabla v_{n,m}|||_{L^1(\Omega\times(-T_0^2,T))}\lesssim T_0 (|\mu_n|+|\sigma_n|\otimes\varphi_{2,m})(\Omega\times(-T_0^2,T)).$$
                     Therefore, for any $(x,t)\in\Omega^d\times (0,T)$
                     $$|\nabla v_{n,m}(x,t)|\lesssim_{T_0/d}\mathbb{I}_{1}[|\mu_n|+|\sigma_n|\otimes\varphi_{2,m}](x,t).
                                       $$
Finally, letting $m\to \infty$ and $n\to\infty$ we get for any $(x,t)\in\Omega^d\times (0,T)$
$$|\nabla u(x,t)|\lesssim\mathbb{I}_{1}[|\mu|+|\sigma|\otimes\delta_{\{t=0\}}](x,t).$$     We  conclude \eqref{5hh070420144} since    $\mathbb{I}_1[|\mu|+|\sigma|\otimes\delta_{\{t=0\}}] \lesssim \mathbb{I}_1^{2T_0}[|\mu|+|\sigma|\otimes\delta_{\{t=0\}}] $ in  $\Omega_T$.                          
\end{remark}
Next, we will establish pointwise estimates from below for solutions of equations \eqref{5hhparabolic1}. 
\begin{theorem}\label{5hh260220143}If $u\in C(Q_r(y,s))\cap L^2(s-r^2,s, H^1(B_r(y)))$ is a nonnegative weak solution of \eqref{5hhparabolic1} with data $\mu\in \mathfrak{M}^+(Q_r(y,s))$ and $u(s-r^2)\geq0$, then we have
\begin{equation}
u(y,s)\gtrsim\sum_{k=0}^{\infty}\frac{\mu(Q_{r_k/8}(y,s-\frac{35}{128}r_k^2))}{r_k^N},
\end{equation}
where $r_k=4^{-k}r$. 
\end{theorem}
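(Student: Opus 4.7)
The plan is to obtain the stated series by iteratively applying a single-scale lower bound at the dyadic radii $r_k = 4^{-k}r$. Concretely, the goal is to prove that there exists $c = c(N,\Lambda_1,\Lambda_2) \in (0,1)$ such that, for every $\rho \in (0,r]$ and every nonnegative weak solution $u$ of the equation on $Q_\rho(y,s)$ with nonnegative measure data $\mu$,
\[
u(y,s) \;\geq\; c\, \frac{\mu\bigl(Q_{\rho/8}(y, s - \tfrac{35}{128}\rho^2)\bigr)}{\rho^N} \;+\; c\, m(\rho/4),
\]
where $m(\rho')$ denotes an infimum (or a Harnack-type average) of $u$ on a cylinder centred at $(y, s - \tfrac{35}{128}\rho'{}^{2})$ of space-radius $\rho'/8$. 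Iterating this with $\rho = r_k$ for $k = 0, 1, 2, \ldots$ and using the continuity of $u$ at $(y,s)$ to pass $m(r_k)$ to $u(y,s)$ in the limit, one sums the resulting geometric-type series and absorbs the tail into the left-hand side to produce the stated bound.

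To realize the single-scale estimate I would combine two tools. First, since $\mu \geq 0$, the function $u$ is a nonnegative weak supersolution of the homogeneous equation on $Q_\rho(y,s)$, so the weak parabolic Harnack inequality of Trudinger--Moser--DiBenedetto type yields
\[
\left( \fint_{Q_{\rho/8}(y, s - \frac{35}{128}\rho^2)} u^{q_0}\, dx\, dt \right)^{1/q_0} \leq C\, u(y,s)
\]
for some small $q_0 > 0$; the specific parameters $\tfrac{\rho}{8}$ and $\tfrac{35}{128}\rho^2$ are chosen precisely so that the intrinsic geometry of this Harnack inequality, applied on $Q_\rho(y,s)$, is satisfied. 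Second, I would compare $u$ with $v_\rho$, the solution of the homogeneous Cauchy--Dirichlet problem on $Q_\rho(y,s)$ matching $u$ on the parabolic boundary. By the comparison principle (valid under the monotonicity \eqref{5hhcondb}), $0 \leq v_\rho \leq u$, and $w_\rho := u - v_\rho$ solves the monotone parabolic equation with source $\mu$ and zero Cauchy--Dirichlet data. Testing this equation against a cutoff $\eta \in C_c^\infty(Q_\rho(y,s))$ which equals $1$ on $Q_{\rho/8}(y, s - \tfrac{35}{128}\rho^2)$, and using the coercivity of $A(x,t,\nabla u) - A(x,t,\nabla v_\rho)$ against $\nabla w_\rho$ provided by \eqref{5hhcondb}, together with the Caccioppoli estimate for $w_\rho$, produces a bound of the form
\[
\fint_{Q_{\rho/8}(y, s - \frac{35}{128}\rho^2)} w_\rho\, dx\, dt \;\geq\; c\,\frac{\mu(Q_{\rho/8}(y, s - \tfrac{35}{128}\rho^2))}{\rho^N}.
\]
Combining this with the weak Harnack bound above, and using $u = v_\rho + w_\rho$ together with a Harnack-chain inequality for the nonnegative solution $v_\rho$ to transfer its value near $(y,s)$ to a quantity at the next finer scale, closes the one-step inequality.

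The main technical obstacle is the nonlinearity of $A$, which prevents a direct Green's-function representation of $w_\rho$. The resolution is to exploit the monotonicity \eqref{5hhcondb}: although $A(x,t,\nabla u) - A(x,t,\nabla v_\rho)$ is nonlinear, the pairing $\langle A(x,t,\nabla u) - A(x,t,\nabla v_\rho), \nabla w_\rho\rangle \geq \Lambda_2 |\nabla w_\rho|^2$ allows standard energy estimates to proceed as in the linear regime. A secondary but genuine difficulty is the precise geometric tuning: the constants $\tfrac{1}{8}$ and $\tfrac{35}{128}$ must be selected so that (i) $Q_{\rho/8}(y, s - \tfrac{35}{128}\rho^2)$ lies strictly in the past portion of $Q_\rho(y,s)$ where the weak Harnack inequality is available with constants depending only on $N,\Lambda_1,\Lambda_2$, and (ii) the cylinders at successive scales $r_k$ and $r_{k+1} = r_k/4$ are properly nested so that the iteration proceeds without accumulating constants. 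Once that geometric bookkeeping is organized, the telescoping sum $\sum_k c^k \mu(Q^k)/r_k^N$ bounds $u(y,s)$ from below, and the theorem follows by standard rearrangement of a geometric series.
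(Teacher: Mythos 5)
There is a genuine structural gap in your iteration scheme. Your single-scale bound has the form
\[
u(y,s)\;\geq\; c\,\frac{\mu(Q_{\rho/8}(y,s-\tfrac{35}{128}\rho^2))}{\rho^N}\;+\;c\,m(\rho/4),\qquad c=c(N,\Lambda_1,\Lambda_2)\in(0,1),
\]
with the \emph{same} factor $c<1$ multiplying both the measure term and the lower-order term $m$. Iterating this (re-applying the estimate to the $m$-term at the next scale $r_{k+1}$) unavoidably produces
\[
u(y,s)\;\geq\;\sum_{k=0}^{K} c^{k+1}\,\frac{\mu(Q_{r_k/8}(y,s-\tfrac{35}{128}r_k^2))}{r_k^N}\;+\;c^{K+1}m(r_{K+1}),
\]
and letting $K\to\infty$ the last term vanishes (since $m(r_{K+1})\to u(y,s)$ is finite and $c^{K+1}\to 0$), leaving a bound with geometrically decaying weights $c^{k+1}$. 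That is strictly weaker than the theorem, which asserts a constant $C$ in front of every term, uniformly in $k$. There is no way to ``absorb the tail into the left-hand side'' to recover uniformity: the loss is already built into the one-step inequality. The paper avoids this by proving instead the \emph{telescoping} single-scale inequality
\[
\frac{\mu(Q_{r_k/8}(y,s-\tfrac{35}{128}r_k^2))}{r_k^N}\;\leq\; c_1\Bigl(\inf_{Q_{r_{k+1}}(y,s)}u-\inf_{Q_{r_k}(y,s)}u\Bigr),
\]
whose right-hand sides sum exactly to $c_1\bigl(\lim_k\inf_{Q_{r_k}}u-\inf_{Q_r}u\bigr)=c_1\bigl(u(y,s)-\inf_{Q_r}u\bigr)\le c_1\,u(y,s)$ by continuity at $(y,s)$. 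The key is that the constant appears only once, on one side, and the increments of the infima telescope; your additive form with a $c<1$ prefactor on the lower-order term cannot reproduce this.

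On the mechanics of the single-scale estimate, the proposed decomposition $u=v_\rho+w_\rho$ also doesn't close in the way sketched. Even granting the monotonicity \eqref{5hhcondb}, $w_\rho$ does not solve a nonlinear equation of the same form, and testing its identity against $\eta^2$ gives an identity involving $\int\eta\,\bigl(A(x,t,\nabla u)-A(x,t,\nabla v_\rho)\bigr)\cdot\nabla\eta$; coercivity and Caccioppoli bound gradients from \emph{above}, they do not produce the asserted \emph{lower} bound on $\fint w_\rho$. The paper instead tests the equation for $u$ itself against $\eta^2$ (with $\eta$ supported in $Q_{\rho/4}(y,s-\rho^2/4)$ and equal to $1$ on $Q_{\rho/8}(y,s-\tfrac{35}{128}\rho^2)$), writes $\int u_t\eta^2=-2\int(u-a)\eta_t\eta$ with $a=\inf_{Q_\rho}u$, and controls the remaining gradient term via a logarithmic Caccioppoli estimate obtained from the test function $(u-a+\delta)^{-\varepsilon}\eta^2$, combined with the weak Harnack inequality of \cite[Theorem 6.18]{55Li3}, applied to $u-a$. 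Working with $u-a$ directly, rather than with a difference of solutions, is both simpler and precisely what produces the difference-of-infima structure needed for the telescoping.
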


\begin{proof} It is enough to show that for $\rho\in (0,r)$
\begin{equation}\label{5hh270120144}
\frac{\mu(Q_{\rho/8}(y,s-\frac{35}{128}\rho^2))}{\rho^N}\lesssim \inf_{Q_{\rho/4}(y,s)}u-\inf_{Q_{\rho}(y,s)}u.
\end{equation} By \cite[Theorem 6.18, p. 122 ]{55Li3}, we have for any $\theta\in (0,1+2/N)$,
\begin{equation}\label{5hh270120141}
\left(\fint_{Q_{\rho/4}(y,s-\rho^2/4)}(u-a)^\theta\right)^{1/\theta}\lesssim b-a,
\end{equation}
where $b=\inf_{Q_{\rho/4}(y,s)}u $, $a=\inf_{Q_{\rho}(y,s)}u.$

Let $\eta\in C^\infty_c(Q_\rho(y,s))$ such that $0\leq \eta\leq 1$,  $\text{supp} \eta\subset Q_{\rho/4}(y,s-\frac{1}{4}\rho^2)$, $\eta=1$ in $Q_{\rho/8}(y,s-\frac{35}{128}\rho^2)$ and $|\nabla\eta|\lesssim 1/\rho$, $|\eta_t|\lesssim 1/\rho^2$. We have 
\begin{align*}
\mu(Q_{\rho/8}(y,s-\frac{35}{128}\rho^2))&\leq\int_{Q_\rho(y,s)}\eta^2d\mu(x,t)\\&= \int_{Q_\rho(y,s)}u_t\eta^2dxdt+2\int_{Q_\rho(y,s)}\eta A(x,t,\nabla u)\nabla \eta dxdt 
\\&= -2\int_{Q_\rho(y,s)}(u-a)\eta_t\eta dxdt+2\int_{Q_\rho(y,s)}\eta A(x,t,\nabla u)\nabla \eta dxdt 
\\ &\lesssim r^{-2}\int_{Q_{\rho/4}(y,s-\frac{1}{4}\rho^2)}(u-a) dxdt+\int_{Q_\rho(y,s)}\eta |\nabla u||\nabla \eta| dxdt
\\ &\lesssim r^N(b-a)+\int_{Q_\rho(y,s)}\eta |\nabla u||\nabla \eta| dxdt.
\end{align*}
Here we used \eqref{5hh270120141} with $\theta=1$ in the last inequality. It remains to show that  
\begin{equation}\label{5hh270120142}
\int_{Q_r(y,s)}\eta |\nabla u||\nabla \eta| dxdt\lesssim r^{N}(b-a).
\end{equation}
First, we verify that for $\varepsilon\in(0,1)$
\begin{equation}\label{5hh270120143}
\int_{Q_\rho(y,s)} |\nabla u|^2(u-a)^{-\varepsilon-1}\eta^2dxdt
\lesssim\int_{Q_\rho(y,s)}(u-a)^{1-\varepsilon}\left(\eta|\eta_t|+|\nabla \eta|^2\right)dxdt.
\end{equation}
Indeed, 
for $\delta\in(0,1)$  we choose $\varphi=(u-a+\delta)^{-\varepsilon}\eta^2$ as test function in \eqref{5hhparabolic1}, 
\begin{align*}
0&\leq \int_{Q_\rho(y,s)} u_t(u-a+\delta)^{-\varepsilon}\eta^2 dxdt+\int_{Q_\rho(y,s)} A(x,t,\nabla u)\nabla \left((u-a+\delta)^{-\varepsilon}\eta^2\right) dxdt\\
&\leq 2(1-\varepsilon)\int_{Q_\rho(y,s)} (u-a+\delta)^{1-\varepsilon}|\eta_t|\eta dxdt -\varepsilon \Lambda_2\int_{Q_\rho(y,s)} |\nabla u|^2(u-a+\delta)^{-\varepsilon-1}\eta^2dxdt\\&~~~~+ 2\Lambda_1\int_{Q_\rho(y,s)}\eta|\nabla u|(u-a+\delta)^{-\varepsilon}|\nabla \eta|dxdt.
\end{align*}
So, we deduce \eqref{5hh270120143} from using the H\"older's inequality and letting $\delta\to 0$.\\
Therefore, for $\varepsilon\in (0,2/N)$ using the H\"older's inequality,  we get
\begin{align*}&\int_{Q_r(y,s)}\eta |\nabla u||\nabla \eta| dxdt\\&~~~\leq \left(\int_{Q_\rho(y,s)} |\nabla u|^2(u-a)^{-\varepsilon-1}\eta^2dxdt\right)^{1/2}\left(\int_{Q_\rho(y,s)} (u-a)^{\varepsilon+1}|\nabla \eta |^2dxdt\right)^{1/2}
\\&~~~\overset{\eqref{5hh270120143}}\lesssim\left(\int_{Q_\rho(y,s)}(u-a)^{1-\varepsilon}\left(\eta|\eta_t|+|\nabla \eta|^2\right)dxdt\right)^{1/2}\left(\int_{Q_\rho(y,s)} (u-a)^{\varepsilon+1}|\nabla \eta |^2dxdt\right)^{1/2}
\\&~~~\lesssim\rho^{-2}\left(\int_{Q_{\rho/4}(y,s-\frac{1}{4}\rho^2)}(u-a)^{1-\varepsilon}dxdt\right)^{1/2}\left(\int_{Q_{\rho/4}(y,s-\frac{1}{4}\rho^2)} (u-a)^{\varepsilon+1}dxdt\right)^{1/2}.
\end{align*}
Hence, \eqref{5hh270120144}  follows from this and \eqref{5hh270120141}.
\end{proof}\\\\
\begin{proof}[Proof of Theorem \ref{5hh270120145}]Let $\mu_{n}\in (C_c^\infty (\Omega_T))^+,\sigma_n\in (C_c^\infty(\Omega))^+$ be in the proof of  Theorem \ref{5hh141013112}. 
         Let $u_n$ be a weak solution of equation
                  \begin{equation*}
                           \left\{
                           \begin{array}
                           [c]{l}%
                           {(u_{n})_t}-\operatorname{div}(A(x,t,\nabla u_n))=\mu_{n}~~\text{in }\Omega_T,\\
                           u_n=0~~~\text{on}~\partial\Omega \times (0,T),\\
                           {u}_{n}(0)=\sigma_n~~\text{on }\Omega.
                           \end{array}
                           \right.  
                           \end{equation*} 
As the proof of  Theorem \ref{5hh141013112}, thanks to Theorem \ref{5hh260220143} we get for any $Q_r(y,s)\subset\Omega\times (-\text{diam}(\Omega),T)$ and $r_k=4^{-k}r$
$$u_{n}(y,s)\gtrsim\sum_{k=0}^{\infty}\frac{\mu_n(Q_{r_k/8}(y,s-\frac{35}{128}r_k^2))}{r_k^N}+\sum_{k=0}^{\infty}\frac{(\sigma_{n}\otimes\delta_{\{t=0\}})(Q_{r_k/8}(y,s-\frac{35}{128}r_k^2))}{r_k^N}.
$$
Finally, by Proposition \ref{5hhmun} and Theorem \ref{5hhsta} we get the result.                         
\end{proof}\\
\begin{remark} If $u\in L^q(\Omega_T)$ satisfies \eqref{5hh010420141} then $\mathcal{G}_2[\chi_E\mu]\in L^q(\mathbb{R}^{N+1})$ and $\mathbf{G}_{\frac{2}{q}}[\chi_{F}\sigma]\in L^q(\mathbb{R}^{N})$ for every $E\subset\subset \Omega\times [0,T)$ and $F\subset\subset\Omega$. Indeed, for $E\subset\subset\Omega\times [0,T)$, $\varepsilon=\text{dist}\left(E,(\Omega\times (0,T))\cup (\Omega\times \{t=T\})\right)>0$, we can see that for any $(y,s)\in \Omega_T$, $r_k=4^{-k}\varepsilon/4$
$$
    u(y,s)\gtrsim\sum_{k=0}^{\infty}\frac{\tilde{\mu}(E\cap Q_{r_k/8}(y,s-\frac{35}{128}r_k^2))}{r_k^N},$$
         where $\tilde{\mu}=\mu+\sigma\otimes\delta_{\{t=0\}}$.\\
     Moreover, for any $(y,s)\notin \Omega_T$
     \begin{align*}
     \sum_{k=0}^{\infty}\frac{\tilde{\mu}(E\cap Q_{r_k/8}(y,s-\frac{35}{128}r_k^2))}{r_k^N}=0.
     \end{align*}
    Thus, 
\begin{align*}
\infty &> \int_{\mathbb{R}^{N+1}}\sum_{k=0}^{\infty}\left(\frac{\tilde{\mu}(E\cap Q_{r_k/8}(y,s-\frac{35}{128}r_k^2))}{r_k^N}\right)^qdyds \geq \int_{\mathbb{R}^N}\sum_{k=0}^{\infty}\int_{\mathbb{R}}\left(\frac{\tilde{\mu}(E\cap \tilde{Q}_{r_k/8}(y,s))}{r_k^N}\right)^qdsdy
\\& \gtrsim\int_{\mathbb{R}^{N+1}}\int_0^{\varepsilon/64}\left(\frac{\tilde{\mu}(E\cap\tilde{Q}_\rho(y,s))}{\rho^N}\right)^q\frac{d\rho}{\rho}dsdy\gtrsim_\varepsilon\int_{\mathbb{R}^{N+1}}\left(\mathcal{G}_2[\tilde{\mu}\chi_E]\right)^qdsdy.
\end{align*}
Thus, from Proposition \ref{5hh240120146}, we get the results.
\end{remark}
\begin{proof}[Proof of Theorem \ref{5hh1203201417}]Set $D_n=B_n(0)\times(-n^2,n^2)$. For $n\geq 4$, by Theorem \ref{5hh141013112}, there exists a renormalized solution $u_n$ to problem 
         \begin{equation*}
                  \left\{
                  \begin{array}
                  [c]{l}%
                  {(u_{n})_t}-\operatorname{div}(A(x,t,\nabla u_n))=\chi_{D_{n-1}}\omega~~\text{in }D_n,\\
                  u_n=0~~~\text{on}~\partial B_n(0)\times (-n^2,n^2),\\
                  {u}_{n}(-n^2)=0~~\text{on }B_n(0).
                  \end{array}
                  \right.  
                  \end{equation*} 
              relative to  a decomposition $(f_n,g_n,h_n)$ of $\chi_{D_{n-1}}\omega_0$              satisfying  \begin{equation}
                      -K\mathbb{I}_{2}[\omega^-](x,t) \leq u_n(x,t)\leq K\mathbb{I}_{2}[\omega^+](x,t)~~ \forall ~(x,t)\in D_{n}.
                                                 \end{equation}
From the proof of Theorem \ref{5hh141013112} and Remark \ref{5hh1203201410}, we can assume that $u_n$  satisfies  \eqref{5hh110320141} and \eqref{5hh110320147} in Proposition \ref{5hh1203201411} with $1<q_0<\frac{N+2}{N}$, $L\equiv 0$. Moreover, there holds
\begin{align}
||f_n||_{L^1(D_i)}+||g_n||_{L^2(D_i)}+|||h_n|+|\nabla h_n|||_{L^2(D_i)}\leq 2|\omega|(D_{i+1})\label{5hh130320148}
\end{align}
 for any $i=1,...,n-1$ and $h_n$ is convergent in $L^1_{\text{loc}}(\mathbb{R}^{N+1})$.\\
On the other hand, by Lemma \ref{5hh1310136} we have for any $s\in (1,\frac{N+2}{N})$ 
\begin{equation}
\int_{D_m}|u_n|^{s}dxdt\leq K^s\int_{D_m}(I_2[|\omega|])^{s}dxdt\leq K^s\int_{\tilde{Q}_{4m}(x_0,t_0)}(I_2[|\omega|])^{s}dxdt\lesssim m^{N+2}, \label{5hh1203201413}
\end{equation}
for  $n\geq m \geq |x_0|+|t_0|^{1/2}$.
Consequently, we can apply Proposition \ref{5hh1203201412} and obtain that $u_n$ converges to some $u$ in $L^1_{loc}(\mathbb{R};W^{1,1}_{loc}(\mathbb{R}^N))$.\\ Since  for any $\alpha\in(0,1/2)$
\begin{align*}
\int_{D_m}\frac{|\nabla u_n|^2}{(|u_n|+1)^{\alpha+1}}dxdt\lesssim_{\alpha,m}1 ~~\forall~ n\geq m,
\end{align*}
thus using \eqref{5hh1203201413} and H\"older's inequality, we get  for any $1\leq s_1<\frac{N+2}{N+1}$ 
\begin{align*}
\int_{D_m}|\nabla u_n|^{s_1}dxdt\lesssim_{s_1,m} 1~~\text{ for all }~ n\geq m\geq |x_0|+|t_0|^{1/2}.
\end{align*}
This yields $u_n\to u$ in $L^{s_1}_{\text{loc}}(\mathbb{R};W^{1,s_1}_{\text{loc}}(\mathbb{R}^N))$. \\
Take $\varphi\in C_c^\infty(\mathbb{R}^{N+1})$ and $m_0\in\mathbb{N}$ with $\text{supp} (\varphi)\subset D_{m_0}$, we have for $n\geq m_0+1$
\begin{align*}
  -\int_{\mathbb{R}^{N+1}}u_n\varphi_tdxdt+\int_{\mathbb{R}^{N+1}}A(x,t,\nabla u_n)\nabla \varphi dxdt=\int_{\mathbb{R}^{N+1}}\varphi d\omega
  \end{align*}
Letting $n\to\infty$, we conclude that $u$ is a distributional solution to problem  \eqref{5hhparabolic2'} with data $\mu=\omega$ which satisfies \eqref{5hh1203201414}.\\
       \textbf{Claim 1.} If $\omega\geq 0$. By Theorem \ref{5hh270120145}, we have for $n\geq 4^{k_0+1}$, $(y,s)\in B_{4^{k_0}}\times (0,n^2)$
     $$
                            u_n(y,s)\gtrsim\sum_{k=0}^{\infty}\frac{\omega(Q_{r_k/8}(y,s-\frac{35}{128}r_k^2)\cap D_{n-1})}{r_k^N},
              $$ where $r_k=4^{-k+k_0}$.
       This gives 
$$
                               u_n(y,s)\gtrsim\sum_{k=-k_0}^{\infty}\frac{\omega(Q_{2^{-2k-3}}(y,s-35\times 2^{-4k-7})\cap B_{n-1}(0)\times(0,(n-1)^2))}{2^{-2Nk}}.$$
       Letting $n\to\infty$ and $k_0\to\infty$ we have   \eqref{5hh1203201415}. 
       Finally, thanks to Proposition \ref{5hh230120143} and Theorem \ref{5hh051120131}, we will assert \eqref{5hh130320145} if we show that for $q>\frac{N+2}{N}$
$$ B:=\int_{\mathbb{R}}\left(\sum_{k=-\infty}^{\infty}\frac{\omega(Q_{2^{-2k-3}}(x,t-35\times 2^{-4k-7}))}{2^{-2Nk}}\right)^qdxdt\gtrsim \int_{\mathbb{R}}\int_{0}^{+\infty}\left(\frac{\omega(\tilde{Q}_\rho(x,t))}{\rho^N}\right)^q\frac{d\rho}{\rho}dxdt.$$
       Indeed, 
       \begin{align*}
       B&\geq  \sum_{k=-\infty}^{\infty}\int_{\mathbb{R}}\left(\frac{\omega(Q_{2^{-2k-3}}(x,t-35\times 2^{-4k-7}))}{2^{-2Nk}}\right)^qdtdx
       \\&~~~~~= \sum_{k=-\infty}^{\infty}\int_{\mathbb{R}}\left(\frac{\omega(\tilde{Q}_{2^{-2k-3}}(x,t))}{2^{-2Nk}}\right)^qdt       
        \gtrsim \int_{\mathbb{R}^{N+1}}\int_{0}^{+\infty}\left(\frac{\omega(\tilde{Q}_\rho(x,t))}{\rho^N}\right)^q\frac{d\rho}{\rho}dxdt.
       \end{align*}
      \textbf{Claim 2.}  If $A$ is independent of space variable $x$ and  $\eqref{5hhcondc}$ is satisfied. By Remark \ref{5hh070420146} we get for any $(x,t)\in D_{n/4} $
      $$|\nabla u_n(x,t)|\lesssim\mathbb{I}_{1}[|\omega|](x,t).$$
                                                                       Letting $n\to \infty$, we get \eqref{5hh310320146}. \\
               \textbf{Claim 3.}  If $\omega=\mu+\sigma\otimes\delta_{\{t=0\}}$ with $\mu\in\mathfrak{M}(\mathbb{R}^N\times (0,\infty))$ and $\sigma\in \mathfrak{M}(\mathbb{R}^N)$, then by  Remark \eqref{5hh070420145} we  get  that $u_n=0$ in $B_n(0)\times (-n^2,0)$. So, $u=0$ in $\mathbb{R}^N\times(-\infty,0)$. Therefore, clearly ${\left. u \right|_{\mathbb{R}^N\times [0,\infty)}}$ is a distributional solution to \eqref{5hhparabolic2}.  The proof is complete.                                                                                              
\end{proof}
\begin{remark}\label{5hh280320141} If $\omega\in \mathfrak{M}_b(\mathbb{R}^{N+1})$ then $u$ satisfies 
\begin{align*}
|||\nabla u|||_{L^{\frac{N+2}{N+1},\infty}(\mathbb{R}^{N+1})}\lesssim|\omega|(\mathbb{R}^{N+1}).
\end{align*} 
Moreover, $I_2[|\omega|]\in L^{\frac{N+2}{N},\infty}(\mathbb{R}^{N+1})$ and  $I_2[|\omega|]<\infty$ a.e in $\mathbb{R}^{N+1}$.

\end{remark} 
\section{Quasilinear Lane-Emden Type Parabolic Equations}
\subsection{Quasilinear Lane-Emden Parabolic Equations in $\Omega_T$} 
To prove Theorem \ref{5hh070120146} we need the following proposition which was proved in \cite{55BaPi2}.
\begin{proposition}\label{5hh040220142}
 Assume that $O$ is an open subset of $\mathbb{R}^{N+1}$. Let $p>1$ and $\mu\in \mathfrak{M}^+(O)$. If $\mu$ is absolutely continuous with respect to $\text{Cap}_{2,1,p}$ in $O$, there exists a nondecreasing sequence
$\{\mu_n\}\subset \mathfrak{M}_b^+(O)\cap \left(W^{2,1}_p(\mathbb{R}^{N+1})\right)^{*} $,  with compact support in $O$ which converges to 
$\mu$  weakly in $\mathfrak{M}(O)$. Moreover, if $\mu\in\mathfrak{M}_b^+(O)$ then $||\mu_n-\mu||_{\mathfrak{M}_b(O)}\to 0$ as $n\to\infty$.
\end{proposition}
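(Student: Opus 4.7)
The strategy is to exhaust $O$ by compact sets and reduce to the already-known case of bounded positive measures with compact support in $\mathbb{R}^{N+1}$. Fix an exhaustion $K_1 \subset\subset K_2 \subset\subset \cdots \subset\subset O$ with $\bigcup_n K_n = O$, and set $\mu^{(n)} := \chi_{K_n}\mu$. Each $\mu^{(n)}$ lies in $\mathfrak{M}_b^+(\mathbb{R}^{N+1})$ with compact support in $O$, and since $\mu$ does not charge sets of zero $\text{Cap}_{2,1,p}$-capacity neither does any $\mu^{(n)}$. Monotone convergence yields $\mu^{(n)} \nearrow \mu$ weakly in $\mathfrak{M}(O)$, and in total variation if $\mu \in \mathfrak{M}_b^+(O)$.

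The heart of the argument is the Baras--Pierre approximation theorem in \cite{55BaPi2}: any $\nu \in \mathfrak{M}_b^+(\mathbb{R}^{N+1})$ absolutely continuous with respect to $\text{Cap}_{2,1,p}$ is the strong (total variation) limit of a nondecreasing sequence $\{\nu_k\} \subset \mathfrak{M}_b^+ \cap \bigl(W^{2,1}_p(\mathbb{R}^{N+1})\bigr)^{*}$. Using Bagby's equivalence $\text{Cap}_{2,1,p} \simeq \text{Cap}_{\mathcal{G}_2,p}$ (Corollary \ref{5hh250220142}) together with the Maz'ya--Havin--Feyel--de\,la\,Pradelle criterion that $\nu \in (W^{2,1}_p)^{*}$ iff $\mathcal{G}_2 * \nu \in L^{p'}(\mathbb{R}^{N+1})$, the $\nu_k$'s are obtained by truncating the capacitary potential of $\nu$ at height $k$ and forming the corresponding balayage; the construction keeps $\operatorname{supp}(\nu_k) \subset \operatorname{supp}(\nu)$. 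Applied to $\mu^{(n)}$, this produces $\{\mu^{(n)}_k\}_k$ with $\mu^{(n)}_k \nearrow \mu^{(n)}$ in total variation and $\operatorname{supp}(\mu^{(n)}_k) \subset K_n$.

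I would then combine these by a diagonal choice: pick $k(n)$ so that $\|\mu^{(n)}_{k(n)} - \mu^{(n)}\|_{\mathfrak{M}_b} < 1/n$, and set
\[
\mu_n := \sup_{1 \le j \le n}\mu^{(j)}_{k(j)},
\]
the supremum taken in the lattice of positive Radon measures. Each $\mu_n$ is dominated by $\mu$, compactly supported in $K_n \subset O$, and lies in $(W^{2,1}_p)^{*}$ since $\mathcal{G}_2 * \mu_n \le \sum_{j\le n} \mathcal{G}_2 * \mu^{(j)}_{k(j)} \in L^{p'}$. By construction $\mu_n$ is nondecreasing and $\mu_n \ge \mu^{(n)}_{k(n)}$, so $\mu - \mu_n \to 0$ in total variation on $K_n$, which gives weak convergence on $O$ (and total variation convergence if $\mu \in \mathfrak{M}_b^+(O)$). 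The main obstacle is the Baras--Pierre construction itself (invoking it with prescribed support), together with the verification that the positive cone of $(W^{2,1}_p)^{*}$ is stable under finite lattice suprema; both reduce to the fact that $\nu \mapsto \mathcal{G}_2 * \nu$ is monotone in $\nu$, which makes the criterion $\mathcal{G}_2*\nu \in L^{p'}$ hereditary under domination. With these technical points in place, the rest is routine measure-theoretic bookkeeping.
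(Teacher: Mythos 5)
The paper itself offers no proof of this proposition; it simply cites \cite{55BaPi2}, so there is no in-house argument to compare yours against. Your reconstruction — exhaust $O$ by compacts $K_n$, restrict $\mu$ to each $K_n$ to obtain bounded compactly-supported measures still charging no null sets, approximate each by the Baras--Pierre scheme, then glue with a diagonal choice and a finite lattice supremum — is sound in structure and does establish the statement modulo the Baras--Pierre theorem for bounded compactly-supported measures, which is exactly the role the citation plays in the paper anyway. Two points are worth tightening. First, the criterion for membership in $\bigl(W^{2,1}_p(\mathbb{R}^{N+1})\bigr)^{*}$ uses the \emph{backward} parabolic Bessel potential ${\mathop \mathcal{G}\limits^ \vee}_2[\nu]$, not $\mathcal{G}_2 * \nu$: since $\mathcal{G}_2$ is supported in $\{t>0\}$ it is not symmetric in time, so pairing $\nu$ against $\varphi = \mathcal{G}_2 * f$ produces $\int f\,{\mathop \mathcal{G}\limits^ \vee}_2[\nu]$ (the paper records this in the remark following the proposition, noting $\|\mu_n\|_{(\mathcal{L}_2^p)^*} = \|{\mathop \mathcal{G}\limits^\vee}_2[\mu_n]\|_{L^{p'}}$). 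The monotonicity argument you make carries over verbatim with the backward kernel, so this is a cosmetic rather than structural error, but it should be stated correctly. Second, the assertion that the Baras--Pierre approximants $\nu_k$ satisfy $\operatorname{supp}(\nu_k) \subset \operatorname{supp}(\nu)$ is essential to your diagonalization and is plausible (the construction restricts $\nu$ to sublevel sets of its capacitary potential), but as you are invoking \cite{55BaPi2} as a black box you should either verify it against the original construction or note explicitly that you are using this additional feature of their proof; without it, the $\mu_n$ you build need not be compactly supported in $O$.
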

\begin{remark} By Theorem \ref{5hh040220141}, $W^{2,1}_p(\mathbb{R}^{N+1})=\mathcal{L}_2^p(\mathbb{R}^{N+1})$, it follows $\{\mu_n\}\subset \mathfrak{M}_b^+(O)\cap \left(\mathcal{L}_2^p(\mathbb{R}^{N+1})\right)^{*}$. Since $||\mu_n||_{\left(\mathcal{L}_2^p(\mathbb{R}^{N+1})\right)^{*}}=|| {\mathop \mathcal{G}\limits^ \vee}_2[\mu_n]||_{L^{p'}(\mathbb{R}^{N+1})}$,  so $ {\mathop \mathcal{G}\limits^ \vee}_2[\mu_n]\in L^{p'}(\mathbb{R}^{N+1})$.\medskip\\ Consequently, from \eqref{5hh230120142'} in Proposition \ref{5hh230120143}, we obtain $\mathbb{I}_2^R[\mu_n]\in L^{p'}(\mathbb{R}^{N+1})$ for any $n\in\mathbb{N}$ and $R>0$. In particular, $\mathbb{I}_2[\mu_n]\in L^{p'}_{\text{loc}}(\mathbb{R}^{N+1})$ for any $n\in\mathbb{N}$.
\end{remark}
\begin{remark}
As in the proof of Theorem 2.5 in \cite{55VHV}, we can prove a general version of Proposition \ref{5hh040220142}, that is:  for $p>1$, if $\mu$ is absolutely continuous with respect to $\text{Cap}_{\mathcal{G}_\alpha,p}$ in $O$, there exists a nondecreasing sequence
$\{\mu_n\}\subset \mathfrak{M}_b^+(O)\cap \left(\mathcal{L}_\alpha^p(\mathbb{R}^{N+1})\right)^{*} $,  with compact support in $O$ which converges to 
$\mu$ weakly in $\mathfrak{M}(O)$. Furthermore, $\mathbb{I}_\alpha[\mu_n]\in L^{p'}_{\text{loc}}(\mathbb{R}^{N+1})$ for all $n\in\mathbb{N}$. Besides, we also obtain that for $\mu\in\mathfrak{M}_b(O)$ is absolutely continuous with respect to $\text{Cap}_{\mathcal{G}_\alpha,p}$ in $O$ if and only if $\mu=f+\nu$ where $f\in L^1(O)$ and $\nu\in  \left(\mathcal{L}_\alpha^p(\mathbb{R}^{N+1})\right)^{*}.$ 
\end{remark}
\begin{proof}[Proof of Theorem \ref{5hh070120146}]First, assume that $\sigma\in L^1(\Omega)$. Because $\mu$ is absolutely continuous with respect to the capacity 
$\text{Cap}_{2,1,q'}$, so are $\mu^+$ and $\mu^-$. Applying Proposition \ref{5hh040220142} there exist two nondecreasing sequences $\{\mu_{1,n}\}$ and $\{\mu_{2,n}\}$ of positive bounded measures with compact support in $\Omega_T$ which converge to $\mu^+$ and $\mu^-$ in $\mathfrak{M}_b(\Omega_T)$ respectively and such that $\mathbb{I}_2[\mu_{1,n}], \mathbb{I}_2[\mu_{2,n}]\in L^{q}(\Omega_T)$.\medskip\\
For $i=1,2$, set $\tilde{\mu}_{i,1}=\mu_{i,1}$ and $\tilde{\mu}_{i,j}=\mu_{i,j}-\mu_{i,j-1}\geq 0$, so $\mu_{i,n}=\sum_{j=1}^{n}\tilde{\mu}_{i,j}$. We write $\mu_{i,n}=\mu_{i,n,0}+\mu_{i,n,s}$,  $\tilde{\mu}_{i,j}=\tilde{\mu}_{i,j,0}+\tilde{\mu}_{i,j,s}$ with  $\mu_{i,n,0},\tilde{\mu}_{i,n,0}\in\mathfrak{M}_0(\Omega_T)$, $\mu_{i,n,s},\tilde{\mu}_{i,n,s}\in\mathfrak{M}_s(\Omega_T)$.\\ As in the proof of Theorem \ref{5hh141013112}, for any $j\in\mathbb{N}$ and $i=1,2$, there exist  sequences of nonnegative measures $\tilde{\mu}_{m,i,j,0}=(f_{m,i,j}%
         ,g_{m,i,j},h_{m,i,j})$ and $\tilde{\mu}_{m,i,j,s}$ such that
         \begin{description}
         	\item[i)] $f_{m,i,j},g_{m,i,j},h_{m,i,j}\in C_{c}^{\infty}(\Omega_T)$
         	strongly converge to some $f_{i,j},g_{i,j},h_{i,j}$ in $L^{1}(\Omega_T),L^{2}(\Omega_T,\mathbb{R}^N)$ and $L^2(0,T,H^1_0(\Omega))$
         	respectively;
         	\item[ii)] $\tilde{\mu}_{m,i,j},\tilde{\mu}_{m,i,j,s}\in C_{c}^{\infty}(\Omega_T)$ converge to $\tilde{\mu}_{i,j},\tilde{\mu}_{i,j,s}$ resp. in the narrow topology with $\tilde{\mu}_{m,i,j}=\tilde{\mu}_{m,i,j,0}+\tilde{\mu}_{m,i,j,s}$ which 
         	satisfy $\tilde{\mu}_{i,j,0}=(f_{i,j},g_{i,j},h_{i,j})$ and  $0\leq\tilde{\mu}_{m,i,j} \leq \varphi_m*\tilde{\mu}_{i,j}$ and                    \begin{equation}\label{5hh130320141}
         		||f_{m,i,j}||_{L^1(\Omega_T)}+\left\Vert g_{m,i,j}\right\Vert _{L^{2}(\Omega_T,\mathbb{R}^N)}+||h_{m,i,j}||_{L^2(0,T,H^1_0(\Omega))}+\mu_{m,i,j,s}(\Omega_T)
         		\leq 2\tilde{\mu}_{i,j}(\Omega_T).\end{equation}
         	Here $\{\varphi_m\}$ is a sequence of mollifiers in $\mathbb{R}^{N+1}$.
         \end{description}    For any $n,k,m\in\mathbb{N}$, let $u_{n,k,m},u_{1,n,k,m}, u_{2,n,k,m}\in W $ be  solutions of problems
\begin{equation}\label{5hh080120144}
\left\{ \begin{array}{l}
  (u_{n,k,m})_t- \operatorname{div} (A(x,t,\nabla u_{n,k,m})) + T_k(|u_{n,k,m}|^{q - 1}u_{n,k,m})= \sum_{j=1}^{n}(\tilde{\mu}_{m,1,j}-\tilde{\mu}_{m,2,j}) ~\text{ in }~\Omega_T,  \\ 
  u_{n,k,m}=0~~~~~~~~~~~~~~~~\text{ on }\partial\Omega\times (0,T),\\
 u_{n,k,m}(0) = T_n(\sigma^+)-T_n(\sigma^-)~~~\text{on }~\Omega,  
 \end{array} \right.
\end{equation}
\begin{equation}\label{5hh080120145}
\left\{ \begin{array}{l}
  (u_{1,n,k,m})_t- \operatorname{div} (A(x,t,\nabla u_{1,n,k,m})) + T_k(u_{1,n,k,m}^q)= \sum_{j=1}^{n}\tilde{\mu}_{m,1,j} ~\text{ in }~\Omega_T,  \\ 
    u_{1,n,k,m}=0~~~~~~~~~~~~~~~~\text{ on }\partial\Omega\times (0,T),\\
   u_{1,n,k,m}(0) = T_n(\sigma^+)~~~\text{in}~\Omega,  \\  
 \end{array} \right.
\end{equation}
\begin{equation}\label{5hh080120146}
\left\{ \begin{array}{l}
  (u_{2,n,k,m})_t- \operatorname{div} (\tilde{A}(x,t,\nabla u_{2,n,k,m})) + T_k(u_{2,n,k,m}^q)= \sum_{j=1}^{n}\tilde{\mu}_{m,2,j}  ~\text{ in }~\Omega_T,  \\ 
 u_{2,n,k,m} = 0~~~~~~~~~~~~~~~~\text{ on }\partial\Omega\times (0,T),\\
    u_{2,n,k,m}(0) = T_n(\sigma^-)~~~\text{in }~\Omega,  \\ 
 \end{array} \right.
\end{equation}
where $\tilde{A}(x,t,\xi)=-A(x,t,-\xi)$ and 
$$W=\left\{z:z\in L^2(0,T,H^1_0(\Omega)),z_t\in L^2(0,T,H^{-1}(\Omega))\right\}.$$
Thanks to Comparison Principle Theorem and Theorem \ref{5hh141013112}, we have that  for any $m,k$ the sequences $\{u_{1,n,k,m}\}_n$ and $\{u_{2,n,k,m}\}_n$ are  increasing and 
\begin{align*}
-K \mathbb{I}_2[T_n(\sigma^-)\otimes\delta_{\{t=0\}}]-K \mathbb{I}_2[\mu_{2,n}*\varphi_m]&\leq -u_{2,n,k,m}\leq u_{n,k,m}\leq u_{1,,n,k,m}\\&\leq K \mathbb{I}_2[\mu_{1,n}*\varphi_m]+K \mathbb{I}_2[T_n(\sigma^+)\otimes\delta_{\{t=0\}}],
\end{align*} 
where a constant $K$ is in Theorem \ref{5hh141013112}.
Thus, 
\begin{align*}
-K \mathbb{I}_2[T_n(\sigma^-)\otimes\delta_{\{t=0\}}]-K \mathbb{I}_2[\mu_{2,n}]*\varphi_m&\leq -u_{2,n,k,m}\leq u_{n,k,m}\leq u_{1,,n,k,m}\\&\leq K \mathbb{I}_2[\mu_{1,n}]*\varphi_m+K \mathbb{I}_2[T_n(\sigma^+)\otimes\delta_{\{t=0\}}].
\end{align*}
Moreover, 
\begin{align*}
\int_{\Omega_T}T_k(u_{i,n,k,m}^q)dxdt&\leq \int_{\Omega_T}\varphi_m*\mu_{i,n}dxdt+|\sigma|(\Omega)
\leq |\mu|(\Omega_T)+|\sigma|(\Omega).
\end{align*}
As in \cite[Proof of Lemma 5.3]{55VHb},  thanks to Proposition \ref{5hhmun} and  Theorem \ref{5hhsta},  there exist subsequences of $\{u_{n,k,m}\}_m$ $\{u_{1,n,k,m}\}_m$, $\{u_{2,n,k,m}\}_m$, still denoted them, converging to renormalized solutions $u_{n,k}$ $u_{1,n,k}$, $u_{2,n,k}$ of equations
\begin{description}
	\item  \eqref{5hh080120144} with data $\mu_{1,n}-\mu_{2,n}$, $u_{n,k}(0)=T_n(\sigma^+)-T_n(\sigma^-)$ and the decomposition $(\sum_{j=1}^{n}f_{1,j}-\sum_{j=1}^{n}f_{2,j},\sum_{j=1}^{n}g_{1,j}-\sum_{j=1}^{n}g_{2,j},\sum_{j=1}^{n}h_{1,j}-\sum_{j=1}^{n}h_{2,j})$ of $\mu_{1,n,0}-\mu_{2,n,0}$,
	\item \eqref{5hh080120145} with data $\mu_{1,n}$, $u_{1,n,k}(0)=T_n(\sigma^+)$ and the decomposition $(\sum_{j=1}^{n}f_{1,j},$ $\sum_{j=1}^{n}g_{1,j},$ $\sum_{j=1}^{n}h_{1,j})$ of $\mu_{1,n,0}$,
	\item \eqref{5hh080120146} with data $\mu_{2,n}$, $u_{2,n,k}(0)=T_n(\sigma^-)$ and the decomposition $(\sum_{j=1}^{n}f_{2,j},$ $\sum_{j=1}^{n}g_{2,j},\sum_{j=1}^{n}h_{2,j})$ of $\mu_{2,n,0}$ respectively,
\end{description}   which satisfy \begin{align*}
-K \mathbb{I}_2[T_n(\sigma^-)\otimes\delta_{\{t=0\}}]-K \mathbb{I}_2[\mu_{2,n}]&\leq -u_{2,n,k}\leq u_{n,k}\leq u_{1,n,k}\\&\leq K \mathbb{I}_2[\mu_{1,n}]+K \mathbb{I}_2[T_n(\sigma^+)\otimes\delta_{\{t=0\}}].
\end{align*}
Next, as in \cite[Proof of Lemma 5.4]{55VHb} since $I_2[\mu_{i,n}]\in L^{q}(\Omega_T)$ for any $n$, 
thanks to Proposition \ref{5hhmun} and  Theorem \ref{5hhsta},  there exist subsequences of $\{u_{n,k}\}_k$ $\{u_{1,n,k}\}_k$, $\{u_{2,n,k}\}_k$, still denoted them, converging to renormalized solutions $u_{n}$ $u_{1,n}$, $u_{2,n}$ of equations 
\begin{equation}\label{5hh080120147}
\left\{ \begin{array}{l}
  (u_{n})_t- \operatorname{div} (A(x,t,\nabla u_{n})) + |u_{n}|^{q - 1}u_{n}= \mu_{1,n}-\mu_{2,n}~\text{ in }~\Omega_T,  \\ 
   u_{n}=0~~~~~~~~~~~~~~~~\text{ on }\partial\Omega\times (0,T),\\
  u_{n}(0) = T_n(\sigma^+)-T_n(\sigma^-)~~~\text{in }~\Omega, 
 \end{array} \right.
\end{equation}
\begin{equation}\label{5hh080120148}
\left\{ \begin{array}{l}
  (u_{1,n})_t- \operatorname{div} (A(x,t,\nabla u_{1,n})) + u_{1,n}^q= \mu_{1,n} ~\text{ in }~\Omega_T,  \\ 
  u_{1,n}=0~~~~~~~~~~~~~\text{ on }\partial\Omega\times (0,T),\\
    u_{1,n}(0) = T_n(\sigma^+)~~~\text{in }~\Omega,  \\ 
 \end{array} \right.
\end{equation}
\begin{equation}\label{5hh080120149}
\left\{ \begin{array}{l}
  (u_{2,n})_t- \operatorname{div} (\tilde{A}(x,t,\nabla u_{2,n})) + u_{2,n}^q= \mu_{2,n} ~\text{ in }~\Omega_T,  \\ 
  u_{2,n}=0~~~~~~~~~~~\text{ on }\partial\Omega\times (0,T),\\
    u_{2,n}(0) = T_n(\sigma^-)~~~\text{in }~\Omega,  \\ 
 \end{array} \right.
\end{equation} relative to the decomposition $(\sum_{j=1}^{n}f_{1,j}-\sum_{j=1}^{n}f_{2,j},\sum_{j=1}^{n}g_{1,j}-\sum_{j=1}^{n}g_{2,j},\sum_{j=1}^{n}h_{1,j}-\sum_{j=1}^{n}h_{2,j})$ of $\mu_{1,n,0}-\mu_{2,n,0}$, $(\sum_{j=1}^{n}f_{1,j},$ $\sum_{j=1}^{n}g_{1,j},$ $\sum_{j=1}^{n}h_{1,j})$ of $\mu_{1,n,0}$ and $(\sum_{j=1}^{n}f_{2,j},$ $\sum_{j=1}^{n}g_{2,j},\sum_{j=1}^{n}h_{2,j})$ of $\mu_{2,n,0}$ respectively,  which satisfy \begin{align*}
-K \mathbb{I}_2[T_n(u_0^-)\otimes\delta_{\{t=0\}}]-K \mathbb{I}_2[\mu_{2,n}]&\leq -u_{2,n}\leq u_{n}\leq u_{1,n}\\&\leq K \mathbb{I}_2[\mu_{1,n}]+K \mathbb{I}_2[T_n(u_0^+)\otimes\delta_{\{t=0\}}].
\end{align*}
Moreover, the sequences $\{u_{1,n}\}_n$ and $\{u_{2,n}\}_n$ are  increasing and $$
\int_{\Omega_T}u_{i,n}^qdxdt\leq |\mu|(\Omega_T)+|\sigma|(\Omega).$$
Note that from \eqref{5hh130320141} we have
$$
||f_{i,j}||_{L^1(\Omega_T)}+\left\Vert g_{i,j}\right\Vert _{L^{2}(\Omega_T,\mathbb{R}^N)}+||h_{i,j}||_{L^2(0,T,H^1_0(\Omega))}
         \leq 2\tilde{\mu}_{i,j}(\Omega_T)$$
which implies
$$
\sum_{j=1}^{n}\left(||f_{i,j}||_{L^1(\Omega_T)}+||g_{i,j}|| _{L^{2}(\Omega_T,\mathbb{R}^N)}+||h_{i,j}||_{L^2(0,T,H^1_0(\Omega))}\right)
         \leq 2\mu_{i,n}(\Omega_T)\leq 2|\mu|(\Omega_T).$$
Finally, as in \cite[Proof of Theorem 5.2]{55VHb} thanks to Proposition \ref{5hhmun}, Theorem \ref{5hhsta} and Monotone Convergence Theorem  there exist subsequences of $\{u_{n}\}_n$, $\{u_{1,n}\}_n$, $\{u_{2,n}\}_n$, still denoted them, converging to renormalized solutions $u$, $u_{1}$, $u_{2}$ of equations
\begin{description}
	\item \eqref{5hh080120147} with data $\mu$, $u(0)=\sigma$ and the decomposition $(\sum_{j=1}^{\infty}f_{1,j}-\sum_{j=1}^{\infty}f_{2,j},\sum_{j=1}^{\infty}g_{1,j}-\sum_{j=1}^{\infty}g_{2,j},\sum_{j=1}^{\infty}h_{1,j}-\sum_{j=1}^{\infty}h_{2,j})$ of $\mu_{0}$, 
	\item  \eqref{5hh080120148} with data $\mu^+$, $u_1(0)=\sigma^+$ and the decomposition $(\sum_{j=1}^{\infty}f_{1,j},\sum_{j=1}^{\infty}g_{1,j},\sum_{j=1}^{\infty}h_{1,j})$ of $\mu_0^+$, 
	\item \eqref{5hh080120149} with data $\mu^-$, $u_2(0)=\sigma^-$ and the decomposition $(\sum_{j=1}^{\infty}f_{2,j},\sum_{j=1}^{\infty}g_{2,j},\sum_{j=1}^{\infty}h_{2,j})$ of $\mu_{0}^-$, respectively 
\end{description} satisfying
$$
-K \mathbb{I}_2[\sigma^-\otimes\delta_{\{t=0\}}]-K \mathbb{I}_2[\mu^-]\leq -u_{2}\leq u\leq u_{1}\leq K \mathbb{I}_2[\mu^+]+K \mathbb{I}_2[\sigma^+\otimes\delta_{\{t=0\}}].$$
We now have remark: if $\sigma\equiv0$ and $\text{supp}(\mu)\subset\overline{\Omega}\times [a,T]$, $a>0$, then $u=u_1=u_2=0$ in $\Omega\times(0,a)$ since $u_{n,k}=u_{1,n,k}=u_{2,n,k}=0$ in $\Omega\times(0,a)$.
\\Next, we will consider $\sigma\in \mathfrak{M}_b(\Omega)$ such that $\sigma$ is absolutely continuous with respect to the capacity $\text{Cap}_{\mathbf{G}_{\frac{2}{q},q'}}$ in $\Omega$. So, $\chi_{\Omega_T}\mu+\sigma\otimes\delta_{\{t=0\}}$ is absolutely continuous with respect to the capacity $\text{Cap}_{2,1,q'}$ in $\Omega\times (-T,T)$. As above, we verify that there exists a renormalized solution $u$ of
\begin{equation}
\left\{ \begin{array}{l}
  u_t- \operatorname{div} (A(x,t,\nabla u)) + |u|^{q - 1}u= \chi_{\Omega_T}\mu+\sigma\otimes\delta_{\{t=0\}} ~\text{ in }~\Omega\times (-T,T),  \\ 
   u=0~~~~~~~~~~\text{ on }\partial\Omega\times (-T,T),\\
  u(-T) = 0~~~\text{on }~\Omega, 
 \end{array} \right.
\end{equation}
satisfying $u=0$ in $\Omega\times (-T,0)$ and 
\begin{equation*}
-K \mathbb{I}_2[\sigma^-\otimes\delta_{\{t=0\}}]-K \mathbb{I}_2[\mu^-]\leq u\leq K \mathbb{I}_2[\mu^+]+K \mathbb{I}_2[\sigma^+\otimes\delta_{\{t=0\}}].
\end{equation*}
Finally, from remark \ref{5hh060420141} we get the result. This completes the proof.
\end{proof}\medskip\\
\begin{proof}[Proof of Theorem \ref{5hh070120147}]Let $\{\mu_{n,i}\}\subset C_c^\infty(\Omega_T), \sigma_{i,n}\in C_c^\infty(\Omega)$ for $i=1,2$ be as in the proof of Theorem  \ref{5hh141013112}. We have $0\leq\mu_{n,1}\leq \varphi_n*\mu^+,0\leq\mu_{n,2}\leq \varphi_n*\mu^-, 0\leq\sigma_{1,n}\leq \varphi_{1,n}*\sigma^+, 0\leq\sigma_{2,n}\leq \varphi_{1,n}*\sigma^-$ for any $n\in\mathbb{N}$ where $\{\varphi_n\}$ and $\{\varphi_{1,n}\}$ are  sequences of standard mollifiers in $\mathbb{R}^{N+1},\mathbb{R}^N$ respectively.\medskip\\  We prove that the problem \eqref{5hh070120149} has a solution with data $\mu=\mu_{n_0}=\mu_{n_0,1}-\mu_{n_0,2},\sigma=\sigma_{n_0}=\sigma_{1,n_0}-\sigma_{2,n_0}$ for $n_0\in\mathbb{N}$.
 Put
       \begin{align*}
       J&=\left\{u\in L^q(\Omega_T):u^+\leq\frac{q K}{q-1}\mathbb{I}_2^{2T_0,\delta}[\mu_{n_0,1}+\sigma_{1,n_0}\otimes\delta_{\{t=0\}}] \right.
       \\&~~~~~~~~~~~~~~\text{ and }~\left.u^-\leq\frac{q K}{q-1}\mathbb{I}_2^{2T_0,\delta}[\mu_{n_0,2}+\sigma_{2,n_0}\otimes\delta_{\{t=0\}}]~\right\},
       \end{align*} 
       where $\max\{-\frac{N+2}{q'}+2,0\}<\delta<2$.\\
Clearly, $J$ is closed under the strong topology of $L^q(\Omega_T)$ and convex. \\
       We consider a map $S:J\to J$ defined for each $v\in J$ by $S(v)=u$, where $u\in L^1(\Omega_T)$  is the unique renormalized solution of 
       \begin{equation}\label{5hh090420142}
                         \left\{
                         \begin{array}
                         [c]{l}%
                         {u_{t}}-\operatorname{div}(A(x,t,\nabla u))=|v|^{q-1}v+\mu_{n_0,1}-\mu_{n_0,2}~\text{in }\Omega_T,\\ 
                                             u=0~~~~~~~\text{on}~~
                                          \partial\Omega \times (0,T),
                                             \\
                                             u(0) = \sigma_{1,n_0}-\sigma_{2,n_0}~~~\text{in}~~ \Omega. \\ 
                         \end{array}
                         \right.  
                         \end{equation} 
 By Theorem \ref{5hh141013112}, we have 
      \begin{align*}
     & u^+\leq K\mathbb{I}_2^{2T_0}[(v^+)^q]+ K\mathbb{I}_2^{2T_0}[\mu_{n_0,1}+\sigma_{1,n_0}\otimes\delta_{\{t=0\}}],\\&
      u^-\leq K\mathbb{I}_2^{2T_0}[(v^-)^q]+ K\mathbb{I}_2^{2T_0}[\mu_{n_0,2}+\sigma_{2,n_0}\otimes\delta_{\{t=0\}}],
      \end{align*}  
      where $K$ is the constant in Theorem \ref{5hh141013112}. 
      Thus, 
      \begin{align*}
           & u^+\leq K\left(\frac{q K}{q-1}\right)^q\mathbb{I}_2^{2T_0,\delta}\left[\left(\mathbb{I}_2^{2T_0,\delta}[\mu_{n_0,1}+\sigma_{1,n_0}\otimes\delta_{\{t=0\}}]\right)^q\right]+ K\mathbb{I}_2^{2T_0,\delta}[\mu_{n_0,1}+\sigma_{1,n_0}\otimes\delta_{\{t=0\}}],\\&
            u^-\leq K\left(\frac{q K}{q-1}\right)^q\mathbb{I}_2^{2T_0,\delta}\left[\left(\mathbb{I}_2^{2T_0,\delta}[\mu_{n_0,2}+\sigma_{2,n_0}\otimes\delta_{\{t=0\}}]\right)^q\right]+ K\mathbb{I}_2^{2T_0,\delta}[\mu_{n_0,2}+\sigma_{2,n_0}\otimes\delta_{\{t=0\}}].
            \end{align*}
            Thus, thanks to Theorem \ref{5hh1410136} there exists $\varepsilon_0=\varepsilon_0(N,K,\delta,q)$ such that 
              if for  every compact sets $E\subset \mathbb{R}^{N+1}$,  \begin{equation}\label{5hh020420141}
                     |\mu_{n_0,i}|(E)+(|\sigma_{i,n_0}|\otimes\delta_{\{t=0\}})(E)\leq \varepsilon_0 \text{Cap}_{E_2^{2T_0,\delta},q'}(E).
                     \end{equation}   
                     then $\mathbb{I}_2^{2T_0,\delta}[\mu_{n_0,i}+\sigma_{i,n_0}\otimes\delta_{\{t=0\}}]\in L^q(\mathbb{R}^{N+1})$ and 
                                                \begin{align*}
                                                \mathbb{I}_2^{2T_0,\delta}\left[\left(\mathbb{I}_2^{2T_0,\delta}[\mu_{n_0,i}+\sigma_{i,n_0}\otimes\delta_{\{t=0\}}]\right)^q\right]\leq \frac{(q-1)^{q-1}}{(Kq)^q}\mathbb{I}_2^{2T_0,\delta}[\mu_{n_0,i}+\sigma_{i,n_0}\otimes\delta_{\{t=0\}}]~i=1,2.
                                                \end{align*}        which implies $u\in L^q(\Omega_T)$ and
                                                $$ u^+\leq  \frac{q K}{q-1}\mathbb{I}_2^{2T_0}[\mu_{n_0,1}+\sigma_{1,n_n}\otimes\delta_{\{t=0\}}],\quad
                                                      u^-\leq  \frac{q K}{q-1}\mathbb{I}_2^{2T_0}[\mu_{n_0,2}+\sigma_{2,n_0}\otimes\delta_{\{t=0\}}].$$                                                                        
Now we assume  that \eqref{5hh020420141} is satisfied, so   $S$ is well defined. 
         Therefore, if we can show that the map $S:J\to J$ is continuous and $S(J)$ is pre-compact  under the strong topology of $L^q(\Omega_T)$ then by the Schauder Fixed Point Theorem, $S$ has a fixed point on $J$. Hence the problem \eqref{5hh070120149} has a solution with data $\mu=\mu_{n_0},\sigma=\sigma_{n_0}$.\medskip\\
         Now we show that \textbf{S is continuous}.
         Let $\{v_n\}$ be a sequence in $J$ such that $v_n$ converges strongly in $L^q(\Omega_T)$ to a function $v\in J$. Set $u_n=S(v_n)$. We need to show that $u_n\to S(v)$ in $L^q(\Omega_T)$.

         By Proposition \ref{5hhmun},  there exists a subsequence of $\{u_n\} $, still denoted by it, converging to $u$ a.e in $\Omega_T$. Since $$|u_n|\leq \sum_{i=1,2}\frac{qK}{q-1}\mathbb{I}_2^{2T_0,\delta}[\mu_{n_0,i}+\sigma_{i,n_0}\otimes\delta_{\{t=0\}}]\in L^q(\Omega_T)~~\forall~n\in\mathbb{N}$$
       Thanks to the Dominated Convergence Theorem, we have $u_n\to u$ in $L^q(\Omega_T)$. Hence, thanks to Theorem \ref{5hhsta} we get $u=S(v)$.  \\       
Next we show that \textbf{S is pre-compact}. 
         Indeed if $\{u_n\}=\{S(v_n)\}$ is a sequence in $S(J)$. By Proposition \ref{5hhmun}, there exists a subsequence of $\{u_n\} $, still denoted by it, converging to $u$ a.e in $\Omega_T$. Again,  the Dominated Convergence Theorem we get $u_n\to u$ in $L^q(\Omega_T)$. So \textbf{S is pre-compact}. \medskip\\
         Next,  thanks to Corollary \ref{5hh250320146} and Remark \ref{5hh270320141} we have
         \begin{align*}
         [\mu_{n,i}+\sigma_{i,n}\otimes\delta_{\{t=0\}}]_{\mathfrak{M}^{\mathcal{G}_2,q'}}\lesssim [|\mu|+|\sigma|\otimes\delta_{\{t=0\}}]_{\mathfrak{M}^{\mathcal{G}_2,q'}}~~\forall~n\in\mathbb{N}, i=1,2.
         \end{align*} 
         In addition, by the proof of Corollary \ref{5hh250320146} we get
$$
         \text{Cap}_{E^{2T_0,\delta}_2,q'}(E)\lesssim_{T_0} \text{Cap}_{\mathcal{G}_2,q'}(E)$$  for every compact set $E$ with $\text{diam}(E)\leq 2T_0$.                       
    Thus, there is $\varepsilon_1=\varepsilon_1(N,K,\delta,q,T_0)$ such that if \begin{equation}\label{5hh010420142}
    [|\mu|+|\sigma|\otimes\delta_{\{t=0\}}]_{\mathfrak{M}^{\mathcal{G}_2,q'}}\leq \varepsilon_1,
    \end{equation} then \eqref{5hh020420141} holds for any $n_0\in \mathbb{N}$.\medskip\\
    Now we suppose that  \eqref{5hh010420142} holds, it is equivalent to  \eqref{5hh020420143}, by Remark \ref{5hh130320146}. Therefore, for any $n\in\mathbb{N}$ there exists a renormalized solution $u_n$ of
   \begin{equation}
                            \left\{
                            \begin{array}
                            [c]{l}%
                            {(u_n)_{t}}-\operatorname{div}(A(x,t,\nabla u_n))=|u_n|^{q-1}u_n+\mu_{n,1}-\mu_{n,2}~\text{in }\Omega_T,\\ 
                                                u_n=0~~~~~~~\text{on}~~
                                             \partial\Omega \times (0,T),
                                                \\
                                                u_n(0) = \sigma_{1,n}-\sigma_{2,n}~~~\text{in}~~ \Omega, \\ 
                            \end{array}
                            \right.  
                            \end{equation}
                            which satisfies
                          \begin{align*}
                        -\frac{q K}{q-1}\mathbb{I}_2^{2T_0,\delta}[\mu_{n,2}+\sigma_{2,n}\otimes\delta_{\{t=0\}}]\leq u_n\leq \frac{q K}{q-1}\mathbb{I}_2^{2T_0,\delta}[\mu_{n,1}+\sigma_{1,n}\otimes\delta_{\{t=0\}}].
                          \end{align*}  
 Thus, for every $(x,t)\in \Omega_T$,
 \begin{align*}
       &-\frac{q K}{q-1}\varphi_n*\mathbb{I}_2^{2T_0,\delta}[\mu^-](x,t)-\frac{q K}{q-1}\varphi_{1,n}*(\mathbb{I}_2^{2T_0,\delta}[\sigma^-\otimes\delta_{\{t=0\}}](.,t))(x)\leq u_n(x,t)\\&~~~~~~~~~~~~~~~~~\leq\frac{q K}{q-1}\varphi_n*(\mathbb{I}_2^{2T_0,\delta}[\mu^-])(x,t)+\frac{q K}{q-1}\varphi_{1,n}*(\mathbb{I}_2^{2T_0,\delta}[\sigma^-\otimes\delta_{\{t=0\}}](.,t))(x).
                           \end{align*}               Since    $\varphi_n*\mathbb{I}_2^{2T_0,\delta}[\mu^\pm](x,t), \varphi_{1,n}*(\mathbb{I}_2^{2T_0,\delta}[\sigma^\pm\otimes\delta_{\{t=0\}}](.,t))(x)$  converge to  $\mathbb{I}_2^{2T_0,\delta}[\mu^\pm](x,t), \mathbb{I}_2^{2T_0,\delta}[\sigma^\pm\otimes\delta_{\{t=0\}}](x,t)$ in $L^q(\mathbb{R}^{N+1})$ as $n\to\infty$, respectively, so $|u_n|^q$ is equi-integrable. \\            
                                    By Proposition \ref{5hhmun}, there exists a subsequence of $\{u_n\} $, still denoted by its, converging to $u$ a.e in $\Omega_T$. It follows $|u_n|^{q-1}u_n\to|u|^{q-1}u$ in $L^1(\Omega_T)$. \\
                                    Consequently, by Proposition \ref{5hhmun} and Theorem \ref{5hhsta}, we obtain that     $u$ is
a distributional solution (a renormalized solution if $\sigma\in L^1(\Omega)$) of \eqref{5hh070120149} with data $\mu$, $\sigma$, and satisfies \eqref{5hh020420142}. 
Furthermore, by Corollary \ref{5hh250320146} we have 
$$
  \left[\left(\mathbb{I}_{2 }^{2T_0,\delta}[|\mu|+|\sigma|\otimes\delta_{\{t=0\}}]\right)^{q}\right]_{\mathfrak{M}^{\mathcal{G}_2,q'}}\sim_{T_0} \left[|\mu|+|\sigma|\otimes\delta_{\{t=0\}}\right]^{q}_{\mathfrak{M}^{\mathcal{G}_2,q'}}$$
which implies
$
  \left[|u|^q\right]_{\mathfrak{M}^{\mathcal{G}_2,q'}}\lesssim_{T_0}1$
and we get \eqref{5hh270420141}. 
The proof  is complete.
\end{proof}
\begin{remark}\label{5hh020420144} In view of above proof, we can see that
\begin{description}
\item[i.] The Theorem \ref{5hh070120147} also holds when we replace assumption \eqref{5hh020420143} by  \begin{equation*}
       |\mu|(E)\leq \varepsilon_0 \text{Cap}_{\mathcal{H}_2,q'}(E)~~\text{and}~~|\sigma|(F)\leq \varepsilon_0\text{Cap}_{\mathbf{I}_{\frac{2}{q}},q'}(F).
       \end{equation*} 
    for every compact sets $E\subset \mathbb{R}^{N+1}, F\subset \mathbb{R}^{N}$ and  $\varepsilon_0>0$ small enough.
\item[ii.] If $\sigma\equiv0$ and $\text{supp}(\mu)\subset\overline{\Omega}\times [a,T]$, $a>0$, then we can show that the solution $u$ in Theorem \ref{5hh070120147} satisfies $u=0$ in $\Omega\times(0,a)$ since we can replace the set $E$ by $E'$:
\begin{align*}
       E'&=\left\{u\in L^q(\Omega_T):u=0~\text{in}~\Omega\times(0,a)~\text{and}~u^+\leq\frac{q K}{q-1}\mathbb{I}_2^{2T_0,\delta}[\mu_{n_0,1}+\sigma_{1,n_0}\otimes\delta_{\{t=0\}}], \right.
       \\&~~~~~~~~~~~~~~~~~~~\left.u^-\leq\frac{q K}{q-1}\mathbb{I}_2^{2T_0,\delta}[\mu_{n_0,2}+\sigma_{2,n_0}\otimes\delta_{\{t=0\}}]~\right\}.
       \end{align*} 
\end{description} 
\end{remark}
\subsection{Quasilinear Lane-Emden Parabolic Equations in $\mathbb{R}^{N}\times (0,\infty)$ and $\mathbb{R}^{N+1}$}
This section is devoted to prove  Theorem \ref{5hh1303201411} and Theorem \ref{5hh130320142}.\\
\begin{proof}[Proof of the Theorem \ref{5hh1303201411}] Since $\omega$ is absolutely continuous with respect to the capacity $\text{Cap}_{2,1,q'}$ in $\mathbb{R}^{N+1}$, so does $|\omega|$. Set $D_n=B_n(0)\times(-n^2,n^2)$.  From the proof of Theorem \ref{5hh070120146}, there exist  renormalized solutions $u_n,v_{n}$ of 
  \[
  \left\{
  \begin{array}
  [c]{l}%
  (u_{n})_{t}-\operatorname{div}(A(x,t,\nabla u_{n}))+|u_n|^{q-1}u_n=\chi_{D_n}\omega~~~\text{in
  }~D_n,\\
  u_{n}=0\qquad\text{on }\partial B_{n}(0)\times (-n^2,n^2),\\
  u_{n}(-n^2)=0 ~~~~\text{ in } ~B_n(0),
  \end{array}
  \right.
  \] 
  and
  \[
    \left\{
    \begin{array}
    [c]{l}%
    (v_{n})_{t}-\operatorname{div}(A(x,t,\nabla v_{n}))+v_n^q=\chi_{D_n}|\omega|~~~\text{in
    }~D_n,\\
    v_{n}=0\qquad\text{on }\partial B_{n}(0)\times (-n^2,n^2),\\
    v_{n}(-n^2)=0 ~~~~\text{ in } ~B_n(0),
    \end{array}
    \right.
    \] 
  relative to  decompositions $(f_n,g_n,h_n)$ of $\chi_{D_n}\omega_0$ and  $(\overline{f}_n,\overline{g}_n,\overline{h}_n)$ of $\chi_{B_{n}(0)\times (0,n^2)}|\omega_0|$, satisfied
  \eqref{5hh110320141}, \eqref{5hh110320147} in Proposition \ref{5hh1203201411} with $1<q_0<q$, $L(u_n)=|u_n|^{q-1}u_n$, $L(v_n)=v_n^q$ and $\mu$ is replaced by $\chi_{D_n}\omega$ and $\chi_{D_n}|\omega|$ respectively. Moreover, there hold
  \begin{align}
  -KI_2[\omega^-]\leq u_n\leq KI_2[\omega^+],~0\leq v_n\leq KI_2[|\omega|] ~\text{ in }~ D_n,
  \end{align}
  and $v_{n+1}\geq v_{n}$, $|u_n|\leq v_n$ in $D_n$.  \\   
 By Remark \ref{5hh1203201410}, we can assume that 
 \begin{align*}
 &||f_n||_{L^1(D_i)}+||g_n||_{L^2(D_i,\mathbb{R}^N)}+|||h_n|+|\nabla h_n|||_{L^2(D_i)}\leq 2|\omega|(D_{i+1}),\\&||\overline{f}_n||_{L^1(D_i)}+||\overline{g}_n||_{L^2(D_i,\mathbb{R}^N)}+|||\overline{h}_n|+|\nabla \overline{h}_n|||_{L^2(D_i)}\leq 2|\omega|(D_{i+1}),
 \end{align*}
  for any $i=1,...,n-1$ and $h_n,\overline{h}_n$ are convergent in $L^1_{\text{loc}}(\mathbb{R}^{N+1})$. 
  On the other hand, since $u_n,v_n$ satisfy \eqref{5hh110320141} in Proposition \ref{5hh1203201411} with $1<q_0<q$, $L(u_n)=|u_n|^{q-1}u_n$, $L(v_n)=v_n^q$ and thanks to H\"older inequality: for any $\varepsilon\in (0,1)$
$$
 \left(|u_n|+1\right)^{q_0}\leq \varepsilon |u_n|^{q}+c(\varepsilon), \left(|v_n|+1\right)^{q_0}\leq \varepsilon |v_n|^{q}+c(\varepsilon),
$$
we get
  \begin{align}\int_{D_i}|u_n|^qdxdt+ \int_{D_i}|u_n|^{q_0}dxdt+ \int_{D_i}v_n^qdxdt+ \int_{D_i}v_n^{q_0}dxdt \leq C(i)+c|\omega|(D_{i+1}).\label{5hh1303201414}
  \end{align}
  for $i=1,...,n-1$, where the constant $C(i)$ depends on $N,\Lambda_1,\Lambda_2,q_0,q$ and $i$.\\ 
 Consequently,  we can apply Proposition \ref{5hh1203201412} with $\mu_n=-|u_n|^{q-1}u_n+\chi_{D_n}\omega, -v_n^q+\chi_{D_n}|\omega|$ and obtain that there are subsequences of $u_n,v_n$, still denoted by them, converging to some $u,v$ in $L^1_{loc}(\mathbb{R};W^{1,1}_{loc}(\mathbb{R}^N))$ resp. So, $\frac{|\nabla u|^2}{(|u|+1)^{\alpha+1}}\in L^1_{loc}(\mathbb{R}^{N+1})$ for all $\alpha>0$ and $u\in L^q_{loc}(\mathbb{R}^{N+1})$ satisfies \eqref{5hh020420145}. In addition, using H\"older inequality we get $u\in L^\gamma_{\text{loc}
               }(\mathbb{R};W^{1,\gamma}_{\text{loc}}(\mathbb{R}^N))$ for any $1\leq \gamma<\frac{2q}{q+1}$. \medskip\\ 
Thanks to \eqref{5hh1303201414} and the Monotone Convergence Theorem we get $v_n\to v$ in $L^q_{loc}(\mathbb{R}^{N+1})$. After, we also have $u_n\to u$ in $L^q_{loc}(\mathbb{R}^{N+1})$ by  $|u_n|\leq v_n$ and the Dominated Convergence Theorem. \\
Consequently, $u$ is a distributional solution of problem \eqref{5hhparabolic3'} which satisfies \eqref{5hh020420145}. \medskip\\
 If $\omega=\mu+\sigma\otimes\delta_{\{t=0\}}$ with $\mu\in\mathfrak{M}(\mathbb{R}^N\times (0,\infty))$ and $\sigma\in \mathfrak{M}(\mathbb{R}^N)$, then by  the proof of Theorem \ref{5hh070120146} we obtain that $u_n=0$ in $B_n(0)\times (-n^2,0)$. So, $u=0$ in $\mathbb{R}^N\times(-\infty,0)$. Therefore, clearly ${\left. u \right|_{\mathbb{R}^N\times [0,\infty)}}$ is a distributional solution to \eqref{5hhparabolic3}.
              This completes the proof.
              \end{proof}\\\\            
\begin{proof}[Proof of the Theorem \ref{5hh130320142}]By the proof of Theorem  \ref{5hh070120147} and Remark \ref{5hh020420144}, \ref{5hh130320146}, there exists a constant $\varepsilon_0=\varepsilon_0(N,q,\Lambda_1,\Lambda_2)$ such that 
if $\omega$ satisfies 
 for every compact set $E\subset \mathbb{R}^{N+1}$, 
              \begin{equation}\label{5hh130320149}
              |\omega|(E)\leq \varepsilon_0 \text{Cap}_{\mathcal{H}_2,q'}(E),
              \end{equation}
  then there is a renormalized solution $u_n$ of 
  \[
  \left\{
  \begin{array}
  [c]{l}%
  (u_{n})_{t}-\operatorname{div}(A(x,t,\nabla u_{n}))=|u_n|^{q-1}u_n+\chi_{D_n}\omega~\text{in
 }~D_n\\
  u_{n}=0\qquad\text{on }\partial B_{n}(0)\times (-n^2,n^2),\\
  u_{n}(-n^2)=0~~\text{ in } ~B_n(0),
  \end{array}
  \right.
  \] 
  relative to  a decomposition $(f_n,g_n,h_n)$ of $\chi_{D_n}\omega_0$, satisfying
  \eqref{5hh110320141}, \eqref{5hh110320147} in Proposition \ref{5hh1203201411} with $q_0=q$, $L\equiv 0$ and $\mu$ is replaced by $|u_{n}|^{q-1}u_n+\chi_{D_n}\omega$ and 
  \begin{align}
  -\frac{q K}{q-1}\mathbb{I}_{2}[\omega^-](x,t)\leq u_n\leq \frac{q K}{q-1}\mathbb{I}_{2}[\omega^+](x,t)\label{5hh020420146}
  \end{align} 
   for a.e $(x,t)$ in  $D_n$  and 
 $I_2[\omega^\pm]\in L^q_{loc}(\mathbb{R}^{N+1})$.\\ 
 Besides, thanks to Remark \ref{5hh1203201410}, we can assume that $f_n,g_n,h_n$ satisfies \eqref{5hh130320148} in proof of Theorem \eqref{5hh1203201417} and $h_n$ is convergent in $L^1_{\text{loc}}(\mathbb{R}^{N+1})$.\medskip\\
 Consequently,  we can apply Proposition \ref{5hh1203201412} and obtain that there exist a subsequence of $u_n$, still denoted by it, converging to some $u$ a.e in $\mathbb{R}^{N+1}$ and in $L^1_{loc}(\mathbb{R};W^{1,1}_{loc}(\mathbb{R}^N))$. Also, $u_n\to u$ in  $L^q_{\text{loc}}(\mathbb{R}^{N+1})$ by Dominated Convergence Theorem,  $\frac{|\nabla u|^2}{(|u|+1)^{\alpha+1}}\in L^1_{loc}(\mathbb{R}^{N+1})$ for all $\alpha>0$. Using H\"older inequality we get $u\in L^\gamma_{\text{loc}
               }(\mathbb{R};W^{1,\gamma}_{\text{loc}}(\mathbb{R}^N))$ for any $1\leq \gamma<\frac{2q}{q+1}$.\medskip\\
 Thus we obtain that $u$ is a distributional solution of \eqref{5hhparabolic4'} which satisfies \eqref{5hh130320144}. 
Since \eqref{5hh130320149} holds, thus by Theorem \ref{5hh1410136} we get
$$\left[\left(\mathbb{I}_{2 }[|\omega|]\right)^{q}\right]_{\mathfrak{M}^{\mathcal{H}_2,q'}}\sim  \left[|\omega|\right]^{q}_{\mathfrak{M}^{\mathcal{H}_2,q'}},$$
so we have 
$
  \left[|u|^{q}\right]_{\mathfrak{M}^{\mathcal{H}_2,q'}}\lesssim 1.$
              It follows \eqref{5hh280420141}. \medskip\\
               If $\omega=\mu+\sigma\otimes\delta_{\{t=0\}}$ with $\mu\in\mathfrak{M}(\mathbb{R}^N\times (0,\infty))$ and $\sigma\in \mathfrak{M}(\mathbb{R}^N)$, then by  Remark \ref{5hh020420144} we obtain that $u_n=0$ in $B_n(0)\times (-n^2,0)$. So, $u=0$ in $\mathbb{R}^N\times(-\infty,0)$. Therefore, clearly ${\left. u \right|_{\mathbb{R}^N\times [0,\infty)}}$ is a distributional solution to \eqref{5hhparabolic4}.
                            The proof is complete.                       
\end{proof}
    \section{Interior Estimates and Boundary Estimates for Parabolic Equations}

   In this section we always assume that  $u\in C(-T,T,L^2(\Omega))\cap L^2(-T,T,H^1_0(\Omega))$ is a solution to equation \eqref{5hhparabolic1} in $\Omega\times (-T,T)$ with $\mu\in L^2(\Omega\times (-T,T))$ and $u(-T)=0$. We extend $u$ by zero to $\Omega\times(-\infty,-T)$, clearly $u$ is a solution to equation
   \begin{equation}\label{5hheq6}
         \left\{ \begin{array}{l}
           {u_t} - \operatorname{div}\left( {A(x,t,\nabla u)} \right) = \chi_{(-T,T)}(t)\mu ~~\text{ in }~\Omega\times(-\infty,T), \\ 
           u = 0~~~\text{ on }~~ \partial\Omega\times (-\infty,T).\\ 
           \end{array} \right.
         \end{equation} 
      \subsection{Interior Estimates}
       For each ball $B_{2R}=B_{2R}(x_0)\subset\subset\Omega$ and $t_0\in (-T,T)$, one considers the unique solution 
      \begin{equation}
      w\in C(t_0-4R^2,t_0;L^2(B_{2R}))\cap L^2(t_0-4R^2,t_0;H^1(B_{2R}))
      \end{equation}
      to the following equation 
      \begin{equation}
       \label{5hheq3}\left\{ \begin{array}{l}
         {w_t} - \operatorname{div}\left( {A(x,t,\nabla w)} \right) = 0 \;in\;Q_{2R}, \\ 
         w = u\quad \quad on~~\partial_{p}Q_{2R}, \\ 
         \end{array} \right.
       \end{equation}
       where $Q_{2R}=B_{2R}  \times (t_0-4R^2,t_0)$ and $\partial_{p}Q_{2R}= \left( {\partial B_{2R}  \times (t_0-4R^2,t_0)} \right) \cup \left( {B_{2R}  \times \left\{ {t = t_0-4R^2} \right\}} \right) $.
       \begin{theorem} \label{5hh1510135}
      There exist  constants $\theta_1>2$,  $\beta_1\in (0,\frac{1}{2}]$ such that the following estimates are true 
       \begin{equation}\label{5hhineq1}
       \fint_{Q_{2R}}|\nabla u-\nabla w|dxdt \lesssim \frac{|\mu|(Q_{2R})}{R^{N+1}},
       \end{equation}
       \begin{equation}\label{5hhineq2}
       \left(\fint_{Q_{\rho/2}(y,s)}|\nabla w|^{\theta_1} dxdt\right)^{\frac{1}{\theta_1}}\lesssim\fint_{Q_{\rho}(y,s)}|\nabla w| dxdt,
       \end{equation}
   
       \begin{equation}\label{5hhineq3}
       \left(\fint_{Q_{\rho_1}(y,s)}|w-\overline{w}_{Q_{\rho_1}(y,s)}|^2dxdt\right)^{1/2}\lesssim \left(\frac{\rho_1}{\rho_2}\right)^{\beta_1}\left(\fint_{Q_{\rho_2}(y,s)}|w-\overline{w}_{Q_{\rho_2}(y,s)}|^2dxdt\right)^{1/2},
       \end{equation} 
       \begin{equation}\label{5hhineq3'}
              \left(\fint_{Q_{\rho_1}(y,s)}|\nabla w|^2dxdt\right)^{1/2}\lesssim \left(\frac{\rho_1}{\rho_2}\right)^{\beta_1-1}\left(\fint_{Q_{\rho_2}(y,s)}|\nabla w|^2dxdt\right)^{1/2}
              \end{equation}
               for any  $Q_{\rho}(y,s)\subset Q_{2R}$, and $Q_{\rho_1}(y,s)\subset Q_{\rho_2}(y,s)\subset Q_{2R}$. 
       \end{theorem}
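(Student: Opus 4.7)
The plan is to establish the three estimates \eqref{5hhineq1}, \eqref{5hhineq2}, and \eqref{5hhineq3}--\eqref{5hhineq3'} by independently appealing to three classical strands of parabolic regularity theory: a Marcinkiewicz-type bound for equations with measure data, Gehring's higher integrability lemma, and De Giorgi--Nash--Moser H\"older continuity paired with the Campanato characterization. Since the three estimates are independent of one another, I would treat them separately and only at the end verify that the constants and the exponents depend solely on $N,\Lambda_1,\Lambda_2$.

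\emph{Estimate \eqref{5hhineq1}.} Set $v=u-w$. By \eqref{5hhconda}--\eqref{5hhcondb} the map $\zeta \mapsto B(x,t,\zeta):=A(x,t,\nabla w+\zeta)-A(x,t,\nabla w)$ satisfies $|B(x,t,\zeta)|\le\Lambda_1|\zeta|$ and $\langle B(x,t,\zeta),\zeta\rangle\ge\Lambda_2|\zeta|^2$, so that $v$ is a weak solution of $v_t-\text{div}(B(x,t,\nabla v))=\chi_{(-T,T)}\mu$ on $Q_{2R}$ with $v=0$ on $\partial_p Q_{2R}$. Testing with the truncations $T_k(v)$ in the Boccardo--Gallou\"et scheme adapted to the parabolic setting (cf.\ Remark \ref{5hh070420143}) gives $\||\nabla v|\|_{L^{(N+2)/(N+1),\infty}(Q_{2R})}\le C|\mu|(Q_{2R})$. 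Since $|Q_{2R}|\simeq R^{N+2}$, interpolating weak-$L^{(N+2)/(N+1)}$ into $L^1$ on $Q_{2R}$ yields $\int_{Q_{2R}}|\nabla v|\,dxdt \le CR\,|\mu|(Q_{2R})$, and dividing by $|Q_{2R}|$ produces exactly \eqref{5hhineq1}.

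\emph{Estimate \eqref{5hhineq2}.} Using $(w-c)\eta^2$ as test function with a standard parabolic cut-off $\eta$, I would first prove a Caccioppoli inequality
\begin{equation*}
\fint_{Q_\sigma(y,s)}|\nabla w|^2\,dxdt \le \frac{C}{(\tau-\sigma)^2}\fint_{Q_\tau(y,s)}|w-c|^2\,dxdt,
\end{equation*}
valid for all constants $c$ and all $Q_\sigma\subset Q_\tau\subset Q_{2R}$. Combining this with the parabolic Sobolev--Poincar\'e inequality applied to $w-\overline{w}_{Q_\tau}$ produces a self-improving inequality of reverse-H\"older type $\bigl(\fint_{Q_{\rho/2}}|\nabla w|^2\bigr)^{1/2}\le C\bigl(\fint_{Q_\rho}|\nabla w|^{2\kappa}\bigr)^{1/(2\kappa)}$ for some $\kappa<1$ depending only on $N$. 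The parabolic Gehring lemma (as in Giaquinta--Struwe) then upgrades the exponent $2$ to some universal $\theta_1=\theta_1(N,\Lambda_1,\Lambda_2)>2$, which is \eqref{5hhineq2}.

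\emph{Estimates \eqref{5hhineq3} and \eqref{5hhineq3'}.} Conditions \eqref{5hhconda}--\eqref{5hhcondb} make the equation for $w$ a uniformly parabolic quasilinear equation with quadratic coercivity and linear growth, so the De Giorgi--Nash--Moser theorem yields an interior H\"older estimate: there exist $\beta_1\in(0,\tfrac{1}{2}]$ and $C>0$, depending only on $N,\Lambda_1,\Lambda_2$, with
\begin{equation*}
\text{osc}_{Q_{\rho_1}(y,s)}w \le C\Bigl(\frac{\rho_1}{\rho_2}\Bigr)^{\beta_1}\text{osc}_{Q_{\rho_2}(y,s)}w
\end{equation*}
for $Q_{\rho_1}(y,s)\subset Q_{\rho_2}(y,s)\subset Q_{2R}$. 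Since $|w-\overline{w}_{Q_\rho}|^2\le (\text{osc}_{Q_\rho}w)^2$ pointwise, this immediately yields \eqref{5hhineq3}. For \eqref{5hhineq3'} I would apply Caccioppoli on $Q_{\rho_1}\subset Q_{2\rho_1}$ with $c=\overline{w}_{Q_{2\rho_1}}$, iterate \eqref{5hhineq3} between scales $2\rho_1$ and $\rho_2$ to control the $L^2$-oscillation at scale $2\rho_1$ by $(\rho_1/\rho_2)^{2\beta_1}$ times the $L^2$-oscillation at scale $\rho_2$, and close with Poincar\'e's inequality at scale $\rho_2$ to convert the $L^2$-oscillation back to $\fint_{Q_{\rho_2}}|\nabla w|^2$; the net exponent is $\beta_1-1$.

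\emph{Main obstacle.} The delicate point is that the exponents $\theta_1>2$ and $\beta_1\in(0,\tfrac{1}{2}]$ must be universal in $(N,\Lambda_1,\Lambda_2)$ and independent of $u,w,R$. Gehring's lemma is only qualitative, so $\theta_1$ is not explicit, and the De Giorgi oscillation-reduction iteration must be carried out at parabolic scale to produce $\beta_1$ intrinsically. Both exponents thread through the rest of the paper---they play the role of $\theta$ and $\gamma_0$ in Theorems \ref{5hh0701201411}--\ref{5hh0701201412}---so making sure the constants here really depend on nothing more than the stated structural data is the main bookkeeping issue.
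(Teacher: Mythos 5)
Your proposal is broadly sound and, for \eqref{5hhineq3'}, reproduces the paper's argument exactly (Caccioppoli from $Q_{\rho_1}$ into $Q_{2\rho_1}$, then \eqref{5hhineq3} between scales $2\rho_1$ and $\rho_2$, then Poincar\'e at scale $\rho_2$). For \eqref{5hhineq1}--\eqref{5hhineq3} your route is more self-contained than the paper's, which simply cites Duzaar--Mingione \cite{55DuzaMing}; your reduction of the comparison estimate \eqref{5hhineq1} via the translated vector field $B(x,t,\zeta)=A(x,t,\nabla w+\zeta)-A(x,t,\nabla w)$, the weak-$L^{(N+2)/(N+1)}$ Boccardo--Gallou\"et bound, and the interpolation on the finite-measure set $Q_{2R}$ all check out and match what is really behind the cited result. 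The Caccioppoli--Sobolev--Poincar\'e--Gehring route to \eqref{5hhineq2} is also the expected argument (one small gap left implicit: Gehring gives a reverse-H\"older inequality starting from $L^{2\kappa}$ with $1<2\kappa<2$ on the right, and the $L^1$ average stated in \eqref{5hhineq2} comes from the standard self-improvement iteration, which you should invoke explicitly).

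There is, however, a genuine gap in your treatment of \eqref{5hhineq3}. You pass from the oscillation estimate
$\text{osc}_{Q_{\rho_1}}w \le C(\rho_1/\rho_2)^{\beta_1}\,\text{osc}_{Q_{\rho_2}}w$
to \eqref{5hhineq3} using only the pointwise inequality $|w-\overline{w}_{Q_\rho}|\le \text{osc}_{Q_\rho}w$. That bound runs in the right direction on the left-hand side (giving $(\fint_{Q_{\rho_1}}|w-\overline{w}|^2)^{1/2}\le\text{osc}_{Q_{\rho_1}}w$), but on the right-hand side it goes the wrong way: you still have $\text{osc}_{Q_{\rho_2}}w$ and you need the $L^2$-mean oscillation $(\fint_{Q_{\rho_2}}|w-\overline{w}_{Q_{\rho_2}}|^2)^{1/2}$, which is in general \emph{smaller} than the sup-oscillation. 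So the step does not close ``immediately.'' The missing ingredient is the local boundedness half of the De Giorgi--Nash--Moser theory, applied to $w-\overline{w}_{Q_{\rho_2}}$ (which solves the same equation, since adding a constant leaves $w_t-\operatorname{div}A(x,t,\nabla w)=0$ invariant): this gives $\text{osc}_{Q_{\rho_2/2}}w \le C\,(\fint_{Q_{\rho_2}}|w-\overline{w}_{Q_{\rho_2}}|^2\,dxdt)^{1/2}$, and one then applies the oscillation decay between scales $\rho_1$ and $\rho_2/2$ (for $\rho_1\le\rho_2/4$; the range $\rho_2/4<\rho_1\le\rho_2$ is trivial up to a constant). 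With that extra step inserted, the proof of \eqref{5hhineq3} is correct and depends only on $N,\Lambda_1,\Lambda_2$, as required.
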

       \begin{proof} Inequalities \eqref{5hhineq1}, \eqref{5hhineq2} and \eqref{5hhineq3} were proved by Duzaar and  Mingione in 
      \cite{55DuzaMing}. So, it remains to prove \eqref{5hhineq3'} in case $\rho_1\leq \rho_2/2$. 
      By the interior Caccioppoli inequality we have 
$$
       \left(\fint_{Q_{\rho_1}(y,s)}|\nabla w|^2dxdt\right)^{1/2}\lesssim \frac{1}{\rho_1} \left(\fint_{Q_{2\rho_1}(y,s)}|w-\overline{w}_{Q_{2\rho_1}(y,s)}|^2dxdt\right)^{1/2}.
$$
      On the other hand, by a Sobolev inequality there holds 
      \begin{equation*}
      \left(\fint_{Q_{\rho_2}(y,s)}|w-\overline{w}_{Q_{\rho_2}(y,s)}|^2dxdt\right)^{1/2}\lesssim \rho_2 \left(\fint_{Q_{\rho_2}(y,s)}|\nabla w|^2dxdt\right)^{1/2}.
      \end{equation*}
      Therefore, \eqref{5hhineq3'} follows from \eqref{5hhineq3}. 
       \end{proof}
       \begin{corollary}\label{5hh060120148} Let $\beta_1$ be  the constant in Theorem \ref{5hh1510135} and  $2-\beta_1<\theta<N+2$. There holds for any $B_{\rho}(y)\subset B_{\rho_0}(y)\subset\subset \Omega$,  $s\in (-T,T)$
       \begin{equation}\label{5hh060120141}
       \int_{Q_\rho(y,s)}|\nabla u|dxdt \lesssim_{\theta} \rho^{N+3-\theta}\left(\left(\frac{T_0}{\rho_0}\right)^{N+3-\theta}+1\right)||\mathbb{M}_{\theta}[\mu]||_{L^\infty(\Omega\times (-T,T))}.
       \end{equation}
       \end{corollary}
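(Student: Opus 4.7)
The plan is a standard Campanato/Giaquinta--Giusti iteration built on the comparison estimates of Theorem~\ref{5hh1510135}. Write $\phi(r) := \int_{Q_r(y,s)}|\nabla u|\,dx\,dt$. Fix $0 < \rho \le R \le \rho_0/2$ and let $w$ be the solution of the homogeneous problem \eqref{5hheq3} on $Q_R$ with boundary data $u$. From \eqref{5hhineq3'} with $\rho_1=\rho$, $\rho_2 = R/2$, combined with Jensen (using $\theta_1 \ge 2$ and \eqref{5hhineq2}), I would deduce
$$
\left(\fint_{Q_\rho}|\nabla w|^2\right)^{1/2} \le C\left(\frac{\rho}{R}\right)^{\beta_1-1}\left(\fint_{Q_{R/2}}|\nabla w|^2\right)^{1/2} \le C\left(\frac{\rho}{R}\right)^{\beta_1-1}\fint_{Q_R}|\nabla w|,
$$
which after multiplying by $|Q_\rho|$ and using $|\nabla w|\le|\nabla u|+|\nabla u-\nabla w|$ on $Q_R$ gives
$$
\int_{Q_\rho}|\nabla w|\,dx\,dt \le C\left(\frac{\rho}{R}\right)^{N+1+\beta_1}\bigl(\phi(R) + R|\mu|(Q_R)\bigr).
$$

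Combining this with \eqref{5hhineq1} (which, after multiplying by $|Q_R|$, yields $\int_{Q_R}|\nabla u - \nabla w|\,dx\,dt \le CR|\mu|(Q_R)$) and using the Morrey-type control $|\mu|(Q_R) \le C R^{N+2-\theta}\|\mathbb{M}_\theta[\mu]\|_{L^\infty}$, I arrive at the key iteration inequality
$$
\phi(\rho) \le C\left(\frac{\rho}{R}\right)^{N+1+\beta_1}\phi(R) + C R^{N+3-\theta}\|\mathbb{M}_\theta[\mu]\|_{L^\infty(\Omega\times(-T,T))},
$$
valid for every $0<\rho\le R\le \rho_0/2$. Since the hypothesis $\theta > 2-\beta_1$ is exactly $N+1+\beta_1 > N+3-\theta$, the standard Giaquinta--Giusti iteration lemma upgrades this to
$$
\phi(\rho) \le C\left(\frac{\rho}{R}\right)^{N+3-\theta}\phi(R) + C\rho^{N+3-\theta}\|\mathbb{M}_\theta[\mu]\|_{L^\infty}.
$$

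To conclude I would take $R=\rho_0/2$ and bound $\phi(\rho_0/2)$ by the global $L^1$-gradient estimate of Remark~\ref{5hh070420143}: $\phi(\rho_0/2)\le \||\nabla u|\|_{L^1(\Omega\times(-T,T))} \le C T_0 |\mu|(\Omega\times(-T,T))$. Since $\Omega\times(-T,T)$ is contained in a parabolic cylinder of radius comparable to $T_0$, the Morrey bound on $\mu$ gives $|\mu|(\Omega\times(-T,T)) \le C T_0^{N+2-\theta}\|\mathbb{M}_\theta[\mu]\|_{L^\infty}$, hence $\phi(\rho_0/2)\le C T_0^{N+3-\theta}\|\mathbb{M}_\theta[\mu]\|_{L^\infty}$. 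Substituting yields exactly \eqref{5hh060120141}, with the case $\rho_0/2 < \rho \le \rho_0$ handled by the trivial bound $\phi(\rho)\le \phi(\rho_0)$ together with $\rho^{N+3-\theta}\ge (\rho_0/2)^{N+3-\theta}$.

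The main technical obstacle is verifying the threshold exponent: one must correctly pass between the $L^2$-decay \eqref{5hhineq3'} and an $L^1$-decay for $|\nabla w|$ via the higher integrability \eqref{5hhineq2}, so that the iteration exponent $N+1+\beta_1$ is available; if one tried to use a weaker decay (say without \eqref{5hhineq2}), the threshold on $\theta$ would be worse. A smaller but real care point is that $\beta_1\le 1/2$ forces $\theta>3/2$ at a minimum, and the iteration lemma must be applied in the form that does \emph{not} require a smallness parameter, which is legitimate here since no perturbative $\varepsilon$ appears.
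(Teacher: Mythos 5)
Your proposal is correct and follows essentially the same route as the paper: apply Theorem \ref{5hh1510135} on $Q_R$ (with $w$ the homogeneous comparison), pass from the $L^2$-decay \eqref{5hhineq3'} to an $L^1$-decay for $\nabla w$ via Jensen plus the reverse H\"older bound \eqref{5hhineq2}, add back the error term \eqref{5hhineq1} controlled by $R\,|\mu|(Q_R)\le R^{N+3-\theta}\|\mathbb{M}_\theta[\mu]\|_{L^\infty}$, and run the Giaquinta--Giusti/Lieberman iteration (the paper cites Lemma~4.6 of \cite{55Li3}), closing with Remark~\ref{5hh070420143} to control the starting value $\phi(\rho_0/2)$. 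The only difference is presentational: the paper compresses the $L^2\!\to\!L^1$ step into a single ``Thus'', while you spell it out, and you split off the trivial range $\rho_0/2<\rho\le\rho_0$ rather than stating the iteration bound directly for all $\rho\le\rho_0$; both are fine.
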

       \begin{proof} Take $B_{\rho_2}(y)\subset\subset \Omega$ and $s\in (-T,T)$. Set $Q_{\rho}:=Q_{\rho_1}(y,s)$.
       For any $Q_{\rho_1}\subset Q_{\rho_2}$ with $\rho_1\leq\rho_2/2$, we take $w$ as in Theorem \ref{5hh1510135} with $Q_{2R}=Q_{\rho_2}(y,s)$. Thus,
      $$\int_{Q_{\rho_1}}|\nabla w|dxdt\lesssim \left(\frac{\rho_1}{\rho_2}\right)^{N+\beta_1+1}\int_{Q_{\rho_2}}|\nabla w|dxdt,\quad \int_{Q_{\rho_2}}|\nabla u-\nabla w|dxdt\lesssim\rho_2 |\mu|(Q_{\rho_2}).$$
         It follows that
         \begin{align*}
         \int_{Q_{\rho_1}}|\nabla u|dxdt&\leq \int_{Q_{\rho_1}}|\nabla w |dxdt + \int_{Q_{\rho_1}}|\nabla u-\nabla w |dxdt\\&\lesssim \left(\frac{\rho_1}{\rho_2}\right)^{N+\beta_1+1}\int_{Q_{\rho_2}}|\nabla w|dxdt+ \int_{Q_{\rho_2}}|\nabla u-\nabla w |dxdt    
                \\&\lesssim \left(\frac{\rho_1}{\rho_2}\right)^{N+\beta_1+1}\int_{Q_{\rho_2}}|\nabla u|dxdt+ \rho_2 |\mu|(Q_{\rho_2}).
         \end{align*}     
       This implies 
$$
       \int_{Q_{\rho_1}}|\nabla u|dxdt\lesssim\left(\frac{\rho_1}{\rho_2}\right)^{N+\beta_1+1}\int_{Q_{\rho_2}}|\nabla u|dxdt+ \rho_2^{N+3-\theta}||\mathbb{M}_{\theta}[\mu]||_{L^\infty(\Omega\times(-T,T))}.$$
       Since $N+3-\beta<N+\beta_1+1$, applying  \cite[Lemma 4.6, page 54]{55Li3} we obtain
       $$
              \int_{Q_{\rho}}|\nabla u|dxdt\lesssim\left(\frac{\rho}{\rho_0}\right)^{N+3-\theta}||\nabla u||_{L^1(\Omega_\times(-T,T))}+ \rho^{N+3-\theta}||\mathbb{M}_{\theta}[\mu]||_{L^\infty(\Omega\times(-T,T))},$$
              for any $B_{\rho}(y)\subset B_{\rho_0}(y)\subset\subset \Omega$,  $s\in (-T,T)$.
       On the other hand, by Remark \ref{5hh070420143}
$$
       ||\nabla u||_{L^1(\Omega\times (-T,T))}\lesssim T_0|\mu|(\Omega\times (-T,T))\lesssim T_0^{N+3-\theta}||\mathbb{M}_{\theta}[\mu]||_{L^\infty(\Omega\times (-T,T))}.$$
        Hence, we get the desired result. 
       \end{proof}\\

      To continue, we consider the unique solution
       \begin{equation}
        v\in C(t_0-R^2,t_0;L^2(B_{R}))\cap L^2(t_0-R^2,t_0;H^1(B_{R}))
        \end{equation}
        to the following equation 
         \begin{equation}\label{5hheq4}
         \left\{ \begin{array}{l}
              {v_t} - \operatorname{div}\left( {\overline{A}_{B_R(x_0)}(t,\nabla v)} \right) = 0 \;in\;Q_{R}, \\ 
              v = w\quad \quad on~~\partial_{p}Q_{R}, \\ 
              \end{array} \right.
         \end{equation}
          where $Q_{R}=B_{R}(x_0)  \times (t_0-R^2,t_0)$ and $\partial_{p}Q_{R}= \left( {\partial B_{R}  \times (t_0-R^2,t_0)} \right) \cup \left( {B_{R}  \times \left\{ {t = t_0-R^2} \right\}} \right) $.
       \begin{lemma}\label{5hh21101319} Let $\theta_1$ be  the constant in Theorem \ref{5hh1510135}. There holds  \begin{eqnarray}
               \left(\fint_{Q_R}|\nabla w-\nabla v|^2dxdt\right)^{1/2}\lesssim [A]_{s_1}^{R} \fint_{Q_{2R}}|\nabla w|dxdt, \label{5hh18094}
               \end{eqnarray}
       with $s_1=\frac{2\theta_1}{\theta_1-2}$ and \begin{equation}\label{5hh18091}
             \int_{Q_R}|\nabla w|^2dxdt\sim\int_{Q_R}|\nabla v|^2dxdt.
                \end{equation}
       \end{lemma}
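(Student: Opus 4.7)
The plan is to derive both inequalities from a single energy identity obtained by testing the difference equation with $v-w$, then separating the estimation strategy: the small BMO quantity $[A]_{s_1}^R$ enters only through Hölder's inequality coupled with the higher integrability \eqref{5hhineq2}.

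First I would observe that since $v=w$ on $\partial_p Q_R$, the difference $\varphi=v-w$ belongs to $L^2(t_0-R^2,t_0;H^1_0(B_R))$ with $\varphi(t_0-R^2)=0$, so it is an admissible test function for the difference of the two equations
\[
(v-w)_t-\operatorname{div}\bigl(\overline{A}_{B_R}(t,\nabla v)-A(x,t,\nabla w)\bigr)=0\qquad\text{in }Q_R.
\]
Integrating the time term by parts gives the non-negative boundary contribution $\tfrac12\int_{B_R}(v-w)^2(t_0)\,dx\ge 0$, so
\[
\int_{Q_R}\bigl[\overline{A}_{B_R}(t,\nabla v)-A(x,t,\nabla w)\bigr]\cdot\nabla(v-w)\,dxdt\le 0.
\]
Splitting $\overline{A}(t,\nabla v)-A(x,t,\nabla w)=[\overline{A}(t,\nabla v)-\overline{A}(t,\nabla w)]+[\overline{A}(t,\nabla w)-A(x,t,\nabla w)]$ and using that the averaged operator $\overline{A}_{B_R}$ inherits coercivity from \eqref{5hhcondb}, I would conclude
\[
\Lambda_2\int_{Q_R}|\nabla v-\nabla w|^2\,dxdt\le\int_{Q_R}\bigl|A(x,t,\nabla w)-\overline{A}_{B_R}(t,\nabla w)\bigr|\,|\nabla(v-w)|\,dxdt.
\]
Applying the pointwise bound $|A(x,t,\zeta)-\overline{A}_{B_R}(t,\zeta)|\le\Theta(A,B_R)(x,t)\,|\zeta|$ and Young's inequality with a small parameter to absorb $|\nabla(v-w)|^2$ on the right-hand side yields the master estimate
\[
\int_{Q_R}|\nabla v-\nabla w|^2\,dxdt\le C\int_{Q_R}\Theta(A,B_R)^2|\nabla w|^2\,dxdt,\qquad C=C(\Lambda_1,\Lambda_2).
\]

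For \eqref{5hh18094}, I would invoke Hölder's inequality on the right-hand side with the conjugate pair $(s_1/2,\theta_1/2)$, which by $\tfrac{2}{s_1}+\tfrac{2}{\theta_1}=1$ is precisely the choice forced by $s_1=\tfrac{2\theta_1}{\theta_1-2}$. After normalizing to averages the volume factors cancel exactly, giving
\[
\fint_{Q_R}|\nabla v-\nabla w|^2\le C\Bigl(\fint_{Q_R}\Theta^{s_1}\Bigr)^{2/s_1}\Bigl(\fint_{Q_R}|\nabla w|^{\theta_1}\Bigr)^{2/\theta_1}\le C\bigl([A]_{s_1}^R\bigr)^2\Bigl(\fint_{Q_{2R}}|\nabla w|\Bigr)^2,
\]
where the last step uses the definition of $[A]_{s_1}^R$ and the reverse-Hölder bound \eqref{5hhineq2} from Theorem \ref{5hh1510135}; taking square roots gives \eqref{5hh18094}. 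For \eqref{5hh18091}, I would substitute the trivial pointwise bound $\Theta(A,B_R)\le 2\Lambda_1$ (coming from $|A|\le \Lambda_1|\zeta|$ and $|\overline{A}|\le\Lambda_1|\zeta|$) into the master estimate to get $\int_{Q_R}|\nabla v-\nabla w|^2\le C\int_{Q_R}|\nabla w|^2$, then apply the triangle inequality to obtain $\int_{Q_R}|\nabla v|^2\le C\int_{Q_R}|\nabla w|^2$. For the reverse comparison I would repeat the argument after splitting as $A(x,t,\nabla w)-A(x,t,\nabla v)+A(x,t,\nabla v)-\overline{A}(t,\nabla v)$, so that coercivity is applied to $A$ itself and the perturbation involves $\Theta\cdot|\nabla v|$; this yields $\int_{Q_R}|\nabla v-\nabla w|^2\le C\int_{Q_R}|\nabla v|^2$, and one more triangle inequality completes \eqref{5hh18091}.

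The main obstacle is not any one step individually but the clean rigorous justification of testing with $v-w$ (verifying the time regularity so that the integration-by-parts identity holds) and the bookkeeping that makes the Hölder exponents match exactly so that the prefactors collapse to averages; once the master $L^2$-bound is in place, both conclusions follow from essentially arithmetic manipulations combined with the non-trivial input \eqref{5hhineq2}.
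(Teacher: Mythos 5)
Your proposal is correct and follows essentially the same path as the paper: test the two equations with the difference $w-v$, use the sign of the time term, split the nonlinearity to isolate the coercive part and the $\Theta(A,B_R)$-controlled perturbation, then apply Hölder with exponents dictated by $s_1=\tfrac{2\theta_1}{\theta_1-2}$ and the reverse-Hölder inequality \eqref{5hhineq2}. The only cosmetic difference is that the paper applies a three-exponent Hölder inequality and divides by $\bigl(\fint|\nabla w-\nabla v|^2\bigr)^{1/2}$ directly, whereas you absorb that factor first via Young's inequality to get a clean master $L^2$ bound, and your route to \eqref{5hh18091} passes through the trivial bound $\Theta\le 2\Lambda_1$ plus a symmetric re-splitting rather than the paper's direct manipulation of the energy inequality; these are equivalent variants of the same argument.
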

       \begin{proof} We can choose $\varphi=w-v$ as a test function for equations \eqref{5hheq3}, \eqref{5hheq4} and since \begin{eqnarray*}
             \int_{Q_R}w_t(w-v)dxdt- \int_{Q_R}v_t(w-v)dxdt =\frac{1}{2}\int_{B_R}(w-v)^2(t_0)dx\geq  0,
             \end{eqnarray*}
              we find       
      \begin{equation*}
      -\int_{Q_R}\overline{A}_{B_R(x_0)}(t,\nabla v)\nabla (w-v)dxdt\leq -\int_{Q_R}A(x,t,\nabla w)\nabla (w-v)dxdt.
      \end{equation*}
      By using inequalities \eqref{5hhconda} and \eqref{5hhcondb} together with H\"older's inequality we get 
$$
      \int_{Q_R}|\nabla w|^2dxdt\sim\int_{Q_R}|\nabla v|^2dxdt,$$
       and  we also have
       \begin{align*}
       \Lambda_2\int_{Q_R}|\nabla w-\nabla v|^2dxdt&\leq \int_{Q_R}\left(\overline{A}_{B_R(x_0)}(t,\nabla w)-\overline{A}_{B_R(x_0)}(t,\nabla v)\right)\left(\nabla w-\nabla v\right)dxdt\\&\leq \int_{Q_R}\left(\overline{A}_{B_R(x_0)}(t,\nabla w)-A(x,t,\nabla w)\right)\left(\nabla w-\nabla v\right)dxdt\\&\leq \int_{Q_R}\Theta(A,B_R(x_0))(x,t)|\nabla w||\nabla w-\nabla v|dxdt. 
       \end{align*}    
      Here we used the definition of $\Theta(A,B_R(x_0))$ in the last inequality. Using H\"older's inequality with exponents $s_1=\frac{2\theta_1}{\theta_1-2}, \theta_1$ and $2$ one gets
      \begin{align*}
       \Lambda_2\fint_{Q_R}|\nabla w-\nabla v|^2&\leq \left(\fint_{Q_R}\Theta(A,B_R(x_0))(x,t)^{s_1}dxdt\right)^{1/s_1} \left(\fint_{Q_R}|\nabla w|^{\theta_1} dxdt\right)^{1/\theta_1} \\&~~~~~~\times\left(\fint_{Q_R}|\nabla w-\nabla v|^2 dxdt\right)^{1/2}.
      \end{align*}    
       In other words, 
$$
         \left(\fint_{Q_R}|\nabla w-\nabla v|^2dxdt\right)^{1/2}\lesssim [A]_{s_1}^{R}\left(\fint_{Q_R}|\nabla w|^{\theta_1} dxdt\right)^{1/\theta_1}.  
$$
         After using the inequality \eqref{5hhineq2} in Theorem \ref{5hh1510135} we get \eqref{5hh18094}.
       \end{proof}
       \begin{lemma}\label{5hh24092}Let $\theta_1$ be  the constant in Theorem \ref{5hh1510135}. 
       There exists a function $v\in C(t_0-R^2,t_0;L^2(B_{R}))\cap L^2(t_0-R^2,t_0;H^1(B_{R}))\cap L^\infty(t_0-\frac{1}{4}R^2,t_0;W^{1,\infty}(B_{R/2}))$ such that 
       \begin{equation}\label{5hh18092}
       ||\nabla v||_{L^\infty(Q_{R/2})}\lesssim \fint_{Q_{2R}}|\nabla u| dxdt +\frac{|\mu|(Q_{2R})}{R^{N+1}},
       \end{equation}
         \begin{equation}
               \fint_{Q_R}|\nabla u-\nabla v|dxdt
               \lesssim\frac{|\mu|(Q_{2R})}{R^{N+1}}+ [A]_{s_1}^{R}\left(\fint_{Q_{2R}}|\nabla u|dxdt+ \frac{|\mu|(Q_{2R})}{R^{N+1}}\right),\label{5hh18093}
               \end{equation}
               where $s_1=\frac{2\theta_1}{\theta_1-2}$.
       \end{lemma}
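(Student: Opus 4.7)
The plan is to construct $v$ as the solution of the frozen-coefficient problem \eqref{5hheq4}, where $w$ itself solves the homogeneous problem \eqref{5hheq3}, and then to combine three pieces: (i) the standard Lipschitz gradient estimate for solutions of parabolic equations whose nonlinearity does not depend on the space variable; (ii) the energy comparison \eqref{5hh18091} together with \eqref{5hh18094} from Lemma \ref{5hh21101319}; (iii) the potential-type comparison \eqref{5hhineq1} between $u$ and $w$ from Theorem \ref{5hh1510135}.

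First I would establish the $L^\infty$-gradient bound \eqref{5hh18092}. Because $\overline{A}_{B_R(x_0)}(t,\zeta)$ is independent of $x$ and inherits the structure conditions \eqref{5hhcondc} from $A$, the classical Lipschitz regularity theory for homogeneous parabolic equations with $x$-independent structure (DiBenedetto/Lieberman) yields
\[
\|\nabla v\|_{L^\infty(Q_{R/2})}\le c_1\Bigl(\fint_{Q_R}|\nabla v|^2\,dxdt\Bigr)^{1/2}.
\]
By \eqref{5hh18091} we control this by $\bigl(\fint_{Q_R}|\nabla w|^2\bigr)^{1/2}$, and then by \eqref{5hhineq2} (self-improving higher integrability of $\nabla w$) together with a standard reverse-Hölder argument, by $c_2 \fint_{Q_{2R}}|\nabla w|\,dxdt$. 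Splitting $\nabla w=\nabla u+(\nabla w-\nabla u)$ and applying \eqref{5hhineq1} converts the right-hand side into $c_3\fint_{Q_{2R}}|\nabla u|\,dxdt+c_3 R^{-N-1}|\mu|(Q_{2R})$, which is \eqref{5hh18092}.

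Next, for \eqref{5hh18093}, I would simply write $\nabla u-\nabla v=(\nabla u-\nabla w)+(\nabla w-\nabla v)$ and integrate. The first piece is controlled directly by \eqref{5hhineq1}. For the second, Lemma \ref{5hh21101319} yields
\[
\fint_{Q_R}|\nabla w-\nabla v|\,dxdt\le \Bigl(\fint_{Q_R}|\nabla w-\nabla v|^2\,dxdt\Bigr)^{1/2}\le c_4[A]_{s_1}^{R}\fint_{Q_{2R}}|\nabla w|\,dxdt,
\]
and one more application of \eqref{5hhineq1} bounds $\fint_{Q_{2R}}|\nabla w|$ by $\fint_{Q_{2R}}|\nabla u|+c_5 R^{-N-1}|\mu|(Q_{2R})$. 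Adding the two contributions produces the claimed estimate.

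The main obstacle is step (i): invoking the correct form of the Lipschitz gradient bound for $v$. One must verify that $\overline{A}_{B_R(x_0)}(t,\zeta)$ truly satisfies the differentiability/ellipticity structure \eqref{5hhcondc} uniformly in $t$ after averaging in $x$ (which is immediate by Jensen and dominated convergence applied to $A_\zeta$), and that the classical interior Lipschitz estimate for such $x$-independent quasilinear parabolic equations gives the sup of $|\nabla v|$ in $Q_{R/2}$ in terms of an integral average on $Q_R$. Once this is in place the remainder of the argument is routine juggling of \eqref{5hhineq1}, \eqref{5hhineq2}, \eqref{5hh18091} and \eqref{5hh18094}.
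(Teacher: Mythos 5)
Your proposal follows the paper's proof essentially step for step: you construct $v$ via the frozen-coefficient problem \eqref{5hheq4}, obtain the $L^\infty$-gradient bound for $v$ by standard interior regularity combined with \eqref{5hh18091} and \eqref{5hhineq2}, convert $\nabla w$-averages back to $\nabla u$ and $\mu$ via \eqref{5hhineq1}, and split $\nabla u-\nabla v=(\nabla u-\nabla w)+(\nabla w-\nabla v)$ with H\"older and \eqref{5hh18094} for the second estimate. This is the paper's argument; the only (minor) extra content is your remark that the $x$-averaged nonlinearity inherits \eqref{5hhcondc} by Jensen, a point the paper leaves implicit.
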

       \begin{proof}
       Let $w$ and $v$ be in equations \eqref{5hheq3} and \eqref{5hheq4}. By standard interior regularity and inequality \eqref{5hhineq2} in Theorem \ref{5hh1510135} and \eqref{5hh18091} in Lemma \ref{5hh21101319} we have 
$$
      ||\nabla v||_{L^\infty(Q_{R/2})}\lesssim \left(\fint_{Q_R}|\nabla v|^2dxdt\right)^{1/2}\lesssim \left(\fint_{Q_R}|\nabla w|^2dxdt\right)^{1/2}\lesssim \fint_{Q_{2R}}|\nabla w|dxdt. $$
      Combining this with \eqref{5hhineq1}, we get \eqref{5hh18092}.
        On the other hand, \eqref{5hh18094} in Lemma \ref{5hh21101319} and H\"older's inequality yield  
$$
                  \fint_{Q_R}|\nabla w-\nabla v|dxdt\lesssim [A]_{s_1}^{R} \fint_{Q_{2R}}|\nabla w| dxdt. 
$$
     It leads
$$
                        \fint_{Q_R}|\nabla u-\nabla v|dxdt\lesssim \fint_{Q_R}|\nabla u-\nabla w|dxdt+  [A]_{s_1}^{R}\fint_{Q_{2R}}|\nabla w| dxdt. $$
                Consequently, we get \eqref{5hh18093}
                            from this and  \eqref{5hhineq1}  in Theorem \ref{5hh1510135}. The proof is complete. 
       \end{proof}  
       \subsection{Boundary Estimates}
       In this subsection, we focus on the corresponding estimates near the boundary. \\
    Let $x_0\in \partial\Omega$ be a boundary point and for $R>0$ and $t_0\in (-T,T)$. We set $\tilde{\Omega}_{6R}=\tilde{\Omega}_{6R}(x_0,t_0)=\left(\Omega\cap B_{6R}(x_0)\right)\times (t_0-(6R)^2,t_0)$ and $Q_{6R}=Q_{6R}(x_0,t_0)$.\\ We consider the unique solution $w$ to the equation
       \begin{equation}
         \label{5hheq5}\left\{ \begin{array}{l}
           {w_t} - \operatorname{div}\left( {A(x,t,\nabla w)} \right) = 0 \;in\;\tilde{\Omega}_{6R}, \\ 
           w = u\quad \quad on~~\partial_{p}\tilde{\Omega}_{6R}. \\ 
           \end{array} \right.
         \end{equation}
         In what follows we extend $\mu$ and $u$ by zero to $\left(\Omega\times (-\infty,T)\right)^c$ and then extend $w$ by $u$ to $\mathbb{R}^{N+1}\backslash \tilde{\Omega}_{6R}$. \\\\
         In order to obtain  estimates for $w$ as in Theorem \ref{5hh1510135} we require the domain $\Omega$ to be satisfied $2-$Capacity uniform thickness condition. 
        \subsubsection{2-Capacity uniform thickness domain}
            
   It is well known that if $\mathbb{R}^N\backslash\Omega$ satisfies  a uniformly $2-$thick condition with constants $c_0,r_0>0$, there exist $p_0\in (\frac{2N}{N+2},2)$ and $C=C(N,c_0)>0$ such that 
   \begin{equation}\label{5hh090120141}
   \text{Cap}_{p_0}(\overline{B_r(x)}\cap(\mathbb{R}^N\backslash \Omega), B_{2r}(x))\geq Cr^{N-p_0},
   \end{equation} 
   for all $0<r\leq r_0$ and all $x\in \mathbb{R}^N\backslash\Omega$, see \cite{55Le,55Mik}.
  \begin{theorem}\label{5hh24093}
  Suppose that $\mathbb{R}^N\backslash\Omega$ is uniformly $2-$thick with constants $c_0,r_0$. Let $w$ be in \eqref{5hheq5} with $0<6R\leq r_0$. There exist  constants $\theta_2>2$,  $\beta_2\in (0,\frac{1}{2}]$  such that
  \begin{align}
  	\label{5hhineq9}
  	&\fint_{Q_{6R}}|\nabla u-\nabla w|dxdt \lesssim \frac{|\mu|(\tilde{\Omega}_{6R})}{R^{N+1}},\\&
  	\label{5hhineq8}
  	\left(\fint_{Q_{\rho/2}(z,s)}|\nabla w|^{\theta_2} dxdt\right)^{\frac{1}{\theta_2}}\lesssim\fint_{Q_{3\rho}(z,s)}|\nabla w| dxdt,\\&
  	\label{5hhineq10}
  	\left(\fint_{Q_{\rho_1}(y,s)}|w|^2dxdt\right)^{1/2}\lesssim \left(\frac{\rho_1}{\rho_2}\right)^{\beta_2}\left(\fint_{Q_{\rho_2}(y,s)}|w|^2dxdt\right)^{1/2},\\&
  	\label{5hhineq10'}
  	\left(\fint_{Q_{\rho_1}(z,s)}|\nabla w|^2dxdt\right)^{1/2}\lesssim \left(\frac{\rho_1}{\rho_2}\right)^{\beta_2-1}\left(\fint_{Q_{\rho_2}(z,s)}|\nabla w|^2dxdt\right)^{1/2},
  \end{align}
   for any $Q_{3\rho}(z,s)\subset Q_{6R}$, $y\in \partial \Omega$,  $Q_{\rho_1}(y,s)\subset Q_{\rho_2}(y,s)\subset Q_{6R}$ and $Q_{\rho_1}(z,s)\subset Q_{\rho_2}(z,s)\subset Q_{6R}$.
  \end{theorem}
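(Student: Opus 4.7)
The plan is to adapt the interior boundary-regularity machinery of Theorem \ref{5hh1510135} to the case where $y\in\partial\Omega$, exploiting the uniform $2$-thickness of $\mathbb{R}^N\setminus\Omega$ to recover Poincaré/Sobolev inequalities up to the boundary. By Lewis's self-improvement theorem (quoted as inequality \eqref{5hh090120141}), there exist $p_0\in\bigl(\tfrac{2N}{N+2},2\bigr)$ and $C=C(N,c_0)>0$ such that $\mathbb{R}^N\setminus\Omega$ is uniformly $p_0$-thick with constants $C,r_0$. This is the only structural input from the geometry of $\Omega$; every subsequent inequality will follow from Caccioppoli-type estimates combined with the Maz'ya boundary Poincaré inequality $\bigl(\fint_{B_r(y)}|\varphi|^2\bigr)^{1/2}\leq C r\bigl(\fint_{B_{2r}(y)}|\nabla\varphi|^{p_0}\bigr)^{1/p_0}$ applied slicewise in time to functions $\varphi$ vanishing $\text{Cap}_{p_0}$-q.e.\ on $\overline{B_r(y)}\cap(\mathbb{R}^N\setminus\Omega)$.

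I would first prove \eqref{5hhineq9}. Since $u-w\in L^2(t_0-(6R)^2,t_0;H^1_0(\Omega\cap B_{6R}))$ vanishes on $\partial_p\tilde{\Omega}_{6R}$ and $(u-w)_t-\operatorname{div}(A(x,t,\nabla u)-A(x,t,\nabla w))=\chi_{(-T,T)}\mu$ in $\tilde{\Omega}_{6R}$, testing with $T_k(u-w)/k$ and sending $k\to 0$ (Boccardo–Gallouët/Marcinkiewicz-type argument, as used in Remark \ref{5hh070420143}) yields $\|\nabla(u-w)\|_{L^{\frac{N+3}{N+2},\infty}(\tilde\Omega_{6R})}\leq C|\mu|(\tilde{\Omega}_{6R})$, and Hölder on $Q_{6R}$ with the trivial extension gives \eqref{5hhineq9}. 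Next, for \eqref{5hhineq8}, I would combine the parabolic Caccioppoli inequality for $w$ on $Q_{\rho/2}(z,s)\subset Q_{3\rho}(z,s)\subset Q_{6R}$ with the Maz'ya inequality above (applied to $w$, which vanishes quasi-everywhere on a set of large $p_0$-capacity inside $B_\rho(z)$), to obtain a reverse Hölder-type inequality
\begin{equation*}
\Bigl(\fint_{Q_{\rho/2}(z,s)}|\nabla w|^2\,dxdt\Bigr)^{1/2}\le C_2\Bigl(\fint_{Q_{3\rho}(z,s)}|\nabla w|^{q_0}\,dxdt\Bigr)^{1/q_0}
\end{equation*}
with $q_0=\max\{p_0,\tfrac{2N}{N+2}\}<2$; Gehring's lemma then upgrades the exponent $2$ on the left to some $\theta_2>2$, and Hölder reduces $q_0$ to $1$ on the right.

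For \eqref{5hhineq10}, the point is that $w$ is a weak solution vanishing on $\partial\Omega\cap B_{6R}$ (as $u=0$ there) and the $p_0$-thickness at the boundary point $y$ forces energy decay. Using Caccioppoli on the concentric cylinders $Q_{\rho_2/2}(y,s)\subset Q_{\rho_2}(y,s)$ together with the Maz'ya–Poincaré inequality for $w(\cdot,t)$ sliced in time (valid because $w(\cdot,t)\equiv 0$ quasi-everywhere on $\overline{B_{\rho_2/2}(y)}\cap(\mathbb{R}^N\setminus\Omega)$, a set of $p_0$-capacity at least $cr^{N-p_0}$), I would establish the one-step geometric decay
\begin{equation*}
\fint_{Q_{\rho_2/2}(y,s)}|w|^2\,dxdt\le \gamma\fint_{Q_{\rho_2}(y,s)}|w|^2\,dxdt,
\end{equation*}
with $\gamma=\gamma(N,\Lambda_1,\Lambda_2,c_0)\in(0,1)$; iterating this in the standard way yields \eqref{5hhineq10} with $\beta_2=-\tfrac12\log_2\gamma\in(0,\tfrac12]$ (the upper bound $1/2$ being harmless after decreasing $\beta_2$). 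Finally, \eqref{5hhineq10'} follows from \eqref{5hhineq10} by combining Caccioppoli on $Q_{\rho_1}(z,s)\subset Q_{2\rho_1}(z,s)$ with the slicewise Sobolev–Poincaré inequality that replaces $\|\nabla w\|_{L^2}$ by $\rho^{-1}\|w-c\|_{L^2}$, exactly as in the interior derivation \eqref{5hhineq3'}$\Leftarrow$\eqref{5hhineq3} in Theorem \ref{5hh1510135}.

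The main obstacle will be Step 3, namely producing the one-step contraction with an explicit constant $\gamma<1$ independent of scale, for a nonlinear parabolic operator and for cylinders rather than balls. The difficulty is twofold: the Caccioppoli inequality on a parabolic cylinder mixes the $L^\infty_t L^2_x$ and $L^2_t H^1_x$ norms, so the Maz'ya inequality must be combined with a time-slicewise argument and a parabolic interpolation to re-express the right-hand side as an integral of $|w|^2$ over $Q_{\rho_2}(y,s)$; and the capacity thickness constant $c_0$ enters only through the Maz'ya constant, which depends on $p_0$, forcing us to track uniform dependence of every constant on $c_0$ through the Gehring step. Once this quantitative decay is in hand, the remaining steps are routine variants of the interior arguments and the comparison estimate of Duzaar–Mingione.
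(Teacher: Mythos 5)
Your plan coincides with the paper's argument for \eqref{5hhineq9} (Marcinkiewicz-type difference estimate), \eqref{5hhineq8} (reverse Hölder with exponent $p_0\in(\tfrac{2N}{N+2},2)$ from the self-improved thickness, then Gehring), and \eqref{5hhineq10'} (Caccioppoli plus slicewise Sobolev–Poincaré, reducing to \eqref{5hhineq3'} away from the boundary). The substantive problem is your Step 3.

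You propose to derive \eqref{5hhineq10} from a one-step $L^2$-contraction
\begin{equation*}
\fint_{Q_{\rho_2/2}(y,s)}|w|^2\,dxdt\le\gamma\fint_{Q_{\rho_2}(y,s)}|w|^2\,dxdt,\qquad\gamma<1,
\end{equation*}
obtained by combining the parabolic Caccioppoli inequality with the Maz'ya boundary Poincaré inequality. This cannot work as stated. Caccioppoli gives $\sup_t\int|w|^2+\int_{Q_{\rho_2/2}}|\nabla w|^2\lesssim\rho_2^{-2}\int_{Q_{\rho_2}}|w|^2$, and the $p_0$-Maz'ya–Poincaré inequality returns an upper bound for $\int_{B_{\rho_2/2}}|w|^2$ in terms of $\rho_2^2(\fint_{B_{\rho_2}}|\nabla w|^{p_0})^{2/p_0}$. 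Chaining these produces a constant of order one, not a definite improvement $\gamma<1$; Poincaré-type inequalities are not contractions, and there is no mechanism in pure energy estimates that makes the constant small uniformly in scale. The paper does something genuinely different here: it first establishes an \emph{oscillation decay} estimate $\mathrm{osc}(w,Q_{\rho_1}(y,s))\le c\,(\rho_1/\rho_2)^{\beta_2}\,\mathrm{osc}(w,Q_{\rho_2/2}(y,s))$ by a De Giorgi/Moser-type boundary argument. The key step is the inequality $M_{4\rho_1}\le c_{41}(M_{4\rho_1}-M_{\rho_1})$, proved by testing the equation with $(V-M_{4\rho_1})\eta^2$ where $V=\min\{M_{4\rho_1}-w,\,M_{4\rho_1}\}$, invoking the weak Harnack inequality from Lieberman's book, and — crucially — using the lower bound $\mathrm{Cap}_{1,2}(\Omega^c\cap B_{\rho_1/2}(y),B_{\rho_1}(y))\ge c\rho_1^{N-2}$ to force the gradient of the cut-off extension of $V$ to have large energy. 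Only after this Hölder-type decay is in hand does the paper pass to \eqref{5hhineq10}, using that $w\equiv 0$ outside $\Omega_T$ so that the $L^2$-average over $Q_{\rho_1}$ is controlled by the oscillation, and then a local boundedness estimate (also from Lieberman) to bound $\mathrm{osc}(w,Q_{\rho_2/2})$ by the $L^2$-average over $Q_{\rho_2}$. Your write-up correctly flags Step 3 as the obstacle, but the resolution requires the weak Harnack inequality and the capacity-based test function, which your plan does not introduce; Caccioppoli plus Maz'ya alone will not close the loop.
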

  \begin{proof}1. For $\eta\in C^\infty_c ([t_0-(6R)^2,t_0))$ ,  $0\leq \eta\leq 1$, $\eta_t\leq 0$ and $\eta(t_0-(6R)^2)=1$. Using $\varphi=T_k(u-w)\eta$, for any $k>0$, as a test function for \eqref{5hheq6} and \eqref{5hheq5}, we get
 \begin{align*}
 & \int_{\tilde{\Omega}_{6R}}(u-w)_tT_k(u-w)\eta dxdt\\&~~~~~~~~~~~~+\int_{\tilde{\Omega}_{6R}}\left(A(x,t,\nabla u)-A(x,t,\nabla w)\right)\nabla T_k(u-w)\eta dxdt=\int_{\tilde{\Omega}_{6R}}T_k(u-w)\eta d\mu.
 \end{align*}                             Thanks to  \eqref{5hhcondb}, we obtain
            \begin{equation*}
                       -\int_{\tilde{\Omega}_{6R}}\overline{T}_k(u-w)\eta_t dxdt+\Lambda_2 \int_{\tilde{\Omega}_{6R}}|\nabla T_k(u-w)|^2\eta dxdt\leq k|\mu|(\tilde{\Omega}_{6R}),
                       \end{equation*}
                       where $\overline{T}_k(s)=\int_{0}^{s}T_k(\tau)d\tau$. 
 As in  \cite[Proposition 2.8]{55VH}, we also verify that
$$
 |||\nabla (u-w)|||_{L^{\frac{N+2}{N+1},\infty}(\tilde{\Omega}_{6R})}\lesssim|\mu|(\tilde{\Omega}_{6R}).$$
  Hence we get \eqref{5hhineq9}.\\ 
 2. We need to prove that 
 \begin{eqnarray}
                  \fint_{Q_{r/4}(z,s)}|\nabla w|^2 dxdt\leq \frac{1}{2}\fint_{Q_{\frac{26}{10}r}(z,s)}|\nabla w|^2dxdt+ c\left(\fint_{Q_{\frac{26}{10}r}(z,s)}|\nabla w|^{p_0}dxdt\right)^{\frac{2}{p_0}},\label{5hh19096}
                  \end{eqnarray}
   for all $Q_{\frac{26}{10}r}(z,s)\subset Q_{6R}=Q_{6R}(x_0,t_0)$. Here the constant $p_0$ is in inequality \eqref{5hh090120141}.    \\                                                                
 Suppose that $B_{r}(z)\subset \Omega$. Take $\rho\in (0,r]$.  Let $\varphi\in C_c^\infty(B_\rho(z))$, $\eta\in C^\infty_c((s-\rho^2,s])$ be such that $0\leq \varphi,\eta\leq 1 $, $\varphi=1$ in $B_{\rho/2}(z)$, $\eta=1$ in $[s-\rho^2/4,s]$ and $|\nabla \varphi|\leq c_1/\rho$, $|\eta_t|\leq c_1/\rho^2$. We denote 
 \begin{equation*}
 \tilde{w}_{B_\rho(z)}(t)=\left(\int_{B_{\rho}(z)}\varphi(x)^2dx\right)^{-1}\int_{B_\rho(z)}w(x,t)\varphi(x)^2dx.
 \end{equation*}
  Using  $\varphi=(w-\tilde{w}_{B_\rho(z)})\varphi^2\eta^2$ 
  as a test function for the equation \eqref{5hheq5} we have for all $s'\in [s-\rho^2/4,s]$
  \begin{align*}
 &\int_{B_\rho(z)\times (s-\rho^2,s')}(w-\tilde{w}_{B_\rho(z)})_t(w-\tilde{w}_{B_\rho(z)})\varphi^2\eta^2dxdt\\&~~~~~~~~~~~~~~~~~ +\int_{B_\rho(z)\times (s-\rho^2,s')}A(x,t,\nabla w)\nabla \left((w-\tilde{w}_{B_\rho(z)})\varphi^2\eta^2\right)dxdt=0.
  \end{align*}
 Here we used the equality 
 $
   \int_{B_\rho(z)\times (s-\rho^2,s')}\left(\tilde{w}_{B_\rho(z)}\right)_t(w-\tilde{w}_{B_\rho(z)})\varphi^2\eta^2dxdt=0.
$\\
     Thus, we can  write 
     \begin{align*}
   & \frac{1}{2} \int_{B_\rho(z)}(w(s')-\tilde{w}_{B_\rho(z)}(s'))^2\varphi^2dx +\int_{B_\rho(z)\times (s-\rho^2,s')}A(x,t,\nabla w)\nabla w\varphi^2\eta^2 dxdt\\&~~~~~~~~~~~~~= -2\int_{B_\rho(z)\times (s-\rho^2,s')}A(x,t,\nabla w)\nabla \varphi \varphi\eta^2(w-\tilde{w}_{B_\rho(z)})dxdt \\&~~~~~~~~~~~~~~~~~+ \int_{B_\rho(z)\times (s-\rho^2,s')}(w-\tilde{w}_{B_\rho(z)})^2\varphi^2\eta \eta_t dxdt.
     \end{align*}
          From conditions \eqref{5hhconda} and \eqref{5hhcondb},  we get 
      \begin{align*}
      & \int_{B_\rho(z)}(w(s')-\tilde{w}_{B_\rho(z)}(s'))^2\varphi^2dx +\int_{B_{\rho}(z)\times (s-\rho^2,s')}|\nabla w|^2\varphi^2\eta^2 dxdt\\&\quad\quad\quad\lesssim\int_{B_\rho(z)\times (s-\rho^2,s')}|\nabla w||\nabla \varphi| \varphi\eta^2|w-\tilde{w}_{B_\rho(z)}|dxdt + \frac{1}{\rho^2}\int_{Q_{\rho}(z,s)}(w-\tilde{w}_{B_\rho(z)})^2 dxdt.
      \end{align*}
 Using H\"older's inequality we can verify that
          \begin{align}
          \nonumber
          &\mathop {\sup }\limits_{s'\in[s-\rho^2/4,s]}\int_{B_\rho(z)}(w(s')-\tilde{w}_{B_\rho(z)}(s'))^2\varphi^2dx \\&~~~~~~~~~~~~~~~~~~~~~~~~+\int_{Q_{\rho/2}(z,s)}|\nabla w|^2 dxdt \lesssim \frac{1}{\rho^2}\int_{Q_{\rho}(z,s)} |w-\tilde{w}_{B_\rho(z)}|^2dxdt.\label{5hh19092} 
          \end{align}         On the other hand, for any $s'\in[s-\rho^2/4,s]$ 
                        \begin{equation}\label{5hh230320141}
                        \int_{B_{\rho/2}(z)}(w(s')-\tilde{w}_{B_{\rho/2}(z)}(s'))^2dx\lesssim \int_{B_\rho(z)}(w(s')-\tilde{w}_{B_\rho(z)}(s'))^2\varphi^2dx,
                        \end{equation}
                        where $\varphi_1(x)=\varphi(z+2(x-z))$ for all $x\in B_{\rho/2}(z)$ and 
                        $$\tilde{w}_{B_{\rho/2}(z)}=\left(\int_{B_{\rho/2}(z)}\varphi_1(x)^2dx\right)^{-1}\int_{B_{\rho/2}(z)}w(x,t)\varphi_1(x)^2dx.$$      
                        In fact, since $0\leq \varphi\leq 1$ and $\varphi=1$ in $B_{\rho/2}(z)$, we have  
                        \begin{align*}
                        	&\int_{B_{\rho/2}(z)}(w(s')-\tilde{w}_{B_{\rho/2}(z)}(s'))^2dx\\&~~~\lesssim\int_{B_{\rho/2}(z)}(w(s')-\tilde{w}_{B_{\rho}(z)}(s'))^2dx+(\tilde{w}_{B_{\rho/2}(z)}(s')-\tilde{w}_{B_{\rho}}(z)(s'))^2|B_{\rho/4}(z)|                        
                        	\\&~~~\lesssim \int_{B_\rho(z)}(w(s')-\tilde{w}_{B_\rho(z)}(s'))^2\varphi^2dx +\int_{B_{\rho/2}(z)}(w(s')-\tilde{w}_{B_{\rho/2}(z)}(s'))^2\varphi^2_1dx\\&~~~~~+\int_{B_{\rho/2}(z)}(w(s')-\tilde{w}_{B_{\rho}(z)}(s'))^2\varphi^2_1dx. 
                        \end{align*}                       
                      
                        which yields \eqref{5hh230320141} due to  the following inequality
$$
                                                 \int_{B_{\rho/2}(z)}(w(s')-\tilde{w}_{B_{\rho/2}(z)}(s'))^2\varphi_1^2dx\leq \int_{B_{\rho/2}(z)}(w(s')-l)^2\varphi_1^2dx ~~\forall l\in\mathbb{R}.$$
                        Therefore, 
                        \begin{align}
                        \nonumber&\mathop {\sup }\limits_{s'\in[s-\rho^2/4,s]}\int_{B_{\rho/2}(z)}(w(s')-\tilde{w}_{B_{\rho/2}(z)}(s'))^2dx \\&~~~~~~~~~~~~~~~~~~~~~+\int_{Q_{\rho/2}(z,s)}|\nabla w|^2 dxdt \lesssim \frac{1}{\rho^2}\int_{Q_{\rho}(z,s)} |w-\tilde{w}_{B_\rho(z)}|^2dxdt. \label{5hh090120142}
                        \end{align}                                               
                     Now we use estimate  \eqref{5hh090120142} for $\rho=r/2$, we have
                     \begin{align*}
                     \int_{Q_{r/4}(z,s)}|\nabla w|^2 dxdt&\lesssim \frac{}{r^2}\int_{Q_{r/2}(z,s)}(w-\tilde{w}_{B_{r/2}(z)})^2dxdt\\&\lesssim \frac{1}{r^2}\left(\mathop {\sup }\limits_{s'\in[s-r^2/4,s]}\int_{B_{r/2}(z)}(w(s')-\tilde{w}_{B_{r/2}(z)}(s'))^2dx\right)^{\frac{2}{N+2}}\\&~~~~~~\times\int_{s-r^2/4}^{s}\left(\int_{B_{r/2}(z)}(w-\tilde{w}_{B_{r/2}(z)})^2dx\right)^{\frac{N}{N+2}}dt.
                     \end{align*}
                    After we  use estimate  \eqref{5hh090120142} for $\rho=r$ we get
                     \begin{align*}
       \int_{Q_{r/4}(z,s)}|\nabla w|^2 dxdt&\lesssim \frac{1}{r^2}\left(\mathop { }\frac{1}{r^2}\int_{Q_{r}(z,s)} |w-\tilde{w}_{B_r(z)}|^2dxdt\right)^{\frac{2}{N+2}}\\&~~~~~\times\int_{s-r^2/4}^{s}\left(\int_{B_{\rho/2}(z)}(w-\tilde{w}_{B_{r/2}(z)})^2dx\right)^{\frac{N}{N+2}}dt .
                                         \end{align*}                                    
                 Thanks to a Sobolev-Poincare inequality, we obtain 
$$\int_{Q_{r/4}(z,s)}|\nabla w|^2 dxdt\lesssim \frac{1}{r^2}\left(\int_{Q_r(z,s)}|\nabla w|^2dxdt\right)^{\frac{2}{N+2}}\int_{Q_{r/2}(z,s)}|\nabla w|^{\frac{2N}{N+2}}dxdt.$$
    Since $p_0\in (\frac{2N}{N+2},2)$, thanks to H\"older's inequality we get \eqref{5hh19096}. \\
          Finally, we consider the case $B_{r}(z)\cap \Omega\not= \emptyset$. In this  case we choose $z_0\in \partial \Omega$ such that $|z-z_0|=\text{dist}(z,\partial\Omega)$. Then $|z_0-z|<r$ and thus $\frac{1}{4}r\leq\rho_1\leq \frac{1}{2}r$,
          \begin{align}\label{5hh240320141}
          B_{\frac{1}{4}r}(z)\subset B_{\frac{5}{4}r}(z_0)\subset B_{\rho_1+r}(z_0)\subset B_{\rho_1
                    +\frac{11}{10}r}(z_0)\subset B_{\frac{16}{10}r}(z_0)\subset B_{\frac{26}{10}r}(z)\subset B_{6R}(x_0).
          \end{align}
          Let $\varphi\in C_c^\infty(B_{\rho_1
                    +\frac{11}{10}r}(z_0))$ be such that  $0\leq \varphi\leq1$, $\varphi=1$ in $ B_{\rho_1+r}(z_0)$ and $|\nabla \varphi|\leq C/r$. For $\frac{1}{2}r\leq \rho_2\leq r$, let $\eta\in C^\infty_c((s-\rho_2^2,s])$ be such that $0\leq \eta\leq 1$, $\eta=1$ in $[s-\rho_2^2/4,s]$ and $|\eta_t|\leq c/r^2$.  Using $\phi=w\varphi^2\eta^2$ as a test function for \eqref{5hheq5} we have for any $s'\in (s-\rho_2^2,s)$
\begin{align*}
&\int_{(B_{\rho_1
       +\frac{11}{10}r}(z_0)\cap \Omega)\times (s-\rho_2^2,s')}w_t w \varphi^2\eta^2dxdt\\&~~~~~~~~~~~~~~~~~~~~+\int_{(B_{\rho_1
              +\frac{11}{10}r}(z_0)\cap \Omega)\times (s-\rho_2^2,s')}A(x,t,\nabla w)\nabla\left(w\varphi^2\eta^2\right)dxdt=0.
\end{align*}                         
        As above we also get 
        \begin{align*}
        &\mathop {\sup }\limits_{s'\in[s-\rho_2^2/4,s]} \int_{B_{\rho_1+r}(z_0)}w^2(s')  dx\\&~~~~~~~~~~~~~~~+\int_{B_{\rho_1+r}(z_0)\times (s-\rho_2^2/4,s)}|\nabla w|^2dxdt\lesssim \frac{1}{r^2}\int_{B_{\rho_1
                          +\frac{11}{10}r}(z_0)\times (s-\rho_2^2,s)}w^2dxdt.
        \end{align*}           
           In particular, for $\rho_1=\frac{1}{4}r$, $\rho_2=\frac{1}{2}r$ and using \eqref{5hh240320141} yields
                        \begin{equation}\label{5hh240320143}
                        \int_{Q_{\frac{1}{4}r}(z,s)}|\nabla w|^2dxdt \lesssim \frac{1}{r^2}\int_{B_{\frac{29}{20}r}(z_0)\times (s-r^2/4,s)}w^2dxdt,
                        \end{equation}   
                        and $\rho_1=(\frac{1}{4}+\frac{1}{10})r,\rho_2=r$,
                        \begin{equation*}\label{5hh240320142}
     \mathop {\sup }\limits_{s'\in[s-r^2/4,s]} \int_{B_{\frac{1}{4}r+\frac{11}{10}r}(z_0)}w^2(s')dx\lesssim \frac{1}{r^2}\int_{B_{\frac{29}{20}r}(z_0)\times (s-r^2,s)}w^2dxdt.
                                            \end{equation*} 
                                            Set $ K_1=\{w=0\}\cap \overline{B}_{\frac{29}{20}r}(z_0)$ and $K_2=\{w=0\}\cap \overline{B}_{\frac{1}{4}r+\frac{11}{10}r}(z_0)$, Since $\mathbb{R}^N\backslash\Omega$ satisfies an uniformly $2-$thick, we have the following estimates 
                \begin{equation*}
                \text{Cap}_2(K_1,B_{\frac{29}{10}r}(z_0))\gtrsim  r^{N-2} ~\text{ and }~ \text{Cap}_{p_0}(K_2,B_{\frac{1}{2}r+\frac{11}{5}r}(z_0))\gtrsim  r^{N-p_0}.
                \end{equation*}               
                   So, by  Sobolev-Poincare's inequality we get 
                    \begin{equation}     
                                   \fint_{B_{\frac{29}{20}r}(z_0)}w^2dx\lesssim r^{2}\fint_{B_{\frac{5}{2}r}(z)}|\nabla w|^2dx,\label{5hh240320144}
                    \end{equation}               
                 and
$$
                 \fint_{B_{\frac{1}{4}r+\frac{11}{10}r}(z_0)}w^2dxdt\lesssim  r^2\left(\fint_{B_{\frac{1}{4}r+\frac{11}{10}r}(z_0)}|\nabla w|^{p_0}dx\right)^{\frac{2}{p_0}}
                             \lesssim  r^2\left(\fint_{B_{\frac{5}{2}r}(z_0)}|\nabla w|^{p_0}dx\right)^{\frac{2}{p_0}}.$$
Hence,
                               \begin{equation}\label{5hh19094}
                                             \mathop {\sup }\limits_{s'\in[s-r^2/4,s]} \int_{B_{\frac{1}{4}r+\frac{11}{10}r}(z_0)}w^2(s')dx\lesssim\int_{Q_{\frac{5}{2}r}(z,s)}|\nabla w|^2dxdt,
                                               \end{equation}   and\begin{equation}\label{5hh19095}
                                                               \int_{B_{\frac{1}{4}r+\frac{11}{10}r}(z_0)}w^2(t)dx\lesssim r^{N+2}\left(\fint_{B_{\frac{5}{2}r}(z_0)}|\nabla w|^{p_0}(t)dx\right)^{\frac{2}{p_0}}.
                                                                \end{equation}         
   From \eqref{5hh240320143}, we have
   \begin{align*}
    &\fint_{Q_{\frac{1}{4}r}(z,s)}|\nabla w|^2dxdt \lesssim \frac{1}{r^{N+4}}\int_{B_{\frac{1}{4}r
                         +\frac{11}{10}r}(z_0)\times (s-r^2/4,s)}w^2dxdt
                          \\&~\lesssim \frac{1}{r^{N+4}}  \left(\mathop {\sup }\limits_{s'\in[s-r^2/4,s]} \int_{B_{\frac{1}{4}r+\frac{11}{10}r}(z_0)}w^2(s')dx\right)^{1-\frac{p_0}{2}}\int_{s-r^2/4}^{s} \left(\int_{B_{\frac{1}{4}r+\frac{11}{10}r}(z_0)}w^2(t)dx\right)^{\frac{p_0}{2}}dt.                         
   \end{align*}
  Using \eqref{5hh19095}, \eqref{5hh19094} and H\"older's inequality we get
  \begin{align*}
  \fint_{Q_{\frac{1}{4}r}(z,s)}|\nabla w|^2dxdt&\lesssim \frac{1}{r^{N+4}}  \left(\int_{Q_{\frac{5}{2}r}(z,s)}|\nabla w|^2dxdt\right)^{1-\frac{p_0}{2}}r^{\frac{N+2}{2}p_0-N}\int_{Q_{\frac{5}{2}r}(z,s)}|\nabla w|^{p_0}dxdt 
         \\&\lesssim \left(\fint_{Q_{\frac{5}{2}r}(z,s)}|\nabla w|^2dxdt\right)^{1-\frac{p_0}{2}}\fint_{Q_{\frac{5}{2}r}(z,s)}|\nabla w|^{p_0}dxdt  
                  \\&\leq \frac{1}{2} \fint_{Q_{\frac{26}{10}r}(z,s)}|\nabla w|^2dxdt+c\left(\fint_{Q_{\frac{26}{10}r}(z,s)}|\nabla w|^{p_0}dxdt \right)^{\frac{2}{p_0}}. 
  \end{align*}                 
     So we proved \eqref{5hh19096}.  \\
   Therefore, By Gehring's Lemma (see \cite{55Nau}) we get     \eqref{5hhineq8}. \\
   3. Now we prove \eqref{5hhineq10}. Let $y\in \partial \Omega$, $Q_{\rho_1}(y,s)\subset Q_{\rho_2}(y,s)\subset Q_{6R}$ with  $\rho_1\leq\rho_2/4$. First, we will show that there exists a constant $\beta_2=\beta_2(N,\Lambda_1,\Lambda_2,c_0)\in (0,1/2]$ such that 
                      \begin{equation}\label{5hh060120145}
                      \text{osc}(w,Q_{\rho_1}(y,s))\lesssim\left(\frac{\rho_1}{\rho_2}\right)^{\beta_2} \text{osc}(w,Q_{\rho_2/2}(y,s)),
                      \end{equation}
                      where $\text{osc}(w,O)=\sup_{O}w-\inf_{O}w$.\\
                      Indeed, since 
   \begin{equation*}
   \int_{0}^1\frac{\text{Cap}_{1,2}(\Omega^c\cap B_r(z),B_{2r}(z)
    )}{r^{N-2}}\frac{dr}{r}=+\infty ~~\forall z\in\partial\Omega.
   \end{equation*} thus by the  Wiener criterion (see \cite{55Zi1}), we have $w$ is continuous up to $\partial_p \tilde{\Omega}_{6R}$.                     So, we can  choose  $\varphi=\left(V-M_{4\rho_1}\right)\eta^2\in L^2(-\infty,T;H^1_0(\Omega\cap B_{6R}(x_0)))$ as test function in \eqref{5hheq5},
    where 
    \begin{description}
    	\item[a)]  $\eta\in C^\infty (Q_{4\rho_1}(y,s))$, $0\leq \eta\leq 1$ such that $\eta=1$ in $Q_{\rho_1/2}(y,s-\frac{17}{4}\rho_1^2)$, $\text{supp}(\eta)\subset\subset Q_{\rho_1}(y,s-4\rho_1^2)$ and $|\nabla \eta|\leq c_{27}/\rho_1$, $|\eta_t|\leq c_{28}/\rho_1^2$,
    	\item[b)]  $M_{4\rho_1}= \sup_{Q_{4\rho_1}(y,s)}w$ and $V=\inf\{M_{4\rho_1}-w,M_{4\rho_1}\}$ in $\tilde{\Omega}_{6R}$, $V=M_{4\rho_1}$ outside $\tilde{\Omega}_{6R}$.
    \end{description}
   We have 
   \begin{align*}
   & \int_{\tilde{\Omega}_{6R}}w_t\left(V-M_{4\rho_1}\right)\eta^2dxdt\\&~~~~~~~+\int_{\tilde{\Omega}_{6R}}2\eta A(x,t,\nabla w)\nabla \eta\left(V-M_{4\rho_1}\right)dxdt+\int_{\tilde{\Omega}_{6R}}\eta^2A(x,t,
      \nabla w)\nabla Vdxdt=0,
   \end{align*}
   which implies 
   \begin{align*}
   &\int_{\tilde{\Omega}_{6R}}\eta^2A(x,t,
         -\nabla V)(-\nabla V)dxdt=\int_{\tilde{\Omega}_{6R}}2\eta A(x,t,-\nabla V)\nabla \eta\left(V-M_{4\rho_1}\right)dxdt\\&~~~~~~~~~~~~-\int_{\tilde{\Omega}_{6R}}\left(V-M_{4\rho_1}\right)_t\left(V-M_{4\rho_1}\right)\eta^2dxdt.
   \end{align*}
   Using \eqref{5hhconda} and \eqref{5hhcondb} we get 
   \begin{align*}
    &\Lambda_2\int_{\tilde{\Omega}_{6R}}\eta^2|\nabla V|^2dxdt\\&~~~~\leq 2\Lambda_1\int_{\tilde{\Omega}_{6R}}\eta |\nabla V||\nabla \eta||V-M_{4\rho_1}|dxdt-1/2\int_{\tilde{\Omega}_{6R}}\left(\left(V-M_{4\rho_1}\right)^2-M_{4\rho_1}^2\right)(\eta^2)_tdxdt
         \\&~~~~\leq 2\Lambda_1M_{4\rho_1}\int_{\tilde{\Omega}_{6R}}\eta |\nabla V||\nabla \eta|dxdt+2M_{4\rho_1}\int_{\tilde{\Omega}_{6R}}\eta V |\eta_t|dxdt.
   \end{align*}   
      Since $\text{supp}(|\nabla V|)\cap \text{supp}(\eta)\subset \tilde{\Omega}_{6R}$, thus 
      \begin{align}
      \nonumber \int_{\mathbb{R}^{N+1}}|\nabla (\eta V)|^2dxdt
                  &\lesssim M_{4\rho_1}\left(\int_{\mathbb{R}^{N+1}}\eta |\nabla V||\nabla \eta|dxdt+\int_{\mathbb{R}^{N+1}}V \left(\eta|\eta_t|+|\nabla\eta|^2\right)dxdt\right)\\& \lesssim M_{4\rho_1}\left(\int_{\mathbb{R}^{N+1}}\eta |\nabla V||\nabla \eta|dxdt+\frac{1}{\rho_1^2}\int_{Q_{\rho_1}(y,s-4\rho_1^2)}V dxdt\right).\label{5hh060120143}
      \end{align}
            By \cite[Theorem 6.31, p. 132]{55Li3}, for any $\sigma\in (0,1+2/N)$ there holds 
            
            \begin{equation}\label{5hh060120142}
            \left(\fint_{Q_{\rho_1}(y,s-4\rho_1^2)}V^\sigma dxdt\right)^{1/\sigma}\lesssim \inf_{Q_{\rho_1}(y,s)}V=M_{4\rho_1}-\sup_{Q_{\rho_1}(y,s)}w=M_{4\rho_1}-M_{\rho_1}.
            \end{equation}
            In particular, 
            \begin{equation}\label{5hh060120144}
            \frac{1}{\rho_1^2}\int_{Q_{\rho_1}(y,s-4\rho_1^2)}V dxdt\lesssim\rho_1^N (M_{4\rho_1}-M_{\rho_1}).
            \end{equation}
            We need to estimate $\int_{\tilde{\Omega}_{6R}}\eta |\nabla V||\nabla \eta|dxdt$.
            Using H\"older inequality and \eqref{5hh060120142}, for $\varepsilon\in (0,\min\{2/N,1\})$ we have
            \begin{align*}
             \int_{\tilde{\Omega}_{6R}}\eta |\nabla V||\nabla \eta|dxdt&\leq\left(\int_{\tilde{\Omega}_{6R}}\eta^2 V^{-(1+\varepsilon)}|\nabla V|^2 dxdt\right)^{1/2}\left(\int_{\tilde{\Omega}_{6R}}V^{1+\varepsilon}|\nabla \eta|^2dxdt\right)^{1/2}
             \\&  \lesssim\left(\int_{\tilde{\Omega}_{6R}}\eta^2 V^{-(1+\varepsilon)}|\nabla V|^2 dxdt\right)^{1/2}\left(\int_{Q_{\rho_1}(y,s-4\rho_1^2)}V^{1+\varepsilon}dxdt\right)^{1/2}
                        \\&\lesssim \left(\int_{\tilde{\Omega}_{6R}}\eta^2 V^{-(1+\varepsilon)}|\nabla V|^2 dxdt\right)^{1/2}\rho_1^{\frac{N+2}{2}}(M_{4\rho_1}-M_{\rho_1})^{(1+\varepsilon)/2}.
            \end{align*}          
            To estimate  $\left(\int_{\tilde{\Omega}_{6R}}\eta^2 V^{-(1+\varepsilon)}|\nabla V|^2 dxdt\right)^{1/2}$, we can choose $\varphi=((V+\delta)^{-\varepsilon}-(M_{4\rho_1}+\delta)^{-\varepsilon})\eta^2$, for $\delta>0$, as test function in \eqref{5hheq5}, we will get 
            \begin{align*}
             &\int_{\tilde{\Omega}_{6R}}\eta^2 (V+\delta)^{-(1+\varepsilon)}|\nabla V|^2 dxdt\\&~~~~~~~~~\lesssim \int_{\tilde{\Omega}_{6R}}\eta (V+\delta)^{-\varepsilon}|\nabla V||\nabla \eta| dxdt+\int_{\tilde{\Omega}_{6R}}\eta (V+\delta)^{1-\varepsilon} |\eta_t| dxdt.
            \end{align*} 
            Thanks to H\"older's inequality, we obtain
            \begin{align*}
             \int_{\tilde{\Omega}_{6R}}\eta^2 (V+\delta)^{-(1+\varepsilon)}|\nabla V|^2 dxdt&\lesssim  \int_{\tilde{\Omega}_{6R}} (V+\delta)^{1-\varepsilon} \left(\eta|\eta_t|+|\nabla\eta|^2\right) dxdt \\& \lesssim  \rho_1^2\int_{Q_{\rho_1}(y,s-4\rho_1^2)} (V+\delta)^{1-\varepsilon} dxdt. 
            \end{align*}       
           Letting $\delta\to 0$ and using \eqref{5hh060120142}, we get
$$\int_{\tilde{\Omega}_{6R}}\eta^2 V^{-(1+\varepsilon)}|\nabla V|^2 dxdt\lesssim\rho_1^{-2}\int_{Q_{\rho_1}(y,s-4\rho_1^2)} V^{1-\varepsilon} dxdt
               \lesssim\rho_1^{N}\left(M_{4\rho_1}-M_{\rho_1}\right)^{1-\varepsilon}.$$
        Thus, $$
                    \int_{\tilde{\Omega}_{6R}}\eta |\nabla V||\nabla \eta|dxdt\lesssim \rho_1^{N}(M_{4\rho_1}-M_{\rho_1}).
                   $$    Combining this with \eqref{5hh060120143} and \eqref{5hh060120144}, 
$$
\int_{\mathbb{R}^{N+1}}|\nabla (\eta V)|^2dxdt\lesssim\rho_1^NM_{4\rho_1}\left(M_{4\rho_1}-M_{\rho_1}\right).$$
Note that $\eta V=M_{4\rho_1}$ in $\left(\Omega^c\cap B_{\rho_1/2}(y)\right)\times (s-\frac{9}{2}\rho_1^2,s-\frac{17}{4}\rho_1^2)$ thus 
\begin{align*}
\int_{\mathbb{R}^{N+1}}|\nabla (\eta V)|^2dxdt&\geq \int_{s-\frac{9}{2}\rho_1^2}^{s-\frac{17}{4}\rho_1^2}\int_{\mathbb{R^N}}|\nabla (\eta V)|^2dxdt\\&\geq \int_{s-\frac{9}{2}\rho_1^2}^{s-\frac{17}{4}\rho_1^2}M_{4\rho_1}^2\text{Cap}_{1,2}(\Omega^c\cap B_{\rho_1/2}(y),B_{\rho_1}(y))dt
\\&\gtrsim M_{4\rho_1}^2\rho_1^N.
\end{align*}
Here we have used ${Cap}_{1,2}(\Omega^c\cap B_{\rho_1/2}(y),B_{\rho_1}(y))\gtrsim \rho_1^{N-2} $ in the last inequality.
It follows 
\begin{equation*}
M_{4\rho_1}\leq c(M_{4\rho_1}-M_{\rho_1}).
\end{equation*}
So 
\begin{equation*}
\sup_{Q_{\rho_1}(y,s)}w\leq \gamma\sup_{Q_{4\rho_1}(y,s)}w~~\text{ where }~\gamma=\frac{c}{c+1}<1.
\end{equation*}
Clearly, above estimate is also true when we replace $w$ by $-w$. These give, 
\begin{equation*}
\text{osc}(w,Q_{\rho_1}(y,s))\leq \gamma \text{osc}(w,Q_{4\rho_1}(y,s)).
\end{equation*}
It follows \eqref{5hh060120145}.\\
We come back the proof of \eqref{5hhineq10}.\\
Since $w=0$ outside $\Omega_T$ this leads to 
$$
              \left(\fint_{Q_{\rho_1}(y,s)}|w|^2dxdt\right)^{1/2}\lesssim \left(\frac{\rho_2}{\rho_1}\right)^{\beta_2}\text{osc}(w,Q_{\rho_2/2}(y,s)).$$
 On the other hand, By \cite[Theorem 6.30, p. 132]{55Li3} we have 
 \begin{align*}
 &\sup_{Q_{\rho_2/2}(y,s)}w\lesssim \left(\fint_{Q_{\rho_2}(y,s)}(w^+)^2\right)^{1/2},
  \sup_{Q_{\rho_2/2}(y,s)}(-w)\lesssim \left(\fint_{Q_{\rho_2}(y,s)}(w^-)^2\right)^{1/2}.
 \end{align*}                  
Thus, we get \eqref{5hhineq10}. \\
Next, we have \eqref{5hhineq10'} for case $z=y\in\partial\Omega$ since from Caccippoli's inequality, 
$$
                    \int_{Q_{\rho_1}(z,s)}|\nabla w|^2dxdt\lesssim \frac{1}{\rho_1^2}  \int_{Q_{2\rho_1}(z,s)}| w|^2dxdt,$$
and using Sobolev-Poincare's  inequality as in \eqref{5hh240320144}, 
$$\int_{Q_{\rho_2}(z,s)}| w|^2dxdt\lesssim \rho_2^2\int_{Q_{\rho_2}(z,s)}| \nabla w|^2dxdt.          
         $$              We now prove \eqref{5hhineq10'}. Take $Q_{\rho_1}(z,s)\subset Q_{\rho_2}(z,s)\subset Q_{6R}$, it is enough to consider the case $\rho_1\leq\rho_2/20$.  Clearly, if $B_{\rho_2/4}(z)\subset \Omega$ then \eqref{5hhineq10'} follows from \eqref{5hhineq3'} in Theorem \ref{5hh1510135}. We consider $B_{\rho_2/4}(z)\cap\partial\Omega\not= \emptyset$, let $z_0\in B_{\rho_2/4}(z)\cap\partial\Omega$ such that $|z-z_0|=\text{dist}(z,\partial\Omega)\leq \rho_2/4$. 
          Obviously,  if  $\rho_1<|z-z_0|/4$ and $z\notin\Omega$, then \eqref{5hhineq10'} is trivial.  If  $\rho_1<|z-z_0|/4$ and $z\in\Omega$, then \eqref{5hhineq10'} follows from \eqref{5hhineq3'} in Theorem \ref{5hh1510135}. \\
          Now assume  $\rho_1\geq |z-z_0|/4$ then since $Q_{\rho_1}(z,s)\subset Q_{5\rho_1}(z_0,s)$
          \begin{align*}
           \left(\fint_{Q_{\rho_1}(z,s)}|\nabla w|^2dxdt\right)^{1/2}&\lesssim \left(\fint_{Q_{5\rho_1}(z_0,s)}|\nabla w|^2dxdt\right)^{1/2}
                    \\&\lesssim \left(\frac{\rho_1}{\rho_2}\right)^{\beta_2-1}\left(\fint_{Q_{\rho_2/4}(z_0,s)}|\nabla w|^2dxdt\right)^{1/2}
                    \\&\lesssim \left(\frac{\rho_1}{\rho_2}\right)^{\beta_2-1}\left(\fint_{Q_{\rho_2/2}(z,s)}|\nabla w|^2dxdt\right)^{1/2},
          \end{align*}    
         which implies \eqref{5hhineq10'}.       
  \end{proof}
  \begin{corollary}\label{5hh060120149}
    Suppose that $\mathbb{R}^N\backslash\Omega$ satisfies uniformly $2-$thick with constants $c_0,r_0$. Let $\beta_2$ be the constant in Theorem \ref{5hh24093}. For $2-\beta_2<\theta<N+2$, there holds for any $B_{\rho}(y)\cap\partial\Omega\not=\emptyset$,  $s\in (-T,T)$, $0<\rho\leq r_0$
         \begin{equation}\label{5hh060120147}
         \int_{Q_\rho(y,s)}|\nabla u|dxdt \lesssim \rho^{N+3-\theta}\left(\left(\frac{T_0}{r_0}\right)^{N+3-\theta}+1\right)||\mathbb{M}_{\theta}[\mu]||_{L^\infty(\Omega\times(-T,T))},
         \end{equation}
         where $T_0=diam(\Omega)+T^{1/2}$.
         \end{corollary}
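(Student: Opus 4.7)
The strategy parallels the proof of Corollary~\ref{5hh060120148}, with the interior estimates of Theorem~\ref{5hh1510135} replaced by their boundary analogues in Theorem~\ref{5hh24093}. First I reduce to the case where the center lies on $\partial\Omega$: since $B_\rho(y)\cap\partial\Omega\neq\emptyset$, pick $y_0\in\partial\Omega$ with $|y-y_0|<\rho$, so that $Q_\rho(y,s)\subset Q_{2\rho}(y_0,s)$ and it suffices to estimate $\int_{Q_{2\rho}(y_0,s)}|\nabla u|\,dxdt$. For each scale $0<\rho_2\leq r_0/12$, I take $w=w_{\rho_2}$ to be the solution of \eqref{5hheq5} on $Q_{6\rho_2}(y_0,s)$, i.e.\ with $x_0=y_0$, $t_0=s$, $R=\rho_2$, using the extension of $u$ by zero provided by \eqref{5hheq6}.

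Theorem~\ref{5hh24093} then supplies three ingredients at each scale. The Campanato-type decay \eqref{5hhineq10'} at the center $z=y_0$ controls $L^2$-averages of $\nabla w$; the reverse-Hölder inequality \eqref{5hhineq8} applied at radius $2\rho_2$ converts an $L^2$-average at scale $\rho_2$ back to an $L^1$-average at scale $6\rho_2$; and the comparison \eqref{5hhineq9} yields $\int_{Q_{6\rho_2}(y_0,s)}|\nabla u-\nabla w|\,dxdt\leq C\rho_2|\mu|(\tilde{\Omega}_{6\rho_2})\leq C\rho_2^{N+3-\theta}\|\mathbb{M}_\theta[\mu]\|_{L^\infty(\Omega\times(-T,T))}$, the last step by definition of $\mathbb{M}_\theta$. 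Chaining the first two via Hölder's inequality exactly as in the interior argument produces
\begin{equation*}
\int_{Q_{\rho_1}(y_0,s)}|\nabla w|\,dxdt\leq C\Big(\frac{\rho_1}{\rho_2}\Big)^{N+1+\beta_2}\int_{Q_{6\rho_2}(y_0,s)}|\nabla w|\,dxdt\qquad(\rho_1\leq\rho_2),
\end{equation*}
and combining this with the comparison yields the iterative inequality
\begin{equation*}
\phi(\rho_1)\leq C\Big(\frac{\rho_1}{\rho_2}\Big)^{N+1+\beta_2}\phi(\rho_2)+C\rho_2^{N+3-\theta}\|\mathbb{M}_\theta[\mu]\|_{L^\infty(\Omega\times(-T,T))}
\end{equation*}
for $\phi(\rho):=\int_{Q_\rho(y_0,s)}|\nabla u|\,dxdt$ and $0<\rho_1\leq\rho_2\leq r_0/12$.

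The hypothesis $\theta>2-\beta_2$ ensures $N+3-\theta<N+1+\beta_2$, so the Campanato iteration \cite[Lemma~4.6, p.~54]{55Li3} applies and upgrades the estimate to $\phi(\rho)\leq C[(\rho/r_0)^{N+3-\theta}\phi(r_0/12)+\rho^{N+3-\theta}\|\mathbb{M}_\theta[\mu]\|_{L^\infty}]$ for $0<\rho\leq r_0/12$. The global bound $\phi(r_0/12)\leq\||\nabla u|\|_{L^1(\Omega\times(-T,T))}\leq CT_0|\mu|(\Omega\times(-T,T))\leq CT_0^{N+3-\theta}\|\mathbb{M}_\theta[\mu]\|_{L^\infty}$, obtained from Remark~\ref{5hh070420143} combined with the definition of $\mathbb{M}_\theta$ evaluated on a cylinder of radius $\sim T_0$ containing $\Omega\times(-T,T)$, then produces the prefactor $(T_0/r_0)^{N+3-\theta}+1$ after rearrangement; the remaining range $r_0/12<\rho\leq r_0$ is absorbed by monotonicity of $\phi$. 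The main technical point, and the only genuine departure from the interior proof, is the conversion of the $L^2$ Morrey decay with exponent $\beta_2-1$ in \eqref{5hhineq10'} into an $L^1$ decay with the sharp exponent $N+1+\beta_2$; this is where the enlarged-scale reverse-Hölder inequality \eqref{5hhineq8} from Theorem~\ref{5hh24093} is indispensable. A secondary, minor subtlety is checking that with $t_0=s$ the reference cylinder $Q_{6\rho_2}(y_0,s)$ lies in the admissible range for Theorem~\ref{5hh24093}, which is automatic once $u$ is extended by zero in time through \eqref{5hheq6}.
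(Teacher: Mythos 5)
Your proposal is correct and follows essentially the same strategy as the paper: construct the boundary comparison function $w$ from Theorem~\ref{5hh24093}, chain the $L^2$ decay \eqref{5hhineq10'} with the reverse H\"older inequality \eqref{5hhineq8} to get $L^1$ decay with exponent $N+1+\beta_2$, combine with the comparison \eqref{5hhineq9} and $\mathbb{M}_\theta$, feed the resulting iterative inequality into the Campanato lemma from Lieberman, and close with the global $L^1$ bound of Remark~\ref{5hh070420143}. The only cosmetic difference is that you re-center the cylinders at the boundary point $y_0$ (using $Q_\rho(y,s)\subset Q_{2\rho}(y_0,s)$), whereas the paper keeps the original center $y$ throughout and applies \eqref{5hhineq10'} and \eqref{5hhineq8} at $z=y$ with the reference domain $Q_{6R}=Q_{\rho_2/2}(y_0,s)$; since both estimates in Theorem~\ref{5hh24093} hold for arbitrary $z$ with $Q_{\rho_2}(z,s)\subset Q_{6R}$, either centering works and the two arguments are interchangeable.
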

         \begin{proof} Take $B_{\rho_2/4}(y)\cap \partial \Omega\not=\emptyset$ and $s\in (-T,T)$, $\rho_2\leq 2r_0$. Let $y_0\in B_{\rho_2/4}(y)\cap \partial \Omega$ be such that $|y-y_0|=dist(y,\partial\Omega)\leq \rho_2/4$. Thus $ Q_{\rho_2/4}(y,s)\subset Q_{\rho_2/2}(y_0,s)$
         For any $Q_{\rho_1}(y,s)\subset Q_{\rho_2}(y,s)$ with $\rho_1\leq\rho_2/4$, we take $w$ as in Theorem \ref{5hh24093} with $Q_{6R}=Q_{\rho_2/2}(y_0,s)$. Thus, 
         \begin{align*}
        & \int_{Q_{\rho_1}(y,s)}|\nabla w|dxdt\lesssim \left(\frac{\rho_1}{\rho_2}\right)^{N+\beta_1+1}\int_{Q_{\rho_2/4}(y,s)}|\nabla w|dxdt,\\&
         \int_{Q_{\rho_2/2}(y_0,s)}|\nabla u-\nabla w|dxdt\lesssim\rho_2 |\mu|(Q_{\rho_2/2}(y_0,s)).
         \end{align*}     
           As in the proof of Corollary \ref{5hh060120148}, we get the result. 
         \end{proof}\\
          \subsubsection{Reifenberg flat domain}
  In this subsection, we always assume that $A$ satisfies \eqref{5hhcondc}. Also, 
  we  assume that $\Omega$ is a $(\delta,R_0)$- Reifenberg flat domain with $0<\delta<1/2$ . Fix $x_0\in\partial \Omega$ and $0<R<R_0/6$. We have a density estimate 
  \begin{equation}
  |B_t(x)\cap (\mathbb{R}^N\backslash\Omega)|\geq c|B_t(x)|~ \forall x\in\partial\Omega, 0<t<R_0,
  \end{equation}
  with $c=((1-\delta)/2)^N\geq 4^{-N}$.\\
  In particular, $\mathbb{R}^N\backslash\Omega$ satisfies a uniformly $2-$thick condition  with constants $c,r_0=R_0$. \\
  Next we set $\rho=R(1-\delta)$ so that $0<\rho/(1-\delta)<R_0/6$. By the definition of Reifenberg flat domains,  there exists a coordinate system $\{y_1,y_2,...,y_N\}$ with the
  origin $0\in\Omega$ such that in this coordinate system $x_0=(0,...,0,-\rho\delta/(1-\delta))$ and 
  \begin{equation*}
  B^+_\rho(0)\subset \Omega\cap B_\rho(0)\subset B_\rho(0)\cap \{y=(y_1,y_2,....,y_N):y_N>-2\rho\delta/(1-\delta)\}.
  \end{equation*}
  Since $\delta<1/2$ we have 
   \begin{equation*}
    B^+_\rho(0)\subset \Omega\cap B_\rho(0)\subset B_\rho(0)\cap \{y=(y_1,y_2,....,y_N):y_N>-4\rho\delta\},
    \end{equation*}
    where $B^+_\rho(0):=B_\rho(0)\cap\{y=(y_1,y_2,...,y_N):y_N>0\}$.\\
    Furthermore we consider the unique solution
           \begin{equation}
            v\in C(t_0-\rho^2,t_0;L^2(\Omega\cap B_\rho(0)))\cap L^2(t_0-\rho^2,t_0;H^1(\Omega\cap B_\rho(0)))
            \end{equation}
            to the following equation
             \begin{equation}\label{5hh1610131}
             \left\{ \begin{array}{l}
                  {v_t} - \operatorname{div}\left( {\overline{A}_{B_{\rho}(0)}(t,\nabla v)} \right) = 0 \;in\;\tilde{\Omega}_\rho(0),\\ 
                  v = w\quad \quad on~~\partial_{p}\tilde{\Omega}_\rho(0), \\ 
                  \end{array} \right.
             \end{equation}
              where $\tilde{\Omega}_\rho(0)=\left(\Omega\cap B_{\rho}(0)\right)\times (t_0-\rho^2,t_0)$ ($-T<t_0<T$). \\
              We put $v=w$ outside $\tilde{\Omega}_\rho(0)$. As Lemma \ref{5hh21101319},  we have the following Lemma. 
              \begin{lemma}\label{5hh1610139} Let $\theta_2$ be the constant in Theorem \ref{5hh24093}. There holds \begin{eqnarray}\label{5hh1610132}
                             \left(\fint_{Q_{\rho}(0,t_0)}|\nabla w-\nabla v|^2\right)^{1/2}\lesssim [A]_{s_2}^{R} \fint_{Q_{\rho}(0,t_0)}|\nabla w| dxdt, 
                             \end{eqnarray}
                     with $s_2=\frac{2\theta_2}{\theta_2-2}$ and \begin{equation}\label{5hh1610133}
                              \int_{Q_{\rho}(0,t_0)}|\nabla w|^2dxdt\sim  \int_{Q_{\rho}(0,t_0)}|\nabla v|^2dxdt.
                              \end{equation}
                     \end{lemma}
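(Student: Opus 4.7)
The plan is to mirror the interior argument of Lemma \ref{5hh21101319} in the boundary setting. The structural input that makes the scheme go through is that, since $v = w$ on the parabolic boundary $\partial_p\tilde\Omega_\rho(0)$ (and $v$ is extended by $w$ outside $\tilde\Omega_\rho(0)$), the function $\varphi = w - v$ is admissible as a test function in both weak formulations \eqref{5hheq5} and \eqref{5hh1610131}, after a standard Steklov time-mollification. I would subtract the two weak identities tested against $\varphi$; the resulting time-derivative contribution equals $\frac{1}{2}\int_{B_\rho(0)\cap\Omega}(w-v)^2(t_0)\,dx\ge 0$ by the initial data $w=v$ at $t=t_0-\rho^2$, producing
\[
\int_{\tilde\Omega_\rho(0)}\bigl[A(x,t,\nabla w)-\overline{A}_{B_\rho(0)}(t,\nabla v)\bigr]\cdot\nabla(w-v)\,dxdt\le 0.
\]
Adding and subtracting $\overline{A}_{B_\rho(0)}(t,\nabla w)$, I would exploit the monotonicity \eqref{5hhcondb} on the left (which survives averaging in $x$, with the same constant $\Lambda_2$) and the definition of $\Theta(A,B_\rho(0))$ on the right to reach the Caccioppoli-type bound
\[
\Lambda_2\int_{\tilde\Omega_\rho(0)}|\nabla(w-v)|^2\,dxdt\le\int_{\tilde\Omega_\rho(0)}\Theta(A,B_\rho(0))(x,t)\,|\nabla w|\,|\nabla(w-v)|\,dxdt.
\]

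For the first estimate \eqref{5hh1610132}, I would apply H\"older with exponents $2,\;s_2=\frac{2\theta_2}{\theta_2-2},\;\theta_2$ on the right and cancel one factor of $\bigl(\fint|\nabla(w-v)|^2\bigr)^{1/2}$. The BMO average $\bigl(\fint_{Q_\rho(0,t_0)}\Theta(A,B_\rho(0))^{s_2}\bigr)^{1/s_2}$ is bounded by $[A]_{s_2}^R$ because $\rho\le R$ and the supremum defining $[A]_{s_2}^R$ ranges over all radii $\le R$. For the $L^{\theta_2}$ factor I would invoke the boundary higher-integrability estimate \eqref{5hhineq8} of Theorem \ref{5hh24093} at $(z,s)=(0,t_0)$ with inner radius $\rho$; the required inclusion $Q_{6\rho}(0,t_0)\subset Q_{6R}(x_0,t_0)$ follows from $\rho=R(1-\delta)$ and $|x_0|=\rho\delta/(1-\delta)$, giving $6\rho+|x_0|=(6-5\delta)R<6R$. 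Combining these two bounds delivers \eqref{5hh1610132}.

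For \eqref{5hh1610133} the BMO factor is not needed: the growth assumption \eqref{5hhconda} yields $\Theta(A,B_\rho(0))\le 2\Lambda_1$ pointwise, so the Caccioppoli-type inequality above combined with Cauchy--Schwarz forces $\bigl(\int_{Q_\rho(0,t_0)}|\nabla(w-v)|^2\bigr)^{1/2}\le 2\Lambda_1\Lambda_2^{-1}\bigl(\int_{Q_\rho(0,t_0)}|\nabla w|^2\bigr)^{1/2}$, after which the triangle inequality (and its symmetric counterpart) produce both comparisons in \eqref{5hh1610133} with $C_2$ depending only on $\Lambda_1,\Lambda_2$. The main technical obstacle is making the testing rigorous: neither $w$ nor $v$ has an $L^2$ time derivative in hand, so the Steklov-averaging or Landes-type time approximation already used elsewhere in the paper must be invoked before passing to the limit. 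The only secondary point requiring care is the geometric containment above, which is where the precise relationship $\rho=R(1-\delta)$ enters.
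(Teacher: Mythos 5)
Your scheme matches the paper's intended proof, which is simply ``As Lemma~\ref{5hh21101319}'': test against $\varphi=w-v$ (admissible since $v=w$ on $\partial_p\tilde\Omega_\rho(0)$, with Steklov regularisation of the time derivative), observe the non-negative boundary term $\tfrac12\int(w-v)^2(t_0)$, insert $\overline A_{B_\rho(0)}(t,\nabla w)$, use monotonicity (which indeed averages with constant $\Lambda_2$) to reach $\Lambda_2\int|\nabla(w-v)|^2\le\int\Theta(A,B_\rho(0))|\nabla w||\nabla(w-v)|$, then apply H\"older with exponents $s_2,\theta_2,2$ together with~\eqref{5hhineq8}. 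Your inclusion check $6\rho+|x_0|=(6-5\delta)R<6R$ is exactly the geometric point that must be verified. Two remarks. First, H\"older plus~\eqref{5hhineq8} produces $\fint_{Q_{6\rho}(0,t_0)}|\nabla w|$ on the right of~\eqref{5hh1610132}, not $\fint_{Q_{\rho}(0,t_0)}|\nabla w|$ as written in the lemma; this larger cylinder is in fact what the paper uses when it invokes this lemma (in the proof of Lemma~\ref{5hh16101310}), so your derivation is faithful to what is needed and the lemma statement is imprecise. Second, your route to~\eqref{5hh1610133} via the single bound $\|\nabla(w-v)\|_{L^2}\le 2\Lambda_1\Lambda_2^{-1}\|\nabla w\|_{L^2}$ plus the triangle inequality yields $\|\nabla v\|_{L^2}\lesssim\|\nabla w\|_{L^2}$ but not the converse: since $\Lambda_1\ge\Lambda_2$ the factor $2\Lambda_1/\Lambda_2\ge 2$, so the reverse triangle inequality cannot close the gap. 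You would also need the symmetric Caccioppoli bound $\Lambda_2\int|\nabla(w-v)|^2\le\int\Theta(A,B_\rho(0))|\nabla v||\nabla(w-v)|$ (obtained by inserting $A(x,t,\nabla v)$ rather than $\overline A_{B_\rho(0)}(t,\nabla w)$), or else follow the more direct route used in Lemma~\ref{5hh21101319}: rearrange the tested identity to $\int A(x,t,\nabla w)\nabla w+\int\overline A_{B_\rho}(t,\nabla v)\nabla v\le\int A(x,t,\nabla w)\nabla v+\int\overline A_{B_\rho}(t,\nabla v)\nabla w$, apply coercivity and growth, and absorb with Young. If your phrase ``its symmetric counterpart'' meant the former, the proof is complete; as written it is ambiguous on this point, and this is the one place where your argument as stated is not yet airtight.
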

                    We can see that if the boundary of $\Omega$ is bad enough, then the $L^\infty$-norm of $\nabla v$ up to $\partial\Omega\cap B_\rho(0)\times (t_0-\rho^2,t_0)$ could be unbounded. For our purpose, we will consider another  equation: 
            \begin{equation}\label{5hh1610134}
               \left\{ \begin{array}{l}
       {V_t} - \operatorname{div}\left( {\overline{A}_{B_{\rho}(0)}(t,\nabla V)} \right) = 0 ~~\text{in}~~Q_\rho^+(0,t_0),\\ 
                V = 0\quad \quad \text{on}~~ T_\rho(0,t_0), \\ 
                            \end{array} \right.
                                             \end{equation}
                                             where $Q_\rho^+(0,t_0)=B_\rho^+(0)\times (t_0-\rho^2,t_0)$ and  $T_\rho(0,t_0)=Q_\rho(0,t_0)\cap\{x_N=0\}$.\\
A weak solution $V$ of above problem is understood in the following sense: the zero extension of $V$  to $Q_\rho(0,t_0)$ is in  $ V\in C(t_0-\rho^2,t_0;L^2( B_\rho(0)))\cap L^2_{\text{loc}}(t_0-\rho^2,t_0;H^1( B_\rho(0)))$  and for every $\varphi\in C_c^1(Q_\rho^+(0,t_0))$ there holds
\begin{align*}
-\int_{Q_\rho^+(0,t_0)}V\varphi_tdxdt+\int_{Q_\rho^+(0,t_0)}\overline{A}_{B_{\rho}(0)}(t,\nabla V)\nabla \varphi dxdt=0. 
\end{align*}                                      
                                         We have the following gradient $L^\infty$ estimate up to the boundary for $V$.
                                         \begin{lemma}[see \cite{55Li1,55Li2}]\label{5hh2110139}
                                         For any weak solution $V\in C(t_0-\rho^2,t_0;L^2( B_\rho^+(0)))\cap L^2_{\text{loc}}(t_0-\rho^2,t_0;H^1( B_\rho^+(0)))$ of \eqref{5hh1610134}, we have 
$$
                                         ||\nabla V||_{L^\infty (Q_{\rho'/2}^+(0,t_0))}\lesssim \fint_{Q_{\rho'}^+(0,t_0)}|\nabla V|^2dxdt~~\forall~ 0<\rho'\leq \rho.
$$
                                        Moreover, $\nabla V$ is continuous up to $T_\rho(0,t_0)$.
                                         \end{lemma}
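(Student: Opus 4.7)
My plan is to deduce this Lemma from Lieberman's boundary regularity theory for parabolic equations \cite{55Li1,55Li2}. The two favorable features here are that the averaged nonlinearity $\overline{A}_{B_\rho(0)}(t,\zeta)$ depends only on $(t,\zeta)$ (no spatial dependence) and that the boundary portion $T_\rho(0,t_0) = \{x_N = 0\}$ is flat. These reduce the problem to a model boundary-value problem for a uniformly parabolic nonlinearity satisfying \eqref{5hhcondc}.

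I would proceed in four steps. First (boundary energy estimates), testing the equation in $Q_{\rho'}^+(0,t_0)$ against $V\eta^2$ with $\eta \in C_c^\infty(B_{\rho'}(0) \times (t_0 - \rho'^2, t_0])$ nonnegative and cut off away from the curved portion $\partial B_{\rho'}(0) \cap \{x_N>0\}$ but not on $T_\rho$ (which is allowed since $V\equiv 0$ on $T_\rho$), I obtain a boundary Caccioppoli inequality. Combined with the vanishing trace of $V$ on $T_\rho$, a Sobolev-Poincar\'e inequality, and Gehring's lemma, this yields a reverse H\"older estimate for $\nabla V$ up to $T_\rho$, hence self-improved integrability $|\nabla V| \in L^{p}_{\mathrm{loc}}(Q_\rho^+ \cup T_\rho)$ for some $p>2$.

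Second (tangential differentiation), because $\overline{A}$ has no $x$-dependence, differentiating the equation formally in the tangential directions $\partial_{x_i}$ for $i=1,\dots,N-1$ produces, for $w_i := \partial_{x_i} V$, a linear parabolic equation $(w_i)_t - \operatorname{div}(\overline{A}_\zeta(t,\nabla V)\nabla w_i) = 0$, with coefficients bounded and uniformly elliptic by \eqref{5hhcondc}. Crucially, tangential differentiation preserves the boundary condition: $w_i = 0$ on $T_\rho$. Applying the De Giorgi--Nash--Moser theorem up to the flat boundary, one obtains interior and boundary H\"older continuity of the tangential derivatives $w_1,\dots,w_{N-1}$ up to $T_\rho$. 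Third (normal derivative), I recover $\partial_{x_N} V$ by inverting the equation: from the weak formulation and the strict monotonicity of $\zeta \mapsto \overline{A}(t,\zeta)$ (ellipticity in \eqref{5hhcondc}), one solves for $\partial_{x_N} V$ in terms of $\partial_t V$ and the tangential derivatives, thereby transferring H\"older regularity to the full gradient $\nabla V$ up to $T_\rho$. A Campanato characterization then gives the $L^\infty$ bound by the $L^2$ mean, with constant independent of $\rho'$ by a scaling argument (rescaling $V$ to $\rho'^{-1}V(\rho' x, \rho'^2 t)$, which satisfies the same type of equation because $\overline{A}$ is independent of $x$).

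The main obstacle is Step~3: tangential derivatives are controlled by classical linear theory, but extracting H\"older regularity of the normal derivative $\partial_{x_N} V$ from the equation requires handling the nonlinear coupling between $\nabla V$ and the coefficient $\overline{A}_\zeta(t,\nabla V)$, where the latter is merely continuous (not Lipschitz) in $\nabla V$. Lieberman's trick is to bootstrap: start with $\nabla V \in L^p$ with $p>2$ from Step~1, upgrade tangential derivatives to H\"older via Step~2, then show that the inverted equation for $\partial_{x_N} V$ yields a modulus of continuity matching (up to the H\"older exponent from Step~2) that of the tangential derivatives, and iterate. A complementary difficulty is to absorb the fact that $\overline{A}$ need not be smoother than continuous in $\zeta$; one handles this via a freezing-coefficients and comparison argument with the constant-coefficient problem where explicit gradient bounds are available. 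The final continuity of $\nabla V$ up to $T_\rho(0,t_0)$ is a direct consequence of the H\"older estimate.
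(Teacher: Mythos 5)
The paper does not prove this lemma itself; it delegates it entirely to Lieberman's boundary-regularity papers \cite{55Li1,55Li2}, and your sketch is a faithful reconstruction of exactly the argument those references use for the flat-boundary, $x$-independent-coefficient model problem. Two small precisions are worth recording. First, under \eqref{5hhcondc} one has $|A_\zeta(x,t,\zeta)|\leq\Lambda_1$, so $\overline{A}(t,\cdot)$ is in fact Lipschitz in $\zeta$; what is genuinely unassumed is any modulus of continuity of $A_\zeta$ in $\zeta$, so the linearized coefficient matrix $\overline{A}_\zeta(t,\nabla V)$ is only bounded, measurable and uniformly elliptic — which is precisely the minimal input for the De Giorgi--Nash--Moser step in your Step~2, so no bootstrap in coefficient regularity is needed there. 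Second, the quantitative $L^\infty$--$L^2$ bound follows directly from Moser iteration (up to the flat boundary, using the vanishing Dirichlet data for $w_i=\partial_{x_i}V$, $i<N$) for the tangential derivatives, and your Step~3 correctly identifies the recovery of $\partial_{x_N}V$ as the real subtlety: one exploits the strict monotonicity of $\zeta_N\mapsto\overline{A}^N(t,\zeta',\zeta_N)$ to invert the $N$-th flux component for $\partial_{x_N}V$ once the tangential derivatives and $V_t$ are under control, and this together with the scaling $V\mapsto(\rho')^{-1}V(\rho'\cdot,t_0+(\rho')^2(\cdot-t_0))$ (which keeps the structure conditions invariant since $\overline{A}$ is $x$-independent) gives the $\rho'$-uniform constant $C=C(N,\Lambda_1,\Lambda_2)$.
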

                                         \begin{lemma}\label{5hh21101312}
                                         If  $V\in C(t_0-\rho^2,t_0;L^2( B_\rho^+(0)))\cap L^2(t_0-\rho^2,t_0;H^1( B_\rho^+(0)))$ is a weak solution of  \eqref{5hh1610134}, then its zero extension from $Q_{\rho}^+(0,t_0)$ to $Q_{\rho}(0,t_0)$ solves
                                         \begin{equation}
                                         {V_t} - \operatorname{div}\left( {\overline{A}_{B_{\rho}(0)}(t,\nabla V)} \right) = \frac{\partial F}{\partial x_N},
                                         \end{equation}
                                         weakly in $Q_{\rho}(0,t_0)$. Here,
                                         $\overline{A}_{B_{\rho}(0)}=(\overline{A}_{B_{\rho}(0)}^1,...,\overline{A}_{B_{\rho}(0)}^N)$ and $F(x,t)=\chi_{x_N<0}\overline{A}_{B_{\rho}(0)}^N(t, \nabla V(x',0,t))$  for $(x,t)=(x',x_N,t)\in Q_{\rho}(0,t_0)$.
                                         \end{lemma}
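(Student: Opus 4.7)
The plan is to test the weak form of \eqref{5hh1610134} against a suitable cutoff of an arbitrary $\varphi\in C_c^1(Q_\rho(0,t_0))$, then pass to the limit and identify the emerging boundary term as $-\int F\,\partial_{x_N}\varphi$. First I would observe that, since the growth assumption \eqref{5hhconda} forces $A(x,t,0)=0$ and hence $\overline{A}_{B_\rho(0)}(t,0)=0$, the zero extension $\tilde V$ satisfies $\nabla \tilde V=0$ and $\overline{A}_{B_\rho(0)}(t,\nabla \tilde V)=0$ on $Q_\rho^-(0,t_0):=B_\rho^-(0)\times(t_0-\rho^2,t_0)$. So the task reduces to showing, for every $\varphi\in C_c^1(Q_\rho(0,t_0))$,
\begin{equation*}
-\int_{Q_\rho^+(0,t_0)} V\,\varphi_t\,dxdt+\int_{Q_\rho^+(0,t_0)}\overline{A}_{B_\rho(0)}(t,\nabla V)\cdot\nabla\varphi\,dxdt=-\int_{Q_\rho(0,t_0)}F\,\partial_{x_N}\varphi\,dxdt.
\end{equation*}

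Next, for $\varepsilon>0$ I would introduce $\eta_\varepsilon(x_N):=\min\{1,\max\{0,x_N/\varepsilon\}\}$, which is Lipschitz in $x_N$, vanishes for $x_N\le 0$, equals $1$ for $x_N\ge\varepsilon$, and satisfies $\eta_\varepsilon'=\varepsilon^{-1}\chi_{[0,\varepsilon]}$. Then $\varphi\eta_\varepsilon$ has compact support in $Q_\rho^+(0,t_0)$ and is admissible in the weak formulation of \eqref{5hh1610134}; since $\eta_\varepsilon$ is $t$-independent,
\begin{equation*}
-\int_{Q_\rho^+(0,t_0)} V\varphi_t\eta_\varepsilon\,dxdt+\int_{Q_\rho^+(0,t_0)}\overline{A}_{B_\rho(0)}(t,\nabla V)\cdot(\eta_\varepsilon\nabla\varphi+\varphi\,\eta_\varepsilon'\,e_N)\,dxdt=0.
\end{equation*}
Dominated convergence handles the first two terms as $\varepsilon\to 0$, using $|\overline{A}_{B_\rho(0)}(t,\nabla V)|\le\Lambda_1|\nabla V|\in L^2$. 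For the crucial third term,
\begin{equation*}
\int_{Q_\rho^+(0,t_0)}\overline{A}^N_{B_\rho(0)}(t,\nabla V)\varphi\,\eta_\varepsilon'(x_N)\,dxdt=\frac{1}{\varepsilon}\int_{T_\rho(0,t_0)\times(0,\varepsilon)}\overline{A}^N_{B_\rho(0)}(t,\nabla V)\varphi\,dxdt,
\end{equation*}
and here Lemma \ref{5hh2110139}, which ensures $\nabla V$ is continuous up to $T_\rho(0,t_0)$, lets me pass to the limit and get the trace
\begin{equation*}
\int_{T_\rho(0,t_0)}\overline{A}^N_{B_\rho(0)}(t,\nabla V(x',0,t))\varphi(x',0,t)\,dx'dt.
\end{equation*}

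Finally, I would rewrite the right-hand side of the target identity by integrating $\partial_{x_N}\varphi$ in $x_N\in(-\sqrt{\rho^2-|x'|^2},0)$: since $\varphi$ vanishes on $\partial B_\rho(0)$,
\begin{equation*}
\int_{Q_\rho(0,t_0)}F\,\partial_{x_N}\varphi\,dxdt=\int_{T_\rho(0,t_0)}\overline{A}^N_{B_\rho(0)}(t,\nabla V(x',0,t))\Bigl(\varphi(x',0,t)-0\Bigr)dx'dt,
\end{equation*}
matching the boundary term produced above and completing the identification. The only delicate point is the justification of the pointwise limit of the $\eta_\varepsilon'$ integral; this is precisely where the $C^1$-up-to-boundary regularity of $V$ from Lemma \ref{5hh2110139} is essential, because without it the trace $\overline{A}^N_{B_\rho(0)}(t,\nabla V(x',0,t))$ need not make classical sense and one would be forced to work with a weaker distributional trace, obscuring the simple formula for $F$.
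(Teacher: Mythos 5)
Your proposal is correct and follows essentially the same route as the paper: you test the weak formulation against $\varphi$ multiplied by a one-dimensional cutoff in $x_N$, pass to the limit, and the surviving term is the boundary trace of $\overline{A}^N_{B_\rho(0)}(t,\nabla V)$ on $T_\rho(0,t_0)$, which you then identify with $-\int F\,\partial_{x_N}\varphi$ by integrating in $x_N$ over the lower half. The paper uses a smooth cutoff $g(nx_N)$ (so that the test functions $\varphi_n=g(nx_N)\varphi$ lie in $C_c^\infty(Q_\rho^+(0,t_0))$, hence are admissible without comment) and packages the delta-like term as $\int_0^\rho G(x_N)\,g'(nx_N)n\,dx_N\to G(0)$, with $G$ continuous by the up-to-boundary regularity of $\nabla V$; you use the Lipschitz ramp $\eta_\varepsilon$ and invoke Lemma \ref{5hh2110139} explicitly for the same trace, which is a minor variant (one should note that $\varphi\eta_\varepsilon$ is only Lipschitz, not $C^1$, so strictly speaking a small mollification or density argument is needed to make it an admissible test function — the paper's smooth $g$ avoids this, but the gap is routine). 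You also correctly observe at the outset that \eqref{5hhconda} forces $\overline{A}_{B_\rho(0)}(t,0)=0$, so the zero extension contributes nothing on $Q_\rho^-(0,t_0)$, a point the paper leaves implicit.
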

                                         \begin{proof}
Let $g\in C^\infty(\mathbb{R})$ with $g=0$ on $(-\infty,1/2)$ and $g=1$ on $(1,\infty)$. Then, for any $\varphi\in C^\infty_c(Q_\rho(0,t_0))$ and $n\in\mathbb{N}$, we have $\varphi_n(x,t)=\varphi_n(x',x_N,t)=g(nx_N)\varphi(x,t)\in C^\infty_c(Q^+_\rho(0,t_0)$. Thus, we get 
\begin{equation*}
\int_{Q^+_\rho(0,t_0)}V_t\varphi_n dxdt +\int_{Q^+_\rho(0,t_0)}\overline{A}_{B_\rho(0)}(t,\nabla V)\nabla \left(g(nx_N)\varphi(x,t)\right) dxdt=0,
\end{equation*}      
which implies  
$$
\int_{Q^+_\rho(0,t_0)}V_t\varphi_n dxdt +\int_{Q^+_\rho(0,t_0)}\overline{A}_{B_\rho(0)}(t,\nabla V)\nabla\varphi(x,t) g(nx_N)dxdt
=- \int_{0}^{\rho}G(z)g'(nz)ndz.$$
Here $$G(z)=\int_{t_0-\rho^2}^{t_0}\int_{|x'|<\sqrt{\rho^2-z^2}}\overline{A}^N_{B_\rho(0)}(t,\nabla V)\varphi(x',z,t)dx'dt\in C([0,\infty)). $$       
Letting $n\to \infty$ we get  
$$
\int_{Q^+_\rho(0,t_0)}V_t\varphi dxdt +\int_{Q^+_\rho(0,t_0)}\overline{A}_{B_\rho(0)}(t,\nabla V)\nabla\varphi(x,t) dxdt=-\int_{Q_\rho(0,t_0)}F\frac{\partial \varphi}{\partial x_N}dxdt.$$
Since $\nabla V=0,V=0$ outside $Q^+_\rho$, therefore we get  the result.     
                                         \end{proof}\\ \\
                                         We now consider a scaled version of equation \eqref{5hh1610131}
                               \begin{equation}\label{5hh1610135}
                              \left\{ \begin{array}{l}
                       {v_t} - \operatorname{div}\left( {\overline{A}_{B_{1}(0)}(t,\nabla v)} \right) = 0 ~~\text{ in }~\tilde{\Omega}_1(0),\\ 
                      v = 0~~\text{ on }~\partial_{p}\tilde{\Omega}_1(0)\backslash \left(\Omega\times (-T,T)\right), \\ 
                        \end{array} \right.
                                   \end{equation}
                                                      under assumption 
                                                      \begin{equation}\label{5hh1610136}
                                                          B^+_1\subset \Omega\cap B_1\subset B_1\cap \{x_N>-4\delta\}
                                                          \end{equation}
                                                      with $B_\rho=B_\rho(0).$
    \begin{lemma}\label{5hh2110138}
    For any $\varepsilon>0$ there exists a small $\delta=\delta(N,\Lambda_1,\Lambda_2,\varepsilon)>0$ such that if $v\in C(t_0-1,t_0;L^2(\Omega\cap B_1))\cap L^2(t_0-1,t_0;H^1(\Omega\cap B_1))$ is a solution of \eqref{5hh1610135} and \eqref{5hh1610136} is satisfied and 
    \begin{equation}
    \fint_{Q_1(0,t_0)}|\nabla v|^2dxdt\leq 1,
    \end{equation}
    then there exists a weak solution $V\in C(t_0-1,t_0;L^2( B_1^+))\cap L^2(t_0-1,t_0;H^1( B_1^+))$ of  \eqref{5hh1610134} with $\rho=1$, whose zero extension to $Q_1(0,t_0)$ satisfies 
    \begin{equation}
       \fint_{Q_1(0,t_0)}|v-V|^2dxdt\leq \varepsilon^2,
        \end{equation}
    \end{lemma}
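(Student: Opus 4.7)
My plan is to prove the lemma by a contradiction/compactness argument, which is the standard route for approximation lemmas of this kind in the Caffarelli–Peral / Byun–Wang framework.

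Suppose the conclusion fails for some $\varepsilon_0>0$. Then for each $k\in\mathbb{N}$ I can find $\delta_k\to 0$, a nonlinearity $A_k$ satisfying \eqref{5hhcondc}, a domain $\Omega_k$ satisfying \eqref{5hh1610136} with $\delta=\delta_k$, and a solution $v_k$ of the scaled problem \eqref{5hh1610135} with $\fint_{Q_1(0,t_0)}|\nabla v_k|^2\,dxdt\le 1$, such that for every weak solution $V$ of \eqref{5hh1610134} (with $\rho=1$ and coefficient $\overline{A}^{(k)}_{B_1(0)}$) we have $\fint_{Q_1(0,t_0)}|v_k-V|^2\,dxdt>\varepsilon_0^2$. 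Extending $v_k$ by zero to $Q_1(0,t_0)\setminus \tilde\Omega_1(0)$ (which is permitted since $B_1(0)\cap\{x_N<-4\delta_k\}\subset \mathbb{R}^N\setminus\Omega_k$), I would first use Caccioppoli-type energy estimates together with Poincaré (available because $v_k$ vanishes on a nontrivial portion of the parabolic boundary) to obtain uniform bounds for $\{v_k\}$ in $L^\infty(t_0-1,t_0;L^2(B_1(0)))\cap L^2(t_0-1,t_0;H^1(B_1(0)))$, and for $\{(v_k)_t\}$ in $L^2(t_0-1,t_0;H^{-1}(B_1(0)))$ directly from the equation. Aubin–Lions then yields a subsequence with $v_k\to v_\infty$ strongly in $L^2(Q_1(0,t_0))$, weakly in $L^2(t_0-1,t_0;H^1(B_1(0)))$.

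Next, I would extract a limit coefficient. Because $\overline{A}^{(k)}_{B_1(0)}(t,\zeta)$ is Lipschitz in $\zeta$ with constant $\Lambda_1$ uniformly in $k$, and measurable and bounded by $\Lambda_1|\zeta|$ in $t$, a diagonal argument over a countable dense subset of $\mathbb{R}^N$ produces a subsequence such that $\overline{A}^{(k)}_{B_1(0)}(t,\zeta)\to \tilde A(t,\zeta)$ a.e.\ $t$ and all $\zeta$, with $\tilde A$ still obeying \eqref{5hhcondc}. Passing to the limit in the weak formulation of \eqref{5hh1610135} — the delicate point being the nonlinear term $\overline{A}^{(k)}_{B_1(0)}(t,\nabla v_k)$ — is handled by Minty's monotonicity trick: the uniform monotonicity of $\overline{A}^{(k)}_{B_1(0)}$ combined with strong $L^2$ convergence of $v_k$ and the equation upgrades the weak convergence of $\nabla v_k$ to a.e.\ convergence, which in turn identifies the limit as $\tilde A(t,\nabla v_\infty)$. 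Moreover, since $v_k\equiv 0$ on $B_1(0)\cap\{x_N<-4\delta_k\}\times(t_0-1,t_0)$ and $\delta_k\to 0$, one concludes $v_\infty\equiv 0$ on $B_1^-(0)\times(t_0-1,t_0)$; combined with $v_\infty\in L^2(H^1(B_1(0)))$, this forces the trace of $v_\infty$ on $T_1(0,t_0)$ to vanish. Therefore $V_\infty:=v_\infty|_{Q_1^+(0,t_0)}$ is a weak solution of a half-space problem of type \eqref{5hh1610134} with coefficient $\tilde A$.

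To close the contradiction I still need, for each $k$, a companion solution $V_k$ of \eqref{5hh1610134} with the \emph{correct} coefficient $\overline{A}^{(k)}_{B_1(0)}$ that is close to $v_k$. I would let $V_k\in C([t_0-1,t_0];L^2(B_1^+(0)))\cap L^2(t_0-1,t_0;H^1(B_1^+(0)))$ be the unique weak solution of \eqref{5hh1610134} with $\rho=1$ and coefficient $\overline{A}^{(k)}_{B_1(0)}$, prescribing $V_k=v_\infty$ on $\partial_p Q_1^+(0,t_0)\setminus T_1(0,t_0)$ and $V_k=0$ on $T_1(0,t_0)$ (since $v_\infty$ has zero trace there, these data are compatible and $V_k$ exists and is unique by standard monotone-operator theory). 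Repeating the compactness/Minty argument for the sequence $\{V_k\}$ — which is even simpler because the boundary data are now fixed and independent of $k$ — shows $V_k\to V_\infty=v_\infty|_{Q_1^+(0,t_0)}$ strongly in $L^2(Q_1^+(0,t_0))$. Extending $V_k$ by zero to $Q_1^-(0,t_0)$, and combining with $v_k\to v_\infty$ strongly in $L^2(Q_1(0,t_0))$ and $v_\infty=0$ on $Q_1^-(0,t_0)$, I obtain $\fint_{Q_1(0,t_0)}|v_k-V_k|^2\,dxdt\to 0$, contradicting the standing hypothesis $\fint_{Q_1(0,t_0)}|v_k-V_k|^2\,dxdt>\varepsilon_0^2$.

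The main obstacle is the nonlinear passage to the limit in the coefficient, i.e.\ establishing $\overline{A}^{(k)}_{B_1(0)}(t,\nabla v_k)\rightharpoonup \tilde A(t,\nabla v_\infty)$ in a sense strong enough to identify the limit equation. The monotonicity half of \eqref{5hhcondc} is what makes this step work via Minty (and also yields the strong convergence of $\nabla v_k$ on compact subsets needed to match up $v_k$ with $V_k$); without it the argument collapses. A secondary technical point — which should be routine but deserves care — is the justification that the zero extensions of $v_k$ behave correctly across $T_1(0,t_0)$ as $\delta_k\to 0$, ensuring the trace of $v_\infty$ on $\{x_N=0\}$ really vanishes rather than merely being controlled.
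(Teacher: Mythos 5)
Your overall architecture -- contradiction, Aubin--Lions compactness, zero extension, identification of the half-space limit -- is the right one and matches the paper's, but there is a genuine gap at the central step. You claim that a diagonal argument over a countable dense set of $\zeta$ yields a subsequence with $\overline{A}^{(k)}_{B_1(0)}(t,\zeta)\to\tilde A(t,\zeta)$ \emph{for a.e.\ $t$} and all $\zeta$. For a fixed $\zeta$ the functions $\overline{A}^{(k)}_{B_1(0)}(\cdot,\zeta)$ are only uniformly bounded in $L^\infty(t_0-1,t_0)$, which gives weak-$*$ compactness, not pointwise a.e.\ compactness; the coefficients $A_k(x,t,\zeta)=(1+\tfrac12\sin(kt))\zeta$ satisfy \eqref{5hhcondc} with fixed $\Lambda_1,\Lambda_2$, equal their own $B_1(0)$-average, and have no pointwise a.e.\ convergent subsequence. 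With only weak-$*$ convergence of the coefficients the Minty scheme you sketch collapses at the cross term $\int \overline{A}_k(t,\nabla\phi)\cdot\nabla v_k\,dxdt$, where both factors converge only weakly in $L^2$, so the product does not pass to the limit; and consequently neither the identification of the limit equation for $v_\infty$ nor the claimed a.e.\ convergence of $\nabla v_k$ is justified. The subsequent step ``repeating the compactness/Minty argument shows $V_k\to v_\infty|_{Q_1^+}$'' is precisely the hard content of the lemma, not a routine repetition, and it inherits the same gap.

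The paper circumvents this by never attempting to realize the limit nonlinearity as a pointwise function $\tilde A(t,\zeta)$. Instead it regards $\mathcal{J}_k(\phi)=\overline A_{k,B_1(0)}(t,\nabla\phi)$ as $\Lambda_1$-Lipschitz maps from a set $X$ to $L^2(Q_1^+(0,t_0),\mathbb{R}^N)$, where $X$ is the closure of convex combinations of $\{v_k\}\cup\{V_k\}\cup\{0\}$; by Mazur's theorem $X$ is compact in $L^2(t_0-1,t_0;H^1(B_1(0)))$, so Ascoli--Arzel\`a furnishes a subsequence with $\mathcal{J}_k\to\mathcal{J}$ \emph{uniformly} on $X$. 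This gives, for each fixed $g\in X$, \emph{strong} $L^2$-convergence $\overline A_k(t,\nabla g)\to\mathcal{J}(g)$, which is exactly what makes the cross terms in the Minty inequalities converge against the weakly convergent $\nabla v_k$ and $\nabla V_k$; the abstract limit $\mathcal{J}$ inherits the monotonicity of \eqref{5hhcondc}, and the identification $v_0=V_0$ then follows from two one-sided inequalities plus the energy sign of the time term. Without this compactness device (or some comparable compensated-compactness mechanism), the proposal cannot be completed as written.
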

    \begin{proof}
    We argue by contradiction. Suppose that the conclusion is false. Then, there exist a constant $\varepsilon_0>0$, $t_0\in \mathbb{R}$ and a sequence of nonlinearities $\{A_k\}$ satisfying  \eqref{5hhconda} and \eqref{5hhcondc}, a sequence of domains $\{\Omega^k\}$, and a sequence of functions $\{v_k\}\subset C(t_0-1,t_0;L^2(\Omega^k\cap B_1))\cap L^2(t_0-1,t_0;H^1(\Omega^k\cap B_1))$ such that 
    \begin{equation}\label{5hh2110131}
              B^+_1\subset \Omega^k\cap B_1\subset B_1\cap \{x_N>-1/2 k\},
                                                              \end{equation}
\begin{equation}\label{5hh2110132}
                       \left\{ \begin{array}{l}
                   {(v_k)_t} - \operatorname{div}\left( {\overline{A}_{k,B_{1}}(t,\nabla v_k)} \right) = 0 ~~\text{ in }~\tilde{\Omega}^k_1(0),\\ 
              v_k = 0~~\text{ on }~~(\partial_{p}\tilde{\Omega}^k_1(0))\backslash (\Omega^k\times(-T,T)), \\ 
                      \end{array} \right.
                                                      \end{equation}
and the zero extension of each $v_k$ to $Q_1(0,t_0)$ satisfies   
\begin{align}\label{5hh2110134}
         &\fint_{Q_1(0,t_0)}|\nabla v_k|^2dxdt\leq 1~\text{ but }\\&
         \label{5hh2110135}
                    \fint_{Q_1(0,t_0)}|v_k-V_k|^2dxdt\geq \varepsilon_0^2,\end{align}                             
for any weak solution $V_k$ of 
 \begin{equation}\label{5hh2110133}
                                             \left\{ \begin{array}{l}
                                                  {(V_k)_t} - \operatorname{div}\left( {\overline{A}_{k,B_{1}}(t,\nabla V_k)} \right) = 0, ~\text{in }~Q_1^+(0,t_0),\\ 
                                                  V_k = 0~~\text{on}~~T_1(0,t_0). \\ 
                                                  \end{array} \right.
                                             \end{equation} 
By \eqref{5hh2110131} and \eqref{5hh2110134} and Poincare's inequality  it follows that 
\begin{align*}
	&||v_k||_{L^2(t_0-1,t_0;H^1(B_1))}\lesssim||\nabla v_k||_{L^2(Q_1(0,t_0)}\lesssim 1,\\&
	||(v_k)_t ||_{L^2(t_0-1,t_0;H^{-1}(B_1))}\lesssim  \int_{Q_{1}(0,t_0)}|\nabla v_k|^2dxdt\lesssim 1.
\end{align*}
Therefore, using Aubin$-$Lions Lemma, one can find $v_0$ and a subsequence of $\{v_k\}$, still denoted by $\{v_k\}$ such that 
\begin{equation*}
v_k\to v_0 \text{ weakly in }  L^2(t_0-1,t_0,H^1(B_1)) ~\text{ and strongly in }  L^2(t_0-1,t_0,L^2(B_1)),
\end{equation*}    
and
\begin{equation*}
(v_k)_t\to (v_0)_t~~\text{ weakly in }~ L^2(t_0-1,t_0,H^{-1}(B_1)). 
\end{equation*}
Moreover, $v_0=0$ in $Q^-_1(0,t_0):=(B_1\cap\{x_N<0\})\times (1-t_0,1)$ since $v_k=0$ on outside $\Omega^k\cap Q_1(0,t_0)$ for all $k$. \\
 To get a contradiction we take $V_k$ to be the unique solution of $ {(V_k)_t} - \operatorname{div}\left( {\overline{A}_{k,B_{1}}(t,\nabla V_k)} \right) = 0$ in $Q_1^+(0,t_0)$ and $V_k-v_0\in L^2(t_0-1,t_0,H^1_0(B^+_1))$ and $V_k(t_0-1)=v_0(t_0-1)$. As above, one can find $V_0$ and a subsequence of $\{V_k\}$, still denoted by $\{V_k\}$ such that 
    \begin{equation*}
    V_k\to V_0 \text{ weakly in }  L^2(t_0-1,t_0,H^1(B_1))~\text{ and strongly in }  L^2(t_0-1,t_0,L^2(B_1)),
    \end{equation*}    
    and
    \begin{equation*}
    (V_k)_t\to (V_0)_t~~\text{ weakly in }~ L^2(t_0-1,t_0,H^{-1}(B_1)), 
    \end{equation*}
    for some $V_0\in v_0 +L^2(t_0-1,t_0,H^1_0(B^+_1)$ and $V_0(t_0-1)=v_0(t_0-1)$.\\
   Thanks to \eqref{5hh2110135}, the proof would be complete if we could show that  $v_0=V_0$. In fact,\\
    Let $\mathcal{J}_k: X\to L^2(Q_1^+(0,t_0),\mathbb{R}^N)$ determine
    by $$
    \mathcal{J}_k(\phi(x,t))={\overline{A}_{k,B_{1}}(t,\nabla\phi(x,t))} ~\text{ for any }~\phi\in X,$$
    where $X\subset L^2(t_0-1,t_0,H^1(B_1) )$ is closures (in the strong topology of $L^2(t_0-1,t_0,H^1(B_1))$) of convex combinations of $\{v_k\}_{k\geq 1}\cup\{V_k\}_{k\geq 1}\cup\{0\}$.\\ Since $v_k,V_k$ converge weakly to $v_0,V_0$  in $ L^2(t_0-1,t_0,H^1(B_1)) $ resp.,  thus by  Mazur Theorem,  $X$ is compact subset of  $L^2(t_0-1,t_0,H^1(B_1))$ and $v_0,V_0\in X$. \\Thanks to \eqref{5hhconda} and \eqref{5hhcondc}, we get $\mathcal{J}_k(0)=0$ and
    \begin{align*}
    ||\mathcal{J}_k(\phi_1)-\mathcal{J}_k(\phi_2)||_{L^2(Q_1^+(0,t_0),\mathbb{R}^N)}\leq \Lambda_1 ||\phi_1-\phi_2||_{L^2(t_0-1,t_0,H^1(B_1))},
    \end{align*}
    for every  $\phi_1,\phi_2\in X$ and $k\in\mathbb{N}$.
    Thus, by Ascoli Theorem, there exist $\mathcal{J}\in C(X,L^2(Q_1^+(0,t_0),\mathbb{R}^N) )$ and a subsequence of $\{\mathcal{J}_k\}$, still denoted by it, such that 
    \begin{align}\label{5hh310320141}
    \sup_{\phi\in X}||\mathcal{J}_k(\phi)-\mathcal{J}(\phi)||_{L^2(Q_1^+(0,t_0),\mathbb{R}^N)}\to 0~~\text{ as }~ k\to\infty,
    \end{align}
    and also for any $\phi_1,\phi_2\in X$, 
    \begin{align}\label{5hh310320144}
    \int_{Q_1^+(0,t_0)}\left(\mathcal{J}(\phi_1)-\mathcal{J}(\phi_2)\right).\left(\nabla \phi_1-\nabla\phi_2\right)dxdt\geq \Lambda_2 |||\nabla \phi_1-\nabla\phi_2|||_{L^2(Q_1^+(0,t_0))}.
    \end{align}  
    From  \eqref{5hh2110132}, we deduce
    \begin{align*}
    &\int_{Q^+_1(0,t_0)}(v_k-V_k)_t (v_0-V_0) dxdt\\&~~~~~+\int_{Q^+_1(0,t_0)}\left(\overline{A}_{k,B_1}(t,\nabla v_k)-\overline{A}_{k,B_1}(t,\nabla V_k)\right).\nabla (v_0-V_0) dxdt =0.
    \end{align*}
    We have 
    \begin{align*}
    &\int_{Q^+_1(0,t_0)}|\overline{A}_{k,B_1}(\nabla v_k)|^2dxdt\lesssim \int_{Q^+_1(0,t_0)}|\nabla v_k|^2dxdt\lesssim 1,\\&
    \int_{Q^+_1(0,t_0)}|\overline{A}_{k,B_1}(\nabla V_k)|^2dxdt\lesssim \int_{Q^+_1(0,t_0)}|\nabla V_k|^2dxdt\lesssim 1.
    \end{align*}
    for every $k$.    
    \\
 Thus there exist  a subsequence, still denoted by    $\{\overline{A}_{k,B_1}(t,\nabla v_k),\overline{A}_{k,B_1}(t,\nabla V_k)\}$ and a vector field $A_1,A_2$ belonging to $L^2(Q^+_1(0,t_0),\mathbb{R}^N)$ such that 
    \begin{equation*}
    \overline{A}_{k,B_1}(t,\nabla v_k)\to A_1 ~\text{ and }~  \overline{A}_{k,B_1}(t,\nabla V_k)\to A_2,
    \end{equation*}
    weakly in $L^2(Q^+_1(0,t_0),\mathbb{R}^N)$.
    It follows
     \begin{align*}
        \int_{Q^+_1(0,t_0)}(v_0-V_0)_t (v_0-V_0) dxdt+\int_{Q^+_1(0,t_0)}(A_1-A_2).\nabla (v_0-V_0) dxdt =0.
        \end{align*}
        Since
        \begin{align*}
        \int_{Q^+_1(0,t_0)}(v_0-V_0)_t (v_0-V_0) dxdt= \int_{B_1^+(0)}(v_0-V_0)^2(t_0)dx\geq 0,
        \end{align*}
        we get 
        \begin{align}\label{5hh310320145}
        \int_{Q^+_1(0,t_0)}(A_1-A_2).\nabla (v_0-V_0) dxdt\leq 0.
        \end{align}
        For our purpose, we need to show that 
        \begin{align}
        \label{5hh310320142}
        &\int_{Q^+_1(0,t_0)}(A_1-\mathcal{J}(v_0)).\nabla (v_0-V_0) dxdt\geq 0,\\&
        \label{5hh310320143}
                        \int_{Q^+_1(0,t_0)}(A_2-\mathcal{J}(V_0)).\nabla (V_0-v_0) dxdt\geq 0. 
        \end{align}        
 To do this, we fix a function $g\in X$ and any $\varphi\in C^1_c(Q^+_1(0,t_0))$ such that $\varphi\geq 0$. We have
 \begin{align*}
 0&\leq \int_{Q^+_1(0,t_0)}\varphi \left(\overline{A}_{k,B_1}(t,\nabla v_k)-\overline{A}_{k,B_1}(t,\nabla g)\right)\left(\nabla  v_k-\nabla g\right)dxdt\\&= \int_{Q^+_1(0,t_0)}\varphi \overline{A}_{k,B_1}(t,\nabla v_k)\nabla  v_k dxdt -\int_{Q^+_1(0,t_0)}\varphi \overline{A}_{k,B_1}(t,\nabla v_k)\nabla  g dxdt\\&- \int_{Q^+_1(0,t_0)}\varphi \overline{A}_{k,B_1}(t,\nabla g)\left(\nabla  v_k-\nabla g\right) dxdt
  \\&:=L_1+L_2+L_3.
 \end{align*}   
 It is easy to see that 
 \begin{equation*}
 \lim_{k\to\infty} L_2=  -\int_{Q^+_1(0,t_0)}\varphi A_1\nabla  g dxdt~~\text{ and }~~\lim_{k\to\infty} L_3=- \int_{Q^+_1(0,t_0)}\varphi \mathcal{J}( g)\left(\nabla  v_0-\nabla g\right) dxdt .
 \end{equation*}  
 Moreover, we have 
$$
 L_1=\frac{1}{2}\int_{Q^+_1(0,t_0)}v_k^2\varphi_t dxdt -\int_{Q^+_1(0,t_0)} \overline{A}_{k,B_1}(\nabla v_k)\nabla \varphi  v_kdxdt .$$
 Thus, 
 \begin{align*}
  \lim_{k\to\infty} L_1&=\frac{1}{2}\int_{Q^+_1(0,t_0)}v_0^2\varphi_tdxdt -\int_{Q^+_1(0,t_0)} A_1\nabla \varphi  v_0dxdt\\&=
  -\int_{Q^+_1(0,t_0)}(v_0)_t\varphi v_0dxdt -\int_{Q^+_1(0,t_0)} A_1\nabla (\varphi  v_0)dxdt +\int_{Q^+_1(0,t_0)}\varphi A_1\nabla  v_0dxdt\\&=\int_{Q^+_1(0,t_0)}\varphi A_1\nabla  v_0dxdt.
 \end{align*}
 Hence, 
$$
 0\leq \int_{Q^+_1(0,t_0)}\varphi \left(A_1-\mathcal{J}(g)\right)\left(\nabla  v_0-\nabla g\right)dxdt$$
 holds for all $\varphi \in  C^1_c(Q^+_1(0,t_0))$, $\varphi\geq 0$ and $g\in X$.   
 Now we choose $g=v_0-\xi (v_0-V_0)=(1-\xi)v_0+\xi V_0\in X$ for $\xi\in (0,1)$, 
 so $$
   0\leq \int_{Q^+_1(0,t_0)}\varphi \left(A-\mathcal{J}(v_0-\xi (v_0-V_0))\right)\left(\nabla  v_0-\nabla V_0\right)dxdt.$$
   Letting $\xi\to 0^+$ and $\varphi\to \chi_{Q_1^+(0,t_0)}$, we get \eqref{5hh310320142}. Similarly, we also obtain \eqref{5hh310320143}.
   \\
   Thus,
$$
           \int_{Q^+_1(0,t_0)}(A_1-A_2)\nabla (v_0-V_0) dxdt \geq \int_{Q^+_1(0,t_0)}(\mathcal{J}(v_0)-\mathcal{J}(V_0))\nabla (v_0-V_0) dxdt.        $$
 Combining this with  \eqref{5hh310320144}, \eqref{5hh310320145} and $v_0-V_0\in L^2(t_0-1,t_0,H^1_0(B^+_1))$ yield $v_0=V_0$.
 This completes the proof.
    \end{proof}
    \begin{lemma}\label{5hh21101313} For any $\varepsilon>0$ there exists a small $\delta=\delta(N,\Lambda_1,\Lambda_2,\varepsilon)>0$ such that if $v\in C(t_0-1,t_0;L^2(\Omega\cap B_1))\cap L^2(t_0-1,t_0;H^1(\Omega\cap B_1))$ is a solution of \eqref{5hh1610135} and \eqref{5hh1610136} is satisfied and 
                    \begin{equation}\label{5hh240320145}
                    \fint_{Q_1(0,t_0)}|\nabla v|^2dxdt\leq 1,
                    \end{equation}
                    then there exists a weak solution $V\in C(t_0-1,t_0;L^2( B_1^+))\cap L^2(t_0-1,t_0;H^1( B_1^+))$ of  \eqref{5hh1610134} with $\rho=1$, whose zero extension to $Q_1(0,t_0)$ satisfies 
                    \begin{align}
                    \label{5hh21101310}
                & ||\nabla V||_{L^\infty(Q_{1/4}(0,t_0))}\lesssim 1,\\&\label{5hh21101311}
         \fint_{Q_{1/8}(0,t_0)}|\nabla v-\nabla V|^2dxdt\leq \varepsilon^2. \end{align}                   
                        
                    \end{lemma}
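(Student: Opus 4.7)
\textbf{Proof proposal for Lemma \ref{5hh21101313}.} Given $\varepsilon>0$, the plan is to choose a small parameter $\varepsilon_1>0$ (to be fixed in terms of $\varepsilon$) and apply Lemma \ref{5hh2110138} with $\varepsilon_1$ in place of $\varepsilon$ to produce a weak solution $V\in C(t_0-1,t_0;L^2(B_1^+(0)))\cap L^2(t_0-1,t_0;H^1(B_1^+(0)))$ of \eqref{5hh1610134} (with $\rho=1$) whose zero extension to $Q_1(0,t_0)$ satisfies $\fint_{Q_1(0,t_0)}|v-V|^2\,dxdt\le \varepsilon_1^2$. This $V$ will serve as the approximation. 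To establish \eqref{5hh21101310}, I would first derive a Caccioppoli-type energy bound on $V$ by testing the half-ball equation with $\eta^2 V$ for a smooth cutoff $\eta$ with $\eta\equiv 1$ on $Q_{1/2}^+$; using the Dirichlet condition on $T_1(0,t_0)$ this yields $\fint_{Q_{1/2}^+}|\nabla V|^2\,dxdt \le C\fint_{Q_1^+}V^2\,dxdt \le C(\fint_{Q_1}v^2+\fint_{Q_1}|v-V|^2)\le C$, where I have used \eqref{5hh240320145} together with a Poincar\'e inequality for $v$ (valid because $v\equiv 0$ on the positive-measure set $B_1\setminus\Omega$ by Reifenberg flatness). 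Lemma \ref{5hh2110139} with $\rho'=1/2$ then gives $\|\nabla V\|_{L^\infty(Q_{1/4}^+)}\le C$; combined with $\nabla V\equiv 0$ on $Q_{1/4}^-$ this proves \eqref{5hh21101310}.

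For \eqref{5hh21101311} I would split the integration region as $Q_{1/8}=Q_{1/8}^+\cup Q_{1/8}^-$. In $Q_{1/8}^-$, since $\nabla V\equiv 0$, it suffices to bound $\fint_{Q_{1/8}^-}|\nabla v|^2\,dxdt$. By \eqref{5hh1610136}, the support of $v$ in $Q_{1/8}^-$ lies inside the thin slab $\{-4\delta<x_N<0\}\cap B_{1/8}\times(t_0-1/64,t_0]$, which has Lebesgue measure $O(\delta)$. Applying the higher integrability estimate \eqref{5hhineq8} of Theorem \ref{5hh24093} (the domain $\Omega\cap B_1$ is thick from outside below $\{x_N=0\}$ by the Reifenberg hypothesis) gives $\|\nabla v\|_{L^{\theta_2}(Q_{1/2})}\le C$ for some $\theta_2>2$, and H\"older's inequality then yields $\fint_{Q_{1/8}^-}|\nabla v|^2\le C\delta^{1-2/\theta_2}$, which is as small as desired if $\delta$ is small. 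In $Q_{1/8}^+$, both $v$ and $V$ are weak solutions of the same equation with the same averaged coefficient $\overline{A}_{B_1(0)}(t,\cdot)$, so by the strict monotonicity \eqref{5hhcondc} the difference $w=v-V$ satisfies a Caccioppoli-type estimate of the schematic form
\begin{equation*}
\int_{Q_{1/8}^+}|\nabla(v-V)|^2\,dxdt \le C\int_{Q_{1/4}^+}(v-V)^2\bigl(|\eta\eta_t|+|\nabla\eta|^2\bigr)\,dxdt+\mathcal B,
\end{equation*}
where $\mathcal B$ collects boundary contributions on $T_{1/4}(0,t_0)$ (where $w=v\neq 0$). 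The bulk term is controlled by $C\varepsilon_1^2$ via the $L^2$-closeness $\fint_{Q_1}|v-V|^2\le \varepsilon_1^2$. One then fixes $\delta$ small so the strip estimate and $\mathcal B$ contribute less than $\varepsilon^2/2$, and subsequently $\varepsilon_1$ small so the bulk term is less than $\varepsilon^2/2$.

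The principal obstacle is the boundary term $\mathcal B$, arising because the natural test function $\eta^2(v-V)$ has nonzero trace on $T_{1/4}(0,t_0)$. My plan to handle it is to replace $\eta^2(v-V)$ by $\eta^2(v-V)-\Phi$, where $\Phi$ is a correction chosen to cancel the boundary trace and absorb into the right-hand side; $\Phi$ can be built from a harmonic-type extension of the trace $v|_{T_{1/4}}$ into $Q_{1/4}^+$, with smallness driven by $|v|\le C\delta$ on the slab $\{|x_N|<4\delta\}$. This pointwise control of $v$ near $T_1$ rests on combining the $L^\infty$-gradient bound \eqref{5hh21101310} for $V$ (so $|V|\le C|x_N|$), the fact that $v$ vanishes in a nontrivial portion of the slab, the higher integrability \eqref{5hhineq8}, and the $L^2$-closeness $\fint_{Q_1}|v-V|^2\le \varepsilon_1^2$. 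Driving $\mathcal B$ to zero then reduces to showing an $L^2$ trace estimate of the form $\int_{T_{1/4}}\eta^2|v|^2\,dx'dt\le C\delta^\alpha$ for some $\alpha>0$, which is the genuinely delicate step in the argument.
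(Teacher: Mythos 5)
Your proof of the gradient bound \eqref{5hh21101310} tracks the paper's argument (Caccioppoli from $\phi^2 V$, then Lemma~\ref{5hh2110139}), and that part is fine. But the treatment of \eqref{5hh21101311} has a genuine gap, and it is one you flag yourself. By splitting into $Q_{1/8}^{\pm}$ and subtracting the two equations on $Q^+_{1/4}$, you are forced to test the $V$-equation against $\eta^2(v-V)$; since $v$ does not vanish on $T_{1/4}(0,t_0)$, this test function is \emph{inadmissible} for the half-cylinder problem \eqref{5hh1610134}, so the ``Caccioppoli-type estimate'' you write down cannot be derived directly; one cannot integrate by parts without producing the boundary term $\mathcal{B}$, and there is no clean mechanism to control $\mathcal{B}$. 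Your proposed remedy --- a corrector $\Phi$ built from a harmonic-type extension of $v|_{T_{1/4}}$, plus a trace estimate $\int_{T_{1/4}}\eta^2|v|^2\lesssim\delta^\alpha$ --- is left as a sketch, and as described it does not close: the $L^\infty$ gradient bound controls $V$, not $v$, and the $L^2$-closeness $\fint_{Q_1}|v-V|^2\le\varepsilon_1^2$ gives no pointwise information on $v$ near $T_1$. (Even if you push through a trace estimate via the higher integrability \eqref{5hhineq8}, you still need to show $\Phi$ can be inserted without wrecking the energy identity; none of that is supplied.)

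The missing idea is precisely Lemma~\ref{5hh21101312}: the zero extension of $V$ to $Q_1(0,t_0)$ solves $V_t-\operatorname{div}(\overline{A}_{B_1(0)}(t,\nabla V))=\partial_{x_N}F$ in $Q_1$, with $F$ supported in $\{x_N<0\}$ and bounded by $\|\nabla V\|_{L^\infty}$. This lets one work on the \emph{single} domain $\tilde{\Omega}_1(0)=(\Omega\cap B_1(0))\times(t_0-1,t_0)$, where $\phi^2(V-v)\in L^2(H^1_0(\Omega\cap B_1))$ is a legitimate test function (both $v$ and $V$ vanish on $\partial\Omega\cap B_1$ since $\partial\Omega\cap B_1\subset\{-4\delta\le x_N\le 0\}$), so no boundary trace arises. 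The $F$-terms are then of order $\delta$ by the sup bound on $\nabla V$ and the thinness of the slab $\{-4\delta<x_N<0\}$, and the rest is Young's inequality in $\varepsilon_2,\varepsilon_1,\delta$. This also removes your need for higher integrability of $\nabla v$: the paper never integrates $|\nabla v|^2$ over the thin slab, only $|\nabla V(x',0,t)|^2$, which is controlled in $L^\infty$. Your $Q^-$ estimate via $\|\nabla v\|_{L^{\theta_2}}$ and H\"older is a viable alternative for that half, but without the Lemma~\ref{5hh21101312} device (or a carefully justified corrector $\Phi$) the $Q^+$ half does not go through.
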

                    \begin{proof}
Given $\varepsilon_1\in(0,1)$ by applying Lemma   \ref{5hh2110138},  one finds a small $\delta=\delta(N,\Lambda_1,\Lambda_2,\varepsilon_1)>0$ and a weak solution $V\in C(t_0-1,t_0;L^2( B_1^+(0)))\cap L^2(t_0-1,t_0;H^1( B_1^+(0)))$ of  \eqref{5hh1610134} with $\rho=1$ such that 
\begin{equation}\label{5hh240320146}
       \fint_{Q_1(0,t_0)}|v-V|^2dxdt\leq \varepsilon_1^2,
        \end{equation} 
Using $\phi^2V$ with $\phi\in C^\infty_c(B_1\times (t_0-1,t_0])$, $0\leq \phi \leq 1$  and $\phi=1$ in $Q_{1/2}(0,t_0)$ as test function in \eqref{5hh1610134}, we obtain 
\begin{equation*}
\int_{Q_{1/2}(0,t_0)}|\nabla V|^2dxdt\lesssim \int_{Q_{1}(0,t_0)}|V|^2dxdt.
\end{equation*}  
This implies
\begin{align*}
\int_{Q_{1/2}(0,t_0)}|\nabla V|^2dxdt\lesssim \int_{Q_{1}(0,t_0)}\left(|v-V|^2+|\nabla v|^2\right)dxdt\lesssim 1,
\end{align*}
since \eqref{5hh240320145}, \eqref{5hh240320146} and Poincar\'e's inequality.
Thus, using Lemma \ref{5hh2110139} we get \eqref{5hh21101310}. \\
 Next, we will prove \eqref{5hh21101311}. By Lemma \ref{5hh21101312}, the zero extension of $V$ to $Q_{1}(0,t_0)$ satisfies
    \begin{equation*}
           {V_t} - \operatorname{div}\left( {\overline{A}_{B_{1}}(t,\nabla V)} \right) = \frac{\partial F}{\partial x_N}~~\text{ in weakly }~Q_1(0,t_0).
                                           \end{equation*} 
where $F(x,t)=\chi_{x_N<0}\overline{A}_{B_{\rho}}^N(t, \nabla V(x',0,t)).$ Thus, we can write 
\begin{align*}
&\int_{\tilde{\Omega}_1(0,t_0)}(V-v)_t\varphi dxdt\\&~~~~+\int_{\tilde{\Omega}_1(0,t_0)} \left({\overline{A}_{B_{1}}(t,\nabla V)}-{\overline{A}_{B_{1}}(t,\nabla v)}\right)\nabla \varphi dxdt=-\int_{\tilde{\Omega}_1(0,t_0)}F\frac{\partial \varphi}{\partial x_N}dxdt,
\end{align*}                                                         for any $\varphi\in L^2(t_0-1,t_0,H^1_0(\Omega\cap B_1)) $.\\
We take $\varphi=\phi^2 (V-v)$ where $\phi\in C^\infty_c(B_{1/4}\times(t_0-(1/4)^2,t_0])$ , $0\leq \phi \leq 1$ and $\phi=1$ on $\overline{Q}_{1/8}(0,t_0)$, so 
\begin{align*}
&\int_{\tilde{\Omega}_1(0,t_0)}\phi^2 \left({\overline{A}_{B_{1}}(t,\nabla V)}-{\overline{A}_{B_{1}}(t,\nabla v)}\right)\left(\nabla V-\nabla v\right)dxdt\\&~~~~~~=-2\int_{\tilde{\Omega}_1(0,t_0)}\phi(V-v) \left({\overline{A}_{B_{1}}(t,\nabla V)}-{\overline{A}_{B_{1}}(t,\nabla v)}\right)\nabla \phi dxdt
\\&~~~~~~~ -\int_{\tilde{\Omega}_1(0,t_0)}\phi^2(V-v)_t(V-v) dxdt 
\\&~~~~~~~-\int_{\tilde{\Omega}_1(0,t_0)}\left(\phi^2F \frac{\partial (V-v)}{\partial x_N}+2\phi F(V-v)\frac{\partial \phi}{\partial x_N}\right)dxdt.
\end{align*}
We can rewrite $I_1=I_2+I_3+I_4$.\\
One has
\begin{equation*}
I_1\gtrsim\int_{\tilde{\Omega}_1(0,t_0)}\phi^2|\nabla V-\nabla v|^2 dxdt
\end{equation*}
and using H\"older's inequality
\begin{align*}
|I_2|&\lesssim \int_{\tilde{\Omega}_1(0,t_0)}\phi|V-v|(|\nabla V|+|\nabla v|)|\nabla \phi| dxdt
\\&\leq  \varepsilon_2 \int_{\tilde{\Omega}_1(0,t_0)}\phi^2(|\nabla V|^2+|\nabla v|^2)dxdt+c(\varepsilon_2)\int_{\tilde{\Omega}_1(0,t_0)}|V-v|^2|\nabla \phi|^2dxdt.
\end{align*}
Similarly, we also have 
\begin{align*}
|I_4|&\leq \varepsilon_2\int_{\tilde{\Omega}_1(0,t_0)}\phi^2(|\nabla V|^2+|\nabla v|^2)dxdt+c(\varepsilon_2)\int_{\tilde{\Omega}_1(0,t_0)}|V-v|^2|\nabla \phi|^2dxdt\\&~~~~~~~~~~+c(\varepsilon_2)\int_{\tilde{\Omega}_1(0,t_0)}|F|^2\phi^2 dxdt,
\end{align*}
and 
$$
I_3\leq  \int_{\tilde{\Omega}_1(0,t_0)} \phi_t\phi(V-v)^2dxdt
\lesssim \int_{\tilde{\Omega}_{1/4}(0,t_0)}|V-v|^2dxdt.$$
Hence,
\begin{align*}
\int_{\tilde{\Omega}_{1/8}(0,t_0)}|\nabla V-\nabla v|^2& \lesssim \varepsilon_2\int_{\tilde{\Omega}_{1/4}(0,t_0)}(|\nabla V|^2+|\nabla v|^2)+c(\varepsilon_2)\int_{\tilde{\Omega}_{1/4}(0,t_0)}(|V-v|^2+|F|^2) \\&~\lesssim\varepsilon_2 +c(\varepsilon_2)\left(\varepsilon_1^2+\int_{\tilde{\Omega}_{1/4}(0,t_0)\cap\{-4\delta<x_N<0\}}|\nabla V(x',0,t)|^2dxdt\right)
\\&\lesssim \varepsilon_2 +c(\varepsilon_2)\left(\varepsilon_1^2+\delta\right).
\end{align*}
Finally, for any $\varepsilon>0$ by choosing $\varepsilon_2,\varepsilon_1$ and $\delta$ appropriately  we get \eqref{5hh21101311}.  The proof  is complete.
                    \end{proof}
                    
    \begin{lemma}\label{5hh21101314}
        For any $\varepsilon>0$ there exists a small $\delta=\delta(N,\Lambda_1,\Lambda_2,\varepsilon)>0$ such that if $v\in C(t_0-\rho^2,t_0;L^2(\Omega\cap B_\rho(0)))\cap L^2(t_0-\rho^2,t_0;H^1(\Omega\cap B_\rho(0)))$ is a solution of 
\begin{equation}\label{5hh1610137}
                                  \left\{ \begin{array}{l}
                                           {v_t} - \operatorname{div}\left( {\overline{A}_{B_{\rho}(0)}(t,\nabla v)} \right) = 0 \;in\;\tilde{\Omega}_\rho(0),\\ 
                                                           v = 0\quad \quad on~~\partial_{p}\tilde{\Omega}_\rho(0)\backslash (\Omega\times (-T,T)), \\ 
                                                           \end{array} \right.
                                                      \end{equation}
                                                      and  
                                                      \begin{equation}\label{5hh1610138}
                                                          B^+_\rho(0)\subset \Omega\cap B_\rho(0)\subset B_\rho(0)\cap \{x_N>-4\rho\delta\}.
                                                          \end{equation}
 then there exists a weak solution $V\in C(t_0-\rho^2,t_0;L^2( B_\rho^+(0)))\cap L^2(t_0-\rho^2,t_0;H^1( B_\rho^+(0)))$ of  \eqref{5hh1610134}, whose zero extension to $Q_1(0,t_0)$ satisfies 
 \begin{align}
 &||\nabla V||^2_{L^\infty(Q_{\rho/4}(0,t_0))}\lesssim \fint_{Q_{\rho}(0,t_0)} |\nabla v|^2dxdt~\text{ and }\\&
  \fint_{Q_{\rho/8}(0,t_0)}|\nabla v-\nabla V|^2dxdt\leq \varepsilon^2\fint_{Q_{\rho}(0,t_0)} |\nabla v|^2dxdt.
 \end{align}
        \end{lemma}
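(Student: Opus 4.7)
The plan is to reduce the general scale $\rho$ to the unit-scale case, which is exactly Lemma \ref{5hh21101313}, by an appropriate parabolic rescaling. First, set
\[
M := \left(\fint_{Q_{\rho}(0,t_0)} |\nabla v|^2\,dxdt\right)^{1/2}.
\]
If $M=0$, then $v$ is (essentially) constant and the statement holds trivially with $V\equiv 0$; otherwise, define the rescaled function
\[
\tilde v(x,t) := \frac{1}{\rho M}\,v\bigl(\rho x,\; t_0 + \rho^{2}(t-t_0)\bigr),\qquad (x,t)\in \tilde\Omega'_1(0),
\]
where $\Omega' := \rho^{-1}\Omega$, and the rescaled nonlinearity
\[
\tilde A(x,t,\xi) := \frac{1}{M}\,\overline{A}_{B_\rho(0)}\!\bigl(t_0+\rho^2(t-t_0),\; M\xi\bigr).
\]

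First I would verify that $\tilde A$ satisfies \eqref{5hhconda}, \eqref{5hhcondb} and \eqref{5hhcondc} with the \emph{same} constants $\Lambda_1,\Lambda_2$: both growth and monotonicity are invariant under the symmetric rescaling $\xi\mapsto M\xi$, $A\mapsto M^{-1}A$, and time translation/dilation does not affect the structure conditions. In particular, the averaging in $x$ already produced an $x$-independent nonlinearity, so the condition \eqref{5hhcondc} is preserved pointwise. Next, by direct calculation, $\tilde v$ solves
\[
\tilde v_t - \mathrm{div}\bigl(\tilde A(x,t,\nabla \tilde v)\bigr) = 0 \quad\text{in } \tilde\Omega'_1(0),\qquad \tilde v=0 \text{ on } \partial_p \tilde\Omega'_1(0)\setminus(\Omega'\times(-T,T)),
\]
and the Reifenberg flatness \eqref{5hh1610138} rescales to
\[
B_1^+(0)\subset \Omega'\cap B_1(0)\subset B_1(0)\cap\{x_N > -4\delta\},
\]
which is \eqref{5hh1610136}. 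Moreover, the normalization was chosen so that $\fint_{Q_1(0,t_0)}|\nabla \tilde v|^2\,dxdt = 1$.

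Having put ourselves in the setting of Lemma \ref{5hh21101313}, I would apply that lemma with the same $\varepsilon$ to obtain a small $\delta=\delta(N,\Lambda_1,\Lambda_2,\varepsilon)>0$ and a weak solution $\tilde V\in C(t_0-1,t_0;L^2(B_1^+(0)))\cap L^2(t_0-1,t_0;H^1(B_1^+(0)))$ of
\[
\tilde V_t - \mathrm{div}(\tilde A(x,t,\nabla \tilde V))=0 \text{ in } Q_1^+(0,t_0), \qquad \tilde V = 0 \text{ on } T_1(0,t_0),
\]
whose zero extension to $Q_1(0,t_0)$ satisfies $\|\nabla\tilde V\|_{L^\infty(Q_{1/4}(0,t_0))}\le C$ and $\fint_{Q_{1/8}(0,t_0)}|\nabla\tilde v-\nabla\tilde V|^2\,dxdt\le \varepsilon^2$, with $C=C(N,\Lambda_1,\Lambda_2)$.

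Finally, I would undo the rescaling by setting
\[
V(x,t) := \rho M\,\tilde V\!\bigl(x/\rho,\; t_0 + (t-t_0)/\rho^2\bigr),
\]
extended by zero to $Q_\rho(0,t_0)$. A direct computation shows $V$ solves \eqref{5hh1610134} on $Q_\rho^+(0,t_0)$ with $V=0$ on $T_\rho(0,t_0)$. Since $\nabla V(x,t) = M\,\nabla\tilde V(x/\rho,t_0+(t-t_0)/\rho^2)$, we obtain
\[
\|\nabla V\|_{L^\infty(Q_{\rho/4}(0,t_0))}^2 \le C M^2 = C\fint_{Q_\rho(0,t_0)}|\nabla v|^2\,dxdt,
\]
and, using that the average on $Q_{\rho/8}$ in the original variables equals the average on $Q_{1/8}$ in the rescaled variables up to the factor $M^2$,
\[
\fint_{Q_{\rho/8}(0,t_0)}|\nabla v-\nabla V|^2\,dxdt = M^2\fint_{Q_{1/8}(0,t_0)}|\nabla \tilde v-\nabla\tilde V|^2\,dxdt \le \varepsilon^2 M^2,
\]
which is the required estimate.

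The argument is essentially a careful bookkeeping of scaling factors; the only non-obstacle-but-point-of-care is checking that each of the structure conditions (growth, monotonicity, differentiability \eqref{5hhcondc}, as well as the geometric flatness condition) is \emph{invariant} under the chosen parabolic rescaling with the same constants. Since $\delta$ in Lemma \ref{5hh21101313} is produced only from $(N,\Lambda_1,\Lambda_2,\varepsilon)$ and these constants are unchanged, the $\delta$ obtained here depends on the same data, as claimed.
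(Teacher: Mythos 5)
Your proof is correct and follows essentially the same route as the paper: the paper introduces the rescaled nonlinearity $\mathcal{A}(x,t,\xi)=A(\rho x,t_0+\rho^2(t-t_0),\kappa\xi)/\kappa$ with $\kappa=M$ and then passes to its average $\overline{\mathcal{A}}_{B_1(0)}$, which equals your $\tilde A$, before invoking Lemma \ref{5hh21101313} and undoing the scaling exactly as you do. The only cosmetic difference is that you rescale $\overline{A}_{B_\rho(0)}$ directly rather than rescaling $A$ first and averaging afterward, but these commute, so the arguments coincide.
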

        \begin{proof}
  We set
  \begin{align*}
   \mathcal{A}(x,t,\xi)=A(\rho x,t_0+\rho^2(t-t_0),\kappa  \xi)/\kappa~\text{ and }
   \tilde{v}(x,t)=v(\rho x,t_0+\rho^2(t-t_0))/(\rho \kappa)
  \end{align*} 
  where $\kappa=\left(\frac{1}{|Q_{\rho}(0,t_0)|}\int_{Q_{\rho}(0,t_0)} |\nabla v|^2dxdt\right)^{1/2}$. 
  Then $\mathcal{A}$ satisfies conditions \eqref{5hhconda} and \eqref{5hhcondc}  with the same constants $\Lambda_1$ and $\Lambda_2$. Moreover, $\tilde{v}$ is a solution of   
  \begin{equation}
                   \left\{ \begin{array}{l}
                                   {\tilde{v}_t} - \operatorname{div}\left( {\overline{\mathcal{A}}_{B_{1}(0)}(t,\nabla \tilde{v})} \right) = 0 ~~\text{ in }~\tilde{\Omega}_1^\rho(0)\\ 
                                                             \tilde{v} = 0~~~~~\text{on }~~\left((\partial\Omega^\rho\cap B_1(0))\times(t_0-1,t_0)\right) \cup \left((\Omega^\rho\cap B_1(0))\times\{t=t_0-1\}\right) \\ 
                                                             \end{array} \right.
                                                        \end{equation}
                                                        where $\Omega^\rho=\{z=x/\rho:x\in\Omega\}$ 
and satisfies
       $
       \fint_{Q_{1}(0,t_0)} |\nabla \tilde{v}|^2dxdt=1.
      $
 We also have
$$ B^+_1(0)\subset \Omega^\rho\cap B_1(0)\subset B_1(0)\cap \{x_N>-4\delta\}.$$          Therefore, applying Lemma \ref{5hh21101313} for any $\varepsilon>0$, there exist a constant $\delta=\delta(N,\Lambda_1,\Lambda_2,\varepsilon)>0$ and   $\tilde{V}$ satisfying
$$
||\nabla \tilde{V}||_{L^\infty(Q_{1/4}(0,t_0))}\lesssim 1~~\text{and}\quad\fint_{Q_{1/8}(0,t_0)}|\nabla \tilde{v}-\nabla \tilde{V}|^2dxdt\leq \varepsilon^2.
$$  We complete the proof by choosing $V(x,t)=k\rho \tilde{V}(x/\rho,t_0+(t-t_0)/\rho^2)$.                      
        \end{proof}
       
\begin{lemma}\label{5hh16101310}
 Let $s_2$ be as in Lemma \ref{5hh1610139}. For any $\varepsilon\in (0,1)$ there exists a small $\delta=\delta(N,\Lambda_1,\Lambda_2,\varepsilon)>0$ such that the following holds. If $\Omega$ is a $(\delta,R_0)$-Reifenberg flat domain and $u\in C(0,T;L^2(\Omega))\cap L^2(0,T;H^1(\Omega))$ is a solution to equation \eqref{5hhparabolic1} with $\mu\in L^2(\Omega\times (-T,T))$ and $u(-T)=0$, for  $x_0\in\partial\Omega$, $-T<t_0<T$ and $0<R<R_0/6$ then there is a function $V\in L^\infty(t_0-(R/9)^2,t_0;W^{1,\infty}( B_{R/9}(x_0)))$ such that 
 \begin{equation}\label{5hh21101317}
 ||\nabla V||_{L^\infty(Q_{R/9}(x_0,t_0))}\lesssim \fint_{Q_{6R}(x_0,t_0)}|\nabla u|dxdt+\frac{|\mu|(Q_{6R}(x_0,t_0))}{R^{N+1}},
 \end{equation}
 and 
 \begin{align}
 \nonumber&\fint_{Q_{R/9}(x_0,t_0)}|\nabla u-\nabla V|dxdt\\&~~~~\lesssim (\varepsilon+[A]_{s_2}^{R_0})\fint_{Q_{6R}(x_0,t_0)}|\nabla u|dxdt+ (1+[A]_{s_2}^{R_0})\frac{|\mu|(Q_{6R}(x_0,t_0))}{R^{N+1}}.\label{5hh21101318}
 \end{align}

 \end{lemma}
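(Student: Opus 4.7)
The plan is to establish this via a three-step comparison chain, exploiting the fact that any $(\delta,R_0)$-Reifenberg flat domain is automatically uniformly $2$-thick with constants depending only on $N$. Fix $x_0 \in \partial\Omega$ and $-T < t_0 < T$. First, working in the original coordinates, I take $w$ to be the weak solution of \eqref{5hheq5} on $\tilde{\Omega}_{6R}(x_0,t_0)$ with $w = u$ on the parabolic boundary. By Theorem \ref{5hh24093}, the inequality \eqref{5hhineq9} gives
\[
\fint_{Q_{6R}(x_0,t_0)} |\nabla u - \nabla w|\,dxdt \leq C \frac{|\mu|(\tilde{\Omega}_{6R})}{R^{N+1}},
\]
and \eqref{5hhineq8} together with \eqref{5hhineq10'} provides higher integrability of $\nabla w$ up to the boundary.

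Next, I switch to the coordinate system in which Reifenberg flatness gives $B_\rho^+(0) \subset \Omega \cap B_\rho(0) \subset B_\rho(0) \cap \{x_N > -4\rho\delta\}$ with $\rho := R(1-\delta)$, noting that $x_0$ then has coordinates $(0,\ldots,0,-\rho\delta/(1-\delta))$. In these coordinates I take $v$ to be the solution of \eqref{5hh1610131} on $\tilde{\Omega}_\rho(0)$, and apply Lemma \ref{5hh1610139} to obtain
\[
\left(\fint_{Q_\rho(0,t_0)} |\nabla w - \nabla v|^2\,dxdt\right)^{1/2} \leq C\,[A]_{s_2}^{R_0} \fint_{Q_\rho(0,t_0)} |\nabla w|\,dxdt
\]
together with the two-sided bound $\|\nabla v\|_{L^2(Q_\rho)} \approx \|\nabla w\|_{L^2(Q_\rho)}$. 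Finally, applying Lemma \ref{5hh21101314} with the prescribed $\varepsilon$ yields the half-space solution $V$ whose zero extension satisfies
\[
\|\nabla V\|_{L^\infty(Q_{\rho/4}(0,t_0))}^2 \leq C \fint_{Q_\rho(0,t_0)} |\nabla v|^2\,dxdt,\qquad \fint_{Q_{\rho/8}(0,t_0)} |\nabla v - \nabla V|^2\,dxdt \leq \varepsilon^2 \fint_{Q_\rho(0,t_0)} |\nabla v|^2\,dxdt.
\]

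To conclude, I choose $\delta$ small enough (specifically $\delta < 1/81$) that the displacement $\rho\delta/(1-\delta) = R\delta$ of $x_0$ in the new coordinates combines with the radius $R/9$ to fit $Q_{R/9}(x_0,t_0)$ inside both $Q_{\rho/4}(0,t_0)$ and $Q_{\rho/8}(0,t_0)$. A triangle-inequality decomposition
\[
\fint_{Q_{R/9}} |\nabla u - \nabla V|\,dxdt \leq \fint_{Q_{R/9}} |\nabla u - \nabla w| + \fint_{Q_{R/9}} |\nabla w - \nabla v| + \fint_{Q_{R/9}} |\nabla v - \nabla V|,
\]
together with H\"older's inequality on the last two terms, converts the $L^2$ estimates above into $L^1$ bounds on $Q_{R/9}$; absorbing the volume ratio $|Q_\rho|/|Q_{R/9}|$ costs only a dimensional constant. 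Then \eqref{5hhineq10'} and \eqref{5hhineq8} control $\left(\fint_{Q_\rho} |\nabla w|^2\right)^{1/2}$ by $\fint_{Q_{6R}} |\nabla w|\,dxdt$, which via \eqref{5hhineq9} is bounded by $\fint_{Q_{6R}} |\nabla u|\,dxdt + C|\mu|(Q_{6R})/R^{N+1}$. This yields \eqref{5hh21101318}, and the chain of comparisons delivers \eqref{5hh21101317} from the $L^\infty$ bound on $\nabla V$.

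The main obstacle is the bookkeeping of scales and coordinate changes: $\delta$ must be chosen simultaneously small enough to (i) satisfy the smallness condition imposed by Lemma \ref{5hh21101314} for the given $\varepsilon$, and (ii) ensure the Reifenberg displacement of $x_0$ in the new coordinate system is negligible compared with both $R/9$ and the interior radii $\rho/4$, $\rho/8$. Once these compatibility conditions are reconciled, the three comparison estimates combine mechanically, with H\"older's inequality and the boundary higher integrability of $w$ serving only to transfer $L^2$ averages of $\nabla w$ to $L^1$ averages of $\nabla u$ at the outer scale $6R$.
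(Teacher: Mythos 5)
Your proposal is correct and follows essentially the same path as the paper's own proof: compare $u$ to $w$ via Theorem \ref{5hh24093}, $w$ to $v$ via Lemma \ref{5hh1610139}, $v$ to $V$ via Lemma \ref{5hh21101314}, chain the cylinder inclusions in the Reifenberg coordinates (the paper uses the more conservative threshold $\delta<1/625$ where your $\delta<1/81$ suffices for the geometric inclusion, but both are absorbed into the smallness of $\delta$ required by Lemma \ref{5hh21101314}), and close by the triangle inequality, using the reverse-H\"older inequality \eqref{5hhineq8} to pass from $L^2$-averages of $\nabla w$ on the small scale to the $L^1$-average on $Q_{6R}$ and \eqref{5hhineq9} to convert $\nabla w$-averages to $\nabla u$-averages plus the measure term. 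The only cosmetic point is that \eqref{5hhineq10'} is not actually needed at the last step — \eqref{5hhineq8} alone supplies the $L^2\!\to\!L^1$ transfer — but its mention does no harm.
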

    \begin{proof}
 Let $x_0\in \partial \Omega$, $-T<t_0<T$ and $\rho=R(1-\delta)$, we may assume that $0\in \Omega$, $x_0=(0,...,-\delta\rho/(1-\delta))$
 and 
    \begin{equation}
                                     B^+_\rho(0)\subset \Omega\cap B_\rho(0)\subset B_\rho(0)\cap \{x_N>-4\rho\delta\}.
                                                              \end{equation}
We also have 
\begin{equation}\label{5hh090520141}
Q_{R/9}(x_0,t_0)\subset Q_{\rho/8}(0,t_0)\subset Q_{\rho/4}(0,t_0)\subset Q_{\rho}(0,t_0)\subset Q_{6\rho}(0,t_0)\subset Q_{6R}(x_0,t_0),
\end{equation}                                                         provided that $0<\delta<1/625$.\\
Let $w$ and $v$ be in Theorem \ref{5hh24093} and Lemma  \ref{5hh1610139}. 
    By Lemma  \ref{5hh21101314} for any $\varepsilon>0$                                                          we can find a small positive $\delta=\delta(N,\alpha,\beta,\varepsilon)<1/625$ such that there is a function $V\in L^\infty(t_0-\rho^2,t_0;W^{1,\infty}( B_{\rho}(0)))$ satisfying 
$$ ||\nabla V||^2_{L^\infty(Q_{\rho/4}(0,t_0))}\lesssim \fint_{Q_{\rho}(0,t_0)} |\nabla v|^2,\quad 
   \fint_{Q_{\rho/8}(0,t_0)}|\nabla v-\nabla V|^2\leq \varepsilon^2\fint_{Q_{\rho}(0,t_0)} |\nabla v|^2.$$
        Then, by \eqref{5hh1610133} in Lemma  \ref{5hh1610139} and \eqref{5hhineq8} in Theorem \ref{5hh24093}  and \eqref{5hh090520141} we get 
       \begin{align}
       &	||\nabla V||_{L^\infty(Q_{R/9}(x_0,t_0))} \lesssim \left(\fint_{Q_{\rho}(0,t_0)} |\nabla w|^2\right)^{1/2}\lesssim \fint_{Q_{6R}(x_0,t_0)} |\nabla w|\label{5hh21101316},\\& \fint_{Q_{\rho/8}(0,t_0)}|\nabla v-\nabla V|\lesssim\varepsilon \left(\fint_{Q_{\rho}(0,t_0)} |\nabla w|^2\right)^{1/2}                
       \lesssim\varepsilon \fint_{Q_{6R}(x_0,t_0)} |\nabla w|.\label{5hh21101320}
       \end{align}
        Therefore, from \eqref{5hhineq9} in Theorem  \ref{5hh24093} and \eqref{5hh21101316} we get \eqref{5hh21101317}.\\
         Now we prove \eqref{5hh21101318}. One has
         \begin{align*}
         &\fint_{Q_{R/9}(x_0,t_0)}|\nabla u-\nabla V|\lesssim\fint_{Q_{\rho/8}(0,t_0)}|\nabla u-\nabla V|
                    \\&\lesssim \fint_{Q_{\rho/8}(0,t_0)}|\nabla u-\nabla w|+\fint_{Q_{\rho/8}(0,t_0)}|\nabla w-\nabla v|+\fint_{Q_{\rho/8}(0,t_0)}|\nabla v-\nabla V|.
         \end{align*}
        
           From Lemma \ref{5hh1610139} and Theorem \ref{5hh24093} and \eqref{5hh21101320} it follows that \begin{align*}
           \fint_{Q_{\rho/8}(0,t_0)}|\nabla u-\nabla w|&\lesssim \frac{|\mu|(Q_{6R}(x_0,t_0))}{R^{N+1}},
           \\\fint_{Q_{\rho/8}(0,t_0)}|\nabla v-\nabla w| &\lesssim[A]_{s_2}^{R_0} \fint_{Q_{6\rho}(0,t_0)}|\nabla w|\lesssim[A]_{s_2}^{R_0} \fint_{Q_{6 R}(x_0,t_0)}|\nabla w|\\&\lesssim[A]_{s_2}^{R_0}\left(\fint_{Q_{6 R}(x_0,t_0)}|\nabla u|+\frac{|\mu|(Q_{6R}(x_0,t_0))}{R^{N+1}}\right),
           \end{align*}
$$
          \fint_{Q_{\rho/8}(0,t_0)}|\nabla v-\nabla V|  \lesssim\varepsilon \fint_{Q_{6R}(x_0,t_0)} |\nabla w| \lesssim\varepsilon \left(\fint_{Q_{6 R}(x_0,t_0)}|\nabla u|+\frac{|\mu|(Q_{6R}(x_0,t_0))}{R^{N+1}}\right).$$
           Hence we get \eqref{5hh21101318}. The proof is complete.

                         \end{proof}  
    \section{Global Integral Gradient Bounds for Parabolic equations }
    \subsection{Global estimates on 2-Capacity uniform thickness domains }
   We use the Theorem \ref{5hh1510135} and  \ref{5hh24093}  to prove the following theorem.
  \begin{theorem}\label{5hh1510139} Suppose that $\mathbb{R}^N\backslash\Omega$ satisfies a uniformly $2-$thick condition with constants $c_0,r_0$. Let $\theta_1,\theta_2$ be in Theorem \ref{5hh1510135} and \ref{5hh24093}. Set  $\theta=\min\{\theta_1,\theta_2\}>2$ and $T_0=diam(\Omega)+T^{1/2}$,  $Q=B_{\text{diam}(\Omega)}(x_0)\times(0,T)$. Let $B_1=\tilde{Q}_{R_1}(y_0,s_0)$, $B_2=4B_1:=\tilde{Q}_{4R_1}(y_0,s_0)$  for $R_1>0$. For $\mu\in\mathfrak{M}_b(\Omega_T)$, $\sigma\in\mathfrak{M}_b(\Omega)$, set $\omega=|\mu|+|\sigma|\otimes\delta_{\{t=0\}}$,  there exist a distributional solution $u$ of  equation \eqref{5hhparabolic1} with data $\mu$, $u_0=\sigma$ and constants $\varepsilon_1=\varepsilon_1(N,\Lambda_1,\Lambda_2,c_0,T_0/r_0), \varepsilon_2=\varepsilon_2(N,\Lambda_1,\Lambda_2,c_0)>0$  such that  
  \begin{equation}\label{5hh15101310}
  |\{\mathbb{M}(|\nabla u|)>\varepsilon^{-1/\theta}\lambda, \mathbb{M}_{1}[\omega]\leq \varepsilon^{1-\frac{1}{\theta}}\lambda\}\cap Q|\lesssim_{c_0,T_0/r_0} \varepsilon |\{\mathbb{M}(|\nabla u|)>\lambda \}\cap Q|,
  \end{equation}
  for all $\lambda>0,\varepsilon\in(0,\varepsilon_1)$
  and
  \begin{equation}\label{5hh070120141}
    |\{\mathbb{M}(\chi_{B_2}|\nabla u|)>\varepsilon^{-1/\theta}\lambda, \mathbb{M}_{1}[\chi_{B_2}\omega]\leq \varepsilon^{1-\frac{1}{\theta}}\lambda\}\cap B_1|\lesssim_{c_0,T_0/r_0} \varepsilon |\{\mathbb{M}(\chi_{B_2}|\nabla u|)>\lambda \}\cap B_1|,~~
    \end{equation}
    for all $\lambda> \varepsilon^{-1+\frac{1}{\theta}}||\nabla u||_{L^1(\Omega_T\cap B_2)}R_2^{-N-2}$, $\varepsilon\in (0,\varepsilon_2)$ with $R_2=\inf\{r_0,R_1\}/16$.\\  
    Moreover, if $\sigma\in L^1(\Omega)$ then $u$ is a renormalized solution. 
  \end{theorem}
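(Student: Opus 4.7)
The plan is to combine an approximation-based construction of $u$ with a standard Vitali-type good-$\lambda$ argument driven by the interior/boundary comparison estimates already established (Theorems \ref{5hh1510135} and \ref{5hh24093}). First, approximate $\mu,\sigma$ by smooth data $\mu_n,\sigma_n$ as in the proof of Theorem \ref{5hh141013112}, obtain weak/renormalized solutions $u_n$ to the regularized problems, and extract a limit $u$ via Propositions \ref{5hhmun}, \ref{5hhatt} and Theorem \ref{5hhsta}; this yields a distribution (renormalized when $\sigma\in L^1(\Omega)$) solution with the pointwise bound \eqref{5hh14101314}. I will then prove \eqref{5hh15101310} and \eqref{5hh070120141} for each $u_n$ with constants independent of $n$, and pass to the limit using the a.e.\ convergence of $\{|\nabla u_n|\}$ (Theorem \ref{5hhsta}) together with Fatou.

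The heart of the argument is the following density estimate, to which Lemmas \ref{5hhvitali1}--\ref{5hhvitali3} will apply. Fix $\tilde Q_\rho(x,t)$ with $\rho\leq r_0/16$ that meets $\{\mathbb{M}(|\nabla u|)\leq\lambda\}$ and $\{\mathbb{M}_1[\omega]\leq\varepsilon^{1-1/\theta}\lambda\}$; I must show
\begin{equation*}
\bigl|\{\mathbb{M}(|\nabla u|)>\varepsilon^{-1/\theta}\lambda\}\cap \tilde Q_\rho(x,t)\bigr|\leq C\varepsilon\,|\tilde Q_\rho(x,t)|.
\end{equation*}
Pick a reference cylinder $Q_{6R}$ with $R\approx\rho$: if $B_{6R}\subset\subset\Omega$ use Theorem \ref{5hh1510135}, otherwise center it at a boundary point and use Theorem \ref{5hh24093}. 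Let $w$ solve the homogeneous Dirichlet problem of \eqref{5hheq3} or \eqref{5hheq5}. Splitting $\nabla u=\nabla w+(\nabla u-\nabla w)$ and combining with the standard estimate $\mathbb{M}(\chi_{\tilde Q_\rho}|\nabla u|)(y,s)\leq C\,\mathbb{M}(\chi_{\tilde Q_\rho}|\nabla(u-w)|)(y,s)+C\,\mathbb{M}(\chi_{\tilde Q_\rho}|\nabla w|)(y,s)+C\fint_{Q_{6R}}|\nabla u|$ on $\tilde Q_\rho$, the existence of the two ``good'' points forces the last average to be $\lesssim\lambda$ and $|\mu|(Q_{6R})R^{-N-1}\lesssim \varepsilon^{1-1/\theta}\lambda$. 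Then:
\begin{itemize}
\item Chebyshev plus \eqref{5hhineq1}/\eqref{5hhineq9} gives $|\{\mathbb{M}(\chi_{\tilde Q_\rho}|\nabla(u-w)|)>\tfrac12\varepsilon^{-1/\theta}\lambda\}|\lesssim \varepsilon^{1/\theta}\lambda^{-1}|\mu|(Q_{6R})\lesssim \varepsilon\,|\tilde Q_\rho|$.
\item The higher-integrability bounds \eqref{5hhineq2}/\eqref{5hhineq8} with reverse-Hölder exponent $\theta>2$ yield $\fint_{\tilde Q_\rho}|\nabla w|^\theta\lesssim (\fint_{Q_{6R}}|\nabla w|)^\theta\lesssim \lambda^\theta$, and Chebyshev at exponent $\theta$ gives $|\{\mathbb{M}(\chi_{\tilde Q_\rho}|\nabla w|)>\tfrac12\varepsilon^{-1/\theta}\lambda\}|\lesssim\varepsilon\,|\tilde Q_\rho|$.
\end{itemize}
Summing the two bounds produces the desired density estimate.

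To conclude \eqref{5hh15101310}, apply Lemma \ref{5hhvitali2} with weight $w\equiv 1$ on a cover of $Q$ by cylinders of radius $\sim r_0$ after verifying the ``small initial measure'' hypothesis $|\{\mathbb{M}(|\nabla u|)>\varepsilon^{-1/\theta}\lambda\}\cap Q|<\varepsilon|\tilde Q_{r_0/2}|$ when $\lambda\geq\lambda_0$, and treating small $\lambda$ trivially using Remark \ref{5hh070420143} and $|Q|\lesssim T_0^{N+2}$; the restriction $\rho\leq r_0/16$ in the density step is accommodated by the factor $T_0/r_0$ in $C_1$. For \eqref{5hh070120141}, replace $Q$ by $B_1$ and Lemma \ref{5hhvitali2} by Lemma \ref{5hhvitali3} applied to $F\subset B_1\subset B_2$; the threshold $\lambda>\varepsilon^{-1+1/\theta}\|\nabla u\|_{L^1(\Omega_T\cap B_2)}R_2^{-N-2}$ is exactly what guarantees the small-set hypothesis $|E|<\varepsilon|\tilde Q_{R_2}|$ needed there.

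The main obstacle will be carrying the density estimate uniformly across the interior/boundary dichotomy while keeping the constants dependent only on the declared parameters. Two technical points require care: first, when $\tilde Q_\rho(x,t)$ straddles $\{t=0\}$ the $\sigma$-mass enters through $\omega=|\mu|+|\sigma|\otimes\delta_{\{t=0\}}$, so one must choose the reference cylinder in $\Omega\times(-T,T)$ after zero-extending $u$ in time (as set up in Section 8) and use that $\mathbb{M}_1[\omega]$ dominates the resulting $R^{-N-1}\omega(Q_{6R})$; second, the dependence $C_1=C_1(T_0/r_0)$ must be tracked through the choice of scale at which one switches from Theorem \ref{5hh1510135} to Theorem \ref{5hh24093}, namely via the distance from the center of $\tilde Q_\rho$ to $\partial\Omega$ measured in units of $r_0$.
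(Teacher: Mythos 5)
Your proposal follows the same overall strategy as the paper: approximate, construct comparison functions $w$ via Theorems \ref{5hh1510135} and \ref{5hh24093}, split $\nabla u=\nabla w+\nabla(u-w)$, use Chebyshev on the correction and reverse H\"older on $\nabla w$ to get the density estimate, and finish with the covering lemma (Lemma \ref{5hhvitali3}, which is the unweighted Lemma \ref{5hhvitali2}). The core of the density step is right.

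However, the limiting procedure as you describe it has a real gap. You propose to prove \eqref{5hh15101310}–\eqref{5hh070120141} for each $u_n$ with uniform constants and then pass to the limit by a.e.\ convergence plus Fatou, but Fatou only controls one side of the inequality: it gives $|\{\mathbb{M}(|\nabla u|)>\varepsilon^{-1/\theta}\lambda\}|\leq\liminf_n|\{\mathbb{M}(|\nabla u_n|)>\varepsilon^{-1/\theta}\lambda\}|$, while the right-hand side $|\{\mathbb{M}(|\nabla u_n|)>\lambda\}|$ would have to be dominated by $|\{\mathbb{M}(|\nabla u|)>\lambda\}|$ in the limit, which is the opposite direction and fails in general (level-set measures are only upper semicontinuous under $L^1$-convergence after a safety margin in $\lambda$). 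The paper avoids this by never stating the density estimate for $u_n$: it fixes the sets $E^1_{\lambda,\varepsilon}$ and $F^1_{\lambda}$ for the limit solution $u$ directly, then inside the estimate splits $|E^1_{\lambda,\varepsilon}\cap\tilde Q_r|$ into pieces involving $\nabla w_{n,m}$, $\nabla u_{n,m}-\nabla w_{n,m}$, $\nabla u_{n,m}-\nabla u_n$, $\nabla u-\nabla u_n$, bounds the first two by Chebyshev/reverse-H\"older and lets the last two vanish as $m,n\to\infty$. You should restructure the argument this way (which is consistent with the fact that you actually state the density estimate for $u$, not $u_n$, a few lines later). Note also that the paper needs the double approximation $u_{n,m}$, where $\sigma_n$ is mollified in time into the right-hand side on $\Omega\times(-T,T)$, in order to produce a genuine $L^2$-datum weak solution with zero initial value at $t=-T$ to which the comparison constructions \eqref{5hheq3} and \eqref{5hheq5} of Section 7 apply; your sentence about ``zero-extending $u$ in time'' gestures at this but does not produce the $L^2$-setting that Theorems \ref{5hh1510135} and \ref{5hh24093} presuppose.

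Finally, the pointwise inequality you call ``standard,'' namely $\mathbb{M}(\chi_{\tilde Q_\rho}|\nabla u|)\leq C\mathbb{M}(\chi_{\tilde Q_\rho}|\nabla(u-w)|)+C\mathbb{M}(\chi_{\tilde Q_\rho}|\nabla w|)+C\fint_{Q_{6R}}|\nabla u|$, has a superfluous last term (subadditivity gives the first two and nothing else). The relevant step is the localization $\mathbb{M}(|\nabla u|)(y,s)\leq\max\{\mathbb{M}(\chi_{\tilde Q_{2r}(x,t)}|\nabla u|)(y,s),\,3^{N+2}\lambda\}$ for $(y,s)\in\tilde Q_r(x,t)$, valid because any cylinder extending past $\tilde Q_{2r}$ also contains the ``good'' point $(x_1,t_1)$ where $\mathbb{M}(|\nabla u|)\leq\lambda$; this is what lets one identify the full-maximal-function level set with the local one once $\varepsilon\leq 3^{-(N+2)\theta}$, and it is the step you should use in place of your ad hoc inequality.
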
 
  \begin{proof}[Proof of Theorem \ref{5hh1510139}] Let $\{\mu_n\}\subset C_c^\infty(\Omega_T), \{\sigma_n\}\subset C_c^\infty(\Omega)$ be as in the proof of Theorem  \ref{5hh141013112}. We have $|\mu_n|\leq \varphi_n*|\mu|$ and $|\sigma_n|\leq \varphi_{1,n}*|\sigma|$ for any $n\in\mathbb{N}$, where $\{\varphi_n\}, \{\varphi_{1,n}\}$ are sequences of standard mollifiers in $\mathbb{R}^{N+1},\mathbb{R}^N$, respectively.\\ 
                                   Let $u_n$ be solution of equation
                                   \begin{equation}
                                            \left\{
                                            \begin{array}
                                            [c]{l}%
                                            {(u_{n})_t}-\operatorname{div}(A(x,t,\nabla u_n))=\mu_{n}~~\text{in }\Omega_T,\\
                                            {u}_{n}=0\qquad\text{on }\partial\Omega\times (0,T),\\
                                            u_n(0)=\sigma_n~~~\text{in}~~\Omega.
                                            \end{array}
                                            \right.  
                                            \end{equation}                                              
  By Proposition \ref{5hhmun} and Theorem \ref{5hhsta}, there exists a subsequence of $\{u_n\}$, still denoted by $\{u_n\}$, converging to a distributional solution $u$ of  \eqref{5hhparabolic1} with data $\mu\in\mathfrak{M}_b(\Omega_T)$ and $u_0=\sigma$ such that $ u_n\to  u$ in  $L^s(0,T,W^{1,s}_0(\Omega))$ for any $s\in\left[1,\frac{N+2}{N+1}\right)$ and if $\sigma\in L^1(\Omega)$ then $u$ is a renormalized solution. \\
  By Remark \ref{5hh240220142} and Theorem \ref{5hhsta}, a sequence $\{u_{n,m}\}_m$ of solutions to equations
                                               \begin{equation*}
                                                     \left\{
                                                  \begin{array}
                                                                           [c]{l}%
                                                                                  {(u_{n,m})_t}-\operatorname{div}(A(x,t,\nabla u_{n,m}))=\mu_{n,m}~~\text{in }\Omega\times (-T,T),\\
                                                                                  u_{n,m}=0~~~\text{on}~\partial \Omega\times (-T,T),\\
                                                                                  {u}_{n,m}(-T)=0~~\text{on }\Omega,
                                                                                  \end{array}
                                                                                  \right.  
                             \end{equation*}   
                              converges to $\chi_{\Omega_T}u_{n}$ in $L^s(-T,T,W^{1,s}_0(\Omega))$ for any $s\in\left[1,\frac{N+2}{N+1}\right)$,                                          where $\mu_{n,m}=\left(g_{n,m}\right)_t+\chi_{\Omega_T}\mu_{n}$,  $g_{n,m}(x,t)=\sigma_n(x)\int_{-T}^{t}\varphi_{2,m}(s)ds$ and $\{\varphi_{2,m}\}$ is a sequence of mollifiers in $\mathbb{R}$.\medskip\\     
  Set 
  \begin{align*}
  & E^1_{\lambda,\varepsilon}=\{\mathbb{M}(|\nabla u|)>\varepsilon^{-1/\theta}\lambda, \mathbb{M}_{1}[\omega]\leq \varepsilon^{1-\frac{1}{\theta}}\lambda\}\cap Q,~~F^1_\lambda=\{\mathbb{M}(|\nabla u|)>\lambda \}\cap Q,\\&
  E^2_{\lambda,\varepsilon}=\{\mathbb{M}(\chi_{B_2}|\nabla u|)>\varepsilon^{-1/\theta}\lambda, \mathbb{M}_{1}[\chi_{B_2}\omega]\leq \varepsilon^{1-\frac{1}{\theta}}\lambda\}\cap B_1, ~~F^2_\lambda=\{\mathbb{M}(\chi_{B_2}|\nabla u|)>\lambda \}\cap B_1,
  \end{align*}
  for $\varepsilon\in (0,1)$ and $\lambda>0$.\\
  We verify that
  \begin{equation}\label{5hh1510133}
  |E^1_{\lambda,\varepsilon}|\lesssim_{T_0/r_0}\varepsilon |\tilde{Q}_{R_3}| ~~\forall ~\lambda>0, \varepsilon\in (0,1),
  \end{equation}
 \begin{equation}\label{5hh1510133'}
     |E^2_{\lambda,\varepsilon}|\lesssim\varepsilon |\tilde{Q}_{R_2}| ~~\forall ~~\lambda> \varepsilon^{-1+\frac{1}{\theta}}||\nabla u||_{L^1(\Omega_T\cap A)}R_2^{-N-2}, \varepsilon\in (0,1)
     \end{equation}
  with  $R_3=\inf\{r_0,T_0\}/16$.\\
  In fact, we can assume that $E^1_{\lambda,\varepsilon}\not=\emptyset$. So, $|\mu|(\Omega_T)+|\sigma|(\Omega)\leq T_0^{N+1}\varepsilon^{1-\frac{1}{\theta}}\lambda$. We have
  $$
|E^1_{\lambda,\varepsilon}|\lesssim \frac{1}{\varepsilon^{-1/\theta}\lambda}\int_{\Omega_{T}}|\nabla u|dxdt.$$
By Remark \ref{5hh070420143},
  $
   \int_{\Omega_T}|\nabla u_n|dxdt\lesssim T_0 \left(|\mu_n|(\Omega_T)+|\sigma_n|(\Omega)\right)
$ for all $n$.
   Letting $n\to\infty$ we get 
  $
     \int_{\Omega_T}|\nabla u|dxdt\lesssim T_0 \left(|\mu|(\Omega_T)+|\sigma|(\Omega)\right)
    $. 
 Thus, $$
  |E^1_{\lambda,\varepsilon}|\lesssim \frac{1}{\varepsilon^{-1/\theta}\lambda} T_0 \left(|\mu|(\Omega_T)+|\sigma|(\Omega)\right) \lesssim \frac{1}{\varepsilon^{-1/\theta}\lambda} T_0^{N+2} \varepsilon^{1-\frac{1}{\theta}}\lambda = c\varepsilon |\tilde{Q}_{R_3}|.$$
 Hence, \eqref{5hh1510133} holds.\\
 For any $\lambda> \varepsilon^{-1+\frac{1}{\theta}}||\nabla u||_{L^1(\Omega_T\cap B_2)}R_2^{-N-2}$ we have 
$$
 |E^2_{\lambda,\varepsilon}|\lesssim \frac{1}{\varepsilon^{-1/\theta}\lambda}\int_{\Omega_{T}}\chi_{B_2}|\nabla u|dxdt \leq   c \varepsilon |\tilde{Q}_{R_2}|.$$
 Hence, \eqref{5hh1510133'} holds.\medskip\\
 Next we verify that for all $(x,t)\in Q$, $r\in (0,R_3]$ and $\lambda>0,
 \varepsilon\in (0,1)$, we have
 $
    \tilde{Q}_r(x,t)\cap Q\subset F_\lambda^1
  $
    if $
       |E^1_{\lambda,\varepsilon}\cap \tilde{Q}_r(x,t)|\geq c\varepsilon |\tilde{Q}_r(x,t)|
      $
       where the constant $c$ does not depend on $\lambda$ and $\varepsilon$.
       Indeed,
 take $(x,t)\in Q$ and $0<r\leq R_3$.
            Now assume that $\tilde{Q}_r(x,t)\cap Q\cap (\mathbb{R}^{N+1}\backslash F^1_\lambda)\not= \emptyset$ and $E^1_{\lambda,\varepsilon}\cap \tilde{Q}_r(x,t)\not = \emptyset$ i.e, there exist $(x_1,t_1),(x_2,t_2)\in \tilde{Q}_r(x,t)\cap Q$ such that $\mathbb{M}(|\nabla u|)(x_1,t_1)\leq \lambda$ and $\mathbb{M}_1[\omega](x_2,t_2)\le \varepsilon^{1-\frac{1}{\theta}} \lambda$.  
             We need to prove that
             \begin{equation}\label{5hh1510134}
                    |E^1_{\lambda,\varepsilon}\cap \tilde{Q}_r(x,t))|<c \varepsilon |\tilde{Q}_r(x,t)| 
                                     \end{equation}
                                   Obviously, we have for all $(y,s)\in \tilde{Q}_r(x,t)$ there holds 
$$
                                     \mathbb{M}(|\nabla u|)(y,s)\leq \max\left\{\mathbb{M}\left(\chi_{\tilde{Q}_{2r}(x,t)}|\nabla u|\right)(y,s),3^{N+2}\lambda\right\}.
$$
 So, for all $\lambda>0$ and $\varepsilon\in(0,\varepsilon_0)$ with $\varepsilon_0\leq 3^{-(N+2)\theta}$,
          \begin{align}
         E^1_{\lambda,\varepsilon}\cap \tilde{Q}_r(x,t)=\left\{\mathbb{M}\left(\chi_{\tilde{Q}_{2r}(x,t)}|\nabla u|\right)>\varepsilon^{-1/\theta}\lambda, \mathbb{M}_{1}[\omega]\leq \varepsilon^{1-\frac{1}{\theta}}\lambda\right\}\cap Q \cap \tilde{Q}_r(x,t).\label{5hh1510138}
          \end{align}       
          In particular, $E^1_{\lambda,\varepsilon}\cap \tilde{Q}_r(x,t)=\emptyset$ if $\overline{B}_{4r}(x)\subset\subset \mathbb{R}^{N}\backslash \Omega$. 
          Thus, it is enough to consider the case $B_{4r}(x)\subset\subset\Omega$ and $B_{4r}(x)\cap\Omega\not=\emptyset$.\\          
          We consider the case  $B_{4r}(x)\subset\subset\Omega$. Let $w_{n,m}$ be as in Theorem \ref{5hh1510135} with $Q_{2R}=Q_{4r}(x,t_0)$ and $u=u_{n,m}$ where $t_0=\min\{t+2r^2,T\}$. We have
          \begin{align}
          \label{5hh1510136}
                           &\fint_{Q_{4r}(x,t_0)}|\nabla u_{n,m}-\nabla w_{n,m}|\lesssim \frac{|\mu_{n,m}|(Q_{4r}(x,t_0))}{r^{N+1}},\\&
                          \fint_{Q_{2r}(x,t_0)}|\nabla w_{n,m}|^\theta \lesssim\left(\fint_{Q_{4r}(x,t_0)}|\nabla w_{n,m}|\right)^\theta. \label{5hh1510137} 
          \end{align}                                         
         From  \eqref{5hh1510138}, we have
         \begin{align*}
        & |E^1_{\lambda,\varepsilon}\cap \tilde{Q}_r(x,t)|\leq  |\{\{\mathbb{M}\left(\chi_{\tilde{Q}_{2r}(x,t)}|\nabla w_{n,m}|\right)>\varepsilon^{-1/\theta}\lambda/4\}\cap \tilde{Q}_r(x,t)\}| \\&~~~~~~~~~+|\{\mathbb{M}\left(\chi_{\tilde{Q}_{2r}(x,t)}|\nabla u_{n,m}-\nabla w_{n,m}|\right)>\varepsilon^{-1/\theta}\lambda/4\} \cap \tilde{Q}_r(x,t)|
                   \\&~~~~~~~~~+|\{\mathbb{M}\left(\chi_{\tilde{Q}_{2r}(x,t)}|\nabla u_{n,m}-\nabla u_{n}|\right)>\varepsilon^{-1/\theta}\lambda/4\} \cap \tilde{Q}_r(x,t)|
                   \\&~~~~~~~~~+|\{\mathbb{M}\left(\chi_{\tilde{Q}_{2r}(x,t)}|\nabla u_{n}-\nabla u|\right)>\varepsilon^{-1/\theta}\lambda/4\} \cap \tilde{Q}_r(x,t)|\\&~~~~~~~\lesssim \varepsilon \lambda^{-\theta}\int_{\tilde{Q}_{2r}(x,t)}|\nabla w_{n,m}|^\theta + \varepsilon^{1/\theta}\lambda^{-1}\int_{\tilde{Q}_{2r}(x,t)}|\nabla u_{n,m}-\nabla w_{n,m}|\\&~~~~~~~~~+ \varepsilon^{1/\theta}\lambda^{-1}\int_{\tilde{Q}_{2r}(x,t)}|\nabla u_{n,m}-\nabla u_n|+ \varepsilon^{1/\theta}\lambda^{-1}\int_{\tilde{Q}_{2r}(x,t)}|\nabla u_n-\nabla u|. 
         \end{align*}      
          Thanks to \eqref{5hh1510136} and \eqref{5hh1510137} we can continue 
     \begin{align*}
      &|E^1_{\lambda,\varepsilon}\cap \tilde{Q}_r(x,t)| \lesssim\varepsilon \lambda^{-\theta} |\tilde{Q}_{r}(x,t)|\left(\fint_{Q_{4r}(x,t_0)}|\nabla u_{n,m}| dxdt\right)^\theta \\&~+ \varepsilon \lambda^{-\theta} |\tilde{Q}_{r}(x,t)|\left(\frac{|\mu_{n,m}|(Q_{4r}(x,t_0))}{r^{N+1}}\right)^\theta 
                           + \varepsilon^{1/\theta}\lambda^{-1}|\tilde{Q}_{r}(x,t)|\frac{|\mu_{n,m}|(Q_{4r}(x,t_0))}{r^{N+1}}
                           \\&~+\varepsilon^{1/\theta}\lambda^{-1}\int_{Q_{2r}(x,t_0)}|\nabla u_{n,m}-\nabla u_n|+\varepsilon^{1/\theta}\lambda^{-1}\int_{Q_{2r}(x,t_0)}|\nabla u_n-\nabla u|.
     \end{align*} 
    Letting $m\to\infty$ and $n\to\infty$, we get
 \begin{align*}
 &|E_{\lambda,\varepsilon}\cap \tilde{Q}_r(x,t)|
 \lesssim\varepsilon \lambda^{-\theta} |\tilde{Q}_{r}(x,t)| \left(\fint_{Q_{4r}(x,t_0)}|\nabla u|\right)^\theta  \\&~~~~+ \varepsilon \lambda^{-\theta} |\tilde{Q}_{r}(x,t)|\left(\frac{\omega(\overline{Q_{4r}(x,t_0)})}{r^{N+1}}\right)^\theta  
 + \varepsilon^{1/\theta}\lambda^{-1}|\tilde{Q}_{r}(x,t)|\frac{\omega(\overline{Q_{4r}(x,t_0)})}{r^{N+1}}.                                                  
 \end{align*}           
           Since, $\mathbb{M}(|\nabla u|)(x_1,t_1)\leq \lambda$ and $\mathbb{M}_1[\omega](x_2,t_2)\le \varepsilon^{1-\frac{1}{\theta}} \lambda$ we have 
           \begin{align*}
           &\int_{Q_{4r}(x,t_0)}|\nabla u| \leq \int_{\tilde{Q}_{9r}(x_1,t_1)}|\nabla u|\leq |\tilde{Q}_{9r}(x_1,t_1)|\lambda,\\&
            \omega(\overline{Q_{4r}(x,t_0)})\leq \omega(\tilde{Q}_{8r}(x,t))\leq \omega(\tilde{Q}_{9r}(x_2,t_2)) \leq  \varepsilon^{1-\frac{1}{\theta}} \lambda (9r)^{N+1}.
           \end{align*}         
           Thus 
         $$
            |E_{\lambda,\varepsilon}\cap \tilde{Q}_r(x,t)|                                  \leq c \varepsilon |\tilde{Q}_{r}(x,t)|.
          $$       
            Next,  we consider the case  $B_{4r}(x)\cap\Omega\not=\emptyset$. Let $x_3\in \partial\Omega$ such that $|x_3-x|=\text{dist}(x,\partial\Omega)$. Let $w_n$ be as in Theorem \ref{5hh24093}  with $\tilde{\Omega}_{6R}=\tilde{\Omega}_{16r}(x_3,t_0)$ and $u=u_{n,m}$ where $t_0=\min\{t+2r^2,T\}$. We have $Q_{12r}(x,t_0)\subset Q_{16r}(x_3,t_0)$,
            \begin{align*}
           & \fint_{Q_{12r}(x,t_0)}|\nabla u_{n,m}-\nabla w_{n,m}|  \lesssim \frac{|\mu_{n,m}|(\tilde{\Omega}_{16r}(x_3,t_0))}{r^{N+1}},
           \\&\left(\fint_{Q_{2r}(x,t_0)}|\nabla w_{n,m}|^{\theta} \right)^{\frac{1}{\theta}}\lesssim\fint_{Q_{12r}(x,t_0)}|\nabla w_{n,m}|.
            \end{align*} As above we also obtain
                         \begin{align*}
                         &|E^1_{\lambda,\varepsilon}\cap \tilde{Q}_r(x,t)|\lesssim\varepsilon \lambda^{-\theta} |\tilde{Q}_{r}(x,t)|\left(\fint_{Q_{12r}(x,t_0)}|\nabla u| dxdt\right)^\theta \\&~~~+ \varepsilon \lambda^{-\theta} |\tilde{Q}_{r}(x,t)|\left(\frac{\omega(\overline{Q_{16r}(x_3,t_0)})}{r^{N+1}}\right)^\theta              + \varepsilon^{1/\theta}\lambda^{-1}|\tilde{Q}_{r}(x,t)|\frac{\omega(\overline{Q_{16r}(x_3,t_0)})}{r^{N+1}}.
                         \end{align*}                                 
Since, $ \mathbb{M}(|\nabla u|)(x_1,t_1)\leq \lambda$ and $\mathbb{M}_1[\omega](x_2,t_2)\le \varepsilon^{1-\frac{1}{\theta}} \lambda$ we have 
\begin{align*}
	&\int_{Q_{12r}(x,t_0)}|\nabla u| dxdt\leq \int_{\tilde{Q}_{24r}(x,t)}|\nabla u| dxdt\leq \int_{\tilde{Q}_{25r}(x_1,t_1)}|\nabla u| dxdt\leq |\tilde{Q}_{25r}(x_1,t_1)|\lambda,\\&
	\omega(\overline{Q_{16r}(x_3,t_0)})\leq \omega(\tilde{Q}_{32r}(x_3,t))\leq \omega(\tilde{Q}_{36r}(x,t))\leq \omega(\tilde{Q}_{37r}(x_2,t_2)) \leq  \varepsilon^{1-\frac{1}{\theta}} \lambda (37r)^{N+1}.
\end{align*}
           Thus 
          $$
                    |E^1_{\lambda,\varepsilon}\cap \tilde{Q}_r(x,t)|\leq c \varepsilon |\tilde{Q}_{r}(x,t)|.
                   $$                         
                    Hence, \eqref{5hh1510134} holds.\medskip\\
                    Similarly, we also prove that 
                    for all $(x,t)\in B_1$ and $r\in (0,R_2]$ and $\lambda>0,
                     \varepsilon\in (0,1)$ we have
                        $
                        \tilde{Q}_r(x,t)\cap B_1\subset F_\lambda^2
                       $
                        if $
                           |E^2_{\lambda,\varepsilon}\cap \tilde{Q}_r(x,t)|\geq c\varepsilon |\tilde{Q}_r(x,t)|
                          $
                           where the constant $c$ does not depend on $\lambda$ and $\varepsilon$. 
                     We apply Lemma \ref{5hhvitali3} for $E=E^1_{\lambda,\varepsilon},F=F^1_\lambda$ to  get  \eqref{5hh15101310} and  for $E=E^2_{\lambda,\varepsilon},F=F^2_\lambda$  get  \eqref{5hh070120141}.
                             The proof is complete.                 
  \end{proof}\medskip\\
   
    \begin{proof}[Proof of Theorem \ref{5hh0701201411}]
    By theorem \ref{5hh1510139}, there exist constants $c>0$, $0<\varepsilon_0<1$ and  a renormalized solution $u$ of equation \eqref{5hhparabolic1} with data $\mu$,  $u_0=\sigma$ such that for any $\varepsilon\in (0,1)$, $\lambda>0$
$$|\{\mathbb{M}(|\nabla u|)>\varepsilon^{-1/\theta}\lambda, \mathbb{M}_{1}[\omega]\leq \varepsilon^{1-\frac{1}{\theta}}\lambda\}\cap Q|\leq c\varepsilon |\{\mathbb{M}(|\nabla u|)>\lambda \}\cap Q|.      $$                                           
                                             Therefore, 
 if $0<s<\infty$ 
 \begin{align*}
  ||\mathbb{M}(|\nabla u|)||_{L^{p,s}(Q)}^s&= \varepsilon^{-s/\theta}p\int_{0}^{\infty}\lambda^s|\{(x,t)\in Q:\mathbb{M}(|\nabla u|)>\varepsilon^{-1/\theta}\lambda \}|^{\frac{s}{p}}\frac{d\lambda}{\lambda}\\&\leq 
              c \varepsilon^{\frac{s(\theta-p)}{\theta p}}p\int_{0}^{\infty}\lambda^s|\{(x,t)\in Q:\mathbb{M}(|\nabla u|)>\lambda \}|^{\frac{s}{p}}\frac{d\lambda}{\lambda}\\&+ \varepsilon^{-s/\theta}p\int_{0}^{\infty}\lambda^s|\{(x,t)\in Q:\mathbb{M}_{1}[\omega]>\varepsilon^{1-\frac{1}{\theta}}\lambda \}|^{\frac{s}{p}}\frac{d\lambda}{\lambda}\\&= c \varepsilon^{\frac{s(\theta-p)}{\theta p}}||\mathbb{M}(|\nabla u|)||_{L^{p,s}(Q)}^s+ \varepsilon^{-s}||\mathbb{M}_{1}[\omega]||_{L^{p,s}(Q)}^s.
 \end{align*}         
            Since $p<\theta $, we can choose $0<\varepsilon<\varepsilon_0$ such that $ c \varepsilon^{\frac{s(\theta-p)}{\theta p}}\le 1/2$. So,  we get the result for case $0<s<\infty$. Similarly, we also get the result for case $s=\infty$. \medskip\\
        Also, we get \eqref{5hh200320142} by using \eqref{5hh230120141'} in Proposition \ref{5hh230120143}, \eqref{5hh240120145'} in Proposition \ref{5hh240120146}. The proof is complete.
    \end{proof} 
    \begin{remark}
    Thanks to Proposition \ref{5hh23101315} we have that for any $s\in\left(\frac{N+2}{N+1},\frac{N+2+\theta}{N+2}\right)$  if $\mu\in L^{\frac{(s-1)(N+2)}{s},\infty}(\Omega_T)$ and $\sigma\equiv 0$ then
$$
                                            |||\nabla u|^s||_{L^{\frac{(s-1)(N+2)}{s},\infty}(\Omega_T)}\lesssim_{s,c_0,T_0/r_0}||\mu||^s_{L^{\frac{(s-1)(N+2)}{s},\infty}(\Omega_T)}.
$$
                                            
    \end{remark}     
        As the  proof of Theorem \ref{5hh0701201411}, we also get
        \begin{theorem}\label{5hh070120143} Suppose that $\mathbb{R}^N\backslash\Omega$ is uniformly $2-$thick with constants $c_0,r_0$. Let $\theta$ be as in Theorem \ref{5hh1510139}. Let $1 \leq p< \theta $, $0<s \leq \infty$ and  $\mu\in \mathfrak{M}_b(\Omega_T)$, $\sigma\in\mathfrak{M}_b(\Omega)$, set $\omega=|\mu|+|\sigma|\otimes\delta_{\{t=0\}}$.  There exists  a distributional solution $u$ of equation \eqref{5hhparabolic1} with data $\mu$ and $u_0=\sigma$ such that 
                \begin{align}
                \nonumber||\mathbb{M}(\chi_{\tilde{Q}_{4R}(y_0,s_0)}|\nabla u|)||_{L^{p,s}(\tilde{Q}_{R}(y_0,s_0))}&\lesssim_{p,s,c_0}R^{\frac{N+2}{p}}\inf\{r_0,R\}^{-N-2}||\nabla u||_{L^1(\tilde{Q}_{4R}(y_0,s_0))}\\&~~+  ||\mathbb{M}_1[\chi_{\tilde{Q}_{4R}(y_0,s_0)}\omega]||_{L^{p,s}(\tilde{Q}_{R}(y_0,s_0))},\label{5hh070120142}
                \end{align}               
                    for any $\tilde{Q}_{R}(y_0,s_0)\subset\mathbb{R}^{N+1}$. Moreover, if $\sigma\in L^1(\Omega)$ then $u$ is a renormalized solution.                
                \end{theorem}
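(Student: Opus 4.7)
The plan is to deduce the local estimate directly from the localized good-$\lambda$ inequality \eqref{5hh070120141} in Theorem \ref{5hh1510139}, using the same Lorentz-space level-set computation that proved Theorem \ref{5hh0701201411}, but with one modification: the integration in $\lambda$ must be split at a threshold because \eqref{5hh070120141} is only valid for $\lambda$ large.

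Let $u$ be the distribution solution produced by Theorem \ref{5hh1510139} (which coincides with the one from Theorem \ref{5hh0701201411}). Fix $\tilde{Q}_R(y_0,s_0)$ and set $B_1=\tilde{Q}_R(y_0,s_0)$, $B_2=\tilde{Q}_{4R}(y_0,s_0)$, $R_2=\inf\{r_0,R\}/16$, and
\[
\lambda_0:=\varepsilon^{-1+\frac{1}{\theta}}\,R_2^{-N-2}\,\|\nabla u\|_{L^1(\Omega_T\cap B_2)}.
\]
By \eqref{5hh070120141} of Theorem \ref{5hh1510139}, for every $\varepsilon\in(0,\varepsilon_2)$ and every $\lambda>\lambda_0$,
\[
\bigl|\{\mathbb{M}(\chi_{B_2}|\nabla u|)>\varepsilon^{-1/\theta}\lambda\}\cap B_1\bigr|\leq C_1\varepsilon\,\bigl|\{\mathbb{M}(\chi_{B_2}|\nabla u|)>\lambda\}\cap B_1\bigr|+\bigl|\{\mathbb{M}_1[\chi_{B_2}\omega]>\varepsilon^{1-1/\theta}\lambda\}\bigr|.
\]

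First I would treat the case $0<s<\infty$ by writing
\[
\|\mathbb{M}(\chi_{B_2}|\nabla u|)\|^s_{L^{p,s}(B_1)}=\varepsilon^{-s/\theta}p\int_0^\infty\lambda^s\bigl|\{\mathbb{M}(\chi_{B_2}|\nabla u|)>\varepsilon^{-1/\theta}\lambda\}\cap B_1\bigr|^{s/p}\frac{d\lambda}{\lambda}
\]
and splitting the $\lambda$-integral at $\lambda_0$. On $(0,\lambda_0]$ the trivial bound by $|B_1|^{s/p}$ yields a contribution controlled by $c\,\lambda_0^s|B_1|^{s/p}$. On $(\lambda_0,\infty)$ the good-$\lambda$ inequality produces
\[
\|\mathbb{M}(\chi_{B_2}|\nabla u|)\|^s_{L^{p,s}(B_1)}\leq c\,\varepsilon^{\frac{s(\theta-p)}{\theta p}}\|\mathbb{M}(\chi_{B_2}|\nabla u|)\|^s_{L^{p,s}(B_1)}+c\,\varepsilon^{-s}\|\mathbb{M}_1[\chi_{B_2}\omega]\|^s_{L^{p,s}(B_1)}+c\,\lambda_0^s|B_1|^{s/p}.
\]
Since $p<\theta$, choosing $\varepsilon\in(0,\varepsilon_2)$ small enough absorbs the first term on the right into the left. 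Substituting the definition of $\lambda_0$ gives
\[
\lambda_0^s|B_1|^{s/p}\leq c\,R^{s(N+2)/p}\inf\{r_0,R\}^{-s(N+2)}\|\nabla u\|^s_{L^1(B_2)},
\]
which is exactly the first term in the claimed inequality \eqref{5hh070120142}. The case $s=\infty$ is handled by the same scheme, taking a supremum in $\lambda$ rather than integrating, and again splitting at $\lambda_0$.

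Essentially no new obstacle arises: every ingredient (the good-$\lambda$ inequality localized to $B_1$, the Lorentz-space level-set identity, and the Lemma \ref{5hhvitali1}-type covering used inside Theorem \ref{5hh1510139}) is already in place. The only subtle point is the bookkeeping of the threshold $\lambda_0$: one must verify that $\lambda_0$ satisfies the restriction $\lambda_0>\varepsilon^{-1+1/\theta}\|\nabla u\|_{L^1(\Omega_T\cap B_2)}R_2^{-N-2}$ imposed in \eqref{5hh070120141}, which holds by construction, and that the $|B_1|^{s/p}$ factor combined with $\lambda_0^s$ reproduces the $R^{s(N+2)/p}\inf\{r_0,R\}^{-s(N+2)}$ factor in the statement. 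The rest is algebra identical to the absorption argument in Theorem \ref{5hh0701201411}.
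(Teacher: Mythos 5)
Your argument is correct and is essentially the proof the paper has in mind: the paper only remarks ``As the proof of Theorem \ref{5hh1510139}, we also get'' Theorem \ref{5hh070120143}, meaning one applies the localized good-$\lambda$ inequality \eqref{5hh070120141} together with the absorption computation already used for Theorem \ref{5hh0701201411}. Your handling of the threshold $\lambda_0$ (below which \eqref{5hh070120141} is unavailable) via the trivial bound $|B_1|^{s/p}$, followed by the algebra $\lambda_0^s|B_1|^{s/p}\le c\,R^{s(N+2)/p}\inf\{r_0,R\}^{-s(N+2)}\|\nabla u\|_{L^1(B_2)}^s$, correctly reproduces the first term in \eqref{5hh070120142}.
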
              
                  \begin{proof}[Proof of Theorem \ref{5hh0701201412}] Let $\{u_{n,m}\}$ and $\mu_{n,m}$ be in the proof of Theorem \ref{5hh1510139}.  From Corollary \ref{5hh060120148} and \ref{5hh060120149} we assert:
                                    for $2-\inf\{\beta_1,\beta_2\}<\gamma<N+2$, $0<\rho\leq T_0$ we have 
$$
                                             \int_{Q_\rho(y,s)}|\nabla u_{n,m}| \lesssim_{c_0,\gamma,T_0/r_0} \rho^{N+3-\gamma}||\mathbb{M}_{\gamma}[|\mu_{n,m}|]||_{L^\infty(\Omega\times (-T,T))},$$
                                             where $\beta_1,\beta_2$ are constants in Theorem \ref{5hh1510135} and Theorem \ref{5hh24093}. 
It is easy to see that   $$||\mathbb{M}_{\gamma}[|\mu_{n,m}|]||_{L^\infty(\Omega\times (-T,T))}\leq ||\mathbb{M}_{\gamma}[\omega]||_{L^\infty(\Omega\times (-T,T))} = ||\mathbb{M}_{\gamma}[\omega]||_{L^\infty(\Omega_T)},$$ for any $n,m$ large enough. \\
Letting $m\to\infty,n\to\infty$, yield 
  $$
         \int_{Q_\rho(y,s)}|\nabla u|dxdt \lesssim_{c_0,\gamma,T_0/r_0} \rho^{N+3-\gamma}||\mathbb{M}_{\gamma}[\omega]||_{L^\infty(\Omega_T)}$$                 By Theorem \ref{5hh070120143}  we get 
\begin{align*}
 |||\nabla u|||_{L^{p,s}(\tilde{Q}_{R}(y_0,s_0)\cap\Omega_T)}&\lesssim_{p,s,c_0} c(T_0/r_0)R^{\frac{N+2}{p}+1-\gamma}||\mathbb{M}_{\gamma}[\omega]||_{L^\infty(\Omega_T)}\\&~~+  ||\mathbb{M}_1[\chi_{\tilde{Q}_{R}(y_0,s_0)}\omega]||_{L^{p,s}(\tilde{Q}_{R}(y_0,s_0))}
\end{align*}                                              
 for any $\tilde{Q}_{R}(y_0,s_0)\subset\mathbb{R}^{N+1}$ and $0<R\leq T_0$. It follows \eqref{5hh070520141}.

               Finally, if  $\mu\in L_{*}^{\frac{(\gamma-1)p}{\gamma},\frac{(\gamma-1)s}{\gamma};(\gamma-1)p}(\Omega_T)  $ and $\sigma\equiv 0$, then clearly $u$ is a unique renormalized solution. It suffices to show that 
               \begin{align}&\label{5hh070520143}
             ~~~~~~||\mathbb{M}_{\gamma}[|\mu|]||_{L^\infty(\Omega_T)}\lesssim c_1 ||\mu||_{L_{*}^{\frac{(\gamma-1)p}{\gamma},\frac{(\gamma-1)s}{\gamma};(\gamma-1)p}(\Omega_T)},\\&\label{5hh070520144}
             R^{\frac{p(\gamma-1)-N-2}{p}}||\mathbb{M}_1[\chi_{\tilde{Q}_{R}(y,s_0)}|\mu|]||_{L^{p,s}(\tilde{Q}_{R}(y_0,s_0))}\lesssim c_1 ||\mu||_{L_{*}^{\frac{(\gamma-1)p}{\gamma},\frac{(\gamma-1)s}{\gamma};(\gamma-1)p}(\Omega_T)}
               \end{align}  
                  for any $\tilde{Q}_{R}(y_0,s_0)\subset\mathbb{R}^{N+1}$ and $0<R\leq T_0$,
               where $c_1=c_1(p,s,\gamma,c_0,T_0/r_0)$.
               \\
               In fact, for $0<\rho<T_0$ and $(x,t)\in\Omega_T$ we have 
               \begin{align*}
               ||\mu||_{L_{*}^{\frac{(\gamma-1)p}{\gamma},\frac{(\gamma-1)s}{\gamma};(\gamma-1)p}(\Omega_T)}&\geq ||\mu||_{L_{*}^{\frac{(\gamma-1)p}{\gamma},\infty;(\gamma-1)p}(\Omega_T)}
               \\&\geq \rho^{\frac{(\gamma-1)p-N-2}{\frac{(\gamma-1)p}{\gamma}}}||\mu||_{L^{\frac{(\gamma-1)p}{\gamma},\infty}(\tilde{Q}_\rho(x,t)\cap\Omega_T)}
               \\&\gtrsim c_1 \rho^{\frac{(\gamma-1)p-N-2}{\frac{(\gamma-1)p}{\gamma}}}|\tilde{Q}_\rho(x,t)|^{-1+\frac{\gamma}{(\gamma-1)p}}|\mu|(\tilde{Q}_\rho(x,t)\cap\Omega_T)
               \\&\gtrsim c_1 \frac{|\mu|(\tilde{Q}_\rho(x,t)\cap\Omega_T)}{\rho^{N+2-\gamma}},
               \end{align*}
               which obviously implies \eqref{5hh070520143}.\\
               Next,  note that 
               \begin{align*}
               \mathbb{M}_1[\chi_{\tilde{Q}_{R}(y_0,s_0)}|\mu|](x,t)\lesssim\left(\mathbb{M}\left(\chi_{\tilde{Q}_{R}(y_0,s_0)}|\mu|\right)(x,t)\right)^{1-\frac{1}{\gamma}}||\mu||_{L_{*}^{\frac{(\gamma-1)p}{\gamma},\frac{(\gamma-1)s}{\gamma};(\gamma-1)p}(\Omega_T)}^{\frac{1}{\gamma}}.
               \end{align*}
   We derive 
   \begin{align*}
    & R^{\frac{p(\gamma-1)-N-2}{p}}||\mathbb{M}_1[\chi_{\tilde{Q}_{R}(y,s_0)}|\mu|]||_{L^{p,s}(\tilde{Q}_{R}(y_0,s_0))}\\&~~~\lesssim R^{\frac{p(\gamma-1)-N-2}{p}}||\mathbb{M}\left(\chi_{\tilde{Q}_{R}(y_0,s_0)}|\mu|\right)||_{L^{\frac{(\gamma-1)p}{\gamma},\frac{(\gamma-1)s}{\gamma}}(\tilde{Q}_{R}(y_0,s_0))}^{1-\frac{1}{\gamma}}||\mu||_{L_{*}^{\frac{(\gamma-1)p}{\gamma},\frac{(\gamma-1)s}{\gamma};(\gamma-1)p}(\Omega_T)}^{\frac{1}{\gamma}}
    \\&~~~\lesssim R^{\frac{p(\gamma-1)-N-2}{p}}|||\mu|||_{L^{\frac{(\gamma-1)p}{\gamma},\frac{(\gamma-1)s}{\gamma}}(\tilde{Q}_{R}(y_0,s_0))}^{1-\frac{1}{\gamma}}||\mu||_{L_{*}^{\frac{(\gamma-1)p}{\gamma},\frac{(\gamma-1)s}{\gamma};(\gamma-1)p}(\Omega_T)}^{\frac{1}{\gamma}}.
   \end{align*}
  Here we have used the boundedness property of $\mathbb{M}$ in  $L^{\frac{(\gamma-1)p}{\gamma},\frac{(\gamma-1)s}{\gamma}}(\mathbb{R}^{N+1})$ for $\frac{(\gamma-1)p}{\gamma}>1$. Therefore, immediately we get  \eqref{5hh070520144}.   This completes the proof.          
                
                  \end{proof}
   \subsection{Global estimates on Reifenberg flat domains }
                  Now we prove results for Reifenberg flat domain. First,
           we will use Lemma \ref{5hh24092}, \ref{5hh16101310} and Lemma \ref{5hhvitali2} to get the following result.
  \begin{theorem}\label{5hh23101312}  Suppose that $A$ satisfies \eqref{5hhcondc}. Let $s_1,s_2$ be in Lemma \ref{5hh21101319} and \ref{5hh1610139}, set $s_0=\max\{s_1,s_2\}$. Let $w\in A_\infty$, $\mu\in\mathfrak{M}_b(\Omega_T)$, $\sigma\in\mathfrak{M}_b(\Omega)$, set $\omega=|\mu|+|\sigma|\otimes\delta_{\{t=0\}}$. There exists a distributional solution of \eqref{5hhparabolic1} with data $\mu$ and $u_0=\sigma$ such that the following holds. For any $\varepsilon>0,R_0>0$ one finds  $\delta_1=\delta_1(N,\Lambda_1,\Lambda_2,\varepsilon,[w]_{A_\infty})\in (0,1)$ and $\delta_2=\delta_2(N,\Lambda_1,\Lambda_2,\varepsilon,[w]_{A_\infty},T_0/R_0)\in (0,1)$ and $\Lambda=\Lambda(N,\Lambda_1,\Lambda_2)>0$ such that if $\Omega$ is  $(\delta_1,R_0)$- Reifenberg flat domain and $[\mathcal{A}]_{s_0}^{R_0}\le \delta_1$ then 
   \begin{equation}\label{5hh16101311}
   w(\{\mathbb{M}(|\nabla u|)>\Lambda\lambda, \mathbb{M}_1[\omega]\le \delta_2\lambda \}\cap \Omega_T)\le C\varepsilon w(\{ \mathbb{M}(|\nabla u|)> \lambda\}\cap \Omega_T)
   \end{equation}
   for all $\lambda>0$, 
      where the constant $C$  depends only on $N,\Lambda_1,\Lambda_2, T_0/R_0, [w]_{A_\infty}$.\\
      Furthermore, if $\sigma\in L^1(\Omega)$ then  $u$ is a renormalized solution.  
  \end{theorem}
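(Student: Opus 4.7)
The plan is to prove a weighted good-$\lambda$ inequality via the Vitali-type covering Lemma \ref{5hhvitali2}, mirroring the strategy of Theorem \ref{5hh1510139} but replacing the self-improving higher integrability (used under 2-thickness) by the much finer approximation by gradient-$L^\infty$ reference solutions that is available on Reifenberg flat domains under small-BMO nonlinearity. First, I would construct the solution $u$ by approximating $\mu$ and $\sigma$ by smooth measures $\mu_n \in C_c^\infty(\Omega_T)$ and $\sigma_n \in C_c^\infty(\Omega)$ as in the proof of Theorem \ref{5hh141013112}, taking the classical solutions $u_n$ and extracting a subsequence converging to a distribution (renormalized, if $\sigma \in L^1(\Omega)$) solution $u$ via Proposition \ref{5hhmun} and Theorem \ref{5hhsta}. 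Stability of the maximal functions under this limit lets me prove the good-$\lambda$ inequality at the level of $u_n$ and then pass to the limit.

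Fix $\varepsilon > 0$, $R_0 > 0$, $\Lambda > 1$ (to be chosen later) and $\lambda > 0$. Set $E = \{\mathbb{M}(|\nabla u|) > \Lambda \lambda, \mathbb{M}_1[\omega] \le \delta_2 \lambda\} \cap \Omega_T$ and $F = \{\mathbb{M}(|\nabla u|) > \lambda\} \cap \Omega_T$. Cover $\overline{\Omega}$ by finitely many balls $\{B_r(y_i)\}_{i=1}^L$ with common radius $r \le R_0/4$ comparable to $R_0$; split the time axis into intervals of length $r^2/2$. The first hypothesis of Lemma \ref{5hhvitali2} — $w(E) < \varepsilon\, w(\tilde{Q}_r(y_i, s_j))$ — follows once $r$ is small enough (depending on $T_0/R_0$), using the $A_\infty$ strong doubling property of $w$ together with the global bound $|||\nabla u|||_{L^1(\Omega_T)} \lesssim T_0(|\mu|(\Omega_T) + |\sigma|(\Omega)) \lesssim T_0^{N+2} \delta_2 \lambda$, which is valid whenever $E \neq \emptyset$.

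The heart of the argument is verifying the local implication: if $(x,t) \in \Omega_T$, $\rho \in (0, 2r]$, and $w(E \cap \tilde{Q}_\rho(x,t)) \ge \varepsilon\, w(\tilde{Q}_\rho(x,t))$, then $\tilde{Q}_\rho(x,t) \cap \Omega_T \subset F$. I argue by contradiction: assume there exist $(x_0,t_0),(x_1,t_1) \in \tilde{Q}_\rho(x,t) \cap \Omega_T$ with $\mathbb{M}(|\nabla u|)(x_0,t_0) \le \lambda$ and $\mathbb{M}_1[\omega](x_1,t_1) \le \delta_2 \lambda$. Split into two cases at scale $\rho$:
\begin{itemize}
\item[(i)] \emph{Interior}: if $B_{8\rho}(x) \subset \Omega$, apply Lemma \ref{5hh24092} on a cylinder $Q_{2R} \subset \tilde{Q}_{9\rho}(x_0,t_0)$ to produce $v$ with $\|\nabla v\|_{L^\infty} \lesssim \fint |\nabla u| + |\mu|/\rho^{N+1} \lesssim \lambda + \delta_2 \lambda$ and $\fint|\nabla u - \nabla v| \lesssim ([A]_{s_1}^{R_0} + 1)\delta_2 \lambda$.
\item[(ii)] \emph{Boundary}: if $B_{8\rho}(x) \cap \partial\Omega \neq \emptyset$, pick $x_* \in \partial\Omega$ near $x$ and apply Lemma \ref{5hh16101310} on a cylinder of comparable size around $(x_*,t)$ to obtain $V$ with $\|\nabla V\|_{L^\infty} \lesssim \lambda + \delta_2\lambda$ and $\fint |\nabla u - \nabla V| \lesssim (\varepsilon_0 + [A]_{s_2}^{R_0})\lambda + (\varepsilon_0 + 1 + [A]_{s_2}^{R_0})\delta_2\lambda$, where $\varepsilon_0$ is the approximation parameter of that lemma and is made small by choosing $\delta_1$ small.
\end{itemize}
Choosing $\Lambda = 2C_*$ where $C_*$ dominates the $L^\infty$ bounds of the reference solutions, one has on $\tilde{Q}_\rho(x,t) \cap \Omega_T$
\begin{equation*}
\{\mathbb{M}(|\nabla u|) > \Lambda \lambda\} \subset \{\mathbb{M}\bigl(\chi_{\tilde{Q}_{c\rho}(x,t)}|\nabla u - \nabla V|\bigr) > (\Lambda/2)\lambda\},
\end{equation*}
so by the weak-$(1,1)$ bound for $\mathbb{M}$ and the comparison estimates,
\begin{equation*}
|E \cap \tilde{Q}_\rho(x,t)| \le \frac{C}{\lambda}\int_{\tilde{Q}_{c\rho}(x,t)\cap\Omega_T}|\nabla u - \nabla V|\,dxdt \le C(\varepsilon_0 + [A]_{s_0}^{R_0} + \delta_2)|\tilde{Q}_\rho(x,t)|.
\end{equation*}
Using the $A_\infty$ property of $w$, this yields $w(E \cap \tilde{Q}_\rho(x,t)) \le C[w]_{A_\infty}(\varepsilon_0 + [A]_{s_0}^{R_0} + \delta_2)^\nu w(\tilde{Q}_\rho(x,t))$, which is strictly less than $\varepsilon\, w(\tilde{Q}_\rho(x,t))$ once $\delta_1$ (hence $\varepsilon_0$ and $[A]_{s_0}^{R_0}$) and $\delta_2$ are chosen small enough in terms of $N, \Lambda_1, \Lambda_2, \varepsilon, [w]_{A_\infty}$, contradicting the assumption. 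Lemma \ref{5hhvitali2} then yields \eqref{5hh16101311}.

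The main obstacle is the boundary case: ensuring that $\varepsilon_0$ in the approximation Lemma \ref{5hh16101310} can be forced as small as needed (via the Reifenberg flatness $\delta_1$ and the BMO-smallness $[A]_{s_0}^{R_0} \le \delta_1$), and that the resulting comparison error is controlled by the same $\delta_2$ that bounds $\mathbb{M}_1[\omega]$ near the point, uniformly in the scale $\rho$. A secondary technical point is that the comparison estimates are formulated for smooth data, so I must actually carry out the good-$\lambda$ inequality for each $u_n$ (with data $\mu_n, \sigma_n$) using the approximation route of Theorem \ref{5hh1510139} through solutions $u_{n,m}$ of parabolic problems with $L^2$ data, then pass to the limit $n,m \to \infty$ by Fatou/lower semicontinuity of $\mathbb{M}(|\nabla \cdot|)$ under $L^1$-convergence of gradients, so that the weighted good-$\lambda$ inequality survives for the limiting $u$.
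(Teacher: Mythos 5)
Your proposal follows the paper's proof in its essentials: the same Vitali-type covering Lemma \ref{5hhvitali2}, the interior comparison via Lemma \ref{5hh24092}, the boundary comparison via Lemma \ref{5hh16101310} with the Reifenberg flatness and BMO-smallness furnishing the small comparison error, and the choice of $\Lambda$ dominating the $L^\infty$ bounds of the reference solutions. The middle section of your write-up (defining $E$, $F$ in terms of the limiting solution $u$ and estimating $|E\cap\tilde{Q}_\rho(x,t)|$ via weak-$(1,1)$ plus the comparison estimate) matches the paper.

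The only point worth flagging is the last paragraph, which proposes to \emph{prove the good-$\lambda$ inequality for each $u_n$ and then pass to the limit by Fatou}. This is not what the paper does, and it is a slightly delicate route: Fatou gives $w(\{\mathbb{M}(|\nabla u|)>\Lambda\lambda\})\le\liminf_n w(\{\mathbb{M}(|\nabla u_n|)>\Lambda\lambda\})$ on the left-hand side, but one would also need $\limsup_n w(\{\mathbb{M}(|\nabla u_n|)>\lambda\})\le w(\{\mathbb{M}(|\nabla u|)>\lambda\})$ on the right, which does not follow from lower semicontinuity alone and requires an extra argument (e.g.\ almost-everywhere convergence of $\mathbb{M}(|\nabla u_n|)$ along a subsequence, plus handling the exceptional set $\{\mathbb{M}(|\nabla u|)=\lambda\}$). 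The paper sidesteps this entirely by fixing $E$ and $F$ in terms of the limiting $u$ and inserting the approximations only inside the pointwise comparison step: it bounds $|E\cap\tilde{Q}_r(x,t)|$ by the sum of the maximal-function super-level sets of $|\nabla u_{n,m}-\nabla V_{n,m}|$, $|\nabla u_n-\nabla u_{n,m}|$, and $|\nabla u-\nabla u_n|$, kills the last two by letting $m,n\to\infty$ (using $L^1_{\mathrm{loc}}$ convergence of gradients and the weak-$(1,1)$ bound), and retains only the genuine comparison term. Since the middle of your proposal already has the estimate stated for $u$ with $V$ as the reference solution, you should drop the "good-$\lambda$ for each $u_n$ then Fatou" framing and adopt the triangle-inequality decomposition instead; this is both cleaner and what the paper actually executes.
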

  \begin{proof}Let $\{\mu_{n}\},\{\sigma_n\},\{\mu_{n,m}\},\{u_n\},\{u_{n,m}\},u$ be as in the proof of Theorem \ref{5hh1510139}.                              
  Let $\varepsilon$ be in $(0,1)$. Set $E_{\lambda,\delta_2}=\{\mathbb{M}(|\nabla u|)>\Lambda\lambda, \mathbb{M}_{1}[\omega]\leq \delta_2\lambda\}\cap \Omega_T $ and $F_\lambda=\{\mathbb{M}(|\nabla u|)>\lambda \}\cap \Omega_T$ for $\varepsilon\in (0,1)$ and $\lambda>0$.
    Let $\{y_i\}_{i=1}^L\subset \Omega$ and a ball $B_0$ with radius $2T_0$ such that 
   $$
    \Omega\subset \bigcup\limits_{i = 1}^L {{B_{r_0}}({y_i})}  \subset {B_0}$$
    where $r_0=\min\{R_0/1080,T_0\}$. Let 
    $s_j=T-jr_0^2/2$ for all $j=0,1,...,[\frac{2T}{r_0^2}]$ and $Q_{2T_0}=B_0\times (T-4T_0^2,T)$. So,
$$
        \Omega_T\subset \bigcup\limits_{i,j} {{Q_{r_0}}({y_i,s_j})}  \subset {Q_{2T_0}}.$$
   We verify that
   \begin{equation}\label{5hh2310131}
   w(E_{\lambda,\delta_2})\leq \varepsilon w({\tilde{Q}_{r_0}}({y_i,s_j})) ~~\forall ~\lambda>0
   \end{equation}
   for some $\delta_2$ small enough, depending on $N,p,\alpha,\beta,\epsilon,[w]_{A_\infty},T_0/R_0$.\\
   In fact, we can assume that $E_{\lambda,\delta_2}\not=\emptyset$,  so $|\mu| (\Omega_T)+|\sigma|(\Omega)\leq T_0^{N+1}\delta_2\lambda$. We have
 $$
 |E_{\lambda,\delta_2}|\lesssim \frac{1}{\Lambda\lambda}\int_{\Omega_T}|\nabla u|.$$
  We also have
$$\int_{\Omega_T}|\nabla u|\lesssim T_0 (|\mu| (\Omega_T)+|\sigma|(\Omega)).
$$
  Thus, $$
   |E_{\lambda,\varepsilon}|\lesssim \frac{1}{\Lambda\lambda} T_0( |\mu| (\Omega_T)+|\sigma|(\Omega))\lesssim \frac{1}{\Lambda\lambda} T_0^{N+2} \delta_2\lambda =  \delta_2 |Q_{2T_0}| .$$
  which implies
$$
    w(E_{\lambda,\delta_2})\lesssim_C \left(\frac{|E_{\lambda,\delta_2}|}{|Q_{2T_0}|}\right)^\nu w(Q_{2T_0})\leq  c\delta_2^\nu w(Q_{2T_0})
$$
   where $(C,\nu)$ is a pair of $A_\infty$ constants of $w$. It is known that (see, e.g \cite{55Gra}) there exist $A_1=A_1(N,C,\nu)$ and $\nu_1=\nu_1(N,C,\nu)$ such that 
$$
   \frac{w(\tilde{Q}_{2T_0})}{w({\tilde{Q}_{r_0}}({y_i,s_j}))}\leq A_1\left(\frac{|\tilde{Q}_{2T_0}|}{|{\tilde{Q}_{r_0}}({y_i,s_j})|}\right)^{\nu_1}~~\forall i,j.
$$
  So, 
$$
      w(E_{\lambda,\delta_2})\leq C\left(c\delta_2\right)^\nu \left(\frac{|\tilde{Q}_{T_0}|}{|{\tilde{Q}_{r_0}}({y_i,s_j})|}\right)^{\nu_1} w({\tilde{Q}_{r_0}}({y_i,s_j}))
      < \varepsilon w({\tilde{Q}_{r_0}}({y_i,s_j}))~~\forall ~i,j$$
     where $\delta_2\leq c\left(\frac{\varepsilon}{(T_0r_0^{-1})^{(N+2)\nu_1}}\right)^{1/\nu}$. It follows \eqref{5hh2310131}.\\
  Next we verify that for all $(x,t)\in \Omega_T$ and $r\in (0,2r_0]$ and $\lambda>0$ we have
    $
     \tilde{Q}_r(x,t)\cap \Omega_T\subset F_\lambda
     $
     if $
        w(E_{\lambda,\delta_2}\cap \tilde{Q}_r(x,t))\geq \varepsilon w(Q_r(x,t))
      $
     for some $\delta_2\leq c\left(\frac{\varepsilon}{(T_0r_0^{-1})^{(N+2)\nu_1}}\right)^{1/\nu}$. \\
        Indeed,
  take $(x,t)\in \Omega_T$ and $0<r\leq 2r_0$.
             Now assume that $\tilde{Q}_r(x,t)\cap \Omega_T\cap F^c_\lambda\not= \emptyset$ and $E_{\lambda,\delta_2}\cap \tilde{Q}_r(x,t)\not = \emptyset$ i.e, there exist $(x_1,t_1),(x_2,t_2)\in \tilde{Q}_r(x,t)\cap \Omega_T$ such that $\mathbb{M}(|\nabla u|)(x_1,t_1)\leq \lambda$ and $\mathbb{M}_1[\omega](x_2,t_2)\le \delta_2 \lambda$.
              We need to prove that
              \begin{equation}\label{5hh2310133}
                     w(E_{\lambda,\delta_2}\cap \tilde{Q}_r(x,t)))< \varepsilon w(\tilde{Q}_r(x,t)). 
                                      \end{equation}
                                   Clearly,
                                      \begin{equation*}
                                      \mathbb{M}(|\nabla u|)(y,s)\leq \max\left\{\mathbb{M}\left(\chi_{\tilde{Q}_{2r}(x,t)}|\nabla u|\right)(y,s),3^{N+2}\lambda\right\}~~\forall (y,s)\in \tilde{Q}_r(x,t).
                                      \end{equation*}
           Therefore, for all $\lambda>0$ and $\Lambda\geq 3^{N+2}$,
           \begin{eqnarray}\label{5hh2310134}E_{\lambda,\delta_2}\cap \tilde{Q}_r(x,t)=\left\{\mathbb{M}\left(\chi_{\tilde{Q}_{2r}(x,t)}|\nabla u|\right)>\Lambda\lambda, \mathbb{M}_{1}[\omega]\leq \delta_2\lambda\right\}\cap \Omega_T \cap \tilde{Q}_r(x,t).
           \end{eqnarray}
           In particular, $E_{\lambda,\delta_2}\cap \tilde{Q}_r(x,t)=\emptyset$ if $\overline{B}_{8r}(x)\subset\subset \mathbb{R}^{N}\backslash \Omega$.
           Thus, it is enough to consider the case $B_{8r}(x)\subset\subset\Omega$ and $B_{8r}(x)\cap\Omega\not=\emptyset$.\\   
           We consider the case $B_{8r}(x)\subset\subset\Omega$. Let $v_{n,m}$ be as in Lemma \ref{5hh24092} with $Q_{2R}=Q_{8r}(x,t_0)$ and $u=u_{n,m}$ where $t_0=\min\{t+2r^2,T\}$. We have  
           \begin{equation}
           \label{5hh2310135}
         ||\nabla v_{n,m}||_{L^\infty(Q_{2r}(x,t_0))}\lesssim \fint_{Q_{8r}(x,t_0)}|\nabla u_{n,m}|  +\frac{|\mu_{n,m}|(Q_{8r}(x,t_0))}{r^{N+1}},
           \end{equation}
                  \begin{align*}
                  \fint_{Q_{4r}(x,t_0)}|\nabla u_{n,m}-\nabla v_{n,m}|&\lesssim\frac{|\mu_{n,m}|(Q_{8r}(x,t_0))}{r^{N+1}}+ [A]_{s_0}^{R_0}\left(\fint_{Q_{8r}(x,t_0)}|\nabla u_{n,m}|\right.\\&~~~~~~~~\left.+ \frac{|\mu_{n,m}|(Q_{8r}(x,t_0))}{r^{N+1}}\right). 
                  \end{align*}
                          Thanks to $\mathbb{M}(|\nabla u|)(x_1,t_1)\leq \lambda$ and $\mathbb{M}_1[\omega](x_2,t_2)\le \delta_2 \lambda$ with $(x_1,t_1),(x_2,t_2)\in Q_r(x,t)$, we get 
                          \begin{align*}
                           \mathop {\limsup }\limits_{n \to \infty } \mathop {\limsup }\limits_{m \to \infty }||\nabla v_{n,m}||_{L^\infty(Q_{2r}(x,t))}&\lesssim  \fint_{\tilde{Q}_{17r}(x_1,t_1)}|\nabla u| dxdt + \frac{\omega(\overline{\tilde{Q}_{17r}(x_2,t_2)})}{r^{N+1}}                                   
                                                             \\&\lesssim\lambda+ \delta_2\lambda
                                  \lesssim\lambda,  
                          \end{align*}
                                     and 
\begin{align*}
&\mathop {\limsup }\limits_{n \to \infty }\mathop {\limsup }\limits_{m \to \infty }\fint_{Q_{4r}(x,t_0)}|\nabla u_n-\nabla v_n|  
                          \\&~~~~~~~~~~~~~\lesssim\frac{\omega(\overline{\tilde{Q}_{17r}(x_2,t_2)})}{r^{N+1}}+ [A]_{s_0}^{R_0}\left(\fint_{\tilde{Q}_{17r}(x_1,t_1)}|\nabla u|+ \frac{\omega(\overline{\tilde{Q}_{17r}(x_2,t_2)})}{r^{N+1}}\right)  
                          \\&~~~~~~~~~~~~~\lesssim\delta_2\lambda + [A]_{s_0}^{R_0}\left(\lambda+\delta_2\lambda\right)  
                          \lesssim\left(\delta_2+\delta_1(1+\delta_2)\right)\lambda.
\end{align*}                                                                Here we have used $[A]_{s_0}^{R_0}\leq \delta_1$ in the last inequality. \\
                          So, we can find $n_0$ large enough and a sequence $\{k_{n}\}$ such that 
                          \begin{equation}\label{5hh2310136}
                          ||\nabla v_{n,m}||_{L^\infty(\tilde{Q}_{2r}(x,t))}=||\nabla v_{n,m}||_{L^\infty(Q_{2r}(x,t_0))}\leq c\lambda,
                          \end{equation}                        
                          \begin{equation}\label{5hh2310137} 
                          \fint_{Q_{4r}(x,t_0)}|\nabla u_{n,m}-\nabla v_{n,m}|dxdt\leq c\left(\delta_2+\delta_1(1+\delta_2)\right)\lambda,
                          \end{equation}
                          for all $n\geq n_0$ and $m\geq k_n$.\\
                           In view of \eqref{5hh2310136} we see that for $\Lambda\geq \max\{3^{N+2},8c\}$ and $n\geq n_0$, $m\geq k_n$,
                           \begin{align*}
                           |\{\mathbb{M}\left(\chi_{\tilde{Q}_{2r}(x,t)}|\nabla v_{n,m}|\right)>\Lambda\lambda/4\}\cap \tilde{Q}_r(x,t)|=0.
                           \end{align*}
                           Leads to
\begin{align*}
|E_{\lambda,\delta_2}\cap \tilde{Q}_r(x,t)|&\leq   |\{\mathbb{M}\left(\chi_{\tilde{Q}_{2r}(x,t)}|\nabla u_{n,m}-\nabla v_{n,m}|\right)>\Lambda\lambda/4\}\cap \tilde{Q}_r(x,t)|
                           \\&+ |\{\mathbb{M}\left(\chi_{\tilde{Q}_{2r}(x,t)}|\nabla u_n-\nabla u_{n,m}|\right)>\Lambda\lambda/4\}\cap \tilde{Q}_r(x,t)|
                           \\&+ |\{\mathbb{M}\left(\chi_{\tilde{Q}_{2r}(x,t)}|\nabla u-\nabla u_{n}|\right)>\Lambda\lambda/4\}\cap \tilde{Q}_r(x,t)|.                          
\end{align*}                           
Therefore, by  \eqref{5hh2310137} and $\tilde{Q}_{2r}(x,t)\subset Q_{4r}(x,t_0)$ we obtain for any $n\geq n_0$ and $m\geq k_n$
\begin{align*}
|E_{\lambda,\delta_2}\cap \tilde{Q}_r(x,t)|&\leq  \frac{1}{\lambda}\int_{\tilde{Q}_{2r}(x,t)} |\nabla u_{n,m}-\nabla v_{n,m}| \\&~+\frac{1}{\lambda}\int_{\tilde{Q}_{2r}(x,t)} |\nabla u_n-\nabla u_{n,m}| +\frac{1}{\lambda}\int_{\tilde{Q}_{2r}(x,t)} |\nabla u-\nabla u_{n}|                          
                          \\&\lesssim \left(\delta_2+\delta_1(1+\delta_2)\right)|Q_r(x,t)|  \\&~+\frac{1}{\lambda}\int_{\tilde{Q}_{2r}(x,t)} |\nabla u_n-\nabla u_{n,m}| +\frac{1}{\lambda}\int_{\tilde{Q}_{2r}(x,t)} |\nabla u-\nabla u_{n}|.
\end{align*}
                          Letting $m\to\infty$ and $n\to\infty$ we get 
                          \begin{equation*}
                          |E_{\lambda,\delta_2}\cap \tilde{Q}_r(x,t)|\lesssim \left(\delta_2+\delta_1(1+\delta_2)\right)|\tilde{Q}_r(x,t)|.
                          \end{equation*}
Thus,  
\begin{align*}
w(E_{\lambda,\delta_2}\cap \tilde{Q}_r(x,t))&\leq C\left(\frac{|E_{\lambda,\delta_2}\cap \tilde{Q}_r(x,t) |}{|\tilde{Q}_r(x,t)|}\right)^\nu w(\tilde{Q}_r(x,t))
    \\&\lesssim \left(\delta_2+\delta_1(1+\delta_2)\right)^\nu w(\tilde{Q}_r(x,t))
    \\&< \varepsilon w(\tilde{Q}_r(x,t)).
\end{align*} 
    where $\delta_2,\delta_1$ are appropriately chosen,  $(C,\nu)$ is a pair of $A_\infty$ constants of $w$.\\
    Next we consider the case $B_{8r}(x)\cap\Omega\not=\emptyset$. Let $x_3\in\partial \Omega$ be such that $|x_3-x|=\text{dist}(x,\partial\Omega)$. Set $t_0=\min\{t+2r^2,T\}$. We have 
    \begin{equation}\label{5hh2310138}
    Q_{2r}(x,t_0)\subset Q_{10r}(x_3,t_0)\subset Q_{540r}(x_3,t_0)\subset \tilde{Q}_{1080r}(x_3,t)\subset \tilde{Q}_{1088r}(x,t)\subset \tilde{Q}_{1089r}(x_1,t_1),
    \end{equation}
    and 
    \begin{equation}\label{5hh2310139}
        Q_{540r}(x_3,t_0)\subset \tilde{Q}_{1080r}(x_3,t)\subset \tilde{Q}_{1088r}(x,t)\subset \tilde{Q}_{1089r}(x_2,t_2).
        \end{equation}
     Let $V_{n,m}$ be as in Lemma 
    \ref{5hh16101310} with $Q_{6R}=Q_{540r}(x_3,t_0)$, $u=u_{n,m}$ and $\varepsilon=\delta_3\in (0,1)$. We have 
     \begin{equation*}
     ||\nabla V_{n,m}||_{L^\infty(Q_{10r}(x_3,t_0))}\lesssim \fint_{Q_{540r}(x_3,t_0)}|\nabla u_{n,m}|+\frac{|\mu_{n,m}|(Q_{540r}(x_3,t_0))}{R^{N+1}}
     \end{equation*}
     and 
     \begin{align*}
      &\fint_{Q_{10r}(x_3,t_0)}|\nabla u_{n,m}-\nabla V_{n,m}|\\&\quad\quad\lesssim  (\delta_3+[A]_{s_0}^{R_0})\fint_{Q_{540r}(x_3,t_0)}|\nabla u_{n,m}|+ (1+[A]_{s_0}^{R_0})\frac{|\mu_{n,m}|(Q_{540r}(x_3,t_0))}{R^{N+1}}.
     \end{align*}       
Since $\mathbb{M}(|\nabla u|)(x_1,t_1)\leq \lambda$,  $\mathbb{M}_1[\omega](x_2,t_2)\le \delta_2 \lambda$ and \eqref{5hh2310138}, \eqref{5hh2310139} we get 
\begin{align*}
\mathop {\limsup }\limits_{n \to \infty }\mathop {\limsup }\limits_{m \to \infty }||\nabla V_{n,m}||_{L^\infty(Q_{2r}(x,t_0))}&\lesssim \fint_{Q_{540r}(x_3,t_0)}|\nabla u|+\frac{\omega(\overline{Q_{540r}(x_3,t_0)})}{R^{N+1}}
\\&\lesssim \fint_{\tilde{Q}_{1089r}(x_1,t_1)}|\nabla u|+\frac{\omega(\tilde{Q}_{1089r}(x_2,t_2))}{R^{N+1}}
\\&\lesssim \lambda+\delta_2 \lambda\lesssim\lambda 
\end{align*}
and 
\begin{align*}
 &\mathop {\limsup }\limits_{n \to \infty }\mathop {\limsup }\limits_{m\to \infty }\fint_{Q_{2r}(x,t_0)}|\nabla u_{n,m}-\nabla V_{n,m}|dxdt
 \\&~~~\lesssim (\delta_3+[A]_{s_0}^{R_0})\fint_{Q_{540r}(x_3,t_0)}|\nabla u|dxdt+ (1+[A]_{s_0}^{R_0})\frac{\omega(\overline{Q_{540 r}(x_3,t_0)})}{r^{N+1}}
 \\&~~~\lesssim (\delta_3+[A]_{s_0}^{R_0})\fint_{\tilde{Q}_{1089r}(x_1,t_1)}|\nabla u|dxdt+ (1+[A]_{s_0}^{R_0})\frac{\omega(\tilde{Q}_{1089}(x_2,t_2))}{r^{N+1}}
 \\&~~~\lesssim (\delta_3+[A]_{s_0}^{R_0})\lambda+ (1+[A]_{s_0}^{R_0}) \delta_2 \lambda
 \\&~~~\leq \left( (\delta_3+\delta_1)+ (1+\delta_1) \delta_2 \right)\lambda.
\end{align*}
   Here we used $[A]_{s}^{R_0}\leq \delta_1$ in the last inequality.\\
   So, we can find $n_0$ large enough and a sequence $\{k_n\}$ such that 
                             \begin{equation}\label{5hh23101310}
                            ||\nabla V_{n,m}||_{L^\infty(\tilde{Q}_{2r}(x,t))}= ||\nabla V_{n,m}||_{L^\infty(Q_{2r}(x,t_0))}\leq c\lambda,
                             \end{equation}                           
                             \begin{equation}\label{5hh23101311} 
                             \fint_{Q_{2r}(x,t_0)}|\nabla u_{n,m}-\nabla V_{n,m}|\leq c\left( (\delta_3+\delta_1)+ (1+\delta_1) \delta_2 \right)\lambda,
                             \end{equation}
                             for all $n\geq n_0$ and $m\geq k_n$.\\
  Now set $\Lambda= \max\{3^{N+2},8c\}$. As above we also have  for $n\geq n_0$, $m\geq k_n$
 \begin{align*}
 |E_{\lambda,\delta_2}\cap \tilde{Q}_r(x,t)|&\leq   |\{\mathbb{M}\left(\chi_{\tilde{Q}_{2r}(x,t)}|\nabla u_{n,m}-\nabla V_{n,m}|\right)>\Lambda\lambda/4\}\cap \tilde{Q}_r(x,t)|
                            \\&+ |\{\mathbb{M}\left(\chi_{\tilde{Q}_{2r}(x,t)}|\nabla u_n-\nabla u_{n,m}|\right)>\Lambda\lambda/4\}\cap \tilde{Q}_r(x,t)|
                            \\&+ |\{\mathbb{M}\left(\chi_{\tilde{Q}_{2r}(x,t)}|\nabla u-\nabla u_{n}|\right)>\Lambda\lambda/4\}\cap \tilde{Q}_r(x,t)|.                          
 \end{align*}                         
 Therefore from  \eqref{5hh23101311} we obtain 
 \begin{align*}
 |E_{\lambda,\delta_2}\cap \tilde{Q}_r(x,t)|&\lesssim  \frac{1}{\lambda}\int_{\tilde{Q}_{2r}(x,t)} |\nabla u_{n,m}-\nabla V_{n,m}| \\&~+\frac{1}{\lambda}\int_{\tilde{Q}_{2r}(x,t)} |\nabla u_{n}-\nabla u_{n,m}| +\frac{1}{\lambda}\int_{\tilde{Q}_{2r}(x,t)} |\nabla u-\nabla u_{n}|                             
                            \\&\lesssim   \left( (\delta_3+\delta_1)+ (1+\delta_1) \delta_2 \right)|\tilde{Q}_r(x,t)|  \\&~+\frac{1}{\lambda}\int_{\tilde{Q}_{2r}(x,t)} |\nabla u_{n}-\nabla u_{n,m}| +\frac{1}{\lambda}\int_{\tilde{Q}_{2r}(x,t)} |\nabla u-\nabla u_{n}|. 
 \end{align*}
                           Letting $m\to\infty$ and $n\to\infty$ we get
$$
                           |E_{\lambda,\delta_2}\cap \tilde{Q}_r(x,t)|\lesssim \left( (\delta_3+\delta_1)+ (1+\delta_1) \delta_2 \right)|\tilde{Q}_r(x,t)| .$$
 Thus
 \begin{align*}
  w(E_{\lambda,\delta_2}\cap \tilde{Q}_r(x,t))&\leq C\left(\frac{|E_{\lambda,\delta_2}\cap \tilde{Q}_r(x,t)|}{|\tilde{Q}_r(x,t)|}\right)^\nu w(\tilde{Q}_r(x,t))
      \\&\leq  c \left( (\delta_3+\delta_1)+ (1+\delta_1) \delta_2 \right)^\nu w(\tilde{Q}_r(x,t))
      \\&< \varepsilon w(\tilde{Q}_r(x,t)),
 \end{align*}   
     where $\delta_3,\delta_1,\delta_2$ are appropriately chosen,  $(C,\nu)$ is a pair of $A_\infty$ constants of $w$.\medskip\\  
    Therefore, for all $(x,t)\in \Omega_T$ and $r\in (0,2r_0]$ and $\lambda>0$ if  
                $w(E_{\lambda,\delta_2}\cap \tilde{Q}_r(x,t))\geq \varepsilon w(\tilde{Q}_r(x,t))$                
then            
        $ \tilde{Q}_r(x,t)\cap \Omega_T\subset F_\lambda$    
         where $\delta_1=\delta_1(\varepsilon,[w]_{A_\infty})\in (0,1)$ and $\delta_2=\delta_2(\varepsilon,[w]_{A_\infty},T_0/R_0)\in (0,1)$. Thanks to Lemma \ref{5hhvitali2} we get the result.
              \end{proof}\\\\
      \begin{proof}[Proof of Theorem \ref{5hh2410131}] As in the proof of 
Theorem \ref{5hh0701201411}, we can prove \eqref{5hh16101312} by using  estimate \eqref{5hh16101311} in Theorem \ref{5hh23101312}.
              In particular,  thanks to Proposition \ref{5hh23101315} for $q>\frac{N+2}{N+1}$, $\mu\in L^{\frac{(N+2)(q-1)}{q},\infty}(\Omega_T)$  and $\sigma\equiv0$,    one has                               
                                          \begin{equation}\label{5hh23101314}
                                             |||\nabla u|^q||_{L^{\frac{(N+2)(q-1)}{ q },\infty}(\Omega_T)}\lesssim_{q,T_0/R_0} ||\mu||^{q}_{L^{\frac{(N+2)(q-1)}{q},\infty}(\Omega_T)}.
                                             \end{equation}                 The proof is complete.                                             
            \end{proof}\medskip\\
         The following corollary is a consequence of Theorem \ref{5hh2410131}.

        \begin{corollary}
        	\label{5hh2410131'} Suppose that $A$ satisfies \eqref{5hhcondc}. Let $\mu\in\mathfrak{M}_b(\Omega_T)$, $\sigma\in\mathfrak{M}_b(\Omega)$, set $\omega=|\mu|+|\sigma|\otimes\delta_{\{t=0\}}$. Let $s_0$ be in  Theorem \ref{5hh2410131}.  There exists a distributional solution of \eqref{5hhparabolic1} with data $\mu, \sigma$ such that the following holds. For any $1<q<\infty, 0<\beta<N+2$, we find $\delta=\delta(N,\Lambda_1,\Lambda_2,q)\in (0,1)$ such that if $\Omega$ is a $(\delta,R_0)$- Reifenberg flat domain and $[A]_{s_0}^{R_0}\le \delta$ for some $R_0$ then
        	\begin{equation}\label{keypoint}
        		\mathbb{I}_\beta[|\nabla u|^q\chi_{\Omega_{T}}]\lesssim C_1	\mathbb{I}_\beta[\mathbb{M}_1[\omega]^q\chi_{\Omega_{T}}],~~
        	\end{equation}
        	\begin{align}\label{5hh070520147}
        		\mathop {\sup }\limits_{\scriptstyle \text{compact } K\subset\mathbb{R}^{N+1} \hfill \atop 
        			\scriptstyle \text{Cap}_{\mathcal{G}_1,q'}(K)>0 \hfill}\left(\frac{\int_{K\cap\Omega_T}|\nabla u|^qdxdt}{\text{Cap}_{\mathcal{G}_1,q'}(K)}\right) \lesssim C_1 \mathop {\sup }\limits_{\scriptstyle \text{compact } K\subset\mathbb{R}^{N+1} \hfill \atop 
        			\scriptstyle \text{Cap}_{\mathcal{G}_1,q'}(K)>0 \hfill}\left(\frac{\omega(K)}{\text{Cap}_{\mathcal{G}_1,q'}(K)}\right)^q,                              
        	\end{align}
        	and if $q>\frac{N+2}{N+1}$, 
        	\begin{align}\label{5hh070520148}                                    
        		\mathop {\sup }\limits_{\scriptstyle \text{compact } K\subset\mathbb{R}^{N+1} \hfill \atop 
        			\scriptstyle \text{Cap}_{\mathcal{H}_1,q'}(K)>0 \hfill}\left(\frac{\int_{K\cap\Omega_T}|\nabla u|^qdxdt}{\text{Cap}_{\mathcal{H}_1,q'}(K)}\right) \lesssim C_2 \mathop {\sup }\limits_{\scriptstyle \text{compact } K\subset\mathbb{R}^{N+1} \hfill \atop 
        			\scriptstyle \text{Cap}_{\mathcal{H}_1,q'}(K)>0 \hfill}\left(\frac{\omega(K)}{\text{Cap}_{\mathcal{H}_1,q'}(K)}\right)^q,                             
        	\end{align}
        	
        	where $C_1=C_1(q,R_0,T_0)$ and $C_2=C_2(q,T_0/R_0)$.      
        \end{corollary} 
        \begin{remark}If $1<q<2$, we have
$$
        		 \mathop {\sup }\limits_{\scriptstyle \text{compact } O\subset\mathbb{R}^{N} \hfill \atop 
        			\scriptstyle \text{Cap}_{\mathbf{G}_{\frac{2}{q}-1},q'}(O)>0 \hfill}\frac{|\sigma|(O)}{\text{Cap}_{\mathbf{G}_{\frac{2}{q}-1},q'}(O)}\sim_q
        		\mathop {\sup }\limits_{\scriptstyle \text{compact } K\subset\mathbb{R}^{N+1} \hfill \atop 
        			\scriptstyle \text{Cap}_{\mathcal{G}_1,q'}(K)>0 \hfill}\frac{(|\sigma|\otimes\delta_{\{t=0\}})(K)}{\text{Cap}_{\mathcal{G}_1,q'}(K)}.                       
$$
        	If $\frac{N+2}{N+1}<q<2$, then this estimate is true when two capacities $\text{Cap}_{\mathcal{G}_1,q'}$, ,$\text{Cap}_{\mathbf{G}_{\frac{2}{q}-1},q'}$ are replaced by $\text{Cap}_{\mathcal{H}_1,q'}$ ,$\text{Cap}_{\mathbf{I}_{\frac{2}{q}-1},q'}$ respectively,                          
        	see Remark \ref{5hh130320146}. 
        \end{remark}        
             \begin{proof}[Proof of Corollary \ref{5hh2410131'}] By Theorem \ref{5hh2410131}, there exists a renormalized solution of \eqref{5hhparabolic1} with data $\mu$, $u(0)=\sigma$ satisfying 
             \begin{equation}\label{5hh2503201410}
             \int_{\Omega_T}|\nabla u|^qdw\lesssim c_1 \int_{\Omega_T}\left(\mathbb{M}_1[\omega]\right)^qdw
             \end{equation}
             for any $w\in A_\infty$, where $c_1=c_1(q,T_0/R_0,[w]_{A_\infty})$. Thanks to Corollary \ref{keylam}, we obtain \eqref{keypoint}.\medskip
             \\ We have $\mathbb{M}_1[\omega]\lesssim\mathbb{I}_1^{2T_0,1}[\omega]$ in $\Omega_T$. Thus, \eqref{5hh2503201410} can be rewritten
$$
              \int_{\Omega_T}|\nabla u|^qdw\lesssim c_1  \int_{\Omega_T}\left(\mathbb{I}_1^{2T_0,1}[\omega]\right)^qdw.
$$
              Thanks to Proposition \ref{5hh140420143} and  Corollary \ref{5hh250320146} and \ref{5hh250320145} we obtain \eqref{5hh070520147} and \eqref{5hh070520148}.
             \end{proof}\medskip\\
          As in the proof of 
         Theorem \ref{5hh0701201411}, we obtain the following corollary by using  estimate \eqref{5hh16101311} in Theorem \ref{5hh23101312}.
         \begin{corollary} \label{keyre}  Suppose that $A$ satisfies \eqref{5hhcondc}. Let $\mu\in\mathfrak{M}_b(\Omega_T)$, $\sigma\in\mathfrak{M}_b(\Omega)$, set $\omega=|\mu|+|\sigma|\otimes\delta_{\{t=0\}}$. There exists a distributional solution of \eqref{5hhparabolic1} with data $\mu,\sigma$ such that the following holds. For any  $1\leq q<\infty$, we find  $\delta=\delta(N,\Lambda_1,\Lambda_2,q)\in (0,1)$ and $s_0=s_0(N,\Lambda_1,\Lambda_2)>0$ such that if $\Omega$ is  $(\delta,R_0)$-Reifenberg flat domain $\Omega$ and $[A]_{s_0}^{R_0}\le \delta$ for some $R_0$ then                                       
$$
       	\int_{\lambda_0}^{\infty}\lambda^{q-1}\left|\{\mathbb{M}(|\nabla u|)>\Lambda\lambda \}\cap \Omega_T\right|d\lambda\leq C	\int_{\lambda_0/C}^{\infty}\lambda^{q-1}\left|\{\mathbb{M}_1[\omega]>\lambda \}\cap \Omega_T\right| d\lambda
$$
         for any $\lambda_0\geq 0$.	Here $C$ depends  on $N,\Lambda_1,\Lambda_2,q$ and $T_0/R_0$.                       
         \end{corollary} 
                      
  In the following, we shall use the classical  Minkowski inequality, for  convenience we recall it.  For any $0<q_1\leq q_2<\infty$ there holds
  \begin{align*}
  \left(\int_{X}\left(\int_{Y}|f(x,y)|^{q_1}d\mu_2(y)\right)^{\frac{q_2}{q_1}}d\mu_1(x)\right)^{\frac{1}{q_2}}\leq \left(\int_{Y}\left(\int_{X}|f(x,y)|^{q_2}d\mu_1(x)\right)^{\frac{q_1}{q_2}}d\mu_2(y)\right)^{\frac{1}{q_1}}
  \end{align*}
  for any measure function $f$ in $X\times Y$, where $\mu_1,\mu_2$ are nonnegative measures in $X$ and $Y$ respectively.\\ 
          \begin{proof}[Proof of Theorem \ref{5hh190320143}]We will consider only the case $s\not=\infty$ and leave the case $s=\infty$ to the readers. 
                              Take $\kappa_1\in (0,\kappa)$. It is easy to see that for $(x_0,t_0)\in\Omega_T$ and $0<\rho<\text{diam}(\Omega)+T^{1/2}$ \begin{align*}
                              w(x,t)=\min\{\rho^{-N-2+\kappa-\kappa_1},\max\{|x-x_0|,\sqrt{2|t-t_0|}\}^{-N-2+\kappa-\kappa_1}\}\in A_{\infty}
                              \end{align*}
                              where $[w]_{A_\infty}$ is independent of $ (x_0,t_0)$ and $\rho$. Thus, from \eqref{5hh16101312} in Theorem \ref{5hh2410131} we have 
\begin{align}\nonumber
||\mathbb{M}(|\nabla u|)||^s_{L^{q,s}(\tilde{Q}_\rho(x_0,t_0)\cap\Omega_T)}&= \rho^{\frac{(N+2-\kappa+\kappa_1)s}{q}}||\mathbb{M}(|\nabla u|)||^s_{L^{q,s}(\tilde{Q}_\rho(x_0,t_0)\cap\Omega_T,dw)}
\\&\lesssim\nonumber \rho^{\frac{(N+2-\kappa+\kappa_1)s}{q}}||\mathbb{M}_1[\omega]||^s_{L^{q,s}(\Omega_T,dw)}
\\&=\nonumber q\rho^{\frac{(N+2-\kappa+\kappa_1)s}{q}}\int_{0}^{\infty}\left(\lambda^q\int_{0}^{\infty}|\{\mathbb{M}_1[\omega]>\lambda,w>\tau\}\cap\Omega_T|d\tau\right)^{\frac{s}{q}}\frac{d\lambda}{\lambda}
\\&=:\rho^{\frac{(N+2-\kappa+\kappa_1)s}{q}} A.\label{5hh190320145}
\end{align}   
Since $w\leq \rho^{-N-2+\kappa-\kappa_1}$ and    $\{\mathbb{M}_1[\omega]>\lambda,w>\tau\}\subset \{\mathbb{M}_1[\omega]>\lambda\}\cap \tilde{Q}_{\tau^{\frac{1}{-N-2+\kappa-\kappa_1}}}(x_0,t_0)$, 
\begin{align*}
A\leq q \int_{0}^{\infty}\left(\lambda^q\int_{0}^{\rho^{-N-2+\kappa-\kappa_1}}|\{\mathbb{M}_1[\omega]>\lambda\}\cap \tilde{Q}_{\tau^{\frac{1}{-N-2+\kappa-\kappa_1}}}(x_0,t_0)\cap\Omega_T|d\tau\right)^{\frac{s}{q}}\frac{d\lambda}{\lambda}.
\end{align*} 
We divide into two cases. \\
Case 1: $0<s\leq q$. We can verify that for any non-increasing function $F$ in $(0,\infty)$ and $0<a\leq 1$ we have 
$$
\left(\int_{0}^{\infty}F(\tau)d\tau\right)^a\leq 4 \int_{0}^{\infty}(\tau F(\tau))^a\frac{d\tau}{\tau}.$$
Hence, 
\begin{align*}
A&\leq 4 q \int_{0}^{\infty}\int_{0}^{\rho^{-N-2+\kappa-\kappa_1}}\left(\lambda^q\tau|\{\mathbb{M}_1[\omega]>\lambda\}\cap \tilde{Q}_{\tau^{\frac{1}{-N-2+\kappa-\kappa_1}}}(x_0,t_0)\cap\Omega_T|\right)^{\frac{s}{q}}\frac{d\tau}{\tau}\frac{d\lambda}{\lambda}
\\&= 4 \int_{0}^{\rho^{-N-2+\kappa-\kappa_1}} ||\mathbb{M}_1[\omega]||^s_{L^{q,s}(\tilde{Q}_{\tau^{\frac{1}{-N-2+\kappa-\kappa_1}}}(x_0,t_0)\cap\Omega_T)}\tau^{\frac{s}{q}}\frac{d\tau}{\tau}
\\&\leq 4 \int_{0}^{\rho^{-N-2+\kappa-\kappa_1}} ||\mathbb{M}_1[\omega]||^s_{L^{q,s;\kappa}(\Omega_T)}\tau^{\frac{(N+2-\kappa)s}{(-N-2+\kappa-\kappa_1)q}}\tau^{\frac{s}{q}}\frac{d\tau}{\tau}
\\&\lesssim ||\mathbb{M}_1[\omega]||^s_{L^{q,s;\kappa}(\Omega_T)}\rho^{-\frac{s \kappa_1}{q}}.
\end{align*} 
Case 2: $s>q$. Using the Minkowski inequality, yields 
\begin{align*}
A&\lesssim \left(\int_{0}^{\rho^{-N-2+\kappa-\kappa_1}} \left(\int_{0}^{\infty}\left(\lambda^q|\{\mathbb{M}_1[\omega]>\lambda\}\cap \tilde{Q}_{\tau^{\frac{1}{-N-2+\kappa-\kappa_1}}}(x_0,t_0)\cap\Omega_T|\right)^{\frac{s}{q}}\frac{d\lambda}{\lambda}\right)^{\frac{q}{s}}d\tau\right)^{\frac{s}{q}}
\\&\lesssim \left(\int_{0}^{\rho^{-N-2+\kappa-\kappa_1}} \left(||\mathbb{M}_1[\omega]||^s_{L^{q,s;\kappa}(\Omega_T)}\tau^{\frac{(N+2-\kappa)s}{(-N-2+\kappa-\kappa_1)q}}\right)^{\frac{q}{s}}d\tau\right)^{\frac{s}{q}}
\\&=  ||\mathbb{M}_1[\omega]||^s_{L^{q,s;\kappa}(\Omega_T)}\rho^{-\frac{s \kappa_1}{q}}.
\end{align*}                   
Therefore, we always have 
$$
A\lesssim ||\mathbb{M}_1[\omega]||^s_{L^{q,s;\kappa}(\Omega_T)}\rho^{-\frac{s \kappa_1}{q}}.$$
which implies  \eqref{5hh190320144} from \eqref{5hh190320145}. \\
Similarly, we obtain estimate \eqref{5hh050520142} by adapting
$$
                             w(x,t)=\min\{\rho^{-N+\vartheta-\vartheta_1},|x-x_0|^{-N+\vartheta-\vartheta_1}\}\in A_{\infty}$$in above argument,   
                  where $0<\vartheta_1<\vartheta$, $x_0\in\Omega$ and $0<\rho<\text{diam}(\Omega)$  and $[w]_{A_\infty}$ is independent of $ x_0$ and $\rho$. \\
                  Next, to obtain \eqref{5hh050520141} we need to show that for any ball $B_\rho\subset\mathbb{R}^N$                                                      
                 \begin{equation}\label{5hh050520144}                                
                                                                                                                                \left(\int_0^T|\text{osc}_{B_\rho\cap\overline{\Omega}}u(t)|^qdt\right)^{\frac{1}{q}}\lesssim \rho^{1-\frac{\vartheta}{q}}|||\nabla u|||_{L_{**}^{q;\vartheta}(\Omega_T)}.
                                                                                                                                    \end{equation} 
  Since the extension of $u$ over $(\Omega_T)^c$ is zero and $u\in L^1(0,T,W^{1,1}_0(\Omega))$ thus we have for a.e $t\in (0,T)$, $u(.,t)\in W^{1,1}(\mathbb{R}^N)$. Applying \cite[Lemma 7.16]{55GiTr} to a ball $B_\rho\subset\mathbb{R}^N$, we get for a.e $t\in (0,T)$ and $x\in B_\rho$
$$
 |u(x,t)-u_{B\rho}(t)|\lesssim\int_{B_\rho}\frac{|\nabla u(y,t)|}{|x-y|^{N-1}}dy \lesssim \int_{0}^{3\rho}\frac{\int_{B_r(x)}|\nabla u(y,t)|dy}{r^{N-1}}\frac{dr}{r}.$$
Here $u_{B\rho}(t)$ is the average of $u(.,t)$ over $B_\rho$, i.e $u_{B\rho}(t)=\frac{1}{|B_\rho|}\int_{B_\rho}u(x,t)dx$.  \\Using the Minkowski and the H\"older inequality, we observe that for a.e  $x\in B_\rho$ 
\begin{align*}
\left(\int_0^T|u(x,t)-u_{B\rho}(t)|^qdt\right)^{\frac{1}{q}}&\lesssim\left(\int_{0}^{T}\left(\int_{0}^{3\rho}\frac{\int_{B_r(x)}|\nabla u(y,t)|dy}{r^{N-1}}\frac{dr}{r} \right)^qdt\right)^{\frac{1}{q}}
\\&\lesssim \int_{0}^{3\rho}\int_{B_r(x)}\left(\int_{0}^{T}|\nabla u(y,t)|^qdt\right)^{\frac{1}{q}}dy\frac{dr}{r^{N}}
\\&\lesssim\int_{0}^{3\rho}\left(\int_{B_r(x)}\int_{0}^{T}|\nabla u(y,t)|^qdtdy\right)^{\frac{1}{q}}|B_r(x)|^{\frac{q-1}{q}}\frac{dr}{r^{N}}
\\&\lesssim\int_{0}^{3\rho}r^{\frac{N-\vartheta}{q}}r^{\frac{N(q-1)}{q}}\frac{dr}{r^{N}}|||\nabla u|||_{L_{**}^{q;\vartheta}(\Omega_T)}
\\&\lesssim\rho^{1-\frac{\vartheta}{q}}|||\nabla u|||_{L_{**}^{q;\vartheta}(\Omega_T)}.
\end{align*}                                                       
 Therefore, we get \eqref{5hh050520144}. The proof is complete.                               \end{proof} \vspace{0.4cm}\\
In the following, we give some estimates for the norm of $\mathbb{M}_1[\omega]$ in $L_{*}^{q;\kappa}(\mathbb{R}^{N+1})$ and in $L_{**}^{q;\vartheta}(\mathbb{R}^{N+1})$ 
\begin{proposition}\label{5hh050520145} Let $1<\kappa\leq N+2$, $0<\vartheta\leq N$ and $q,q_1>1$. Suppose that  $\mu\in \mathfrak{M}^+(\mathbb{R}^{N+1})$. Then $\mathbb{M}_1[\mu]\leq 2^{N+2}\mathbb{I}_1[\mu]$ and 
	\begin{description}
		\item[a.]  If $q>\frac{\kappa}{\kappa-1}$ then  
		\begin{equation}\label{5hh050520142}                                 
			||\mathbb{I}_1[\mu]||_{L_{*}^{q;\kappa}(\mathbb{R}^{N+1})}\lesssim_{q,\kappa} ||\mu||_{L_{*}^{\frac{q\kappa}{q+\kappa};\kappa}(\mathbb{R}^{N+1})}.                                                                                                                 \end{equation}  

		\item[b.]If $1<q<2$ then  \begin{align}\label{5hh050520146}
			||\mathbb{I}_1[\mu](x,.)||_{L^q(\mathbb{R})}\leq \mathbf{I}_{\frac{2}{q}-1}[\mu_1](x),
		\end{align}
		where $\mu_1$ is a nonnegative radon measure in $\mathbb{R}^N$ defined by $\mu_1(A)=\mu(A\times\mathbb{R})$ for every  Borel set $A\subset\mathbb{R}^N$. In particular, 
		\begin{align}
			\label{5hh050520148}||\mathbb{I}_1[\mu]||_{L_{**}^{q;\vartheta}(\mathbb{R}^{N+1})}\leq ||\mathbf{I}_{\frac{2}{q}-1}[\mu_1]||_{L^{q;\vartheta}(\mathbb{R}^{N})},
		\end{align} and if $\vartheta>\frac{2-q}{q-1}$ there holds
		\begin{align}
			\label{5hh0505201410}||\mathbb{I}_1[\mu]||_{L_{**}^{q;\vartheta}(\mathbb{R}^{N+1})}\lesssim_{q,\vartheta}||\mu_1||_{L^{\frac{\vartheta q}{\vartheta+2-q};\vartheta}(\mathbb{R}^{N})}.
		\end{align}
		\item[c.]If $\frac{2q}{q+2}<q_1\leq q$ then   \begin{align}\label{5hh050520147}
			||\mathbb{I}_1[\mu](x,.)||_{L^q(\mathbb{R})}\leq\mathbf{I}_{\frac{2}{q}+1-\frac{2}{q_1}}[\mu_2](x),
		\end{align}
		where $d\mu_2(x)=||\mu(x,.)||_{L^{q_1}(\mathbb{R})}dx$. In particular, 
		\begin{align}
			\label{5hh050520149}||\mathbb{I}_1[\mu]||_{L_{**}^{q;\vartheta}(\mathbb{R}^{N+1})}\leq ||\mathbf{I}_{\frac{2}{q}+1-\frac{2}{q_1}}[\mu_2]||_{L^{q;\vartheta}(\mathbb{R}^{N})},
		\end{align} and if $\vartheta>\frac{1}{q-1}\left(2+q-\frac{2q}{q_1}\right)$ there holds
		\begin{align}
			\label{5hh0505201411}||\mathbb{I}_1[\mu]||_{L_{**}^{q;\vartheta}(\mathbb{R}^{N+1})}\lesssim_{q,\vartheta} ||\mu_2||_{L^{\frac{\vartheta qq_1}{(\vartheta+2+q)q_1-2q};\vartheta}(\mathbb{R}^{N})}=||\mu||_{L^{\frac{\vartheta qq_1}{(\vartheta+2+q)q_1-2q};\vartheta}(\mathbb{R}^{N}, L^{q_1}(\mathbb{R}))}.
		\end{align}
	\end{description}
	
\end{proposition}  
\begin{remark} Let $1<q<2$, $0<\vartheta\leq N$ and  $\sigma\in\mathfrak{M}(\mathbb{R}^N)$. From  \eqref{5hh050520148} and \eqref{5hh0505201410} in Proposition \ref{5hh050520145} we assert that 
	\begin{align*}
		||\mathbb{I}_1[|\sigma|\otimes\delta_{\{t=0\}}]||_{L_{**}^{q;\vartheta}(\mathbb{R}^{N+1})}\leq ||\mathbf{I}_{\frac{2}{q}-1}[|\sigma|]||_{L^{q;\vartheta}(\mathbb{R}^{N})},
	\end{align*}
	and 
	\begin{align*}
		||\mathbb{I}_1[|\sigma|\otimes\delta_{\{t=0\}}]||_{L_{**}^{q;\vartheta}(\mathbb{R}^{N+1})}\lesssim_{q,\vartheta}||\sigma||_{L^{\frac{\vartheta q}{\vartheta+2-q};\vartheta}(\mathbb{R}^{N})}~\text{ if } \vartheta>\frac{2-q}{q-1}.
	\end{align*}
	Furthermore, from preceding inequality and \eqref{5hh0505201411} in Proposition \ref{5hh050520145}  we can state that
	\begin{align*}
		||\mathbb{I}_1[|\sigma|\otimes\delta_{\{t=0\}}+|\mu|]||_{L_{**}^{q;\vartheta}(\mathbb{R}^{N+1})}\lesssim_{q,\vartheta}||\sigma||_{L^{\frac{\vartheta q}{\vartheta+2-q};\vartheta}(\mathbb{R}^{N})}+||\mu||_{L^{\frac{\vartheta qq_1}{(\vartheta+2+q)q_1-2q};\vartheta}(\mathbb{R}^{N}, L^{q_1}(\mathbb{R}))},
	\end{align*}
	provided \begin{align*}
		& 1<q_1\leq q<2,\\& \max\left\{\frac{2-q}{q-1},\frac{1}{q-1}\left(2+q-\frac{2q}{q_1}\right)\right\}<\vartheta\leq N.
	\end{align*}
 Here 
	\begin{align*}
		||\mu||_{L^{q_2;\vartheta}(\mathbb{R}^{N}, L^{q_1}(\mathbb{R}))}=\sup_{\rho>0, x\in \mathbb{R}^N}\rho^{\frac{\vartheta-N}{q_2}}\left(\int_{B_\rho(x)}\left(\int_{\mathbb{R}}|\mu(y,t)|^{q_1}dt\right)^{\frac{q_2}{q_1}}dy\right)^{\frac{1}{q_2}},
	\end{align*}
	with $q_2=\frac{\vartheta qq_1}{(\vartheta+2+q)q_1-2q}$.
\end{remark}     
 \begin{proof}[Proof of Proposition \ref{5hh050520145}] Clearly, estimate 
 \eqref{5hh050520142} is followed by \eqref{5hh190320142} in Proposition \ref{5hh190320148}. We want to emphasize that  
 almost every estimates in this proof will be used the Minkowski inequality.
For a ball $B_\rho\subset\mathbb{R}^N$, we have for a.e $x\in\mathbb{R}^N$
\begin{align}\nonumber
||\mathbb{I}_1[\mu](x,.)||_{L^q(\mathbb{R})}&=\left(\int_{-\infty}^{+\infty}\left(\int_{0}^{\infty}\frac{\mu(\tilde{Q}_r(x,t))}{r^{N+1}}\frac{dr}{r}\right)^qdt\right)^{\frac{1}{q}}
\\&\leq \int_{0}^{\infty}\left(\int_{-\infty}^{+\infty}(\mu(\tilde{Q}_r(x,t)))^qdt\right)^{\frac{1}{q}}\frac{dr}{r^{N+2}}.\label{5hh0505201412}
\end{align}
Now, we need to estimate $\left(\int_{-\infty}^{+\infty}(\mu(\tilde{Q}_r(x,t)))^qdt\right)^{\frac{1}{q}}$. \\
\textbf{b.} We have 
\begin{align*}
\left(\int_{-\infty}^{+\infty}(\mu(\tilde{Q}_r(x,t)))^qdt\right)^{\frac{1}{q}}&=\left(\int_{-\infty}^{+\infty}\left(\int_{\mathbb{R}^{N+1}}\chi_{\tilde{Q}_r(x,t)}(x_1,t_1)d\mu(x_1,t_1)\right)^qdt\right)^{\frac{1}{q}}\\&\leq \int_{\mathbb{R}^{N+1}}\left(\int_{-\infty}^{+\infty}\chi_{\tilde{Q}_r(x,t)}(x_1,t_1)dt\right)^{\frac{1}{q}}d\mu(x_1,t_1)
\\&=r^{\frac{2}{q}}\mu_1(B_r(x)).
\end{align*}
Combining this with \eqref{5hh0505201412} we obtain \eqref{5hh050520146} and \eqref{5hh0505201410}.\\ Thus,   we also assert \eqref{5hh0505201410} from \cite[Theorem 3.1 ]{55Ad}.\\
\textbf{c.} Set $d\mu_{2}(x)=||\mu(x,.)||_{L^{q_1}(\mathbb{R})}dx$. Using H\"older's inequality, 
\begin{align*}
\mu(\tilde{Q}_r(x,t))\leq r^{\frac{2(q_1-1)}{q_1}}\int_{B_r(x)}\left(\int_{t-\frac{\rho^2}{2}}^{t+\frac{\rho^2}{2}}(\mu(x_1,t_1))^{q_1}dt_1\right)^{\frac{1}{q_1}}dx_1.
\end{align*}
Leads to 
\begin{align*}
\left(\int_{-\infty}^{+\infty}(\mu(\tilde{Q}_r(x,t)))^qdt\right)^{\frac{1}{q}}&\leq r^{\frac{2(q_1-1)}{q_1}}\int_{B_r(x)}\left(\int_{-\infty}^{+\infty}\left(\int_{t-\frac{\rho^2}{2}}^{t+\frac{\rho^2}{2}}(\mu(x_1,t_1))^{q_1}dt_1\right)^{\frac{q}{q_1}}dt\right)^{\frac{1}{q}}dx_1.
\end{align*}
Note that 
$$\left(\int_{-\infty}^{+\infty}\left(\int_{t-\frac{\rho^2}{2}}^{t+\frac{\rho^2}{2}}(\mu(x_1,t_1))^{q_1}dt_1\right)^{\frac{q}{q_1}}dt\right)^{\frac{q_1}{q}}\leq \rho^{\frac{2q_1}{q}}\int_{-\infty}^{+\infty}(w(x_1,t_1))^{q_1}dt_1.$$
Hence 
$$
\left(\int_{-\infty}^{+\infty}(\mu(\tilde{Q}_r(x,t)))^qdt\right)^{\frac{1}{q}}\leq  r^{\frac{2(q_1-1)}{q_1}+\frac{2}{q}}\int_{B_r(x)}||\mu(x_1,.)||_{L^{q_1}(\mathbb{R})}dx_1=r^{\frac{2(q_1-1)}{q_1}+\frac{2}{q}}\mu_{2}(B_r(x)).$$
Consequently, since \eqref{5hh0505201412} we derive  \eqref{5hh050520147} and \eqref{5hh050520149}.\\ We also obtain  \eqref{5hh0505201411} from \cite[Theorem 3.1 ]{55Ad}.
 \end{proof}

                                   \subsection{Global estimates in $\mathbb{R}^N\times (0,\infty)$ and $\mathbb{R}^{N+1}$}
                                 Now, we prove Theorem  \ref{5hh040420149} and   \ref{5hh040420145}.\\                   \begin{proof}[Proof of Theorem \ref{5hh040420149} and Theorem \ref{5hh040420145}] 
                                   For any $n\geq 1$, it is easy to see that 
                                   \begin{description}
                                   \item[i.]$\mathbb{R}^N\backslash B_n(0)$ satisfies a uniformly $2-$thick condition with constants $c_0=\frac{\text{Cap}_p(B_{1/4}(z_0),B_2(0))}{\text{Cap}_p(B_{1}(0),B_2(0))}$, $z_0=(1/2,0,...,0)\in\mathbb{R}^N$ and $r_0=n$.
                                   \item[ii.] for any $\delta\in (0,1)$, $B_n(0)$ is a $(\delta, 2n\delta)-$ Reifenberg flat domain. 
                                   \item[iii.] $[\mathcal{A}]_{s_0}^n\leq [\mathcal{A}]_{s_0}^\infty$. 
                                   \end{description}
                                Assume that $||\mathbb{M}_1[|\omega|]||_{L^{p,s}(\mathbb{R}^{N+1}                                                                )}<\infty$. Thus by Remark \ref{5hh0404201410} we have  
                                   \begin{align}\label{5hh0404201411}
                                   \mathbb{I}_2[|\omega|](x,t)<\infty~\text{ for a.e }(x,t)\in \mathbb{R}^{N+1}.
                                   \end{align}                                   
                                   In view of the proof of the Theorem \ref{5hh1203201417} and applying Theorem \ref{5hh0701201411} to $B_n(0)\times (-n^2,n^2)$ and with data $\chi_{B_{n-1}(0)\times (-(n-1)^2,(n-1)^2)}\omega$ for any $n\geq 2$, there exists a sequence of renormalized solutions $\{u_n\}$ of 
                                    \begin{equation*}                  \left\{
                                                     \begin{array}
                                                     [c]{l}%
                                                     {(u_{n})_t}-\operatorname{div}(A(x,t,\nabla u_n))=\chi_{B_{n-1}(0)\times (-(n-1)^2,(n-1)^2)}\omega~\text{in }B_n(0)\times (-n^2,n^2),\\
                                                     u_n=0~~~\text{on}~\partial B_n(0)\times (-n^2,n^2),\\
                                                     {u}_{n}(-n^2)=0~~\text{in }B_n(0),
                                                     \end{array}
                                                     \right.  
                                                     \end{equation*} 
                                   converging to a distributional solution $u$ in $L^1_{\text{loc}}(\mathbb{R}; W^{1,1}_{\text{loc}}(\mathbb{R}^N))$ of \ref{5hhparabolic2'} with data $\mu=\omega$ such that     
$$|||\nabla u_n|||_{L^{p,s}(B_n(0)\times (-n^2,n^2))}\lesssim ||\mathbb{M}_1[|\omega|]||_{L^{p,s}(\mathbb{R}^{N+1})}.$$
                                             Here the constant $c$ does not depend on $n$ since $\frac{T_0}{r_0}=\frac{2n+(1+n^2)^{1/2}}{n}\approx 1$. \\
                                   Using Fatou's Lemma, we get estimate \eqref{5hh040420141}.  \\                                
                                   As above, we also obtain \eqref{5hh040420143}.
                                   And similarly, we can prove Theorem  \ref{5hh040420145} by this way. The proof is complete.                              
                                   \end{proof}        
 \begin{remark}[sharpness]The inequality \eqref{5hh040420146} is optimal in the following  sense : 
 \begin{align}\label{5hh070420141}
|||\nabla \mathcal{H}_2|*|\omega|||_{L^q(\mathbb{R}^N\times (0,\infty))}\sim ||\mathbb{M}_1[|\omega|]||_{L^q(\mathbb{R}^{N+1})}.
 \end{align}
 Indeed, we have 
 \begin{align*}
 \nabla \mathcal{H}_2(x,t)=-\frac{C}{2}\frac{\chi_{(0,\infty)}(t)}{t^{(N+1)/2}}\exp(-\frac{|x|^2}{4t})\frac{x}{\sqrt{t}},
 \end{align*}
which leads to 
$$
      \frac{1 }{t^{\frac{N+1}{2}}}\chi_{t>0}\chi_{\frac{1}{2}\sqrt{t}\lesssim|x|\leq2 \sqrt{t}}\lesssim |\nabla\mathcal{H}_\alpha(x,t)|\lesssim \frac{1}{\max\{|x|,\sqrt{2|t|}\}^{N+1}}.$$
Immediately, we get 
$$
  \int_{0}^{\infty}\frac{\omega\left((B_r(x)\backslash B_{r/2}(x))\times (t-r^2,t-r^2/4)\right)}{r^{N+1}}\frac{dr}{r}
  \lesssim |\nabla \mathcal{H}_2|*|\omega| (x,t)\lesssim \mathbb{I}_1[\omega](x,t).$$  
Then,  Theorem \ref{5hh051120131} gives  the right-hand side inequality of \eqref{5hh070420141}. So, it is enough to show that 
\begin{equation}\label{5hh070420142}
A:=\int_{\mathbb{R}^{N+1}}\left(\int_{0}^{\infty}\frac{\omega\left((B_r(x)\backslash B_{r/2}(x))\times (t-r^2,t-r^2/4)\right)}{r^{N+1}}\frac{dr}{r}\right)^qdxdt
\gtrsim||\mathbb{M}_1[\omega]||_{L^q(\mathbb{R}^{N+1})}^q.
\end{equation}
To do this, we take $r_k=(3/2)^k$ for $k\in\mathbb{Z}$, 
\begin{align*}
&\left(\int_{0}^{\infty}\frac{\omega\left((B_r(x)\backslash B_{r/2}(x))\times (t-r^2,t-r^2/4)\right)}{r^{N+1}}\frac{dr}{r}\right)^q\\& ~~~~~~~\gtrsim\sum_{k=-\infty}^{\infty}\left(\frac{\omega\left((B_{r_k}(x)\backslash B_{3r_k/4}(x))\times(t-r_k^2,t-9 r_k^2/16)\right)}{r_k^{N+1}}\right)^q.
\end{align*}
We deduce that 
\begin{align*}
A\gtrsim\sum_{k=-\infty}^{\infty}\int_{\mathbb{R}^{N+1}}\left(\frac{\omega\left((B_{r_k}(x)\backslash B_{3r_k/4}(x))\times(t-r_k^2,t-9 r_k^2/16)\right)}{r_k^{N+1}}\right)^qdxdt.
\end{align*}
For any $k$, put $y=x+\frac{7}{8}r_k$ and $s=t-\frac{25}{32}r_k^2$, so $B_{r_k}(x)\backslash B_{3r_k/4}(x)\supset B_{r_k/8}(y)$ and 
\begin{align*}
&\int_{\mathbb{R}^{N+1}}\left(\frac{\omega\left((B_{r_k}(x)\backslash B_{3r_k/4}(x))\times(t-r_k^2,t-9 r_k^2/16)\right)}{r_k^{N+1}}\right)^qdxdt\\&~~~~~\geq \int_{\mathbb{R}^{N+1}}\left(\frac{\omega\left(B_{r_k/8}(y)\times(s-7r_k^2/32,t+7r_k^2/32)\right)}{r_k^{N+1}}\right)^qdyds.
\end{align*}
Consequently,
$$
A\gtrsim\int_{\mathbb{R}^{N+1}}\sum_{k=-\infty}^{\infty}\left(\frac{\omega\left(B_{r_k/8}(y)\times(s-7r_k^2/32,t+7r_k^2/32)\right)}{r_k^{N+1}}\right)^qdyds.$$
It follows \eqref{5hh070420142}. 
   
 \end{remark}
             \section{Quasilinear Riccati Type Parabolic Equations}
             \subsection{Quasilinear Riccati Type Parabolic Equation in $\Omega_T$}
             We provide below only the proof of Theorem \ref{5hh260320144},  \ref{5hh2410136}. The proof of Theorem \ref{5hh0701201413} can be proceeded by a similar argument.\\\\  
              \begin{proof}[Proof of Theorem \ref{5hh260320144}] Let $\{\mu_n\}\subset C_c^\infty(\Omega_T)$ be as in the proof of Theorem  \ref{5hh141013112}. We have $|\mu_n|(\Omega_T)\leq |\mu|(\Omega_T)$ for any $n\in\mathbb{N}$. Let $\sigma_{n}\in C_c^\infty(\Omega)$ converge to $\sigma$ in the narrow topology of measures and in $L^1(\Omega)$ if $\sigma\in L^1(\Omega)$ such that $||\sigma_{n}||_{L^1(\Omega)}\leq |\sigma|(\Omega)$.             
                                 For $n_0\in \mathbb{N}$, we prove that the problem \eqref{5hh0701201410} has a solution with data $\mu=\mu_{n_0}$ and $\sigma=\sigma_{n_0}$.
                                   Now we put 
                                   $$\mathbf{E}_\Lambda=\left\{u\in L^1(0,T,W^{1,1}_0(\Omega)):|||\nabla u|||_{L^{\frac{N+2}{N+1},\infty}(\Omega_T)}\leq \Lambda\right\},$$ 
                                          where $\Lambda>0$,  $L^{\frac{N+2}{N+1},\infty}(\Omega_T)$ is the Lorentz space with norm 
                                          \begin{equation*}
                                          ||f||_{L^{\frac{N+2}{N+1},\infty}(\Omega_T)}:=\mathop {\sup }\limits_{0<|D|<\infty }\left(|D|^{-\frac{1}{N+2}} \int_{D\cap\Omega_T} |f|\right).
                                          \end{equation*}     
                                   By Fatou's lemma, $\mathbf{E}_\Lambda$ is closed under the strong topology of $L^1(0,T,W^{1,1}_0(\Omega))$ and convex. \\
                                   We consider a map $S:\mathbf{E}_\Lambda\to \mathbf{E}_\Lambda$ defined for each $v\in \mathbf{E}_\Lambda$ by $S(v)=u$, where $u\in L^1(0,T,W^{1,1}_0(\Omega))$  is the unique solution of 
                  \begin{equation}\label{5hh270320142}\left\{ \begin{array}{l}
            {u_t} - \operatorname{div}\left( {A(x,t,\nabla u)} \right) = |\nabla v|^q+\mu_{n_0}~\text{ in }\Omega_T, \\ 
                 u = 0~~~\text{on }~~\partial\Omega\times (0,T) \\
              u(0)=\sigma_{n_0}. 
         \end{array} \right.\end{equation}
             By Remark \ref{5hh070420143}, we have              
             \begin{align*}
             |||\nabla u|||_{L^{\frac{N+2}{N+1},\infty}(\Omega_T)}\lesssim|||\nabla v|^q||_{L^1(\Omega_T)}+|\mu_{n_{0}}|(\Omega_T)+||\sigma_{n_0}||_{L^1(\Omega)}.
             \end{align*} It leads to 
             \begin{align*}
             |||\nabla u|||_{L^{\frac{N+2}{N+1},\infty}(\Omega_T)}&\lesssim |\Omega_T|^{1-\frac{q(N+1)}{N+2}}|||\nabla v|||_{L^{\frac{N+2}{N+1},\infty}(\Omega_T)}^q+|\mu|(\Omega_T)+|\sigma|(\Omega)
             \\&\lesssim |\Omega_T|^{1-\frac{q(N+1)}{N+2}}\Lambda^q+|\mu|(\Omega_T)+|\sigma|(\Omega).
             \end{align*} 
              Thus, we now suppose that $$|\Omega_T|^{-1+\frac{q'}{N+2}}\left(|\mu|(\Omega_T)+|\sigma|(\Omega)\right)\leq\varepsilon_0,$$ then $$|||\nabla u|||_{L^{\frac{N+2}{N+1},\infty}(\Omega_T)}\leq \Lambda:=c(|\mu|(\Omega)+|\sigma|(\Omega)),$$ 
                 for some $\varepsilon_0>0,c>0$.             So, $S$ is well defined.\\
                                     Now we show that $S$\textbf{ is continuous}.
                                              Let $\{v_n\}$ be a sequence in $\mathbf{E}_\Lambda$ such that $v_n$ converges strongly in $L^1(0,T,W^{1,1}_0(\Omega))$ to a function $v\in \mathbf{E}_\Lambda$. Set $u_n=S(v_n)$. We need to show that $u_n\to S(v)$ in $L^1(0,T,W^{1,1}_0(\Omega))$. We have 
                                               \begin{equation}\label{5hh270320144}\left\{ \begin{array}{l}
                                                                        {(u_n)_t} - \operatorname{div}\left( {A(x,t,\nabla u_n)} \right) = |\nabla v_n|^q+\mu_{n_0}~\text{ in }~\Omega_T, \\ 
                                                                        u_n = 0~~~\text{ on }~~\partial\Omega\times (0,T),\\
                                                                        u_n(0)=\sigma_{n_0} ~~\text{ in }~\Omega,
                                                                        \end{array} \right.\end{equation}
                                                                        satisfying
                                                  \begin{equation*}
                        |||\nabla u_n|||_{L^{\frac{N+2}{N+1},\infty}(\Omega_T)}\leq \Lambda,~~|||\nabla v_n|||_{L^{\frac{N+2}{N+1},\infty}(\Omega_T)}\leq \Lambda.
                                                                        \end{equation*}                                                           
                                                                        Thus, $|\nabla v_n|^q\to |\nabla v|^q$ in $L^1(\Omega_T)$. 
                                                                        Therefore, it is easy to see that we get $u_n\to S(v)$ in $L^1(0,T,W^{1,1}_0(\Omega))$  by Theorem \ref{5hhsta}.\\
                                     Next we show that $S$ \textbf{ is pre-compact}. 
                                              Indeed if $\{u_n\}=\{S(v_n)\}$ is a sequence in $S(\mathbf{E}_\Lambda)$. By Proposition \ref{5hhmun}, there exists a subsequence of $\{u_n\} $ converging to some $u$ in $L^1(0,T,W^{1,1}_0(\Omega))$. Consequently, 
                                      by the Schauder Fixed Point Theorem, $S$ has a fixed point on $\mathbf{E}_\Lambda$. So,  the problem \eqref{5hh0701201410} has a solution with data $\mu_{n_0},\sigma_{n_0}$.\medskip\\                        
                             Therefore, for any $n\in\mathbb{N}$, 
                                                   there exists a renormalized solution $u_n$ of 
                      \begin{equation}\label{5hh270320143}\left\{ \begin{array}{l}
                                                               {(u_n)_t} - \operatorname{div}\left( {A(x,t,\nabla u_n)} \right) = |\nabla u_n|^q+\mu_{n}~~\text{ in }~\Omega_T, \\ 
                                                               u = 0~~~\text{on }~~\partial\Omega\times (0,T), \\
                                                               u_n(0)=\sigma_{n},
                                                               \end{array} \right.\end{equation}
which satisfies                                                          
                                                               \begin{equation*}
                                                                |||\nabla u_n|||_{L^{\frac{N+2}{N+1},\infty}(\Omega_T)}\leq \Lambda.
                                                               \end{equation*}                     Thanks to Proposition \ref{5hhmun}, there exists a subsequence of $\{u_n\} $ converging to $u$ in $L^1(0,T,W^{1,1}_0(\Omega))$. So, $ |||\nabla u|||_{L^{\frac{N+2}{N+1},\infty}(\Omega_T)}\leq \Lambda$ and $|\nabla u_n|^q\to |\nabla u|^q$ in $L^1(\Omega)$ since $\{|\nabla u_n|^q\}$ is equi-integrable. It follows the result by Proposition \ref{5hhmun} and  Theorem \ref{5hhsta}. 
                                     \end{proof}\\\\                  
            \begin{proof}[Proof of Theorem \ref{5hh2410136}]Let $\{\mu_n\}\subset C_c^\infty(\Omega_T), \sigma_{n}\in C_c^\infty(\Omega)$ be as in the proof of Theorem  \ref{5hh141013112}. We have $|\mu_n|\leq \varphi_n*|\mu|, |\sigma_{n}|\leq \varphi_{1,n}*|\sigma|$ for any $n\in\mathbb{N}$, $\{\varphi_n\}$, $\{\varphi_{1,n}\}$ are  sequences of standard mollifiers in $\mathbb{R}^{N+1},\mathbb{R}^N$ respectively. Set $\omega_n=|\mu_n|+|\sigma_n|\otimes\delta_{\{t=0\}}$ and $\omega=|\mu|+|\sigma|\otimes\delta_{\{t=0\}}$.\medskip\\ 
         For $n_0\in\mathbb{N}$,      $\varepsilon>0$, we prove that the problem \eqref{5hh0701201410} has a solution with data $\mu=\mu_{n_0},\sigma=\sigma_{n_0}$.
       Now we put 
        $$\mathbf{E}=\left\{u\in L^q(0,T,W^{1,q}_0(\Omega)): \mathbb{I}_1[|\nabla u|^q\chi_{\Omega_{T}}]\leq \mathbb{I}_1[\omega]  \right\}.$$ 
       By Fatou's lemma, $\mathbf{E}_\Lambda$ is closed under the strong topology of $L^1(0,T,W^{1,1}_0(\Omega))$ and convex. \\
       We consider a map $S:\mathbf{E}\to \mathbf{E}$ defined for each $v\in \mathbf{E}$ by $S(v)=u$, where $u\in L^q(0,T,W^{1,q}_0(\Omega))$  is the unique solution of problem \eqref{5hh270320142}.        
        By Corollary  \ref{5hh2410131'} and \ref{keyre}, there exist   $\delta=\delta(N,\Lambda_1,\Lambda_2,q)\in (0,1)$ and  $s_0=s_0(N,\Lambda_1,\Lambda_2)>0$ such that $\Omega$ is $(\delta,R_0)$- Reifenberg flat domain  and $[\mathcal{A}]_{s_0}^{R_0}\le \delta$ for some $R_0$ we have
       \begin{equation}\label{Z2}
       	\mathbb{I}_1[|\nabla u|^q\chi_{\Omega_{T}}]\lesssim	\mathbb{I}_1[\mathbb{M}_1[|\nabla v|^q+\omega_{n_0}]^q\chi_{\Omega_{T}}],~~
       \end{equation}
   and 
   \begin{equation}\label{Z5}
   	\int_{\lambda_0}^{\infty}\lambda^{q-1}\left|\{\mathbb{M}(|\nabla u|)>\Lambda\lambda \}\cap \Omega_T\right|d\lambda\leq c	\int_{\lambda_0/c}^{\infty}\lambda^{q-1}\left|\{\mathbb{M}_1[|\nabla v|^q+\omega_{n_0}]>\lambda \}\cap \Omega_T\right| d\lambda
   \end{equation}
   for any $\lambda_0\geq 0$
                where $c=c(N,\Lambda_1,\Lambda_2,q,T_0/R_0,T_0)$.\\
                We will prove that if 
                \begin{equation}\label{Z3}
                	[\omega]_{\mathfrak{M}^{\mathcal{G}_1,q'}}\leq \varepsilon_0
                \end{equation}
                for some $\varepsilon_0>0$ small enough, then  $S$ is well defined.\\
           By Corollary \ref{5hh250320146},     
            \begin{equation}\label{Z4}
            	[\omega_{n_0}]_{\mathfrak{M}^{\mathcal{G}_1,q'}}\lesssim	[\omega]_{\mathfrak{M}^{\mathcal{G}_1,q'}}\lesssim \varepsilon_0.
            \end{equation}
             Thus, thanks to  Theorem \ref{5hh1410136}
             \begin{align*}
&\mathbb{I}_1^{2T_0,1}[\mathbb{I}_1^{2T_0,1}[\omega]^q]\lesssim\varepsilon_0^{q-1}\mathbb{I}_1^{2T_0,1}[\omega],\quad\quad\mathbb{I}_1^{2T_0,1}[\mathbb{I}_1^{2T_0,1}[\omega_{n_0}]^q]\lesssim\varepsilon_0^{q-1}\mathbb{I}_1^{2T_0,1}[\omega_{n_0}].
             \end{align*}  
Note that   $$\mathbb{I}_1^{2T_0,1}[\omega_{n_0}]\leq \varphi_{n_0}\star \mathbb{I}_1^{2T_0,1}[\omega]\overset{\eqref{5hh09262}}\lesssim \mathbb{I}_1^{2T_0,1}[\omega].$$        
         Thus, \eqref{Z2} and $\mathbb{I}_1[|\nabla v|^q\chi_{\Omega_{T}}]\leq \mathbb{I}_1[\omega]$ imply
         \begin{align*}
	\mathbb{I}_1[|\nabla u|^q\chi_{\Omega_{T}}]&\lesssim \mathbb{I}_1[\mathbb{I}_1[\omega]^q\chi_{\Omega_{T}}]+	\mathbb{I}_1[\mathbb{M}_1[\omega_{n_0}]^q\chi_{\Omega_{T}}]\\&\lesssim \mathbb{I}_1^{2T_0,1}[\mathbb{I}_1^{2T_0,1}[\omega]^q]+	\mathbb{I}_1^{2T_0,1}[\mathbb{I}_1^{2T_0,1}[\omega_{n_0}]^q]\\&\lesssim\varepsilon_0^{q-1} \mathbb{I}_1^{2T_0,1}[\omega]\\&\leq  \mathbb{I}_1[\omega]
         \end{align*}      
         for  $\varepsilon_0>0$ small enough. Hence, $S$ is well defined.\\Moreover, it follows from \eqref{Z5} that
          \begin{equation}\label{Z6}
         	\int_{\lambda_0}^{\infty}\lambda^{q-1}\left|\{\mathbb{M}(|\nabla S(v)|)>\lambda \}\cap \Omega_T\right|d\lambda\leq c	\int_{\lambda_0/c}^{\infty}\lambda^{q-1}\left|\{\mathbb{I}_1[\omega]>\lambda \}\cap \Omega_T\right| d\lambda
         \end{equation}
         for any $\lambda_0\geq 0$ and $v\in \mathbf{E}.$\medskip\\
         Now we assume \eqref{Z3}. We show that $S$\textbf{ is continuous}.
                  Let $\{v_n\}$ be a sequence in $\mathbf{E}$ such that $v_n$ converges strongly in $L^q(0,T,W^{1,q}_0(\Omega))$ to a function $v\in \mathbf{E}$. Set $u_n=S(v_n)$. We need to show that $u_n\to S(v)$ in $L^q(0,T,W^{1,q}_0(\Omega))$. Thanks to Proposition \ref{5hhmun}, $u_n\to S(v)$ a.e. By \eqref{Z6}, $\{|\nabla u_n|^q\}$ is equi-integrable.
                                            Thus,  $u_n\to S(v)$ in $L^q(0,T,W^{1,q}_0(\Omega))$.
Similarly, we also obtain that $S$\textbf{ is  pre-compact}.
\medskip\\                                             
         Thus, by the Schauder Fixed Point Theorem, $S$ has a fixed point on $\mathbf{E}$. Hence the problem \eqref{5hh0701201410} has a solution with data $\mu=\mu_{n_0},\sigma=\sigma_{n_0}$.                              This means, for any $n\in\mathbb{N}$,                                                          there exists a solution $u_n\in \mathbf{E}$ of problem \eqref{5hh270320143}. Since $u_n\in \mathbf{E}$ satisfies \eqref{Z6} with $S(v)=u_n$, so, $\{|\nabla u_n|^q\}$ is equi-integrable.
                By Proposition \ref{5hhmun}, there exists a subsequence of $\{u_n\} $ converging to some function $u$ in $L^1(0,T,W^{1,1}_0(\Omega))$. Thus, 
                             $|\nabla u_n|^q\to |\nabla u|^q$ in $L^1(\Omega)$. The results follow by Proposition \ref{5hhmun} and  Theorem \ref{5hhsta}.  The proof is complete.                     \end{proof}                                                         
\subsection{Quasilinear Riccati Type Parabolic Equation in $\mathbb{R}^N\times (0,\infty)$ and $\mathbb{R}^{N+1}$} 
In this subsection,  we provide the proofs of  Theorem   \ref{5hh0404201417}. We shall follow the same strategy as the proof of  Theorem \ref{5hh0404201412}. \\               
\begin{proof}[Proof of Theorem \ref{5hh0404201417}]Let $D_n=B_n(0)\times (-n^2,n^2)$,  $\mu_n=\varphi_n*(\chi_{D_{n-1}}\mu)$ for any $n\geq 2$. Here $\{\varphi_n\}$ is a  sequence of standard mollifiers in $\mathbb{R}^{N+1}$.  We have $\mu_n\in C_c^\infty (\mathbb{R}^{N+1})$ with $\text{supp}(\mu_n)\subset D_n$ and $\mu_n\to\mu$ weakly in $\mathfrak{M}(\mathbb{R}^{N+1})$.\\ 
	Assume that 
	 \begin{equation}\label{Z8}
		[\mu]_{\mathfrak{M}^{\mathcal{H}_1,q'}}\leq \varepsilon_0.
	\end{equation}
 By Corollary \ref{5hh250320146},     
$$
	[\mu_{n}]_{\mathfrak{M}^{\mathcal{H}_1,q'}}\lesssim	[\mu]_{\mathfrak{M}^{\mathcal{H}_1,q'}}\lesssim \varepsilon_0.$$
Therefore, thanks to Theorem \ref{5hh2410136}, there exist   $\delta=\delta(N,\Lambda_1,\Lambda_2,q)\in (0,1)$ and  $s_0=s_0(N,\Lambda_1,\Lambda_2)>0$ such that if  $[\mathcal{A}]_{s_0}^{\infty}\le \delta$, then    for any $n$, the problem 
\begin{equation*}\left\{ \begin{array}{l}
		{u_t} - \operatorname{div}\left( {A(t,x, \nabla u)} \right) = |\nabla u|^q+\mu_{n}~\text{in }~D_{n}, \\
		u=0~~~~\text{ on }~~\partial B_{n}(0)\times (-n^2,n^2), \\                                                   u(-n^2)=0 \quad \text{in }~~B_{n}(0),
	\end{array} \right.\end{equation*}
 has a solution $u_n\in \mathbf{E}_n$ with data $\mu=\mu_{n}$ provided $\varepsilon_0>0$ small enough. Here
	$$\mathbf{E}_n=\left\{v\in L^q(-n^2,n^2,W^{1,q}_0(B_{n}(0))): \mathbb{I}_1[|\nabla v|^q\chi_{D_n}]\leq \mathbb{I}_1[\mu]  \right\}.$$ 
Moreover, $u_n$ satisfies 
 $$\int_{K\cap D_n} |\nabla u_n|^{q}dxdt\lesssim \text{Cap}_{\mathcal{H}_1,q'}(K)~~\text{ for every compact subset } K\subset \mathbb{R}^{N+1}.
$$
By \eqref{Z6}, one has 
	 \begin{equation}\label{Z10}
		\int_{\lambda_0}^{\infty}\lambda^{q-1}\left|\{\mathbb{M}(|\nabla u_n|)>\lambda \}\cap D_n\right|d\lambda\leq c	\int_{\lambda_0/c}^{\infty}\lambda^{q-1}\left|\{\mathbb{I}_1[\mu]>\lambda \}\right| d\lambda,
	\end{equation}
for some $c>0$. 
So, $\{|\nabla u_n|^{q}\}_{n\geq n_0}$ is equi-integrable in $D_{n_0}$ for any $n_0>0$. \\
Hence, by Corollary \ref{5hh090420144}, there exists a subsequence of $\{u_n\}$ converging to a distributional solution $u$ of \eqref{5hh270420142} satisfying 
$
	\mathbb{I}_1[|\nabla u|^q]\leq \mathbb{I}_1[\mu]
$ and 
 $$\int_{K} |\nabla u_n|^{q}dxdt\lesssim \text{Cap}_{\mathcal{H}_1,q'}(K)~~\text{ for every compact subset } K\subset \mathbb{R}^{N+1}.
$$
 Furthermore, if $\text{supp}(\mu)\subset \mathbb{R}^N\times [0,\infty]$ and $\sigma\in \mathfrak{M}(\mathbb{R}^N)$, then $u_{n}=0$ in $B_{n}(0)\times (-n^2,-\frac{2}{n})$ where $\text{supp}(\omega_n)\subset \mathbb{R}^N\times (-\frac{2}{n},\infty)$. So, $u=0$ in $\mathbb{R}^N\times(-\infty,0)$.
 The proof is complete.
\end{proof}                      
            \section{Appendix}
            \begin{proof}[Proof of the Remark \ref{5hh020520141}]            
            For $\omega\in\mathfrak{M}^+(\mathbb{R}^{N+1})$, $0<\alpha<N+2$ if $\mathbb{I}_\alpha[\omega](x_0,t_0)<\infty$ for some $(x_0,t_0)\in\mathbb{R}^{N+1}$ then for any $0<\beta\leq \alpha$,  $\mathbb{I}_\beta[\omega]\in L^{s}_{\text{loc}}(\mathbb{R}^{N+1})$ for any $0<s<\frac{N+2}{N+2-\beta}$. Indeed, by Remark \ref{5hh120320141} we have     $\mathbb{I}_\alpha[\omega]\in L^{s}_{\text{loc}}(\mathbb{R}^{N+1})$ for any $0<s<\frac{N+2}{N+2-\beta}$.\\ Take $0<\beta\leq \alpha$ and $0<s<\frac{N+2}{N+2-\beta}$. For $R>0$, by Proposition \ref{5hh23101315} we have
            $\mathbb{I}_\beta[\chi_{\tilde{Q}_{2R}(0,0)}\omega]\in L^s_{\text{loc}}(\mathbb{R}^{N+1})$.
            Thus,
            \begin{align*}
            &\int_{\tilde{Q}_R(0,0)}\left(\mathbb{I}_\beta[\omega](x,t)\right)^sdxdt\\&~~~\lesssim \int_{\tilde{Q}_R(0,0)}\left(\mathbb{I}_\beta[\chi_{\tilde{Q}_{2R}(0,0)}\omega](x,t)\right)^sdxdt+  \int_{\tilde{Q}_R(0,0)}\left(\mathbb{I}_\beta[\chi_{\tilde{Q}_{2R}(0,0)^c}\omega](x,t)\right)^sdxdt
            \\&~~~\lesssim \int_{\tilde{Q}_R(0,0)}\left(\mathbb{I}_\beta[\chi_{\tilde{Q}_{2R}(0,0)}\omega](x,t)\right)^sdxdt+ R^{-s(\alpha-\beta)} \int_{\tilde{Q}_R(0,0)}\left(\mathbb{I}_\alpha[\omega](x,t)\right)^sdxdt\\&~~~<\infty.
            \end{align*}
            For $0<\beta<\alpha<N+2$, we consider 
            \begin{align*}
            \omega (x,t)=\sum_{k=4}^{\infty}\frac{a_k}{|\tilde{Q}_{k+1}(0,0)\backslash\tilde{Q}_{k}(0,0)|}\chi_{\tilde{Q}_{k+1}(0,0)\backslash\tilde{Q}_{k}(0,0)}(x,t),
            \end{align*}
            where $a_k=2^{n(N+2-\theta)}$ if $k=2^n$ and $a_k=0$ otherwise with $\theta\in (\beta,\alpha]$.\\
             It is easy to see that  $\mathbb{I}_\alpha[\omega]\equiv \infty$ and $\mathbb{I}_\beta[\omega]<\infty$ in $\mathbb{R}^{N+1}$.
            \end{proof}\medskip\\
            \begin{proof}[Proof of the Remark \ref{5hh0404201410}] For $\omega\in\mathfrak{M}^+(\mathbb{R}^{N+1})$, 
                           since $\mathbb{I}_2[\omega]\lesssim I_1[I_1[\omega]]$ thus:\\
                           If $\mathbb{I}_1[\omega]\in L^{s,\infty}(\mathbb{R}^{N+1})$ with $1<s<N+2$, then by Proposition \ref{5hh23101315} 
$$
                           ||\mathbb{I}_2[\omega]||_{L^{\frac{s(N+1)}{N+2-s},\infty}(\mathbb{R}^{N+1})}\lesssim ||\mathbb{I}_1[\omega]||_{L^{s,\infty}(\mathbb{R}^{N+1})}<\infty.$$
                           If $\mathbb{I}_1[\omega]\in L^{N+2,\infty}(\mathbb{R}^{N+1})$, then by Theorem \ref{5hh170220145}, one has 
$\mathbb{I}_2[\omega]\in L^{s_0}_{\text{loc}}(\mathbb{R}^{N+1})~~\forall ~s_0>1.$ So, $ \mathbb{I}_2[\omega]<\infty$ a.e in $\mathbb{R}^{N+1}$ if $\mathbb{I}_1[\omega]\in L^{s,\infty}(\mathbb{R}^{N+1})$ with $1<s\leq N+2$.\\
                           For $s>N+2$, there exists $\omega\in\mathfrak{M}^+(\mathbb{R}^{N+1})$ such that  $ \mathbb{I}_2[\omega]\equiv\infty$ in $\mathbb{R}^{N+1}$ and $\mathbb{I}_1[\omega]\in L^{s}(\mathbb{R}^{N+1})$. Indeed, consider
$$
                           \omega (x,t)=\sum_{k=1}^{\infty}\frac{k^{N-1}}{|\tilde{Q}_{k+1}(0,0)\backslash\tilde{Q}_{k}(0,0)|}\chi_{\tilde{Q}_{k+1}(0,0)\backslash\tilde{Q}_{k}(0,0)}(x,t).$$
                           We have for $(x,t)\in \mathbb{R}^{N+1}$ and $n_0\in\mathbb{N}$ with $n_0>\log_2(\max\{|x|,\sqrt{2|t|}\}) $
                           \begin{align*}
                           \mathbb{I}_2[\omega](x,t)&\gtrsim\sum_{n_0}^{\infty}\frac{\omega(\tilde{Q}_{2^{n}}(x,t))}{2^{nN}}
                           \gtrsim\sum_{n_0}^{\infty}\frac{\omega(\tilde{Q}_{2^{n-1}}(0,0))}{2^{nN}}
                           \\&\gtrsim\sum_{n_0}^{\infty}\frac{\sum_{k=1}^{2^{n-1}-1}k^{N-1}}{2^{nN}}
                           \gtrsim\sum_{k=1}^{\infty}\left(\sum_{n_0}^{\infty}\chi_{k\leq 2^{n-1}-1}\frac{1}{2^{nN}}\right)k^{N-1}
                           \\&\gtrsim\sum_{k=n_0}^{\infty}k^{-1}=\infty.
                           \end{align*}
                           On the other hand, for $s_1>\frac{N+2}{2}$
$$
                          \int_{\mathbb{R}^{N+1}}\omega^{s_1}dxdt=c\sum_{k=1}^{\infty}\frac{k^{s(N-1)}}{((k+1)^{N+2}-k^{N+2})^{s_1-1}}
                         \lesssim \sum_{k=1}^{\infty}\frac{k^{s_1(N-1)}}{k^{(s_1-1)(N+1)}}<\infty, $$
                           since $(s_1-1)(N+1)-s_1(N-1)>1$.
                           Thus, 
$$
                           ||\mathbb{I}_1[\omega]||_{L^s(\mathbb{R}^{N+1})}\lesssim ||\omega||_{L^{\frac{s(N+2)}{N+2+s}}(\mathbb{R}^{N+1})}<\infty.$$                                     
            \end{proof}\\
            \begin{proof}[Proof of the Proposition \ref{5hh1203201411}] We will use an idea in \cite{55Bi1,55Bi2} to  prove \ref{5hh110320141}. For $S^{\prime}\in W^{1,\infty}(\mathbb{R})$ with $S(0)=0$, $S^{\prime\prime}\geq 0$, $S'(\tau)\tau\geq 0$ for all $\tau\in\mathbb{R}$ and  $||S^{\prime}||_{L^\infty(\mathbb{R})}\leq 1$  we have           
            \begin{align*}
            &-\int_{D}\eta_t S(u)dxdt+\int_{D}{S^{\prime}(u)A(x,t,\nabla u)\nabla\eta}dxdt \\&~~~~~~~~~~~~~~~+\int_{D}{S^{\prime\prime
            }(u)\eta A(x,t,\nabla u)\nabla u}dxdt
           +\int_{D}S^{\prime}(u)\eta L(u)dxdt=\int_{D}S^{\prime}(u)\eta d\mu.
            \end{align*}
            Thus, 
            \begin{align*}
            &\Lambda_2\int_{D}S^{\prime\prime}(u)\eta |\nabla u|^2dxdt
                     \\&~~~~~~+\int_{D}S^{\prime}(u)\eta L(u)dxdt\leq \Lambda_1\int_{D}|\nabla u||\nabla \eta| dxdt+\int_{D}\eta d|\mu|+\int_{D}|\eta_t||u|dxdt.
            \end{align*}            
            \textbf{a.} We choose $S^{\prime} \equiv \varepsilon^{-1}T_\varepsilon $ for $\varepsilon>0$ and  let $\varepsilon\to 0$
             we will obtain            
             \begin{align}
             \int_{D}\eta|L(u)|dxdt\leq \Lambda_1\int_{D}|\nabla u||\nabla\eta|dxdt+\int_{D}\eta d|\mu|+\int_{D}|\eta_t||u|dxdt.\label{5hh110320142}
              \end{align}             
            \textbf{b.} for $S^{\prime}(u)=(1-(|u|+1)^{-\alpha})\text{sign}(u)$ for $\alpha>0$ then 
            \begin{align*}
            \int_{D} \frac{|\nabla u|^2}{\left(|u|+1\right)^{\alpha+1}}\eta dxdt\lesssim\int_{D}|\nabla u||\nabla\eta|dxdt+\int_{D}\eta d|\mu|+\int_{D}|\eta_t||u|dxdt.
            \end{align*}          
            Using H\"older's inequality, we have
             \begin{equation}
             \int_{D}|\nabla u||\nabla\eta|dxdt+\int_{D} \frac{|\nabla u|^2}{\left(|u|+1\right)^{\alpha+1}}\eta dxdt\lesssim B.\label{5hh110320143}
             \end{equation}
             \textbf{c.} for $S^{\prime}(u)=\frac{-k+\delta+|u|}{2\delta}\text{sign}(u)\chi_{k-\delta<|u|<k+\delta}+\text{sign}(u)\chi_{|u|\geq k+\delta}$, $0<\delta\leq k$ then
             \begin{equation}\label{5hh110320144}
            \frac{1}{2\delta}\int_{k-\delta<|u|<k+\delta} |\nabla u|^2
            \eta dxdt\lesssim\int_{D}|\nabla u||\nabla\eta|dxdt+\int_{D}\eta d|\mu|+\int_{D}|\eta_t||u|dxdt. \end{equation} 
            In particular, 
            \begin{equation}\label{5hh110320145}
            \frac{1}{k}\int_{D} |\nabla T_k(u)|^2
            \eta dxdt\lesssim\int_{D}|\nabla u||\nabla\eta|dxdt+\int_{D}\eta d|\mu|+\int_{D}|\eta_t||u|dxdt~~\forall k>0.
            \end{equation}
            Consequently,  we deduce \eqref{5hh110320141}  from \eqref{5hh110320142}-\eqref{5hh110320145}.\\
            Next, take  $\varphi \in  C^\infty_c(D)$ and $S^{\prime}(u)=\chi_{|u|\leq k-\delta}+\frac{k+\delta-|u|}{2\delta}\chi_{k-\delta<|u|<k+\delta}$, $S(0)=0$
             we have
             \begin{align*}
             &-\int_{D}\varphi_t\eta S(u)dxdt+\int_{D}S^{\prime}(u)\eta A(x,t,\nabla u)\nabla \varphi dxdt+\int_{D}S^{\prime}(u)\varphi A(x,t,\nabla u)\nabla \eta dxdt\\&~~~~-\frac{1}{2\delta}\int_{k-\delta<|u|<k+\delta}\text{sign}(u)\varphi\eta  A(x,t,\nabla u)\nabla u dxdt+\int_{D}S'(u)\varphi\eta L(u)dxdt\\&\quad\quad=\int_{D}S'(u)\varphi\eta d\mu +\int_{D}\varphi\eta_t S(u)dxdt.
             \end{align*} 
             Combining with \eqref{5hh110320142}, \eqref{5hh110320143} and \eqref{5hh110320144}, we get
$$
             -\int_{D}\varphi_t\eta S(u)dxdt+\int_{D}S^{\prime}(u)\eta A(x,t,\nabla u)\nabla \varphi dxdt\lesssim||\varphi||_{L^\infty(D)}B.
$$
            Letting $\delta\to0$, we get
$$
             -\int_{D}\varphi_t\eta T_k(u)dxdt+\int_{D}\eta A(x,t,\nabla T_k(u))\nabla \varphi dxdt\lesssim||\varphi||_{L^\infty(D)}B.
$$
           We take $\varphi=T_{\varepsilon}(T_k(u)-\langle T_{k}(w)\rangle_\nu)$,
            \begin{align*}
             &-\int_{D}\frac{\partial}{\partial t}\left(T_{\varepsilon}(T_k(u)-\langle T_{k}(w)\rangle_\nu)\right)\eta T_k(u)dxdt\\&~~~~~~~~~~~~+\int_{D}\eta A(x,t,\nabla T_k(u))\nabla T_{\varepsilon}(T_k(u)-\langle T_{k}(w)\rangle_\nu) dxdt\lesssim\varepsilon B.
             \end{align*}
             Using integration by part, we have
             \begin{align*}
             &-\int_{D}\frac{\partial}{\partial t}\left(T_{\varepsilon}(T_k(u)-\langle T_{k}(w)\rangle_\nu)\right)\eta T_k(u)dxdt
             =\frac{1}{2}\int_D(T_\varepsilon(T_k(u)-\langle T_{k}(w)\rangle_\nu))^2\eta_tdxdt
              \\&\quad\quad\quad\quad+\int_DT_{\varepsilon}(T_k(u)-\langle T_{k}(w)\rangle_\nu)\langle T_{k}(w)\rangle_\nu\eta_tdxdt
             \\&\quad\quad\quad\quad\quad+\nu\int_D\eta (T_k(w)-\langle T_{k}(w)\rangle_\nu) T_{\varepsilon}(T_k(u)-\langle T_{k}(w)\rangle_\nu)dxdt.
             \end{align*}
            Thus,   
             \begin{align*}
             &-\int_{D}\frac{\partial}{\partial t}\left(T_{\varepsilon}(T_k(u)-\langle T_{k}(w)\rangle_\nu)\right)\eta T_k(u)dxdt\\&~~~~~~~
             \geq -\varepsilon(1+k)||\eta_t||_{L^1(D)}+ \nu\int_D \eta\left( T_{k}(w)-\langle T_{k}(w)\rangle_\nu\right)T_\varepsilon(T_k(u)-\langle T_{k}(w)\rangle_\nu) dxdt,
             \end{align*}
            which follows \eqref{5hh110320147}.
            \end{proof}\\
            \begin{proof}[Proof of the proposition \ref{5hh1203201412}]  Let $S_k\in W^{2,\infty}(\mathbb{R})$ such that $S_k(z)=z$ if $|z|\le k$ and $S_k(z)=sign(z)2k$ if $|z|>2k$. For $m\in \mathbb{N}$, let $\eta_{m}$ be  the cut off function on $D_m$ with respect to $D_{m+1}$. It is easy to see that from the assumption  and Remark \ref{5hh120320145}, Proposition \ref{5hh120320146} we get  $U_{m,n}=\eta_{m}S_k(v_n)$, $v_n=u_n-h_n$
            \begin{align*}
            &\mathop {\textrm{sup} }\limits_{n\geq m+1} \left( ||\left(U_{m,n}\right)_t||_{L^{2}(-m^2,m^2,H^{-1}(B_m(0)))+L^1(D_m)}+||U_{m,n}||_{L^2(-m^2,m^2,H_0^1(B_m(0)))}\right.\\&~~~~~~~~~~~~~\left.+||u_n||_{L^1(D_m)}+||v_n||_{L^1(D_m)}\right) \leq M_m< \infty. 
            \end{align*} 
            Thus,  $\{U_{m,n}\}_{n\geq m+1}$ is relatively compact in $L^1(D_m)$. 
            On the other hand, for any $n_1,n_2\geq m+1$
            \begin{align*}
            &|\{|v_{n_1}-v_{n_2}|>\lambda\}\cap D_m|= |\{|\eta_{m}v_{n_1}-\eta_{m}v_{n_2}|>\lambda\}\cap D_m|
            \\&~~~\leq \frac{1}{k}\left(||v_{n_1}||_{L^1(D_m)}+||v_{n_2}||_{L^1(D_m)}\right)+\frac{1}{\lambda}||\eta_{m}S_k(v_{n_1})-\eta_{m}S_k(v_{n_2})||_{L^1(D_m)}
            \\&~~~\leq \frac{2M_m}{k}+\frac{1}{\lambda}||U_{m,n_1}-U_{m,n_2}||_{L^1(D_m)},
            \end{align*}
            and $h_n$ is convergent in $L^1_{\text{loc}}(\mathbb{R}^{N+1})$. 
            So, for any $m\in \mathbb{N}$ there is a subsequence of $\{ u_n\}$, still denoted by  $\{u_n\}$ such that $\{u_n\}$ is a Cauchy sequence (in measure) in $D_m$.
            Therefore, there is a subsequence of $\{u_n\}$ converging to some function $u$ a.e in $\mathbb{R}^{N+1}$. Clearly, $u\in L^1_{\text{loc}}(\mathbb{R}; W^{1,1}_{loc}(\mathbb{R}^N))$.
            Now, we prove that $\nabla {u_n} \to \nabla u$ a.e in $\mathbb{R}^{N+1}$.\\
            From 
            \eqref{5hh110320147} with $D=D_{m+2}$, $\eta=\eta_{m}$ and $T_k(w)=T_k(\eta_{m+1}u)$ we have 
            \begin{align}\nonumber
             &\nu\int_{D_{m+2}} \eta_{m}\left( T_{k}(\eta_{m+1}u)-\langle T_{k}(\eta_{m+1}u)\rangle_\nu\right)T_\varepsilon(T_k(u_n)-\langle T_{k}(\eta_{m+1}u)\rangle_\nu) dxdt\\&\nonumber~~~~~~+\int_{D_{m+2}}\eta_{m} A(x,t,\nabla T_k(u_n))\nabla T_{\varepsilon}(T_k(u_n)-\langle T_{k}(\eta_{m+1}u)\rangle_\nu) dxdt \\&~~~~~~~\lesssim\varepsilon(1+k)C(n,m)~~\forall ~n\geq m+2,\label{5hh120320147}
             \end{align}
             where 
            \begin{align*}
            C(n,m)&=||(\eta_m)_t (|u_n|+1)||_{L^1(D_{m+2})}\\&~~~+\int_{D_{m+2}}(|u_n|+1)^{q_0}\eta dxdt+\int_{D_{m+2}}|\nabla \eta_m^{1/q_1}|^{q_1}dxdt+\int_{D_{m+2}}\eta_m d|\mu_n|,
            \end{align*}            
             with $q_1<\frac{q_0-1}{2q_0}$. By the assumption, we verify  that the right hand side of \eqref{5hh120320147} is bounded by  $c\varepsilon$, where $c$ does not depend on $n$.   \\  
            Since $\{\eta_{m}T_k(u_n)\}_{n\geq m+2}$ is bounded in $L^2(-(m+2)^2,(m+2)^2;H^1_0(B_{m+2}(0)))$, thus  there is a subsequence of $\{u_n\}$, still denoted by  $\{u_n\}$ such that
$$
            \lim\limits_{n\to\infty}\int\limits_{ \left| {{T_k}({u_n}) - \langle T_{k}(\eta_{m+1}u)\rangle_\nu} \right| \le \varepsilon } \eta_{m} A(x,t,\nabla T_k(u))\nabla \left(T_k(u_n)-T_k(u)\right)dxdt=0.
$$
            Therefore, thanks to $u_n\to u$ a.e in $D_{m+2}$ and $\langle T_{k}(\eta_{m+1}u)\rangle_\nu\to T_k(\eta_{m+1}u)$ in $L^2(-(m+2)^2,(m+2)^2;H^1_0(B_{m+2}(0)))$, we get 
$$
            \limsup\limits_{\nu\to\infty}\limsup\limits_{n\to\infty}\int\limits_{ \left| {{T_k}({u_n}) - \langle T_{k}(\eta_{m+1}u)\rangle_\nu} \right| \le \varepsilon } \eta_{1,m} \Phi_{n,k} dxdt\lesssim\varepsilon ~\forall~ \varepsilon\in (0,1),
$$
            where
            $
            \Phi_{n,k}=\left(A(x,t,T_k(u_n))-A(x,t,T_k(u))\right)\nabla\left(T_k(u_n)-T_k(u)\right).$ \medskip\\           
            Using H\"older's inequality, one has
            \begin{align*}
            &\int_{D_{m+2}}\eta_{m}\Phi_{k,n}^{1/2}dxdt=\int_{D_{m+2}}\eta_{m}\Phi_{k,n}^{1/2}\chi_{ \left| {{T_k}({u_n}) - \langle T_{k}(\eta_{m+1}u)\rangle_\nu} \right| \le \varepsilon}dxdt\\&~~~+\int_{D_{m+2}}\eta_{m}\Phi_{k,n}^{1/2}\chi_{ \left| {{T_k}({u_n}) - \langle T_{k}(\eta_{m+1}u)\rangle_\nu} \right| > \varepsilon}dxdt
            \\&~~~\leq ||\eta_{1,m}||_{L^1(D_{m+2})}^{1/2}\left(\int\limits_{ \left| {{T_k}({u_n}) - \langle T_{k}(\eta_{m+1}u)\rangle_\nu} \right| \le \varepsilon } \eta_{m} \Phi_{n,k} dxdt\right)^{1/2}\\&~~~+|\{\left| {{T_k}({u_n}) - \langle T_{k}(\eta_{m+1}u)\rangle_\nu} \right| > \varepsilon\}\cap D_{m+1}|^{1/2}\left(\int_{D_{m+2}}\eta_{m}^2\Phi_{k,n}dxdt\right)^{1/2}\\&~~~=A_{n,\nu,\varepsilon}.
            \end{align*}
            Clearly, $\limsup\limits_{\varepsilon\to 0}\limsup\limits_{\nu\to\infty}\limsup\limits_{n\to\infty}A_{n,\nu,\varepsilon}=0$. It follows 
$$
            \limsup\limits_{n\to\infty}\int_{D_{m+2}}\eta_{m}\Phi_{k,n}^{1/2}dxdt=0.
$$
            Since $\Phi_{n,k}\geq\Lambda_2 |\nabla T_k(u_n)-\nabla T_k(u)|^2$, thus  $ \nabla T_k(u_n)\to \nabla T_k(u)$ in $L^1(D_m)$.\\
            Note that 
            \begin{align*}
            |\{|\nabla u_{n_1}-\nabla u_{n_2}|>\lambda\}\cap D_m|&\leq  \frac{1}{k}\left(|| u_{n_1}||_{L^1(D_m)}+|| u_{n_2}||_{L^1(D_m)}\right)\\&~+\frac{1}{\lambda}||\nabla T_k(u_{n_1})-\nabla T_k(u_{n_2})||_{L^1(D_m)}
            \\&\leq \frac{2M_m}{k}+\frac{1}{\lambda}|||\nabla T_k(u_{n_1})-\nabla T_k(u_{n_2})|||_{L^1(D_m)}.
            \end{align*}
            Thus, we can show that there is a subsequence of $\{\nabla u_n\}$ converging $\nabla u$ a.e in $\mathbb{R}^{N+1}$. The proof is complete. 
            \end{proof}

\end{document}